\DeclareSymbolFont{bbold}{U}{bbold}{m}{n}
\DeclareSymbolFontAlphabet{\mathbbold}{bbold}
\newcommand{\Z}{\ensuremath{\mathbb{Z}}}
\newcommand{\Q}{\ensuremath{\mathbb{Q}}}
\newcommand{\R}{\ensuremath{\mathbb{R}}}
\newcommand{\CC}{\ensuremath{\mathbb{C}}}
\newcommand{\A}{\ensuremath{\mathbb{A}}}
\newcommand{\F}{\ensuremath{\mathbb{F}}}
\newcommand{\rank}{\operatorname{rk}}
\newcommand{\Aut}{\operatorname{Aut}}
\newcommand{\Tr}{\operatorname{tr}}
\newcommand{\topwedge}{\ensuremath{\bigwedge^{\mathrm{max}}}}
\newcommand{\Ind}{\operatorname{Ind}}
\newcommand{\Resotimes}{\ensuremath{{\bigotimes}'}}
\newcommand{\dd}{\mathop{}\!\mathrm{d}}
\newcommand{\Schw}{\ensuremath{\mathcal{S}}}	
\newcommand{\mes}{\operatorname{mes}}
\newcommand{\angles}[1]{\ensuremath{\langle #1 \rangle}}
\newcommand{\Stab}{\operatorname{Stab}}
\newcommand{\demi}{\ensuremath{\frac{1}{2}}}
\newcommand{\relgeq}[1]{\ensuremath{\underset{#1}{\geq}}}	
\newcommand{\relgg}[1]{\ensuremath{\underset{#1}{\gg}}}		
\newcommand{\identity}{\ensuremath{\mathrm{id}}}
\newcommand{\Hom}{\operatorname{Hom}}
\newcommand{\End}{\operatorname{End}}
\newcommand{\rightiso}{\ensuremath{\stackrel{\sim}{\rightarrow}}}
\newcommand{\leftiso}{\ensuremath{\stackrel{\sim}{\leftarrow}}}
\newcommand{\Ker}{\operatorname{ker}}
\newcommand{\Image}{\operatorname{im}}
\newcommand{\dotimes}[1]{\ensuremath{\underset{#1}{\otimes}}}
\newcommand{\Ad}{\operatorname{Ad}}
\newcommand{\Spec}{\operatorname{Spec}}
\newcommand{\Gm}{\ensuremath{\mathbb{G}_\mathrm{m}}}
\newcommand{\Ga}{\ensuremath{\mathbb{G}_\mathrm{a}}}
\newcommand{\Res}{\operatorname{Res}}
\newcommand{\utimes}[1]{\ensuremath{\overset{#1}{\times}}}
\newcommand{\Supp}{\operatorname{Supp}}
\newcommand{\divisor}{\operatorname{div}}
\newcommand{\GmC}{\ensuremath{\mathbb{G}_{\mathrm{m}, \mathbb{C}}}}
\newcommand{\pr}{\ensuremath{\mathbb{P}}}	
\newcommand{\GL}{\operatorname{GL}}
\newcommand{\SO}{\operatorname{SO}}
\newcommand{\SL}{\operatorname{SL}}
\newcommand{\Sp}{\operatorname{Sp}}
\theoremstyle{plain}
\newtheorem{proposition}{Proposition}
\newtheorem{lemma}[proposition]{Lemma}
\newtheorem{theorem}[proposition]{Theorem}
\newtheorem{corollary}[proposition]{Corollary}
\theoremstyle{definition}
\newtheorem{definition}[proposition]{Definition}
\newtheorem{hypothesis}[proposition]{Hypothesis}
\newtheorem{conjecture}[proposition]{Conjecture}
\newtheorem{axiom}[proposition]{Axiom}
\newtheorem{example}[proposition]{Example}
\theoremstyle{remark}
\newtheorem{remark}[proposition]{Remark}
\newtheorem{notation}[proposition]{Notation}
\numberwithin{equation}{section}
\newcommand{\HC}{\ensuremath{\mathbf{H}}}	
\newcommand{\relint}{\operatorname{rel.int}}	
\newcommand{\rec}{\operatorname{rec}}	
\newcommand{\SVexp}{\ensuremath{\mathfrak{exp}}}	
\newcommand{\amspace}{\ensuremath{\mathfrak{X}}}	
\renewcommand{\Re}{\operatorname{Re}}
\renewcommand{\Im}{\operatorname{Im}}
\renewcommand{\emptyset}{\ensuremath{\varnothing}}
\newcommand{\MyQED}{}	
\numberwithin{proposition}{section}
\title{Zeta integrals, Schwartz spaces and local functional equations}
\author{Wen-Wei Li}
\date{}
\begin{document}

\maketitle

\begin{abstract}
	According to Sakellaridis, many zeta integrals in the theory of automorphic forms can be produced or explained by appropriate choices of a Schwartz space of test functions on a spherical homogeneous space, which are in turn dictated by the geometry of affine spherical embeddings. We pursue this perspective by developing a local counterpart and try to explicate the functional equations. These constructions are also related to the $L^2$-spectral decomposition of spherical homogeneous spaces in view of the Gelfand--Kostyuchenko method. To justify this viewpoint, we prove the convergence of $p$-adic local zeta integrals under certain premises, work out the case of prehomogeneous vector spaces and re-derive a large portion of Godement--Jacquet theory. Furthermore, we explain the doubling method and show that it fits into the paradigm of $L$-monoids developed by L.\ Lafforgue, B.\ C.\ Ngô \textit{et al.}, by reviewing the constructions of Braverman and Kazhdan (2002). In the global setting, we give certain speculations about global zeta integrals, Poisson formulas and their relation to period integrals. 
\end{abstract}

\tableofcontents

\chapter{Introduction}
Zeta integrals are indispensable tools for studying automorphic representations and their $L$-functions. In broad terms, it involves integrating automorphic forms (global case) or ``coefficients'' of representations (local case) against suitable test functions such as Eisenstein series or Schwartz--Bruhat functions; furthermore, these integrals are often required to have meromorphic continuation in some complex parameter $\lambda$ and satisfy certain symmetries, i.e.\ functional equations. These ingenious constructions are usually crafted on a case-by-case basis; systematic theories are rarely pursued with a notable exception \cite{Sak12}, which treats only the global zeta integrals. The goal of this work is to explore some aspects of this general approach, with an emphasis on the local formalism.

\section{Review of prior works}
To motivate the present work, let us begin with an impressionist overview of several well-known zeta integrals.

\subsection*{From Tate's thesis to Godement--Jacquet theory}
The most well-understood example is Tate's thesis (1950). Consider a number field $F$ and a Hecke character $\pi: F^\times \backslash \A_F^\times \to \CC^\times$. Fix a Haar measure $\dd^\times x$ on $\A_F^\times$ and define
\begin{align*}
	Z^\text{Tate}(\lambda, \pi, \xi) & := \int_{\A_F^\times} \pi(x)|x|^\lambda \xi(x) \dd^\times x = \prod_v  Z^\text{Tate}_v(\lambda, \pi_v, \xi_v), \\
	Z^\text{Tate}_v(\lambda, \pi_v, \xi_v) & := \int_{F_v^\times} \pi_v(x_v) |x_v|_v^\lambda \xi_v(x_v) \dd^\times x_v
\end{align*}
where $\xi = \prod_v \xi_v$ belongs to the Schwartz--Bruhat space $\Schw = \Resotimes_v \Schw_v$ of $\A_F$, and the local and global measures are chosen in a compatible manner. The adélic integral and its local counterparts converge for $\Re(\lambda) \gg 0$, and they admit meromorphic continuation to all $\lambda \in \CC$ (rational in $q_v^\lambda$ for $v \nmid \infty$). When the test functions $\xi$ vary, the greatest common divisor of $Z^\text{Tate}(\lambda, \pi, \cdot)$ (resp.\  $Z^\text{Tate}_v(\lambda, \pi_v, \cdot)$), which exists in some sense, gives rise to the $L$-factor $L(\lambda, \pi)$ (resp.\  $L(\lambda, \pi_v)$). Computations at unramified places justify that this is indeed the complete Hecke $L$-function for $\pi$. Moreover, the functional equation for $L(\lambda, \pi)$ stems from those for $Z^\text{Tate}$ and $Z^\text{Tate}_v$: let $\mathcal{F} = \mathcal{F}_\psi: \Schw \to \Schw$ be the Fourier transform with respect to an additive character $\psi = \prod_v \psi_v$ of $F \backslash \A_F$, the local functional equation reads
\[ Z^\text{Tate}_v(1-\lambda, \check{\pi}_v, \mathcal{F}_v(\xi_v)) = \gamma^\text{Tate}_v(\lambda, \pi_v) Z^\text{Tate}_v(\lambda, \pi_v, \xi_v) \]
where $\check{\pi}_v := \pi_v^{-1}$, i.e.\ the contragredient.

A.\ Weil \cite{Weil66} reinterpreted $Z^\text{Tate}_v(\lambda, \pi, \cdot)$ as a family of $F_v^\times$-equivariant tempered distributions on the affine line $F_v$, baptized \emph{zeta distributions}, and similarly for the adélic case. Specifically, $Z^\text{Tate}_v(\lambda, \pi, \cdot)$ may be viewed as elements in $\Hom_{F_v^\times}(\pi_v \otimes |\cdot|^\lambda, \Schw_v^\vee)$, varying in a meromorphic family in $\lambda$. The local functional equation can then be deduced from the uniqueness of such families, using the Fourier transform. In a broad sense, the $\Gm$-equivariant affine embedding $\Gm \hookrightarrow \Ga$ is the geometric backdrop of Tate's thesis. Convention: the groups always act on the right of the varieties.

Tate's approach is successfully generalized to the standard $L$-functions of $\GL(n)$ by Godement and Jacquet \cite{GJ72} for any $n \geq 1$; they also considered the inner forms of $\GL(n)$. The affine embedding $\Gm \hookrightarrow \Ga$ is now replaced by $\GL(n) \hookrightarrow \text{Mat}_{n \times n}$, which is $\GL(n) \times \GL(n)$-equivariant under the right action $x(g_1, g_2) = g_2^{-1} x g_1$. To simplify matters, let us consider only the local setting and drop the index $v$. Denote now by $\Schw$ the Schwartz--Bruhat space of the affine $n^2$-space $\text{Mat}_{n \times n}(F)$, where $F$ stands for a local field. Given an irreducible smooth representation $\pi$ of $\GL(n, F)$, the Godement--Jacquet zeta integrals are
\begin{equation}\label{eqn:GJ-first}\begin{gathered}
	Z^\text{GJ}(\lambda, v \otimes \check{v}, \xi) := \int_{\GL(n,F)} \angles{\check{v}, \pi(x) v} |\det x|^{\lambda - \demi + \frac{n}{2}} \xi(x) \dd^\times x, \\
	v \otimes \check{v} \in V_\pi \otimes V_{\check{\pi}}, \; \xi \in \Schw
\end{gathered}\end{equation}
where $\check{\pi}$ still denotes the contragredient representation of $\pi$, the underlying vector spaces being denoted by $V_\pi$, $V_{\check{\pi}}$, etc. Note that we adopt the Casselman--Wallach representations, or SAF representations (abbreviation for smooth admissible Fréchet, see \cite{BK14}) in the Archimedean case. The integral is convergent for $\Re(\lambda) \gg 0$, with meromorphic/rational continuation to all $\lambda \in \CC$. The shift in the exponent of $|\det x|$ ensures that taking greatest common divisor yields the standard $L$-factor $L(\lambda, \pi)$ in the unramified case. Furthermore, we still have the local functional equation
\[ Z^\text{GJ}_v(1-\lambda, \check{v} \otimes v, \mathcal{F}(\xi)) = \gamma^\text{GJ}(\lambda, \pi) Z^\text{GJ}(\lambda, v \otimes \check{v}, \xi), \]
where $\mathcal{F} = \mathcal{F}_\psi: \Schw \rightiso \Schw$ is the Fourier transform. Note that Godement and Jacquet did not explicitly use Weil's idea of characterizing equivariant distributions in families.

Let us take a closer look at the local integral \eqref{eqn:GJ-first} and its functional equation. The shift by $-\demi$ in the exponent of $|\det x|$ is harmless: if we shift the zeta integrals to $\lambda \mapsto Z^\text{GJ}(\lambda + \demi, \cdots)$, the local functional equation will interchange $\lambda$ with $-\lambda$ and the greatest common divisor will point to $L(\demi+\lambda, \pi)$; the evaluation at $\lambda=0$ then yields the central $L$-value, which is more natural. On the other hand, the shift by $\frac{n}{2}$ remains a mystery: the cancellation $-\demi + \demi = 0$ in the case $n=1$ turns out to be just a happy coincidence.

What is the \textit{raison d'être} of $n/2$ here? Later we shall see in the discussion on $L^2$-theory that this amateurish question does admit a sensible answer.

\subsection*{The doubling method}
In \cite{GPSR87,PSR86}, Piatetski-Shapiro and Rallis proposed a generalization of Godement--Jacquet integrals for classical groups, called the \emph{doubling method}. For ease of notations, here we consider only the case $G = \Sp(V)$ over a local field $F$, where $V$ is a symplectic $F$-vector space. Doubling means that we take $V^\Box := V \oplus (-V)$ where $-V$ stands for the $F$-vector space $V$ carrying the symplectic form $-\angles{\cdot|\cdot}_V$, so that $G \times G$ embeds naturally into $G^\Box := \Sp(V^\Box)$. The diagonal image of $V$ is a Lagrangian subspace of $V^\Box$, giving rise to a canonical Siegel parabolic $P \subset G^\Box$. It is shown in \cite{GPSR87} that $P \cap (G \times G) = \text{diag}(G)$ and $G$ embeds equivariantly into $P \backslash G^\Box$ as the unique open $G \times G$-orbit.

There is a natural isomorphism $\sigma: P_\text{ab} \rightiso \Gm$. Given a continuous character $\chi: P_\text{ab}(F) \to \CC^\times$, one can form the normalized parabolic induction $I^{G^\Box}_P(\chi \otimes |\sigma|^\lambda)$ for $\lambda \in \CC$. The doubling zeta integral of Piatetski-Shapiro and Rallis takes the form
\[ Z_\lambda(\lambda, s, v \otimes \check{v}) = \int_{G(F)} s_\lambda(x) \angles{\check{v}, \pi(x)v} \dd x, \quad \Re(\lambda) \gg 0 \]
where $\pi$ is an irreducible smooth representation of $G(F)$, $v \otimes \check{v} \in \pi \otimes \check{\pi}$ and $s_\lambda$ is a \emph{good section} in $I^{G^\Box}_P(\chi \otimes |\sigma|^\lambda)$ varying meromorphically in $\lambda$. In \textit{loc.\ cit.}, unramified computations show that these zeta integrals point to $L(\demi + \lambda, \chi \times \pi)$. Furthermore, one still obtains a parametrized family of integrals that admits meromorphic/rational continuation and functional equations. These properties are deduced from the corresponding properties of suitably normalized intertwining operators for $I^{G^\Box}_P(\chi \otimes |\sigma|^\lambda)$.

It is explained in \cite{GPSR87} that the doubling method actually subsumes the Godement--Jacquet integral as a special case. In comparison with the prior constructions, here the compactification $G \hookrightarrow P \backslash G^\Box$ replaces the affine embedding $\GL(n) \hookrightarrow \text{Mat}_{n \times n}$, and the good sections $s_\lambda \in I^{G^\Box}_P(\chi \otimes |\sigma|^\lambda)$ replace Schwartz--Bruhat functions. The occurrence of matrix coefficients in both cases can be explained by the presence of the homogeneous $G \times G$-space $G$; this will be made clear later on.

On the contrary, Braverman and Kazhdan \cite{BK02} reinterpreted the doubling method à la Godement--Jacquet, namely by allowing $\chi$ to vary and by replacing the good sections $s_\lambda$ by functions from some Schwartz space $\Schw(X_P)$, where $X_P := P_\text{der} \backslash G^\Box$ is a quasi-affine homogeneous $G^\Box$-space; elements from $\Schw(X_P)$ may be viewed as universal good sections. The embedding in question becomes $P_\text{ab} \times G \hookrightarrow X_P$ which is $P_\text{ab} \times G \times G$-equivariant. This turns out to be closely related to the theory of $L$-monoids to be discussed below. See also \cite{GL17}.

\subsection*{Braverman--Kazhdan: $L$-monoids}
In \cite{BK00}, Braverman and Kazhdan proposed a bold generalization of Godement--Jacquet theory that should yield more general $L$-functions. To simplify matters, we work with a local field $F$ of characteristic zero and a split connected reductive $F$-group $G$; the $L$-factor in question is $L(\lambda, \pi, \rho)$, where $\pi$ is an irreducible smooth representation of $G(F)$ and $\rho: \hat{G} \to \GL(N,\CC)$ is a homomorphism of $\CC$-groups. The basic idea is to generalize $(\GL(n) \hookrightarrow \text{Mat}_{n \times n}, \det)$ into $(G \hookrightarrow X_\rho, \det_\rho)$, where
\begin{compactitem}
	\item $\det_\rho: G \to \Gm$ is a surjective homomorphism, dual to an inclusion $\CC^\times \hookrightarrow \hat{G}$, and we assume that $\rho(z) = z\cdot\identity$ for each $z \in \CC^\times$;
	\item $X_\rho$ is a \emph{normal reductive algebraic monoid} with unit group $G$, constructed in a canonical fashion from $\rho$, so that $G \hookrightarrow X_\rho$ is a $G \times G$-equivariant open immersion;
	\item furthermore, we assume that $\det_\rho$ extends to $X_\rho \to \Ga$ such that $\det_\rho^{-1}(0)$ equals $\partial X_\rho := X_\rho \smallsetminus G$ (set-theoretically at least).
\end{compactitem}
Given these data, one forms the zeta integral $\int_{G(F)} \xi(x) \angles{\check{v}, \pi(x)v} |\det_\rho(x)|^\lambda \dd x$ as before, except that $\xi$ is now taken from a conjectural Schwartz space $\Schw$ attached to $(G, \rho)$. Expectations include
\begin{compactenum}[(i)]
	\item $C^\infty_c(G(F)) \subset \Schw \subset C^\infty(G(F))$ is $G \times G(F)$-stable;
	\item these zeta integrals should converge for $\Re(\lambda) \gg 0$ and admit meromorphic/rational continuations;
	\item in the unramified situation, there should be a distinguished element $\xi^\circ \in \Schw$, called the basic function for $X_\rho$, whose zeta integral yields $L(\lambda, \pi, \rho)$ (or with some shift in $\lambda$, cf.\ the previous discussions);
	\item upon fixing an additive character $\psi$, there should exist a Fourier transform $\mathcal{F}_\rho: \Schw \to \Schw$, which fits into the local functional equation as in the Godement--Jacquet case, and preserves the basic function.
\end{compactenum}
By \cite{BK00}, prescribing $\mathcal{F}_\rho$ is equivalent to prescribing the $\gamma$-factor $\gamma(\lambda, \pi, \rho)$ that figures in the conjectural local functional equation. Furthermore, in the global setting one may form the adélic Schwartz space as a restricted $\otimes$-product $\Schw = \Resotimes_v \Schw_v$ using the basic functions, and one expects a ``Poisson formula'' for $\mathcal{F}_\rho$, at least when $\xi_v \in C^\infty_c(G(F_v))$ for some $v$.

The idea of embedding $G$ into a reductive monoid to produce $L$-factors is then refined by L.\ Lafforgue \cite{Laf14} and B.\ C.\ Ngô \cite{Ng14}, leading up to the concept of $L$-monoids. The precise definition of $\Schw$ is expected to come from the geometry of $X_\rho$. This is illustrated by the following idea of Ngô: in the unramified setting, the values of the basic function $\xi_v^\circ$ is expected to come from the traces of Frobenius of some version of IC-complex on the formal arc space of $X_\rho$, upon passing to the equal-characteristic setting. The main difficulty turns out to be the singularity of $X_\rho$. We refer to \cite{Ng14,BNS16} for further discussions and developments in this direction.

In \cite{Laf14}, L.\ Lafforgue is able to define $\Schw$ using the Plancherel formula for $G$, whose elements are called functions of $L$-type in \cite[Définition II.15]{Laf14}, and the Fourier transform $\mathcal{F}_\rho$ also admits a natural definition in spectral terms. The crucial premise here is to have the correct local factors $L(\lambda, \pi, \rho)$, $\varepsilon(\lambda, \pi, \rho)$ for $\pi$ in the tempered dual of $G(F)$.

\subsection*{Integral representations and spherical varieties}
The group $G$ endowed with $G \times G$-action is just an instance of homogeneous spaces, called the ``group case''. Many basic problems in harmonic analysis, such as the $L^2$-spectral decomposition, can be formulated on homogeneous spaces, and a generally accepted class of spaces is the \emph{spherical homogeneous spaces}. Indeed, this can be justified from considerations of finiteness of multiplicities \cite{KO13} and the work of Sakellaridis--Venkatesh \cite{SV17}. Assume $G$ split. A $G$-variety $X$ is called spherical if it is normal with an open dense Borel orbit.

In \cite{Sak12}, Sakellaridis proposed to understand many global zeta integrals in terms spherical homogeneous $G$-spaces $X^+$ and their affine equivariant embeddings $X^+ \hookrightarrow X$. As in \cite{BK00,BK02}, his construction relies on a conjectural Schwartz space $\Schw = \Resotimes_v \Schw_v$, whose structure is supposed to be dictated by the geometry of $X$. Specifically, the restricted $\otimes$-product is taken with respect to \emph{basic functions} $\xi_v^\circ \in \Schw_v$ at each unramified place $v$. Notice that these functions are rarely compactly supported and they should be related to $L$-factors (cf.\ \cite{Sak18}). His construction subsumes many known global zeta integrals by using the setting of ``preflag bundles''.

As in the predecessors \cite{BK99,BK02}, the Schwartz spaces and basic functions here are partly motivated by the geometric Langlands program. For example, the basic functions in \textit{loc.\ cit.} are closely related to the IC-sheaves of Drinfeld's compactification $\overline{\text{Bun}_P}$ of the moduli stack $\text{Bun}_P$ over a smooth projective curve over some $\F_q$, where $P \subset G$ is a parabolic; it arises in the study of geometric Eisenstein series and provides a global model of the singularities of local spaces in question. 

\subsection*{Other integrals}
There are other sorts of zeta integrals that emerged in the pre-Langlands era, many of them have direct arithmetic significance. We take our lead from the local Igusa zeta integrals: let $F$ be a local field of characteristic zero, $X$ be an affine space over $F$ with a prescribed additive Haar measure on $X(F)$, and $f \in F[X]$. Pursuing the ideas of Gelfand \textit{et al.}, Igusa considered
\[ \int_{X(F)} \xi(x)|f(x)|^\lambda \dd x, \quad \xi \in \Schw(X(F)) \]
where $\Schw(X(F))$ stands for the Schwartz--Bruhat space of $X(F)$, and the integral converges whenever $\Re(\lambda) > 0$. It turns out that this extends to a meromorphic/rational family in $\lambda$ of tempered distributions, also known as \emph{complex powers} of $f$ as it interprets $|f|^\lambda$ as a tempered distribution for all $\lambda$ away from the poles. An extensive survey can be found in \cite{Ig00}.

To bring harmonic analysis into the picture, we switch to the case of a reductive \emph{prehomogeneous vector space} $(G, \rho, X)$, where $G$ is a split connected reductive group, $\rho: G \to \GL(X)$ is an algebraic representation --- thus $G$ acts on the right of $X$ --- and we assume there exists an open $G$-orbit $X^+$ in $X$. To simplify matters, let us suppose temporarily that $\partial X := X \smallsetminus X^+$ is the zero locus of some irreducible $f \in F[X]$, then such $f$ is unique up to $F^\times$ and defines a relative invariant, i.e.\ $f(xg)=\omega(g)f(x)$ for an eigencharacter $\omega \in X^*(G) := \Hom(G, \Gm)$. These prehomogeneous zeta integrals along with their global avatars have been studied intensively by M.\ Sato, T.\ Shintani and their school; we refer to \cite{Sa89,Ki03} for detailed surveys.

In comparison with the local Godement--Jacquet integrals, corresponding to the case $G := \GL(n) \times \GL(n)$, $X := \text{Mat}_{n \times n}$ and $f := \det$, a natural generalization is to replace Igusa's integrand $|f|^\lambda \xi$ by $\varphi(v)|f|^\lambda \xi$, where $\varphi \in \mathcal{N}_\pi := \Hom_{G(F)}(\pi, C^\infty(X^+(F)))$ and $v \in \pi$, where $\pi$ is an irreducible smooth representation of $G(F)$; the function $\varphi(v)$ is called a (generalized) coefficient of $\pi$ in $X^+$. In the Godement--Jacquet case we have $\dim \mathcal{N}_\pi \leq 1$, with equality if and only if $\pi \simeq \sigma \boxtimes \check{\sigma}$ for some $\sigma$, in which case $\mathcal{N}_\pi$ is generated by the matrix coefficient $v \otimes \check{v} \mapsto \angles{\check{v}, \sigma(\cdot)v}$. This approach is pioneered by Bopp and Rubenthaler \cite{BR05} in a classical setting. The upshot here is to have a good understanding of the harmonic analysis on $X^+(F)$, including the asymptotics for the coefficients $\varphi(v)$.

\section{Our formalism}
\subsection*{More general zeta integrals}
Over a local field $F$ of characteristic zero, we synthesize the previous constructions by considering
\begin{compactitem}
	\item $G$: a split connected reductive $F$-group,
	\item $X^+$: an affine spherical homogeneous $G$-space, and
	\item $X^+ \hookrightarrow X$: an equivariant open immersion into an affine spherical $G$-variety $X$.
\end{compactitem}

As in the prehomogeneous zeta integrals, we wish to be able to ``twist'' the functions on $X^+(F)$ by complex powers of some relative invariants. To this end we introduce Axiom \ref{axiom:geometric}. Roughly speaking, its content is:
\begin{enumerate}[(A)]
	\item the $G$-eigenfunctions in $F(X) = F(X^+)$ span a lattice $\Lambda$, and the eigenfunctions in $F[X]$ generate a monoid $\Lambda_X$ in $\Lambda$, we require that $\Lambda_{X,\Q} := \Q_{\geq 0}\Lambda_X$ is a simplicial cone in $\Lambda_{\Q} := \Q \Lambda$;
	\item take minimal integral generators $\omega_1, \ldots, \omega_r$ of $\Lambda_{X,\Q}$ with eigenfunctions (i.e.\ relative invariants) $f_1, \ldots, f_r$ (unique up to $F^\times$), we require that their zero loci cover $\partial X$.
\end{enumerate}
We may define
\begin{compactitem}
	\item the continuous character $|\omega|^\lambda = \prod_{i=1}^r |\omega_i|^{\lambda_i}$ of $G(F)$, and
	\item the $|\omega|^\lambda$-eigenfunction $|f|^\lambda = \prod_{i=1}^r |f_i|^{\lambda_i}$ on $X^+(F)$
\end{compactitem}
for $\lambda = \sum_{i=1}^r \lambda_i \omega_i \in \Lambda_{\CC} := \Lambda_{\Q} \otimes \CC$. We have $r=1$ in most classical cases.

Let us turn to harmonic analysis. The guiding principle here is to set the $L^2$-theory as our background. Specifically, we consider functions on $X^+(F)$ with values in a $G(F)$-equivariant vector bundle $\mathscr{E}$ endowed with a $G(F)$-equivariant, positive definite hermitian pairing
\[ \mathscr{E} \otimes \overline{\mathscr{E}} \to \mathscr{L} \]
where $\mathscr{L}$ stands for the density bundle on $X^+(F)$. Sections of $\mathscr{L}$, called the ``densities'' on $X^+(F)$, are functions that carry measures: their integrals are canonically defined provided that a Haar measure on $F$ is chosen, say by fixing a non-trivial unitary character $\psi$ of $F$. Densities can be expressed locally as $\phi |\omega|$ where $\phi$ is a $C^\infty$-function and $\omega$ is an algebraic volume form. The typical choice of $\mathscr{E}$ is the bundle of \emph{half-densities} $\mathscr{L}^\demi$, whose local sections take the form $\phi |\omega|^\demi$. Indeed, we have $\mathscr{L}^\demi = \overline{\mathscr{L}^\demi}$ together with an isomorphism $\mathscr{L}^\demi \otimes \mathscr{L}^\demi \rightiso \mathscr{L}$, which we shall denote simply by multiplication. Note that $\mathscr{L}$ and $\mathscr{L}^\demi$ are often equivariantly trivializable, but we shall refrain from doing so; the benefit will be seen shortly.

Define
\begin{align*}
	C^\infty(X^+) & := C^\infty(X^+(F), \overline{\mathscr{E}}) \simeq C^\infty(X^+(F), \mathscr{E}^\vee \otimes \mathscr{L}), \\
	C^\infty_c(X^+) & := C^\infty_c(X^+(F), \mathscr{E}), \\
	L^2(X^+) & := L^2(X^+(F), \mathscr{E}).
\end{align*}
The formalism of densities implies that $G(F)$ acts canonically on all these spaces, and $L^2(X^+)$ becomes a unitary representations of $G(F)$. Equipping $C^\infty(X^+)$ and $C^\infty_c(X^+)$ with natural topologies, they become smooth continuous $G(F)$-representations. All topological vector spaces in this work are locally convex by assumption.

Given an irreducible smooth representation $\pi$ of $G(F)$, taken to be an SAF representation for Archimedean $F$, we define $\mathcal{N}_\pi := \Hom_{G(F)}(\pi, C^\infty(X^+))$ (the continuous Hom). To achieve this, $\pi$ has to be viewed as a continuous representation for all $F$. This is unavoidable in the Archimedean case, whereas in the non-Archimedean case it can be naturally done by introducing the notion of algebraic topological vector spaces (Definition \ref{def:algebraic-tvs}).

At this stage, we incorporate the basic Axiom \ref{axiom:finiteness} that asserts
\[ \dim \mathcal{N}_\pi < +\infty \quad \text{for all}\; \pi, \]
which has been established in many cases. Note that the representations with nonzero $\mathcal{N}_\pi$ are the basic objects in the harmonic analysis over $X^+(F)$. Write $\pi_\lambda := \pi \otimes |\omega|^\lambda$ for $\lambda \in \Lambda_{\CC}$. Upon fixing a choice of $f_1, \ldots, f_r$, elements of $\mathcal{N}_\pi$ may be twisted by $\Lambda_{\CC}$ by
\begin{align*}
	\mathcal{N}_\pi & \stackrel{\sim}{\longrightarrow} \mathcal{N}_{\pi_\lambda} \\
	\varphi & \longmapsto \varphi_\lambda := |f|^\lambda \varphi.
\end{align*}

The Schwartz space $\Schw$ is taken to be a subspace of $C^\infty(X^+(F), \mathscr{E}) \cap L^2(X^+)$ in which $C^\infty_c(X^+)$ embeds continuously; moreover $\Schw$ should be a smooth continuous $G(F)$-representation. The desired zeta integrals take the form
\[ Z_{\lambda,\varphi}(v \otimes \xi) = \int_{X^+(F)} \underbracket{\varphi_\lambda(v) \xi}_{\in \mathscr{L}}, \quad v \in \pi, \; \xi \in \Schw, \; \varphi \in \mathcal{N}_\pi, \]
which is required to converge for $\lambda \in \Lambda_{\CC}$, $\Re(\lambda) \relgg{X} 0$, the latter notation meaning that $\lambda = \sum_{i=1}^r \lambda_i \omega_i$ with $\Re(\lambda_i) \gg 0$ for all $i$. The analogy with the local zeta integrals reviewed before should be evident.

The precise requirements on $\Schw$ and $Z_\lambda$ are collected in Axiom \ref{axiom:zeta}. Below is a digest.
\begin{itemize}
	\item The topological vector space $\Schw$ is required to be \emph{separable, nuclear and barreled}. We will see the use of nuclearity when discussing the $L^2$-theory; being barreled implies that $\mathcal{S}^\vee$ is quasi-complete with respect to the topology of pointwise convergence, which is used to address continuity after meromorphic continuation.
	\item Set $\Schw_\lambda := |f|^\lambda \Schw$, endowed with the topology from $\Schw$. We assume that when $\Re(\lambda) \relgeq{X} 0$, we still have the inclusion $\Schw_\lambda \hookrightarrow L^2(X^+)$ that is continuous with dense image. Furthermore, the family of embeddings $\alpha_\lambda: \Schw \hookrightarrow L^2(X^+)$, given by $\alpha_\lambda(\xi) = |f|^\lambda \xi$ for $\Re(\lambda) \relgeq{X} 0$, is required to be holomorphic in a weak sense. Again, these properties are motivated by $L^2$ considerations.
	\item In the Archimedean case, functions from $\Schw$ should have rapid decay on $X(F)$ upon twisting by some $|f|^{\lambda_0}$; here the rapid decay is defined in $L^2$ terms, and we defer the precise yet tentative definition to \S\ref{sec:auxiliary}. In the non-Archimedean case, we assume that the closure of $\Supp(\xi)$ in $X(F)$ is compact for each $\xi \in \Schw$.
	\item The zeta integrals $Z_{\lambda, \varphi}$ above are assumed to be convergent and separately continuous for $\Re(\lambda) \relgg{X} 0$, and admit meromorphic (rational for non-Archimedean $F$) continuation to all $\lambda \in \Lambda_{\CC}$.
\end{itemize}
An often overlooked point in the literature is that the meromorphically continued zeta integrals are still separately continuous off the poles, by a principle from Gelfand and Shilov \cite[Chapter I, A.2.3]{GS1} relying on the quasi-completeness of $\Schw^\vee$. Available tools from functional analysis \cite[III]{BoEVT} imply that the $G(F)$-invariant bilinear form $Z_{\varphi,\lambda}: \pi_\lambda \otimes \Schw \to \CC$ is hypocontinuous off the poles, which essentially means that it induces a continuous intertwining operator $\pi_\lambda \to \Schw^\vee$, where $\Schw^\vee$ carries the strong topology. Weil's vision of zeta distributions \cite{Weil66} is therefore revived.

\subsection*{Local functional equation}
Let $\mathcal{T}$ denote the space of characters $|\omega|^\lambda$, $\lambda \in \Lambda_{\CC}$. Define $\mathcal{O}$ to be the space of holomorphic (regular algebraic when $F$ is non-Archimedean) functions on $\mathcal{T}$ and put $\mathcal{K} := \text{Frac}(\mathcal{O})$. Let $\pi$ be an irreducible smooth representation of $G(F)$. Granting Axiom \ref{axiom:zeta}, the zeta integrals induce a $\mathcal{K}$-linear injection
\[ T_\pi: \mathcal{N}_\pi \otimes \mathcal{K} \hookrightarrow \mathcal{L}_\pi \]
where $\mathcal{L}_\pi$ stands for the $\mathcal{K}$-vector space of ``reasonable'' meromorphic families of invariant pairings $\pi_\lambda \otimes \Schw \to \CC$ parametrized by $\lambda \in \Lambda_{\CC}$. Now consider two affine spherical embeddings $X^+_i \hookrightarrow X_i$ under $G$, with lattices of eigencharacters $\Lambda_i$ ($i=1,2$), each satisfying the aforementioned conditions with given Schwartz spaces $\Schw_i$. The motto here is:
\begin{center}
	Intertwining operator $\mathcal{F}: \Schw_2 \to \Schw_1$ \quad $\dashrightarrow$ \quad local functional equation.
\end{center}
In this work we call such an $\mathcal{F}$ a \emph{model transition}, a term borrowed from Lapid and Mao \cite{LM15}. For simplicity, let us assume $\Lambda_1 = \Lambda_2$ so that $X_1^+$, $X_2^+$ share the same objects $\mathcal{O}$ and $\mathcal{K}$. Pulling back by $\mathcal{F}$ gives rise to a natural isomorphism $\mathcal{F}^\vee: \mathcal{L}^{(1)}_\pi \rightiso \mathcal{L}^{(2)}_\pi$ for every $\pi$. Definition \ref{def:lfe} says that the local functional equation attached to $\pi$ and $\mathcal{F}$, if it exists, is the unique $\mathcal{K}$-linear map $\gamma(\pi)$ that renders the following diagram commutative.
\[\begin{tikzcd}
	\mathcal{L}^{(1)}_\pi \arrow{r}{\mathcal{F}^\vee} & \mathcal{L}^{(2)}_\pi \\
	\mathcal{N}^{(1)}_\pi \otimes \mathcal{K} \arrow[hookrightarrow]{u} \arrow{r}[below]{\gamma(\pi)} & \mathcal{N}^{(2)}_\pi \otimes \mathcal{K} \arrow[hookrightarrow]{u}
\end{tikzcd}\]

This formalism is compatible with the local functional equations in the cases of Godement--Jacquet, Braverman--Kazhdan, etc., where the spaces $\mathcal{N}_\pi$ are at most one-dimensional and we retrieve the usual $\gamma$-factors. For prehomogeneous vector spaces, we recover the $\gamma$-matrices when $\pi$ is the trivial representation; see for example \cite{Sa89}.

The existence of $\gamma(\pi)$ is by no means automatic. However, it can be partly motivated by the $L^2$-theory if we assume that $\mathcal{F}$ extends to an isometry $L^2(X_2^+) \rightiso L^2(X_1^+)$; see below.

\subsection*{\texorpdfstring{$L^2$}{L2}-theory}
The decomposition of the unitary $G(F)$-representation $L^2(X^+)$ is a classical concern of harmonic analysis. The abstract Plancherel decomposition gives an isometry
\begin{gather}\label{eqn:Plancherel-first}
	L^2(X^+) \simeq \int^\oplus_{\Pi_\text{unit}(G(F))} \mathcal{H}_\tau \dd\mu(\tau)
\end{gather}
where $\Pi_\text{unit}(G(F))$ stands for the unitary dual, $\mathcal{H}_\tau$ is a $\tau$-isotypic unitary representation on a separable Hilbert space, and $\mu$ is the corresponding Plancherel measure on $\Pi_\text{unit}(G(F))$. The abstract theory says little beyond the existence and essential uniqueness of such a decomposition. The refinements hinge on using finer spaces of test functions on $X^+(F)$.

According to an idea of Gelfand and Kostyuchenko \cite[Chapter I]{GV4} (see also \cite{Ma68}), later rescued from oblivion by Bernstein \cite{Be88}, the scenario here is to seek a separable topological vector space $\Schw$ with a continuous embedding $\alpha: \Schw \hookrightarrow L^2(X^+)$ of dense image, such that there exists a family of intertwining operators $\alpha_\tau: \Schw \to \mathcal{H}_\tau$ such that $\alpha = \int^\oplus \alpha_\tau \dd\mu(\tau)$; in this case we say $\alpha$ is pointwise defined. As a motivating example, recall the classical study of $L^2(\R)$ under $\R$-translations. For the unitary character $\tau(x) = e^{2\pi i ax}$ of $\R$, the required map $\alpha_\tau$ amounts to the usual Fourier transform evaluated at $a$, well-defined only for functions of rapid decay on $\R$. This is the starting point of classical Fourier analysis.

Coming back to the homogeneous $G$-space $X^+$ satisfying the previous axioms, the minimalist choice of $\Schw$ is $C^\infty_c(X^+)$ or the Schwartz space à la Harish-Chandra $\mathscr{C}(X^+)$ \cite[3.5 Definition]{Be88}; the fact that $\alpha$ is pointwise defined is established in \textit{loc.\ cit.} Loosely speaking, given \eqref{eqn:Plancherel-first}, specifying $\Schw \hookrightarrow L^2(X^+)$ together with $\left( \alpha_\tau: \Schw \to \mathcal{H}_\tau \right)_{\tau \in \Pi_\text{unit}(G(F))}$ amounts to a \emph{refined Plancherel decomposition}. Harish-Chandra's theory is a highly successful instance for the group case, so are Sakellaridis--Venkatesh \cite{SV17} for wavefront spherical homogeneous spaces over non-Archimedean $F$ and the works by van den Ban, Schlichtkrull, Delorme \textit{et al.} for real symmetric spaces, just to mention a few.

Write $\mathcal{H}_\tau = \tau \hat{\otimes} \mathcal{M}_\tau$ where $\mathcal{M}_\tau$ is equipped with the trivial $G(F)$-representation. Following \cite{Be88}, we will explain in \S\ref{sec:GK-method} that the dual Hilbert space $\mathcal{M}_\tau^\vee$ injects into $\Hom_{G(F)}(\Schw, \tau)$, which is in turn embedded into $\Hom_{G(F)}(\pi, \Schw^\vee)$ by taking adjoint, where $\pi := \bar{\tau}^\infty$ denotes the smooth part of the complex-conjugate $\bar{\tau}$. Furthermore, it is shown in \cite{Be88} that
\[ \Hom_{G(F)}(C^\infty_c(X^+), \tau) \simeq \Hom_{G(F)}(\pi, C^\infty(X^+)) = \mathcal{N}_\pi. \]
Therefore the $L^2$ theory is connected to the ``smooth theory'', i.e.\ the study of $\mathcal{N}_\pi$, the coefficients, etc.

Our proposal is to understand the Schwartz space $\Schw$ in zeta integrals from the same angle. The first task is to show $\alpha: \Schw \to L^2(X^+)$ is pointwise defined. The Gelfand--Kostyuchenko method asserts that it suffices to assume $\Schw$ nuclear. This result was stated in \cite[Chapter I, 4]{GV4} and \cite[Chapter II, \S 1]{Ma68} in slightly different terms; we will give a quick proof in \S\ref{sec:GK-method} based on \cite{Be88}. The advantage of working with general $\Schw$ is as follows.
\begin{enumerate}
	\item Consider two homogeneous spaces $X_1^+$, $X_2^+$ together with an isomorphism $\mathcal{F}: L^2(X_2^+) \rightiso L^2(X_1^+)$ of unitary $G(F)$-representations. One infers that the same Plancherel measure may be used for both sides. On the other hand, the comparison of refined Plancherel decompositions depends crucially on the choice of test functions.
	\item It is known that $\mathcal{F}$ disintegrates into isometries $\eta(\tau): \mathcal{M}^{(2)}_\tau \rightiso \mathcal{M}^{(1)}_\tau$ between multiplicity spaces. If $\mathcal{F}$ restricts to a continuous isomorphism $\Schw_2 \rightiso \Schw_1$ between given Schwartz spaces, then $\eta(\tau)^\vee$ is restricted from $\mathcal{F}^*: \Hom_{G(F)}(\Schw_1, \tau) \rightiso \Hom_{G(F)}(\Schw_2, \tau)$. For studying $\eta(\tau)$ (thus $\mathcal{F}$), the main hurdle is to understand the spaces $\Hom_{G(F)}(\Schw_i, \tau) \subset \Hom_{G(F)}(\pi, \Schw_i^\vee)$ together with the image of $\mathcal{M}_\tau^{(i),\vee}$ therein.
	\item The spaces $\mathcal{N}^{(1)}_\pi$ and $\mathcal{N}^{(2)}_\pi$ are relatively easier to cope with, but $\mathcal{F}$ seldom preserves $C^\infty_c$. Omitting indices, the problem may thus be reformulated as follows: how to pass from $\mathcal{N}_\pi$ to $\Hom_{G(F)}(\pi, \Schw^\vee)$ in a manner that respects the embeddings of $\mathcal{M}_\tau^\vee$? The situation is complicated by the non-injectivity of $\Schw^\vee \to C^\infty_c(X^+)^\vee \supset C^\infty(X^+)$, since $C^\infty_c(X^+)$ is rarely dense in $\Schw$.
	\item Our tentative answer is to relate them via meromorphic continuation, more precisely, by zeta integrals. We contend that the restriction of $\gamma(\pi): \mathcal{N}^{(1)}_\pi \otimes \mathcal{K} \to \mathcal{N}^{(2)}_\pi \otimes \mathcal{K}$ to the images of $\mathcal{M}^{(i),\vee}_\tau$ can be evaluated at $\lambda=0$, for $\mu$-almost all $\tau$, and it gives $\eta(\tau)^\vee$.
\end{enumerate}

The heuristics here is that upon twisting a coefficient $\varphi(v)$ by $|f|^\lambda$, where $\varphi \in \mathcal{N}_\pi$, $v \in \pi$ and $\Re(\lambda) \relgg{X} 0$, it will decrease rapidly towards $\partial X$, thus behaves like an element of $\Schw^\vee$. We are still unable to obtain complete results in this direction, but we can partially justify some special cases, including the non-Archimedean Godement--Jacquet case, under
\begin{itemize}
	\item a condition \eqref{eqn:generic-inj} of generic injectivity of the restriction map
		\[ \Hom_{G(F)}(\pi_\lambda, \Schw^\vee) \to \Hom_{G(F)}(\pi_\lambda, C^\infty_c(X^+)^\vee); \]
	\item a technical condition \eqref{eqn:L2-holomorphy} which may be regarded as a parametrized Gelfand--Kostyuchenko method; unfortunately, the author is not yet able to establish this property.
\end{itemize}

Recall that in the case of $L$-monoids (Braverman--Kazhdan), Lafforgue's work \cite{Laf14} turns the picture over by taking the local factors and Harish-Chandra's Plancherel formula as inputs, and produces the Schwartz functions and the Fourier transform in spectral terms. On the contrary, our approach starts from a function space $\Schw$ and outputs a refined spectral decomposition for $L^2$. These two approaches should be seen as complementary.

To conclude the discussion, let us revisit the local Godement--Jacquet integral \eqref{eqn:GJ-first}. Fix an additively invariant volume form $\eta \neq 0$ on $\text{Mat}_{n \times n}$ so that $|\Omega| := |\det|^{-n} |\eta|$ gives a Haar measure on $\GL(n,F)$. Let $\pi$ be an irreducible smooth representation of $\GL(n,F)$. According to the formalism above with $\mathscr{E} = \mathscr{L}^\demi$, one should consider the integration of the following smooth sections of $\mathscr{L}^\demi$, multiplied by $|\det|^\lambda$:
\begin{compactitem}
	\item coefficients $\varphi(v \otimes \check{v}) = \angles{\check{v}, \pi(\cdot)v} |\Omega|^\demi$ where $v \otimes \check{v} \in \pi \boxtimes \check{\pi}$, against
	\item a Schwartz--Bruhat half-density $\xi = \xi_0 |\eta|^\demi$ on $\text{Mat}_{n \times n}(F)$, where $\xi_0$ is a Schwartz--Bruhat function on $\text{Mat}_{n \times n}(F)$. This is a reasonable candidate of $\mathscr{L}^\demi$-valued Schwartz space on $\text{Mat}_{n \times n}(F)$ since the bundle $\mathscr{L}^\demi$ extends naturally to $\text{Mat}_{n \times n}(F)$, and it is contained in $L^2(\GL(n)) = L^2(\text{Mat}_{n \times n})$. When $F$ is Archimedean, this coincides with the Schwartz space $\Schw(\text{Mat}_{n \times n}(F), \mathscr{L}^\demi)$ of \cite[Definition 5.1.2]{AG08}.
\end{compactitem}
Since $|\eta|^\demi = |\det|^{\frac{n}{2}} |\Omega|^\demi$, the zeta integral $Z_{\lambda, \varphi}((v \otimes \check{v}) \otimes \xi)$ equals
\[ \int_{\GL(n,F)} \angles{\check{v}, \pi(\cdot)v} \xi_0 |\det|^{\lambda + \frac{n}{2}}  |\Omega| = Z^\text{GJ}\left( \lambda + \demi, v \otimes \check{v}, \xi \right). \]
This explains the shift by $\frac{n}{2}$ in \eqref{eqn:GJ-first}. Another bonus is that the Schwartz--Bruhat half-densities behave much better under Fourier transform, see \S\ref{sec:Fourier}.

\subsection*{On the global case}
Our brief, speculative treatment of the global setting in \S\ref{sec:global} is largely based on \cite[\S 3]{Sak12}; however, extra care is needed to deal with functions with values in a bundle. Let $F$ be a number field and $\A := \A_F$, consider a split connected reductive $F$-group $G$ and an affine spherical embedding $X^+ \hookrightarrow X$ of $G$-varieties over $F$, satisfying Axiom \ref{axiom:geometric} with everything defined over $F$. Furthermore, assume that the lattice $\Lambda$ and the monoid $\Lambda_X$ coincide with their local counterparts at each place $v$ of $F$. Choose relative invariants $f_i$ over $F$ with eigencharacters $\omega_i$ as in the local case, so that $|f|^\lambda$, $|\omega|^\lambda$ make adélic sense.

Suppose that the Schwartz spaces $\Schw_v \subset L^2(X^+_v)$ are given at each place $v$, say with values in a $G(F_v)$-equivariant vector bundle $\mathscr{E}_v$. We need two ingredients.
\begin{enumerate}
	\item The choice of \emph{basic vectors} (Definition \ref{def:basic-vector}, called basic functions in \cite{Sak12}) $\xi^\circ_v \in \Schw_v^{G(\mathfrak{o}_v)}$ for almost all $v \nmid \infty$, at which our data have good reduction. One can then define $\Schw := \Resotimes_v \Schw_v$ and $\mathscr{E} := \Resotimes_v \mathscr{E}_v$ (a bundle over $X^+(\A)$) with respect to $\xi_v^\circ$. Let $\mathscr{L}$ denote the density bundle on the adélic space $X^+(\A)$, and similarly for $\mathscr{L}^\demi$, we require that the local pairings patch into a well-defined $\mathscr{E} \otimes \overline{\mathscr{E}} \to \mathscr{L}$; morally, this is a condition about the convergence of infinite products of local measures.
	\item Suppose that a system of global ``evaluation maps'' $(\text{ev}_\gamma: \Schw \to \CC)_{\gamma \in X^+(F)}$ is given, subject to suitable equivariance properties (Hypothesis \ref{hyp:evaluation-global}). In \S\ref{sec:theta-dist} we  will construct a canonical closed central subgroup $\mathfrak{a}[X] \subset G(F_\infty)$ acting trivially on $X^+(\A)$. We require that
	\[ \vartheta := \sum_{\gamma \in X^+(F)} \text{ev}_\gamma \]
	defines a $G(F) \mathfrak{a}[X]$-invariant continuous linear functional of $\Schw$, which gives rise to a $G(\A)$-equivariant map $\xi \mapsto \vartheta_\xi \in C^\infty(\amspace_{G,X})$ by Frobenius reciprocity; here $\amspace_{G,X} := \mathfrak{a}[X]G(F) \backslash G(\A)$. 
\end{enumerate}
Note that distributions akin to $\vartheta$ are ubiquitous in the theory of automorphic forms.

Choose an invariant measure on $\amspace_{G,X}$ and let $\pi$ be a smooth cuspidal automorphic representation of $G(\A)$ realized on $\amspace_{G,X}$. The global zeta integral takes the form
\[ Z_\lambda(\phi \otimes \xi) = \int_{\amspace_{G,X}} \vartheta_\xi \phi_\lambda, \quad \xi \in \Schw,\; \phi \in \pi,\; \lambda \in \Lambda_{\CC}, \]
where $\phi_\lambda := \phi|\omega|^\lambda$, supposed to converge for $\Re(\lambda) \relgeq{X} \lambda(\pi)$ for some $\lambda(\pi)$; furthermore, we suppose that $Z_\lambda$ admits meromorphic continuation to all $\lambda$. This is the content of Axiom \ref{axiom:theta}.

We will discuss the global functional equation in \S\ref{sec:Poisson}, which takes the form $Z^{(1)}_\lambda(\phi \otimes \mathcal{F}(\xi)) = Z^{(2)}_\lambda(\phi \otimes \xi)$. The conjectural Poisson formula will also be briefly mentioned, but we do not attempt to build a general theory of Poisson formulas.

As in the local situation, the prehomogeneous vector spaces furnish a useful testing ground for the global conjectures. Nonetheless, it has been observed in \cite[p.656]{Sak12} that the resulting zeta integrals do not produce new $L$-functions.

\section{Positive results}
One may wonder if it is possible to prove anything under this generality. Below is a list of local evidence obtained in this work. Let $F$ be a local field of characteristic zero.
\begin{enumerate}
	\item The minimal requirement: convergence of zeta integrals for $\Re(\lambda) \relgg{X} 0$. We prove this for $F$ non-Archimedean, $X^+$ wavefront and $\mathscr{E} = \mathscr{L}^\demi$ in Corollary \ref{prop:conv-gg0}. The natural idea is to use the assumption that $\Supp(\xi)$ has compact closure in $X(F)$, the smooth asymptotics of the coefficients, which make use of a smooth complete toroidal compactification $X^+ \hookrightarrow \bar{X}$, and the Cartan decomposition for spherical homogeneous spaces. One technical point is to make $X^+ \hookrightarrow \bar{X}$ compatible with the geometry of $X$, see \S\ref{sec:Cartan}. The wavefront assumption is imposed in order to use the smooth asymptotics \cite[\S 5]{SV17}.

	For partial results in the Archimedean case, we refer to \cite{Li18}.
	\item The case where $X$ is a $F$-regular prehomogeneous vector space. Indeed, this is the only case with complete definitions of Schwartz space, model transition and Poisson formulas. Another reason is that the difficulty of choosing $\Schw$ often comes from the singularities of $X$, whereas non-singular affine spherical $G$-varieties are fibered in prehomogeneous vector spaces --- this fact is due to Luna, see \cite[Theorem 2.1]{KV06}.
	
		We will prove that
		\begin{inparaenum}[(i)]
			\item the spaces of Schwartz--Bruhat half-densities satisfy all the required properties, except temporarily those concerning Archimedean zeta integrals (Theorem \ref{prop:pvs-Schw}) --- the rationality is established by invoking Igusa's theory;
			\item upon fixing an additive character $\psi$, the Fourier transform $\mathcal{F}: \Schw(X) \rightiso \Schw(\check{X})$ of Schwartz--Bruhat half-densities is well-defined: the resulting theory is actually simpler than the classical one (Theorem \ref{prop:Fourier-vector});
			\item the local functional equation for the Fourier transform holds under a geometric condition (Hypothesis \ref{hyp:lfe}), which includes the well-known case of symmetric bilinear forms on an $F$-vector space, the Godement--Jacquet case $X = \text{Mat}_{n \times n}$ and their dual. In fact, the hypothesis for $X$ implies that $T_\pi: \mathcal{N}_\pi \otimes \mathcal{K} \to \mathcal{L}_\pi$ is an isomorphism.
		\end{inparaenum}

		We will actually re-derive the local Godement--Jacquet theory in \S\ref{sec:GJ} as an application of the general prehomogeneous theory, with improvements on continuity issues in the Archimedean case. The local functional equation so interpreted relates two different spherical $\GL(n) \times \GL(n)$-varieties, namely the prehomogeneous vector space $\text{Mat}_{n \times n}$ and its dual, the latter being isomorphic to $\text{Mat}_{n \times n}$ with the flipped action $x(g_1, g_2) = g_1^{-1} x g_2$. The result is a conceptually cleaner theory.
		
		However, little is known beyond the Godement--Jacquet case, and we give no predictions on the general structure of $\gamma$-factors (or $\gamma$-matrices) in the prehomogeneous case.
	\item The doubling method of Piatetski-Shapiro and Rallis is considered in \S\ref{sec:doubling}, following the reinterpretation by Braverman and Kazhdan \cite{BK02} in terms of Schwartz spaces. Modulo certain conjectures or working hypotheses inherent in \textit{loc.\ cit.}, we will set up a geometric framework and then reconcile their theory with ours in Proposition \ref{prop:doubling-zeta-axiom}.
	
		To simplify matters and stay within the realm of split connected groups, we will mainly work with the symplectic case, then sketch the general setting in \S\ref{sec:doubling-general}; another reason is that the formalism of \cite{BK02} coincides with that of \cite{PSR86, GPSR87} only in the symplectic case.

		We will also interpret the affine spherical embedding $X^+ \hookrightarrow X$ in doubling method as a reductive monoid. In Theorem \ref{prop:PSR-Ngo-compatible}, everything is shown to be compatible with the recipe in \cite{Laf14,Ng14} of attaching $L$-monoids to irreducible representations of the Langlands dual group. Consequently, one obtains a family of examples in Braverman--Kazhdan--Ngô program that goes beyond Godement--Jacquet, whereas the Schwartz space, Fourier transform and Poisson formula (in the global case) are still available to some extent.
\end{enumerate}

As for the global case, let $F$ be a number field.
\begin{enumerate}
	\item We will show that in Theorem \ref{prop:theta-period} that for $\lambda$ in the range of convergence, we have
		\[ Z_\lambda(\phi \otimes \xi)  = \int_{X^+(\A)} \xi |f|^\lambda \mathcal{P}(\phi) \]
		where $\mathcal{P}$ is an intertwining operator $\pi \to C^\infty(X^+(\A), \overline{\mathscr{E}})$ made from period integrals over $H_\gamma := \Stab_G(\gamma)$, for various $\gamma \in X^+(F)$; see \eqref{eqn:period} for the precise recipe. In particular,
		\begin{compactitem}
			\item the global twists by automorphic characters $|\omega|^\lambda$ correspond to local twists by relative invariants $|f|^\lambda$, and
			\item $Z_\lambda$ factorizes into local zeta integrals provided that $\mathcal{P}$ factorizes into local coefficients $\varphi_v \in \mathcal{N}_{\pi_v}$.
		\end{compactitem}
		This partly justifies our local formalism.
	\item To illustrate the use of Poisson formulas, we deduce the global Godement--Jacquet functional equation in Example \ref{eg:GJ-global}. The core arguments are the same as the classical ones.
\end{enumerate}

Notice one obvious drawback of our current formalism: it excludes the theories involving unipotent integration such as the Rankin--Selberg convolutions, automorphic descent, etc. To remedy this, one will need an in-depth study of the Whittaker-type inductions \cite[\S 2.6]{SV17} as well as the unfolding procedure \cite[\S 9.5]{SV17} of Sakellaridis--Venkatesh. Another drawback is that we do not attempt to locate the poles of zeta integrals or to make the $\gamma$-factors explicit.

\paragraph*{Acknowledgements}
The author is grateful to Professors Bill Casselman, Wee Teck Gan, Yiannis Sakellaridis, Freydoon Shahidi, Binyong Sun and Satoshi Wakatsuki for encouragements and fruitful discussions. His thanks also go to the anonymous referees for their helpful remarks.

\section{Structure of this book}
\begin{compactitem}
	\item In \S\ref{sec:geometric-bg} we collect vocabularies of spherical embeddings, the Luna--Vust classification, the Cartan decomposition and boundary degenerations set up in \cite{SV17}, and then state the geometric Axiom \ref{axiom:geometric}.
	\item In \S\ref{sec:analytic-bg} we define the integration of densities, the formalism of direct integrals and the Gelfand--Kostyuchenko method concerning pointwise defined continuous maps $\Schw \hookrightarrow L^2(X^+)$; we also include a brief review of hypocontinuous bilinear forms in order to fix notations.
	\item In \S\ref{sec:Schwartz-zeta} we specify the bundles, sections and coefficients in question, and state the Axioms \ref{axiom:finiteness}, \ref{axiom:zeta} concerning Schwartz spaces and zeta integrals. We then move to model transitions, the notion of local functional equations and $\gamma$-factors. This chapter ends with a heuristic discussion relating the Schwartz spaces to $L^2$-theory.
	\item The convergence of local zeta integrals in the non-Archimedean case is partly addressed in \S\ref{sec:convergence}. We begin by setting up some results from convex geometry, then we review the theory of smooth asymptotics with values in $\mathscr{L}^\demi$, and establish the convergence for $\Re(\lambda) \relgg{X} 0$.
	\item The example of prehomogeneous vector spaces is treated in \S\ref{sec:pvs}. We rewrite the theory of Schwartz--Bruhat functions and their Fourier transform in terms of half-densities, following \cite[\S 9.5]{SV17} closely. After a rapid review of prehomogeneous vector spaces and their dual, we verify the relevant axioms, mainly in the non-Archimedean case, and prove the local functional equation under Hypothesis \ref{hyp:lfe} over non-Archimedean $F$. The local Godement--Jacquet theory is then realized as a special case.
	\item In \S\ref{sec:doubling}, we review the general theory in \cite{BK02} on the Schwartz space and Fourier transforms for $X_P := P_\text{der} \backslash G^\Box$, specialize it to the doubling method for classical groups and discuss the corresponding zeta integrals. We then realize the doubling method as a special case of our formalism, and relate it to $L$-monoids in the sense of Braverman--Kazhdan, Lafforgue and Ngô. Except in \S\ref{sec:doubling-general}, we will focus on the symplectic case of doubling.
	\item The \S\ref{sec:global} is a sketch of the global case. Requirements on the basic vectors, $\vartheta$-distributions and the global integrals $Z_\lambda$ are set up, including the Axiom \ref{axiom:theta} on the convergence and meromorphy of $Z_\lambda$. We relate $Z_\lambda$ to period integrals, and conclude by a discussion on global functional equations and Poisson formulas, with an illustration in the global Godement--Jacquet case.
\end{compactitem}

The necessary hypotheses and conjectures will be summarized in the beginning of each chapter. Below is a dependency graph of the chapters.
\begin{center} \begin{tikzpicture}[every node/.style = {very thick, draw=black!40, font=\bfseries\sffamily, circle, outer sep=3pt}]
	\node (1) at (0, 0) {1};
	\node (2) at (1, 1) {2};
	\node (3) at (1, -1) {3};
	\node (4) at (2, 0) {4};
	\node (5) at (3.5, 0) {5};
	\node (6) at (5, 1.3) {6};
	\node (7) at (5, 0) {7};
	\node (8) at (5, -1.3) {8};
	\draw[thick, ->] (2) edge (4)	
		  (4) edge (5)
		  (2) edge[bend left=30] (5)
		  (3) edge (4)
		  (4) edge[bend right=30] (8)
		  (5) edge (6)
		  (5) edge (7)
		  (7) edge (8)
		  (6) edge[bend left=90] (8) ;
\end{tikzpicture}\end{center}

\section{Conventions}
\subsection*{Convex geometry}
Let $V$ be a finite-dimensional affine space over $\R$, i.e.\ a torsor under translations by a finite-dimensional $\R$-vector space. It makes sense to speak of affine closures, convexity, etc.\ in $V$. A \emph{polyhedron} in $V$ is defined to be the intersection of finitely many half-spaces $\{v : \alpha(v) \geq 0\}$, where $\alpha: V \to \R$ is an affine form. Bounded polyhedra are called \emph{polytopes}; equivalently, a polytope is the convex hull of finitely many points in $V$. The \emph{relative interior} of a polyhedron $P$ will be denoted by $\relint(P)$. \emph{Faces} of $P$ are defined as subsets of the form $H \cap P$, where $H$ is an affine hyperplane bounding $P$. The union of proper faces of $P$ equals $\partial P = P \smallsetminus \relint(P)$.

Suppose that a base point $0 \in V$ is chosen. A \emph{cone} in $V$ is defined as the intersection of finitely many $\{v : \alpha(v) \geq 0\}$ where $\alpha$ are now taken to be linear forms. Equivalently, a cone takes the form $C = \sum_{i=1}^k \R_{\geq 0} v_i$ for some generators $v_1, \ldots, v_k$; in particular, $C$ is polyhedral and finitely generated by definition. There is a unique minimal system of generators for $C$ up to dilation, namely take $v_i$ from the \emph{extremal rays} (i.e.\ $1$-dimensional faces) of $C$. A cone $C$ is called \emph{strictly convex}\index{cone!strictly convex} if $C \cap (-C) = \{0\}$, \emph{simplicial}\index{cone!simplicial} if $C$ is generated by a set of linearly independent vectors. A \emph{fan} in $V$ is a collection of cones closed under intersections and taking faces.

We will primarily work with \emph{rational} polytopes, cones and fans. This means that the space $V$, the base point, its affine or linear forms are all defined over $\Q$. Faces of rational polytopes are still rational.

\subsection*{Fields}
Given a field $F$, the symbol $\bar{F}$ will stand for some algebraic closure of $F$; the Galois cohomology will be denoted by $H^\bullet(F, \cdot)$.

When $F$ is a local field, the normalized absolute value will be denoted as $|\cdot| = |\cdot|_F$; when $F=\CC$ we set $|z| := z\bar{z}$ so that Artin's product formula for absolute values holds. For a global field $F$, we denote its ring of adèles as $\A = \A_F$. When $F$ is a global field or a non-Archimedean local field, its ring of integers will be denoted by $\mathfrak{o}_F$. Let $v$ be a place of a global field $F$, i.e.\ an equivalence class of valuations of rank $1$, then $F_v$ denotes the completion of $F$ at $v$. If $v \nmid \infty$ (i.e.\ non-Archimedean), we abbreviate $\mathfrak{o}_{F_v}$ as $\mathfrak{o}_v$, and $\mathfrak{m}_v \subset \mathfrak{o}_v$ stands for the maximal ideal.

For a global field $F$ we write $|\cdot| = \prod_v |\cdot|_v: \A_F \to \R_{\geq 0}$ to denote the adélic absolute value. The role of $|\cdot|$ will be clear from the context.

\subsection*{Varieties}
Let $\Bbbk$ be a field. Unless otherwise specified, the $\Bbbk$-varieties are assumed to be separated, geometrically integral schemes of finite type over $\Spec(\Bbbk)$. Recall that a $\Bbbk$-variety $X$ is called \emph{quasi-affine}\index{quasi-affine} if it admits an open immersion into an affine $\Bbbk$-variety. Call $X$ \emph{strongly quasi-affine} if $\Gamma(X, \mathcal{O}_X)$ is a finitely generated $\Bbbk$-algebra and $X \to \Spec(\Gamma(X, \mathcal{O}_X))$ is an open immersion. In the latter case, call $\overline{X}^\text{aff} := \Spec(\Gamma(X, \mathcal{O}_X))$ the \emph{affine closure}\index{affine closure} of $X$.

For general schemes over $\Spec(R)$ where $R$ is a ring, we usually distinguish the scheme $X$ itself and the set 	$X(R)$; the latter is endowed with a natural topology when $R$ is a topological ring. If for every $x \in X$ the local ring $\mathcal{O}_{X,x}$ is a normal domain, we say $X$ is a normal scheme.

\subsection*{Groups}
Let $G$ be an affine algebraic $\Bbbk$-group. Its identity connected component is denoted by $G^\circ$ and its center denoted by $Z_G$. The normalizer (resp.\  centralizer) subgroups in $G$ are denoted as $Z_G(\cdot)$ (resp.\  $N_G(\cdot)$). We adopt the standard notations $\Gm$, $\Ga$, $\GL$, etc.\ for the well-known groups, with one slight abuse: $\Ga$ usually comes with the monoid structure under multiplication aside from the additive one, and $\Ga \supset \Gm$. In a similar spirit, $\GL(n)$ embeds into the space of $n \times n$-matrices denoted by $\text{Mat}_{n \times n}$.

We write $X^*(G) = \Hom_{\Bbbk\text{-grp}}(G, \Gm)$, which is naturally a $\Z$-module. When $G$ is a torus we define $X_*(G) = \Hom(\Gm, G)$. For any commutative ring $R$ we set $X^*(G)_R := X^*(G) \otimes R$, and same for $X_*(G)_R$. Write $G \twoheadrightarrow G_\text{ab} := G/G_\text{der}$ for the abelianization. The unipotent radical of a parabolic subgroup $P \subset G$ is denoted by $U_P$, and we denote opposite parabolic subgroups by $P^-$. The Lie algebras are indicated by $\mathfrak{gothic}$ typeface.

\subsection*{\texorpdfstring{$G$}{G}-varieties}
Let $G$ be an affine algebraic $\Bbbk$-group. A $G$-variety is an irreducible $\Bbbk$-variety $X$ equipped with a \emph{right action} $X \times G \to X$, written as $(x,g) \mapsto xg$. Call $X$ \emph{homogeneous} if it consists of a single $G$-orbit. By choosing $x_0 \in X(\Bbbk)$, a given $G$-space $X$ is isomorphic to $H \backslash G$ (the geometric quotient) for $H := \Stab_G(x_0)$. The group $\Aut_G(X)$\index{Aut_G(X)@$\Aut_G(X)$} of its $G$-automorphisms is isomorphic to $H \backslash N_G(H)$, therefore forms an algebraic $\Bbbk$-group; more precisely, we let a coset $Ha$ in $H \backslash N_G(H)$ act \emph{on the left} of $H \backslash G$ by
\[ Hg \mapsto Hag. \]
Set $\mathcal{Z}(X) := \Aut_G(X)^\circ$\index{Z(X)@$\mathcal{Z}(X)$}. We refer to \cite[\S 1, Appendix D]{Ti11} or \cite[II. \S 4]{AG4} for quotients for $G$-varieties.

\subsection*{Vector spaces}
The topological vector spaces considered in this work are all complex, Hausdorff and locally convex. For a vector space $E$, we denote its dual space as $E^\vee$, and its exterior algebra as $\bigwedge E = \bigoplus_{k \geq 0} \bigwedge^k E$. When $E$ is a topological vector space, $E^\vee$ stands for the \emph{continuous dual} of $E$, i.e.\ the space of continuous linear functionals; depending on the context, it will be equipped with either the strong topology (i.e.\ of \emph{bounded convergence}) or the weak topology (i.e.\ of \emph{pointwise convergence}), see \cite[p.198]{Tr67}. The scalar product on a Hilbert space $E$ will be denoted by $(\cdot|\cdot) = (\cdot|\cdot)_E$. The complex conjugate of a $\CC$-vector space $E$ will be denoted by $\overline{E}$; in the case of Hilbert spaces, $\overline{E}$ is canonically isomorphic to $E^\vee$.\index{topological vector space!strong dual}

When $E_1, E_2$ are topological vector spaces, $\Hom(E_1, E_2)$ will mean the vector space of continuous linear maps $E_1 \to E_2$ unless otherwise specified. For nuclear spaces we will denote the completed tensor product as $\hat{\otimes}$. The same notation also stands for the completed tensor product for Hilbert spaces \cite[V, \S 3]{BoEVT}. The meaning is always clear from the context.

\subsection*{Representations}
Unless otherwise specified, all representations are realized on $\CC$-vector spaces and the group acts from the left. The underlying vector space of a representation $\pi$ will be systematically denoted as $V_\pi$, sometimes identified with $\pi$ itself by abusing notation. A \emph{continuous representation} of a locally compact separated group $\Gamma$ on a topological vector space $V$ corresponds to a continuous linear action $\Gamma \times V \to V$. The \emph{unitary representations} are always realized as isometries on Hilbert spaces. The contragredient (in a suitable category) of a representation $\pi$ is denoted by $\check{\pi}$.

When discussing representations of Lie groups, we will occasionally work in the category of \emph{Harish-Chandra modules} or admissible $(\mathfrak{g}, K)$-modules; such modules are of finite length. The precise definition can be found in \cite[\S 4]{BK14}.

The modular character $\delta_\Gamma: \Gamma \to \R_{>0}^\times$ is specified by $\dd\mu(gxg^{-1}) = \delta_\Gamma(g) \dd\mu(x)$ where $\mu$ is any left Haar measure on $\Gamma$.

Given $\Gamma$-representations $V_1$, $V_2$, denote by $\Hom_\Gamma(V_1, V_2)$ the vector space of continuous $\Gamma$-invariant linear maps $V_1 \to V_2$. The tensor product of two representations $\pi_1$, $\pi_2$ (in suitable categories) of groups $G_1$, $G_2$ is denoted by $\pi_1 \boxtimes \pi_2$, which is a representation $G_1 \times G_2$.

Let $H$ be a subgroup of an affine $F$-group $G$ over a local field, we denote by $\Ind^G_H(\cdot)$ the (unnormalized) induction functor of smooth representations from $H(F)$ to $G(F)$, which will be recalled in \S\ref{sec:coefficients}. If $P \subset G$ is a parabolic subgroup with Levi quotient $M := P/U_P$, we denote by $I^G_P(-) := \Ind^G_P \left(- \otimes \delta_P^\demi \right)$ the functor of \emph{normalized parabolic induction} of smooth representations from $M(F)$ to $G(F)$, where $\delta_P = \delta_{P(F)}$ factorizes through $M(F)$.


\chapter{Geometric background}\label{sec:geometric-bg}
\section{Review of spherical varieties}\label{sec:review-spherical}
The main purpose of this section is to fix notation. We refer to \cite{Kno91} for a detailed treatment.

Let $\Bbbk$ be an algebraically closed field of characteristic zero. Fix a connected reductive $\Bbbk$-group $G$, a Borel subgroup $B \subset G$ and denote by $B \twoheadrightarrow B/U =: A$ its Levi quotient.

A $G$-variety $X$ is \emph{spherical}\index{spherical variety} if $X$ is normal and there exists an open dense $B$-orbit $\mathring{X}$\index{X-ring@$\mathring{X}$} in $X$, which is unique. Hence there exists a unique open dense $G$-orbit $X^+$\index{X-plus@$X^+$} in $X$. By a \emph{spherical embedding}\index{spherical embedding} we mean a $G$-equivariant open immersion $X^+ \hookrightarrow X$ of $\Bbbk$-varieties, where $X^+$ is homogeneous and $X$ is spherical. A spherical embedding $X^+ \hookrightarrow X$ is called \emph{simple}\index{spherical embedding!simple} if there exists a unique closed $G$-orbit in $X$; it is called affine if $X$ is affine\index{spherical embedding!affine}.

The boundary $\partial X$ is defined as $X \smallsetminus X^+$, equipped with the reduced induced scheme structure.

For spherical homogeneous spaces, quasi-affine implies strongly quasi-affine by \cite[Proposition 2.2.3]{Sak12}, thus one can define the affine closure $\overline{H \backslash G}^\text{aff}$ for quasi-affine spherical homogeneous $G$-spaces $H \backslash G$.

Let $P$ be a parabolic subgroup of $G$, with unipotent radical $U_P$ and Levi quotient $\pi: P \twoheadrightarrow P/U_P = M$. For any $M$-variety $Y$, its \emph{parabolic induction} is defined to be the geometric quotient
\begin{gather}\label{eqn:parabolic-induction}
	X := Y \utimes{P} G = \dfrac{Y \times G}{(yp,g) \sim (y,pg), \; p \in P},
\end{gather}
where $Y$ is inflated into a $P$-variety via $\pi$; see \cite[II. \S 4.8]{AG4} for generalities on such contracted products which are often called homogeneous fiber bundles. The class containing $(y,g)$ will be denoted by $[y,g]$. The $G$-action on $X$ is $[y,g]g' = [y,gg']$.
\begin{itemize}
	\item Mapping $[y,g]$ to the coset $Pg$ yields a $G$-equivariant fibration $X \twoheadrightarrow P \backslash G$, whose fiber over $P \cdot 1$ is just $Y$.
	\item If $Y \simeq H_M \backslash M$, then $X$ is isomorphic to $\pi^{-1}(H_M) \backslash G$.
	\item Conversely, if $X \simeq H \backslash G$ is a homogeneous $G$-variety with $U_P \subset H \subset P$, then $X$ is parabolically induced from of $Y := H_M \backslash M$ with $H_M := \pi(H) \subset M$. Indeed, $[H_M m, g] \in Y \utimes{P} G$ corresponds to $H \pi^{-1}(m)g$.
	\item $X$ is spherical if and only if $Y$ is spherical.
\end{itemize}

To a spherical $G$-variety $X$ are attached the following birational invariants.
\begin{enumerate}
	\item Write $\Bbbk(X)^{(B)}$ for the group of $B$-eigenvectors in $\Bbbk(X)^\times$, which are $U$-invariant by stipulation. To each $f \in \Bbbk(X)^{(B)}$ is associated a unique eigencharacter $\lambda \in X^*(B)=X^*(A)$. Let $\Lambda(X) \subset X^*(A)$\index{Lambda(X)@$\Lambda(X)$} be the subgroup formed by these eigencharacters. We deduce a short exact sequence
		\begin{gather}\label{eqn:spherical-ses}
			1 \to \Bbbk^\times \to \Bbbk(X)^{(B)} \to \Lambda(X) \to 0.
		\end{gather}
	\item Denote by $A \twoheadrightarrow A_X$\index{A_X@$A_X$} the homomorphism of tori dual to $\Lambda(X) \hookrightarrow X^*(A)$. Set $\mathcal{Q} := X_*(A_X) \otimes \Q$\index{Q@$\mathcal{Q}$}. Every discrete valuation $v$ of $\Bbbk(X)$ which is trivial on $\Bbbk^\times$ induces an element $\rho(v)$ of $\mathcal{Q}$.
	\item Let $\mathcal{V}$\index{V@$\mathcal{V}$} be the set of $G$-invariant discrete valuations of $\Bbbk(X)$ which are trivial on $\Bbbk^\times$.
	\item Let $\mathcal{D}^B$\index{D_B@$\mathcal{D}^B$} be the set of $B$-stable prime divisors in $X^+$; thus the divisors in $\mathcal{D}^B$ are supported in $X^+ \smallsetminus \mathring{X}$. Since our varieties are normal, to each $D \in \mathcal{D}^B$ we may define $\rho(D) \in \mathcal{Q}$ as the corresponding normalized valuation of $\Bbbk(X)$. Hence a canonical map
		\[ \rho: \mathcal{D}^B \longrightarrow \mathcal{Q}. \]
\end{enumerate}

By \cite[Corollary 1.8]{Kno91}, the map $v \mapsto \rho(v)$ embeds $\mathcal{V}$ as a subset of $\mathcal{Q}$. Moreover, \cite[Corollary 5.3]{Kno91} asserts that $\mathcal{V}$ is a cone in $\mathcal{Q}$ which contains the image of the anti-dominant Weyl chamber under $X_*(A) \otimes \Q \twoheadrightarrow \mathcal{Q}$. Call $\mathcal{V} \subset \mathcal{Q}$ the \emph{valuation cone}\index{valuation cone} of $X$.

\begin{definition}
	Call a spherical variety $X$ \emph{wavefront}\index{spherical variety!wavefront} if $\mathcal{V}$ equals the image of the anti-dominant Weyl chamber in $\mathcal{Q}$.
\end{definition}
The notions above depend only on $X^+$ and not its embedding in $X$.
\begin{definition}
	Given the data above on $X^+$, a \emph{strictly convex colored cone}\index{cone!colored} is a pair $(\mathcal{C}, \mathcal{F})$ where
	\begin{itemize}
		\item $\mathcal{C} \subset \mathcal{Q}$ is a strictly convex cone, finitely generated over $\Q$;
		\item $\mathcal{F} \subset \mathcal{D}^B$ and $\rho(\mathcal{F}) \not\ni 0$;
		\item $\relint(\mathcal{C}) \cap \mathcal{V} \neq \emptyset$;
		\item $\mathcal{C}$ is generated by $\rho(\mathcal{F})$ and finitely many elements from $\mathcal{V}$.
	\end{itemize}
	Elements of the set $\mathcal{F}$ are called the \emph{colors}\index{color}.
\end{definition}

Assume henceforth that the spherical embedding is simple with the closed $G$-orbit $Y$. We denote by $\mathcal{D}(X)$ the set of $B$-stable prime divisors $D$ on $X$ with $D \supset Y$. To each $D \in \mathcal{D}(X)$ we associate the normalized valuation $v_D: \Bbbk(X) \to \Z$, viewed as an element of $\mathcal{Q}$. Set \index{C_X@$\mathcal{C}_X$}\index{F_X@$\mathcal{F}_X$}
\begin{align*}
	\mathcal{V}_X & := \left\{ v_D : D \in \mathcal{D}(X) \text{ is $G$-stable} \right\}, \\
	\mathcal{F}_X & := \left\{ D \in \mathcal{D}(X) : D \cap X^+ \in \mathcal{D}^B \right\} \hookrightarrow \mathcal{D}^B, \\
	\mathcal{C}_X & := \text{the convex cone in $\mathcal{Q}$ generated by $\mathcal{V}_X \cup \rho(\mathcal{F}_X)$}.
\end{align*}

The Luna--Vust classification of simple spherical embeddings can now be stated as follows.
\begin{theorem}[Luna--Vust]\label{prop:LV}
	Given a spherical homogeneous $G$-space $X^+$, there is a bijection
	\begin{align*}
		\left\{ \text{simple spherical embeddings of } X^+ \right\}/\simeq & \stackrel{1:1}{\longleftrightarrow} \{\text{strictly convex colored cones}\} \\
		[X^+ \hookrightarrow X] & \longmapsto (\mathcal{C}_X, \mathcal{F}_X).
	\end{align*}
	Furthermore,
	\begin{itemize}
		\item A valuation $v: \Bbbk(X) \twoheadrightarrow \Z \sqcup \{\infty\}$ lies in $\mathcal{V}_X$ if and only if $\Q_{\geq 0} v$ is an extremal ray of $\mathcal{C}_X$ not intersecting of $\rho(\mathcal{F}_X)$;
		\item every affine spherical embedding $X^+ \hookrightarrow X$ is simple;
		\item $X$ is affine if and only if there exists $\chi \in \mathcal{Q}^\vee$ such that $\chi|_{\mathcal{V}} \leq 0$, $\chi|_{\mathcal{C}_X} = 0$ and $\chi|_{\rho(\mathcal{D}^B \smallsetminus \mathcal{F}_X)} > 0$;
		\item $X^+$ is quasi-affine if and only if $\rho(\mathcal{D}^B)$ does not contain $0$ and generates a strictly convex cone.
	\end{itemize}
\end{theorem}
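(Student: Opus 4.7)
The plan is to follow Knop's streamlined version of the original Luna--Vust theory, as presented in \cite{Kno91}. The central tool is the notion of a \emph{$B$-chart}: a $B$-stable affine open subset $X_0 \subset X$ containing the closed $G$-orbit $Y$, from which $X$ is recovered as the $G$-translate $X_0 G$. Sumihiro's theorem guarantees the existence of such a chart in the simple case, and the heart of the argument is the combinatorial encoding of the coordinate ring $\Bbbk[X_0]$ by the colored cone $(\mathcal{C}, \mathcal{F})$.

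To set up the map $X \mapsto (\mathcal{C}_X, \mathcal{F}_X)$, I would verify that the strict convexity of $\mathcal{C}_X$ and the condition $\relint(\mathcal{C}_X) \cap \mathcal{V} \neq \emptyset$ correspond, respectively, to the separatedness of $X$ and the existence of the closed orbit $Y$ (via the valuative criterion applied to the quotient torus $A_X$). For injectivity, if two simple embeddings $X_1, X_2$ share $(\mathcal{C}, \mathcal{F})$, their unique $B$-charts $X_{i,0}$ have coordinate rings
$$ \Bbbk[X_{i,0}] = \left\{ f \in \Bbbk(X^+) : v_D(f) \geq 0 \text{ for every } D \in \mathcal{D}(X_i) \right\}, $$
which depend only on the colored cone through the characterization of $\mathcal{D}(X_i)$ by $\mathcal{V}_X \cup \rho(\mathcal{F}_X)$; then $X_i = X_{i,0} G$ is the canonical reconstruction. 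For surjectivity, given $(\mathcal{C}, \mathcal{F})$, I would define $\Bbbk[X_0]$ by the same formula inside $\Bbbk(X^+)$, put $X_0 := \Spec$ of this algebra, and glue its $G$-translates. Knop's finite generation theorem (a variant of Gordan's lemma applied in $\mathcal{Q}$) ensures that $X_0$ is of finite type.

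The additional four properties follow from this construction. The bijection between $\mathcal{V}_X$ and the extremal rays of $\mathcal{C}_X$ avoiding the colors is visible from the labelling of $B$-stable prime divisors on $X_0$ by the generators of its defining cone. Affine embeddings are automatically simple by geometric invariant theory: for spherical $X$ one has $\Bbbk[X]^G \subset \Bbbk(X)^G = \Bbbk$, so the categorical quotient $X /\!/ G$ is a point and the closed orbit is unique. The affineness criterion amounts to the existence of a global $B$-eigenfunction $f \in \Bbbk[X]^{(B)}$ whose non-vanishing locus is all of $X$; this is a feasibility question for a linear functional on $\mathcal{Q}^\vee$, and unwinding the inequalities gives exactly the conditions on $\chi$. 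The quasi-affineness criterion for $X^+$ reduces via Sukhanov's theorem to the requirement that $B$-eigenfunctions separate points and tangent vectors, which translates into the condition on $\rho(\mathcal{D}^B)$.

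The main obstacle is the surjectivity step: proving that $X := X_0 G$ is a separated, normal, integral scheme of finite type. Normality and finite generation follow formally from the description of $\Bbbk[X_0]$ as an intersection of valuation rings in $\Bbbk(X^+)$. The delicate point is \emph{separatedness}, which requires checking that for each pair of $B$-charts indexed by faces of the colored data, their intersection embeds as a closed subscheme of the product; this is where the strict convexity of $\mathcal{C}$ and the condition $\rho(\mathcal{F}) \not\ni 0$ are essential, and it is handled combinatorially by passing to the dual cone in $\mathcal{Q}^\vee$ and invoking the analogue of the toric separation criterion. Once separatedness is established, the remaining properties and the verification that $(\mathcal{C}_X, \mathcal{F}_X)$ recovers the original colored cone are direct.
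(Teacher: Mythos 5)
The paper offers no proof of this theorem: it simply cites Knop (\S 3 and Theorem 6.7 of \cite{Kno91}, plus Lemma 2.4 for the description of $\mathcal{V}_X$), and that cited argument is exactly the one you outline — $B$-charts obtained via Sumihiro's theorem, their coordinate rings described as intersections of valuation rings indexed by $\mathcal{V}_X \cup \rho(\mathcal{F}_X)$, reconstruction of $X$ as $X_0 G$, and the combinatorial verification of separatedness and of the four supplementary criteria. Your sketch is therefore a faithful summary of the proof the paper delegates to the reference, with one small slip worth correcting: in the affineness criterion the relevant $B$-eigenfunction $f$ has non-vanishing locus equal to the $B$-chart $X_0$ (it vanishes precisely on the colors outside $\mathcal{F}_X$, which is what produces the strict inequality $\chi|_{\rho(\mathcal{D}^B \smallsetminus \mathcal{F}_X)} > 0$), not all of $X$ — if $f$ were nowhere vanishing it would be a unit and $\chi$ would pair to zero with every divisor.
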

\begin{proof}
	See \cite[\S 3 and Theorem 6.7]{Kno91}. The description of $\mathcal{V}_X$ can be found in \cite[Lemma 2.4]{Kno91}.
\MyQED\end{proof}

A \emph{face} of a colored cone $(\mathcal{C}, \mathcal{F})$ is a pair $(\mathcal{C}_0, \mathcal{F}_0)$ where $\mathcal{C}_0$ is a face of $\mathcal{C}$ such that $\relint(\mathcal{C}_0) \cap \mathcal{V} \neq \emptyset$, and $\mathcal{F}_0 = \mathcal{F} \cap \rho^{-1}(\mathcal{C}_0)$.

\begin{proposition}\label{prop:G-orbits}
	Let $(\mathcal{C}_X, \mathcal{F}_X)$ be the colored cone attached to $X^+ \hookrightarrow X$. There is an order-reversing bijection
	\begin{gather*}
		\left\{ G\text{-orbits}\; \subset X \right\} \stackrel{1:1}{\longleftrightarrow} \{\text{faces of } \; (\mathcal{C}_X, \mathcal{F}_X) \}
	\end{gather*}
	such that a valuation $v \in \mathcal{V} \cap \mathcal{C}$ lies in $\relint(\mathcal{C}_0)$ of a face $(\mathcal{C}_0, \mathcal{F}_0)$ if and only if the center of $v$ exists in $X$ and equals the orbit closure in $X$ corresponding to $(\mathcal{C}_0, \mathcal{F}_0)$.
\end{proposition}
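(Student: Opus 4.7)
The plan is to construct the bijection by associating to each $G$-orbit $Y' \subset X$ an auxiliary simple embedding. Define the $G$-stable open subvariety
\[ X_{Y'} := \{ x \in X : Y' \subset \overline{xG} \}, \]
i.e.\ the union of $G$-orbits whose closure contains $Y'$; openness follows because its complement is the finite union of orbit closures not containing $Y'$. Then $X_{Y'}$ is a simple spherical embedding of $X^+$ with unique closed $G$-orbit $Y'$, and Theorem \ref{prop:LV} attaches to it a strictly convex colored cone $(\mathcal{C}_{Y'}, \mathcal{F}_{Y'})$. By construction the $B$-stable prime divisors of $X_{Y'}$ containing $Y'$ are exactly those elements of $\mathcal{D}(X)$ whose closure contains $Y'$, so $\mathcal{F}_{Y'} \subset \mathcal{F}_X$ and the $G$-stable generators of $\mathcal{C}_{Y'}$ form a subset of $\mathcal{V}_X$.

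The first step is to verify that $(\mathcal{C}_{Y'}, \mathcal{F}_{Y'})$ is a face of $(\mathcal{C}_X, \mathcal{F}_X)$. This rests on the local structure theorem for spherical embeddings: on a $B$-stable affine open neighborhood of the generic point of $Y'$ one produces a character $\chi \in \mathcal{C}_X^\vee$ that vanishes on $\mathcal{C}_{Y'}$ and is strictly positive on those elements of $\rho(\mathcal{D}^B) \cup \mathcal{V}_X$ whose associated divisor has closure disjoint from $Y'$; this shows that $\mathcal{C}_{Y'}$ is precisely the face of $\mathcal{C}_X$ cut out by $\chi = 0$ and that $\mathcal{F}_{Y'} = \mathcal{F}_X \cap \rho^{-1}(\mathcal{C}_{Y'})$. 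Conversely, given a face $(\mathcal{C}_0, \mathcal{F}_0)$ with $\relint(\mathcal{C}_0) \cap \mathcal{V} \neq \emptyset$, I would pick $v$ in that intersection, apply the valuative criterion of properness to its valuation ring $\mathcal{O}_v \hookrightarrow \Bbbk(X)$, obtain a $G$-stable closed subvariety as the center of $v$ in $X$, and define $Y'$ to be its open $G$-orbit. The order-reversing property is then immediate from the definition: $Y'_1 \subset \overline{Y'_2}$ forces $X_{Y'_2} \subset X_{Y'_1}$, whence the containment of colored cones is reversed.

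The main obstacle is the center characterization for $v \in \mathcal{V} \cap \mathcal{C}_X$. For the \emph{if} direction, if $v \in \relint(\mathcal{C}_0)$ where $(\mathcal{C}_0, \mathcal{F}_0)$ corresponds to $Y'$, then membership in $\mathcal{C}_{Y'}$ ensures that $v$ has a center in $X_{Y'} \subset X$; being in the relative interior prevents $v$ from lying in any proper subface $\mathcal{C}_{Y''}$ with $Y'' \subsetneq \overline{Y'}$, forcing the center to equal $\overline{Y'}$ and not a smaller orbit closure. For the \emph{only if} direction, one shows that if the center equals $\overline{Y'}$ then $v$ lies in $\mathcal{C}_{Y'}$ via the valuative criterion applied at the generic point of $Y'$, and interior-ness follows because otherwise $v$ would belong to a proper face corresponding to some strictly larger orbit closure, contradicting minimality of $\overline{Y'}$ as a center. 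Making this precise requires a careful reduction to the toric case via the local structure theorem in order to analyze how $G$-invariant valuations interact with the orbit stratification; this is where the bulk of the technical work lies, and we refer to \cite{Kno91} for the details.
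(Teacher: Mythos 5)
Your outline correctly reproduces the standard Luna--Vust argument --- passing to the simple open sub-embedding $X_{Y'}$ consisting of orbits whose closure contains $Y'$, identifying its colored cone as a face of $(\mathcal{C}_X,\mathcal{F}_X)$, and characterizing centers of invariant valuations via the local structure theorem --- and, like the paper, you ultimately defer the technical core to \cite{Kno91}; indeed the paper's own proof is nothing more than a citation of \cite[Theorem 2.5 and Lemma 3.2]{Kno91}. One minor imprecision: the separating character should be strictly positive on the generators attached to divisors whose closure does \emph{not contain} $Y'$, not those \emph{disjoint} from $Y'$.
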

\begin{proof}
	See \cite[Theorem 2.5 and Lemma 3.2]{Kno91}. Generalities on valuations can be found in \cite[\S 1]{Kno91}.
\MyQED\end{proof}
In particular, the center (see \cite[Definition B.4]{Ti11}) of any nonzero $v \in \mathcal{V} \cap \mathcal{C}$ is a proper $G$-orbit closure, since $\mathcal{C}_0 = \{0\}$ corresponds to the $G$-orbit $X^+$. The morphisms between spherical embeddings also have a combinatorial description as follows.

\begin{theorem}[{\cite[Theorem 4.1]{Kno91}}]\label{prop:LV-morphism}
	Let $X^+ \hookrightarrow X$ and $Y^+ \hookrightarrow Y$ be spherical embeddings under $G$, and denote their Luna--Vust data as $\mathcal{Q}_{X^+}$, $\mathcal{Q}_{Y^+}$ etc. Let $\varphi: X^+ \to Y^+$ be a $G$-equivariant morphism, we have
	\begin{itemize}
		\item $\varphi$ induces $\varphi_*: \Lambda_{X^+} \to \Lambda_{Y^+}$, thus $\varphi_*: \mathcal{Q}_{X^+} \twoheadrightarrow \mathcal{Q}_{Y^+}$;
		\item $\varphi_*$ maps $\mathcal{V}_{X^+}$ onto $\mathcal{V}_{Y^+}$;
		\item $\varphi$ extends to a $G$-equivariant morphism $X \to Y$ if and only if $\varphi_*(\mathcal{C}_X) \subset \mathcal{C}_Y$ and $\varphi_*(\mathcal{F}_X \smallsetminus \mathcal{F}_\varphi) \subset \mathcal{F}_Y$.
	\end{itemize}
	Here $\mathcal{F}_\varphi$ denotes the subset of $D \in \mathcal{D}^B_{X^+}$ that maps dominantly to $Y^+$.
\end{theorem}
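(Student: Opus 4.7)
The first two assertions amount to bookkeeping once one has set up the pullback on function fields and the standard theory of extensions of valuations; the third is the heart of the Luna--Vust theory for equivariant morphisms and will carry the main technical burden.

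Since $Y^+$ is $G$-homogeneous, the $G$-equivariant morphism $\varphi$ has $G$-stable image and is therefore surjective, so $\varphi^*$ embeds $\Bbbk(Y^+) \hookrightarrow \Bbbk(X^+)$ compatibly with the $B$-action and preserves eigencharacters. One thus obtains an inclusion $\Lambda_{Y^+} \hookrightarrow \Lambda_{X^+}$ of sublattices of $X^*(A)$; dualising via the identification $\mathcal{Q}_\bullet = \Hom(\Lambda_\bullet, \Q)$ produces the announced surjection $\varphi_*: \mathcal{Q}_{X^+} \twoheadrightarrow \mathcal{Q}_{Y^+}$. For the second bullet, a $G$-invariant discrete valuation $v$ of $\Bbbk(X^+)$, trivial on $\Bbbk^\times$, restricts under $\varphi^*$ to one of $\Bbbk(Y^+)$ whose class in $\mathcal{Q}_{Y^+}$ coincides with $\varphi_*(\rho(v))$. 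Surjectivity onto $\mathcal{V}_{Y^+}$ relies on the standard existence of valuation extensions across a finitely generated field extension, followed by a $G$-invariant modification of the extension; this last step is non-elementary and would be imported from \cite[\S 4]{Kno91}.

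For the extension criterion I would treat the two implications separately. \emph{Necessity:} assume $\varphi$ extends to $\bar\varphi: X \to Y$. Each $v \in \mathcal{V}_X$ is the valuation of a $G$-stable prime divisor $D \subset X$; since $\bar\varphi(D)$ is contained in some $G$-orbit closure of $Y$, Proposition \ref{prop:G-orbits} identifies the corresponding face of $(\mathcal{C}_Y, \mathcal{F}_Y)$ as one containing $\varphi_*(\rho(D))$, so $\varphi_*(\mathcal{V}_X) \subset \mathcal{C}_Y$. Likewise a color $D \in \mathcal{F}_X \smallsetminus \mathcal{F}_\varphi$ is pushed by $\bar\varphi$ to a $B$-stable divisor of $Y$ meeting the closed $G$-orbit, hence to an element of $\mathcal{F}_Y$; combined with the fact that $\mathcal{C}_X$ is generated by $\mathcal{V}_X \cup \rho(\mathcal{F}_X)$, both required containments follow. \emph{Sufficiency:} apply the valuative criterion: $\varphi$ extends if and only if for every discrete valuation ring $R$ with fraction field $K$ and every $\Spec R \to X$ whose generic point lies in $X^+$, the composite $\Spec K \to Y^+ \hookrightarrow Y$ extends to $\Spec R \to Y$. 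Using Sumihiro's equivariant resolution together with the local structure theorem for spherical varieties, one reduces to a $B$-stable affine neighbourhood of a closed orbit, where the extension problem becomes toric and is governed precisely by $\varphi_*(\mathcal{C}_X) \subset \mathcal{C}_Y$ and $\varphi_*(\mathcal{F}_X \smallsetminus \mathcal{F}_\varphi) \subset \mathcal{F}_Y$.

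The main obstacle is the sufficiency half of the third bullet: the clean combinatorial implication encapsulates the geometric fact that the orbit--face bijection of Proposition \ref{prop:G-orbits}, together with the colored cone $(\mathcal{C}_X, \mathcal{F}_X)$, captures all valuative data relevant for extending a morphism. Making this rigorous requires the Brion--Luna--Vust local structure theorem for spherical varieties in a genuine way; in practice one simply appeals to Knop's original proof in \cite[Theorem 4.1]{Kno91}. A secondary, related difficulty is the $G$-invariant modification of an extended valuation needed in the second bullet, which cannot be handled by naive averaging over the infinite group $G$.
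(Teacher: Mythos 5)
The paper does not prove this theorem at all: it is stated as an imported result with the proof consisting solely of the citation to \cite[Theorem 4.1]{Kno91}, so your sketch cannot diverge from "the paper's proof" in any meaningful way, and it is in fact a faithful outline of Knop's argument with the two genuinely hard ingredients (the $G$-invariant extension of valuations for surjectivity of $\mathcal{V}_{X^+} \to \mathcal{V}_{Y^+}$, and the sufficiency of the combinatorial criterion via $B$-charts and the local structure theorem) correctly identified and deferred to the reference. The only point worth tightening is in your necessity argument: that a color $D \notin \mathcal{F}_\varphi$ maps to a genuine $B$-stable \emph{divisor} of $Y^+$ (rather than something of higher codimension) needs the fibration structure of the homogeneous map $X^+ \to Y^+$, and "valuative criterion" should be read as the Luna--Vust criterion for existence of centers rather than the valuative criterion of properness, since $Y$ need not be proper.
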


\begin{example}\label{eg:typical-cone}
	Consider $X^+ = \GL(n)$ under the $G := \GL(n) \times \GL(n)$-action given by $x(g,h) = g^{-1}xh$. Take $B \subset G$ to be
	\[ B = \left\{ \text{lower triangular} \right\} \times \left\{ \text{upper triangular} \right\}. \]
	Then $A \twoheadrightarrow A_{X_+}$ may be identified with the quotient
	\begin{align*}
		\Gm^n \times \Gm^n & \longrightarrow \Gm^n \\
		(a,b) & \longmapsto a^{-1}b. 
	\end{align*}
	Take the standard basis $\epsilon_1, \ldots \epsilon_n \in X_*(A_{X^+})$ and denote by $\epsilon^*_1, \ldots, \epsilon^*_n$ the dual basis of $\Lambda(X^+)$. This is a wavefront spherical variety, indeed we have $\mathcal{V} = \bigcap_{1 \leq i < j \leq n}\{\epsilon^*_i - \epsilon^*_j \leq 0\}$ inside $\mathcal{Q} = \Q \epsilon_1 \oplus \cdots \oplus \Q \epsilon_n$.
	
	The relevant computations can be found in \cite[Example 24.9]{Ti11}. Consider the affine equivariant embedding into $X = \text{Mat}_{n \times n}$. We assume $n=2$ hereafter, although a general description for any $n$ exists: see \cite[Example 27.21]{Ti11}. We have
	\begin{compactitem}
		\item $\mathcal{C}_X$ is generated by $\{\epsilon_1 - \epsilon_2, \epsilon_2 \}$;
		\item $\mathcal{F}_X$ consists of the prime divisor $D = \begin{pmatrix} 0 & * \\ * & * \end{pmatrix}$ and $\rho(D) = \epsilon_1 - \epsilon_2$;
		\item $\mathcal{V}_X$ consists of the prime divisor $\det=0$, the corresponding normalized valuation is $\epsilon_2$;
		\item the faces of the colored cone are: $(\mathcal{C}_X, \mathcal{F}_X) \supset (\Q_{\geq 0} \epsilon_2 , \emptyset) \supset (0, \emptyset)$, corresponding to the rank stratification of $X$.
	\end{compactitem}
	Note that the divisor $\det=0$ (resp.\  $D$) is defined by a $B$-eigenfunction in $\Bbbk[X]$ with eigencharacter equal to the element $\epsilon_1^* + \epsilon_2^*$ (resp.\  $\epsilon_1^*$) of $\Lambda(X^+)$.
	\begin{center}\begin{tikzpicture}[baseline]
		\fill[gray!35!white] (0,-2) rectangle (-2,2);
		\node at (-1,0) {$\mathcal{V}$};
		\draw[thick] (0,2) -- (0,-2);
		
		\fill[pattern=checkerboard light gray, nearly opaque] (-2,2) -- (0,0) -- (2,0) -- (2,2) -- (-2,2);
		\draw[ultra thick, ->, blue] (0,0) -- (1.4,0) node[below right] {$\rho(D) = \epsilon_1 - \epsilon_2$};
		\draw[ultra thick, ->] (0,0) -- (-1,1) node[above left] {$\epsilon_2$};
		\node[blue] at (1,1) {$\mathcal{C}_X$};
	\end{tikzpicture}\end{center}
\end{example}

Non-simple spherical embeddings can be obtained by patching simple ones, which leads to a classification via strictly convex \emph{colored fans}\index{colored fan}.
\begin{definition}
	Given a spherical homogeneous $G$-space $X^+$, a colored fan is a collection of colored cones in $\mathcal{Q}$ that
	\begin{compactenum}[(i)]
		\item is stable under passing to faces (defined above), and
		\item the intersections with $\mathcal{V}$ of these cones form a fan inside $\mathcal{V}$.
	\end{compactenum}
	Call a colored fan strictly convex if all its cones are strictly convex.
\end{definition}

The bijection of Proposition \ref{prop:G-orbits} and the description of morphisms in Theorem \ref{prop:LV-morphism} generalize to this context. Such generality will not be needed except for the case of toroidal embeddings, which is to be discussed in the next section.

Attach a parabolic subgroup to $X^+$ as follows. Take $X^+ = X$ to be spherical. Set $P(X) := \left\{ g \in G: \mathring{X}g = \mathring{X} \right\}$\index{P(X)@$P(X)$}; clearly $P(X) \supset B$. We make the following choices:
\begin{itemize}
	\item a Levi factor $L(X) \subset P(X)$\index{L(X)@$L(X)$}, say by choosing a base point $x_0 \in \mathring{X}$ together with a $P(X)$-eigenfunction $f$ with zero locus $X \smallsetminus \mathring{X}$, as in \cite[\S 2.1]{SV17};
	\item a maximal torus $A \hookrightarrow B \cap L(X)$.
\end{itemize}
Set $H := \Stab_G(x_0)$ so that we deduce an isomorphism $X^+ \simeq H \backslash G$ from the choice of $x_0$. It is known that $L(X) \cap H \supset L(X)_\text{der}$. Together with the choices above, by \cite[\S 2.1]{SV17} we have an identification of $F$-tori
\begin{gather}\label{eqn:iden-tori}
	L(X)/L(X) \cap H = A/A \cap H \rightiso A_X;
\end{gather}
in particular, we may identify $A_X$ as a closed subvariety of $\mathring{X}$ via the orbit map $A_X \ni a \mapsto x_0 a$.

\section{Boundary degenerations}\label{sec:boundary}
Assume $\text{char}(\Bbbk) = 0$ and consider a spherical homogeneous $G$-space $X$.

\begin{notation}
	A spherical embedding $X \hookrightarrow \bar{X}$ is called \emph{toroidal}\index{spherical embedding!toroidal} if it is colorless, i.e.\ no divisor in $\mathcal{D}^B$ can contain a $G$-orbit in its closure inside $\bar{X}$. As in \cite{SV17}, the term \emph{complete toroidal compactification}\index{complete toroidal compactification} will mean a toroidal embedding with $X$ proper.

	Luna--Vust theory specializes to give the natural bijection
	\begin{align}\label{eqn:toroidal-classification}
		\left\{ \begin{array}{l}
			X \hookrightarrow \bar{X}: \\
			\text{toroidal embeddings}
		\end{array} \right\} & \stackrel{1:1}{\longleftrightarrow}
		\left\{
			\text{strictly convex fans in $\mathcal{V}$}
		\right\}, \\
		\left\{ \begin{array}{l}
			X \hookrightarrow \bar{X}: \\
			\text{toroidal compactifications}
		\end{array} \right\} & \stackrel{1:1}{\longleftrightarrow}
		\left\{ \begin{array}{l}
			\text{subdivisons of $\mathcal{V}$ by} \\
			\text{a strictly convex fan}
		\end{array} \right\}.
	\end{align}
	A strictly convex fan subdividing $\mathcal{V}$ means a strictly convex fan in $\mathcal{Q}$ whose union of cones equals $\mathcal{V}$.
\end{notation}

Smooth complete toroidal compactifications of $X$ always exist \cite[\S 29.2]{Ti11}, but there is no canonical choice unless when $\mathcal{V}$ is strictly convex. Below is a summary of some notions introduced in \cite[\S 2]{SV17}.

\begin{itemize}
	\item There is a finite crystallographic reflection group $W_X$, called the \textit{little Weyl group} of $X$, such that $\mathcal{V}$ is a fundamental domain for the $W_X$-action on $\mathcal{Q}$. There is a natural way to embed $W_X$ into the Weyl group $W$ associated to $G$ and $B$.
	\item Let $\Delta_X$ be the set of integral generators of the extremal rays of
	\[ \left\{\alpha \in \Lambda_{\Q} : \angles{\alpha, \mathcal{V}} \leq 0 \right\}; \]
	its elements are called the (normalized, simple) \emph{spherical roots}\index{Delta_X@$\Delta_X$}. Together with the $W_X$-action on $\mathcal{Q}$, they are known to form a based root system.
	\item Choose a smooth complete toroidal compactification $X \hookrightarrow \bar{X}$. By \cite[2.3.6]{SV17}, we have a notion of correspondence (non-bijective in general) between
		\begin{inparaenum}[(i)]
			\item $G$-orbits $Z \subset \bar{X}$ or their closures, and
			\item subsets $\Theta \subset \Delta_X$.
		\end{inparaenum}
		Specifically, to the complete toroidal compactification is associated a subdivision of $\mathcal{V}$ into a strictly convex fan by \eqref{eqn:toroidal-classification}; each cone therein intersects $X_*(A_X)$ in a free monoid. To each $Z$ is attached a face $\mathcal{F}_Z$ of that fan, by Proposition \ref{prop:G-orbits}. We stipulate that $Z \leftrightarrow \Theta$ if $\mathcal{F}_Z$ is orthogonal to $\Theta$, but not orthogonal to any subset $\Theta' \supsetneq \Theta$ of $\Delta_X$. For example, $\Theta = \Delta_X$ corresponds to $Z = \bar{X}$.

		For any orbit closure $Z \leftrightarrow \Theta$, the normal cone $N_Z \bar{X}$ is known to be spherical. Denote by $X_\Theta$ its open $G$-orbit: this homogeneous $G$-space is called the \emph{boundary degeneration}\index{boundary degeneration} of $X$ attached to $\Theta$. It is wavefront if $X$ is.
	\item Note that one $\Theta$ may correspond to several $Z$. As indicated in \cite[Proposition 2.5.3]{SV17}, the resulting boundary degenerations $X_\Theta$ are all isomorphic; they are also independent of the choice of $\bar{X}$.
	\item To each $\Theta$ is attached a face $\Theta^\perp \cap \mathcal{V}$ of $\mathcal{V}$, as well as the subtorus $A_\Theta := A_{X, \Theta} \subset A_X$\index{A_Theta@$A_\Theta$} characterized by $X_*(A_\Theta) = \Theta^\perp \cap X_*(A_X)$. By \cite[2.4.6]{SV17}, there is a canonical identification
		\[ A_\Theta \rightiso \mathcal{Z}(X_\Theta) := \Aut_G(X_\Theta)^0. \]
	\item Furthermore, there is an embedding $\Gm^{\Delta_X \smallsetminus \Theta} \hookrightarrow A_\Theta$, whose image in $\mathcal{Z}(X_\Theta)$ acts by dilating the fibers of $\text{pr}_Z: N_Z \bar{X} \to Z$; the GIT quotient of $X_\Theta$ by $\Gm^{\Delta_X \smallsetminus \Theta}$ is isomorphic to $Z$ via the projection morphism $\text{pr}_Z$.
\end{itemize}

\begin{remark}\label{rem:local-structure-thm}
	The basic tool for establishing these properties is the Local Structure Theorem due to Brion, Luna and Vust (see \cite[Theorem 2.3.4]{SV17}). Morally, its effect is to reduce things to the toric case $G = A$. For a general toroidal embedding $X \hookrightarrow \bar{X}$ in characteristic zero, it asserts that $\bar{X}$ is covered by $G$-translates of the $P(X)$-stable open subset
	\begin{gather}\label{eqn:local-structure-thm}
		\bar{X}_B := \bar{X} \smallsetminus \bigcup_{D \in \mathcal{D}^B} \bar{D} \leftiso Y \utimes{L(X)} P(X)  \simeq Y \times U_{P(X)}
	\end{gather}
	where
	\begin{compactitem}
		\item $\bar{D}$ stands for the closure in $\bar{X}$ of the prime divisor $D \subset X$,
		\item $Y$ is the closure of $A_X$ in $\bar{X}_B$ which is an $L(X)_\text{ab}$-variety, as a toric variety it corresponds to the fan in $\mathcal{V}$ attached to $X^+ \hookrightarrow \bar{X}$,
		\item $Y \utimes{L(X)} P(X)$ is a contracted product (cf.\ \eqref{eqn:parabolic-induction}) and the morphisms are the obvious ones.
	\end{compactitem}
	Furthermore, $Y$ meets every $G$-orbit in $\bar{X}$. Therefore $\bar{X}$ is smooth if and only if $Y$ is. In view of \eqref{eqn:local-structure-thm} and Theorem \ref{prop:LV-morphism}, the existence of smooth complete toroidal compactifications for $X$ is now clear: simply take any complete toroidal embedding into $\bar{X}$ and desingularize $\bar{X}$ by subdividing the corresponding fan in $\mathcal{V}$.
\end{remark}

\begin{remark}\label{rem:iden-B-orbits}
	The open $B$-orbits of $X$ and $X_\Theta$ can be identified as follows (see \cite[2.4.5]{SV17}):
	\begin{gather}\label{eqn:iden-B-orbits}
		\mathring{X} \simeq \mathring{X}_\Theta \quad \text{as $B$-varieties}.
	\end{gather}
	If we express $\mathring{X}$ as $A_{X^+} \times U_{P(X)}$ using the Local Structure Theorem, then \eqref{eqn:iden-B-orbits} is unique up to a $B$-isomorphism of the form $(y,u) \mapsto (y, y^{-1}vyu)$, for some $v \in U_{P(X)}$ fixed by $\Ker[L(X) \twoheadrightarrow A_X]$.
	
	The identification \eqref{eqn:iden-B-orbits} of $B$-varieties pins down the isomorphism $A_\Theta \rightiso \mathcal{Z}(X_\Theta)$ as follows: $a \in A_\Theta$ acts by $x_0 b \mapsto x_0 a b$, where $b \in B$. This is well-defined by \eqref{eqn:iden-tori}.
\end{remark}

The reader may convince himself of all those claims about $X_\Theta$ by checking the case of toric varieties. For example, the identification \eqref{eqn:iden-B-orbits} should become evident in the toric setting.

\begin{remark}\label{rem:Whittaker-induction}
	Non-complete smooth toroidal embeddings have to be used in certain situations, and the notion of boundary degenerations has to be modified accordingly. This is the case for ``Whittaker inductions'' dealt in \cite[\S 2.6]{SV17}.
\end{remark}

\section{Cartan decomposition}\label{sec:Cartan}
Consider now a local field $F$ of characteristic zero.

Let $G$ be a connected reductive $F$-group. We assume $G$ split in order to apply the results of \cite{SV17}, although some other cases will be discussed in Remark \ref{rem:symmetric-spaces}. As before, we fix a Borel subgroup $B \subset G$ with $A := B/U$. Let $X^+$ be a spherical homogeneous $G$-space defined over $F$, with the open $B$-orbit $\mathring{X}$. The first observation is the existence of $F$-rational points.

\begin{proposition}[{Cf.\ \cite[Theorem 1.5.1]{Sak08} and its proof}]\label{prop:existence-point}
	Under the assumptions above, we have $X^+(F) \neq \emptyset$. Here $F$ can be any field of characteristic zero.
\end{proposition}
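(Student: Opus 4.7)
The plan is to reduce to the unique open $B$-orbit $\mathring{X} \subset X^+$. Since $G$ is $F$-split, the Borel subgroup $B$ is defined over $F$, and the uniqueness of the open $B$-orbit forces $\mathring{X}$ to be $\Gal(\bar{F}/F)$-stable, hence defined over $F$. As $\mathring{X}(F) \subset X^+(F)$, it suffices to establish that $\mathring{X}(F) \neq \emptyset$.

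The core of the argument is the classical fact that any homogeneous variety $Y$ under a connected split solvable $F$-group $B$ (in characteristic zero) carries an $F$-rational point. This is proved by dévissage: $B$ admits a composition series whose successive quotients are isomorphic to $\Gm$ or $\Ga$, both of which have trivial $H^1$ over any field by Hilbert 90 and its additive analogue, so $H^1(F, B) = 1$; moreover, in characteristic zero a closed connected subgroup of a split solvable group is again split solvable, so its $H^1$ also vanishes. The twisted Galois cohomology exact sequence for a geometric stabilizer in $B$ then forces the class classifying $Y$ to be trivial, yielding $Y(F) \neq \emptyset$. Applying this to $Y = \mathring{X}$ under $B$ completes the reduction.

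An alternative, more concrete route is to produce an $F$-point via the Local Structure Theorem recalled in Remark \ref{rem:local-structure-thm}. The parabolic $P(X) := \{g \in G : \mathring{X} \cdot g = \mathring{X}\}$ is $F$-rational since $\mathring{X}$ is, and a Levi decomposition $P(X) = L(X) \ltimes U_{P(X)}$ may be chosen over $F$ because $G$ is $F$-split. The theorem then gives an identification $\mathring{X} \simeq A_{X^+} \times U_{P(X)}$ as varieties, where $A_{X^+}$ is a quotient of an $F$-split torus and $U_{P(X)}$ is a split unipotent $F$-group; both factors contain the identity as a manifest $F$-point.

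The main obstacle is the apparent circularity of the second approach: the Local Structure Theorem, as usually stated, fixes a geometric base point $\bar{x}_0 \in \mathring{X}(\bar{F})$ in order to produce the product decomposition, whereas we wish to produce an $F$-rational point precisely by exhibiting such a decomposition. Resolving this requires verifying that the identification can be constructed using only the intrinsic $F$-rational data ($P(X)$, its $F$-rational Levi, the toric piece inside $\bar{X}_B$), without presupposing $\mathring{X}(F) \neq \emptyset$. The cohomological route of the second paragraph sidesteps the issue and is cleaner; the Local Structure Theorem then gives the additional geometric insight, identifying the $F$-point so produced with $\bar{x}_0 \cdot (1,1) \in A_{X^+}(F) \times U_{P(X)}(F)$.
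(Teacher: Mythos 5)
Your overall strategy --- pass to the open $B$-orbit $\mathring{X}$, which is Galois-stable by uniqueness and hence defined over $F$, then invoke the fact that a homogeneous space under a connected split solvable group in characteristic zero has a rational point --- is exactly the argument of \cite[Theorem 1.5.1]{Sak08}, which is all the paper itself offers by way of proof. The one weak step is your justification of the key lemma. The geometric stabilizer of a point of $\mathring{X}$ in $B$ is in general neither connected nor normal (already for $X^+$ the quotient of $\SL_2$ by the normalizer of a maximal torus, the generic stabilizer in $B$ is a copy of $\mu_2$), so ``$H^1$ of the stabilizer vanishes'' is not the right mechanism, and when $\bar{H}$ is not normal there is no twisted cohomology exact sequence with values in $\bar{H}\backslash B$ to appeal to; note also that if $Y$ were literally $H\backslash B$ with $H$ an $F$-subgroup, it would contain the image of $1\in B(F)$ and no cohomology would be needed --- the cohomology enters precisely for forms, where your argument is incomplete. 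The standard repair is to run the d\'evissage on the homogeneous space rather than on the stabilizer: filter $B$ by normal connected $F$-subgroups with successive quotients $\Ga$ or $\Gm$, and pass to the corresponding successive quotients of $Y$. A homogeneous space under $\Ga$ in characteristic zero is a point or a $\Ga$-torsor, trivial since $H^1(F,\Ga)=0$; a homogeneous space under $\Gm$ has stabilizer $\mu_n$ or $\Gm$, which is normal and canonically defined over $F$, so it is a point or a torsor under $\Gm/\mu_n \simeq \Gm$, trivial by Hilbert 90. Rational points then lift step by step through the resulting fibrations. With that correction the proof is complete; your own diagnosis of the circularity in the Local Structure Theorem route is accurate, and that route is best regarded as a posteriori description rather than as a proof.
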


Henceforth we fix $x_0 \in \mathring{X}(F)$ and some regular function on $X^+$ with vanishing locus $X^+ \smallsetminus \mathring{X}$. These choices define $P(X)$ and its Levi factor $L(X)$. To apply the Luna--Vust classification (Theorem \ref{prop:LV}) we have to work over an algebraic closure $\bar{F}$. However, since $\text{Gal}(\bar{F}/F)$ acts trivially on $\mathcal{Q}$, the Luna--Vust data not involving colors do not require the algebraic-closeness of $F$; cf.\ the remarks in \cite[\S 2.2]{Sak12}.

For the next definition, recall that in \S\ref{sec:review-spherical} we have defined a quotient torus $A_{X^+}$ of $A$ which may be regarded as a closed subvariety of $\mathring{X}$ via $A_{X^+} \ni a \mapsto x_0 a$; furthermore, $X^*(A_{X^+}) = \Lambda(X^+)$. All these constructions are defined over $F$.

\begin{definition}
	Let $\HC: A_{X^+}(F) \to \Hom(\Lambda(X^+), \Q) = \mathcal{Q}$ be the homomorphism characterized by \index{H@$\HC$}
	\[ \angles{\chi, \HC(a)} = -\log |\chi(a)|, \quad \chi \in \Lambda(X^+), \; a \in A_X(F). \]
	Here $\log = \log_q$ when $F$ is non-Archimedean with residual field $\mathbb{F}_q$. Note that $\HC$ is trivial on the maximal compact subgroup of $A_{X^+}(F)$, and its image equals $X_*(A_{X^+})$ when $F$ is non-Archimedean. Define \index{A_X(F)-plus@$A_{X^+}(F)^+$}
	\[ A_{X^+}(F)^+ := \HC^{-1}(\mathcal{V}). \]
	In the literature $-\HC$ is often used. Our convention here agrees with that of \cite[\S 1.7]{SV17}.
\end{definition}

\begin{theorem}\label{prop:Cartan}\index{Cartan decomposition}
	The \emph{Cartan decomposition} holds for $X^+(F)$, namely there exists a compact subset $K \subset G(F)$ such that
	\[ A_{X^+}(F)^+ K = X^+(F). \]
	In the non-Archimedean case, it is customary to enlarge $K$ so that $K$ is compact open.
\end{theorem}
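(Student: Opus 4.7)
The strategy is to reduce to the classical Cartan decomposition on a smooth toric variety, via the Local Structure Theorem (Remark~\ref{rem:local-structure-thm}) applied to a smooth complete toroidal compactification $X^+ \hookrightarrow \bar{X}$ defined over $F$. Such an $\bar{X}$ exists as explained after Remark~\ref{rem:local-structure-thm}: start from any complete toroidal embedding and refine the corresponding fan in $\mathcal{V}$ until it is smooth. Since $\bar{X}$ is proper, $\bar{X}(F)$ is compact in its natural topology. The Local Structure Theorem furnishes a $P(X)$-stable open subvariety $\bar{X}_B \subset \bar{X}$ with $\bar{X}_B \simeq Y \utimes{L(X)} P(X) \simeq Y \times U_{P(X)}$, where $Y$ is a smooth toric $A_{X^+}$-variety whose fan subdivides $\mathcal{V}$; since $Y$, and hence $\bar{X}_B$, meets every $G$-orbit of $\bar{X}$, its $G(F)$-translates cover $\bar{X}(F)$, and by compactness there exist $g_1, \ldots, g_n \in G(F)$ with $X^+(F) \subset \bar{X}(F) = \bigcup_{i=1}^n \bar{X}_B(F)\cdot g_i$.

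It therefore suffices to establish a Cartan-type decomposition on the single chart $\bar{X}_B(F) \cap X^+(F)$. The classical toric Cartan decomposition on the smooth $A_{X^+}$-variety $Y$ yields a compact subset $C \subset Y(F)$ with $Y(F) = A_{X^+}(F)^+ \cdot C$: this is proved by covering $Y$ with the affine charts $\Spec F[\sigma^\vee \cap \Lambda(X^+)]$ indexed by cones $\sigma$ of the fan, using that $\HC$ realizes $A_{X^+}(F)$ modulo its maximal compact as $X_*(A_{X^+})$ in the non-archimedean case, or as a cocompact sublattice of $\mathcal{Q}$ in the archimedean case, and that the cones $\sigma$ cover $\mathcal{V}$. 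Absorbing the $U_{P(X)}(F)$-factor into a compact piece---using the filtration of $U_{P(X)}(F)$ by compact open subgroups in the non-archimedean case, or an exponential chart in the archimedean case---one obtains a compact $K_0 \subset G(F)$ with $\bar{X}_B(F) \cap X^+(F) \subset A_{X^+}(F)^+ \cdot K_0$. Setting $K := K_0 \cdot \{g_1, \ldots, g_n\}$ then produces $X^+(F) = A_{X^+}(F)^+ \cdot K$; in the non-archimedean case $K$ may be further enlarged to a compact open subset.

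The principal obstacle is ensuring that $\bar{X}(F)$ is covered by $G(F)$-translates of $\bar{X}_B(F)$ rather than merely by $G(\bar{F})$-translates. This requires that each $F$-rational $G(\bar{F})$-orbit of $\bar{X}$ contains, in each of its $G(F)$-orbits, a point of $\bar{X}_B(F)$. Proposition~\ref{prop:existence-point} applied to every orbit closure in $\bar{X}$, combined with the finiteness of $H^1(F, H)$ over local fields---which implies that $X^+(F)$ decomposes into finitely many $G(F)$-orbits and similarly for lower strata---supplies the required rational points; finitely many coset representatives are then absorbed into $K$. A secondary subtlety in the archimedean setting is that the extraction of a finite subcover must be performed on the compact real-analytic manifold $\bar{X}(F)$, but this is routine once the $G(F)$-translates of $\bar{X}_B(F)$ are known to form an open cover.
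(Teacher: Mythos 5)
The paper does not actually prove this statement: it is quoted from the literature, namely \cite[Theorem 4.1]{Fl78} for archimedean symmetric spaces, \cite[Theorem 5.13]{KKSS15} for real spherical spaces, and \cite[Lemma 5.3.1]{SV12} for the non-archimedean case. Your attempt at a direct proof has a genuine gap at its central step, the claimed ``toric Cartan decomposition'' $Y(F) = A_{X^+}(F)^+ \cdot C$ with $C \subset Y(F)$ compact. This statement is false whenever $X^+$ is not horospherical. Indeed, the fan of $Y$ subdivides $\mathcal{V}$, so for any compact $C \subset Y(F)$ one has $\HC\bigl(C \cap A_{X^+}(F)\bigr) \subset \bigcup_{i}(v_i + \mathcal{V})$ for finitely many $v_i$ (this is exactly the content of Corollary \ref{prop:compact-exhaustion}); since $A_{X^+}(F)^+ = \HC^{-1}(\mathcal{V})$ and the torus action preserves the open orbit, $A_{X^+}(F)^+ \cdot C$ meets $A_{X^+}(F)$ only inside $\HC^{-1}\bigl(\bigcup_i (v_i + \mathcal{V})\bigr)$. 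When $\mathcal{V} \subsetneq \mathcal{Q}$ this set omits all $a$ with $\angles{\alpha, \HC(a)}$ large for a linear form $\alpha$ separating $\mathcal{V}$, so it is a proper subset of $A_{X^+}(F) \subset Y(F)$. Already in the group case of Example \ref{eg:typical-cone}, the dominant part of the torus is missed. Consequently $\mathring{X}(F) = \bar{X}_B(F) \cap X^+(F)$ is \emph{not} contained in $A_{X^+}(F)^+ K_0$ for any compact $K_0$, and passing to finitely many $G(F)$-translates $g_i$ does not repair this: for a fixed $g_i$ the map $y \mapsto y g_i$ is a homeomorphism onto its image, so points $x_0 a$ with $\HC(a)$ escaping to infinity outside $\mathcal{V}$ also leave every compact subset of the chart $\mathring{X}(F) g_i$, with no control that their $A_{X^+}$-coordinate there lands in $\mathcal{V}$.

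The omitted point is the entire content of the theorem: why the single chamber $\mathcal{V}$ suffices. In the group and symmetric cases this rests on the Weyl group being realized inside $N_K(A)$, so that $A(F) \subset A(F)^+ N_K(A)$; for a general spherical homogeneous space the little Weyl group $W_X$ (for which $\mathcal{V}$ is a fundamental domain in $\mathcal{Q}$) is \emph{not} realized by elements of $G$ acting on $X^+$, and the proof of \cite[Lemma 5.3.1]{SV12} instead proceeds by induction over the $G$-orbits of a wonderful compactification $\bar{X}$, starting from the (complete, hence compact) closed orbits and using the contraction of normal cones by $A_\Theta(F)^{>0}$ to show that a whole neighbourhood of each boundary orbit is already swept out by $A_{X^+}(F)^+ K$. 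None of this appears in your sketch. The peripheral ingredients you do supply --- covering $\bar{X}(F)$ by finitely many $G(F)$-translates of $\bar{X}_B(F)$ using finiteness of Galois cohomology, and absorbing the $U_{P(X)}(F)$-factor --- are reasonable but do not touch the real difficulty.
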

Alternatively, there exist $x_0, \ldots, x_k \in \mathring{X}(F)$ such that $\bigcup_{i=0}^k x_i A(F)^+ K = X^+(F)$, where $A(F)^+$ is the inverse image of $A_{X^+}(F)^+$ in $A(F)$.
\begin{proof}
	It is established under the following circumstances:
	\begin{inparaenum}[(i)]
		\item $F$ is Archimedean and $X^+$ is a symmetric space \cite[Theorem 4.1]{Fl78}, in which case $K$ can be taken to be a maximal compact subgroup in good position relative to $A$;
		\item the general Archimedean case for ``real spherical spaces'' is addressed in \cite[Theorem 5.13]{KKSS15}, cf.\ the (2.8) of \textit{loc.\ cit.};
		\item $F$ is non-Archimedean and $X^+$ is any spherical homogeneous $G$-space \cite[Lemma 5.3.1]{SV17}.
	\end{inparaenum}
\MyQED\end{proof}

\begin{remark}\label{rem:symmetric-spaces}
	Recall that $X^+$ is a symmetric space means that there exists an involution $\sigma: G \to G$, with fixed subgroup $G^\sigma$, such that $H = \Stab_G(x_0)$ satisfies $(G^\sigma)^\circ \subset H \subset G^\sigma$. Symmetric spaces are wavefront spherical varieties. Cartan decomposition holds for symmetric spaces over a local field $F$ with $\text{char}(F) \neq 2$, and we do not need to assume $G$ split in that case. See \cite{BO07} for the non-Archimedean setting.
\end{remark}

Let us turn to the notion of boundaries in the Cartan decomposition.
\begin{itemize}
	\item Take a smooth complete toroidal compactification $X^+ \hookrightarrow \bar{X}$ as in \S\ref{sec:boundary}, which is automatically defined over $F$. For any subset $\Theta \subset \Delta_{X^+}$ we define \index{A_Theta(F)-plus@$A_\Theta(F)^{>0}$}
		\begin{align*}
			A_\Theta(F)^{>0} & := \left\{ a \in A_\Theta(F) : \forall \alpha \in \Delta_{X^+} \smallsetminus \Theta, \; \angles{-\alpha, \HC(a)} = \log |\alpha(a)| > 0 \right\} \\
			& = \HC^{-1} \left( \relint (\Theta^\perp \cap \mathcal{V}) \right).
		\end{align*}
		Then $A_\Theta(F)^{>0}$ is precisely the subset of elements in $A_\Theta(F)$ that steer every $x \in X_\Theta$ towards some $Z \leftrightarrow \Theta$. In short, $A_\Theta(F)^{>0}$ contracts the fibers of various $N_Z(\bar{X}) \to Z$. Cf.\ \cite[Lemma 2.4.9]{SV17} and Remark \ref{rem:iden-B-orbits}. The avatars $A_\Theta(F)^{\geq 0}$ are defined by using $\geq 0$ or by omitting $\relint$ in the formulas above.
	\item A similar description exists for an affine spherical embedding $X^+ \hookrightarrow X$, cf.\ \cite[p.624]{Sak12}. Here we have to choose a proper birational $G$-equivariant morphism $\hat{X} \to X$, such that $X^+ \hookrightarrow \hat{X}$ is toroidal. This is addressed combinatorially by applying Theorem \ref{prop:LV-morphism} with $\varphi := \identity: X^+ \to X^+$: a canonical choice is to take $\hat{X} = \hat{X}_\text{can}$ to be the ``decolorization'' of $X$, by taking the cone $\mathcal{C}_{\hat{X}} := \mathcal{C}_X \cap \mathcal{V}$ and $\mathcal{F}_{\hat{X}} := \emptyset$.

		Given a face $\mathcal{F}$ of the colored cone $(\mathcal{C}_X, \mathcal{F}_X)$, the subgroup $\mathcal{F} \cap X_*(A_{X^+})$ defines a subtorus $A_\mathcal{F} \subset A_{X^+}$. Set \index{A_F(F)-plus@$A_{\mathcal{F}}(F)^{>0}$} \index{A_F@$A_{\mathcal{F}}$}
		\[ A_\mathcal{F}(F)^{>0} := \HC^{-1}(\relint(\mathcal{F} \cap \mathcal{V})). \]
		Translation by elements of $A_\mathcal{F}(F)^{>0}$ steers elements in $X$ to the $G$-orbit closure corresponding to $\mathcal{F}$, cf.\ Proposition \ref{prop:G-orbits}. Indeed, this can be seen on the level of $\hat{X}$ by Remark \ref{rem:local-structure-thm}. Replacing $\relint(\mathcal{F} \cap \mathcal{V})$ by $\mathcal{F} \cap \mathcal{V}$ gives rise to $A_{\mathcal{F}}(F)^{\geq 0}$
\end{itemize}
Again, the reader is invited to check these properties in the case $G=A$. The rigorous argument goes by reducing to the toric setting via Remark \ref{rem:local-structure-thm}; in the case of $X$ one has to pass to the toroidal embedding $\hat{X}$.

Next, we explain a way of choosing the spherical embeddings $\hat{X} \twoheadrightarrow X \supset X^+$ and $X^+ \hookrightarrow \bar{X}$ compatibly.
\begin{itemize}
	\item Choose a fan $\mathfrak{F}$ subdividing $\mathcal{V}$, such that there exists a subfan $\mathfrak{F}'$ that subdivides $\mathcal{C}_X \cap \mathcal{V}$. Upon taking refinements, we may assume that both $\mathfrak{F}$ and $\mathfrak{F}'$ give rise to smooth toroidal embeddings of $X^+$.
	\item Let $X^+ \hookrightarrow \bar{X}$ and $X^+ \hookrightarrow \hat{X}$ be the toroidal embeddings determined by $\mathfrak{F}$ and $\mathfrak{F}'$, respectively. Notice that $\hat{X}$ is not necessarily equal to the $\hat{X}_\text{can}$ determined by $\mathcal{C}_{\hat{X}} := \mathcal{C}_X \cap \mathcal{V}$; nevertheless, Theorem \ref{prop:LV-morphism} connects them by proper birational equivariant morphisms $\hat{X} \to \hat{X}_\text{can} \to X$, whose composite we denote as $p$. On the other hand, Luna--Vust theory says that $\hat{X}$ embeds as a $G$-stable open subvariety of $\bar{X}$.
	\item All the morphisms above induce identity on $X^+$. The situation is summarized as
		\begin{equation}\label{eqn:hat-vs-bar} \begin{tikzcd}
			\bar{X} & \hat{X} \arrow[twoheadrightarrow]{d}[right]{p} \arrow[hookrightarrow]{l} \\
			X^+ \arrow[hookrightarrow]{u} \arrow[hookrightarrow]{r} & X
		\end{tikzcd}\end{equation}
		in which every $\hookrightarrow$ is an open immersion.
	\item As in Remark \ref{rem:local-structure-thm}, define the open $P(X)$-subvariety $\bar{X}_B := \bar{X} \smallsetminus \bigcup_{D \in \mathcal{D}^B} \bar{D}$, where $\bar{D}$ denotes the closure of $D$ in $\bar{X}$; here we are implicitly working over the algebraic closure $\bar{F}$. Let $\bar{Y}$ stand for the closure of $A_{X^+}$ in $\bar{X}_B$. Recall that it is a toric variety defined by the fan $\mathfrak{F}$ in $\mathcal{Q}$; thus $\bar{Y}$ is defined over $F$.
	\item Similarly, set $\hat{X}_B := \hat{X} \smallsetminus \bigcup_{D \in \mathcal{D}^B} \hat{D}$ with $\hat{D}$ standing for the closure in $\hat{X}$. We obtain a toric variety $\hat{Y}$ in the same manner, whose fan is $\mathfrak{F}' \subset \mathcal{Q}$. There is a natural morphism $\hat{Y} \hookrightarrow Y$ of toric varieties: they share the common open stratum $A_{X^+}$. Thus it makes sense to write $\hat{Y}(\mathfrak{o}_F) \hookrightarrow \bar{Y}(\mathfrak{o}_F)$.
\end{itemize}

\begin{proposition}\label{prop:comparison-Cartan}
	For non-Archimedean $F$, the compact open subset $K \subset G(F)$ in the Cartan decomposition $A_{X^+}(F)^+ K = X^+(F)$ can be taken such that
	\[ (\bar{Y}(\mathfrak{o}_F) \cap \hat{Y}(F)) K = \hat{X}(F) \]
\end{proposition}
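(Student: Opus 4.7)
The strategy is to start from the Cartan decomposition $A_{X^+}(F)^+ K = X^+(F)$ of Theorem \ref{prop:Cartan} and extend it to $\hat{X}(F)$ by a density and closure argument, invoking the toric geometry of $\bar{Y}$. I expect that no enlargement of the original $K$ is necessary.

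I would first argue that $X^+(F)$ is dense in $\hat{X}(F)$. Since $\mathfrak{F}'$ was chosen so that $\hat{X}$ is smooth, $\hat{X}(F)$ is an $F$-analytic manifold in which the closed subvariety $\hat{X}\smallsetminus X^+$ cuts out a nowhere dense subset; combined with $X^+(F) \neq \emptyset$ (Proposition \ref{prop:existence-point}), this gives density. Taking closures in $\hat{X}(F)$ and using sequential compactness of $K$ (valid since $F$ is non-archimedean and $K$ is a compact metrizable subset of $G(F)$), I would then conclude that
\[
\hat{X}(F) \;=\; \overline{X^+(F)}^{\hat{X}(F)} \;=\; \overline{A_{X^+}(F)^+ K}^{\hat{X}(F)} \;=\; \overline{A_{X^+}(F)^+}^{\hat{X}(F)} \cdot K,
\]
where the last equality uses that any $x = \lim_n a_n k_n$ with $a_n \in A_{X^+}(F)^+$, $k_n \in K$ gives, after extracting a convergent subsequence $k_n \to k \in K$, a limit $a_n \to xk^{-1}$ in $\hat{X}(F)$ by continuity of the action. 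The problem is thus reduced to showing
\[
\overline{A_{X^+}(F)^+}^{\hat{X}(F)} \;\subset\; \bar{Y}(\mathfrak{o}_F) \cap \hat{Y}(F).
\]

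The toric inclusion step is the heart of the matter. Applying the valuative criterion cone-by-cone in the fan $\mathfrak{F}$ defining $\bar{Y}$, an element $a \in A_{X^+}(F)$ extends to an $\mathfrak{o}_F$-point of $\bar{Y}$ if and only if $\HC(a) \in |\mathfrak{F}| = \mathcal{V}$, yielding $A_{X^+}(F) \cap \bar{Y}(\mathfrak{o}_F) = A_{X^+}(F)^+$. Since $\bar{Y}$ is separated of finite type over $\mathfrak{o}_F$, the subset $\bar{Y}(\mathfrak{o}_F)$ is compact, hence closed in the Hausdorff space $\bar{X}(F)$. Taking closures in $\bar{X}(F)$ and intersecting with the open subset $\hat{X}(F) \subset \bar{X}(F)$ gives $\overline{A_{X^+}(F)^+}^{\hat{X}(F)} \subset \bar{Y}(\mathfrak{o}_F) \cap \hat{X}(F)$.

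Finally I would identify $\bar{Y}(\mathfrak{o}_F) \cap \hat{X}(F) = \bar{Y}(\mathfrak{o}_F) \cap \hat{Y}(F)$. Since $\bar{Y}(\mathfrak{o}_F) \subset \bar{X}_B(F)$ and $\hat{X}(F) \cap \bar{X}_B(F) = \hat{X}_B(F)$ by definition of $\hat{X}_B$, this reduces to the scheme-theoretic identity $\hat{Y} = \bar{Y} \cap \hat{X}_B$, which is immediate: $\hat{Y}$ is the closure of $A_{X^+}$ in the open subvariety $\hat{X}_B$ of $\bar{X}_B$, so it coincides with the restriction to $\hat{X}_B$ of the closure of $A_{X^+}$ in $\bar{X}_B$. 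Chaining the inclusions gives $\hat{X}(F) \subset (\bar{Y}(\mathfrak{o}_F) \cap \hat{Y}(F)) K$, and the reverse inclusion is trivial from the $G(F)$-stability of $\hat{X}(F)$. The only mildly delicate point I anticipate is the density step, which quietly uses the smoothness of $\hat{X}$ arranged in the setup preceding \eqref{eqn:hat-vs-bar}; beyond that, the argument is a routine exercise in interleaving closures with the compact group action.
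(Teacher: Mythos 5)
Your argument is correct, but it reaches the conclusion by a genuinely different route from the paper. The paper re-enters the inductive proof of \cite[Lemma 5.3.1]{SV12} to produce a $K$ satisfying the orbit-wise identity $(\bar{Y}(\mathfrak{o}_F) \cap Z(F))K = Z(F)$ for \emph{every} $G$-orbit $Z \subset \bar{X}$, and then simply takes the union over the orbits contained in $\hat{X}$; the only topological input is the same identification $\hat{Y} = \bar{Y} \cap \hat{X}$ that you also use at the end. You instead take the Cartan decomposition on the open orbit as a black box and propagate it to the boundary by a closure argument: density of $X^+(F)$ in $\hat{X}(F)$ (which does require the smoothness of $\hat{X}$ arranged in the setup preceding \eqref{eqn:hat-vs-bar} — for a singular model the density step can fail), the identity $\overline{A_{X^+}(F)^+ K} = \overline{A_{X^+}(F)^+}\cdot K$ via sequential compactness of $K$, and the fact that $\bar{Y}(\mathfrak{o}_F)$ is compact, hence closed in $\bar{X}(F)$, and meets $A_{X^+}(F)$ exactly in $\HC^{-1}(\mathcal{V})$. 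What each approach buys: yours shows that \emph{any} $K$ witnessing the Cartan decomposition already works, with no enlargement, and avoids reopening the SV12 induction; the paper's yields the finer orbit-by-orbit statement $(\bar{Y}(\mathfrak{o}_F) \cap Z(F))K = Z(F)$, which is independent of smoothness considerations and is closer in spirit to how the decomposition is actually used later (e.g.\ in Corollary \ref{prop:compact-exhaustion}). One small presentational point: your reduction of $\bar{Y}(\mathfrak{o}_F)\cap\hat{X}(F)$ to $\bar{Y}(\mathfrak{o}_F)\cap\hat{Y}(F)$ is asserted ``by definition,'' but it silently uses $\bar{D}\cap\hat{X}=\hat{D}$ for each $D \in \mathcal{D}^B$, which is exactly the verification the paper spells out; it is routine but worth recording.
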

\begin{proof}
	We shall plug $X^+ \hookrightarrow \bar{X}$ into the elegant proof of \cite[Lemma 5.3.1]{SV17}; this is justified since $\bar{X}$ is a \emph{wonderful compactification} in the sense of \cite[\S 2.3]{SV17}. In that proof, a compact open subset $K \subset G(F)$ is constructed such that
	\begin{gather}\label{eqn:Cartan-Z}
		(\bar{Y}(\mathfrak{o}_F) \cap Z(F))K = Z(F), \quad Z \subset \bar{X}: G\text{-orbit}.
	\end{gather}
	The construction proceeds by induction: one starts with the closed $G$-orbits. Taking $Z=X^+$ gives the Cartan decomposition since $\bar{Y}(\mathfrak{o}_F) \cap X^+ = \bar{Y}(\mathfrak{o}_F) \cap A_{X^+}(F) = \HC^{-1}(\mathcal{V})$, as the fan $\mathfrak{F}$ subdivides $\mathcal{V}$.

	Now we take the union over all $G$-orbits $Z \subset \hat{X}$ in \eqref{eqn:Cartan-Z}. It implies that $\hat{X}(F)$ equals
	\begin{align*}
		\bigcup_{Z \subset \hat{X}} (\bar{Y}(\mathfrak{o}_F) \cap Z(F)) K & = \left( \bigcup_{Z \subset \hat{X}} \bar{Y}(\mathfrak{o}_F) \cap Z(F) \right) K \\
		& = (\bar{Y}(\mathfrak{o}_F) \cap \hat{X}(F)) K.
	\end{align*}
	We contend that $\hat{Y} = \bar{Y} \cap \hat{X}$. Indeed, $\hat{X} \subset \bar{X}$ is open dense, so it is routine to verify (over the algebraic closure) that
	\begin{inparaenum}[(i)]
		\item $\bar{D} \cap \hat{X} = \hat{D}$ for each $D \in \mathcal{D}^B$, thus
		\item $\bar{X}_B \cap \hat{X} = \hat{X}_B$,
		\item therefore $\bar{Y} \cap \hat{X} = \bar{Y} \cap \hat{X}_B$ is an irreducible closed subvariety of $\hat{X}_B$ containing $A_{X^+}$, from which $\hat{Y} = \bar{Y} \cap \hat{X}$.
	\end{inparaenum}
	
	Conclusion: $\bar{Y}(\mathfrak{o}_F) \cap \hat{X}(F) = \bar{Y}(\mathfrak{o}_F) \cap \hat{Y}(F)$, and our assertion follows immediately.
\MyQED\end{proof}

\begin{corollary}\label{prop:compact-exhaustion}
	With previous notations, $\hat{X}(F)$ is a union of compact open subsets of the form $C_{\hat{X}} = C_{\hat{Y}} \cdot K$ where $C_{\hat{Y}} \subset \hat{Y}(F)$ satisfies
	\[ C_{\hat{Y}} \cap A_{X^+}(F) = \bigcup_{i=0}^m \HC^{-1}\left( v_i + \mathcal{C}_X \cap \mathcal{V} \right) \]
	for some $m$ and $v_0, \ldots, v_m \in \mathcal{V}$.
\end{corollary}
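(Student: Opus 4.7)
The plan is to promote the decomposition of Proposition \ref{prop:comparison-Cartan}, namely $(\bar{Y}(\mathfrak{o}_F) \cap \hat{Y}(F)) \cdot K = \hat{X}(F)$, into a covering by the required compact open subsets. Writing $C_0 := \bar{Y}(\mathfrak{o}_F) \cap \hat{Y}(F)$, it suffices to cover $C_0$ by compact open subsets $C_{\hat{Y}} \subset \hat{Y}(F)$ of the prescribed shape; the sets $C_{\hat{X}} := C_{\hat{Y}} \cdot K$ will then provide the desired cover of $\hat{X}(F)$.

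The natural candidates for $C_{\hat{Y}}$ are translates of $\hat{Y}(\mathfrak{o}_F)$ by elements of $A_{X^+}(F)^+$. Since $\hat{Y}$ is the toric variety attached to the fan $\mathfrak{F}'$ with support $\mathcal{C}_X \cap \mathcal{V}$, the standard description of integral points of affine toric varieties gives $\hat{Y}(\mathfrak{o}_F) \cap A_{X^+}(F) = \HC^{-1}(\mathcal{C}_X \cap \mathcal{V})$. For any $a \in A_{X^+}(F)^+ = \HC^{-1}(\mathcal{V})$, the translate $a \cdot \hat{Y}(\mathfrak{o}_F)$ is compact open in $\hat{Y}(F)$ with
$$ \bigl( a \cdot \hat{Y}(\mathfrak{o}_F) \bigr) \cap A_{X^+}(F) = \HC^{-1}\bigl( \HC(a) + \mathcal{C}_X \cap \mathcal{V} \bigr), $$
matching the prescribed form with $v_0 = \HC(a) \in \mathcal{V}$ and $m = 0$. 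Any finite union $\bigcup_{j=0}^{m} a_j \cdot \hat{Y}(\mathfrak{o}_F)$ with $a_j \in A_{X^+}(F)^+$ preserves the prescribed form, with the $v_j$'s taken to be $\HC(a_j)$.

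Next I would show that every $y \in C_0$ lies in some $a \cdot \hat{Y}(\mathfrak{o}_F)$ with $a \in A_{X^+}(F)^+$. If $y \in C_0 \cap A_{X^+}(F)$, the computation in the proof of Proposition \ref{prop:comparison-Cartan} forces $\HC(y) \in \mathcal{V}$, so the choice $a := y$ works, since $y = a \cdot 1$ and $1 \in \hat{Y}(\mathfrak{o}_F)$. If $y$ lies in a non-open $A_{X^+}$-orbit $O_\tau$ for some $\tau \in \mathfrak{F}' \smallsetminus \{0\}$, the action of $A_{X^+}$ on $O_\tau$ factors through $T' := A_{X^+}/T_\tau$, so the task becomes to lift $\overline{\HC(y)} \in X_*(T')$ to some $v \in \mathcal{V} \cap X_*(A_{X^+})$ such that, for any $a \in A_{X^+}(F)$ with $\HC(a) = v$, one has $a^{-1} y \in \hat{Y}(\mathfrak{o}_F) \cap O_\tau(F)$. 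A local toric analysis of $\hat{Y}$ near $O_\tau$, combined with the $\bar{Y}$-integrality of $y$, furnishes such a lift.

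The main obstacle will be this last lifting step; the sticking point is that the fiber of $X_*(A_{X^+}) \twoheadrightarrow X_*(T')$ through $\overline{\HC(y)}$ is a coset of $X_*(T_\tau)$, which need not meet $\mathcal{V}$ for arbitrary starting points. The rescue is the compatibility built into \eqref{eqn:hat-vs-bar}: the cone $\tau$ belongs to $\mathfrak{F}' \subset \mathfrak{F}$, whose support lies in $\mathcal{V}$, so $\tau \subset \mathcal{V}$ and the $\bar{Y}$-integrality of $y$ pins the class $\overline{\HC(y)}$ into the image of $\mathcal{V} \cap X_*(A_{X^+})$. Adding an integral multiple of a generator of $\tau$ then places the lift inside $\mathcal{V}$, completing the construction.
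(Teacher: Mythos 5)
Your proof is correct, but it organizes the argument differently from the paper. The paper first exhausts all of $\hat{Y}(F)$ by compact open subsets whose trace on $A_{X^+}(F)$ has the stated shape with $v_i$ allowed to range over $\mathcal{Q}$ — quoted as a standard consequence of the toric structure of $\hat{Y}$, with a pointer to \cite{AMRT10} — and only afterwards intersects with $\bar{Y}(\mathfrak{o}_F)$, using $\bar{Y}(\mathfrak{o}_F) \cap A_{X^+}(F) = \HC^{-1}(\mathcal{V})$ to force the $v_i$ into $\mathcal{V}$. You instead cover $C_0 = \bar{Y}(\mathfrak{o}_F) \cap \hat{Y}(F)$ directly by the translates $a \cdot \hat{Y}(\mathfrak{o}_F)$ with $\HC(a) \in \mathcal{V}$, each of which realizes the stated shape with $m=0$; this buys an explicit covering family at the cost of the stratum-by-stratum verification that these translates really exhaust $C_0$, which is exactly where your lifting step lives. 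That step does close, and your sketch of it is sound, but to make it airtight you should record the following: for $y$ in the orbit $O_\tau$ with $\tau \in \mathfrak{F}'$, the $\bar{Y}$-integrality of $y$ places $\HC_{T'}(y)$ in the image of some cone $\sigma' \in \mathfrak{F}$ with $\sigma' \supseteq \tau$; since $\sigma' + \tau = \sigma'$, any integral lift of $\HC_{T'}(y)$ plus a sufficiently deep integral element of $\relint(\tau)$ lands in $\sigma' \subset \mathcal{V}$ (so the lift must be aimed at this particular $\sigma'$, not merely at $\mathcal{V}$); and for $a$ with $\HC(a)$ equal to that lift one gets $\HC_{T'}(a^{-1}y)=0$, so $a^{-1}y$ lies in the maximal compact subgroup of $O_\tau$, which is contained in $Y_\tau(\mathfrak{o}_F) \subset \hat{Y}(\mathfrak{o}_F)$ precisely because $\tau$ is itself a cone of $\mathfrak{F}'$. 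With that made explicit, both routes prove the corollary; the paper's is shorter because it outsources the toric exhaustion to the literature, while yours is self-contained and pins down the covering sets concretely.
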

\begin{proof}
	First, $\hat{Y}(F)$ can be expressed a union of compact open subsets $C_{\hat{Y}}$ whose intersection with $A_{X^+}(F)$ takes the form above, except that $v_0, \ldots, v_m$ are taken from $\mathcal{Q}$. This is an immediate consequence of the topology on $\hat{Y}(F)$ and of the toric structure on $\hat{Y}$; cf.\ \cite[\S I.1]{AMRT10} for the real case.
	
	To finish the proof, it suffices to intersect the compact open subsets constructed above with $\bar{Y}(\mathfrak{o}_F)$. Since $\bar{Y}(\mathfrak{o}_F) \cap A_{X^+}(F) = \HC^{-1}(\mathcal{V})$, the effect is to limit $v_0, \ldots, v_m$ to $\mathcal{V}$.
\MyQED\end{proof}

An easy yet useful observation that
\[ C_{\hat{X}} \cap X^+(F) = (C_{\hat{Y}} \cap A_{X^+}(F)) K = \bigcup_{i=0}^m \HC^{-1}\left( v_i + \mathcal{C}_X \cap \mathcal{V} \right) K \]
with the notation of the proof.

\section{Geometric data}\label{sec:geometric-data}
Let $F$ be a local field of characteristic zero. Fix a split connected reductive $F$-group $G$. We may still define the group of $F$-rational characters $X^*(G)$ and $X^*(G)_{\Q} := X^*(G) \otimes_{\Z} \Q$, $X^*(G)_{\R} := X^*(G) \otimes_{\Z} \R$, etc. They coincide with those defined over $\bar{F}$ since $G_\text{ab}$ is split.

Let $X^+ \hookrightarrow X$ be an affine spherical embedding. Define \index{Lambda@$\Lambda$}
\begin{align*}
	F(X^+)^{(G)}_\lambda & := \left\{f \in F(X^+) : \forall g \in G, \; f(\bullet g) = \lambda(g)f(\bullet)  \right\}, \quad \lambda \in X^*(G), \\
	\Lambda & := \left\{ \lambda \in X^*(G) : F(X^+)^{(G)}_\lambda \neq (0) \right\} \quad \text{(a subgroup of $X^*(G)$)}, \\
	F(X^+)^{(G)} & := \bigcup_{\lambda \in \Lambda} F(X^+)^{(G)}_\lambda, \\
	r & := \rank(\Lambda).
\end{align*}
Note that $\dim_F F(X^+)^{(G)}_\lambda = 1$ for all $\lambda \in \Lambda$, and $F(X^+)^{(G)}_0 = F(X^+)^G = F$. For $f \in F(X^+)^{(G)}$, $f \neq 0$, the ``eigencharacter'' $\lambda$ for which $f \in F(X^+)^{(G)}_\lambda$ is uniquely determined. These are birational invariants and depend only on the homogeneous $G$-space $X^+$.

The affine embedding enters into the picture via the objects \index{Lambda_X@$\Lambda_X$}
\begin{align*}
	F[X]^{(G)}_\lambda & := F(X^+)^{(G)}_\lambda \cap F[X], \quad \lambda \in \Lambda, \\
	\Lambda_X & := \left\{ \lambda \in \Lambda : F[X]^{(G)}_\lambda \neq (0) \right\}.
\end{align*}

\begin{lemma}\label{prop:Lambda-generation}
	The group $\Lambda$ is generated by its submonoid $\Lambda_X$.
\end{lemma}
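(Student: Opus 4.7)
The plan is to reduce the claim to the case of a torus action and then exploit a weight-space decomposition.

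Take $\lambda \in \Lambda$ and pick a nonzero $f \in F(X^+)^{(G)}_\lambda$. Since $X$ is affine with $X^+ \subset X$ open and dense, $F(X^+) = F(X) = \operatorname{Frac}(F[X])$, so $f = p/q$ for some $p, q \in F[X]$ with $q \neq 0$. It suffices to rewrite $f = P/Q$ with $P \in F[X]^{(G)}_{\mu_1}$ and $Q \in F[X]^{(G)}_{\mu_2}$, which will give $\lambda = \mu_1 - \mu_2 \in \Lambda_X - \Lambda_X$.

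I first pass to the quotient by the derived subgroup. Let $T := G_\text{ab} = G/G_\text{der}$, a torus. Every $F$-rational character of $G$ factors through $T$, so $\lambda|_{G_\text{der}} = 1$ and $f \in F(X)^{G_\text{der}}$. Form $Y := \Spec(F[X]^{G_\text{der}})$, an affine normal $T$-variety. Under the tautological identification, $F[X]^{(G)}_\mu = F[Y]^{(T)}_\mu$ for every $\mu \in X^*(G) = X^*(T)$, so $\Lambda_X$ is also the eigencharacter monoid of $F[Y]$ as a $T$-variety. The invariant-theoretic equality $F(X)^{G_\text{der}} = \operatorname{Frac}(F[X]^{G_\text{der}}) = F(Y)$ (Rosenlicht-Popov for reductive group actions on normal affine varieties) places $f$ inside $F(Y)^{(T)}_\lambda$, reducing the problem to the torus $T$ acting on the affine variety $Y$.

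The torus case is then a direct eigenspace manipulation. Write $f = P/Q$ with $P, Q \in F[Y]$ and decompose into $T$-eigenspaces $P = \sum_\alpha P_\alpha$, $Q = \sum_\beta Q_\beta$, with $P_\alpha \in F[Y]^{(T)}_\alpha$, $Q_\beta \in F[Y]^{(T)}_\beta$. Comparing weight components in the identity $fQ = P$, using $t \cdot f = \lambda(t) f$, one finds $f Q_\beta = P_{\beta + \lambda}$ for every $\beta$. Now pick any $\beta$ with $Q_\beta \neq 0$, which exists because $Q \neq 0$: since $F(Y)$ is a domain and $f \neq 0$, the element $P_{\beta + \lambda} = fQ_\beta$ is nonzero. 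Therefore $f = P_{\beta+\lambda}/Q_\beta$ with $Q_\beta \in F[Y]^{(T)}_\beta$ and $P_{\beta+\lambda} \in F[Y]^{(T)}_{\beta+\lambda}$; this yields $\beta,\; \beta + \lambda \in \Lambda_X$ and $\lambda = (\beta+\lambda) - \beta$, as required.

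The one substantive ingredient is the Rosenlicht-Popov equality $F(X)^{G_\text{der}} = \operatorname{Frac}(F[X]^{G_\text{der}})$; it is classical over $\bar F$ and descends to the split reductive setting over $F$ by base change, since everything in sight is defined over $F$. No further obstacle is expected, as everything else in the plan reduces to a transparent weight-space manipulation for the torus action on $Y$.
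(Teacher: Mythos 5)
Your overall route is the same as the paper's: pass to the abelianization $Y = X\sslash G_{\mathrm{der}} = \Spec(F[X]^{G_{\mathrm{der}}})$, identify eigenfunctions upstairs and downstairs, and finish with the torus case. The paper disposes of the torus case by citing Timashev (Lemma D.7); your weight-component argument ($fQ_\beta = P_{\beta+\lambda}$, then pick any $\beta$ with $Q_\beta \neq 0$) is a correct and self-contained proof of exactly that step, which is a small improvement in readability.

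The weak point is the reduction, specifically the blanket equality $F(X)^{G_{\mathrm{der}}} = \operatorname{Frac}(F[X]^{G_{\mathrm{der}}})$ that you attribute to Rosenlicht--Popov "for reductive group actions on normal affine varieties". As stated this is false: for $\Gm$ acting on $\A^2$ by scaling, the invariant ring is $F$ while $x/y$ is a nonconstant invariant rational function (and $x/y$ is even a $\Gm$-eigenfunction of weight $0$, so restricting to eigenfunctions does not rescue the general statement either). The rational quotient and the GIT quotient agree only under additional hypotheses (e.g.\ generic orbits closed), and here the relevant extra input is sphericality of $X$. What you actually need is only the containment $F(X)^{(G)}_\lambda \subseteq \operatorname{Frac}(F[X]^{G_{\mathrm{der}}})$ for the single eigenfunction $f$ at hand; the natural route is the denominator ideal $I = \{q \in F[X] : qf \in F[X]\}$, which is nonzero and $G$-stable because $f$ is a $G$-eigenfunction, after which one must produce a nonzero element of $I^{G_{\mathrm{der}}}$ (equivalently, a one-dimensional $G$-constituent of $I$, using the multiplicity-free decomposition of $F[X]$ for spherical $X$) and then run your torus argument on it. To be fair, the paper's own proof asserts the identification $F(X)^{(G)}_\lambda = F(X\sslash G_{\mathrm{der}})^{(G_{\mathrm{ab}})}_\lambda$ with the same economy of words, so you are matching the paper's level of detail; but a blind write-up should not rest this step on a general-position invariant-theory fact that is simply not true, and you should either invoke sphericality explicitly or supply the denominator-ideal argument.
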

\begin{proof}
	Form the GIT quotient $X\sslash G_\text{der}$, which is an affine $G_\text{ab}$-variety, and note that
	\begin{align*}
		X^*(G) &= X^*(G_\text{ab}), \\
		F(X^+)^{(G)}_\lambda & = F(X)^{(G)}_\lambda = F(X\sslash G_\text{der})^{(G_\text{ab})}_\lambda, \\
		F[X]^{(G)}_\lambda & = F[X\sslash G_\text{der}]^{(G_\text{ab})}_\lambda.
	\end{align*}
	The required result now follows from \cite[Lemma D.7]{Ti11} since $G_\text{ab}$ is an $F$-torus.
\MyQED\end{proof}

Set $\Lambda_{\Q} := \Lambda \otimes {\Q}$, $\Lambda_{\R} := \Lambda \otimes {\R}$, etc. Therefore $\Lambda_X$ generates a cone $\Lambda_{X,\Q} = \Q_{>0} \Lambda_X$ inside $\Lambda_{\Q}$.

\begin{definition}\label{def:relative-invariant} \index{relative invariant}
	Call nonzero $f \in F[X]$ a \emph{relative invariant} with eigencharacter $\lambda \in \Lambda_X$ if $f \in F[X]^{(G)}_\lambda$. In this case we write $f \leftrightarrow \lambda$. The Weil divisor $\divisor(f)$ on $X$ is then $G$-invariant.
\end{definition}

\begin{axiom}\label{axiom:geometric}
	Hereafter, we shall assume that the affine spherical embedding $X^+ \hookrightarrow X$ satisfies
	\begin{enumerate}[(G1)]
		\item $\Lambda_{X,\Q}$ is a simplicial cone inside $\Lambda_{\Q}$; consequently, the minimal integral generators $\omega_1, \ldots, \omega_r \in \Lambda_X$ of the extremal rays of $\Lambda_{X,\Q}$ form a basis of $\Lambda_{\Q}$;
		\item Choose relative invariants $f_1, \ldots, f_r \in F[X]$ such that $f_i \leftrightarrow \omega_i$ (each is unique up to $F^\times$); assume that
			\[ X \smallsetminus X^+ = \bigcup_{i=1}^r \Supp(\divisor(f_i)). \]
	\end{enumerate}
	In practice, the vanishing loci of $f_i$ are often irreducible or even geometrically irreducible.
\end{axiom}

The lattice $\Lambda$ can be described in terms of the stabilizer of a chosen base point as follows.
\begin{lemma}[Cf.\ {\cite[Proposition 2.11]{Ki03}}]\label{prop:char-Lambda}
	Fix $x_0 \in \mathring{X}^+(F)$ and let $H := \Stab_G(x_0)$. Then $\Lambda = \{\omega \in X^*(G) = X^*(G_\mathrm{ab}) : \omega|_H = 1 \}$.
\end{lemma}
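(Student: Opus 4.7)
Using the base point $x_0$, identify $X^+$ with $H\backslash G$ via the orbit map $Hg \mapsto x_0 g$, so that pullback along $G \twoheadrightarrow H\backslash G = X^+$ identifies $F(X^+)$ with the subfield of left-$H$-invariants in $F(G)$. Under this identification, the right $G$-action on $F(X^+)$ corresponds to right translation on $F(G)$, and a nonzero $f \in F(X^+)^{(G)}_\lambda$ lifts to a left-$H$-invariant $\tilde{f} \in F(G)^\times$ satisfying
\begin{equation*}
	\tilde{f}(g g') = \lambda(g')\, \tilde{f}(g), \qquad g, g' \in G.
\end{equation*}

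Specializing to $g = e$ yields $\tilde{f}(g') = \tilde{f}(e)\,\lambda(g')$, so $\tilde{f}$ is a nonzero scalar multiple of $\lambda$ viewed as a regular function $G \to \Gm$. The left-$H$-invariance then forces $\lambda(h)\,\tilde{f}(e) = \tilde{f}(h) = \tilde{f}(e)$ for every $h \in H$, i.e.\ $\lambda|_H = 1$. This proves the inclusion $\Lambda \subset \{\omega \in X^*(G) : \omega|_H = 1\}$.

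For the reverse inclusion, let $\omega \in X^*(G)$ with $\omega|_H = 1$. Viewing $\omega$ as a morphism $G \to \Gm$, left-$H$-invariance means that it factors through the quotient $H \backslash G \simeq X^+$, yielding a morphism $f_\omega: X^+ \to \Gm$, hence an element of $F[X^+]^\times \subset F(X^+)^\times$; by construction $f_\omega(x_0 g \cdot g') = \omega(g g') = \omega(g')\, f_\omega(x_0 g)$, so $f_\omega \in F(X^+)^{(G)}_\omega$ and $\omega \in \Lambda$. Note here that $G$ being split and $H$ being $F$-defined (as the stabilizer of an $F$-point) guarantees that the construction stays over $F$.

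\textbf{Expected difficulty.} There is essentially no serious obstacle: the argument is a direct translation between eigenfunctions on $X^+$ and characters on $G$ via the identification $X^+ = H\backslash G$. The only point requiring mild care is the $F$-rationality of the descent in the second half, which is automatic since $G$ is split (so $X^*(G) = X^*(G_{\mathrm{ab}})$ consists of characters defined over $F$) and $H$ is an $F$-subgroup, ensuring that the quotient morphism and the induced function $f_\omega$ are all defined over $F$.
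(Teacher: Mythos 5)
Your proof is correct and is essentially the paper's own argument: evaluate the eigenfunction relation along $H$ at the base point for one inclusion, and descend $\omega$ through $H\backslash G \rightiso X^+$ (valid since $\mathrm{char}\,F=0$) for the other. The only point worth making explicit is that $\tilde f$ may legitimately be evaluated at $e$: the locus where a nonzero $G$-eigenfunction is defined and nonvanishing is a nonempty $G$-stable open subset of the homogeneous space $X^+$, hence all of it.
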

\begin{proof}
	Suppose $\omega \in \Lambda$ and select a relative invariant $f \leftrightarrow \omega$. Considerations of $H \ni h \mapsto f(x_0 h)$ show that $\omega|_H = 1$. Conversely, given $\omega \in X^*(G)$ such that $\omega|_H = 1$, we define a regular function $f$ on $H \backslash G$ by $f(x_0 g) = \omega(g)$. Since $\text{char}(F)=0$, we have $H \backslash G \rightiso X^+$ via $g \mapsto x_0 g$, therefore $f$ affords the required relative invariant.
\MyQED\end{proof}

\begin{notation}
	Every $\lambda \in \Lambda_{\CC}$ can be written uniquely as $\lambda = \sum_{i=1}^r \lambda_i \omega_i$ with $\lambda_i \in \CC$. We write
	\begin{itemize}
		\item $\Re(\lambda) \relgeq{X} 0$ if $\Re(\lambda_i) \geq 0$ for all $i$;
		\item $\Re(\lambda) \relgg{X} 0$ if $\Re(\lambda_i) \gg 0$ for all $i$;
		\item $|\omega|^\lambda := \prod_{i=1}^r |\omega_i|^{\lambda_i}$, an unramified character of $G(F)$; \index{omega-lambda@$\lvert \omega\rvert^\lambda$}
		\item $|f|^\lambda := \prod_{i=1}^r |f_i|^{\lambda_i}$, a nowhere vanishing $C^\infty$ function on $X^+(F)$. \index{f-lambda@$\lvert f\rvert^\lambda$}
	\end{itemize}
	Note that $|f|^\lambda$ depends on the choice of $f_1, \ldots, f_r$, and $|f|^\lambda(\bullet g) = |f|^\lambda(\bullet) |\omega|^\lambda(g)$ for every $g \in G(F)$.
\end{notation}

To $X^+ \hookrightarrow X$ is associated the colored cone $(\mathcal{C}_X, \mathcal{F}_X)$, $\mathcal{C}_X \subset \mathcal{Q}$; one may pass to $\bar{F}$ if necessary. Every $\omega \in \Lambda \subset X^*(G)$ induces an element $\chi_\omega \in \Lambda(X^+) \subset \mathcal{Q}^\vee$ by taking the class of $f \leftrightarrow \omega$, since $f \in F(X^+)^{(G)} \subset F(X^+)^{(B)}$. This is compatible with the restriction map $X^*(G) \to X^*(B)$.

The next result will be useful in \S\ref{sec:convergence}.
\begin{lemma}\label{prop:Lambda-positivity}
	Assume $\mathcal{C}_X \neq \{0\}$. For any $\omega \in \Lambda_X$ we have $\chi_\omega|_{\mathcal{C}_X} \geq 0$. Moreover,
	\begin{align*}
		\omega \neq 0 & \implies \angles{\chi_\omega, \rho} >0, \quad \forall  \rho \in \relint(\mathcal{C}_X), \\
		\omega \in \relint(\Lambda_{X,\Q}) & \implies \angles{\chi_\omega, \rho} > 0, \quad \forall \rho \in \mathcal{C}_X \cap \mathcal{V},\; \rho \neq 0.
	\end{align*}
\end{lemma}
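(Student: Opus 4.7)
The plan is to translate the inequalities into valuations of the relative invariants $f_i$, and then apply the Luna--Vust correspondence between orbits and faces. Fixing $\omega \in \Lambda_X$ and a relative invariant $f \in F[X]$ with $f \leftrightarrow \omega$, the key identity I would use throughout is $\langle \chi_\omega, \rho(v) \rangle = v(f)$ for any discrete valuation $v$ of $F(X^+) = F(X)$ trivial on $F^\times$; this is just the definition of $\rho$ and $\chi_\omega$ recalled in \S\ref{sec:review-spherical}.

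Non-negativity on $\mathcal{C}_X$ should come directly from regularity of $f$. Every generator of $\mathcal{C}_X$ is of the form $\rho(v_D)$ for a $B$-stable prime divisor $D$ of $X$, either in $\mathcal{V}_X$ (a $G$-stable divisor) or in $\mathcal{F}_X$ (a color); since $f$ is regular on $X$, $v_D(f) \geq 0$ for each such $D$, so $\chi_\omega \geq 0$ on every generator and hence on $\mathcal{C}_X$.

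For the first strict inequality, I would argue that when $\omega \neq 0$, the simpliciality of $\Lambda_{X,\Q}$ in Axiom~\ref{axiom:geometric}(G1) (whence strict convexity) prevents both $\omega$ and $-\omega$ from lying in $\Lambda_X$; consequently $f$ is not a unit in $F[X]$, so $\divisor(f)$ is a nonzero effective $G$-invariant divisor on $X$. Since $f$ is a $G$-semi-invariant with nontrivial character and $G$ acts transitively on $X^+$, $f$ cannot vanish on $X^+$, so $\Supp(\divisor(f)) \subset \partial X$. Picking any prime component $D$ of $\divisor(f)$, this $D$ is $G$-invariant of codimension $1$; for our simple affine embedding, Proposition~\ref{prop:G-orbits} and the order-reversing bijection between faces and orbit closures guarantee that every $G$-orbit closure contains the unique closed orbit $Y$, so $D \in \mathcal{V}_X$. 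Thus $\rho(v_D)$ is a generator of $\mathcal{C}_X$ on which $\chi_\omega$ takes the value $v_D(f) > 0$. The elementary convex-geometry fact that the zero locus of a nonnegative linear form on a cone is a proper face (hence lies in the relative boundary) then finishes Claim~1.

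Claim~2 will follow by decomposing $\omega = \sum_i a_i \omega_i$ with $a_i > 0$, using simpliciality once again, so that $\chi_\omega = \sum_i a_i \chi_{\omega_i}$. For any nonzero $\rho \in \mathcal{C}_X \cap \mathcal{V}$, Proposition~\ref{prop:G-orbits} produces a center in $X$ for the corresponding $G$-invariant valuation $v_\rho$ equal to a proper $G$-orbit closure, hence contained in $\partial X$. Axiom~\ref{axiom:geometric}(G2) then places this center inside $\Supp(\divisor(f_{i_0}))$ for some index $i_0$, forcing $v_\rho(f_{i_0}) > 0$. Combining with $v_\rho(f_i) \geq 0$ for every $i$ (regularity) and $a_i > 0$ gives $\langle \chi_\omega, \rho \rangle = \sum_i a_i v_\rho(f_i) \geq a_{i_0} v_\rho(f_{i_0}) > 0$, as required. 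The only real hurdle is the bookkeeping with centers of valuations via Proposition~\ref{prop:G-orbits}; once that is in place, both strict inequalities are straightforward consequences of the two halves of Axiom~\ref{axiom:geometric}.
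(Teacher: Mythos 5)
Your proof is correct and follows essentially the same route as the paper's: non-negativity from $v_D(f)\geq 0$ for regular $f$, strict positivity on $\relint(\mathcal{C}_X)$ from a $G$-stable component of $\divisor(f)$ lying in $\mathcal{V}_X$, and the last claim via centers of valuations (Proposition \ref{prop:G-orbits}) combined with Axiom \ref{axiom:geometric}(G2). The only cosmetic differences are that you phrase the middle step as ``the zero locus of a nonnegative linear form is a proper face'' where the paper peels off $\epsilon w$ from a positive combination of generators, and you work with a single $f_{i_0}$ instead of the product $\prod_i f_i^{\lambda_i}$ in the final step; both variants are fine, and you in fact justify $\divisor(f)\neq 0$ more explicitly than the paper does.
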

\begin{proof}
	Let $f \leftrightarrow \omega$ be a relative invariant. Since $f \in F[X]$, by definition we have $\angles{\chi_\omega, \rho(D)} = v_D(f) \geq 0$ for any prime divisor $D$ on $X \times_F \bar{F}$, where $v_D$ stands for the normalized valuation attached to $D$. Taking $D$ from $\mathcal{F}_X \cup \mathcal{V}_X$ yields the first assertion.

	Assume $r = \rank(\Lambda) \geq 1$ in what follows. For the second assertion, notice that $\divisor(f) \neq 0$ implies $\angles{\chi_\omega, w} = w(f) > 0$ for some $w \in \mathcal{V}_X$. On the other hand, by the description of $\mathcal{V}_X$ in Theorem \ref{prop:LV}, we may express $\rho \in \relint(\mathcal{C}_X)$ as
	\[ \rho = \sum_{v \in \mathcal{V}_X} a_v v + \sum_{D \in \mathcal{F}_X} a_D \rho(D), \quad a_D \geq 0, \; a_v > 0. \]
	Consequently we have $\rho = \epsilon w + \rho'$ with $\epsilon > 0$ sufficiently small so that $\rho' \in \mathcal{C}_X$. Now the positivity of $\angles{\chi_\omega, \rho}$ follows from the first part.
	
	To prove the last assertion, write $\omega = \sum_{i=1}^r \lambda_i \omega_i$ with $\lambda_i > 0$ for all $i$. We may even assume $\lambda_i \in \Z$ so that $\Supp(\divisor(f)) = \partial X$ by Axiom \ref{axiom:geometric}. By Proposition \ref{prop:G-orbits}, every nonzero $\rho \in \mathcal{C}_X \cap \mathcal{V}$ (as a valuation) has center equal to some $G$-orbit closure $Z \subsetneq X$. This means that the valuation ring $\mathcal{O}_\rho := \{h \in F(X) : \rho(h) \geq 0\}$ dominates the local ring $\mathcal{O}_{X,Z}$ of $Z$; as $f$ lies in the maximal ideal $\mathfrak{m}_{X,Z}$, we see $\angles{\chi_\omega, \rho} = \rho(f) > 0$ as required.
\MyQED\end{proof}
The reader is invited to check this in the situation Example \ref{eg:typical-cone}, in which case $\Lambda_X$ is generated by $\epsilon_1^* + \epsilon_2^*$.

\chapter{Analytic background}\label{sec:analytic-bg}
\section{Integration of densities}\label{sec:integration-density}
Let $F$ be a local field of characteristic zero and $G$ be an affine $F$-group. Consider a smooth $G$-variety $Y$ over $F$, so that $Y(F)$ becomes an $F$-analytic manifold with right $G(F)$-action. The constructions below can be performed for a broader class of geometric objects, such as the semi-algebraic $F$-manifolds, but we shall restrict to the smooth algebraic case.

Fix a Haar measure on $F$ (eg.\ by fixing a nontrivial unitary additive character), which induces a Haar measure on $F^n$ for every $n$. We begin by clarifying the meaning of integration over $Y(F)$, using the language of densities as illustrated in \cite[p.29]{BGV04}.

\begin{definition}\label{def:density}\index{density}
	There is a canonical line bundle $\mathscr{L}$ over $Y(F)$, called the \emph{density bundle}, with the following structures.
	\begin{itemize}
		\item $\mathscr{L}$ is obtained from an $\R_{>0}$-torsor on $Y(F)$, therefore it makes sense to talk about its real or positive sections.
		\item $\mathscr{L}$ is $G(F)$-equivariant, i.e.\ equipped with a morphism $\text{pr}_1^* \mathscr{L} \to a^* \mathscr{L}$ where $a$ (resp.\  $\text{pr}_1$) is the action (resp.\  projection) morphism $Y(F) \times G(F) \to Y(F)$, subject to the usual constraints. Groups act on the right of bundles, therefore act on the left of their spaces of sections.
		\item For any open set $\mathcal{U} \subset Y(F)$, there is a canonical linear functional (the ``integration'')
		\[ \int_{\mathcal{U}}: C_c(\mathcal{U}, \mathscr{L}) \longrightarrow \CC \]
		where $C_c(\mathcal{U}, \cdot)$ stands for the space of compactly-supported continuous sections, such that
		\begin{compactitem}
			\item if $a$ is a real (resp.\  positive) section, then $\int_{\mathcal{U}} a$ is real (resp.\  positive when $\mathcal{U} \neq \emptyset$);
			\item $\int_{\mathcal{U}} ga = \int_{\mathcal{U}g} a$ for all $g \in G(F)$;
			\item $\int_{\mathcal{U} \sqcup \mathcal{U}'} a = \int_{\mathcal{U}} a|_{\mathcal{U}} + \int_{\mathcal{U}'} a|_{\mathcal{U}'}$.
		\end{compactitem}
		The integration functional depends on the choice of a measure on $F$. Cf.\ the construction below.
	\end{itemize}
	In particular, one has the notion of integrable or $L^1$ sections of $\mathscr{L}$. The group $G(F)$ acts on the left of the $L^1$-space.
\end{definition}

The density bundle can be constructed concretely as follows.
\begin{enumerate}
	\item Let $\mathcal{G} := \underline{\text{Isom}}(\mathcal{O}_Y, \topwedge \Omega_{Y/F})$ be the $\Gm$-torsor over $Y$ corresponding to the line bundle $\topwedge \Omega_{Y/F}$. We pass to the $F$-analytic topos by using $|\cdot|: \Gm(F) = F^\times \to \R_{>0}$, therefore obtain an $\R_{>0}$-torsor $|\mathcal{G}|$ on $Y(F)$. The density bundle is simply
	\[ \mathscr{L} := |\mathcal{G}| \utimes{\R_{>0}} \CC. \]
	\item Over a small open subset $\mathcal{U} \subset Y(F)$, the sections of $\mathscr{L}$ can be written as $\eta = \varphi |\omega|$ where $\varphi$ is a $C^\infty$ function on $\mathcal{U}$, and $\omega$ is a nowhere vanishing section of $\topwedge \Omega_{Y/F}$ over some Zariski open subset $U$ with $U(F) \supset \mathcal{U}$. The choice of $|\omega|$ trivializes $\mathscr{L}|_{\mathcal{U}}$, and our notation ``explains'' the transition law
		\[ \varphi|\omega| = \varphi \left| \frac{\omega}{\omega'} \right| \cdot |\omega'| \]
		where $\omega/\omega' \in \Gamma(U, \mathcal{O}_Y^\times)$.
	\item Locally around each point $y \in Y(F)$, there exists an $F$-analytic chart $\simeq F^n$. To define the integration functional, it suffices to define $\int_{\mathcal{U}} \eta$ for $\mathcal{U}$ open in $F^n$, by the usual procedure, cf.\ \cite[\S 2.2.1]{Weil82}. Express a section $\eta$ of $\mathscr{L}$ as
	\[ \eta = \varphi |\dd x_1 \wedge \cdots \wedge \dd x_n| \]
	and set $\int_{\mathcal{U}} \eta := \int_{\mathcal{U}} \varphi \dd x_1 \cdots \dd x_n$ with respect to the chosen measure on $F^n$. The formula of change of variables asserts that $\int_{\mathcal{U}}$ is well-defined; furthermore, it also implies that $\int_{\mathcal{U}}$ is invariant under all automorphisms. There exists a $G(F)$-invariant Radon measure on $Y(F)$ if and only if $\mathscr{L}$ is trivial as an equivariant line bundle.
	\item More generally, one can define the $\R_{>0}$-torsor $|\mathcal{G}|^t$ using $|\cdot|^t: F^\times \to \R_{>0}$, for every $t \in \R$, thereby obtains the line bundle of $t$-densities $\mathscr{L}^t$ on $Y(F)$. Observe that\index{L-s-density@$\mathscr{L}^s$}
		\begin{compactitem}
			\item $\mathscr{L}^1 = \mathscr{L}$ and $\mathscr{L}^0$ is identified with the trivial line bundle $\CC$;
			\item for any $t,t' \in \R$, there is a canonical isomorphism
				\begin{gather}\label{eqn:density-mult}
					\mathscr{L}^t \otimes \mathscr{L}^{t'} \rightiso \mathscr{L}^{t+t'}
				\end{gather}
				satisfying the usual constraints.
		\end{compactitem}
		The density bundles inherit their equivariant structures from that of $\Omega_{Y/F}$.
	\item Let us explicate the case of a homogeneous space $Y = H \backslash G$. It suffices to describe $\mathscr{L}$ on each $G(F)$-orbit in $(H \backslash G)(F)$, say the one containing the coset $x_0 := H(F) \cdot 1$. The fiber of $\topwedge \Omega_{Y / F}$ at $x_0$ is
		\[ ( \topwedge\mathfrak{g}^\vee ) \otimes ( \topwedge \mathfrak{h}^\vee )^{-1}. \]
		Thus $H$ acts on that fiber via $(\det \Ad_G\big|_H)^{-1} \det\Ad_H: H \to \Gm$. Upon taking $|\cdot|$ on $F$-points, one sees that $\mathscr{L}^t\big|_{x_0 G(F)}$ corresponds to the induced representation
		\begin{gather}\label{eqn:induced-density}
			\Ind^G_H \left( \delta_H \cdot \delta_G^{-1}|_H \right)^t, \quad t \in \R.
		\end{gather}
		We recover the familiar criterion of the existence of invariant Radon measures ($t=1$) on homogeneous spaces.
\end{enumerate}

Consider a complex vector bundle $\mathscr{E}$ on $Y(F)$ (always assumed to be of finite rank) equipped with a positive definite hermitian pairing\index{E@$\mathscr{E}$}
\begin{gather}\label{eqn:hermitian-vb}
	\mathscr{E} \otimes \overline{\mathscr{E}} \to \mathscr{L}
\end{gather}
on the fibers. Define the separable Hilbert space $L^2(Y(F), \mathscr{E})$ as the completion of $C_c(Y(F), \mathscr{E})$ with respect to the scalar product
\[ (\xi|\xi') := \int_{Y(F)} \underbracket{\xi \overline{\xi'}}_{\mathscr{L}\text{-valued}}. \]
When $\mathscr{E}$ is $G(F)$-equivariant and the pairing \eqref{eqn:hermitian-vb} is invariant, $G(F)$ acts on $L^2(Y(F), \mathscr{E})$ via isometries.

\begin{remark}\label{rem:half-densities}\index{density!half}
	Vector bundles with a pairing \eqref{eqn:hermitian-vb} can be constructed as follows. Let $\mathscr{E}_0$ be an equivariant hermitian vector bundle on $Y(F)$. Since $\mathscr{L}^{1/2} \otimes \mathscr{L}^{1/2} \rightiso \mathscr{L}$ is defined over $\R$, the vector bundle
	\[ \mathscr{E} := \mathscr{E}_0 \otimes \mathscr{L}^{1/2} \]
	is equivariant and equipped with a pairing as in \eqref{eqn:hermitian-vb}. Hence we get a canonical unitary representation of $G(F)$ on $L^2(Y(F), \mathscr{E})$, in view of \eqref{eqn:density-mult}. They typical case is $\mathscr{E}_0 = \CC$, so that $\mathscr{E} = \mathscr{L}^{1/2}$.
	
	Such an approach has already been adopted in \cite[3.7]{Be88}.
\end{remark}

\section{Direct integrals and \texorpdfstring{$L^2$}{L2}-spectral decomposition}\label{sec:direct-integrals}
What follows is fairly standard: the details can be found in \cite[II.1, II.2]{Di96} or \cite[Chapter 14]{Wall92}, for example.

Let $\Pi$ be a topological space with a positive Borel measure $\mu$. Consider a family (also known as a ``field'') of separable Hilbert spaces $\mathcal{H}: \tau \mapsto \mathcal{H}_\tau$, where $\tau \in \Pi$. Assume $(\mathcal{H}_\tau)_\tau$ is \emph{measurable}, which means that we are given $\CC$-vector space $\mathfrak{S}$ of sections $s: \tau \mapsto s(\tau) \in \mathcal{H}_\tau$, verifying
\begin{compactenum}[(i)]
	\item for all $s \in \mathfrak{S}$, the function $\tau \mapsto \|s(\tau)\|$ is measurable on $\Pi$;
	\item if $t$ is a section such that $\tau \mapsto (t(\tau)|s(\tau))$ is measurable for all $s \in \mathfrak{S}$, then $t \in \mathfrak{S}$;
	\item there exists a sequence $s_1, s_2, \ldots$ in $\mathfrak{S}$ such that for every $\tau \in \Pi$, the subset $\{s_i(\tau)\}_{i \geq 1}$ is dense in $\mathfrak{H}_\tau$; call it a \emph{fundamental sequence}.
\end{compactenum}
Sections belonging to $\mathfrak{S}$ are called measurable. Consider two basic examples.
\begin{compactitem}
	\item The constant family $\mathcal{H}_\tau = \mathcal{H}_0$ for some fixed $\mathcal{H}_0$: here $\mathfrak{S}$ is taken to be the space of $s$ such that $\tau \mapsto (s(\tau)|v)$ is measurable for every $v \in \mathcal{H}$.
	\item Let $(\mathcal{H}_\tau)_\tau$ and $(\mathcal{H}'_\tau)_{\tau'}$ be measurable families. The family of completed tensor products $(\mathcal{H}_\tau \hat{\otimes} \mathcal{H}'_\tau)_\tau$ of Hilbert spaces is measurable by requiring that $s \otimes s'$ is measurable whenever $s$ and $s'$ are.
\end{compactitem}

For every $s \in \mathfrak{S}$, put
\[ \|s\|^2 := \int_\Pi \|s(\tau)\|^2 \dd\mu(\tau). \]
Elements $s \in \mathfrak{S}$ with $\|s\| < +\infty$ are called $L^2$ sections; they form a $\CC$-vector space. Define
\[ \int^\oplus_\Pi \mathcal{H}_\tau \dd\mu(\tau) := \left\{ s \in \mathfrak{S} : L^2\text{-section} \right\} \big/ \{ s \in \mathfrak{S}: \|s\| = 0\}. \]
It turns out that $\int^\oplus_\Pi \mathcal{H}_\tau \dd\mu(\tau)$ equipped with $\|\cdot\|^2$ becomes a separable Hilbert space, called the \emph{direct integral}\index{direct integral} of the measurable family $(\mathcal{H}_\tau)_\tau$.

Let $\mathcal{H} = (\mathcal{H}_\tau)_\tau$, $\mathcal{H}' = (\mathcal{H}'_\tau)_\tau$ be two measurable families of separable Hilbert spaces over $(\Pi, \mu)$. Consider a family $\mathcal{F}$ of continuous linear maps from $\mathcal{H}$ to $\mathcal{H}'$, that is,
\[ \mathcal{F}_\tau \in \Hom(\mathcal{H}_\tau, \mathcal{H}'_\tau), \quad \tau \in \Pi. \]
Call $\mathcal{F}$ measurable if it preserves measurable sections. If the operator norms $\tau \mapsto \|\mathcal{F}_\tau\|$ form an $L^\infty$ function on $\Pi$, then we obtain a continuous linear map
\[ \int^\oplus_{\Pi} \mathcal{F}_\tau \dd\mu(\tau): \int^\oplus_\Pi \mathcal{H}_\tau \dd\mu(\tau) \to \int^\oplus_\Pi \mathcal{H}'_\tau \dd\mu(\tau) \]
of norm equal to the $L^\infty$-norm of $\tau \mapsto \|\mathcal{F}_\tau \|$. Here comes more definitions.

\begin{itemize}
	\item Operators of the form $\int^\oplus_{\Pi} \mathcal{F}_\tau \dd\mu(\tau)$ above are called \emph{decomposable}.
	\item There is a continuous injection from $L^\infty(\Pi,\mu)$ to the space of decomposable operators, namely by sending $f$ to the measurable family $\mathcal{F}_f: \tau \mapsto f(\tau)\cdot \identity$. Operators arising in this manner are called \emph{diagonalizable}.
\end{itemize}

Let $\Gamma$ be a separable locally compact group which is CCR (see \cite[14.6.9]{Wall92}). Denote by $\Pi_{\mathrm{unit}}(\Gamma)$ the unitary dual of $\Gamma$ equipped with the Fell topology. We pick a representative for each equivalence class of unitary representations in $\Pi_{\mathrm{unit}}(\Gamma)$.

\begin{theorem}[Abstract Plancherel decomposition]\label{prop:direct-integral}
	Let $L$ be a unitary representation of $\Gamma$ on separable Hilbert spaces. There exists an isomorphism of unitary representations of $\Gamma$
	\[ \Phi: L \rightiso \int^\oplus_{\Pi_{\mathrm{unit}}(\Gamma)} \tau \hat{\otimes} \mathcal{M}_\tau \dd\mu(\tau) \]
	for some positive Borel measure $\mu$ on $\Pi_{\mathrm{unit}}(\Gamma)$, called the Plancherel measure\index{Plancherel measure}, and a measurable family of separable Hilbert spaces $\tau \mapsto \mathcal{M}_\tau$, called the multiplicity spaces, on which $G(F)$ acts trivially.   

	The data $\mu$, $(\mathcal{M}_\tau)_\tau$ and $\Phi$ are unique in the following sense. If $\Phi': L \rightiso \int^\oplus_{\Pi_{\mathrm{unit}}(\Gamma)} \tau \hat{\otimes} \mathcal{M}'_\tau \dd\mu'(\tau)$ is another decomposition, then:
	\begin{compactitem}
		\item the measures $\mu$, $\mu'$ are equivalent: thus there exists a measurable $a: \Pi_{\mathrm{unit}}(\Gamma) \to \R_{>0}$ (the Radon--Nikodym derivative) such that $\dd\mu' = a\dd\mu$;
		\item there exists a measurable family of continuous linear maps $H(\tau): \tau \hat{\otimes} \mathcal{M}_\tau \rightiso \tau \hat{\otimes} \mathcal{M}'_\tau$ such that $a(\tau)H(\tau)$ is an isometry for $\mu$-almost all $\tau$, and
			\[ \Phi' \Phi^{-1} = \int^\oplus_{\Pi_{\mathrm{unit}}(\Gamma)} H(\tau) \dd\mu(\tau). \]
	\end{compactitem}
\end{theorem}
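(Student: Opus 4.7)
The statement is the abstract Plancherel theorem for a CCR (equivalently, type I) separable locally compact group. The strategy I would follow is the classical one, systematized in Dixmier's treatise on von Neumann algebras: reduce to the spectral theory of an abelian von Neumann algebra --- the center of the von Neumann algebra generated by $L(\Gamma)$ --- in order to obtain a central decomposition of $L$, and then exploit the CCR hypothesis to pin down the fibers as isotypic representations parametrized by the unitary dual.

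\textbf{Step 1: central decomposition.} Form the von Neumann algebra $\mathcal{A} := L(\Gamma)'' \subset B(L)$, whose commutant $\mathcal{A}'$ consists of the $\Gamma$-intertwining operators, and set $\mathcal{Z} := \mathcal{A} \cap \mathcal{A}'$. Since $L$ is separable, $\mathcal{Z}$ is an abelian von Neumann algebra acting on a separable Hilbert space; by the spectral theorem for such algebras, there is an isometric isomorphism $\mathcal{Z} \rightiso L^\infty(\Pi_0, \mu_0)$ for some standard Borel space $(\Pi_0, \mu_0)$. Dixmier's reduction theory then produces a $\Gamma$-equivariant isomorphism
$$ L \rightiso \int^\oplus_{\Pi_0} L_\tau \dd\mu_0(\tau) $$
in which the diagonalizable operators correspond exactly to $\mathcal{Z}$, and $L_\tau$ is a \emph{factor} representation of $\Gamma$ for $\mu_0$-almost every $\tau$.

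\textbf{Step 2: type I fibers and parametrization.} The CCR hypothesis implies that $\Gamma$ is type I (Glimm--Dixmier dichotomy). Hence almost every factor representation $L_\tau$ is of type I, and is therefore isomorphic to $\tau_0 \hat{\otimes} \mathcal{N}_\tau$ for an irreducible unitary representation $\tau_0$ (uniquely determined up to equivalence) and a separable Hilbert space $\mathcal{N}_\tau$ carrying the trivial $\Gamma$-action. For a type I group, $\Pi_{\mathrm{unit}}(\Gamma)$ carries a standard Borel structure, and the assignment $\tau \mapsto [L_\tau]$ defines a Borel map $\Pi_0 \to \Pi_{\mathrm{unit}}(\Gamma)$. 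Pushing $\mu_0$ forward along this map defines the Plancherel measure $\mu$; aggregating the Hilbert spaces $\mathcal{N}_\tau$ along the fibers of $\Pi_0 \to \Pi_{\mathrm{unit}}(\Gamma)$ (by a measurable selection of representatives, guaranteed by the existence of Borel cross-sections for type I groups) produces the multiplicity spaces $\mathcal{M}_\tau$ and rewrites the direct integral in the stated form. Measurability of $m(\tau) = \dim \mathcal{M}_\tau$ follows from the measurability of the field $(\mathcal{M}_\tau)_\tau$.

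\textbf{Step 3: uniqueness, and the main obstacle.} A second decomposition $\Phi'$ furnishes another isomorphism of $\mathcal{Z}$ with $L^\infty(\Pi_{\mathrm{unit}}(\Gamma), \mu')$; comparing the two realizations inside $B(L)$ produces a Borel isomorphism between the supports of $\mu$ and $\mu'$, whence their equivalence, with $a$ the Radon--Nikodym derivative, while $\Phi' \Phi^{-1}$ diagonalizes fiberwise into the family $H(\tau)$ --- an isometry after rescaling by $a(\tau)^{1/2}$ (with the square root absorbed into the stated form). The delicate step I expect is Step 2: the type I dichotomy is what prevents factors of type II or III from appearing in the central disintegration, and without it the notions of ``irreducible constituent'' and ``multiplicity space'' lose their meaning. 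All the harder technical content of the proof --- the standard Borel structure on $\Pi_{\mathrm{unit}}(\Gamma)$, measurable selection of irreducible representatives, essential uniqueness of disintegrations --- is concentrated in this step and ultimately traces back to Glimm's theorem.
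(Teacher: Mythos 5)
Your proposal is correct and follows essentially the same route as the paper: the paper delegates existence to Wallach (loc.\ cit.\ 14.10 and 14.13.8), which is precisely the central-decomposition-plus-type-I argument you spell out, and its uniqueness argument likewise rests on the intrinsic characterization of the diagonalizable operators as the center of the von Neumann algebra generated by $C^*(\Gamma)$, combined with Dixmier's uniqueness theorem for direct integral decompositions. Your Step 3 and the paper's uniqueness paragraph are the same argument in different words.
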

\begin{proof}
	For the existence of direct integral decompositions, see \cite[14.10.* and 14.13.8]{Wall92}.

	As for the uniqueness, let $\mathcal{F} := \Phi' \Phi^{-1}$. In view of \cite[II.6.3 Théorème 4]{Di96}, it suffices to show that $\mathcal{F}$ induces an isomorphism of the $C^*$-algebras of diagonalizable operators.	Denote by $C^*(\Gamma)$ the $C^*$-algebra attached to $\Gamma$ \cite[p.274]{Wall92} and let $\mathcal{A}$ be its strong closure in $\End(L)$. By \cite[p.330]{Wall92}, the center $Z(\mathcal{A})$ gets identified with the algebra of diagonalizable operators under $\Phi$. The same holds true under $\Phi'$. Since $\mathcal{F}$ transports these structures, it preserves the algebras of diagonalizable operators.
\MyQED\end{proof}

\begin{corollary}[Disintegration of intertwining operators]\label{prop:disintegration-F}
	Let $L$, $L'$ be unitary representations of $\Gamma$ as before, and $\mathcal{F}: L \rightiso L'$ is an isomorphism of unitary representations of $\Gamma$, then:
	\begin{itemize}
		\item There exist a Borel measure $\mu$ on $\Pi_{\mathrm{unit}}(\Gamma)$, measurable families of multiplicity spaces $\mathcal{M}_\tau$, $\mathcal{M}'_\tau$, together with isomorphisms
		\[
			L \rightiso \int^\oplus \tau \hat{\otimes} \mathcal{M}_\tau \dd\mu(\tau), \quad L' \rightiso \int^\oplus \tau \hat{\otimes} \mathcal{M}'_\tau \dd\mu(\tau);
		\]
		\item Given the decompositions above, there exists a measurable family of isometries $\eta(\tau): \mathcal{M}_\tau \rightiso \mathcal{M}'_\tau$ such that $\mathcal{F}$ corresponds to $\int^\oplus (\identity_\tau \otimes \eta(\tau)) \dd\mu(\tau)$, provided that $\dim_{\CC} \mathcal{M}_\tau$ is finite for $\mu$-almost all $\tau$.
	\end{itemize}
\end{corollary}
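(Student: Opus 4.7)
The plan is to reduce the corollary to the uniqueness clause of Theorem \ref{prop:direct-integral} combined with Schur's lemma applied fiberwise.

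First I would invoke the existence clause of Theorem \ref{prop:direct-integral} on $L$ and $L'$ separately, obtaining Plancherel measures $\mu_L, \mu_{L'}$ and decompositions $\Phi: L \rightiso \int^\oplus \tau \hat{\otimes} \mathcal{M}_\tau \dd\mu_L(\tau)$ and $\Phi': L' \rightiso \int^\oplus \tau \hat{\otimes} \mathcal{M}'_\tau \dd\mu_{L'}(\tau)$. Since $\mathcal{F}: L \rightiso L'$ is a unitary isomorphism, $\Phi' \mathcal{F}$ is a second decomposition of $L$, now with data $(\mu_{L'}, \mathcal{M}'_\tau)$. The uniqueness clause of Theorem \ref{prop:direct-integral} then provides a measurable density $a: \Pi_{\mathrm{unit}}(\Gamma) \to \R_{>0}$ with $\dd\mu_{L'} = a \dd\mu_L$ and a measurable family $H(\tau): \tau \hat{\otimes} \mathcal{M}_\tau \rightiso \tau \hat{\otimes} \mathcal{M}'_\tau$ of continuous linear maps such that $a(\tau)H(\tau)$ is an isometry for $\mu_L$-almost every $\tau$ and $\Phi' \mathcal{F} \Phi^{-1} = \int^\oplus H(\tau) \dd\mu_L(\tau)$. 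Rescaling the inner product on each $\mathcal{M}'_\tau$ by the positive measurable function $a(\tau)$ absorbs the Radon-Nikodym density into the multiplicity fibers, so after this harmless modification we may assume $\mu_L = \mu_{L'} =: \mu$ and that each $H(\tau)$ is itself an isometry; this establishes the first assertion.

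For the second assertion, note that each $H(\tau)$ intertwines the $\Gamma$-action on $\tau \hat{\otimes} \mathcal{M}_\tau$ with that on $\tau \hat{\otimes} \mathcal{M}'_\tau$ for $\mu$-almost every $\tau$. Since $\tau$ is irreducible, Schur's lemma yields the canonical identification $\Hom_\Gamma(\tau \hat{\otimes} \mathcal{M}_\tau, \tau \hat{\otimes} \mathcal{M}'_\tau) \rightiso \Hom(\mathcal{M}_\tau, \mathcal{M}'_\tau)$ carrying $H(\tau)$ to a unique bounded $\eta(\tau)$ with $H(\tau) = \identity_\tau \otimes \eta(\tau)$. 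The isometry property transfers from $H(\tau)$ to $\eta(\tau)$, and substituting into the decomposition of $\Phi' \mathcal{F} \Phi^{-1}$ recovers the required formula $\mathcal{F} = \int^\oplus (\identity_\tau \otimes \eta(\tau)) \dd\mu(\tau)$.

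The main obstacle is to justify that the fiberwise choice $\tau \mapsto \eta(\tau)$ is itself measurable, so as to make sense of the direct integral on the right; this is where the hypothesis $\dim_{\CC} \mathcal{M}_\tau < \infty$ for $\mu$-almost every $\tau$ enters. I would partition $\Pi_{\mathrm{unit}}(\Gamma)$ into countably many Borel pieces according to the values of $\dim \mathcal{M}_\tau$ and $\dim \mathcal{M}'_\tau$, then on each piece choose measurable orthonormal frames for $(\mathcal{M}_\tau)_\tau$ and $(\mathcal{M}'_\tau)_\tau$ extracted from the fundamental sequences of measurable sections recalled in \S\ref{sec:direct-integrals}. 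In these frames $\eta(\tau)$ is represented by a matrix whose entries are measurable in $\tau$ by Fubini's theorem applied to the matrix coefficients of $H(\tau)$, and Schur's lemma at the fiber reduces to the elementary assertion that $\Hom_\Gamma(\tau \otimes \CC^k, \tau \otimes \CC^{k'}) = \Hom(\CC^k, \CC^{k'})$ for irreducible unitary $\tau$. The infinite-dimensional case would require a more delicate disintegration argument relying on separable preduals, which the hypothesis lets us avoid.
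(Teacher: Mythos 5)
Your proof is correct and follows essentially the same route as the paper, which simply applies the uniqueness clause of Theorem \ref{prop:direct-integral} and observes that $H(\tau)$ must have the form $\identity_\tau \otimes \eta(\tau)$ when $\dim_{\CC}\mathcal{M}_\tau < \infty$. Your additional care about absorbing the Radon--Nikodym density and about the measurability of $\tau \mapsto \eta(\tau)$ only fills in details the paper leaves implicit.
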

\begin{proof}
	Apply the uniqueness part of Theorem \ref{prop:direct-integral}, and note that $H(\tau)$ must be of the form $\identity_\tau \otimes \eta(\tau)$ for some $\eta(\tau)$ whenever $\dim_{\CC} \mathcal{M}_\tau$ is finite.
\MyQED\end{proof}

We shall apply these results to a $G(F)$-equivariant vector bundle $\mathscr{E}$ on $Y(F)$ together with a pairing \eqref{eqn:hermitian-vb}, thereby making $L^2(Y(F), \mathscr{E})$ into a unitary representation of $G(F)$, where $F$, $Y$, $G$ are as in \S\ref{sec:integration-density} and $G$ is reductive. In this case $G(F)$ is known to be CCR.

\section{Gelfand--Kostyuchenko method}\label{sec:GK-method}
The main reference here is \cite{Be88}. We consider
\begin{itemize}
	\item a separable Hilbert space $L$ which equals a direct integral $\int^\oplus_\Pi \mathcal{H}_\tau \dd\mu(\tau)$ of Hilbert spaces;
	\item a separable topological vector space $\Schw$, with continuous dual $\Schw^\vee$ endowed with the strong topology.
\end{itemize}

\begin{definition}\label{def:pointwise}
	A continuous linear map $\alpha: \Schw \to L$ is said to be \emph{pointwise defined}\index{pointwise defined} if there exists a family
	\[ \alpha_\tau: \Schw \to \mathcal{H}_\tau, \quad \tau \in \Pi \]
	such that for all $\xi \in \Schw$, the section $\tau \mapsto \alpha_\tau(\xi)$ is measurable and represents $\alpha(\xi) \in L$.
\end{definition}
Evidently, one can neglect those $\alpha_\tau$ for $\tau$ in a set of $\mu$-measure zero. We collect some more facts below, cf.\ \cite[1.3 Lemma]{Be88}.
\begin{enumerate}
	\item Any two families $(\alpha_\tau)_\tau$, $(\alpha'_\tau)_\tau$ coincide off a subset of $\mu$-measure zero.
	\item Let $\Gamma$ be a locally compact separable group acting on $\Schw$, $\mathcal{H}_\tau$ (for each $\tau \in \Pi$) and thus on $L$ by isometries, such that $\alpha$ is $\Gamma$-equivariant, then one can choose $\alpha_\tau$ to be $\Gamma$-equivariant for all $\tau$.
	\item If $\alpha$ has dense image, then $\alpha_\tau$ has dense image in $\mathcal{H}_\tau$ for $\mu$-almost all $\tau \in \Pi$.
	\item Given continuous linear maps $\varphi: \Schw \to \Schw'$ and $\alpha': \Schw' \to L$. If $\alpha'$ is pointwise defined, then so is $\alpha := \alpha' \varphi: \Schw \to L$;
\end{enumerate}

Denote by $\Schw^\dagger$ the conjugate of $\Schw^\vee$. If $\alpha$ if injective of dense image, taking hermitian adjoint yields
\[ \Schw \hookrightarrow L \hookrightarrow \Schw^\dagger. \]
This is naturally connected to the idea of \emph{rigged Hilbert spaces} or \emph{Gelfand triples} \cite{GV4} when $\alpha$ is injective, pointwise defined and of dense image. In fact, let $\alpha_\tau^\dagger: \mathcal{H}_\tau \to \Schw^\dagger$ be the hermitian adjoint of $\alpha_\tau$ for $\mu$-almost all $\tau$. In \cite[p.667 (**)]{Be88} it is shown that
\begin{gather}\label{eqn:generalized-expansion}
	\xi = \int_{\tau \in \Pi} \alpha_\tau^\dagger \alpha_\tau (\xi) \dd\mu(\tau), \quad \xi \in \Schw \subset \Schw^\dagger.
\end{gather}
This may be viewed as an expansion for $L$ into generalized eigenvectors, with the help of ``test vectors'' from $\Schw$. These observations lead to the following description of the components $\mathcal{H}_\tau$ in $L$, for $\mu$-almost all $\mu$.

\begin{proposition}
	Suppose that $\alpha: \Schw \to L$ is pointwise defined of dense image. Denote by $\|\cdot\|_\tau$ the continuous semi-norm $\xi \mapsto \|\alpha_\tau(\xi)\|_{\mathcal{H}_\tau}$ on $\Schw$. For $\mu$-almost all $\tau$, the map $\alpha_\tau$ induces an isomorphism
	\[ \left( \text{the completion of $\Schw$ relative to $\|\cdot\|_\tau$} \right) \rightiso \mathcal{H}_\tau \]
	between topological vector spaces. 
\end{proposition}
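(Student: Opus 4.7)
The plan is to derive the statement directly from the properties of pointwise-defined maps collected just before the proposition, supplemented by a standard completion argument.

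First I would invoke item~(3) in the list preceding the proposition: since $\alpha: \Schw \to L$ is pointwise defined and has dense image, the same holds for $\alpha_\tau: \Schw \to \mathcal{H}_\tau$ for $\mu$-almost every $\tau$. For the rest of the argument I restrict to such $\tau$; by the essential uniqueness in item~(1), removing a $\mu$-null set does no harm.

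Next I would examine the seminorm $\|\cdot\|_\tau$ on $\Schw$. Continuity of $\alpha_\tau$ (implicit in the Gelfand--Kostyuchenko framework of \cite{Be88}) guarantees that $\|\cdot\|_\tau$ is continuous. Its kernel is the closed subspace $N_\tau := \Ker(\alpha_\tau)$, and the quotient $\Schw/N_\tau$ is a normed space on which $\alpha_\tau$ descends to an isometric linear injection $\bar{\alpha}_\tau: \Schw/N_\tau \hookrightarrow \mathcal{H}_\tau$, whose image is dense by Step~1. By definition, the completion of $\Schw$ relative to $\|\cdot\|_\tau$ coincides with the Hausdorff completion of $\Schw/N_\tau$.

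Finally, passing to completions, $\bar{\alpha}_\tau$ extends uniquely to an isometric continuous linear map $\hat{\alpha}_\tau$ from this completion into $\mathcal{H}_\tau$. Its image is simultaneously closed (being the isometric image of a complete metric space in a Hilbert space) and dense (containing the dense image of $\bar{\alpha}_\tau$), hence equals $\mathcal{H}_\tau$. Consequently $\hat{\alpha}_\tau$ is a surjective isometry, and in particular an isomorphism of topological vector spaces, which is the required assertion.

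The main subtlety I foresee is verifying the continuity of each $\alpha_\tau$, so that $\|\cdot\|_\tau$ is indeed a well-defined continuous seminorm on $\Schw$; this is not literally part of Definition \ref{def:pointwise}, and must be secured by appealing to Bernstein's construction \cite{Be88} (typically exploiting the separability and/or nuclearity of $\Schw$ later discussed in \S\ref{sec:GK-method}). Once this point is nailed down, the rest of the proof is routine functional analysis.
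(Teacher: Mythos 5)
Your argument is correct and is exactly the routine completion argument the paper has in mind: the paper in fact gives no written proof of this proposition, treating it as an immediate consequence of the facts quoted from \cite[1.3 Lemma]{Be88} (in particular the almost-everywhere density of the image of $\alpha_\tau$, which is your Step~1). Your closing caveat is also well placed — continuity of each $\alpha_\tau$ is built into Bernstein's notion of a pointwise decomposition into morphisms, which is why the statement can call $\|\cdot\|_\tau$ a continuous semi-norm without further comment.
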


\begin{proposition}\label{prop:multiplicity-spaces}\index{$\mathcal{M}_\tau$}
	Suppose in addition that
	\begin{compactitem}
		\item we are given a locally compact separable group $\Gamma$, acting unitarily on $L$ such that $\alpha: \Schw \to L$ is $\Gamma$-equivariant;
		\item the direct integral for $L$ is the spectral decomposition in Theorem \ref{prop:direct-integral}, i.e.\ $\Pi = \Pi_{\mathrm{unit}}(\Gamma)$ and $\mathcal{H}_\tau = \tau \hat{\otimes} \mathcal{M}_\tau$.
	\end{compactitem}
	For $\mu$-almost all $\tau$, there is a canonical inclusion of vector spaces
	\[ \Hom(\mathcal{M}_\tau, \CC) \hookrightarrow \left\{ a \in \Hom_\Gamma(\Schw, \tau) : \; \exists C > 0, \; \|a(\cdot)\| \leq C \|\cdot\|_\tau \right\} \]
	where $\Hom(\cdots)$ and $\Hom_\Gamma(\cdots)$ stand for the continuous $\Hom$-spaces. It is an isomorphism when $\dim_{\CC} \mathcal{M}_\tau$ is finite.
\end{proposition}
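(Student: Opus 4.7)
The plan is to exhibit the canonical inclusion explicitly and then check injectivity by density, leaving the equality part to a Schur-type argument in the finite-dimensional setting. Let me fix a pointwise defined family $(\alpha_\tau)_\tau$ representing $\alpha$; by the remark following Definition \ref{def:pointwise} we may assume each $\alpha_\tau: \Schw \to \mathcal{H}_\tau = \tau \hat{\otimes} \mathcal{M}_\tau$ is $\Gamma$-equivariant, and by another item in that list $\alpha_\tau$ has dense image for $\mu$-almost every $\tau$. Discard a $\mu$-null set so that both properties hold throughout.

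The proposed map is
$$ \mathcal{M}_\tau^\vee \longrightarrow \Hom_\Gamma(\Schw, \tau), \qquad \ell \longmapsto a_\ell := (\identity_\tau \hat{\otimes}\, \ell) \circ \alpha_\tau. $$
First I would check that $a_\ell$ lands in the prescribed subspace. The map $\identity_\tau \hat{\otimes}\, \ell$ is a $\Gamma$-equivariant bounded operator $\tau \hat{\otimes} \mathcal{M}_\tau \to \tau$ of norm $\|\ell\|$ (since $\Gamma$ acts trivially on $\mathcal{M}_\tau$), and composition with $\alpha_\tau$ preserves $\Gamma$-equivariance. For the norm estimate, observe that by the very definition of $\|\cdot\|_\tau$,
$$ \|a_\ell(\xi)\|_\tau \;\leq\; \|\ell\| \cdot \|\alpha_\tau(\xi)\|_{\mathcal{H}_\tau} \;=\; \|\ell\| \cdot \|\xi\|_\tau, $$
so one may take $C = \|\ell\|$. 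Injectivity is immediate from density: if $a_\ell = 0$, then $\identity_\tau \hat{\otimes}\, \ell$ vanishes on the image of $\alpha_\tau$, which is dense in $\tau \hat{\otimes} \mathcal{M}_\tau$, hence $\identity_\tau \hat{\otimes}\, \ell = 0$, which forces $\ell = 0$.

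For surjectivity under the assumption $\dim_{\CC}\mathcal{M}_\tau < +\infty$, take $a \in \Hom_\Gamma(\Schw, \tau)$ satisfying $\|a(\xi)\|_\tau \leq C \|\xi\|_\tau$ for some $C > 0$. The previous proposition identifies the completion of $\Schw$ with respect to $\|\cdot\|_\tau$ with $\mathcal{H}_\tau = \tau \hat{\otimes} \mathcal{M}_\tau$, with $\alpha_\tau$ as the canonical map. The estimate on $a$ means it extends uniquely by continuity to a bounded $\Gamma$-equivariant operator
$$ \bar{a}: \tau \hat{\otimes} \mathcal{M}_\tau \longrightarrow \tau, \qquad \bar{a} \circ \alpha_\tau = a. $$
Since $\mathcal{M}_\tau$ is finite-dimensional, $\tau \hat{\otimes} \mathcal{M}_\tau = \tau \otimes \mathcal{M}_\tau$ is (non-canonically) a finite direct sum of copies of $\tau$, and Schur's lemma for irreducible unitary representations of $\Gamma$ (applicable since $\Gamma$ is CCR and $\tau$ is irreducible) gives $\End_\Gamma(\tau) = \CC \cdot \identity_\tau$. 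Hence the natural map
$$ \mathcal{M}_\tau^\vee \longrightarrow \Hom_\Gamma(\tau \otimes \mathcal{M}_\tau, \tau), \qquad \ell \longmapsto \identity_\tau \otimes \ell, $$
is an isomorphism, so $\bar{a} = \identity_\tau \otimes \ell$ for a unique $\ell \in \mathcal{M}_\tau^\vee$, and then $a = a_\ell$, as desired.

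The main subtlety is the identification in the previous proposition: one must justify that the $\Gamma$-equivariant continuous extension $\bar{a}$ actually exists and is unique, which rests on $\alpha_\tau$ inducing an isomorphism onto a dense subspace. Once this identification is granted, the finite-dimensional Schur argument is essentially formal. In the infinite-dimensional case, I expect that the inclusion may well be strict because not every $\Gamma$-equivariant bounded operator $\tau \hat{\otimes} \mathcal{M}_\tau \to \tau$ needs to come from evaluation against a vector of $\mathcal{M}_\tau$; this is precisely why the statement only asserts an inclusion in general.
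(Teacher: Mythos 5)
Your proof is correct and follows essentially the same route as the paper: you realize $\mathcal{M}_\tau^\vee$ inside $\Hom_\Gamma(\tau \hat{\otimes} \mathcal{M}_\tau, \tau)$ via $\ell \mapsto \identity_\tau \hat{\otimes}\, \ell$ (an isomorphism in the finite-dimensional case by Schur's Lemma) and then transport through the identification of $\mathcal{H}_\tau$ with the completion of $\Schw$ relative to $\|\cdot\|_\tau$ from the preceding proposition. The paper compresses this into two sentences; your write-up merely supplies the routine verifications (equivariance, the norm bound, injectivity via density of the image of $\alpha_\tau$).
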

\begin{proof}
	We may view $\Hom(\mathcal{M}_\tau, \CC)$ as a subspace of $\Hom_\Gamma(\tau \hat{\otimes} \mathcal{M}_\tau, \tau)$, which is full when $\mathcal{M}_\tau$ is finite-dimensional, by Schur's Lemma. Now apply the previous result.
\MyQED\end{proof}

The following result asserts that $\alpha: \Schw \to L$ is pointwise defined when $\Schw$ is \emph{nuclear}\index{topological vector space!nuclear}. It has been stated in \cite[I.4, Theorem 5]{GV4} and \cite[Chapter II, \S 1]{Ma68} in a different flavor. For the theory of nuclear spaces, see \cite[\S 50]{Tr67}.

\begin{theorem}\label{prop:nuclear-spectral}
	Assume that $\Schw$ is nuclear. Then any continuous linear map $\alpha: \Schw \to L$ is pointwise defined. In particular, any continuous linear map issuing from $\Schw$ is fine in the sense of \cite[1.4]{Be88}.
\end{theorem}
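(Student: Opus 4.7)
The plan is to exploit the defining property of nuclear spaces, namely that every continuous linear map from a nuclear locally convex space to a Banach space is a nuclear operator \cite[\S 50]{Tr67}. Applied to our $\alpha: \Schw \to L$, this yields a representation
\[ \alpha(\xi) = \sum_{n=1}^\infty c_n \phi_n(\xi) v_n, \qquad \xi \in \Schw, \]
with $(c_n) \in \ell^1$, an equicontinuous sequence $(\phi_n) \subset \Schw^\vee$, and a bounded sequence $(v_n) \subset L$. The strategy will then be to evaluate each $v_n$ pointwise in the direct integral and sum the resulting series in every fiber $\mathcal{H}_\tau$.

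Concretely, I would fix a measurable section $\tau \mapsto v_n(\tau)$ representing each $v_n$ and define tentatively
\[ \alpha_\tau(\xi) := \sum_{n=1}^\infty c_n \phi_n(\xi) v_n(\tau). \]
Setting $M := \sup_n \|v_n\|_L < \infty$, Tonelli's theorem gives
\[ \int_\Pi \sum_{n \geq 1} |c_n| \cdot \|v_n(\tau)\|_{\mathcal{H}_\tau}^2 \dd\mu(\tau) = \sum_{n \geq 1} |c_n| \cdot \|v_n\|_L^2 \leq M^2 \sum_{n \geq 1} |c_n| < \infty, \]
so that $\sum_n |c_n| \|v_n(\tau)\|_{\mathcal{H}_\tau}^2 < \infty$ on a conull subset $\Pi_0 \subset \Pi$; a further Cauchy--Schwarz then yields $\sum_n |c_n| \|v_n(\tau)\|_{\mathcal{H}_\tau} < \infty$ there. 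The equicontinuity of $(\phi_n)$ provides a continuous seminorm $p$ on $\Schw$ with $|\phi_n(\xi)| \leq p(\xi)$ for all $n$, whence
\[ \|\alpha_\tau(\xi)\|_{\mathcal{H}_\tau} \leq p(\xi) \sum_n |c_n| \|v_n(\tau)\|_{\mathcal{H}_\tau}, \quad \tau \in \Pi_0, \; \xi \in \Schw. \]
This shows that $\alpha_\tau$ is a well-defined continuous linear map $\Schw \to \mathcal{H}_\tau$ for every $\tau$ in the common full-measure set $\Pi_0$.

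It then remains to verify that $\tau \mapsto \alpha_\tau(\xi)$ is measurable and represents $\alpha(\xi) \in L$ for each fixed $\xi$. The partial sums $\tau \mapsto \sum_{n=1}^N c_n \phi_n(\xi) v_n(\tau)$ are measurable sections, converge pointwise on $\Pi_0$ to $\alpha_\tau(\xi)$ (hence measurability of the limit), and converge in the norm of $L$ to $\alpha(\xi)$; uniqueness of limits in the direct integral identifies the two. The ``fine'' assertion will then follow at once from Bernstein's definition in \cite[1.4]{Be88}. The only real difficulty is technical: one must pass from $\xi$-dependent convergence statements to a single $\mu$-null exceptional set valid simultaneously for all $\xi \in \Schw$. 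This is precisely where the equicontinuity of the $(\phi_n)$ delivered by the nuclear factorization is indispensable, since it converts the $\xi$-wise convergence into a uniform seminorm bound, allowing the same $\Pi_0$ to work for every $\xi$ at once.
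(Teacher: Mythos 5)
Your argument is correct, but it takes a genuinely different route from the paper's. Both proofs start from the same input, the nuclear representation $\alpha(\xi)=\sum_n c_n\phi_n(\xi)v_n$ furnished by \cite[Theorem 50.1]{Tr67}. The paper then splits $c_n=\tilde{c}_n\mu_n$ into two $\ell^2$-sequences and factors $\alpha=\alpha_2\alpha_1$ through an auxiliary separable Hilbert space, with $\alpha_2$ Hilbert--Schmidt; it concludes by citing Bernstein's Gelfand--Kostyuchenko theorem (Hilbert--Schmidt operators are pointwise defined) together with the stability of pointwise-definedness under precomposition. You instead disintegrate each $v_n$ into a measurable section $v_n(\tau)$ and build $\alpha_\tau$ fibrewise, using Tonelli plus Cauchy--Schwarz to get $\sum_n|c_n|\,\|v_n(\tau)\|_{\mathcal{H}_\tau}<\infty$ on a $\xi$-independent conull set, and equicontinuity of $(\phi_n)$ to make $\alpha_\tau$ continuous there; the identification of $\tau\mapsto\alpha_\tau(\xi)$ with $\alpha(\xi)$ then follows by comparing the pointwise and $L$-norm limits of the partial sums (an a.e.-convergent subsequence suffices, and the definition of ``pointwise defined'' tolerates a $\xi$-dependent null set at this stage). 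In effect you have inlined the proof of the Hilbert--Schmidt case rather than quoting it, which makes the argument self-contained at the cost of redoing the measure-theoretic bookkeeping; the paper's factorization is shorter and isolates the Hilbert-space content in a citable black box. Your closing remark correctly identifies the one genuine subtlety, namely that the exceptional set defining $\Pi_0$ must not depend on $\xi$, and your construction does arrange this since $\Pi_0$ is cut out by the condition $\sum_n|c_n|\,\|v_n(\tau)\|^2<\infty$ alone.
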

\begin{proof}
	By the Gelfand--Kostyuchenko Theorem \cite[1.5, Theorem]{Be88}, Hilbert--Schmidt operators between separable Hilbert spaces are pointwise defined. Therefore it suffices to factorize $\alpha$ into
	\[ \Schw \xrightarrow{\alpha_1} L_1 \xrightarrow{\alpha_2} L \]
	where $L_1$ is a separable Hilbert space, $\alpha_1$ is continuous linear, and $\alpha_2$ is Hilbert--Schmidt.
	
	By \cite[Theorem 50.1]{Tr67}, $\alpha$ is a nuclear mapping: there exist
	\begin{compactitem}
		\item $f_i \in \Schw^\vee$: an equicontinuous family \cite[Definition 14.3]{Tr67} of linear functionals, for $i=1,\ldots$ (at most countable),
		\item $\eta_i \in L$ with $\|\eta_i\| \leq 1$ for $i=1,\ldots$,
		\item $(\lambda_1, \ldots)$: an $\ell^1$-sequence of complex numbers
	\end{compactitem}
	(see also \cite[Proposition 47.2]{Tr67} and its proof), such that for each $\xi \in \Schw$
	\[ \alpha(\xi) = \sum_{i \geq 1} \lambda_i \angles{f_i, \xi} \eta_i \qquad \text{(convergent sum)}. \]
	Set $L_1$ to be the Hilbert space with an orthonormal basis $(\tilde{\eta}_i)_i$, with the same indices $i$ as before. We may also write $\lambda_i = \tilde{\lambda}_i \mu_i$ in such a manner that $(\tilde{\lambda}_i)_i$, $(\mu_i)_i$ are both $\ell^2$-sequences. We define first
	\begin{align*}
		\alpha_1: \Schw & \longrightarrow L_1 \\
		\xi & \longmapsto \sum_{i \geq 1} \tilde{\lambda}_i \angles{f_i, \xi} \tilde{\eta}_i,
	\end{align*}
	which is well-defined and continuous since $(f_i)_i$ is equicontinuous, $(\tilde{\eta}_i)_i$ is orthonormal and $(\tilde{\lambda}_i)_i$ is $\ell^2$.
	
	Secondly, define
	\begin{align*}
		\alpha_2: L_1 & \longrightarrow L \\
		\tilde{\eta}_i & \mapsto \mu_i \eta_i, \quad \forall i \geq 1,
	\end{align*}
	which is Hilbert--Schmidt by \cite[1.5 Lemma]{Be88} since $\|\eta_i\| \leq 1$ and $(\mu_i)_i$ is $\ell^2$. Obviously $\alpha = \alpha_2 \alpha_1$.
\MyQED\end{proof}

Most spaces of test functions in harmonic analysis are nuclear spaces. See \cite[\S 51]{Tr67} for a few examples in the Archimedean case.

\section{Hypocontinuity and barreled spaces}
The following review is meant to fix notations. We follow the definition in \cite[III, \S 5]{BoEVT}.

Let $X, Y, Z$ be topological vector spaces. A bilinear map $B: X \times Y \to Z$ is called \emph{separately continuous} if $B(x,\cdot): Y \to Z$ and $B(\cdot,y): X \to Z$ are both continuous linear maps for all $x, y$; it is called \emph{jointly continuous} if $B$ itself is a continuous map.

\begin{definition}\label{def:hypocontinuity}\index{hypocontinuous}
	Let $\mathfrak{S}$ be a family of bounded sets of $Y$. A separately continuous bilinear map $B: X \times Y \to Z$ is called \emph{$\mathfrak{S}$-hypocontinuous} if for every open neighborhood $\mathcal{W} \subset Z$ of $0$ and every $\mathcal{V} \in \mathfrak{S}$, there exists an open neighborhood $\mathcal{U} \subset X$ of $0$ such that
	\[ B(\mathcal{U} \times \mathcal{V}) \subset \mathcal{W}. \]
	It is known \cite[III, \S 5.3, Proposition 3]{BoEVT} that the condition is equivalent to: $B$ induces a continuous linear map $\theta: X \to \Hom^{\mathfrak{S}}(Y, Z)$ characterized by $\theta(x)(y) = B(x,y)$, where $\Hom^{\mathfrak{S}}(Y, Z)$ is the space of continuous linear maps $Y \to Z$ with the locally convex topology given by uniform convergence over subsets in $\mathfrak{S}$ (cf.\ \cite[\S 19, \S 32]{Tr67}); $\Hom^{\mathfrak{S}}(Y, Z)$ is Hausdorff when $\bigcup \mathfrak{S}$ is dense in $Y$.
\end{definition}

The $\mathfrak{S}$-hypocontinuity interpolates between joint and separate continuity. When $\mathfrak{S}$ equals the set of all bounded subsets of $Y$, we say that $B: X \times Y \to Z$ is a \emph{hypocontinuous} bilinear form. We will mainly be interested in the case $Z=\CC$; in this case, a hypocontinuous $X \times Y \to \CC$ gives rise to a continuous linear map $X \to Y^\vee$ where $Y^\vee = \Hom^{\mathfrak{S}}(Y, \CC)$ is the strong dual of $Y$.

The notion of \emph{barreled} topological vector spaces is standard; the precise definition may be found in \cite[\S 33]{Tr67}. It suffices for us to collect some of their basic properties.\index{topological vector space!barreled}

\begin{proposition}\label{prop:bb-properties}
	The property of being barreled is preserved by arbitrary $\varinjlim$. Furthermore
	\begin{itemize}
		\item Fréchet spaces are barreled, therefore LF-spaces are barreled as well;
		\item the dual of a barreled space is \emph{quasi-complete}\index{topological vector space!quasi-complete} with respect to the topology of pointwise convergence. Here quasi-completeness means that every bounded and closed subset is complete.
	\end{itemize}
\end{proposition}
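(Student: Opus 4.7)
My plan is to treat the three assertions in the order that best exposes their logical dependencies: first that Fréchet spaces are barreled, then the stability of barreledness under $\varinjlim$ (which immediately gives the LF-case), and finally the quasi-completeness of the dual. All three are classical; the underlying references are \cite[III, \S 4]{BoEVT} and \cite[\S 33]{Tr67}, but a self-contained sketch is short.

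To prove that a Fréchet space $E$ is barreled, I would argue by Baire category. Let $T \subset E$ be a barrel, i.e.\ an absolutely convex, absorbent, closed subset. Because $T$ is absorbent we have $E = \bigcup_{n \geq 1} nT$; since $E$ is a complete metric space, Baire's theorem produces some $n$ such that $nT$ has non-empty interior, and then $T$ itself has non-empty interior. Combined with absolute convexity and the fact that $T = -T$, it follows that $T$ is a neighborhood of $0$, which is precisely what barreledness requires.

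For the stability under inductive limits, let $E = \varinjlim_i E_i$ with each $E_i$ barreled, and let $\alpha_i : E_i \to E$ denote the canonical map. Given a barrel $T \subset E$, I would verify that each $\alpha_i^{-1}(T)$ is a barrel in $E_i$: absolute convexity is preserved by preimages; closedness holds because $\alpha_i$ is continuous and $T$ is closed; and absorbency follows since for $x \in E_i$, the element $\alpha_i(x) \in E$ lies in $\lambda T$ for some $\lambda > 0$, hence $x \in \lambda \alpha_i^{-1}(T)$. By assumption each $\alpha_i^{-1}(T)$ is a neighborhood of $0$ in $E_i$, and by the very definition of the inductive limit topology this means $T$ is a neighborhood of $0$ in $E$. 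Combined with the Fréchet case, this yields the statement for LF-spaces.

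For the last assertion, let $E$ be barreled and let $B \subset E^\vee$ be bounded and closed in the topology of pointwise convergence. Boundedness means that $\{\langle f, x\rangle : f \in B\}$ is bounded in $\CC$ for every $x \in E$; the key input — and the only non-formal step — is the Banach--Steinhaus principle valid on barreled spaces, which upgrades this pointwise boundedness to equicontinuity of $B$ (the proof: the polar $B^\circ = \{x \in E : \sup_{f \in B}|\langle f,x\rangle| \leq 1\}$ is a barrel, hence a neighborhood of $0$). Given a Cauchy net $(f_\alpha)$ in $B$ for the topology of pointwise convergence, the completeness of $\CC$ yields a pointwise limit $f : E \to \CC$, which is obviously linear; equicontinuity of $B$ then passes to $f$, so $f \in E^\vee$, and closedness of $B$ places $f$ in $B$. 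This proves quasi-completeness. The only genuine obstacle is invoking Banach--Steinhaus correctly — once equicontinuity of $B$ is in hand, the rest is a mechanical passage to the limit.
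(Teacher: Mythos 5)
Your proof is correct, and it simply spells out the standard arguments (Baire category for the Fréchet case, preimages of barrels for inductive limits, Banach--Steinhaus plus passage to the pointwise limit for the dual) that the paper disposes of by citing \cite[III, \S 4.1]{BoEVT} and \cite[Corollary 2 to Theorem 34.2]{Tr67}. There is nothing to correct.
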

\begin{proof}
	The permanence under $\varinjlim$ can be found in \cite[III, \S 4.1, Corollaire 3]{BoEVT}. The case of Fréchet spaces is also discussed therein. The quasi-completeness of the dual is proved in \cite[Corollary 2 to Theorem 34.2]{Tr67}. 
\MyQED\end{proof}

\begin{proposition}[{\cite[III, \S 5.3, Proposition 6]{BoEVT}}]\label{prop:hypocontinuity}
	Suppose that $X$ is a barreled space, then every separately continuous bilinear map $B: X \times Y \to Z$ is $\mathfrak{S}$-hypocontinuous, for every family $\mathfrak{S}$ of bounded sets in $Y$.
\end{proposition}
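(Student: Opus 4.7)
The plan is to reduce the problem to the defining property of a barreled space, namely that every barrel (i.e.\ absorbing, balanced, closed, convex subset) is a neighborhood of $0$. Fix a bounded set $\mathcal{V} \in \mathfrak{S}$ and an open neighborhood $\mathcal{W} \subset Z$ of $0$. By local convexity of $Z$, I may shrink $\mathcal{W}$ and assume it is closed, convex and balanced. Set
\[
   \mathcal{U} := \bigcap_{y \in \mathcal{V}} \{ x \in X : B(x,y) \in \mathcal{W} \}.
\]
The goal is to show $\mathcal{U}$ is a neighborhood of $0$ in $X$; this is exactly the condition of $\mathfrak{S}$-hypocontinuity.

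The substance of the argument is to verify that $\mathcal{U}$ is a barrel. Closedness, convexity and balancedness are immediate from the corresponding properties of $\mathcal{W}$ together with the continuity and linearity of $B(\cdot,y)$ for each $y \in \mathcal{V}$: indeed $\mathcal{U}$ is an intersection of preimages of the barrel $\mathcal{W}$ under the continuous linear maps $B(\cdot,y)$. The key step is to show that $\mathcal{U}$ is absorbing. Given $x_0 \in X$, the linear map $B(x_0,\cdot): Y \to Z$ is continuous by separate continuity of $B$; since $\mathcal{V}$ is bounded in $Y$, its image $B(x_0,\mathcal{V})$ is bounded in $Z$. Because $\mathcal{W}$ is absorbing (being a neighborhood of $0$), there exists $\lambda > 0$ with $B(x_0,\mathcal{V}) \subset \lambda \mathcal{W}$, i.e.\ $\lambda^{-1} x_0 \in \mathcal{U}$. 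This shows $\mathcal{U}$ absorbs every point.

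Having verified that $\mathcal{U}$ is a barrel, the hypothesis that $X$ is barreled furnishes the conclusion: $\mathcal{U}$ is a neighborhood of $0$ in $X$, and by construction $B(\mathcal{U} \times \mathcal{V}) \subset \mathcal{W}$. Since $\mathcal{V}$ and $\mathcal{W}$ were arbitrary, $B$ is $\mathfrak{S}$-hypocontinuous for every family $\mathfrak{S}$ of bounded sets in $Y$.

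The only delicate point is the absorbing property, which is where both the separate continuity (to control $B(x_0,\cdot)$) and the boundedness of the test sets $\mathcal{V}$ are used in tandem; the remaining geometric properties of $\mathcal{U}$ are essentially formal. No pathology arises from passing to an intersection over possibly uncountably many $y \in \mathcal{V}$, since each defining condition is closed, convex and balanced separately.
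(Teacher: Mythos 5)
Your proof is correct and is precisely the classical barrel argument used in the cited reference (the paper itself gives no proof, only the citation to Bourbaki): the set $\mathcal{U}=\bigcap_{y\in\mathcal{V}}B(\cdot,y)^{-1}(\mathcal{W})$ is a barrel, with the absorbing property coming from the boundedness of $B(x_0,\mathcal{V})$. All steps, including the reduction to a closed convex balanced $\mathcal{W}$ and the use of balancedness of $\mathcal{U}$ to conclude absorption, check out.
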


We record a well-known condition for joint continuity.
\begin{proposition}[{\cite[Corollary to Theorem 34.1]{Tr67}}]\label{prop:joint-continuity}
	Let $B: X \times Y \to Z$ be a separately continuous bilinear map. If $X$ is Fréchet and $Y$ is metrizable, then $B$ is jointly continuous.
\end{proposition}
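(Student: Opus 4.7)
The plan is to reduce the statement to the previously established hypocontinuity result (Proposition \ref{prop:hypocontinuity}) combined with a sequential argument on the metrizable product $X \times Y$. The upshot is that no new functional-analytic machinery is needed beyond what has already been imported from \cite{BoEVT}.

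First, since $X$ is Fréchet, Proposition \ref{prop:bb-properties} tells us $X$ is barreled. Applying Proposition \ref{prop:hypocontinuity} with $\mathfrak{S}$ equal to the von Neumann bornology on $Y$, we conclude that $B$ is hypocontinuous in the sense of Definition \ref{def:hypocontinuity}. Explicitly: for every open neighborhood $\mathcal{W} \subset Z$ of $0$ and every bounded set $\mathcal{V} \subset Y$, there exists an open neighborhood $\mathcal{U} \subset X$ of $0$ with $B(\mathcal{U} \times \mathcal{V}) \subset \mathcal{W}$.

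Next, since $X$ and $Y$ are both metrizable, so is the product $X \times Y$; hence continuity of $B$ at $(0,0)$ is equivalent to sequential continuity there, and by bilinearity joint continuity everywhere reduces to continuity at $(0,0)$. So take an arbitrary sequence $(x_n, y_n) \to (0,0)$ in $X \times Y$. The set $M := \{y_n : n \geq 1\}$ is bounded in $Y$, because every convergent sequence in a topological vector space is bounded. Given any open neighborhood $\mathcal{W}$ of $0$ in $Z$, hypocontinuity produces an open neighborhood $\mathcal{U}$ of $0$ in $X$ with $B(\mathcal{U} \times M) \subset \mathcal{W}$. Since $x_n \to 0$, we have $x_n \in \mathcal{U}$ for all $n$ sufficiently large, and therefore $B(x_n, y_n) \in \mathcal{W}$ for all such $n$. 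This shows $B(x_n, y_n) \to 0$ in $Z$, completing the argument.

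There is no serious obstacle: the only subtle point is remembering that sequential continuity suffices on the metrizable space $X \times Y$, so that boundedness of a \emph{single} convergent sequence $\{y_n\}$ is enough to invoke hypocontinuity uniformly. The Fréchet hypothesis on $X$ is used solely to feed Proposition \ref{prop:hypocontinuity}; metrizability of $X$ is then used a second time, together with that of $Y$, to metrize the product and replace an arbitrary neighborhood argument by a sequential one.
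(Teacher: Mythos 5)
Your proof is correct. The paper itself gives no argument for this proposition—it is quoted directly from \cite[Corollary to Theorem 34.1]{Tr67}—and your derivation reconstructs the standard proof of that result from ingredients already stated in the paper: Fréchet $\Rightarrow$ barreled (Proposition \ref{prop:bb-properties}), barreled $\Rightarrow$ hypocontinuous for every family of bounded sets (Proposition \ref{prop:hypocontinuity}), and then a sequential argument on the metrizable product, using that a convergent sequence is a bounded set. The only step you compress is the reduction of joint continuity everywhere to continuity at the origin; this is fine because in the decomposition $B(x,y)-B(x_0,y_0)=B(x-x_0,y-y_0)+B(x-x_0,y_0)+B(x_0,y-y_0)$ the two cross terms are handled by the separate continuity you are already assuming. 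So the argument is complete and self-contained relative to the paper, which is arguably preferable to a bare citation.
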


The notion of barreled spaces are used in this work to study holomorphic families of linear functionals. Consider an open subset $U \subset \CC^r$, a locally convex topological vector space $V$, its continuous dual $V^\vee$ and a map $T: U \to V^\vee$, written as $\lambda \mapsto T_\lambda$. For the following application, we say $T$ is \emph{holomorphic} if $\lambda \mapsto T_\lambda(v)$ is holomorphic for all $v \in V$. This also implies that $T: \lambda \mapsto T_\lambda$ is continuous if $V^\vee$ is endowed with the topology of pointwise convergence.

The following \emph{method of analytic continuation} is due to Gelfand--Shilov \cite[Chapter I, A.2.3]{GS1},  see also \cite[Proposition 5.2.1]{Ig00}. The following formulation is proved in \cite[\S 8]{Li18}.
\begin{theorem}\label{prop:GS-principle}
	Suppose the space $V$ is barreled. Given a map
	\begin{align*}
		T: \CC^r & \longrightarrow \Hom_\CC(V, \CC) \\
		\lambda & \longmapsto T_\lambda
	\end{align*}
	where $\Hom_{\CC}$ stands for the algebraic $\Hom$.	Assume that
	\begin{itemize}
		\item for each $v \in V$, the function $\lambda \mapsto T_\lambda(v)$ is holomorphic on $\CC^r$;
		\item there exists an open subset $U \neq \emptyset$ of $\CC^r$ such that $T$ restricts to a holomorphic map $U \to V^\vee$.
	\end{itemize}
	Then $T$ is actually a holomorphic map $\CC^r \to V^\vee$.
\end{theorem}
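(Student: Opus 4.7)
The plan is to extend $T$ from $U$ to all of $\CC^r$ by building a global power series in $V^\vee$ centered at a base point of $U$, and then leverage the barreled hypothesis on $V$ to ensure the resulting sums land in $V^\vee$ at every $\lambda$. The key ingredients will be Cauchy's integral formula in several complex variables (to produce the coefficients of the expansion in $V^\vee$), the identity theorem applied to the scalar-valued entire functions $\lambda \mapsto T_\lambda(v)$ (to give global convergence of the series for each fixed $v$), and the Banach–Steinhaus theorem (to promote pointwise limits of functionals to continuous functionals).

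First I would pick a base point $\lambda_0 \in U$ and a closed polydisc $\overline{D}$ centered at $\lambda_0$ with $\overline{D} \subset U$. Using the assumed holomorphy of $T|_U$ into $V^\vee$, together with the quasi-completeness of $V^\vee$ provided by Proposition \ref{prop:bb-properties} (since $V$ is barreled), I would define coefficients $c_\alpha \in V^\vee$, indexed by multi-indices $\alpha \in \N^r$, via the Cauchy integral
$$ c_\alpha = \frac{1}{(2\pi i)^r} \oint \cdots \oint \frac{T_{\lambda_0 + w}}{w_1^{\alpha_1+1} \cdots w_r^{\alpha_r+1}} \dd w_1 \cdots \dd w_r $$
over an appropriate polycircle in $U$, so that $T_\lambda = \sum_\alpha c_\alpha (\lambda - \lambda_0)^\alpha$ in $V^\vee$ for $\lambda \in D$. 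Evaluating at any $v \in V$ then yields a convergent power series expansion of $T_\lambda(v)$ on $D$. Because $\lambda \mapsto T_\lambda(v)$ is entire by assumption, the identity theorem in several complex variables will force the series $\sum_\alpha c_\alpha(v) (\lambda - \lambda_0)^\alpha$ to converge to $T_\lambda(v)$ for \emph{every} $\lambda \in \CC^r$.

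The decisive step will then be to show $T_\lambda \in V^\vee$ for an arbitrary $\lambda \in \CC^r$, not merely $\lambda \in U$. To do this, I would fix $\lambda$ and consider the partial sums $S_N(\lambda) := \sum_{|\alpha| \leq N} c_\alpha (\lambda - \lambda_0)^\alpha \in V^\vee$. By the previous step, $S_N(\lambda)(v) \to T_\lambda(v)$ for every $v \in V$, so the family $\{S_N(\lambda)\}_{N \geq 0}$ is pointwise bounded in $V^\vee$. Since $V$ is barreled, Banach–Steinhaus will imply that this family is equicontinuous, and hence its pointwise limit $T_\lambda$ is continuous. This places $T_\lambda$ in $V^\vee$ as required. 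The holomorphy of the resulting map $T: \CC^r \to V^\vee$ in the (weak) sense stipulated by the theorem is then immediate from the standing assumption that each $\lambda \mapsto T_\lambda(v)$ is holomorphic.

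The main obstacle that I expect is the careful manipulation of complex analysis in the strong dual $V^\vee$, specifically ensuring that the coefficients $c_\alpha$ extracted from Cauchy's formula actually live in $V^\vee$ rather than only in $\Hom_\CC(V, \CC)$; this is precisely where quasi-completeness, and therefore the barreled hypothesis, enters. The other role of barreledness, via Banach–Steinhaus, is conceptually simple but indispensable: without it, the pointwise limit of continuous functionals need not remain continuous, and the whole extension argument collapses. Once both uses of the hypothesis are in place, the rest of the proof is essentially formal.
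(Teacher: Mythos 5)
Your proof is correct and is essentially the standard Gelfand--Shilov ``method of analytic continuation'' that the paper itself does not reprove but defers to \cite[\S 8]{Li16} and \cite[Chapter I, A.2.3]{GS1}: extract Taylor coefficients $c_\alpha \in V^\vee$ by Cauchy integrals over a polycircle in $U$ (using quasi-completeness of the weak dual, hence barreledness), observe that these are the Taylor coefficients of the entire scalar functions $\lambda \mapsto T_\lambda(v)$ so the series converges globally, and conclude $T_\lambda \in V^\vee$ at every $\lambda$ by Banach--Steinhaus applied to the partial sums. The only cosmetic point is that ``identity theorem'' is a slightly loose name for the step you actually need --- uniqueness of power-series coefficients plus the global convergence of the Taylor expansion of an entire function of several variables --- but the argument as described is sound.
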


\chapter{Schwartz spaces and zeta integrals}\label{sec:Schwartz-zeta}
Throughout this chapter, we fix
\begin{compactitem}
	\item a local field $F$ of characteristic zero,
	\item a split connected reductive $F$-group $G$,
	\item an affine spherical embedding $X^+ \hookrightarrow X$ satisfying Axiom \ref{axiom:geometric},
	\item relative invariants $f_i \in F[X]$ of eigencharacter $\omega_i$ (Definition \ref{def:relative-invariant}) for $1 \leq i \leq r := \rank(\Lambda)$.
\end{compactitem}

The Hypothesis \ref{hyp:zeta-0} will be made in the discussion of the $L^2$-aspect, which serves to motivate the overall framework.

\section{Coefficients of smooth representations}\label{sec:coefficients}
The algebraic $\Hom$ between topological vector spaces will be denoted by $\Hom_\text{alg}$ when confusion may arise.

\begin{definition}\label{def:algebraic-tvs}\index{topological vector space!algebraic}
	We call a topological vector space $V$ \emph{algebraic} if it is a countable inductive limit of finite-dimensional vector spaces, each equipped with its usual topology.
\end{definition}

\begin{lemma}\label{prop:autocont}
	Let $V, W$ be topological vector spaces. Suppose that $V$ is algebraic, then $\Hom_{\mathrm{alg}}(V, W) = \Hom(V, W)$.
\end{lemma}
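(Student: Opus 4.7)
The plan is to unpack the definition of ``algebraic'' and then use the universal property of the inductive limit topology. Write $V = \varinjlim_n V_n$ with each $V_n$ finite-dimensional carrying its usual (Euclidean) topology, and with the countable directed system of inclusions $V_m \hookrightarrow V_n$ for $m \leq n$. By the universal property, a linear map $f: V \to W$ is continuous if and only if each restriction $f|_{V_n}: V_n \to W$ is continuous.

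So the first step is to observe that any (algebraic) linear map $V_n \to W$ from a finite-dimensional $\CC$-vector space equipped with its standard topology to an arbitrary topological vector space $W$ is automatically continuous: upon choosing a basis $e_1, \ldots, e_d$ of $V_n$, the map reads $\sum_i a_i e_i \mapsto \sum_i a_i f(e_i)$, which is continuous because scalar multiplication $\CC \times W \to W$ and vector addition $W \times W \to W$ are continuous by definition of a topological vector space. This is a standard fact; no separation hypothesis on $W$ is needed.

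Then the second (and only remaining) step is to apply this to $f \in \Hom_{\mathrm{alg}}(V, W)$: each $f|_{V_n}$ is continuous by the previous paragraph, and therefore $f$ itself is continuous by the universal property of $\varinjlim$. This yields $\Hom_{\mathrm{alg}}(V, W) \subset \Hom(V, W)$, and the reverse inclusion is tautological.

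There is essentially no obstacle; the only subtlety worth flagging is that one must understand ``algebraic'' as referring to the inductive limit \emph{topology} (not merely an algebraic direct limit), so that the universal property is available. Countability of the system plays no role in the argument itself, although it is built into Definition \ref{def:algebraic-tvs} and will be useful elsewhere in the paper.
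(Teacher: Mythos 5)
Your argument is correct and is essentially the paper's own proof: both reduce to the automatic continuity of linear maps out of finite-dimensional spaces (the paper cites Trèves, Theorem 9.1 for this) and then invoke the universal property of the topological inductive limit. Nothing to add.
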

\begin{proof}
	Write $V = \varinjlim_i V_i$ where each $V_i$ is finite-dimensional. Then we have
	\[ \Hom_{\mathrm{alg}}(V, W) = \varprojlim_i \Hom_{\text{alg}}(V_i, W) = \varprojlim_i \Hom(V_i, W), \]
	the last equality resulting from \cite[Theorem 9.1]{Tr67}. The universal property of topological inductive limits yields $\Hom(V, W)$.
\MyQED\end{proof}

\begin{lemma}\label{prop:algebraic-nuclear}
	Algebraic topological vector spaces are separable, nuclear, and barreled. 
\end{lemma}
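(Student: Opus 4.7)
The plan is to verify the three properties separately, reducing each to a permanence property of the locally convex structure together with the triviality of the finite-dimensional case. Write $V = \varinjlim_i V_i$ with $V_i$ finite-dimensional, and recall that set-theoretically $V = \bigcup_i V_i$ with the finest locally convex topology making each inclusion $V_i \hookrightarrow V$ continuous.

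For \emph{barreledness}, I would simply appeal to Proposition \ref{prop:bb-properties}: each $V_i$, being finite-dimensional, is Fréchet and hence barreled; barreledness is preserved under arbitrary inductive limits of locally convex spaces, so $V$ is barreled. For \emph{separability}, pick in each $V_i$ a countable dense subset $D_i$ (e.g.\ $\Q(i)$-linear combinations of a basis); then $D := \bigcup_i D_i$ is countable, and since every $v \in V$ lies in some $V_i$, it is approximated in $V_i$ (hence a fortiori in $V$) by elements of $D_i \subset D$, so $D$ is dense in $V$.

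For \emph{nuclearity}, finite-dimensional spaces are trivially nuclear, and nuclearity is preserved under countable locally convex inductive limits; this is a standard permanence result (see e.g.\ \cite[\S 50]{Tr67}). Applying it to $V = \varinjlim_i V_i$ yields the claim.

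The only step that requires a moment's care is citing the correct permanence statement for nuclearity, since the general situation for $\varinjlim$ in locally convex categories can be subtle; but for \emph{countable} inductive limits it is textbook. Everything else follows by direct inspection, so no serious obstacle is anticipated.
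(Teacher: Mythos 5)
Your proof is correct and follows essentially the same route as the paper: reduce each property to its permanence under countable locally convex inductive limits (barreledness via Proposition \ref{prop:bb-properties}, nuclearity via the permanence result in \cite[\S 50]{Tr67}, separability by taking a union of countable dense subsets) and note that finite-dimensional spaces trivially have all three. No gaps.
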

\begin{proof}
	The required properties are preserved by countable $\varinjlim$:
	\begin{compactitem}
		\item separable: clear,
		\item nuclear: \cite[(50.8)]{Tr67},
		\item barreled: by Proposition \ref{prop:bb-properties}.
	\end{compactitem}
	Obviously, finite-dimensional vector spaces have these virtues.
\MyQED\end{proof}

We review the notion of \emph{smooth representations} below. Let $(\pi, V_\pi)$ be a continuous representation of $G(F)$. We define its smooth part $(\pi^\infty, V_\pi^\infty)$ as follows.\index{representation!smooth}
\begin{itemize}
	\item (For $F$ Archimedean) Take $V_\pi^\infty$ to be the subspace of smooth vectors, i.e.\ $v \in V_\pi^\infty$ if and only if $\gamma_v: g \mapsto \pi(g)v$ is smooth; so the universal enveloping algebra of $\mathfrak{g}$ acts on $V_\pi^\infty$. Given a continuous semi-norm $p$ on $V_\pi$, the corresponding $k$-th Sobolev semi-norm is given by
		\[ p_k(v) = \left( \sum_{m_1 + \cdots + m_n \leq k} p(\pi( X_1^{m_1} \cdots X_n^{m_n} ) v)^2 \right)^\demi \]
		where $X_1, \ldots, X_n$ is a basis of $\mathfrak{g}$. We endow $V_\pi^\infty$ with the locally convex topology induced by $p_k$ for all $p$ and all $k \in \Z_{\geq 1}$, as in \cite[2.4.3]{BK14}.
	\item (For $F$ non-Archimedean) Take $V_\pi^\infty = \bigcup_{J \subset G(F)} V_\pi^J$ where $J$ ranges over the compact open subgroups. It is a smooth representation of $G(F)$ in the usual sense. For each $J$, the subspace $V_\pi^J$ inherits its topology from $V_\pi$, therefore $V_\pi^\infty$ acquires the topology of $\varinjlim$. Let us check that $(\pi^\infty, V_\pi^\infty)$ is a continuous representation of $G(F)$. Recall that a local base at $0 \in V_\pi^\infty$ consists of subsets of the form
	\[ \sum_{J \in \mathcal{J}} \Image \left[ \mathcal{U}_J \to V_\pi^\infty \right] \]
	where $\mathcal{J}$ is a finite set of subgroups $J$ and $\mathcal{U}_J$ is a balanced convex neighborhood of $0 \in V_\pi^J$, for each $J \in \mathcal{J}$. By taking $J'$ so small that $J' \subset J$ for all $J \in \mathcal{J}$, we can assure
	\[ J' \times \sum_{J \in \mathcal{J}} \Image \left[ \mathcal{U}_J \to V_\pi^\infty \right] \to \sum_{J \in \mathcal{J}} \Image \left[ \mathcal{U}_J \to V_\pi^\infty \right] \]
	under the action map, which implies continuity by linearity. 
	
	When $\pi$ is admissible, the topology on each $V_\pi^J$ (finite-dimensional) is unique, and $V_\pi^\infty$ becomes algebraic. There is no need to worry about topologies in such a setting.
\end{itemize}
Call $\pi$ smooth if $\pi = \pi^\infty$. As observed above, this is compatible with the standard notion for non-Archimedean $F$.

\begin{definition}\label{def:nice-smooth-rep}\index{representation!nice}
	In this work, by a \emph{nice representation} of $G(F)$ we mean:
	\begin{itemize}
		\item (For $F$ Archimedean) A smooth representation on a Fréchet space that is admissible of moderate growth, also known as Casselman--Wallach representations\index{representation!Casselman--Wallach} or \emph{SAF representations}\index{representation!SAF} in \cite{BK14}; they are the unique ``SF-globalizations'' of Harish-Chandra modules.
		\item (For $F$ non-Archimedean) A representation $(\pi, V_\pi)$ of $G(F)$ which is smooth admissible of finite length.
	\end{itemize}
\end{definition}

For example, let $\tau$ be an irreducible unitary representation of $G(F)$. It is well known that its smooth part $\tau^\infty$ is a nice representation.

\begin{lemma}\label{prop:nice-smooth-nuclear}
	The underlying space $V_\pi$ of a nice representation $\pi$ is separable, nuclear, and barreled.
\end{lemma}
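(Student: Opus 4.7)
The plan is to split according to whether $F$ is archimedean or not, and reduce each case either to Lemma \ref{prop:algebraic-nuclear} or to known structural results on Casselman-Wallach representations.

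In the non-archimedean case, admissibility of $\pi$ gives $\dim_\CC V_\pi^J < +\infty$ for every compact open subgroup $J \subset G(F)$. Since $G(F)$ is second countable, a neighborhood basis of the identity can be chosen as a countable descending chain $J_1 \supset J_2 \supset \cdots$ of compact open subgroups. Smoothness then yields
\[
V_\pi \;=\; \bigcup_{n \geq 1} V_\pi^{J_n} \;=\; \varinjlim_n V_\pi^{J_n},
\]
with the inductive limit topology, and each $V_\pi^{J_n}$ is finite-dimensional. Thus $V_\pi$ is algebraic in the sense of Definition \ref{def:algebraic-tvs}, and all three conclusions follow at once from Lemma \ref{prop:algebraic-nuclear}.

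In the archimedean case, $V_\pi$ is by hypothesis a Fréchet space, hence barreled by Proposition \ref{prop:bb-properties}. For separability, I would use the underlying Harish-Chandra module $V_{\pi, K}$ (for $K \subset G(F)$ a maximal compact subgroup): admissibility makes each $K$-isotypic component finite-dimensional, and $\hat{K}$ is countable, so $V_{\pi, K}$ is at most countable-dimensional. The Casselman-Wallach theorem identifying $\pi$ as the SAF-globalization of $V_{\pi, K}$ ensures that $V_{\pi, K}$ is dense in $V_\pi$, so a countable $\Q[i]$-span of a basis of $V_{\pi, K}$ provides a countable dense subset of $V_\pi$.

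For nuclearity---the only non-formal step---the plan is to invoke the Casselman embedding theorem, which realizes every SAF-representation as a closed $G(F)$-invariant subspace of a finite direct sum of smooth principal series $I^G_P(\sigma)$ attached to a minimal parabolic $P$ and finite-dimensional representations $\sigma$ of its Levi. Each such principal series identifies with the space of smooth sections of a finite-rank equivariant vector bundle over the compact manifold $K/(K \cap P)$, and the space of smooth sections of a smooth vector bundle on a compact manifold is a nuclear Fréchet space by \cite[\S 51]{Tr67}. Finite direct sums and closed subspaces of nuclear spaces are again nuclear, giving the desired conclusion. The bulk of the difficulty is concentrated in this nuclearity claim; in practice, one may simply cite the corresponding statement in \cite{BK14}, where nuclearity of SAF-representations is part of the standard package.
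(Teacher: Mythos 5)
Your proof is correct and follows essentially the same route as the paper: the non-archimedean case is reduced to Lemma \ref{prop:algebraic-nuclear} by exhibiting $V_\pi$ as a countable inductive limit of the finite-dimensional spaces $V_\pi^{J_n}$, and the archimedean case rests on the standard structure theory of Casselman-Wallach/SAF representations (the paper simply cites \cite[Corollary 5.6]{BK14} for the Fréchet and nuclear properties and passes to the Harish-Chandra module for separability, exactly as you do). Your sketch of nuclearity via the Casselman embedding into smooth principal series is a legitimate unpacking of that citation, not a different argument.
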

\begin{proof}
	In the Archimedean case, \cite[Corollary 5.6]{BK14} implies that $V_\pi$ is Fréchet as well as nuclear, whereas its separability is well-known (pass to Harish-Chandra modules). The non-Archimedean case follows from Lemma \ref{prop:algebraic-nuclear}.
\MyQED\end{proof}

Next, consider the data
\begin{itemize}
	\item an equivariant vector bundle $\mathscr{E}$ on $X^+(F)$, equipped with a $G(F)$-invariant pairing $\mathscr{E} \otimes \overline{\mathscr{E}} \to \mathscr{L}$ as in \eqref{eqn:hermitian-vb};
	\item the following functions spaces on $X^+(F)$ with values in vector bundles:
	\begin{align*}
		C^\infty(X^+) & := C^\infty(X^+(F), \overline{\mathscr{E}}) \simeq C^\infty(X^+(F), \mathscr{E}^\vee \otimes \mathscr{L}), \\
		C^\infty_c(X^+) & := C^\infty_c(X^+(F), \mathscr{E}), \\
		L^2(X^+) & := L^2(X^+(F), \mathscr{E}),
	\end{align*}
	where the isomorphism results from \eqref{eqn:hermitian-vb}.
\end{itemize}

We topologize $C_c^\infty(X^+)$ and $C^\infty(X^+)$ in the standard manner \cite[2.2]{Be88}, making them into continuous representations of $G(F)$. Specifically,
\begin{compactitem}
	\item $C_c(X^+)$ is realized as the $\varinjlim$ of $C_\Omega(X^+) := \{\xi \in C_c(X^+): \Supp(\xi) \subset \Omega\}$, carrying the norms $\sup_\Omega \|\cdot\|$, where $\Omega \subset X^+(F)$ ranges over the compact subsets \footnote{Here we take $\|\cdot\|$ to be any continuous family of norms on the fibers of $\mathscr{E}|_{\Omega}$, the choice being immaterial because $\Omega$ is compact.};
	\item $C(X^+)$ carries the semi-norms $\sup_\Omega\|\cdot\|$ with the $\Omega$ above;
\end{compactitem}
Taking smooth parts yields the topological vector spaces $C_c^\infty(X^+)$ and $C^\infty(X^+)$. One can verify that $C^\infty_c(X^+)$ is actually algebraic when $F$ is non-Archimedean.

Note that $C^\infty_c(X^+) \subset L^2(X^+)$, and the integration of densities furnishes an invariant pairing $C^\infty(X^+) \otimes C^\infty_c(X^+) \to \CC$. The next result furnishes a bridge between $L^2$ and smooth theories.

\begin{theorem}\label{prop:C-infty}
	Let $\tau$ be an irreducible unitary representation of $G(F)$ and put $\pi := \check{\tau}^\infty$, where $\check{\tau}$ stands for the contragredient of $\tau$ as a Hilbert representation. There is a $\CC$-linear isomorphism
	\begin{align*}
	\Hom_{G(F)}(C^\infty_c(X^+), \tau) & \rightiso \Hom_{G(F)}(\pi, C^\infty(X^+)) \\
	\varphi & \mapsto \check{\varphi}|_{\check{\tau}^\infty},
	\end{align*}
	where $\check{\varphi}: \check{\tau} \to C^\infty_c(X^+)^\vee$ is the adjoint of $\varphi$.
\end{theorem}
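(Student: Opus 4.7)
The plan is to show the given map $\Psi \colon \varphi \mapsto \check\varphi|_{\check\tau^\infty}$ is well-defined and to construct its inverse explicitly.

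For well-definedness, I first note that $\varphi(\xi) \in V_\tau^\infty$ for every $\xi \in C^\infty_c(X^+)$: the smoothness (respectively local constancy) of $g \mapsto g\xi$ combined with $G(F)$-equivariance forces $g \mapsto \varphi(g\xi) = \tau(g)\varphi(\xi)$ to be smooth. Consequently $\langle \tilde v, \varphi(\xi)\rangle$ uses the standard perfect pairing between $\pi$ and $\tau^\infty$, and $\Psi(\varphi)(\tilde v) = \check\varphi(\tilde v)$ lies a priori in $C^\infty_c(X^+)^\vee$. The essential claim is that $\check\varphi(\tilde v) \in C^\infty(X^+)$. In the non-archimedean case, pick a compact open $J \subset G(F)$ fixing $\tilde v$; equivariance makes $\check\varphi(\tilde v)$ a $J$-invariant distribution, and submersivity of the orbit maps $G(F) \to X^+(F)$ makes $J$-orbits open on $X^+(F)$, so a $J$-averaging argument shows $\check\varphi(\tilde v)$ is represented by a locally constant section of $\overline{\mathscr{E}}$. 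In the archimedean case the same strategy works using smoothness of $g \mapsto g \tilde v$ in $\pi$ combined with standard regularity for $G$-equivariant distributions on homogeneous manifolds.

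For the inverse, given $\psi \in \Hom_{G(F)}(\pi, C^\infty(X^+))$ and $\xi \in C^\infty_c(X^+)$, define a continuous linear functional $\Phi_\xi$ on $\pi$ by $\Phi_\xi(\tilde v) := \int_{X^+(F)} \psi(\tilde v)\xi$. I claim there is a unique $\varphi(\xi) \in \tau$ with $\Phi_\xi(\tilde v) = \langle \tilde v, \varphi(\xi)\rangle$ for every $\tilde v \in \pi$, i.e.\ $\Phi_\xi \in \tau \subset \pi^\vee$. In the non-archimedean case, let $J$ be a compact open subgroup fixing $\xi$; equivariance gives $\Phi_\xi \in (\pi^\vee)^J$. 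Admissibility makes $\pi^J$ finite-dimensional, and any $J$-fixed vector in $\check\tau$ is automatically smooth, so $\pi^J = \check\tau^J$. The Hilbert pairing descends to a perfect duality $\check\tau^J \times \tau^J \to \CC$, and the $J$-projector yields $(\pi^\vee)^J \simeq (\pi^J)^\vee \simeq \tau^J$, so $\Phi_\xi$ is represented by a vector of $\tau^J \subset \tau$, which is the desired $\varphi(\xi)$. The archimedean analogue passes to $K$-finite vectors, invoking Casselman-Wallach theory: the SAF globalization is determined by its underlying Harish-Chandra module, on which algebraic and Hilbert duals match.

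The two constructions are manifestly adjoint, implementing the same $G(F)$-invariant pairing $C^\infty_c(X^+) \times \pi \to \CC$ in dual guises, and are therefore mutually inverse. The main obstacle is precisely the upgrade from a ``$G$-smooth'' object to a genuine one --- either from a $G$-equivariant distribution to a smooth section, or from a functional continuous on $\pi$ to an $L^2$-bounded (hence $\tau$-valued) one. Both rest on admissibility of $\tau$ combined with the openness of orbits for compact open subgroups acting on the spherical variety.
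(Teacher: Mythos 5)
Your overall strategy is exactly the one the paper relies on: the paper gives no argument of its own but cites \cite[Proposition 2.4]{Be88}, and what you have written is essentially a reconstruction of Bernstein's proof (adjoint lands in distributions; equivariance plus regularization upgrades distributions to smooth sections in one direction and distribution vectors to honest Hilbert-space vectors in the other). The non-archimedean half is correct and complete in outline: $J$-orbits on $X^+(F)$ are compact open, so $J$-averaging represents a $J$-invariant functional by a locally constant section, and admissibility plus the perfect pairing $\check{\tau}^J \times \tau^J \to \CC$ handles the inverse; continuity is then free since all spaces involved are algebraic in the sense of Definition \ref{def:algebraic-tvs}.

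The one genuine gap is the archimedean case of the inverse direction. The assertion that ``algebraic and Hilbert duals match'' on the Harish-Chandra module is not true as stated: the full algebraic dual of an admissible $(\mathfrak{g},K)$-module is enormous, and only its $K$-finite part recovers the dual module. To conclude that your functional $\Phi_\xi \in \pi^\vee = \check{\tau}^{-\infty}$ actually lies in $\tau$, restricting to $K$-finite vectors only works when $\xi$ itself is $K$-finite, and passing from $K$-finite $\xi$ to general $\xi$ by density presupposes the continuity into $\tau$ that you are trying to establish. The standard repair (and the one implicit in Bernstein's argument) is Dixmier--Malliavin: write $\xi = \sum_i f_i * \xi_i$ with $f_i \in C^\infty_c(G(F))$, observe that $\Phi_{f_i * \xi_i} = \Phi_{\xi_i} \circ \check{\tau}(\check{f}_i)$, and use that $\check{\tau}(\check{f}_i)$ maps the Hilbert space $\check{\tau}$ continuously into $\check{\tau}^\infty$, so each summand is a bounded functional on $\check{\tau}$ and hence is represented by a vector of $\tau$ via Riesz. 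A similar Gårding-type regularization (or elliptic regularity) is what justifies your appeal to ``standard regularity for equivariant distributions'' in the forward direction; that part is fine, but the inverse direction needs the explicit convolution argument rather than a duality of Harish-Chandra modules.
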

\begin{proof}
	This is just a paraphrase of \cite[Proposition 2.4]{Be88}, where $\mathscr{E} = \CC$ and the hermitian adjoint is used; the arguments in \textit{loc.\ cit.} carry over verbatim.
\MyQED\end{proof}
\begin{remark}\label{rem:C-infty}
	If we consider the hermitian adjoint as in \textit{loc.\ cit.}, the result will be an anti-linear isomorphism onto $\Hom_{G(F)}(\tau^\infty, C^\infty(X^+(F), \mathscr{E}))$.
\end{remark}

\begin{notation}\index{N_pi@$\mathcal{N}_\pi$}
	For an irreducible nice representation $\pi$ of $G(F)$, we adopt the notation
	\begin{gather*}
		\mathcal{N}_\pi := \Hom_{G(F)}(\pi, C^\infty(X^+)).
	\end{gather*}
	The representations with $\mathcal{N}_\pi \neq \{0\}$ are the main objects in the study of \emph{distinguished representations} \index{representation!distinguished}. The elements $\varphi(v) \in C^\infty(X^+)$ for $\varphi \in \mathcal{N}_\pi$, $v \in V_\pi$ are often called the \emph{coefficients}\index{representation!coefficients of} of $\pi$.
\end{notation}

\begin{axiom}\label{axiom:finiteness}
	For any irreducible nice representation $\pi$, the space $\mathcal{N}_\pi$ is finite dimensional.
\end{axiom}

\begin{remark}\label{rem:finiteness}
	Under the Axiom \ref{axiom:geometric}, this condition is almost known to hold. Indeed, the Archimedean case is covered by \cite[Theorem A]{KO13} since we may deal with the finitely many $G(F)$-orbits in $X^+(F)$ separately. As for the non-Archimedean case, there is no need to impose continuity and when $X^+$ is wavefront and $\mathscr{E}$ is trivializable, the finiteness statement is just \cite[Theorem 5.1.5]{SV17}. That result can be extended to the case $\mathscr{E} = \mathscr{L}^\demi$ (cf.\ Remark \ref{rem:half-densities}), which will be covered by Theorem \ref{prop:finiteness-density}. The case of ``Whittaker induction'' is also known: see Remark \ref{rem:Whittaker-induction}.
\end{remark}

\section{The group case}\label{sec:group-case}
In this section, $H$ stands for a connected reductive $F$-group and $G := H \times H$. \emph{The group case} signifies the homogeneous $G$-variety $X^+ := H$ equipped with the action \index{group case}
\begin{gather}\label{eqn:group-case-action1}
	x(g_1, g_2) = g_2^{-1} x g_1, \quad x \in X^+, \; (g_1, g_2) \in G.
\end{gather}
Take $x_0 = 1 \in X^+(F)$, then its stabilizer equals the diagonal image $\text{diag}(H) \subset G$. When $H$ is quasi-split, we will work with Borel subgroups of $G$ of the form $B^- \times B$. It follows from Bruhat decomposition that $X^+$ is a spherical homogeneous $G$-space; in fact $X^+$ is a symmetric space defined by the involution $(g_1, g_2) \mapsto (g_2, g_1)$, therefore $X^+$ is wavefront. Our aim is to describe the ``coefficients'' of \S\ref{sec:coefficients} in this setting.

The $G$-action above is by no means the canonical one. We may flip the two components of $G$ and obtain a new action
\begin{gather}\label{eqn:group-case-action2}
	x(g_1, g_2) = g_1^{-1} x g_2, \quad x \in X^+, \; (g_1, g_2) \in G.
\end{gather}
The actions \eqref{eqn:group-case-action1} and \eqref{eqn:group-case-action2} are intertwined by $x \mapsto x^{-1}$. We shall denote the case of \eqref{eqn:group-case-action2} by $\check{X}^+$ when confusion may arise.

Hereafter, $F$ is a local field and $H$ is split with a chosen Borel subgroup $B$. Take the bundle $\mathscr{E} := \mathscr{L}^\demi$ of half-densities on $X^+(F)$ and set $C^\infty(X^+) :=C^\infty(X^+, \mathscr{L}^\demi)$ as usual. As before, $G(F)$ acts in two ways, thus we write $C^\infty(X^+)$, $C^\infty(\check{X}^+)$ to distinguish. Consider an irreducible nice representation $\Pi$ of $G(F)$ together with the continuous $\Hom$-spaces
\begin{align*}
	\mathcal{N}_\Pi & := \Hom_{G(F)}(\Pi, C^\infty(X^+)), \\
	\check{\mathcal{N}}_\Pi & := \Hom_{G(F)}(\Pi, C^\infty(\check{X}^+)).
\end{align*}
\begin{itemize}
	\item The equivariant bundle $\mathscr{L}^\demi$ on $X^+(F)$ can be trivialized by taking some Haar measure $|\Omega|$ on $H(F)$ and form the invariant global section $|\Omega|^\demi$ of $\mathscr{L}^\demi$. The construction works for $\check{X}^+$ as well.
	\item The space $\mathcal{N}_\Pi$ is nonzero if and only if $\Pi \simeq \pi \boxtimes \check{\pi}$ for some irreducible nice representation $\pi$ of $H(F)$. In this case, $\mathcal{N}_\Pi$ is in bijection with the space of Haar measures $|\Omega|$, by taking matrix coefficients
	\begin{align*}
	\varphi_{|\Omega|}: \pi \boxtimes \check{\pi} & \longrightarrow C^\infty(X^+) \\
	v \otimes \check{v} &\longmapsto \angles{\check{v}, \pi(\cdot)v} \cdot |\Omega|^\demi.
	\end{align*}
	Some words on the proof are in order. Upon trivializing $\mathscr{L}^\demi$, the case of non-Archimedean $F$ is routine: everything is algebraic. As for the Archimedean case, a similar description of $\mathcal{N}_\Pi$ is well-known in the algebraic setting of Harish-Chandra modules; one can pass to the setting of nice representations by Frobenius reciprocity and the ``group case'' of automatic continuity \cite[\S 11.2]{BK14}. The underlying space of $\pi \boxtimes \check{\pi}$ is actually the completed tensor product $V_\pi \hat{\otimes} V_{\check{\pi}}$ of nuclear spaces.
	\item The same holds for $\check{\mathcal{N}}_\Pi$ except that $\mathcal{N}_\Pi$ is now spanned by
	\begin{align*}
	\check{\varphi}_{|\Omega|}: \pi \boxtimes \check{\pi} & \longrightarrow C^\infty(\check{X}^+) \\
	v \otimes \check{v} & \longmapsto \angles{\check{\pi}(\check{v}), v} \cdot |\Omega|^\demi.
	\end{align*}
	\item All in all, we deduce a canonical isomorphism of lines $\mathcal{N}_\Pi \rightiso \check{\mathcal{N}}_\Pi$ given by $\varphi_{|\Omega|} \mapsto \check{\varphi}_{|\Omega|}$. Furthermore, the following diagram of $G(F)$-representations commutes
	\[ \begin{tikzcd}
	C^\infty(X^+) \arrow{r}{\beta \mapsto \beta^\vee} & C^\infty(\check{X}^+) \\
	\Pi \otimes \mathcal{N}_\Pi \arrow{u} \arrow{r}{\sim} & \Pi \otimes \check{\mathcal{N}}_\Pi \arrow{u}
	\end{tikzcd} \quad \beta^\vee(x) := \beta(x^{-1}). \]
\end{itemize}

Finally, we remark that when $H$ is a torus, there is no need to distinguish left and right translations. Hence we may view $X^+ = H$ as a homogeneous $H$-space via $H = H \times \{1\} \hookrightarrow G$. For any continuous character $\chi$ of $H(F)$ we have $\dim \mathcal{N}_\chi = 1$, with generator $\varphi_{|\Omega|}: 1 \mapsto \chi(\cdot)|\Omega|^\demi \in C^\infty(X^+)$. Furthermore, $X^+ = H$ is still wavefront under this setting.

\section{Auxiliary definitions}\label{sec:auxiliary}
The following notions will enter into the Axiom \ref{axiom:zeta}. As they are more or less intuitive, we collect them here to facilitate the reading.

We begin by clarifying the meaning of meromorphic or rational (in the non-Archimedean case) families. Our formulation is based on Bernstein's approach; for the general theory we recommend \cite[VI.8]{Re10}.

\begin{definition}\label{def:T}\index{T@$\mathcal{T}$}\index{O@$\mathcal{O}$}\index{K@$\mathcal{K}$}
	Define $\mathcal{T}$ to be the complex manifold of unramified characters $\{|\omega|^\lambda : \lambda \in \Lambda_{\CC} \}$. Notice that $\mathcal{T}$ also parametrizes the functions $\{|f|^\lambda : \lambda \in \Lambda_{\CC}\}$ on $X^+(F)$. When $F$ is non-Archimedean, $\mathcal{T}$ is the complex algebraic torus corresponding to the group algebra of $\Hom(\Lambda, \Z)$. Define $\mathcal{O}$ to be the algebra of
	\begin{itemize}
		\item holomorphic functions on $\mathcal{T}$ for $F$ Archimedean;  
		\item regular algebraic functions on $\mathcal{T}$, for $F$ non-Archimedean.
	\end{itemize}
	In either case, the map ``evaluation at $g \in G(F)$'' furnishes a group homomorphism
	\[ |\omega|^\text{univ}: G(F) \to \mathcal{O}^\times. \]
	On the other hand, for every $|\omega|^\lambda \in \mathcal{T}$, the homomorphism ``evaluation at $|\omega|^\lambda$'' is denoted by
	\[ \text{ev}_\lambda: \mathcal{O} \to \CC.\]
	Finally, denote the fraction field of $\mathcal{O}$ by
	\[ \mathcal{K} := \text{Frac}(\mathcal{O}). \]
\end{definition}

Suppose that $(\pi, V_\pi)$ is a nice representation of $G(F)$. Let $G(F)$ act on $V_\pi \otimes \mathcal{O}$ by
\[ \widehat{\pi}(g): v \otimes b \mapsto \pi(g)v \otimes |\omega|^\text{univ}(g)b, \]
which turns out to be $\mathcal{O}$-linear, and whose reduction via $\text{ev}_\lambda$ yields $\pi_\lambda = \pi \otimes |\omega|^\lambda$. Set $\widehat{V}_\pi := V_\pi \otimes \mathcal{K}$ which still carries a $G(F)$-action $\widehat{\pi}$. It captures the idea of the family of representations $\pi_\lambda$ parametrized by $\mathcal{T}$.

\begin{definition}\label{def:mero-rat}\index{L-pi@$\mathcal{L}_{\pi,t}$}
	Suppose given a continuous representation of $G(F)$ on a space $\Schw$. For every nonzero $t \in \mathcal{O}$ (the ``denominator''), denote by $\mathcal{L}_{\pi,t}^\circ$ the $\mathcal{K}$-vector space of $\mathcal{K}$-linear maps $B: \widehat{\pi} \dotimes{\CC} \Schw \longrightarrow \mathcal{K}$ that
	\begin{itemize}
		\item $B$ is $G(F)$-invariant: $B(\widehat{\pi}(g)\widehat{v}, g\xi) = B(\widehat{v}, \xi)$ for all $g \in G(F)$;
		\item $B\left( (V_\pi \otimes t\mathcal{O}) \otimes \Schw \right) \subset \mathcal{O}$.
	\end{itemize}
	Denote by $\mathcal{L}_{\pi,t} \subset \mathcal{L}_{\pi,t}^\circ$ the subspace of $B: \widehat{\pi} \otimes \Schw \to \mathcal{K}$ such that the ``evaluation of $tB$''
	\[ v \otimes \xi \longmapsto \text{ev}_\lambda \left( B(v \otimes t, \xi)\right) \]
	is a hypocontinuous bilinear form $V_\pi \times \Schw \to \CC$ for every $\lambda$.

	Therefore, $B \in \mathcal{L}_{\pi,t}$ induces a meromorphic family $\lambda \mapsto \theta_\lambda$ of elements of $\Hom_{G(F)}(\pi_\lambda, \Schw^\vee)$ (with strong topology on $\Schw^\vee$), characterized by
	\begin{gather}\label{eqn:family-theta}
		t(\lambda)\theta_\lambda(v): \xi \mapsto \text{ev}_\lambda \left( B(v \otimes t, \xi)\right),
	\end{gather}
	where $v \in V_\pi, \lambda \in \Lambda_{\CC}, \xi \in \Schw$. Meromorphy (say with denominator $t$) means that the function $\lambda \mapsto t(\lambda)\theta_\lambda(v)$ is holomorphic in $\lambda$ for every $v$. Recall that the notion of holomorphy is defined prior to Theorem \ref{prop:GS-principle}.

	When $t(\lambda) \neq 0$, one can extend $\text{ev}_\lambda$ to $t^{-1}\mathcal{O} \to \CC$, therefore the reduction at $\lambda$ of $B$ can be defined as a $G(F)$-invariant bilinear form $B_\lambda: \pi_\lambda \otimes \Schw \to \CC$. Finally, we accommodate arbitrary denominators by setting
	\begin{align*}
		\mathcal{L}_\pi^\circ & := \varinjlim_t \mathcal{L}_{\pi,t}^\circ, \\
		\mathcal{L}_\pi & := \varinjlim_t \mathcal{L}_{\pi,t}
	\end{align*}
	where the indices $t \in \mathcal{O} \smallsetminus \{0\}$ are directed by divisibility.
\end{definition}

The passage from $\mathcal{L}_{\pi,t}^\circ$ to $\mathcal{L}_{\pi,t}$ is greatly facilitated by the following device.
\begin{lemma}\label{prop:circ-removal}
	Suppose $\Schw$ is barreled. Let $t \in \mathcal{O} \smallsetminus \{0\}$, $B \in \mathcal{L}_{\pi,t}^\circ$. If the evaluations of $tB$ (see above) are separately continuous at every $\lambda$ in some nonempty open subset $\mathcal{U} \subset \Lambda_{\CC}$, then $B \in \mathcal{L}_{\pi,t}$.
\end{lemma}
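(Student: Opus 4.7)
The plan is straightforward once one identifies the two tools already assembled in the paper: the Gelfand--Shilov principle (Theorem \ref{prop:GS-principle}) to propagate continuity from $\mathcal{U}$ to all of $\Lambda_{\CC}$, and Proposition \ref{prop:hypocontinuity} to upgrade separate continuity to hypocontinuity. Write $B_\lambda(v,\xi) := \text{ev}_\lambda(B(v\otimes t,\xi))$. The hypothesis $B\in\mathcal{L}^\circ_{\pi,t}$ says $B(v\otimes t,\xi)\in\mathcal{O}$, so for each fixed $(v,\xi)$ the function $\lambda\mapsto B_\lambda(v,\xi)$ is holomorphic (resp.\ regular algebraic) on all of $\Lambda_{\CC}$. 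What must be shown is that at every $\lambda$ the bilinear form $B_\lambda$ is hypocontinuous as a map $V_\pi\times\Schw\to\CC$.

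First I would extend separate continuity in the $\Schw$-variable. Fix $v\in V_\pi$ and consider $F_v:\Lambda_{\CC}\to\Hom_\CC(\Schw,\CC)$, $\lambda\mapsto B_\lambda(v,-)$. By hypothesis, the image of $F_v|_{\mathcal{U}}$ lies in the continuous dual $\Schw^\vee$, and pointwise-holomorphy of $F_v$ at every $\xi$ makes $F_v|_{\mathcal{U}}$ into a weakly holomorphic $\Schw^\vee$-valued map in the sense used just before Theorem \ref{prop:GS-principle}. Since $\Schw$ is barreled, Theorem \ref{prop:GS-principle} applies and forces $F_v$ to be $\Schw^\vee$-valued and holomorphic on all of $\Lambda_{\CC}$; in particular $B_\lambda(v,-)\in\Schw^\vee$ for every $\lambda$ and every $v$. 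The symmetric argument in the $V_\pi$-variable works the same way: $V_\pi$ is barreled by Lemma \ref{prop:nice-smooth-nuclear} (which records that nice representations yield barreled underlying spaces), so fixing $\xi\in\Schw$ and applying Theorem \ref{prop:GS-principle} to $\lambda\mapsto B_\lambda(-,\xi)$ gives $B_\lambda(-,\xi)\in V_\pi^\vee$ for every $\lambda$ and $\xi$. Thus $B_\lambda$ is a separately continuous bilinear form at every $\lambda\in\Lambda_{\CC}$.

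To conclude, I would apply Proposition \ref{prop:hypocontinuity} with $X=V_\pi$ (barreled) and $Y=\Schw$: every separately continuous bilinear form $V_\pi\times\Schw\to\CC$ is $\mathfrak{S}$-hypocontinuous for every family $\mathfrak{S}$ of bounded subsets of $\Schw$, and taking $\mathfrak{S}$ to be the von~Neumann bornology yields precisely the notion of hypocontinuity singled out after Definition \ref{def:hypocontinuity}. This shows $B_\lambda$ is hypocontinuous at every $\lambda$, i.e.\ $B\in\mathcal{L}_{\pi,t}$.

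There is no genuine obstacle: both ingredients are tailored for this use and the lemma is essentially a repackaging. The only point demanding a modicum of care is checking that the hypothesis of Theorem \ref{prop:GS-principle} is literally met---namely that $F_v|_{\mathcal{U}}$ is holomorphic as a $\Schw^\vee$-valued map in the weak sense used there, not merely pointwise---but this follows at once from the definition of holomorphy recalled just before that theorem, combined with the separate continuity on $\mathcal{U}$ supplied by the hypothesis.
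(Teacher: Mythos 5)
Your proof is correct and follows essentially the same route as the paper's: barreledness of $V_\pi$ via Lemma \ref{prop:nice-smooth-nuclear}, two applications of the Gelfand--Shilov principle (Theorem \ref{prop:GS-principle}) to propagate separate continuity in each variable from $\mathcal{U}$ to all of $\Lambda_{\CC}$, and Proposition \ref{prop:hypocontinuity} to upgrade separate continuity to hypocontinuity. No gaps.
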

\begin{proof}
	By Lemma \ref{prop:nice-smooth-nuclear} we know $V_\pi$ is barreled. By Proposition \ref{prop:hypocontinuity}, it suffices to establish the separate continuity of the evaluations of $tB$ at every $\lambda$. In what follows, we equip $\Schw^\vee$ with the topology of pointwise convergence (i.e.\ weak topology).

	Fix $v \in V_\pi$, we obtain as in \eqref{eqn:family-theta} a family in $\lambda \in \Lambda_{\CC}$ of linear functionals $t(\lambda)\theta_\lambda(v): \Schw \to \CC$. They belong to $\Schw^\vee$ whenever $\lambda \in \mathcal{U}$. The first task is to propagate this property to all $\lambda$.

	Given $\xi \in \Schw$, the function $\lambda \mapsto \angles{t(\lambda)\theta_\lambda(v), \xi}$ lies in $\mathcal{O}$ by the definition of $\mathcal{L}_{\pi,t}^\circ$, hence holomorphic in $\lambda \in \Lambda_{\CC}$. Since $\Schw$ is barreled, the required continuity of $t(\lambda)\theta_\lambda(v)$ follows from the \emph{method of analytic continuation} in Theorem \ref{prop:GS-principle}.

	By the same reasoning, for every fixed $\xi \in \Schw$ the continuity in $v \in V_\pi$ for $\lambda \in \mathcal{U}$ propagates to the whole $\Lambda_{\CC}$, since $V_\pi$ is barreled. This completes our proof.
\MyQED\end{proof}
Hence the defining condition of $\mathcal{L}_{\pi, t}$ is independent of the denominator $t$: for nonzero $t' \in \mathcal{O}$:
\[ B \in \mathcal{L}^\circ_{\pi,t} \cap \mathcal{L}_{\pi, tt'} \implies B \in \mathcal{L}_{\pi, t}. \]
Indeed, it suffices to evaluate $B$ over some nonempty open $\mathcal{U}$ outside the zero locus of $tt'$.

As for the $L^2$-part of our axioms, a (nonstandard) variant of the notion of holomorphy will be used.
\begin{definition}\label{def:L2-holomorphy}
	Let $W$ be a topological vector space and $H$ be a separable Hilbert space. A family of continuous linear maps $\beta_\lambda: W \to H$ indexed by $\mathcal{D} := \left\{ \lambda \in \Lambda_{\CC} : \Re(\lambda) \relgeq{X} 0 \right\}$ is called holomorphic if it satisfies
	\begin{gather*}
	\forall \xi \in W, \quad \left[ \lambda \mapsto \beta_\lambda(\xi) \right] \; \text{is continuous}, \\
	\forall \xi \in W, \; \forall b \in H, \quad \left[ \lambda \mapsto (\beta_\lambda(\xi)|b)_H \right] \; \text{is holomorphic}
	\end{gather*}
	where $\lambda \in \mathcal{D}$ (resp.\  the interior of $\mathcal{D}$) in the first (resp.\  the second) condition.
\end{definition}
To see how this leads to a stronger version of holomorphy, we refer to \cite[Theorem 3.31]{Ru91}.

Assume $F$ is Archimedean now; it is harmless to work with $F = \R$. There exist natural structures of Nash manifolds and Nash bundles on $X^+(F)$ and $\mathscr{L}$, respectively; for the latter construction, see \cite[A.1.1]{AG08}. We recommend \cite[\S 1]{AG08} for generalities on semi-algebraic geometry over $\R$.

\begin{definition}\label{def:rapid-decay}\index{rapid decay}
	A smooth section $\Xi$ of $\mathscr{L}$ is said to have \emph{rapid decay}, if for any semi-algebraic function $p$ on the affine variety $X(F)$ we have
	\[ \nu_p(\Xi) := \int_{X^+(F)} \left( 1 + |p|^2 \right) |\Xi| < + \infty. \]
\end{definition}
Note that the definition depends on $X^+ \hookrightarrow X$. The notion of rapid decay of $\mathscr{L}^{1/2}$-valued sections can be defined similarly. The main differences from the definition of Schwartz functions in \cite[Remark 4.1.5]{AG08} are that
\begin{inparaenum}[(i)]
	\item we do not consider differential operators, and
	\item $X$ is singular in general, thus $X(F)$ is not a Nash manifold.
\end{inparaenum}

\section{Schwartz spaces: desiderata}\label{sec:Schwartz}
Fix a $G(F)$-stable subspace $\Schw$ of $C^\infty(X^+(F), \mathscr{E})$ (the ``Schwartz space'') subject to the following conditions:
\begin{itemize}
	\item $\Schw$ carries a topology so that $\Schw$ is a smooth continuous $G(F)$-representation.
	\item $C^\infty_c(X^+)$ is continuously included in $\Schw$.
	\item For every $\lambda \in \Lambda_{\CC}$ we set\index{S_lambda@$\Schw_\lambda$}
		\[ \Schw_\lambda := |f|^\lambda \Schw \]
		which still lies between $C^\infty(X^+(F), \mathscr{E})$ and $C^\infty_c(X^+)$. We topologize $\Schw_\lambda$ by requiring that
		\[
			m_\lambda: \Schw \stackrel{\sim}{\longrightarrow} \Schw_\lambda, \quad \xi \mapsto |f|^\lambda \xi
		\]
		is a continuous isomorphism. Upon recalling how the topology of $C^\infty_c(X^+)$ is defined, one sees that the foregoing conditions for $\Schw$ still hold true for $\Schw_\lambda$.
\end{itemize}

The geometry of $X$ intervenes now. For every irreducible nice representation $\pi$, we put\index{pi_lambda@$\pi_\lambda$}
\[ \pi_\lambda := \pi \otimes |\omega|^\lambda, \quad \lambda \in \Lambda_{\CC} \]
realized on the same underlying vector space as $\pi$. Elements of $\mathcal{N}_\pi$ can be twisted by relative invariants of $X$: for every $\varphi \in \mathcal{N}_\pi$, define\index{phi_lambda@$\varphi_\lambda$}
\begin{align*}
	\varphi_\lambda: \pi_\lambda & \longrightarrow C^\infty(X^+) \\
	v & \longmapsto |f|^\lambda \varphi(v).
\end{align*}
It is clearly equivariant, and we have $\varphi_\lambda \in \mathcal{N}_{\pi_\lambda}$ --- indeed, the continuity of $\varphi_\lambda$ follows from the way $C^\infty(X^+)$ is topologized. Furthermore, $\varphi_{\lambda+\mu} = (\varphi_\lambda)_{\mu}$ and $\varphi_0 = \varphi$. Notice that $\varphi_\lambda$ depends on the choice of relative invariants, albeit in a mild manner.

\begin{axiom}\label{axiom:zeta}
	We assume the validity of Axiom \ref{axiom:finiteness}.
	\begin{itemize}
		\item The functional-analytic axioms (mainly used in \S\ref{sec:connection-L2}):
		\begin{enumerate}[(F1)]
			\item $\Schw$ is a separable nuclear space.
			\item $\Schw$ is barreled. 
			\item We have $\Schw_\lambda \subset L^2(X^+)$ whenever $\Re(\lambda) \relgeq{X} 0$. Denote the inclusion map $\Schw \hookrightarrow L^2(X^+)$ by $\alpha$, so that $\Schw_\lambda \hookrightarrow L^2(X^+)$ may be viewed as a family of inclusions
				\begin{align*}
					\alpha_\lambda: \Schw & \hookrightarrow L^2(X^+) \\
					\xi & \mapsto |f|^\lambda \alpha(\xi).
				\end{align*}
				We require that $\alpha_\lambda$ is continuous whenever $\Re(\lambda) \relgeq{X} 0$. Note that $\alpha_\lambda$ is no longer equivariant: the character $|\omega|^\lambda$ intervenes. By Theorem \ref{prop:nuclear-spectral} $\alpha_\lambda$ is pointwise defined, and $\alpha_\lambda$ has dense image in $L^2(X^+)$ since $C^\infty_c(X^+)$ does.
			\item The family $\alpha_\lambda: \Schw \hookrightarrow L^2(X^+)$ is holomorphic in the sense of Definition \ref{def:L2-holomorphy}\index{alpha_lambda@$\alpha_\lambda$}.
		\end{enumerate}
		\item The smooth axioms:
		\begin{enumerate}[(S1)]
			\item For Archimedean $F$, there exists $\lambda_0 \in \Lambda_{\R}$ such that $|f|^{2\lambda} \|\xi\|^2$ is of rapid decay (Definition \ref{def:rapid-decay}) for all $\xi \in \Schw$ when $\Re(\lambda) \relgeq{X} \lambda_0$. Furthermore, we require that the semi-norms $\xi \mapsto \nu_p \left( |f|^{2\lambda}\|\xi\|^2 \right)$ in Definition \ref{def:rapid-decay} are continuous in $\xi \in \Schw$, for all $p$.
			\item For non-Archimedean $F$, the support of any $\xi \in \Schw$ has compact closure in $X(F)$.
			\item Given an irreducible nice representation $(\pi, V_\pi)$ and $\varphi \in \mathcal{N}_\pi$, we assume that the $G(F)$-invariant bilinear form\index{Z_lambda@$Z_\lambda$}
				\begin{equation}\label{eqn:zeta-integral}\begin{aligned}
					Z_\lambda = Z_{\lambda, \varphi}: V_\pi \otimes \Schw & \longrightarrow \CC \\
					v \otimes \xi & \longmapsto \int_{X^+(F)} \varphi_\lambda(v) \xi
				\end{aligned}\end{equation}
				is well-defined by a convergent integral whenever $\Re(\lambda) \relgg{X} 0$, and is separately continuous.
			\item Furthermore, $Z_\lambda$ admits a meromorphic (rational for non-Archimedean $F$) continuation to the whole $\mathcal{T}$ in the sense of Definition \ref{def:mero-rat}: there exists a homomorphism of $\mathcal{K}$-vector spaces
				\[ \widehat{Z} = \widehat{Z}_\varphi: \widehat{\pi} \dotimes{\CC} \Schw \longrightarrow \mathcal{K} \]
				verifying:
				\begin{enumerate}
					\item $\widehat{Z} \in \mathcal{L}_\pi^\circ$, hence $\widehat{Z} \in \mathcal{L}_\pi$ by the previous condition together with Lemma \ref{prop:circ-removal};
					\item when $\Re(\lambda) \relgg{X} 0$, the reduction of $\widehat{Z}$ with respect to $\text{ev}_\lambda$ is well-defined and coincides with the previous integral pairing. Hence we shall unambiguously denote the meromorphic (rational for non-Archimedean $F$) family deduced from $\widehat{Z}$ as $Z_\lambda$.
				\end{enumerate}
		\end{enumerate}
	\end{itemize}
\end{axiom}
If we change the relative invariants $f_i$ to $c_i f_i$ with $c_i \in F^\times$, and put
\[ |c|^\lambda := \prod_{i=1}^r |c_i|^{\lambda_i}, \quad \lambda = \sum_{i=1}^r \lambda_i \omega_i \in \Lambda_{\CC}, \]
then $Z_\lambda$ above will be replaced by $|c|^\lambda Z_\lambda$. The properties such as rationality, etc.\ remain unaffected. 
\begin{remark}\label{rem:zeta-distribution}
	The family $Z_\lambda$ may be called the \emph{zeta integrals}. As explained in Definition \ref{def:mero-rat}, $\widehat{Z}$ induces a meromorphic family $\widehat{\varphi}_\lambda \in \Hom_{G(F)}(\pi_\lambda, \Schw^\vee)$, and their restriction to $C^\infty_c(X^+)$ coincides with the original $\varphi_\lambda: \pi_\lambda \to C^\infty(X^+) \subset C^\infty_c(X^+)^\vee$. Indeed, $\widehat{\varphi}_\lambda|_{C^\infty_c(X^+)} = \varphi_\lambda$ for $\Re(\lambda) \relgg{X} 0$ by the construction of zeta integrals, thus they coincide for all $\lambda$ by meromorphy. Hereafter, we may safely write $\varphi_\lambda$ in place of $\widehat{\varphi}_\lambda$. Call $\varphi_\lambda$ the family of \emph{zeta distributions}.
\end{remark}

\begin{remark}
	One might be tempted to regard $\Schw$ as a space determined by the spherical embedding $X^+ \hookrightarrow X$. We are cautious about this point, since candidates of $\Schw$ may involve extra structures on $X$ such as affine bundles, etc.
\end{remark}

\begin{remark}\label{rem:Schw-fa}
	Let us turn to the functional-analytic aspects. The barreledness assumption on $\Schw$ is usually easy: by Proposition \ref{prop:bb-properties}, the Fréchet spaces, LF-spaces, and the algebraic spaces in Definition \ref{def:algebraic-tvs} are included.
	
	The separate continuity of the zeta integrals seems to be a minimal requirement. When $F$ is non-Archimedean, the continuity in $v \in V_\pi$ is automatic by Lemma \ref{prop:autocont}; if $\Schw$ is algebraic as well, then the separate continuity holds. For Archimedean $F$, the continuity will require some real effort. Note that one can actually deduce joint continuity under the assumptions of Proposition \ref{prop:joint-continuity}. Similar analytic subtleties are encountered in the study of Archimedean Rankin--Selberg integrals, eg.\ \cite{Ja09}.
\end{remark}

\begin{remark}
	Let us verify Axiom \ref{axiom:zeta} for $\Schw := C^\infty_c(X^+)$, in which case $\Schw = \Schw_\lambda$. The required properties of $\Schw$ have been verified in Remark \ref{rem:Schw-fa}. For every $\varphi \in \mathcal{N}_\pi$ the zeta integral
	\begin{gather*}
		v \otimes \xi \longmapsto \int_{X^+(F)} \varphi(v)|f|^\lambda \xi
	\end{gather*}
	converges for all $\lambda$. To show that it is deduced from some $\widehat{Z}$, observe that in the non-Archimedean case,
	\begin{inparaenum}[(i)]
		\item the compactness of $\Supp(\xi)$ and
		\item that $\varphi_\lambda(v)$, $\xi$ and $|f|$ are all stable under translation by some open compact subgroup $J \subset G(F)$,
	\end{inparaenum}
	implies that $\int_{X^+(F)} \varphi(v)|f|^\lambda \xi$ actually comes from an $\mathcal{O}$-valued finite sum via $\text{ev}_\lambda$. For the Archimedean case, it suffices to observe that $\int_{X^+(F)} \varphi(v)|f|^\lambda \xi$ is holomorphic in $\lambda$ and bounded independently of $\Im(\lambda)$.
\end{remark}

Returning to the general scenario, now we have the $\mathcal{K}$-linear map
\begin{equation}\label{eqn:T_pi} \begin{aligned}
	T_\pi: \mathcal{N}_\pi \dotimes{\CC} \mathcal{K} & \longrightarrow \mathcal{L}_\pi, \\
	\varphi \otimes a & \longmapsto a\widehat{Z}_\varphi.
\end{aligned}\end{equation}
Indeed, the denominator $t$ can be chosen to work for all $\varphi$ since $\dim_{\CC} \mathcal{N}_\pi < \infty$. 

\begin{lemma}\label{prop:uniqueness-embedding}
	The composite
	\[ \mathcal{N}_\pi \otimes \mathcal{K} \xrightarrow{T_\pi} \mathcal{L}_\pi \hookrightarrow \left\{\text{meromorphic families } \varphi_\lambda \in \Hom_{G(F)}(\pi_\lambda, \Schw^\vee) \right\} \]
	is injective, thus $T_\pi$ is injective as well. The last inclusion is explained in Definition \ref{def:mero-rat}.
\end{lemma}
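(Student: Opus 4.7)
The plan is to exploit the density-free pairing $C^\infty(X^+) \otimes C^\infty_c(X^+) \to \CC$ together with the fact that, on test functions from $C^\infty_c(X^+)$, the meromorphically continued zeta distribution $\varphi_\lambda$ coincides with the naive recipe $v \mapsto |f|^\lambda \varphi(v)$ paired against $\xi$ for \emph{every} $\lambda$, as recorded in Remark \ref{rem:zeta-distribution}.

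First, choose a $\CC$-basis $\varphi_1, \ldots, \varphi_n$ of $\mathcal{N}_\pi$, which is finite-dimensional by Axiom \ref{axiom:finiteness}. Any element of $\mathcal{N}_\pi \otimes_{\CC} \mathcal{K}$ can be written uniquely as $\sum_{j=1}^n \varphi_j \otimes a_j$ with $a_j \in \mathcal{K}$. Suppose its image in the space of meromorphic families of $G(F)$-invariant pairings is zero. Unwinding the construction \eqref{eqn:T_pi} of $T_\pi$, there is a nonempty Zariski open subset $\mathcal{U} \subset \Lambda_{\CC}$, contained in the locus where all $a_j$ are regular and the family $Z_{\lambda,\varphi_j}$ is holomorphic, such that
$$ \sum_{j=1}^n a_j(\lambda)\, Z_{\lambda,\varphi_j}(v \otimes \xi) = 0, \qquad \lambda \in \mathcal{U}, \; v \in V_\pi, \; \xi \in \Schw. $$
Upon shrinking $\mathcal{U}$ we may further assume $\Re(\lambda) \relgg{X} 0$ on $\mathcal{U}$, so that for each $\lambda \in \mathcal{U}$ and each $\xi \in C^\infty_c(X^+) \hookrightarrow \Schw$ the zeta integral is the convergent integral from \eqref{eqn:zeta-integral}, giving
$$ \int_{X^+(F)} \Bigl( \sum_{j=1}^n a_j(\lambda)\, \varphi_j(v) \Bigr) |f|^\lambda \xi = 0. $$

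Next, fix $\lambda \in \mathcal{U}$ and $v \in V_\pi$. The integration pairing $C^\infty(X^+) \otimes C^\infty_c(X^+) \to \CC$ is non-degenerate in the first slot (test against bump sections supported in an arbitrarily small neighborhood of any point of $X^+(F)$). Since $|f|^\lambda$ is a nowhere-vanishing smooth function on $X^+(F)$ and $\xi \mapsto |f|^\lambda \xi$ is a bijection on $C^\infty_c(X^+)$, we conclude that the section $\sum_j a_j(\lambda) \varphi_j(v) \in C^\infty(X^+)$ vanishes identically. This holds for every $v \in V_\pi$, so $\sum_j a_j(\lambda) \varphi_j = 0$ in $\mathcal{N}_\pi$. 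The linear independence of $\varphi_1, \ldots, \varphi_n$ forces $a_j(\lambda) = 0$ for all $\lambda$ in the nonempty open set $\mathcal{U}$, hence $a_j = 0$ in $\mathcal{K}$ by the identity principle. The injectivity of $T_\pi$ itself is then immediate.

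No serious obstacle is anticipated; the whole argument is essentially a transcription of the non-degeneracy of the smooth/compactly-supported pairing plus analytic continuation. The only point that needs care is the invocation of Remark \ref{rem:zeta-distribution}, which guarantees that the meromorphic family $\varphi_\lambda$ still restricts to the naive integration pairing on $C^\infty_c(X^+) \subset \Schw$, and hence the vanishing of the meromorphic family translates into a pointwise vanishing statement to which the density argument applies.
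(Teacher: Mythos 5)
Your argument is correct and is essentially the paper's own proof, which simply clears denominators and invokes Remark \ref{rem:zeta-distribution} to reduce to the non-degeneracy of the pairing $C^\infty(X^+) \otimes C^\infty_c(X^+) \to \CC$; you have merely written out the basis choice, the restriction to the convergence range, and the density argument explicitly.
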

\begin{proof}
	Upon clearing denominators, it suffices to show the injectivity on $\mathcal{N}_\pi \otimes \mathcal{O}$ which follows from the discussions in Remark \ref{rem:zeta-distribution}.
\MyQED\end{proof}

\begin{lemma}\label{prop:zeta-translation}
	Let $\lambda \in \Lambda_{\CC}$. We have a commutative diagram
	\[ \begin{tikzcd}
		\mathcal{L}_\pi \arrow{r}[above]{\sim} & \mathcal{L}_{\pi_\lambda} \\
		\mathcal{N}_\pi \arrow[hookrightarrow]{u}[left]{T_\pi} \arrow{r}[below]{\sim} & \mathcal{N}_{\pi_\lambda} \arrow[hookrightarrow]{u}[right]{T_{\pi_\lambda}}
	\end{tikzcd} \]
	in which the lower horizontal arrow is $\varphi \mapsto \varphi_\lambda = |f|^\lambda \varphi$, and the upper one is a transport by structure by translating the torus $\mathcal{T}$ by $|\omega|^\lambda$.
\end{lemma}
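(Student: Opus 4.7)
The plan is to verify the lemma in three movements: exhibit the two horizontal arrows, show that each is an isomorphism, and establish commutativity by a direct computation at the level of zeta integrals in the region of convergence.

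For the lower arrow, I would observe that multiplication by the nowhere-vanishing smooth function $|f|^\lambda$ on $X^+(F)$ defines a continuous $\CC$-linear operator on $C^\infty(X^+)$, and that the identity $|f|^\lambda(xg) = |\omega|^\lambda(g)\, |f|^\lambda(x)$ ensures that this operator intertwines the $G(F)$-action for $\pi$ with the one for $\pi_\lambda$. Hence $\varphi \mapsto \varphi_\lambda$ sends $\mathcal{N}_\pi$ into $\mathcal{N}_{\pi_\lambda}$, with inverse $\psi \mapsto |f|^{-\lambda} \psi$.

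For the upper arrow, I would first introduce the $\CC$-algebra automorphism $\sigma_\lambda$ of $\mathcal{O}$ (and hence of $\mathcal{K}$) induced by translation of $\mathcal{T}$ by $|\omega|^\lambda$, characterized by $\mathrm{ev}_\mu \circ \sigma_\lambda = \mathrm{ev}_{\mu+\lambda}$ on $\mathcal{O}$. In particular $\sigma_\lambda(|\omega|^{\mathrm{univ}}(g)) = |\omega|^\lambda(g) \cdot |\omega|^{\mathrm{univ}}(g)$, so the $\tilde{\pi}_\lambda$-action on $V_\pi \otimes_\CC \mathcal{K}$ rewrites as $\tilde{\pi}_\lambda(g)(v \otimes b) = \pi(g) v \otimes \sigma_\lambda(|\omega|^{\mathrm{univ}}(g))\, b$. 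For $B \in \mathcal{L}_\pi$, I would set
\[ \Sigma_\lambda(B)(v \otimes b, \xi) := \sigma_\lambda\bigl(B(v \otimes \sigma_{-\lambda}(b), \xi)\bigr). \]
Routine verification then shows that $\Sigma_\lambda(B)$ is $\mathcal{K}$-linear in $b$ (the two automorphisms cancel), is $G(F)$-invariant under $\tilde{\pi}_\lambda$ (using the invariance of $B$ under $\tilde{\pi}$ together with the rewriting of the action above), and that if $t$ is a denominator for $B$, then $\sigma_\lambda(t)$ serves as a denominator for $\Sigma_\lambda(B)$ with the required hypocontinuity (via $\mathrm{ev}_\mu \circ \sigma_\lambda = \mathrm{ev}_{\mu+\lambda}$ and Lemma \ref{prop:circ-removal}). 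Since $\Sigma_{-\lambda}$ inverts $\Sigma_\lambda$, this produces the desired isomorphism $\mathcal{L}_\pi \rightiso \mathcal{L}_{\pi_\lambda}$.

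For commutativity, both vertical embeddings are injective by Lemma \ref{prop:uniqueness-embedding}, so it suffices to compare the associated meromorphic families in $\Hom_{G(F)}(\pi_{\mu+\lambda}, \Schw^\vee)$ parametrized by $\mu \in \Lambda_{\CC}$. Taking $\varphi \in \mathcal{N}_\pi$ and $\Re(\mu) \relgg{X} 0$ sufficiently large, one computes
\begin{align*}
	\mathrm{ev}_\mu\bigl(\Sigma_\lambda(T_\pi(\varphi))(v \otimes 1, \xi)\bigr) &= \mathrm{ev}_{\mu+\lambda}\bigl(\tilde{Z}_\varphi(v \otimes 1, \xi)\bigr) = \int_{X^+(F)} |f|^{\mu+\lambda} \varphi(v)\, \xi, \\
	\mathrm{ev}_\mu\bigl(T_{\pi_\lambda}(\varphi_\lambda)(v \otimes 1, \xi)\bigr) &= \int_{X^+(F)} |f|^\mu\, \varphi_\lambda(v)\, \xi = \int_{X^+(F)} |f|^{\mu+\lambda} \varphi(v)\, \xi.
\end{align*}
Agreement on this nonempty open set propagates to all $\mu \in \Lambda_{\CC}$ by meromorphy, yielding the desired equality in $\mathcal{L}_{\pi_\lambda}$. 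There is no substantive obstacle; the argument is essentially bookkeeping once the conventions for $\sigma_\lambda$ are fixed. The only mild subtlety is that $\Sigma_\lambda$ is $\sigma_\lambda$-semilinear rather than literally $\mathcal{K}$-linear --- which is precisely the content of the phrase ``transport of structure by translating $\mathcal{T}$'' in the statement.
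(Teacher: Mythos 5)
Your proposal is correct and follows the same route as the paper, whose entire proof is the one-line remark that it suffices to check the identity in the convergence range of the zeta integrals; you have simply made explicit the transport-of-structure map $\Sigma_\lambda$, its $\sigma_\lambda$-semilinearity, and the propagation from the convergence range by meromorphy and the injectivity of $T_{\pi_\lambda}$. No gaps.
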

\begin{proof}
	It suffices to check this in the convergence range of zeta integrals.
\MyQED\end{proof}

Thus far, the convergence for zeta integrals for $\Re(\lambda) \relgg{X} 0$ is part of our assumptions. The following observation will be helpful for establishing the convergence in some cases. See Corollary \ref{prop:conv-gg0} for example.
\begin{proposition}\label{prop:zeta-convergence}
	Let $\varphi \in \mathcal{N}_\pi$, $v \in V_\pi$ and $\xi \in \Schw$. Given an open subset $\mathcal{U} \subset X(F)$ such that $\mathcal{U} \supset \partial X(F)$, we write $\mathcal{U}' := X(F) \smallsetminus \mathcal{U} = X^+(F) \smallsetminus \mathcal{U}$. If
	\begin{itemize}
		\item $\varphi_\lambda(v)$ is in $L^2(\mathcal{U})$ for $\Re(\lambda) \relgg{X} 0$;
		\item when $F$ is non-Archimedean, $\xi|_{\mathcal{U'}}=0$;
		\item when $F$ is Archimedean, $\varphi(v)$ has at most polynomial growth over $\mathcal{U}'$, i.e.\ there exists a semi-algebraic function $p$ on $X(F)$ such that $(1 + |p|)^{-1} \varphi(v)$ is in $L^2(\mathcal{U}')$;
	\end{itemize}
	then the zeta integrals $Z_{\lambda, \varphi}(v \otimes \xi)$ are convergent for $\Re(\lambda) \relgg{X} 0$.
\end{proposition}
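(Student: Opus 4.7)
The plan is to establish absolute convergence of $\int_{X^+(F)} \varphi_\lambda(v)\xi$ by splitting the integral into the two pieces over $\mathcal{U}$ and $\mathcal{U}'$, and then treating each by a Cauchy--Schwarz estimate with different inputs. A single preliminary observation underlies both halves: since the pairing $\mathscr{E} \otimes \overline{\mathscr{E}} \to \mathscr{L}$ is positive definite hermitian, fiberwise Cauchy--Schwarz yields the pointwise majorization
\begin{equation*}
|\varphi_\lambda(v)\xi| \;\leq\; |f|^{\Re(\lambda)}\,\|\varphi(v)\|\cdot\|\xi\|
\end{equation*}
of positive densities on $X^+(F)$, where $\|\varphi(v)\|$ and $\|\xi\|$ are the non-negative ``square roots of densities'' attached to the hermitian norms on $\overline{\mathscr{E}}$ and $\mathscr{E}$ respectively.

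On the region $\mathcal{U}$ the hypothesis $\varphi_\lambda(v)\in L^2(\mathcal{U})$ combines with the consequence $\xi \in L^2(X^+)$ of Axiom (F3) at $\lambda=0$ to give, by Cauchy--Schwarz in $L^2(\mathcal{U})$,
\begin{equation*}
\int_{\mathcal{U}} |\varphi_\lambda(v)\xi| \;\leq\; \|\varphi_\lambda(v)\|_{L^2(\mathcal{U})} \cdot \|\xi\|_{L^2(X^+)} < \infty.
\end{equation*}
On the region $\mathcal{U}'$ the non-archimedean case is immediate because $\xi|_{\mathcal{U}'}=0$. For archimedean $F$, I would take $p$ to be the semi-algebraic function from the polynomial-growth hypothesis (so that $(1+|p|)^{-1}\varphi(v) \in L^2(\mathcal{U}')$) and factorize
\begin{equation*}
|f|^{\Re(\lambda)}\,\|\varphi(v)\|\cdot\|\xi\| \;=\; \bigl[(1+|p|)^{-1}\|\varphi(v)\|\bigr]\cdot\bigl[(1+|p|)\,|f|^{\Re(\lambda)}\|\xi\|\bigr].
\end{equation*}
Cauchy--Schwarz in $L^2(\mathcal{U}')$ then bounds $\int_{\mathcal{U}'}$ by the product of the $L^2$-norms of the two bracketed factors; the first is finite by hypothesis, and for the second we invoke Axiom (S1): for $\Re(\lambda)\relgeq{X}\lambda_0$ the density $|f|^{2\Re(\lambda)}\|\xi\|^2$ has rapid decay, so
\begin{equation*}
\int_{X^+(F)} (1+|p|)^2\,|f|^{2\Re(\lambda)}\|\xi\|^2 \;\leq\; 2\int_{X^+(F)} (1+|p|^2)\,|f|^{2\Re(\lambda)}\|\xi\|^2 < \infty
\end{equation*}
via the elementary bound $(1+|p|)^2 \leq 2(1+|p|^2)$.

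Choosing $\Re(\lambda)$ large enough so that the $L^2(\mathcal{U})$ hypothesis on $\varphi_\lambda(v)$ and the inequality $\Re(\lambda)\relgeq{X}\lambda_0$ hold simultaneously then completes the proof. There is no serious obstacle: the only substantive point is that polynomial growth of the coefficient near $\mathcal{U}'$ is precisely the behaviour the rapid-decay clause of Axiom (S1) was designed to absorb, and the factorization above makes this absorption explicit.
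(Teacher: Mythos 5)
Your proof is correct and follows the same route as the paper: the same decomposition into $\int_{\mathcal{U}} + \int_{\mathcal{U}'}$, Cauchy--Schwarz against $\Schw \subset L^2(X^+)$ on $\mathcal{U}$, and in the archimedean case the identical factorization $\bigl[(1+|p|)^{-1}\varphi(v)\bigr]\cdot\bigl[(1+|p|)|f|^\lambda \xi\bigr]$ absorbed by the rapid-decay clause of Axiom (S1). You merely spell out the elementary bound $(1+|p|)^2 \leq 2(1+|p|^2)$ that the paper leaves implicit.
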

\begin{proof}
	Decompose $\int_{X^+(F)} \varphi_\lambda \xi$ into $\int_{\mathcal{U}} + \int_{\mathcal{U}'}$. The first integral converges for $\Re(\lambda) \relgg{X} 0$ by our assumption, since $\Schw \subset L^2(X^+)$. The convergence for $\int_{\mathcal{U}'}$ follows immediately for non-Archimedean $F$.

	When $F$ is Archimedean, we write the integral over $\mathcal{U}'$ as
	\[ \int_{\mathcal{U'}} \frac{\varphi(v)}{1+|p|} \cdot (1+|p|) |f|^\lambda \xi. \]
	To show the convergence, it suffices to notice that $(1+|p|) |f|^\lambda \xi$ is $L^2$ if $\Re(\lambda) \relgeq{X} \lambda_0$, since $|f|^{2\lambda}\|\xi\|^2$ is of rapid decay.
\MyQED\end{proof}

\section{Model transitions: the local functional equation}\label{sec:model-transition}
To formulate the \emph{model transition}\index{model transition}, we fix $G$ and consider two sets of data, for $i=1,2$:
\begin{center}\begin{tabular}{cl}
	$X_i^+ \hookrightarrow X_i$ & affine spherical embedding \\
	$f^{(i)}_j \leftrightarrow \omega^{(i)}_j$, $1 \leq j \leq r^{(i)}$ & chosen relative invariants \\
	$\Lambda_{X_i} \subset \Lambda_i$ & monoids/lattices generated by $\omega^{(i)}_j$ \\
	$\Schw_i \subset C^\infty(X^+_i(F), \mathscr{E}_i)$ & Schwartz spaces
\end{tabular}\end{center}
each satisfying the Axioms \ref{axiom:geometric}, \ref{axiom:finiteness} and \ref{axiom:zeta}. Assume moreover that
\begin{gather}
	\Lambda_2 \subset \Lambda_1
\end{gather}
Hence the objects in Definition \ref{def:T} are related as
\begin{gather*}
	\mathcal{T}_2 \hookrightarrow \mathcal{T}_1, \\
	\mathcal{O}_1 \twoheadrightarrow \mathcal{O}_2.
\end{gather*}
We do not presume any inclusion between the monoids $\Lambda_{X_2}$ and $\Lambda_{X_1}$.

\begin{definition}\label{def:model-transition}
	By a \emph{model transition} between the data above, we mean a $G(F)$-equivariant isomorphism of topological vector spaces
	\[ \mathcal{F}: \Schw_2 \rightiso \Schw_1 \]
	that comes from the restriction of an isomorphism of unitary representations of $G(F)$
	\[ \mathcal{F}: L^2(X^+_2) \rightiso L^2(X^+_1). \]
	Again, the $L^2$-aspect will only be used in \S\ref{sec:connection-L2}.
\end{definition}

Some extra efforts are needed to cope with the possibility that $\Lambda_1 \neq \Lambda_2$. Define the multiplicative subset
\[ \mathcal{O}_1^\flat := \{a \in \mathcal{O}_1: a|_{\mathcal{T}_2} \neq 0 \} \subset \mathcal{O}_1 \smallsetminus \{0\} ; \]
denote the corresponding localization of $\mathcal{O}_1$ by $\mathcal{K}_1^\flat \subset \mathcal{K}_1$. The restriction homomorphism $\text{Res}^{\mathcal{T}_1}_{\mathcal{T}_2}: \mathcal{K}_1^\flat \to \mathcal{K}_2$ is well-defined.

Let $\pi$ be an irreducible nice representation of $G(F)$. Define the objects $\mathcal{N}^{(i)}_\pi$, $\mathcal{L}^{(i)}_\pi$, etc.\ as in \S\ref{sec:Schwartz}, for $i=1,2$. We define
\[ \mathcal{L}^{(1),\flat}_\pi = \varinjlim_{t \in \mathcal{O}_1^\flat} \mathcal{L}^{(1)}_{\pi,t}. \]
Pulling back by $\mathcal{F}$ followed by $\text{Res}^{\mathcal{T}_1}_{\mathcal{T}_2}$ gives a $\mathcal{K}_1^\flat$-linear map
\[ \mathcal{F}^\vee: \mathcal{L}^{(1), \flat}_\pi \to \mathcal{L}^{(2)}_\pi. \]

The construction below requires a further condition of \emph{avoidance of singularities}:
\begin{gather}\label{eqn:avoidance-singularities}
	\forall \varphi \in \mathcal{N}^{(1)}_\pi, \quad \widehat{Z}^{(1)}_{\pi, \varphi} \in \mathcal{L}^{(1), \flat}_\pi.
\end{gather}

\begin{definition}\label{def:lfe}\index{functional equation!local}\index{gamma(pi)@$\gamma(\pi)$}
	Let $\pi$ be an irreducible nice representation $\pi$ verifying \eqref{eqn:avoidance-singularities}. We say that the \emph{local functional equation} holds for  $\mathcal{F}$ at $\pi$, if there exists a $\mathcal{K}_1^\flat$-linear map
	\[ \gamma(\pi): \mathcal{N}^{(1)}_\pi \dotimes{\CC} \mathcal{K}_1^\flat \to \mathcal{N}^{(2)}_\pi \dotimes{\CC} \mathcal{K}_2 \]
	rendering the following diagram commutative
	\begin{equation}\label{eqn:gamma-def} \begin{tikzcd}
		\mathcal{L}^{(1), \flat}_\pi \arrow{r}{\mathcal{F}^\vee} & \mathcal{L}^{(2)}_\pi \\
		\mathcal{N}^{(1)}_\pi \otimes \mathcal{K}_1^\flat \arrow[hookrightarrow]{u} \arrow{r}[below]{\gamma(\pi)} & \mathcal{N}^{(2)}_\pi \otimes \mathcal{K}_2 \arrow[hookrightarrow]{u}
	\end{tikzcd}\end{equation}
	in which case $\gamma(\pi)$ is uniquely determined by applying Lemma \ref{prop:uniqueness-embedding} to $X_2$.
\end{definition}

We call $\gamma(\pi)$ the \emph{$\gamma$-factor} associated with the local functional equation.

By $\mathcal{K}_1^\flat$-linearity, $\gamma(\pi)$ is determined by its restriction $\mathcal{N}^{(1)}_\pi = \mathcal{N}^{(1)}_\pi \otimes 1 \to \mathcal{N}^{(2)}_\pi \otimes \mathcal{K}_2$, which may be regarded as a meromorphic/rational family of $\CC$-linear maps from $\mathcal{N}^{(1)}_\pi$ to $\mathcal{N}^{(2)}_\pi$ parametrized by $\mathcal{T}_2$. It makes sense to evaluate $\gamma(\pi)$ at a point of $\mathcal{T}_2$ off the singular locus. For the point corresponding to $\lambda \in \Lambda_{2, \CC}$, we denote the evaluation by\index{gamma(pi,lambda)@$\gamma(\pi,\lambda)$}
\begin{gather}\label{eqn:gamma-ev}
	\gamma(\pi, \lambda): \mathcal{N}^{(1)}_\pi \to \mathcal{N}^{(2)}_\pi.
\end{gather}

For $\lambda \in \Lambda_{2, \CC}$, recall the isomorphism $\mathcal{N}^{(i)}_\pi \rightiso \mathcal{N}^{(i)}_{\pi_\lambda}$ given by $\varphi \mapsto \varphi_\lambda = \varphi|f^{(i)}|^\lambda$, $i=1,2$. The $\gamma$-factor behaves well under this shift.

\begin{lemma}
	Suppose that $\lambda \in \Lambda_{2, \CC}$ and \eqref{eqn:avoidance-singularities} holds for $\pi$, then it also holds for $\pi_\lambda$. Moreover, the following diagram commutes:
	\[ \begin{tikzcd}
		\mathcal{N}^{(1)}_{\pi_\lambda} \arrow{r}[above]{\gamma(\pi_\lambda)} & \mathcal{N}^{(2)}_{\pi_\lambda} \otimes \mathcal{K}_2 \\
		\mathcal{N}^{(1)}_\pi \arrow{r}[below]{\gamma(\pi)} \arrow{u}[left]{\simeq} & \mathcal{N}^{(2)}_\pi \otimes \mathcal{K}_2 \arrow{u}[right]{\simeq}
	\end{tikzcd} \]
	where the vertical arrows are given by $\varphi \mapsto \varphi_\lambda$ and the automorphism of $\mathcal{K}_2$ of ``translation by $|\omega|^\lambda$'' in $\mathcal{T}_2$ (see Lemma \ref{prop:zeta-translation}). In the language of \eqref{eqn:gamma-ev}, this amounts to
	\[ \left[ \gamma(\pi, \lambda + \mu)(\varphi) \right]_\lambda = \gamma(\pi_\lambda, \mu)(\varphi_\lambda), \quad \varphi \in \mathcal{N}^{(2)}_\pi, \mu \in \Lambda_2. \]
\end{lemma}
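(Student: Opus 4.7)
The strategy is to reduce the assertion to a cube-chasing argument in which the two non-trivial faces come from Lemma~\ref{prop:zeta-translation} applied to $X_1$ and $X_2$ respectively, and then to invoke the injectivity of $T_{\pi_\lambda}$ (Lemma~\ref{prop:uniqueness-embedding}) to transport the commutative square~\eqref{eqn:gamma-def} for $\pi$ to the corresponding square for $\pi_\lambda$.

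First I would verify that \eqref{eqn:avoidance-singularities} propagates from $\pi$ to $\pi_\lambda$. By Lemma~\ref{prop:zeta-translation} the isomorphism $\mathcal{L}^{(1)}_\pi \rightiso \mathcal{L}^{(1)}_{\pi_\lambda}$ is induced by translating $\mathcal{T}_1$ by the character $|\omega^{(1)}|^\lambda$. Since $\lambda \in \Lambda_{2,\CC} \subset \Lambda_{1,\CC}$, this translation belongs to the subtorus $\mathcal{T}_2 \subset \mathcal{T}_1$, which is itself a group; consequently translation by $|\omega^{(1)}|^\lambda$ preserves $\mathcal{T}_2$ set-theoretically, and hence preserves the multiplicative subset $\mathcal{O}_1^\flat = \{a \in \mathcal{O}_1 : a|_{\mathcal{T}_2} \neq 0\}$. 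It follows that the subspace $\mathcal{L}^{(1),\flat}_\pi$ of $\mathcal{L}^{(1)}_\pi$ corresponds to $\mathcal{L}^{(1),\flat}_{\pi_\lambda}$ of $\mathcal{L}^{(1)}_{\pi_\lambda}$ under this isomorphism. Combined with Lemma~\ref{prop:zeta-translation}, this gives $\tilde{Z}^{(1)}_{\pi_\lambda, \varphi_\lambda} \in \mathcal{L}^{(1),\flat}_{\pi_\lambda}$ for every $\varphi \in \mathcal{N}^{(1)}_\pi$, so \eqref{eqn:avoidance-singularities} holds for $\pi_\lambda$ and $\gamma(\pi_\lambda)$ is defined.

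Next I would assemble the following cube, whose two vertical end-faces are \eqref{eqn:gamma-def} for $\pi$ and $\pi_\lambda$:
$$
\begin{tikzcd}[row sep=small, column sep=small]
	& \mathcal{L}^{(1),\flat}_{\pi_\lambda} \arrow{rr}{\mathcal{F}^\vee} & & \mathcal{L}^{(2)}_{\pi_\lambda} \\
	\mathcal{L}^{(1),\flat}_\pi \arrow{rr}[near end]{\mathcal{F}^\vee} \arrow{ur}{\sim} & & \mathcal{L}^{(2)}_\pi \arrow{ur}[near start]{\sim} & \\
	& \mathcal{N}^{(1)}_{\pi_\lambda} \otimes \mathcal{K}_1^\flat \arrow[hookrightarrow]{uu} \arrow{rr}[near start]{\gamma(\pi_\lambda)} & & \mathcal{N}^{(2)}_{\pi_\lambda} \otimes \mathcal{K}_2 \arrow[hookrightarrow]{uu} \\
	\mathcal{N}^{(1)}_\pi \otimes \mathcal{K}_1^\flat \arrow{rr}[below]{\gamma(\pi)} \arrow[hookrightarrow]{uu} \arrow{ur}{\sim} & & \mathcal{N}^{(2)}_\pi \otimes \mathcal{K}_2 \arrow[hookrightarrow]{uu} \arrow{ur}[right]{\sim} &
\end{tikzcd}
$$
The slanted arrows on the upper face are the isomorphisms of Lemma~\ref{prop:zeta-translation} for $X_1$ and $X_2$, while those on the lower face combine the twist $\varphi \mapsto \varphi_\lambda$ with the translation automorphism of $\mathcal{K}_i$ by $|\omega^{(i)}|^\lambda$. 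The two side-faces commute by Lemma~\ref{prop:zeta-translation}, the front-face by definition of $\gamma(\pi)$. The top face (involving $\mathcal{F}^\vee$ and the two translations) commutes because $\mathcal{F}: \Schw_2 \rightiso \Schw_1$ is fixed and independent of the twist by $|\omega|^\lambda$: pulling back first and translating is the same as translating first and then pulling back. An elementary diagram chase then shows the back face commutes when composed with the inclusion $T_{\pi_\lambda}$; by the injectivity of $T_{\pi_\lambda}$ (Lemma~\ref{prop:uniqueness-embedding}), the back face itself commutes, which is the desired diagram of the lemma. Evaluating this diagram at the point $\mu \in \mathcal{T}_2$ then yields the pointwise identity $\left[ \gamma(\pi, \lambda+\mu)(\varphi) \right]_\lambda = \gamma(\pi_\lambda, \mu)(\varphi_\lambda)$.

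The only mildly subtle point is bookkeeping: one has to track three concurrent shifts (on $\Schw$-side via $|f|^\lambda$, on the representation-side via $|\omega|^\lambda$, and on the coefficient-ring-side via translation of $\mathcal{T}$) and verify they assemble compatibly. Since $\mathcal{F}$ does not involve $\pi$ at all, the compatibility of $\mathcal{F}^\vee$ with the shifts is essentially formal; I do not expect any genuine obstacle beyond carefully writing down the cube.
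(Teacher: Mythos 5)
Your proposal is correct and follows essentially the same route as the paper: propagate \eqref{eqn:avoidance-singularities} by noting that translation by $|\omega|^\lambda$ with $\lambda \in \Lambda_{2,\CC}$ preserves $\mathcal{O}_1^\flat$, then assemble the cube whose side faces are the two squares of Lemma \ref{prop:zeta-translation}, whose top face is the (evidently commutative) compatibility of $\mathcal{F}^\vee$ with translation, and whose front and back faces are \eqref{eqn:gamma-def} for $\pi$ and $\pi_\lambda$, concluding by injectivity of the vertical embeddings. No gaps.
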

\begin{proof}
	By Lemma \ref{prop:zeta-translation}, if $\varphi \in \mathcal{N}^{(1)}_\pi$ has image in $\mathcal{L}^{(1)}_{\pi,t}$ with $t \in \mathcal{O}_1^\flat$, then $\varphi_\lambda$ has image in $\mathcal{L}^{(1)}_{\pi, t'}$ where $t'$ is obtained from $t$ via a translation by $\lambda$, thus still in $\mathcal{O}^\flat_1$ since $\lambda \in \Lambda_{\CC,2}$. Furthermore, Lemma \ref{prop:zeta-translation} furnish the commutative diagrams
	\[ \begin{tikzcd}
		\mathcal{L}^{(1), \flat}_\pi \arrow{r}[above]{\sim} & \mathcal{L}^{(1), \flat}_{\pi_\lambda} \\
		\mathcal{N}^{(1)}_\pi \arrow[hookrightarrow]{u} \arrow{r}[below]{\sim} & \mathcal{N}^{(1)}_{\pi_\lambda} \arrow[hookrightarrow]{u}
	\end{tikzcd} \quad
	\begin{tikzcd}
		\mathcal{L}^{(2)}_\pi \arrow{r}[above]{\sim} & \mathcal{L}^{(2)}_{\pi_\lambda} \\
		\mathcal{N}^{(2)}_\pi \arrow[hookrightarrow]{u} \arrow{r}[below]{\sim} & \mathcal{N}^{(2)}_{\pi_\lambda} \arrow[hookrightarrow]{u}
	\end{tikzcd} \]
	The upper rows of both diagrams are connected by a roof by applying $\mathcal{F}^\vee$, which is obviously commutative. On the other hand, the left (resp.\  right) vertical arrows are connected by the commutative diagram \eqref{eqn:gamma-def} for $\pi$ (resp.\  $\pi_\lambda$). Assembling all these and bearing in mind that the vertical arrows are all injective, we obtain the required commutative diagram in the bottom.
\MyQED\end{proof}

The effect of changing relative invariants is easy to analyze.
\begin{lemma}\label{prop:change-rel-inv}
	Suppose that $f^{(i)}_j$ is changed to $c^{(i)}_j f^{(i)}_j$, for $i=1,2$ and $1 \leq j \leq r^{(i)}$. Define $\left| c^{(i)} \right|^\lambda = \prod_j \left| c^{(i)}_j \right|^{\lambda_j}$ if $\lambda = \sum_j \lambda_j \omega^{(i)}_j \in \Lambda_{i,\CC}$. Then $\gamma(\pi)$ will be replaced by
	\[ \left( \text{mult. by } |c^{(2)}|^\bullet \right) \circ \gamma(\pi) \circ \left( \text{mult. by } |c^{(1)}|^\bullet \right). \]
	Here $|c^{(i)}|^\bullet \in \mathcal{O}_i^\times$ stands for the function $|\omega|^\lambda \mapsto |c^{(i)}|^\lambda$ on $\mathcal{T}_i$. In particular, the evaluation $\gamma(\pi,0)$ at $\lambda=0$ is independent of the choice of relative invariants whenever it is well-defined.
\end{lemma}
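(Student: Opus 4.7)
The proof is a direct computation propagating the rescaling $f^{(i)}_j \leadsto c^{(i)}_j f^{(i)}_j$ through each ingredient of Definition \ref{def:lfe}. First I would observe that, since $|f^{(i),\mathrm{new}}|^\lambda = |c^{(i)}|^\lambda \, |f^{(i),\mathrm{old}}|^\lambda$, the twisted coefficient satisfies $\varphi^{\mathrm{new}}_\lambda = |c^{(i)}|^\lambda \, \varphi^{\mathrm{old}}_\lambda$, so in the convergence range $Z^{(i),\mathrm{new}}_{\lambda,\varphi} = |c^{(i)}|^\lambda \, Z^{(i),\mathrm{old}}_{\lambda,\varphi}$. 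By the meromorphic/rational continuation of Axiom \ref{axiom:zeta}, this identity lifts to $\tilde{Z}^{(i),\mathrm{new}}_\varphi = |c^{(i)}|^\bullet \cdot \tilde{Z}^{(i),\mathrm{old}}_\varphi$ inside $\mathcal{L}^{(i)}_\pi$, where $|c^{(i)}|^\bullet$ denotes the unit on $\mathcal{T}_i$ sending $|\omega|^\lambda$ to $|c^{(i)}|^\lambda$. In the notation of \eqref{eqn:T_pi} this reads $T^{(i),\mathrm{new}}_\pi = |c^{(i)}|^\bullet \cdot T^{(i),\mathrm{old}}_\pi$ for $i=1,2$, and one notes $|c^{(1)}|^\bullet \in (\mathcal{O}_1^\flat)^\times$ since it is nowhere vanishing on $\mathcal{T}_1$.

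Next I would substitute these identities into the defining diagram \eqref{eqn:gamma-def} for the new data, exploit the $\mathcal{K}_1^\flat$-linearity of $\mathcal{F}^\vee$ on the left and the $\mathcal{K}_2$-linearity of $T^{(2),\mathrm{old}}_\pi$ on the right, and invoke the old diagram to rewrite both sides in terms of $\gamma^{\mathrm{old}}(\pi)$ and $\gamma^{\mathrm{new}}(\pi)$:
$$ |c^{(1)}|^\bullet\big|_{\mathcal{T}_2} \cdot \bigl(T^{(2),\mathrm{old}}_\pi \circ \gamma^{\mathrm{old}}(\pi)\bigr) \;=\; |c^{(2)}|^\bullet \cdot \bigl(T^{(2),\mathrm{old}}_\pi \circ \gamma^{\mathrm{new}}(\pi)\bigr). $$
By the injectivity of $T^{(2),\mathrm{old}}_\pi$ (Lemma \ref{prop:uniqueness-embedding}), one cancels $T^{(2),\mathrm{old}}_\pi$ and solves for $\gamma^{\mathrm{new}}(\pi)$ as the product of $\gamma^{\mathrm{old}}(\pi)$ with the units $|c^{(1)}|^\bullet$ and $|c^{(2)}|^\bullet$, interpreted as scalar multiplications on $\mathcal{N}^{(1)}_\pi \otimes \mathcal{K}_1^\flat$ and $\mathcal{N}^{(2)}_\pi \otimes \mathcal{K}_2$ respectively via the restriction $\text{Res}^{\mathcal{T}_1}_{\mathcal{T}_2}$ where needed. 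This is exactly the composition of scalar multiplications asserted in the statement.

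The final claim is then immediate: at $\lambda = 0$ one has $|c^{(1)}|^0 = |c^{(2)}|^0 = 1$, so the unit prefactors collapse and $\gamma^{\mathrm{new}}(\pi, 0) = \gamma^{\mathrm{old}}(\pi, 0)$ whenever both evaluations are defined. There is no substantive obstacle in this argument; it is a purely formal unwinding of the definitions. The only point that requires care is the consistent identification of $|c^{(i)}|^\bullet$ as an element of the appropriate ring at each stage ($\mathcal{O}_i^\times$ natively, pushed into $\mathcal{K}_1^\flat$ or $\mathcal{K}_2$ as needed), and the observation that $|c^{(1)}|^\bullet$ belongs to $(\mathcal{O}_1^\flat)^\times$ so that it commutes past $\mathcal{F}^\vee$ in the first place.
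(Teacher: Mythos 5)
Your argument is correct and is precisely the formal unwinding the paper intends: the paper gives no written proof of this lemma, relying on the observation recorded immediately after Axiom \ref{axiom:zeta} that the rescaling replaces $Z_\lambda$ by $|c|^\lambda Z_\lambda$, which is exactly your starting point, followed by the same appeal to the diagram \eqref{eqn:gamma-def} and Lemma \ref{prop:uniqueness-embedding}. One small imprecision: solving your (correct) displayed identity actually yields $\gamma^{\mathrm{new}}(\pi) = \bigl(|c^{(2)}|^\bullet\bigr)^{-1}\,|c^{(1)}|^\bullet\big|_{\mathcal{T}_2}\,\gamma^{\mathrm{old}}(\pi)$, i.e.\ one of the two scalar factors enters inverted rather than as a plain product; this is immaterial for the final claim, since both units evaluate to $1$ at $\lambda=0$.
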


\begin{remark}
	When $\Lambda_1 = \Lambda_2$, the superscripts $\flat$ will disappear and $\mathcal{K}_1 = \mathcal{K}_2 =: \mathcal{K}$, so that $\gamma(\pi)$ becomes a linear map between finite-dimensional $\mathcal{K}$-vector spaces. Once bases are chosen, it may also be called the \emph{$\gamma$-matrix}, or even a single meromorphic/rational function when both sides have dimension one.

	Furthermore, the roles of $X_1$, $X_2$ are symmetric in this case, hence $\gamma(\pi)$ is a $\mathcal{K}$-isomorphism. It follows that $\mathcal{N}^{(1)}_\pi$ and $\mathcal{N}^{(2)}_\pi$ are isomorphic. Indeed, one can exploit leading terms of $\gamma(\pi,\lambda)$ and $\gamma(\pi,\lambda)^{-1}$ at $\lambda=0$.
\end{remark}

\section{Connection with \texorpdfstring{$L^2$}{L2} theory}\label{sec:connection-L2}
Let $X^+ \hookrightarrow X$, $\Schw \subset L^2(X^+)$ be as in \S\ref{sec:Schwartz}, together with all the axioms therein. Choose a Plancherel measure $\mu$ for the spectral decomposition of $L^2(X^+)$.

Let $\tau \in \Pi_\text{unit}(G(F))$ be an irreducible unitary representation, realized on a Hilbert space. Its contragredient $\check{\tau}$ can be identified with its complex conjugate $\bar{\tau}$. Throughout this section, we adopt systematically the convention
\[ \pi := \check{\tau}^\infty. \]
The $L^2$-multiplicity space $\mathcal{M}_\tau$ defined in Theorem \ref{prop:direct-integral} for $\mu$-almost all $\tau$ is actually a Hilbert space. Put $\mathcal{M}_\tau^\vee := \Hom(\mathcal{M}_\tau, \CC)$.

\begin{proposition}
	For $\mu$-almost all $\tau \in \Pi_\mathrm{unit}(G(F))$, the space $\mathcal{M}_\tau^\vee$ is canonically a subspace of $\mathcal{N}_\pi$, hence of finite dimension.
\end{proposition}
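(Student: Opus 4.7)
The plan is to combine three ingredients already in place: the Gelfand--Kostyuchenko pointwise decomposition for nuclear spaces (Theorem \ref{prop:nuclear-spectral}), the multiplicity-dual embedding of Proposition \ref{prop:multiplicity-spaces}, and the smooth-versus-$L^2$ bridge of Theorem \ref{prop:C-infty}. First, by axiom (F1) in Axiom \ref{axiom:zeta} the space $\Schw$ is nuclear, so Theorem \ref{prop:nuclear-spectral} produces a measurable family of continuous $G(F)$-equivariant maps $\alpha_\tau: \Schw \to \mathcal{H}_\tau = \tau \hat{\otimes} \mathcal{M}_\tau$ realizing $\alpha: \Schw \hookrightarrow L^2(X^+)$ pointwise. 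Since $C^\infty_c(X^+) \hookrightarrow \Schw$ has dense image in $L^2(X^+)$, the map $\alpha$ has dense image, and Proposition \ref{prop:multiplicity-spaces} yields for $\mu$-almost all $\tau$ a canonical inclusion
$$ \iota_\tau: \mathcal{M}_\tau^\vee \hookrightarrow \Hom_{G(F)}(\Schw, \tau), \qquad m^\vee \longmapsto (\identity_\tau \otimes m^\vee) \circ \alpha_\tau. $$

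Next, I would compose $\iota_\tau$ with restriction along $C^\infty_c(X^+) \hookrightarrow \Schw$ and invoke Theorem \ref{prop:C-infty} to obtain a chain
$$ \mathcal{M}_\tau^\vee \longrightarrow \Hom_{G(F)}(\Schw, \tau) \longrightarrow \Hom_{G(F)}(C^\infty_c(X^+), \tau) \xrightarrow{\sim} \mathcal{N}_\pi. $$
Once injectivity of this composite is established, the claimed finite-dimensionality of $\mathcal{M}_\tau^\vee$ is immediate from Axiom \ref{axiom:finiteness}, and the construction is manifestly canonical since each arrow is.

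The main point requiring care, and essentially the only non-formal step, is checking that restriction to $C^\infty_c(X^+)$ preserves injectivity, i.e.\ that $\alpha_\tau\bigl|_{C^\infty_c(X^+)}$ still has dense image in $\mathcal{H}_\tau$ for $\mu$-almost all $\tau$. For this I would apply Theorem \ref{prop:nuclear-spectral} directly to the embedding $C^\infty_c(X^+) \hookrightarrow L^2(X^+)$, which is legitimate because $C^\infty_c(X^+)$ is itself nuclear (algebraic in the non-archimedean case by Lemma \ref{prop:algebraic-nuclear}; a countable strict inductive limit of nuclear Fréchet spaces in the archimedean case, hence nuclear). The essential uniqueness of pointwise decompositions, property (1) following Definition \ref{def:pointwise}, forces its fiberwise maps to agree $\mu$-a.e.\ with $\alpha_\tau \circ (\text{incl.})$; density of $C^\infty_c(X^+)$ in $L^2(X^+)$ then yields density of these restricted fibers $\mu$-a.e.\ via property (3). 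With density at hand, for any $0 \neq m^\vee \in \mathcal{M}_\tau^\vee$ the map $\identity_\tau \otimes m^\vee : \tau \hat{\otimes} \mathcal{M}_\tau \to \tau$ is surjective (pick any $m$ with $m^\vee(m) = 1$ and use $v \otimes m \mapsto v$), so its composition with a dense-image map is nonzero. This delivers the desired injectivity and completes the argument.
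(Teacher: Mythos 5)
Your argument is correct and rests on the same three ingredients as the paper's proof (the Gelfand--Kostyuchenko/nuclearity input, Proposition \ref{prop:multiplicity-spaces}, and Theorem \ref{prop:C-infty}), but the detour through $\Schw$ is unnecessary: the paper applies Proposition \ref{prop:multiplicity-spaces} directly to $C^\infty_c(X^+)$ (which is itself nuclear, so its inclusion into $L^2(X^+)$ is pointwise defined with dense image) and then identifies $\Hom_{G(F)}(C^\infty_c(X^+),\tau)$ with $\mathcal{N}_\pi$ via Theorem \ref{prop:C-infty}. Your extra step verifying that restriction along $C^\infty_c(X^+)\hookrightarrow\Schw$ preserves injectivity is sound, but it amounts to redoing for $C^\infty_c(X^+)$ exactly what the shorter route does from the start.
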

\begin{proof}
	Apply Proposition \ref{prop:multiplicity-spaces} to $\tau$ and to the space $C^\infty_c(X^+)$ of test functions. Theorem \ref{prop:C-infty} asserts that
	\[ \Hom_{G(F)}(C^\infty_c(X^+), \tau) \simeq \Hom_{G(F)}(\pi, C^\infty(X^+)) = \mathcal{N}_\pi \]
	by adjunction. Finiteness results from Axiom \ref{axiom:finiteness}.
\MyQED\end{proof}

\begin{corollary}\label{prop:disintegration-mult-1}
	If $\dim \mathcal{N}_\pi \leq 1$ for $\mu$-almost all irreducible unitary representation $\tau$ of $G(F)$, then $\mathcal{M}_\tau$ is one-dimensional for $\mu$-almost all $\tau \in \Pi_\mathrm{unit}(G(F))$.
\end{corollary}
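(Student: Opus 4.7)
The plan is to combine the preceding Proposition with the standard normalization of the Plancherel measure, arguing that the hypothesis forces a pointwise dimension bound on $\mathcal{M}_\tau$ which, together with the non-vanishing of $\mathcal{M}_\tau$ on the support of $\mu$, pins down $\dim_{\CC} \mathcal{M}_\tau = 1$.

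First, I would invoke the preceding proposition: for $\mu$-almost all $\tau \in \Pi_{\text{unit}}(G(F))$, there is a canonical linear injection $\mathcal{M}_\tau^\vee \hookrightarrow \mathcal{N}_\pi$ with $\pi = \check{\tau}^\infty$. Since $\mathcal{M}_\tau$ is a Hilbert space, finite-dimensionality of $\mathcal{M}_\tau^\vee$ is equivalent to that of $\mathcal{M}_\tau$, and in that case $\dim_{\CC} \mathcal{M}_\tau = \dim_{\CC} \mathcal{M}_\tau^\vee$. By hypothesis, $\dim_{\CC} \mathcal{N}_\pi \leq 1$ for $\mu$-almost every $\tau$, hence we deduce $\dim_{\CC} \mathcal{M}_\tau \leq 1$ on a conull subset of $\Pi_{\text{unit}}(G(F))$.

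Next, I would invoke the normalization built into Theorem \ref{prop:direct-integral}: without loss of generality the Plancherel measure $\mu$ is supported on the set $\{\tau : \mathcal{M}_\tau \neq 0\}$ (otherwise replace $\mu$ by its restriction to that set, which gives an equivalent measurable field and the same direct integral up to canonical isomorphism, by the essential uniqueness clause of the theorem). Consequently $\mathcal{M}_\tau \neq 0$ for $\mu$-almost all $\tau$. Combining with the bound $\dim_{\CC} \mathcal{M}_\tau \leq 1$ obtained above yields $\dim_{\CC} \mathcal{M}_\tau = 1$ for $\mu$-almost every $\tau$, which is the claim.

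There is no serious obstacle here; the only subtlety is the purely formal normalization of the Plancherel measure so that its support avoids the locus where the multiplicity space is zero. One could alternatively phrase the statement as ``$\mathcal{M}_\tau$ is at most one-dimensional almost everywhere'', but the stated form is cleaner once one adopts the above convention, which is harmless in view of the uniqueness part of Theorem \ref{prop:direct-integral}.
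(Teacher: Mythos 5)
Your argument is correct and is essentially the paper's (implicit) one: the corollary is stated without proof as an immediate consequence of the preceding Proposition, via $\dim_{\CC}\mathcal{M}_\tau = \dim_{\CC}\mathcal{M}_\tau^\vee \leq \dim_{\CC}\mathcal{N}_\pi \leq 1$ for $\mu$-almost all $\tau$. Your extra remark normalizing $\mu$ so that $\mathcal{M}_\tau \neq 0$ on its support — justified by the essential uniqueness clause of Theorem \ref{prop:direct-integral} — is the right way to upgrade ``at most one-dimensional'' to ``one-dimensional'' as stated.
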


On the other hand, by Axiom \ref{axiom:zeta} we may also plug $\Schw$ into Proposition \ref{prop:multiplicity-spaces}. This yields
\begin{gather}\label{eqn:S-mult-space}
	\mathcal{M}^\vee_\tau \hookrightarrow \Hom_{G(F)}(\Schw, \tau) \quad \text{for $\mu$-almost all $\tau$.}
\end{gather}
The continuous inclusion $C^\infty_c(X^+) \hookrightarrow \Schw$ and Theorem \ref{prop:C-infty} give the commutative diagram
\begin{equation}\label{eqn:N-vs-S}\begin{tikzcd}
	\Hom_{G(F)}(\Schw, \tau) \arrow{rr} & & \mathcal{N}_\pi \\
	& \mathcal{M}_\tau^\vee \arrow[hookrightarrow]{lu} \arrow[hookrightarrow]{ru} &
\end{tikzcd}. \end{equation}

\begin{lemma}\label{prop:vee-inj}
	For any $\tau \in \Pi_{\mathrm{unit}}(G(F))$, taking adjoint gives an injective map
	\[ \Hom_{G(F)}(\Schw, \tau) \hookrightarrow \Hom_{G(F)}(\pi, \Schw^\vee). \]
\end{lemma}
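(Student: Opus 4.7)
The proposed map is the standard transpose: given $a \in \Hom_{G(F)}(\Schw, \tau)$, form the continuous dual map $a^\vee : \tau^\vee \to \Schw^\vee$ (both equipped with the strong topology), and precompose with the natural inclusion $\pi = \check{\tau}^\infty \hookrightarrow \check{\tau} \simeq \tau^\vee$. The plan is first to verify that the resulting map $a^\vee|_\pi$ lands in $\Hom_{G(F)}(\pi, \Schw^\vee)$, and then to establish injectivity by a density argument for the smooth vectors in $\check{\tau}$.

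For the first step, $a^\vee$ is automatically continuous between strong duals, and it intertwines the dual $G(F)$-actions because $a$ is equivariant. The inclusion $\pi \hookrightarrow \check{\tau}$ is continuous and $G(F)$-equivariant by the definition of $\check{\tau}^\infty$ recalled in \S\ref{sec:coefficients} (in the archimedean case, via Sobolev semi-norms; in the non-archimedean case, via the topological $\varinjlim$ on $J$-fixed subspaces). Composition therefore yields a continuous $G(F)$-equivariant map $\pi \to \Schw^\vee$, i.e.\ an element of $\Hom_{G(F)}(\pi, \Schw^\vee)$.

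For injectivity, suppose $a^\vee|_\pi = 0$, so that $\angles{a(\xi), w} = 0$ for every $\xi \in \Schw$ and every $w \in \pi$, where $\angles{\cdot,\cdot}: \tau \times \check{\tau} \to \CC$ is the canonical duality pairing. The key input is the classical fact that the space of smooth vectors $\check{\tau}^\infty = \pi$ is dense in the Hilbert representation $\check{\tau}$ (by the G\aa rding-type argument in the archimedean case, and trivially by smoothing via compact open subgroups in the non-archimedean case). Since the pairing $\angles{\cdot,\cdot}$ is jointly continuous and non-degenerate, vanishing of $\angles{a(\xi), \cdot}$ on the dense subspace $\pi$ propagates to all of $\check{\tau}$, forcing $a(\xi) = 0$. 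As $\xi \in \Schw$ was arbitrary, $a = 0$.

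The only delicate point is the density of $\pi = \check{\tau}^\infty$ in $\check{\tau}$, but this is standard; once it is in hand, the argument reduces to non-degeneracy of the Hilbert-space duality. No use is made of the axioms on $\Schw$ beyond the fact that $\Schw^\vee$ is a Hausdorff locally convex space separating points of $\Schw$, so the statement holds in full generality.
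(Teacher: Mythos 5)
Your argument is correct and is essentially the paper's own proof: both define the map by the relation $\angles{a^\vee(v),\xi} = (a(\xi)\mid v)_\tau$ and deduce injectivity from the fact that $\pi = \check{\tau}^\infty \hookrightarrow \check{\tau}$ is continuous with dense image. The extra verifications you spell out (continuity and equivariance of the transpose, non-degeneracy of the Hilbert pairing) are exactly the routine points the paper leaves implicit.
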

\begin{proof}
	This map $\alpha \mapsto \alpha^\vee$ is characterized by
	\[ \angles{\alpha^\vee(v), \xi} = \angles{v, \alpha(\xi)}_{\bar{\tau} \otimes \tau \to \CC} = (\alpha(\xi) | v)_\tau \]
	for all $v$ in the underlying space of $\pi = \check{\tau}^\infty = \bar{\tau}^\infty$ and all $\xi \in \Schw$. It remains to recall that $\bar{\tau}^\infty \hookrightarrow \bar{\tau}$ is continuous of dense image.
\MyQED\end{proof}

The following hypothesis will be invoked in the comparison of Plancherel decompositions. Detailed discussions will be deferred to the end of this section.
\begin{hypothesis}\label{hyp:zeta-0}
	Assume that
	\begin{itemize}
		\item for $\mu$-almost all $\tau$, the zeta integrals $Z_{\lambda, \varphi}$ for $\varphi \in \mathcal{M}_\tau^\vee$ are holomorphic at $\lambda=0$, where we regard $\mathcal{M}_\tau^\vee$ as a subspace of $\mathcal{N}_\pi$;
		\item for every $\varphi \in \mathcal{M}_\tau^\vee$, the element of $\Hom_{G(F)}(\pi, \Schw^\vee)$ arising from $Z_{\lambda=0, \varphi}$ coincides with the image of $\mathcal{M}_\tau^\vee \hookrightarrow \Hom_{G(F)}(\Schw, \tau) \hookrightarrow \Hom_{G(F)}(\pi, \Schw^\vee)$.
	\end{itemize}
\end{hypothesis}

Henceforth, we work with two sets of data $X_i^+ \hookrightarrow X_i$, $\Schw_i$, etc. ($i=1,2$) as in \S\ref{sec:model-transition}, together with the model transition $\mathcal{F}: \Schw_2 \rightiso \Schw_1$ (recall Definition \ref{def:model-transition}). On the $L^2$-aspect, the existence of $\mathcal{F}$ has several immediate consequences.
\begin{enumerate}
	\item By the uniqueness part of Theorem \ref{prop:direct-integral}, we may use \emph{the same} Plancherel measure $\mu$ for decomposing both $L^2(X^+_1)$ and $L^2(X^+_2)$. In forming these spectral decompositions, we fix representatives of elements in $\Pi_\text{unit}(G(F))$ so that the multiplicity spaces $\mathcal{M}^{(1)}_\tau$, $\mathcal{M}^{(2)}_\tau$ are also uniquely defined as finite-dimensional Hilbert spaces, up to isometries.
	\item Corollary \ref{prop:disintegration-F} now gives a measurable family of isometries $\eta(\tau): \mathcal{M}^{(2)}_\tau \rightiso \mathcal{M}^{(1)}_\tau$ for $\mu$-almost all $\tau$, such that $\mathcal{F}$ disintegrates into the decomposable operator
	\begin{multline*}
		\int^\oplus_{\Pi_\text{unit}(G(F))} (\identity \otimes \eta(\tau)) \dd\mu(\tau): \\
		\int^\oplus_{\Pi_\text{unit}(G(F))} \tau \otimes \mathcal{M}^{(2)}_\tau \dd\mu(\tau) \rightiso  \int^\oplus_{\Pi_\text{unit}(G(F))} \tau \otimes \mathcal{M}^{(1)}_\tau \dd\mu(\tau).
	\end{multline*}
	\item Taking adjoint gives a measurable family  $\eta(\tau)^\vee: \mathcal{M}^{(1),\vee}_\tau \rightiso \mathcal{M}^{(2),\vee}_\tau$. It consists of isometries if we endow these spaces with their usual Hilbert structure. One can describe $\eta(\tau)^\vee$ even more explicitly. In view of the canonical inclusions \eqref{eqn:S-mult-space}, \eqref{eqn:N-vs-S}, and the fact that $\int^\oplus (\identity \otimes \eta(\tau)) \dd\mu(\tau)$ comes from a transport of structure by $\mathcal{F}: \Schw_2 \rightiso \Schw_1$ which extends to $L^2(X_2^+) \rightiso L^2(X_1^+)$, we arrive at the commutative diagram
	\begin{equation}\label{eqn:F-transport} \begin{tikzcd}
		\mathcal{N}^{(1)}_\pi & \mathcal{N}^{(2)}_\pi \\
		\Hom_{G(F)}(\Schw_1, \tau) \arrow{r}[above]{\mathcal{F}^*}[below]{\sim} \arrow{u} & \Hom_{G(F)}(\Schw_2, \tau) \arrow{u} \\
		\mathcal{M}^{(1),\vee}_\tau \arrow{r}[below]{\eta(\tau)^\vee}[above]{\sim} \arrow[hookrightarrow]{u} \arrow[hookrightarrow, bend left=90]{uu} &  \mathcal{M}^{(2),\vee}_\tau  \arrow[hookrightarrow]{u} \arrow[hookrightarrow, bend right=90]{uu}
	\end{tikzcd}\end{equation}
	for $\mu$-almost all $\tau$. Here $\mathcal{F}^*$ stands for pull-back by $\mathcal{F}$.
	
	Denote by $\mathcal{F}^\vee_*$ the push-forward via $\mathcal{F}^\vee: \Schw_1^\vee \rightiso \Schw_2^\vee$. The diagram
	\begin{equation}\begin{tikzcd}\label{eqn:F-Fvee}
		\Hom_{G(F)}(\Schw_1, \tau) \arrow{r}[above]{\mathcal{F}^*} \arrow[hookrightarrow]{d} & \Hom_{G(F)}(\Schw_2, \tau) \arrow[hookrightarrow]{d} \\
		\Hom_{G(F)}(\pi, \Schw_1^\vee) \arrow{r}[below]{\mathcal{F}^\vee_*} &  \Hom_{G(F)}(\pi, \Schw_2^\vee)
	\end{tikzcd}\end{equation}
	is easily seen to commute, the injectivity being assured by Lemma \ref{prop:vee-inj}.
	\item In view of \eqref{eqn:F-transport} and \eqref{eqn:F-Fvee}, our goal may be rephrased as:
		\begin{center}
			See the effect of $\mathcal{F}^\vee_*$ inside $\mathcal{N}^{(i)}_\pi$ using zeta integrals.
		\end{center}
		This is exactly the content of Hypothesis \ref{hyp:zeta-0}. Granting the local functional equation (Definition \ref{def:lfe}), $\eta(\tau)^\vee$ can be obtained by
		\begin{itemize}
			\item first restrict $\mathcal{F}^\vee_*: \Hom_{G(F)}(\pi_\lambda, \Schw_1^\vee) \to \Hom_{G(F)}(\pi_\lambda, \Schw_2^\vee)$ (for $\lambda \in \Lambda_{2,\CC}$) to $\gamma(\pi): \mathcal{N}^{(1)}_\pi \otimes \mathcal{K}_1^\flat \to \mathcal{N}^{(2)}_\pi \otimes \mathcal{K}_2$ using the injections of Lemma \ref{prop:uniqueness-embedding};
			\item secondly, restrict the domain of $\gamma(\pi)$ to $\mathcal{M}^{(1),\vee}_\tau = \mathcal{M}^{(1),\vee}_\tau \otimes 1$, thereby obtaining a meromorphic family in $\lambda \in \Lambda_{2,\CC}$ of linear maps $\mathcal{M}^{(1),\vee}_\tau \to \mathcal{N}^{(2)}_\pi$;
			\item the Hypothesis \ref{hyp:zeta-0} now implies that $\gamma(\pi)$ restricted to $\mathcal{M}^{(1),\vee}_\tau$ and evaluated at $\lambda=0$ gives $\eta(\tau)^\vee$.
		\end{itemize}
	\item Summing up, $\gamma(\pi)$ disintegrates $\mathcal{F}: L^2(X^+_2) \rightiso L^2(X^+_1)$ under Hypothesis \ref{hyp:zeta-0} and the local functional equation. Note that the evaluation at $\lambda=0$ is independent of the choice of relative invariants, by Lemma \ref{prop:change-rel-inv}.
\end{enumerate}

\begin{corollary}
	The model transition $\mathcal{F}$ is determined by its $\gamma$-factors.
\end{corollary}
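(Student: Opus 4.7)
The plan is to prove injectivity of the map $\mathcal{F} \mapsto (\gamma(\pi))_\pi$: if $\mathcal{F}, \mathcal{F}': \Schw_2 \rightiso \Schw_1$ are two \Transitions whose $\gamma$-factors coincide at every irreducible nice representation $\pi$ of $G(F)$, I must conclude $\mathcal{F} = \mathcal{F}'$. I will grant Hypothesis \ref{hyp:zeta-0} and the existence of the local functional equation throughout, as the statement is evidently intended to be read under those premises.

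First I would pass to the $L^2$-level. Since $C^\infty_c(X_2^+) \subset \Schw_2$ embeds continuously with dense image into $L^2(X_2^+)$, and both $\mathcal{F}, \mathcal{F}'$ extend by continuity to unitary isomorphisms $L^2(X_2^+) \rightiso L^2(X_1^+)$ by the very definition of a \Transition, it suffices to show that those $L^2$-extensions agree. Fix a common Plancherel measure $\mu$ on $\Pi_{\mathrm{unit}}(G(F))$ together with representatives for the classes, so that the multiplicity spaces $\mathcal{M}^{(i)}_\tau$ are pinned down up to isometry. Corollary \ref{prop:disintegration-F} then furnishes measurable families of isometries $\eta(\tau), \eta'(\tau): \mathcal{M}^{(2)}_\tau \rightiso \mathcal{M}^{(1)}_\tau$ with
\[
	\mathcal{F} = \int^\oplus (\identity_\tau \otimes \eta(\tau))\, \dd\mu(\tau), \qquad \mathcal{F}' = \int^\oplus (\identity_\tau \otimes \eta'(\tau))\, \dd\mu(\tau),
\]
reducing the problem to the pointwise equality $\eta(\tau)^\vee = \eta'(\tau)^\vee$ for $\mu$-almost every $\tau$.

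The second step is to reinvoke the dictionary set up in the bulleted discussion preceding the statement. Setting $\pi := \check{\tau}^\infty$, the commutative diagrams \eqref{eqn:F-transport} and \eqref{eqn:F-Fvee} exhibit $\eta(\tau)^\vee$ as the restriction to $\mathcal{M}_\tau^{(1),\vee}$ of the pullback $\mathcal{F}^*: \Hom_{G(F)}(\Schw_1, \tau) \rightiso \Hom_{G(F)}(\Schw_2, \tau)$, and the latter is a restriction of the adjoint $\mathcal{F}^\vee_*: \Hom_{G(F)}(\pi, \Schw_1^\vee) \to \Hom_{G(F)}(\pi, \Schw_2^\vee)$. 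By Hypothesis \ref{hyp:zeta-0}, any element of $\mathcal{M}_\tau^{(1),\vee} \subset \mathcal{N}_\pi^{(1)}$ has zeta integrals holomorphic at $\lambda = 0$, and its image under the zeta-distribution construction there agrees with the inclusion $\mathcal{M}_\tau^{(1),\vee} \hookrightarrow \Hom_{G(F)}(\pi, \Schw_1^\vee)$; the analogous statement holds on the $X_2$-side. Combined with Lemma \ref{prop:uniqueness-embedding}, which identifies $\gamma(\pi)$ as precisely the trace of $\mathcal{F}^\vee_*$ on the meromorphic families issuing from $\mathcal{N}_\pi^{(i)} \otimes \mathcal{K}_i$, this forces
\[
	\eta(\tau)^\vee = \mathrm{ev}_{\lambda=0} \circ \gamma(\pi)\big|_{\mathcal{M}_\tau^{(1),\vee} \otimes 1}
\]
for $\mu$-almost every $\tau$, and an identical formula for $\eta'(\tau)^\vee$ in terms of $\gamma'(\pi)$. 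The assumed equality $\gamma(\pi) = \gamma'(\pi)$ for all $\pi$ then gives $\eta(\tau)^\vee = \eta'(\tau)^\vee$ almost everywhere, hence $\mathcal{F} = \mathcal{F}'$.

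In this form the argument is essentially an \emph{assembly} of the preceding discussion: the substantive content (commutativity of \eqref{eqn:F-transport} and \eqref{eqn:F-Fvee}, the injectivity of Lemmas \ref{prop:vee-inj} and \ref{prop:uniqueness-embedding}, and the independence of $\gamma(\pi,0)$ from the choice of relative invariants in Lemma \ref{prop:change-rel-inv}) has already been worked out. The only genuinely nontrivial ingredient is Hypothesis \ref{hyp:zeta-0}, which the author signals as unverified beyond special cases; establishing it, rather than any step in the assembly above, is the real obstacle.
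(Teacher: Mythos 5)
Your argument is correct and is essentially the paper's own proof, made explicit: the paper simply remarks that $\mathcal{F}$ is determined by the associated isometry $L^2(X^+_2) \rightiso L^2(X^+_1)$, relying on the immediately preceding discussion (items 1--5 of \S\ref{sec:connection-L2}) which already establishes that, under Hypothesis \ref{hyp:zeta-0} and the local functional equation, the $\gamma$-factors evaluated at $\lambda=0$ recover the isometries $\eta(\tau)$ disintegrating that $L^2$-isomorphism. Your identification of Hypothesis \ref{hyp:zeta-0} as the genuinely unverified ingredient matches the paper's own caveat.
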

\begin{proof}
	Indeed, $\mathcal{F}$ is determined by the associated isometry $L^2(X^+_2) \rightiso L^2(X^+_1)$.
\MyQED\end{proof}

\begin{corollary}
	Under the conditions of Corollary \ref{prop:disintegration-mult-1}, the $\gamma$-factors $\gamma(\pi)$ satisfy $|\gamma(\pi,0)|=1$ for $\mu$-almost all $\tau$.
\end{corollary}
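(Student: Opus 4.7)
The plan is to simply combine the disintegration picture summarized in the last paragraphs of the section with the multiplicity-one hypothesis.

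First, under the assumption $\dim \mathcal{N}_\pi \leq 1$ for $\mu$-almost all $\tau$ (with $\pi := \check{\tau}^\infty$), Corollary \ref{prop:disintegration-mult-1} applied to both sides yields $\dim_\CC \mathcal{M}^{(1)}_\tau = \dim_\CC \mathcal{M}^{(2)}_\tau \in \{0,1\}$ for $\mu$-almost all $\tau$. We may discard the $\tau$ with vanishing multiplicities, since they do not contribute to $\mathcal{F}$ in either disintegration. By Corollary \ref{prop:disintegration-F}, $\mathcal{F}$ disintegrates into a measurable family of isometries $\eta(\tau): \mathcal{M}^{(2)}_\tau \rightiso \mathcal{M}^{(1)}_\tau$ between one-dimensional Hilbert spaces; in particular $\eta(\tau)^\vee: \mathcal{M}^{(1),\vee}_\tau \rightiso \mathcal{M}^{(2),\vee}_\tau$ is multiplication by a complex number of modulus $1$ (once linear identifications with $\CC$ are chosen compatibly).

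Next, for $\mu$-almost all $\tau$ the inclusions $\mathcal{M}^{(i),\vee}_\tau \hookrightarrow \mathcal{N}^{(i)}_\pi$ from \eqref{eqn:N-vs-S} are equalities of one-dimensional spaces ($i=1,2$). Granting Hypothesis \ref{hyp:zeta-0} and the existence of the local functional equation (Definition \ref{def:lfe}) with associated $\gamma$-factor $\gamma(\pi)$, the itemized summary preceding the statement shows that the restriction of $\gamma(\pi)$ to $\mathcal{M}^{(1),\vee}_\tau \otimes 1$, evaluated at $\lambda = 0$, coincides with $\eta(\tau)^\vee$. Since $\mathcal{M}^{(1),\vee}_\tau = \mathcal{N}^{(1)}_\pi$ and $\mathcal{M}^{(2),\vee}_\tau = \mathcal{N}^{(2)}_\pi$, the evaluation $\gamma(\pi,0): \mathcal{N}^{(1)}_\pi \to \mathcal{N}^{(2)}_\pi$ is nothing but $\eta(\tau)^\vee$; hence $|\gamma(\pi,0)|=1$ for $\mu$-almost all $\tau$.

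The only real content beyond bookkeeping is the holomorphy-at-zero statement underlying Hypothesis \ref{hyp:zeta-0} and the commutativity of diagram \eqref{eqn:F-transport}; both are part of the standing assumptions here. By Lemma \ref{prop:change-rel-inv}, the value $\gamma(\pi,0)$ is independent of the choice of relative invariants, which is consistent with the intrinsic nature of $|\eta(\tau)^\vee|$.
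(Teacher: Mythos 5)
Your argument is correct and follows the same route the paper intends: the corollary is an immediate consequence of the preceding discussion, namely that under Hypothesis \ref{hyp:zeta-0} and the local functional equation the restriction of $\gamma(\pi)$ to $\mathcal{M}^{(1),\vee}_\tau$ evaluated at $\lambda=0$ equals $\eta(\tau)^\vee$, which by Corollary \ref{prop:disintegration-mult-1} is an isometry between one-dimensional spaces and hence a unimodular scalar. Your bookkeeping about discarding zero-multiplicity $\tau$ and identifying $\mathcal{M}^{(i),\vee}_\tau$ with $\mathcal{N}^{(i)}_\pi$ is exactly what is needed and consistent with the paper.
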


In the rest of this section, we shall give some heuristics in support of Hypothesis \ref{hyp:zeta-0}. Retain the notations thereof, and recall that
\[ \Schw_\lambda := |f|^\lambda \Schw \hookrightarrow L^2(X^+), \quad \Re(\lambda) \relgeq{X} 0. \]
Alternatively, we may work with the family of non-equivariant embeddings $\alpha_\lambda: \Schw \to L^2(X^+)$ given by $\alpha_\lambda(\xi) = |f|^\lambda \xi$.

Since $\Schw_\lambda$ is still nuclear, the foregoing constructions carry over. Write $\tau_{-\lambda} := \tau \otimes |\omega|^{-\lambda}$ for $\tau \in \Pi_\text{unit}(G(F))$; this representation is still realized on a Hilbert space, albeit non-unitary in general. The arguments below apply to $\mu$-almost all $\tau$ as usual. It is routine to check the commutativity of the diagram below (except the dashed arrow):
\begin{equation}\label{eqn:dashed-diagram} \begin{tikzcd}[row sep=large]
	& \mathcal{M}_\tau^\vee \arrow[hookrightarrow]{d} \arrow[hookrightarrow]{rd} & \\
	\Hom_{G(F)}(\pi, (\Schw_\lambda)^\vee) \arrow{d}[left]{\simeq} & \Hom_{G(F)}(\Schw_\lambda, \tau) \arrow{r} \arrow[hookrightarrow]{l} & \mathcal{N}_\pi \arrow[dashed]{lld} \arrow{d}[left]{\simeq} \\
	\Hom_{G(F)}(\pi_\lambda, \Schw^\vee) & \Hom_{G(F)}(\Schw, \tau_{-\lambda}) \arrow[hookrightarrow]{l} \arrow{r} \arrow[crossing over, leftarrow]{u}[right]{\simeq} & \mathcal{N}_{\pi_\lambda}
\end{tikzcd}\end{equation}
where the rightmost vertical arrow is $\varphi \mapsto \varphi_\lambda$, and the other vertical arrows are the obvious ones. The dashed arrow is given by zeta integrals whenever they are defined at $\lambda$.

In order to proceed, we impose two assumptions for $\mu$-almost all $\tau$.
\begin{itemize}
	\item The family of maps $\mathcal{M}^\vee_\tau \to \Hom_{G(F)}(\Schw, \tau_{-\lambda})$ can be re-encoded into elements $\beta_{\lambda,\tau} \in \Hom_{G(F)}(\Schw, \tau_{-\lambda} \otimes \mathcal{M}_\tau)$, as $\dim_{\CC} \mathcal{M}_\tau < \infty$. Note that $\tau_{-\lambda}$ and $\tau$ share the same underlying Hilbert space, so it makes sense to talk about holomorphy in the sense of Definition \ref{def:L2-holomorphy}. We assume
		\begin{equation}\label{eqn:L2-holomorphy}\begin{gathered}
			\text{The family }\; \mathcal{M}^\vee_\tau \to \Hom_{G(F)}(\Schw, \tau_{-\lambda}) \text{ is holomorphic in $\lambda$ when }\; \Re(\lambda) \relgeq{X} 0.
		\end{gathered}\end{equation}
		Morally, this should be a consequence of the holomorphy of $\alpha_\lambda: \Schw \to L^2(X^+)$ (or some strengthening thereof); the author is unable to give a rigorous argument thus far.
	\item The restriction map is assumed to satisfy
		\begin{equation}\label{eqn:generic-inj}\begin{gathered}
			\Hom_{G(F)}(\pi_\lambda, \Schw^\vee) \hookrightarrow \Hom_{G(F)}(\pi_\lambda, C^\infty_c(X^+)^\vee) \\
			\text{when } \lambda \in \text{ some open subset } \mathcal{U} \neq \emptyset, \; \Re(\lambda) \relgg{X} 0.
		\end{gathered}\end{equation}
		This is intuitively plausible, since highly ``$X$-positive'' twists tend to suppress contributions from $\partial X$. We will establish \eqref{eqn:generic-inj} in the prehomogeneous, non-Archimedean case in Theorem \ref{prop:pvs-T_pi-isom} under certain geometric assumptions.
\end{itemize}

In the diagram \eqref{eqn:dashed-diagram}, composition of $\mathcal{M}_\tau^\vee \hookrightarrow \mathcal{N}_\pi$ with the zeta integral $\dashrightarrow$ furnishes a meromorphic family $\theta_\lambda: \mathcal{M}^\vee_\tau \to \Hom_{G(F)}(\pi_\lambda, \Schw^\vee)$. We want to compare it with the holomorphic family $\theta'_\lambda: \mathcal{M}_\tau^\vee \hookrightarrow \Hom_{G(F)}(\pi_\lambda, \Schw^\vee)$ derived from $\mathcal{M}^\vee_\tau \hookrightarrow \Hom_{G(F)}(\Schw_\lambda, \tau)$, under the assumption \eqref{eqn:L2-holomorphy}.

Recall that $\mathcal{N}_{\pi_\lambda} \subset \Hom_{G(F)}(\pi_\lambda, C^\infty_c(X^+)^\vee)$. By Remark \ref{rem:zeta-distribution}, the image of $\theta_\lambda$ in $\Hom_{G(F)}(\pi_\lambda, C^\infty_c(X^+)^\vee)$ equals $\varphi_\lambda \in \mathcal{N}_{\pi_\lambda}$ when $\Re(\lambda) \relgg{X} 0$. An easy diagram-chasing shows that $\theta'_\lambda$ also maps to $\varphi_\lambda$ in $\Hom_{G(F)}(\pi_\lambda, C^\infty_c(X^+)^\vee)$. By assumption \eqref{eqn:generic-inj} we have $\theta_\lambda = \theta'_\lambda$ for $\lambda$ in some nonempty open subset. Hence $\theta_\lambda = \theta'_\lambda$ for all $\Re(\lambda) \relgeq{X} 0$ by meromorphy.


Consequently, $Z_{\lambda, \varphi}$ is well-defined for all $\varphi \in \mathcal{M}^\vee_\tau$ whenever $\Re(\lambda) \relgeq{X} 0$. Indeed, it coincides with the map $\mathcal{M}^\vee_\tau \to \Hom_{G(F)}(\Schw_\lambda, \tau)$ that defines $\alpha_\lambda$ pointwise. All in all:

\begin{proposition}
	The Hypothesis \ref{hyp:zeta-0} holds under the assumptions \eqref{eqn:L2-holomorphy}, \eqref{eqn:generic-inj}.
\end{proposition}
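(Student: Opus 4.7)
The plan is to follow almost verbatim the heuristic argument given just above the statement and turn it into a rigorous verification of both clauses of Hypothesis \ref{hyp:zeta-0}. First I would assemble, for $\mu$-almost every $\tau$, two candidate families indexed by $\lambda$ in $\mathcal{D} := \{\Re(\lambda)\relgeq{X}0\}$: on the one hand the family $\theta_\lambda: \mathcal{M}_\tau^\vee \to \Hom_{G(F)}(\pi_\lambda,\Schw^\vee)$ obtained by composing the inclusion $\mathcal{M}_\tau^\vee\hookrightarrow\mathcal{N}_\pi$ of the upper part of diagram \eqref{eqn:dashed-diagram} with the meromorphic zeta-distribution map $\varphi\mapsto \varphi_\lambda$ (recall Remark \ref{rem:zeta-distribution}); on the other hand the family $\theta'_\lambda$ obtained by composing $\mathcal{M}_\tau^\vee\hookrightarrow\Hom_{G(F)}(\Schw,\tau_{-\lambda})$ from \eqref{eqn:L2-holomorphy} with the canonical bijection $\Hom_{G(F)}(\Schw,\tau_{-\lambda})\simeq\Hom_{G(F)}(\pi_\lambda,\Schw^\vee)$ (the left and middle columns of \eqref{eqn:dashed-diagram}). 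By \eqref{eqn:L2-holomorphy} the family $\theta'_\lambda$ is holomorphic on all of $\mathcal{D}$, and in particular at $\lambda=0$.

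Next I would compare the two families by restricting to the smaller target $\Hom_{G(F)}(\pi_\lambda,C^\infty_c(X^+)^\vee)$, in which $\mathcal{N}_{\pi_\lambda}$ sits. For $\Re(\lambda)\relgg{X}0$ the bottom right triangle of \eqref{eqn:dashed-diagram} commutes: $\theta'_\lambda$ lands in $\mathcal{N}_{\pi_\lambda}$, and by diagram-chasing its image there is precisely the twisted coefficient $\varphi_\lambda$ attached to $\varphi\in\mathcal{M}_\tau^\vee\subset\mathcal{N}_\pi$. By Remark \ref{rem:zeta-distribution}, the restriction of $\theta_\lambda$ to $C^\infty_c(X^+)$ is also $\varphi_\lambda$ on the same range. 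Hence the two maps $\theta_\lambda,\theta'_\lambda:\mathcal{M}_\tau^\vee\to\Hom_{G(F)}(\pi_\lambda,\Schw^\vee)$ have the same image after restriction along $\Schw^\vee\to C^\infty_c(X^+)^\vee$ for $\Re(\lambda)\relgg{X}0$; invoking the generic injectivity assumption \eqref{eqn:generic-inj} on the nonempty open set $\mathcal{U}$, this forces $\theta_\lambda=\theta'_\lambda$ on $\mathcal{U}$.

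Then I would propagate this equality to all of $\Lambda_{\CC}$: since $\theta_\lambda$ is a meromorphic family (element of $\mathcal{L}_\pi$, by Axiom \ref{axiom:zeta}) and $\theta'_\lambda$ is holomorphic on $\mathcal{D}$, their difference is a meromorphic family vanishing on a nonempty open set, hence identically zero wherever both make sense. In particular $\theta_\lambda$ is holomorphic on $\mathcal{D}$, which establishes the first clause of Hypothesis \ref{hyp:zeta-0}: the zeta integrals $Z_{\lambda,\varphi}$ for $\varphi\in\mathcal{M}_\tau^\vee$ are holomorphic at $\lambda=0$. Specializing $\theta_0=\theta'_0$ and tracing the middle and rightmost columns of \eqref{eqn:dashed-diagram} at $\lambda=0$ (where the adjunction of Theorem \ref{prop:C-infty} identifies $\Hom_{G(F)}(\Schw,\tau)$ with a subspace of both $\Hom_{G(F)}(\pi,\Schw^\vee)$ and $\mathcal{N}_\pi$, compatibly with \eqref{eqn:N-vs-S}) yields the second clause: the element of $\Hom_{G(F)}(\pi,\Schw^\vee)$ obtained from $Z_{0,\varphi}$ is precisely the one coming from the chain $\mathcal{M}_\tau^\vee\hookrightarrow\Hom_{G(F)}(\Schw,\tau)\hookrightarrow\Hom_{G(F)}(\pi,\Schw^\vee)$ of Lemma \ref{prop:vee-inj} and \eqref{eqn:S-mult-space}.

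The main obstacle in this plan is purely bookkeeping rather than mathematical depth: one must be careful that the identifications $\Hom_{G(F)}(\Schw,\tau_{-\lambda})\simeq\Hom_{G(F)}(\pi_\lambda,\Schw^\vee)$ used to transport the $L^2$-holomorphy of \eqref{eqn:L2-holomorphy} into holomorphy of $\theta'_\lambda$ are the same adjunctions used to identify $\theta_0$ with the image of $\mathcal{M}_\tau^\vee$; this is essentially the commutativity of \eqref{eqn:dashed-diagram}, which is built into its construction but needs to be traced explicitly for $\lambda=0$. Beyond that, the argument is a standard ``two meromorphic families coinciding on an open set'' principle combined with the generic injectivity hypothesis.
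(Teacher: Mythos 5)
Your proposal is correct and follows essentially the same route as the paper: the paper's proof of this proposition is precisely the preceding discussion, namely the comparison of the meromorphic family $\theta_\lambda$ (from zeta distributions) with the holomorphic family $\theta'_\lambda$ (from \eqref{eqn:L2-holomorphy}) via their common restriction $\varphi_\lambda$ to $C^\infty_c(X^+)$, the generic injectivity \eqref{eqn:generic-inj} to force equality on a nonempty open set, and meromorphic continuation to conclude equality on $\Re(\lambda)\relgeq{X}0$ and hence both clauses of Hypothesis \ref{hyp:zeta-0} at $\lambda=0$. Your added care about matching the adjunctions in diagram \eqref{eqn:dashed-diagram} at $\lambda=0$ is exactly the "easy diagram-chasing" the paper invokes.
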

\chapter{Convergence of some zeta integrals}\label{sec:convergence}
We will need the Hypothesis \ref{hyp:eigendensity} on the existence of eigenmeasures. This is always met in practice.

\section{Cellular decompositions}\label{sec:cellular-decomp}
In this section we consider
\begin{itemize}
	\item a finite-dimensional vector space $V$,
	\item an affine space $\mathcal{A}$ under $V$,
	\item a strictly convex cone $\mathcal{F}^\text{max}$ in $V$.
\end{itemize}
Furthermore, these objects are assumed to be rational, i.e.\ defined over $\Q$, and the same assumption pertains to all the constructions below.

By an \emph{embedded polytopal complex} in $\mathcal{A}$, we mean a compact subset of $\mathcal{A}$ obtained by gluing a finite family of polytopes in $\mathcal{A}$ intersecting in faces. In a similar vein, one may defined \emph{embedded polyhedral complexes} as well. For a rigorous definition we refer to \cite[Definition 1.39]{BG09}. We will not distinguish the embedded complex and its support, until necessary.

We shall make use of the decomposition \cite[Proposition 1.28]{BG09}: let $\mathcal{E}$ be a polyhedron in $\mathcal{A}$ that contains no affine subspaces of $\mathcal{A}$. There is then a decomposition into a Minkowski sum
\begin{gather}\label{eqn:rec-bottom}
	\mathcal{E} = B + \mathcal{F}
\end{gather}
where
\begin{itemize}
	\item $\mathcal{F} := \rec(\mathcal{E}) \subset V$ is the \emph{recession cone}\index{recession-cone@$\text{rec}(\mathcal{E})$}\index{recession cone}, i.e.\ the cone $\{v \in V: \mathcal{E} + \R_{\geq 0}v \subset \mathcal{E} \}$ of ``directions towards infinity'', and
	\item $B$ is an embedded polytopal complex --- there is actually a canonical choice, namely taking $B$ to be the \emph{bottom} of $\mathcal{E}$ (i.e.\ the union its bounded faces).
\end{itemize}
Such decompositions behave well when polyhedra are glued: if $\mathcal{E}$ is a face of $\mathcal{E}'$, then $\mathcal{F} \subset \mathcal{F}'$ and $B \subset B'$ by their definitions. Moreover, in any decomposition of the form \eqref{eqn:rec-bottom} we have $\mathcal{F} = \rec(\mathcal{E})$. Also notice that $\rec(\mathcal{E})$ can be described from presentations of $\mathcal{E}$:
\[ \mathcal{E} = \bigcap_{\alpha \in A} \{\alpha \geq 0\} \implies \rec(\mathcal{E}) = \bigcap_{\alpha \in A} \{\vec{\alpha} \geq 0\}, \]
where $A$ is a finite set of affine forms on $\mathcal{A}$ and $\vec{\alpha} \in V^\vee$ stands for the vectorial part of $\alpha$; see \cite[1.C]{BG09}. Let $\mathcal{F}$ be a face of $\mathcal{F}^\text{max}$ and $B \subset \mathcal{A}$ be a polytope. Suppose that we are given a predicate ``$B$ being \emph{$\mathcal{F}$-deep}'', subject to the conditions
\begin{enumerate}[(i)]
	\item if $B$ is $\mathcal{F}$-deep and $w \in \mathcal{F}$, then $B+w$ is also $\mathcal{F}$-deep;
	\item for every $B$ and $\mathcal{F}$, there exists $v \in \mathcal{F}$ such that $B+v$ is $\mathcal{F}$-deep;
	\item any $B$ is $\{0\}$-deep.
\end{enumerate}

It follows that being $\mathcal{F}$-deep can be viewed as a property of $B + \mathcal{F}$, which turns out to be an embedded polyhedral complex in $\mathcal{A}$: one reduces to the case where $B$ is a polytope, and use the fact \cite[Theorem 1.30]{BG09} that the Minkowski sum of polyhedra is still a polyhedron.

\begin{proposition}\label{prop:cellular-decomp}\index{cellular decomposition}
	Given a polyhedron $\mathcal{E}^\mathrm{max}$ in $\mathcal{A}$ containing no affine subspaces, such that $\mathrm{rec}(\mathcal{E}^\mathrm{max}) = \mathcal{F}^\mathrm{max}$, there exists a family of embedded polytopal complexes $B_{\mathcal{F}} \subset \mathcal{A}$ indexed by certain faces $\mathcal{F}$ of $\mathcal{F}^\mathrm{max}$, which verifies
	\begin{itemize}
		\item $B_{\mathcal{F}}$ is $\mathcal{F}$-deep for every $\mathcal{F}$,
		\item the following decomposition holds in $\mathcal{A}$:
			\[ \mathcal{E}^{\mathrm{max}} = \bigcup_{\mathcal{F}} \left( B_{\mathcal{F}} + \mathcal{F} \right); \]
		\item the subsets $B_{\mathcal{F}} + \mathcal{F}$ are embedded polyhedral complexes in $\mathcal{A}$, and they intersect only in faces.
	\end{itemize}
\end{proposition}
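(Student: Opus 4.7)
The plan is to produce the $B_{\mathcal{F}}$ by cutting $\mathcal{E}^{\mathrm{max}}$ with a hyperplane arrangement and grouping the resulting cells by their recession cone. Write $\mathcal{E}^{\mathrm{max}} = \bigcap_{i=1}^{s} \{\beta_i \ge 0\}$ as an irredundant intersection of rational affine half-spaces, so that the vectorial parts $\alpha_i := \vec\beta_i$ give the minimal facet inequalities of $\mathcal{F}^{\mathrm{max}}$. For a rational parameter $N > 0$ and each $I \subseteq \{1,\ldots,s\}$, set
\[
  D_I := \{x \in \mathcal{E}^{\mathrm{max}} : \beta_i(x) \le N \text{ for } i \in I,\; \beta_j(x) \ge N \text{ for } j \notin I\}.
\]
The nonempty $D_I$ form a finite polyhedral complex subdividing $\mathcal{E}^{\mathrm{max}}$ (meeting in common faces), as is standard for the hyperplane arrangement $\{\beta_i = N\}_i$ cut against $\mathcal{E}^{\mathrm{max}}$.

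A direct computation of recession cones yields $\rec(D_I) = \{v \in \mathcal{F}^{\mathrm{max}} : \alpha_i(v) = 0,\, i \in I\} =: \mathcal{F}_I$, always a face of $\mathcal{F}^{\mathrm{max}}$; applying \eqref{eqn:rec-bottom} to each $D_I$ then gives $D_I = B(D_I) + \mathcal{F}_I$ with $B(D_I)$ its bottom (an embedded polytopal complex). For every face $\mathcal{F}$ of $\mathcal{F}^{\mathrm{max}}$ arising as some $\mathcal{F}_I$, set $B_{\mathcal{F}} := \bigcup_{I : \mathcal{F}_I = \mathcal{F}} B(D_I)$, so that $B_{\mathcal{F}} + \mathcal{F} = \bigcup_{I : \mathcal{F}_I = \mathcal{F}} D_I$. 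Summing over $\mathcal{F}$ recovers $\mathcal{E}^{\mathrm{max}}$, and the face-intersection property between $B_{\mathcal{F}} + \mathcal{F}$ and $B_{\mathcal{F}'} + \mathcal{F}'$ for $\mathcal{F} \ne \mathcal{F}'$ is inherited from the underlying polyhedral complex.

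It remains to arrange $\mathcal{F}$-deepness of each $B_{\mathcal{F}}$ by taking $N$ sufficiently large. Only finitely many faces $\mathcal{F}$ appear, each $B_{\mathcal{F}}$ has finitely many polytope cells, and by condition (ii) there is a single $v_{\mathcal{F}} \in \mathcal{F}$ making $B + v_{\mathcal{F}}$ deep for every cell $B$ of the bottom $B(\mathcal{E}^{\mathrm{max}})$. The geometric point is that as $N$ grows, every cell of $B_{\mathcal{F}}$ is a translate (in the $\mathcal{F}$-direction) of a polytope whose shape stabilizes and which essentially agrees with a cell of $B(\mathcal{E}^{\mathrm{max}})$; the translation vector dominates $v_{\mathcal{F}}$ modulo $\mathcal{F}$ once $N$ is large enough. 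Condition (i) then propagates deepness from $B + v_{\mathcal{F}}$ to every cell of $B_{\mathcal{F}}$. The main obstacle will be making this final propagation step fully rigorous, which amounts to elementary but delicate bookkeeping about how the bottom of a polyhedron translates into its recession cone under increasing hyperplane cuts, all of which is controlled by the rational structure of the $\beta_i$.
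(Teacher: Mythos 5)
Your construction of the subdivision itself is sound: the nonempty cells $D_I$ do form a polyhedral complex subdividing $\mathcal{E}^{\mathrm{max}}$, their recession cones are faces of $\mathcal{F}^{\mathrm{max}}$ exactly as you compute, and grouping cells by recession cone delivers the second and third bullets. The genuine gap is precisely where you flag it, and it is not mere bookkeeping: the assertion that every cell of $B_{\mathcal{F}}$ is a translate of a polytope ``whose shape stabilizes'' is false. Take $\mathcal{A}=\R^2$ and $\mathcal{E}^{\mathrm{max}}=\mathcal{F}^{\mathrm{max}}$ the first quadrant, with $\beta_1=x$, $\beta_2=y$. For $\mathcal{F}$ the nonnegative $y$-axis the only contributing cell is $D_{\{1\}}=[0,N]\times[N,\infty)$, whose bottom is the segment $[0,N]\times\{N\}$: a polytope of diameter $N$, translated by $(0,N)$. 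The underlying polytope grows with $N$, and axiom (ii) only supplies a deep translate $v$ for each \emph{fixed} polytope, with no uniformity over a family of polytopes of unbounded size. Concretely, the predicate ``$B$ is $\mathcal{F}$-deep iff $\min_{(x,y)\in B} y \geq \bigl(\max_{(x,y)\in B}x-\min_{(x,y)\in B}x\bigr)^2+1$'' satisfies the three axioms, yet $[0,N]\times\{N\}$ is never $\mathcal{F}$-deep for $N\geq 1$. So no single global cutting level $N$ can work, however large.

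The paper's proof avoids this by inducting on $\dim\mathcal{F}^{\mathrm{max}}$ and choosing the translation vectors \emph{adaptively}: first pick $v^{\mathrm{max}}\in\relint(\mathcal{F}^{\mathrm{max}})$ making $B^{\mathrm{max}}+v^{\mathrm{max}}$ deep (axiom (ii) applied to the one already-fixed complex $B^{\mathrm{max}}$), then decompose $\mathcal{E}^{\mathrm{max}}\smallsetminus\relint(\mathcal{E}^{\mathrm{max}}+v^{\mathrm{max}})$ into polyhedra whose recession cones are proper faces of $\mathcal{F}^{\mathrm{max}}$, and only then, with those polyhedra and hence their bottoms determined, invoke (ii) again at the next stage of the induction. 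You could rescue your approach in the same spirit by choosing cut levels sequentially, one recession-cone dimension at a time, each level depending on the cells produced by the previous choices; but that is essentially the paper's induction in different notation rather than a repair of the single-parameter argument.
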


A typical situation is depicted below (cf.\ Example \ref{eg:typical-cone}): here we take $\mathcal{E}^\text{max} = \{\text{pt}\} + \mathcal{F}^\text{max}$.
\begin{center}\begin{tikzpicture}[baseline]
	\fill[fill=blue!10!white] (-3,3) -- (0,0) -- (0,3) -- (-3,3);
	\draw (-3,3) -- (0,0) -- (0,3);
	\node at (-0.7,0) {pt};
	\node at (-1,2) [above] {$\mathcal{F}^\text{max}$};
	\end{tikzpicture} \qquad \begin{tikzpicture}
	\filldraw[fill=gray!60!white] (-3,3) -- (0,0) -- (0,3);
	\filldraw[pattern=horizontal lines light gray] (-2,2) -- (-3,3) -- (-2,3) -- (-1.5,2.5) -- (-2,2);  \draw[ultra thick] (-1.5,2.5) -- (-2,2);
	\filldraw[pattern=horizontal lines light gray] (-0.7,2) -- (0,2) -- (0,3) -- (-0.7,3) -- (-0.7,2);  \draw[ultra thick] (-0.7,2) -- (0,2);
	\filldraw[pattern=horizontal lines light blue] (-0.7,1.7) -- (-0.7,3) -- (-2,3) -- (-0.7,1.7);		\filldraw[black] (-0.7,1.7) circle (1.5pt);
	\draw[white, thick] (-3,3) -- (0,3);
\end{tikzpicture}\end{center}
In this case, the polytopal complexes $B_{\mathcal{F}}$ are
the two segments \begin{tikzpicture}[baseline=-3pt] \draw[ultra thick] (-1,0) -- (0,0); \end{tikzpicture},
the dot \begin{tikzpicture}[baseline=-3pt] \filldraw[black] (0,0) circle (1.5pt); \end{tikzpicture} and
the V-shaped area \begin{tikzpicture} \filldraw[fill=gray!60!white] (0,0) rectangle (0.5,0.5); \end{tikzpicture}.

\begin{proof}
	We shall begin with a decomposition
	\[ \mathcal{E}^\mathrm{max} = B^\mathrm{max} + \mathcal{F}^\mathrm{max} \]
	of type \eqref{eqn:rec-bottom}, and argue by induction on $\dim \mathcal{F}^\text{max}$.
	
	If $\mathcal{F}^\text{max} = \{0\}$, then $B_{\mathcal{F}^\text{max}} := B^\text{max}$ works well. In dimension $\geq 1$, take $v^\text{max} \in \relint(\mathcal{F}^\text{max})$ such that $B^\text{max} + v^\text{max}$ is $\mathcal{F}^\text{max}$-deep. There exists a decomposition
	\[ \mathcal{E}^\text{max} \smallsetminus \relint\left( \mathcal{E}^\text{max} + v^\text{max} \right) = \bigcup_{\mathcal{E}: \text{polyhedra}} \mathcal{E} \]
	as an embedded polyhedral complex in $\mathcal{A}$. Indeed, the presentations of polyhedra may be written as
	\begin{align*}
		\mathcal{E}^\text{max} & = \bigcap_{\alpha \in A} \{ \alpha \geq 0 \}, \\
		\relint\left( \mathcal{E}^\text{max} + v^\text{max} \right) & = \bigcap_{\alpha \in A} \{ \alpha > c_\alpha \}
	\end{align*}
	for some finite set $A$ of affine forms on $\mathcal{A}$, where $c_\alpha = \vec{\alpha}(v^\text{max})$. Assume that the presentation above for $\mathcal{E}^\text{max}$ is irredundant. It is then routine to decompose the difference set into polyhedra $\mathcal{E}$: in fact, they are defined by affine forms with vectorial parts equal to $\vec{\alpha}$, for various $\alpha \in A$.

	The construction above also entails that for each $\mathcal{E}$,
	\begin{itemize}
		\item $\mathcal{E}$ contains no affine subspace of $\mathcal{A}$;
		\item the recession cones $\mathcal{F} := \rec(\mathcal{E})$ are faces of $\mathcal{F}^\text{max}$;
		\item furthermore, $\dim \mathcal{F} < \dim \mathcal{F}^\text{max}$.
	\end{itemize}
	The second assertion above follows from the description of recession cones via presentations. Let us justify the last assertion. Assume on the contrary that $\mathcal{F}$ intersects $\relint(\mathcal{F}^\text{max})$, then for every $w \in \mathcal{E}$, there exists $v \in \mathcal{F}$ which is so deep in $\mathcal{F}$ that
	\[ w - v^\text{max} + v \in \relint\left( \mathcal{E}^\text{max} \right), \]
	equivalently,
	\[ w + v \in \mathcal{E} \cap \relint(\mathcal{E}^\text{max} + v^\text{max}) \]
	which leads to contradiction.

	All in all,
	\[ \mathcal{E}^\text{max} = \left( \bigcup {\mathcal{E}} \right) \cup \underbracket{ \left( \mathcal{E}^\text{max} + v^\text{max} \right)}_{\mathcal{F}^\text{max} \text{-deep}}. \]
	This is a decomposition as an embedded polyhedral complex. Since $\mathcal{F}$ is a proper face of $\mathcal{F}^\text{max}$, by induction hypothesis each $\mathcal{E}$ can be decomposed into the form $\bigcup_{\mathcal{F}' \subset \mathcal{F}} (B_{\mathcal{F}'} + \mathcal{F}')$ where $B_{\mathcal{F}'}$ is $\mathcal{F}'$-deep. The proof can be completed by collecting terms according to $\mathcal{F}'$ and gluing.
\MyQED\end{proof}

We take this opportunity to record an obvious result here.
\begin{proposition}\label{prop:face-trivial}
	Let $P \subset Q$ be polyhedra in an affine space $\mathcal{A}$. There exists a unique face $\mathcal{F}$ of $Q$ such that $\relint(P) \subset \relint(\mathcal{F})$.
\end{proposition}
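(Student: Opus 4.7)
The plan is to exploit the classical fact that $Q$ is the disjoint union of the relative interiors of its faces, so each point of $Q$ lies in $\relint(\mathcal{F})$ for a unique face $\mathcal{F}$. First I would pick any base point $x_0 \in \relint(P)$ and let $\mathcal{F}$ be the unique face of $Q$ with $x_0 \in \relint(\mathcal{F})$; this will be the required face, and \emph{uniqueness} is automatic: if some $\mathcal{F}'$ also satisfied $\relint(P) \subset \relint(\mathcal{F}')$, then $x_0$ would lie in $\relint(\mathcal{F}) \cap \relint(\mathcal{F}')$, forcing $\mathcal{F} = \mathcal{F}'$ by disjointness of relative interiors of distinct faces.

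For \emph{existence}, I must show $\relint(P) \subset \relint(\mathcal{F})$. Given $y \in \relint(P)$, I would first use that $\mathcal{F}$ is a face of $Q$ in the sense of the paper's definition (i.e.\ cut out by a supporting affine hyperplane $H = \{\alpha = 0\}$ with $Q \subset \{\alpha \geq 0\}$), which immediately implies the extremality property: any point of $\mathcal{F}$ that lies in the open interior of a segment whose endpoints are in $Q$ forces both endpoints to lie in $\mathcal{F}$. Since $x_0 \in \relint(P)$, the segment from $y$ through $x_0$ extends slightly past $x_0$ inside $P$, yielding $y' \in P \subset Q$ with $x_0 = \lambda y + (1-\lambda) y'$ for some $\lambda \in (0,1)$. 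Extremality then gives $y \in \mathcal{F}$.

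To promote $y \in \mathcal{F}$ to $y \in \relint(\mathcal{F})$, I would apply the same trick on the other side: since $y \in \relint(P)$, there exists $y'' \in P$ with $y$ in the open segment from $x_0$ to $y''$. Applying the previous step to $y''$ (with the role of $y$) yields $y'' \in \mathcal{F}$. Now $y$ lies on the open segment from $x_0 \in \relint(\mathcal{F})$ to $y'' \in \mathcal{F}$, and the standard ``accessibility'' lemma for convex sets (namely, $\relint(C)$ is stable under convex combinations with any point of $\overline{C}$, with positive weight on the interior point) gives $y \in \relint(\mathcal{F})$, as desired.

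No step looks genuinely hard; the only minor subtlety is to apply the segment principle in two stages --- first extremality of $\mathcal{F}$ to land $y$ in $\mathcal{F}$, then the accessibility of $\relint(\mathcal{F})$ to promote it --- rather than trying to get $y \in \relint(\mathcal{F})$ in one shot. No calculation or Luna--Vust-level machinery is required.
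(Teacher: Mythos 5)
Your proof is correct, but it takes a genuinely different route from the paper's. The paper argues by induction on $\dim \mathcal{A}$: after disposing of the case $\relint(P) \subset \relint(Q)$ (where $\mathcal{F}=Q$ works), it uses $\relint(P) \subset \partial Q = \bigcup_H (Q\cap H)$, notes that a convex set covered by finitely many affine hyperplanes must lie in a single one (so $P \subset Q \cap H$ for one supporting hyperplane $H$), and then recurses inside the facet $Q\cap H$. You instead reduce the whole statement to its single-point case: you invoke the classical partition of a polyhedron into the relative interiors of its faces to find the face $\mathcal{F}$ with $x_0 \in \relint(\mathcal{F})$ for a chosen $x_0 \in \relint(P)$, and then propagate from $x_0$ to all of $P$ using the extremality of exposed faces (immediate from the supporting-hyperplane definition) together with the prolongation and accessibility lemmas for relative interiors of convex sets. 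Your argument is shorter and avoids the induction, but it front-loads the essential content into the cited partition theorem --- observe that the proposition for $P=\{x_0\}$ a single point \emph{is} that theorem --- whereas the paper's induction is self-contained modulo the weaker covering statement, recorded in its conventions, that the proper faces of $Q$ cover $\partial Q$. Both proofs settle uniqueness the same way, by disjointness of relative interiors of distinct faces, and both implicitly admit $Q$ as an improper face of itself, which is needed exactly when $\relint(P) \subset \relint(Q)$.
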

\begin{proof}
	Establish the existence first. We may assume $\relint(P) \not\subset \relint(Q)$. We contend that $\relint(P) \subset \partial Q$. Indeed, if their relative interiors intersect, then $P$ and $Q$ have the same affine closure in $\mathcal{A}$. Since the relative interior equals the topological interior taken inside the affine closure, this would lead to $\relint(P) \subset \relint(Q)$ which is contradictory. Thus we have
	\[ \relint(P) \subset \partial Q = \bigcup_{H: \text{hyperplanes}} Q \cap H \quad \text{(finite union)}. \]
	The affine closure of $P$ can thus be covered by finitely many hyperplanes $H \subset \mathcal{A}$, from which we see $P \subset Q \cap H$ for some $H$. One can now proceed by induction on $\dim \mathcal{A}$.
	
	To show the uniqueness, suppose that $\mathcal{F}$, $\mathcal{F}'$ both satisfy our requirement. Then $\relint(\mathcal{F}) \cap \relint(\mathcal{F}') \neq \emptyset$, hence $\mathcal{F} = \mathcal{F}'$ since they are faces of $Q$.
\MyQED\end{proof}

\section{Smooth asymptotics}
Let $F$ be a non-Archimedean local field of characteristic zero. Consider a spherical homogeneous $G$-space $X^+$, with open $B$-orbit $\mathring{X}$, etc. Keep the notations from \S\ref{sec:geometric-bg}.

We begin by reviewing the \emph{exponential maps} constructed in \cite[\S 4.3]{SV17}. Let
\begin{itemize}
	\item $X^+ \hookrightarrow \bar{X}$ be a smooth complete toroidal compactification,
	\item $\Theta \subset \Delta_{X^+}$ corresponding to a $G$-orbit closure $Z$ in $\bar{X}$,
	\item $J \subset G(F)$ be an open compact subgroup.
\end{itemize}
In \textit{loc.\ cit.}, a canonical \emph{germ} of $F$-analytic morphisms from some open neighborhood of $Z(F)/J$ in $(N_Z \bar{X})(F)/J$ to $\bar{X}(F)/J$ is constructed, which is denoted by $\SVexp_{\Theta,J}$\index{exp@$\SVexp_\Theta$}. By a germ we mean an equivalence class of pairs $(U_Z, \theta)$, where
\begin{compactitem}
	\item $U_Z$ is an open neighborhood of $Z(F)/J$ in $(N_Z \bar{X})(F)/J$,
	\item $\theta: U_Z \to \bar{X}(F)/J$ is $F$-analytic,
	\item $(U_Z, \theta) \sim (U'_Z, \theta')$ if $\theta$ agrees with $\theta'$ over $U_Z \cap U'_Z$.
\end{compactitem}
The germ $\exp_{\Theta, J}$ is eventually equivariant: here ``equivariant'' refers to the the Hecke algebra $\mathcal{H}(G(F) \sslash J)$, and we say ``eventually'' because equalities are taken in the sense of germs. As usual, the construction of representatives of $\SVexp_{\Theta,J}$ boils down to the case of toric varieties via the Local Structure Theorem in Remark \ref{rem:local-structure-thm}; in particular, $\SVexp_{\Theta,J}$ is compatible with \eqref{eqn:iden-B-orbits}; cf.\ the proof of \cite[Lemma 4.3.2]{SV17}.

In \textit{loc.\ cit.} one prefers to work in $X_\Theta$ instead of $N_Z \bar{X}$, so that the open neighborhoods $U_Z$ are replaced by open neighborhoods $N_\Theta$ of the ``$\Theta$-infinity'' $\infty_\Theta$ inside $X_\Theta(F)/J$\index{infty-Theta@$\infty_\Theta$}. Specifically, we define $\infty_\Theta := \overline{\bigcup_{Z \leftrightarrow \Theta} Z} \subset \bar{X}$; a neighborhood of $\infty_\Theta$ signifies the intersection with $X^+(F)$ of a neighborhood of $\infty_\Theta(F)$ in $\bar{X}(F)$. This is justified as different $Z$ yields isomorphic $X_\Theta$, and it is shown in \cite[Proposition 4.3.3]{SV17} that $\exp_{\Theta,J}$ is independent of the choice of $\bar{X}$ and $Z$. We shall make free use of this formalism. To work with arbitrarily small $J$, one may take $\varprojlim_J$ and obtains a distinguished projective system of germs denoted by $\SVexp_\Theta$.

The next step is to discuss the smooth asymptotics map of \cite[\S 5]{SV17}. We begin by studying the effect of $\SVexp_\Theta$ on densities (Definition \ref{def:density}). Define the sheaves $\Omega_{X_\Theta/F}^\text{max}$, $\Omega_{X^+/F}^\text{max}$ of differential forms of top degree relative to $\Spec(F)$, in the usual manner.
\begin{proposition}
	Let $\omega$ be a rational section of $\Omega_{X^+/F}^\mathrm{max}$. There is a canonical way to associate a rational section $\tilde{\omega}$ of $\Omega_{X_\Theta/F}^\mathrm{max}$ such that
	\begin{itemize}
		\item if $\omega$ is regular everywhere, then so is $\tilde{\omega}$;
		\item if $\omega$ is a $G$-eigenform, then so is $\tilde{\omega}$ with the same eigencharacter;
		\item the recipe is compatible with \eqref{eqn:iden-B-orbits}, in particular $\omega \neq 0 \implies \tilde{\omega} \neq 0$;
		\item $\tilde{\omega}$ is an $A_\Theta$-eigenform for the action described in \S\ref{sec:boundary}.
	\end{itemize}
\end{proposition}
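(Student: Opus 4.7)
The plan is to construct $\tilde\omega$ by exploiting the common open $B$-orbit of $X^+$ and $X_\Theta$. Via \eqref{eqn:iden-B-orbits}, the identification $\mathring X \simeq \mathring X_\Theta$ of $B$-varieties induces an isomorphism of function fields $F(X^+) = F(\mathring X) \simeq F(\mathring X_\Theta) = F(X_\Theta)$ and hence an isomorphism of the respective spaces of rational top forms. I would define $\tilde\omega$ as the rational section of $\Omega_{X_\Theta/F}^{\mathrm{max}}$ corresponding to $\omega$ under this isomorphism. The ambiguity in \eqref{eqn:iden-B-orbits} recorded in Remark \ref{rem:iden-B-orbits}, namely a $B$-automorphism $(y,u) \mapsto (y, y^{-1}vyu)$ of $A_{X^+} \times U_{P(X)}$ for some fixed $v$, is at each $y$ a mere translation in $U_{P(X)}$; it therefore preserves $\eta_A \wedge \eta_U$, where $\eta_A$ and $\eta_U$ are translation-invariant top forms on $A_{X^+}$ and $U_{P(X)}$ respectively. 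Since any rational top form on $A_{X^+} \times U_{P(X)}$ can be written as $f \cdot (\eta_A \wedge \eta_U)$ with $f$ rational, the assignment $\omega \mapsto \tilde\omega$ is canonical.

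Three of the four properties would follow immediately from this construction. Compatibility with \eqref{eqn:iden-B-orbits} is tautological; the implication $\omega \neq 0 \Rightarrow \tilde\omega \neq 0$ holds because $\mathring X_\Theta$ is dense open in $X_\Theta$. For the $G$-eigenform property, I would invoke the standard fact that on a spherical $G$-variety $X$, any nonzero rational top form that is a $B$-eigenform with eigencharacter $\chi|_B$ (for some $\chi \in \Lambda(X) \subset X^*(G)$) is automatically a $G$-eigenform with eigencharacter $\chi$: both eigenspaces are at most one-dimensional over $F$ by \eqref{eqn:spherical-ses}, and $\Lambda(X^+) = \Lambda(X_\Theta)$ via \eqref{eqn:iden-B-orbits}. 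For the $A_\Theta$-eigenform property, I would again use Remark \ref{rem:iden-B-orbits}: the element $a \in A_\Theta$ acts on $\mathring X_\Theta \simeq A_{X^+} \times U_{P(X)}$ by left translation $y \mapsto ay$ on the first factor and trivially on the second; this scales $\eta_A \wedge \eta_U$ by a character of $A_\Theta$ and multiplies the rational coefficient of $\tilde\omega$ accordingly, so $\tilde\omega$ is an $A_\Theta$-eigenform.

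The main obstacle is the regularity assertion. My plan is to reduce to the toric case via the Local Structure Theorem (Remark \ref{rem:local-structure-thm}). I would fix a smooth complete toroidal compactification $X^+ \hookrightarrow \bar X$ whose fan in $\mathcal V$ contains a face corresponding to $\Theta$; then $\bar X$ is covered by $G$-translates of $\bar X_B \simeq Y \times U_{P(X)}$, and an analogous decomposition $(X_\Theta)_B \simeq Y_\Theta \times U_{P(X)}$ holds for $X_\Theta$, where $Y$ and $Y_\Theta$ are smooth toric $A_{X^+}$-varieties whose fans are related by passing to the star of the $\Theta$-face. Regularity of $\tilde\omega$ on $X_\Theta$ being a local property, it suffices to verify it on $(X_\Theta)_B$; by the product structure this reduces to checking that whenever a rational top form $\omega_Y$ on $Y$ is regular, the corresponding form $\tilde\omega_Y$ on $Y_\Theta$ — transported via the shared open torus $A_{X^+}$ — is also regular. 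Writing $\omega_Y = f \cdot \eta_A$ with $f$ rational on $Y$, regularity of $\omega_Y$ along a torus-invariant divisor $D_\sigma$ attached to a ray $\sigma$ translates to the inequality $v_\sigma(f) \geq 1$, and the analogous inequalities for $Y_\Theta$ concern only the rays of its fan, which (after relabelling) form a subset of those appearing in the star of the $\Theta$-face. A direct combinatorial verification with toric monomials then yields the desired regularity. The principal subtlety is to align the choice of $\bar X$ with the geometry of $X$ so that the $G$-orbit boundaries of $X_\Theta$ are matched against rays of $Y$; this can be arranged as in the compatible constructions of \S\ref{sec:Cartan}.
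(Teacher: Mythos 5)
The paper's own ``proof'' is a citation: $\tilde{\omega}$ is the form constructed in \cite[4.2.2, 4.2.4]{SV12} by degeneration to the normal cone, and the listed properties are read off from that construction. Your route --- transporting $\omega$ across the shared open $B$-orbit via \eqref{eqn:iden-B-orbits} --- is genuinely different, but as written it has two real gaps.

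First, canonicity. The ambiguity $\phi: (y,u) \mapsto (y, y^{-1}vyu)$ of Remark \ref{rem:iden-B-orbits} does preserve the invariant form $\eta_A \wedge \eta_U$ (left translation on a unipotent group preserves its Haar form), but a general rational top form is $f \cdot (\eta_A \wedge \eta_U)$ and transforms by $f \mapsto f \circ \phi$. Only when $f$ is $U_{P(X)}$-invariant --- e.g.\ when $\omega$ is a $B$-eigenform --- is the transported form independent of the choice of identification. Since the proposition asserts a canonical recipe for \emph{arbitrary} rational $\omega$, you must either restrict to eigenforms (which would suffice for the paper's applications but is not what is claimed) or give a construction, like the degeneration in \cite{SV12}, that does not pass through the non-canonical identification.

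Second, and more seriously, the regularity argument checks the wrong divisors. Both $X^+$ and $X_\Theta$ are smooth and homogeneous, so by normality the content of ``$\omega$ regular $\Rightarrow \tilde{\omega}$ regular'' is that $\tilde{\omega}$ has no pole along the codimension-one components of $X_\Theta \smallsetminus \mathring{X}_\Theta$, i.e.\ along the \emph{colors} of $X_\Theta$. But the Local Structure Theorem chart $\bar{X}_B = \bar{X} \smallsetminus \bigcup_{D \in \mathcal{D}^B} \bar{D}$ (and its analogue for $X_\Theta$) removes exactly those divisors, so verifying regularity on $(X_\Theta)_B \simeq Y_\Theta \times U_{P(X)}$ says nothing about them; and the toric divisors $D_\sigma$ of $Y$, $Y_\Theta$ that your combinatorial check addresses lie in the boundary of the \emph{compactification}, which is irrelevant both to the hypothesis (regularity on the homogeneous space $X^+$, not on $\bar{X}$) and to the conclusion. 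What is actually needed is that the identification \eqref{eqn:iden-B-orbits} matches the colors of $X^+$ with those of $X_\Theta$ and preserves the order of vanishing of the canonical section along corresponding colors --- a true statement, built into the degeneration picture of \cite{SV12}, but not established by your reduction. (Covering $X_\Theta$ by $G$-translates of $(X_\Theta)_B$ does not repair this for a non-eigenform, since you then need to control $g^*\tilde{\omega}$ for all $g$.) Finally, note that the last bullet, as in the paper, only makes sense under the eigenform hypothesis; your argument for it tacitly assumes the coefficient $f$ is an $A_\Theta$-eigenfunction, which again fails for general rational $\omega$.
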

\begin{proof}
	See \cite[4.2.2, 4.2.4]{SV17}. The $G$-equivariance is clear from the construction via degeneration. The explicit form of $A_\Theta$-eigencharacter of $\tilde{\omega}$ can also be found there.
\MyQED\end{proof}

\begin{hypothesis}\label{hyp:eigendensity}
	As in \cite{SV17}, we assume henceforth that there exists $\omega \neq 0$ such that $|\omega|$ is a $G(F)$-eigendensity (or eigenmeasure), and so is $|\tilde{\omega}|$ with the same eigencharacter. Cf.\ the discussions in \cite[\S 4.1]{SV17}. The requirement will always be met for the examples under consideration.
\end{hypothesis}

\begin{theorem}[{\cite[Theorem 5.1.2]{SV17}}]\label{prop:smooth-asymptotics}\index{e-star-Theta@$e^*_\Theta$}
	Assume $X^+$ is wavefront. There is a $G(F)$-equivariant linear map
	\[ e^*_\Theta: C^\infty(X^+(F)) \to C^\infty(X_\Theta(F)) \]
	characterized as follows: for every compact open subgroup $J \subset G(F)$ and every representative of the germ $\SVexp_{\Theta,J}$ (denoted by the same symbol), there exists a small $J$-stable open neighborhood $N_\Theta$ of $\infty_\Theta$ such that
	\begin{gather}\label{eqn:asym-characterization}
		e^*_\Theta(a)|_{\bar{N}_\Theta} = (\SVexp_{\Theta, J})^* (a)|_{\bar{N}_\Theta}, \quad a \in C^\infty(X^+(F))^J.
	\end{gather}
	where $\bar{N}_\Theta := \exp_{\Theta,J}^{-1}(N_\Theta) \subset X_\Theta(F)$. In fact, it suffices to consider an open neighborhood $N_\Theta$ of some orbit closure $Z \leftrightarrow \Theta$. Here $C^\infty(\cdots)$ stands for the space of smooth functions.
\end{theorem}

This is expected to hold for non-wavefront $X^+$. In practice, one also has to allow an equivariant vector bundle $\mathscr{E}$. To simplify matters, we confine ourselves to the case of density bundles.

\begin{proposition}
	Let $s \in \R$. The map $e^*_\Theta$ defined above and the operation $\omega \mapsto \tilde{\omega}$ combine into a $G(F)$-equivariant morphism
	\begin{align*}
		e^*_\Theta: C^\infty(X^+(F), \mathscr{L}^s) & \longrightarrow C^\infty(X_\Theta(F), \mathscr{L}^s_\Theta) \\
		a|\omega|^s & \longmapsto e^*_\Theta(a)|\tilde{\omega}|^s
	\end{align*}
	where $\mathscr{L}^s$ (resp.\  $\mathscr{L}^s_\Theta$) stands for the bundle of $s$-densities on $X^+(F)$ (resp.\  $X_\Theta(F)$), and $\omega$ stands for a regular section of $\Omega_{X^+/F}^\mathrm{max}$. Moreover, it is compatible with the multiplication of densities \eqref{eqn:density-mult}:
	\[ e^*_\Theta(a) e^*_\Theta(a') = e^*_\Theta(aa') \]
	where $a \in C^\infty(X^+(F), \mathscr{L}^s)$, $a' \in C^\infty(X^+(F), \mathscr{L}^{s'})$, thus $aa' \in C^\infty(X^+(F), \mathscr{L}^{s+s'})$ for some $s, s' \in \R$.
\end{proposition}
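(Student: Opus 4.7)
The plan is to treat the proposition in three largely independent steps: multiplicativity of $e^*_\Theta$ on smooth functions, well-definedness of the extension to $s$-densities, and multiplicativity of that extension. First I would verify that $e^*_\Theta(ff') = e^*_\Theta(f) \cdot e^*_\Theta(f')$ for $f, f' \in C^\infty(X^+(F))$. Both sides are $G(F)$-equivariant linear operators from $C^\infty(X^+(F))^{\otimes 2}$ to $C^\infty(X_\Theta(F))$, and each agrees with $(\SVexp_{\Theta,J})^*(ff') = (\SVexp_{\Theta,J})^*(f) \cdot (\SVexp_{\Theta,J})^*(f')$ on sufficiently small $J$-stable neighborhoods $\bar N_\Theta$ of $\infty_\Theta$, since pullback respects pointwise multiplication. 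The uniqueness embedded in the characterization \eqref{eqn:asym-characterization} from the preceding theorem forces the two expressions to coincide.

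For well-definedness of the density lift: a smooth $s$-density on $U \subset X^+(F)$ is locally of the form $a|\omega|^s$ with $a \in C^\infty(U)$ and $\omega$ a nowhere vanishing regular section of $\Omega^{\mathrm{max}}_{X^+/F}$ on a Zariski open whose $F$-points contain $U$. If $a|\omega|^s = a'|\omega'|^s$ on $U$, then $h := \omega/\omega' \in F(X^+)^\times$ is a regular unit on the overlap with $a' = a \cdot |h|^s$. Step one then gives $e^*_\Theta(a') = e^*_\Theta(a) \cdot e^*_\Theta(|h|^s)$. The key input is the compatibility of $\omega \mapsto \tilde\omega$ with \eqref{eqn:iden-B-orbits}: under the $B$-isomorphism $\mathring X \simeq \mathring X_\Theta$, the rational function $h$ corresponds exactly to $\tilde h := \tilde\omega/\tilde\omega' \in F(X_\Theta)^\times$. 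On the open $B$-orbit near $\infty_\Theta$, the germ $\SVexp_\Theta$ is compatible with \eqref{eqn:iden-B-orbits}, so the pullback description of $e^*_\Theta$ gives $e^*_\Theta(|h|^s) = |\tilde h|^s$ there; this identity propagates over the locus where $\tilde h$ is a regular unit by $B(F)$-equivariance of $e^*_\Theta$ and the fact that the open $B$-orbit is a single $B$-orbit. Consequently $e^*_\Theta(a')|\tilde\omega'|^s = e^*_\Theta(a) |\tilde h|^s |\tilde\omega'|^s = e^*_\Theta(a)|\tilde\omega|^s$ on a dense open of $X_\Theta(F)$, and by smoothness of both sides the equality holds throughout. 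The $G(F)$-equivariance of the resulting map on densities is inherited from that of $e^*_\Theta$ on functions and of $\omega \mapsto \tilde\omega$ (which preserves eigencharacters).

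For the density multiplicativity, I would choose compatible local presentations $a = b|\omega|^s$ and $a' = b'|\omega|^{s'}$ with the same nowhere vanishing $\omega$, so that $aa' = bb'|\omega|^{s+s'}$; combining step one with $|\tilde\omega|^{s+s'} = |\tilde\omega|^s \cdot |\tilde\omega|^{s'}$ from \eqref{eqn:density-mult} on $X_\Theta$, one concludes $e^*_\Theta(aa') = e^*_\Theta(b) e^*_\Theta(b') |\tilde\omega|^s |\tilde\omega|^{s'} = e^*_\Theta(a) e^*_\Theta(a')$. The principal technical hurdle is the compatibility assertion used in the second paragraph, namely that the germ $\SVexp_\Theta$ restricted to the open $B$-orbit coincides with \eqref{eqn:iden-B-orbits} (so that its pullback on smooth functions acts as the identity under the identification of fraction fields $F(X^+) = F(X_\Theta)$), together with the bookkeeping of smoothness loci for $h$ on $X^+(F)$ versus $\tilde h$ on $X_\Theta(F)$ and the propagation of the agreement off the open-$B$-orbit part of a neighborhood of $\infty_\Theta$ via $B(F)$- and ultimately $G(F)$-equivariance.
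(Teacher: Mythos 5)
Your step 1 (multiplicativity of $e^*_\Theta$ on smooth functions via the uniqueness clause of the characterization \eqref{eqn:asym-characterization}, applied to the two $G(F)$-equivariant bilinear maps $(f,f')\mapsto e^*_\Theta(ff')$ and $(f,f')\mapsto e^*_\Theta(f)e^*_\Theta(f')$) is a legitimate alternative to the paper, which simply reduces to $s=s'=0$ and cites the construction in \cite[\S 5.2]{SV12}. Step 3 is then routine. The problem is step 2, which is where the real content lies, and there your argument has a genuine gap.

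First, $e^*_\Theta$ is defined only on globally smooth functions and is \emph{not} a local operator on $X^+(F)$: its value at a point of $X_\Theta(F)$ is pinned down by $G(F)$-equivariance together with the germ of the input near the $\Theta$-infinity of a compactification. So the expression $e^*_\Theta(|h|^s)$ for a unit $h=\omega/\omega'$ defined only on a Zariski-open overlap does not parse; you must reduce to comparing two \emph{global} trivializations. Once you do, the standing hypothesis before the proposition (and Rosenlicht's theorem for units on a homogeneous space) forces $h=\omega_1/\omega_2$ to be a $G$-eigenfunction, and this is exactly the leverage the paper uses and you do not: because $|h|^s$ and $|\tilde h|^s$ have the \emph{same} $G(F)$-eigencharacter, the twisted operator $\bar e^*_\Theta(a):=|\tilde h|^{-s}\,e^*_\Theta(a|h|^s)$ is genuinely $G(F)$-equivariant, satisfies the germ characterization (checked on $N_\Theta\cap\mathring X_\Theta(F)$ via \eqref{eqn:iden-B-orbits}), and hence equals $e^*_\Theta$ by uniqueness; setting $a=\mathbf{1}$ gives $e^*_\Theta(|h|^s)=|\tilde h|^s$ everywhere in one stroke. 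Your substitute --- propagating the identity from a neighborhood of $\infty_\Theta$ ``by $B(F)$-equivariance'' --- does not work as stated: translating the evaluation point by $b\in B(F)$ replaces the fixed function $|h|^s$ by the different function $b|h|^s$, whose own neighborhood of validity in \eqref{eqn:asym-characterization} varies with $b$ (it depends on the stabilizing compact open subgroup $bJb^{-1}$), and over a non-algebraically-closed field the $B(F)$-translates of $N_\Theta\cap\mathring X_\Theta(F)$ need not exhaust $\mathring X_\Theta(F)$, since the open $B$-orbit may split into several $B(F)$-orbits. In short, the hurdle is not the compatibility of $\SVexp_\Theta$ with \eqref{eqn:iden-B-orbits} (which the paper grants), but the fact that the only handle on $e^*_\Theta$ away from $\infty_\Theta$ is equivariance of a single operator --- and that is available only after you exploit the eigenform structure of the trivializing forms.
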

Note that taking $s=0$ reverts to the asymptotics for smooth functions.
\begin{proof}
	We may define $e^*_\Theta$ as $a|\omega|^s \mapsto e^*_\Theta(a)|\tilde{\omega}|^s$ by fixing $|\omega|$, Its compatibility with respect to \eqref{eqn:density-mult} is then reduced to the case $s=s'=0$, which is clear by the construction in \cite[\S 5.2]{SV17}. The equivariance is evident.

	Now let $\omega_1$, $\omega_2$ be two nonzero sections giving rise to eigendensities. To show that they define the same map $e^*_\Theta$ for densities, all boils down to the equality
	\[ e^*_\Theta\left(\left| \frac{\omega_1}{\omega_2} \right|^s \right)  = \left| \frac{\tilde{\omega}_1}{\tilde{\omega}_2} \right|^s \]
	in $C^\infty(X_\Theta(F))$. This might be checked by inspecting the constructions of $e^*_\Theta$ and $\tilde{\omega}_i$; here we adopt the following indirect approach.
	
	Note that $\omega_1/\omega_2$ and $\tilde{\omega}_1/\tilde{\omega}_2$ are invertible regular functions on $X^+$ and $X_\Theta$, respectively. Therefore one can define the linear map
	\[ \bar{e}^*_\Theta(a) := \left| \frac{\tilde{\omega}_1}{\tilde{\omega}_2} \right|^{-s}  e^*_\Theta \left(a \left| \frac{\omega_1}{\omega_2} \right|^s \right) \]
	from $C^\infty(X^+(F))$ to $C^\infty(X_\Theta(F))$. It suffices to show $\bar{e}^*_\Theta = e^*_\Theta$ by putting $a = \mathbf{1}$, since $e^*_\Theta(\mathbf{1}) = \mathbf{1}$ is easily checked from construction. We invoke the characterization \eqref{eqn:asym-characterization} for this purpose.
	
	First, $\bar{e}^*_\Theta$ is $G(F)$-equivariant: suppose that $|\omega_i|$ has $G(F)$-eigencharacter $\chi_i$, then so does $|\tilde{\omega}_i|$ (for $i=1,2$). Thus for any $g \in G(F)$ and $a \in C^\infty(X^+(F))$,
	\begin{align*}
		\bar{e}^*_\Theta(ga) & = \left| \frac{\tilde{\omega}_1}{\tilde{\omega}_2} \right|^{-s}  e^*_\Theta \left(ga \cdot \left| \frac{\omega_1}{\omega_2} \right|^s \right) \\
		& = \frac{\chi_1(g)^s}{\chi_2(g)^s} \left| \frac{g\tilde{\omega}_1}{g\tilde{\omega}_2} \right|^{-s} \frac{\chi_2(g)^s}{\chi_1(g)^s} \; e^*_\Theta \left(ga \cdot \left| \frac{g\omega_1}{g\omega_2} \right|^s \right) \\
		& = \left| \frac{g\tilde{\omega}_1}{g\tilde{\omega}_2} \right|^{-s} e^*_\Theta \left(ga \cdot \left| \frac{g\omega_1}{g\omega_2} \right|^s \right) = \bar{e}^*_\Theta(a).
	\end{align*}
	Secondly, we verify \eqref{eqn:asym-characterization} for $\bar{e}^*_\Theta$. Let $a \in C^\infty(X^+(F))^J$. To show the equality of smooth functions $\bar{e}^*_\Theta(a)|_{N_\Theta} = (\SVexp_{\Theta, J})^* (a)|_{N_\Theta}$ for an open neighborhood $N_\Theta$ of $\infty_\Theta$, we may restrict both sides to the open dense subset $N_\Theta \cap \mathring{X}_\Theta(F)$. Recall that under the identification \eqref{eqn:iden-B-orbits}, we may identify $\omega_i$ and $\tilde{\omega}_i$ ($i=1,2$), hence $\bar{e}^*_\Theta(a)|_{N_\Theta} = e^*_\Theta(a)|_{N_\Theta} = (\SVexp_{\Theta, J})^* (a)|_{N_\Theta}$ as required.
\MyQED\end{proof}

Observe that $C^\infty(X^+(F), \mathscr{L}^s)$ is naturally endowed with an $A_\Theta(F) \times G(F)$-action (recall that $A_\Theta \simeq \mathcal{Z}(X_\Theta)$). An element of $C^\infty(X^+(F), \mathscr{L}^s)$ is called $A_\Theta(F)$-finite if it is contained in a finite-dimensional $A_\Theta(F)$-stable subspace.

\begin{theorem}\label{prop:finiteness-density}
	Let $\pi$ be an irreducible nice representation of $G(F)$. The space $\Hom_{G(F)}(\pi, C^\infty(X^+(F), \mathscr{L}^s))$ is finite-dimensional when $X^+$ is wavefront.
\end{theorem}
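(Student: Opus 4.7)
The strategy is to transpose the proof of \cite[Theorem 5.1.5]{SV12}---which establishes finiteness for scalar coefficients---to the $\mathscr{L}^s$-valued setting, using essentially the density-valued asymptotics map $e^*_\Theta$ constructed in the preceding proposition. Since $\pi$ is nice, hence admissible of finite length, I will choose a compact open subgroup $J \subset G(F)$ for which $V_\pi^J$ generates $V_\pi$ as a $G(F)$-representation. Any $\varphi \in \Hom_{G(F)}(\pi, C^\infty(X^+(F), \mathscr{L}^s))$ is determined by $\varphi|_{V_\pi^J}$, whose image lies in $C^\infty(X^+(F), \mathscr{L}^s)^J$, so it suffices to bound the space of such restrictions.

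For each $\Theta \subset \Delta_{X^+}$, the equivariance of $e^*_\Theta$ yields a $G(F)$-equivariant map
$$ e^*_\Theta \circ \varphi: V_\pi \longrightarrow C^\infty(X_\Theta(F), \mathscr{L}^s_\Theta), $$
and the image of $V_\pi^J$ lands in the $A_\Theta(F)$-finite part of $C^\infty(X_\Theta(F), \mathscr{L}^s_\Theta)^J$, because $\pi$ has finitely many Jacquet-module exponents along the parabolic associated to $X_\Theta$. By the preceding proposition, $|\tilde\omega|^s$ is an $A_\Theta$-eigenform with an explicit fixed eigencharacter, so the admissible $A_\Theta(F)$-weights appearing in $e^*_\Theta \varphi$ differ from those in the scalar case only by a uniform shift. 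Consequently, the totality of asymptotic data $(e^*_\Theta \varphi|_{V_\pi^J})_{\Theta \subset \Delta_{X^+}}$ ranges over a finite-dimensional space, uniformly in $\varphi$, exactly as in \cite{SV12}.

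It remains to show that $\varphi|_{V_\pi^J}$ is determined, modulo a finite-dimensional subspace, by these asymptotic data. This is the geometric heart of the argument. Using the Cartan decomposition $X^+(F) = A_{X^+}(F)^+ K$ of Theorem \ref{prop:Cartan} and the wavefront hypothesis on $X^+$, one partitions $A_{X^+}(F)^+$ according to the faces of $\mathcal{V}$: on the ``deep'' region of the face corresponding to $\Theta$, the section $\varphi(v)$ coincides with the pullback of $e^*_\Theta \varphi(v)$ via the exponential germ $\SVexp_{\Theta,J}$, by the defining relation \eqref{eqn:asym-characterization}; the complementary ``shallow'' region is compact modulo $J$, so the restriction of $\varphi|_{V_\pi^J}$ to it takes values in a finite-dimensional subspace of $C^\infty(X^+(F), \mathscr{L}^s)^J$.

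The main obstacle I anticipate is the passage from scalar-valued to $\mathscr{L}^s$-valued sections in this injectivity-cum-concentration step. The resolution will be cosmetic rather than structural: the fixed $G(F)$-eigendensity $|\omega|^s$ provides an equivariant local trivialization of $\mathscr{L}^s$ on each Cartan cell, and the compatibility of $e^*_\Theta$ with the multiplication of densities \eqref{eqn:density-mult} ensures that this trivialization is coherent with the asymptotic expansion. This reduces the $\mathscr{L}^s$ case to the scalar case of \cite[Theorem 5.1.5]{SV12} cell by cell, and patching yields the required finiteness bound.
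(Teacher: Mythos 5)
Your proposal is correct and follows essentially the same route as the paper, whose entire proof reads ``Same as the proof of \cite[Theorem 5.1.5]{SV12}; there is no worry about continuity by Lemma \ref{prop:autocont}'' --- i.e.\ transpose the Sakellaridis--Venkatesh argument to the $\mathscr{L}^s$-valued setting via the density-valued asymptotics map $e^*_\Theta$ and the eigendensity trivialization, exactly as you spell out. Your expanded sketch (Cartan cells, deep versus shallow regions, finiteness of the asymptotic data) is a faithful unpacking of that citation rather than a different argument.
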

\begin{proof}
	Same as the proof of \cite[Theorem 5.1.5]{SV17}. There is no worry about continuity by Lemma \ref{prop:autocont}.
\MyQED\end{proof}
Again, the result is expected to hold without wavefront assumption.

\begin{theorem}\label{prop:asym-finite}
	Let $\pi$ and $X^+$ be as above. For every
	\[ \varphi \in \Hom_{G(F)}(\pi, C^\infty(X^+(F), \mathscr{L}^s)), \]
	its image is mapped to the space of $A_\Theta(F)$-finite elements in $C^\infty(X_\Theta(F), \mathscr{L}^s)$ under $e^*_\Theta$.
	
	Consequently, $\varphi(v)$ is a $\bar{X} \smallsetminus X$-finite function, cf.\ the discussion preceding \cite[Corollary 5.1.8]{SV17}.
\end{theorem}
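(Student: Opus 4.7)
The plan is to adapt the argument of \cite[Corollary 5.1.7]{SV12} to the $\mathscr{L}^s$-valued setting, exploiting the fact that $A_\Theta$ commutes with $G$ on $X_\Theta$ together with the $G(F)$-equivariance of $e^*_\Theta$, and then invoking Theorem \ref{prop:finiteness-density} applied to the boundary degeneration.

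First, I would observe that the identification $A_\Theta \rightiso \mathcal{Z}(X_\Theta) = \Aut_G(X_\Theta)^\circ$ from \S\ref{sec:boundary} shows that $A_\Theta(F)$ acts on $X_\Theta(F)$ commuting with $G(F)$, and this action lifts canonically to the density bundle $\mathscr{L}^s_\Theta$ (compatibly with the multiplication \eqref{eqn:density-mult}), yielding $G(F)$-equivariant linear automorphisms of $C^\infty(X_\Theta(F), \mathscr{L}^s_\Theta)$. For each $a \in A_\Theta(F)$, I would then consider the map
$$ T_a: V_\pi \longrightarrow C^\infty(X_\Theta(F), \mathscr{L}^s_\Theta), \quad v \longmapsto a \cdot e^*_\Theta(\varphi(v)). $$
Since $a$ commutes with the $G(F)$-action, and both $e^*_\Theta$ and $\varphi$ are $G(F)$-equivariant, $T_a$ lies in $\Hom_{G(F)}(\pi, C^\infty(X_\Theta(F), \mathscr{L}^s_\Theta))$.

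Because $X^+$ is wavefront, so is $X_\Theta$ (cf.\ \S\ref{sec:boundary}), and Theorem \ref{prop:finiteness-density} therefore asserts that $\Hom_{G(F)}(\pi, C^\infty(X_\Theta(F), \mathscr{L}^s_\Theta))$ is finite-dimensional, say of dimension $n$. Choose $a_1,\ldots,a_n \in A_\Theta(F)$ such that $T_{a_1},\ldots,T_{a_n}$ span the linear hull of $\{T_a\}_{a \in A_\Theta(F)}$. Writing any $T_a = \sum_i c_i(a) T_{a_i}$ and evaluating at an arbitrary $v \in V_\pi$ yields
$$ a \cdot e^*_\Theta(\varphi(v)) = \sum_{i=1}^n c_i(a) \bigl( a_i \cdot e^*_\Theta(\varphi(v)) \bigr), $$
which shows that the $A_\Theta(F)$-orbit of $e^*_\Theta(\varphi(v))$ is contained in the span of the finite set $\{a_i \cdot e^*_\Theta(\varphi(v))\}_{i=1}^n$. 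This establishes the first assertion.

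For the ``consequently'' clause, I would combine the previous finiteness across all $\Theta \subset \Delta_{X^+}$: by the characterization \eqref{eqn:asym-characterization}, $\varphi(v)$ agrees with $(\SVexp_{\Theta,J})_* e^*_\Theta(\varphi(v))$ as germs at $\infty_\Theta$ after fixing any $J$ stabilizing $v$, so the $A_\Theta(F)$-finiteness of $e^*_\Theta(\varphi(v))$ transports to a boundary-finiteness condition for $\varphi(v)$ near each $Z \leftrightarrow \Theta$; exhausting a neighborhood of $\bar{X} \smallsetminus X^+$ by such pieces gives the stated conclusion, exactly as in the discussion preceding \cite[Corollary 5.1.8]{SV12}. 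The only point requiring care is the bookkeeping with the $A_\Theta$-equivariant structure on $\mathscr{L}^s_\Theta$ --- i.e.\ checking that the $A_\Theta(F)$-action on densities commutes appropriately with the operation $\omega \mapsto \tilde{\omega}$ --- but this reduces to the already-recorded fact that $\tilde{\omega}$ is an $A_\Theta$-eigenform whenever $\omega$ is a $G$-eigenform, together with the multiplicativity $e^*_\Theta(a)e^*_\Theta(a') = e^*_\Theta(aa')$.
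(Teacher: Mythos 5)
Your argument is correct and is essentially the paper's own proof (which in turn follows \cite[Corollary 5.1.8]{SV12}): both rest on the observation that post-composition with $a \in A_\Theta(F)$ sends the intertwining operator $e^*_\Theta \circ \varphi$ back into the finite-dimensional space $\Hom_{G(F)}(\pi, C^\infty(X_\Theta(F), \mathscr{L}^s))$ supplied by Theorem \ref{prop:finiteness-density}. The only cosmetic difference is that you extract a finite spanning set of the operators $T_a$ inside that Hom space, whereas the paper instead invokes admissibility (finite-dimensionality of $V_\pi^J$) to bound the image of $V_\pi^J$ under all such operators; either way the $A_\Theta(F)$-orbit of $e^*_\Theta(\varphi(v))$ lands in a finite-dimensional subspace.
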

\begin{proof}
	Same as the proof of \cite[Corollary 5.1.8]{SV17}. More precisely, every $v \in V_\pi$ lies in $V_\pi^J$ for some open compact subgroup $J$, therefore $\varphi(v)$ lies in the sum of finitely many copies of $V_\pi^J$ mapped into $C^\infty(X_\Theta(F), \mathscr{L}^s)$ by Theorem \ref{prop:finiteness-density}, which is finite-dimensional by admissibility. Since the $A_\Theta(F)$-translates of $\varphi(v)$ belong to the same subspace, the finiteness follows.
\MyQED\end{proof}

\section{Proof of convergence}
Let $F$ be a non-Archimedean local field of characteristic zero; denote by $q$ the cardinality of its residue field. We consider an affine spherical embedding $X^+ \hookrightarrow X$ satisfying the Axiom \ref{axiom:geometric}. The main results will be stated with the wavefront assumption; see the remarks after Theorem \ref{prop:smooth-asymptotics}.

In what follows, we adopt the formalism in \S\ref{sec:coefficients} with $\mathscr{E} := \mathscr{L}^{1/2}$, the hermitian pairing $\mathscr{E} \otimes \overline{\mathscr{E}} \to \mathscr{L}$ being that induced from \eqref{eqn:density-mult}. In particular, $C^\infty(X^+)$ (resp.\  $C^\infty(X_\Theta)$ for some $\Theta \subset \Delta_{X^+}$) now stands for the space of $\mathscr{L}^{1/2}$-valued smooth sections on $X^+(F)$ (resp.\  $\mathscr{L}^{1/2}_\Theta$-valued on $X_\Theta(F)$).

Recall that the image of $\HC: A_{X^+}(F) \to \mathcal{Q}$ is the lattice $X_*(A_{X^+}) = \Hom(\Lambda_{X^+}, \Z)$. We choose a uniformizer $\varpi \in F$ and define the homomorphism
\begin{align*}
	a: X_*(A_{X^+}) & \longrightarrow A_{X^+}(F) \\
	\check{\lambda} & \longmapsto \check{\lambda}(\varpi).
\end{align*}
The conventions in \S\ref{sec:Cartan} imply that
\begin{align*}
	\HC \circ a &= \identity, \\
	A_{X^+}(F)^{+} & = a(\mathcal{V} \cap X_*(A_{X^+})) \times A_{X^+}(\mathfrak{o}_F).
\end{align*}

\begin{notation}
	For any face $\mathcal{F}$ of $\mathcal{C}_X \cap \mathcal{V}$, write
	\[ \mathcal{F} \prec \Theta \]
	if $\mathcal{V} \cap \Theta^\perp$ is the unique face of $\mathcal{V}$ whose relative interior contains $\relint(\mathcal{F})$; see Proposition \ref{prop:face-trivial}.
\end{notation}

Recall that we have chosen $x_0 \in \mathring{X}(F)$. Choose a compact open subset $K \subset G(F)$ such that that the Cartan decomposition holds: $X^+(F) = A_{X^+}(F)^+ K$ where $A_{X^+} \simeq x_0 A \hookrightarrow X^+$. Enlarge $K$ appropriately so that Proposition \ref{prop:comparison-Cartan} holds.

\begin{lemma}\label{prop:compact-translate}
	Let $C$ be a compact subset of $X(F)$. There exist $v_0, \ldots, v_m \in \mathcal{V}$ verifying
	\[ C \cap X^+(F) \subset \bigcup_{i=0}^m \HC^{-1}\left( v_i + (\mathcal{C}_X \cap \mathcal{V})\right) K. \]
\end{lemma}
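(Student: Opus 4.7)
The plan is to pull $C$ back along the proper birational morphism $p : \hat{X} \to X$ from the diagram \eqref{eqn:hat-vs-bar} and reduce the statement to the description of compact open subsets of $\hat{X}(F)$ given in Corollary \ref{prop:compact-exhaustion}.

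First, I would invoke the compatible choice of toroidal embeddings made before Proposition \ref{prop:comparison-Cartan}, so that $K$ can simultaneously be taken to satisfy $(\bar{Y}(\mathfrak{o}_F) \cap \hat{Y}(F))K = \hat{X}(F)$. Since $p : \hat{X} \to X$ is proper (Theorem \ref{prop:LV-morphism} supplies this, as $\mathcal{C}_{\hat{X}}$ maps into $\mathcal{C}_X$) and defined over $F$, the induced map on $F$-points is proper in the topological sense. Hence $\tilde{C} := p(F)^{-1}(C)$ is a compact subset of $\hat{X}(F)$. Because $p$ restricts to the identity on $X^+$, we have the equality $\tilde{C} \cap X^+(F) = C \cap X^+(F)$.

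Next, by Corollary \ref{prop:compact-exhaustion}, $\hat{X}(F)$ is exhausted by compact open subsets of the form $C_{\hat{X}} = C_{\hat{Y}} \cdot K$ with $C_{\hat{Y}} \cap A_{X^+}(F) = \bigcup_{i=0}^m \HC^{-1}(v_i + \mathcal{C}_X \cap \mathcal{V})$, where $v_0, \ldots, v_m \in \mathcal{V}$. Covering the compact subset $\tilde{C}$ by finitely many such $C_{\hat{X}}$ and taking their union (which is still a compact open subset of the same shape, since the defining property just amounts to listing finitely many $v_i$'s in $\mathcal{V}$) yields a single $C_{\hat{X}}^{\mathrm{big}} \supset \tilde{C}$ whose intersection with $X^+(F)$ is exactly $\bigcup_{i=0}^m \HC^{-1}(v_i + \mathcal{C}_X \cap \mathcal{V}) K$ by the observation following Corollary \ref{prop:compact-exhaustion}.

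Combining,
\[
  C \cap X^+(F) = \tilde{C} \cap X^+(F) \subset C_{\hat{X}}^{\mathrm{big}} \cap X^+(F) = \bigcup_{i=0}^m \HC^{-1}\!\left( v_i + \mathcal{C}_X \cap \mathcal{V} \right) K,
\]
which is the desired inclusion. There is no real obstacle here beyond assembling the previously established pieces; the only point requiring minor care is the compatibility of $K$ with both the Cartan decomposition and the description in Corollary \ref{prop:compact-exhaustion}, which has already been arranged in Proposition \ref{prop:comparison-Cartan}.
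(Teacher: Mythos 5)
Your argument is correct and is essentially the paper's own proof: both pull $C$ back along the proper morphism $p:\hat{X}\to X$, cover the compact set $p^{-1}(C)$ by finitely many of the sets $C_{\hat{X}}$ from Corollary \ref{prop:compact-exhaustion}, and conclude via the observation that $C_{\hat{X}} \cap X^+(F) = \bigcup_i \HC^{-1}(v_i + \mathcal{C}_X \cap \mathcal{V})K$. No substantive differences.
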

\begin{proof}
	Take the morphisms $p: \hat{X} \to X$ and $X^+ \hookrightarrow \bar{X}$ as in \eqref{eqn:hat-vs-bar}. By Corollary \ref{prop:compact-exhaustion} and the subsequent observation, $\hat{X}(F)$ may be covered by open subsets $C_{\hat{X}}$ such that $C_{\hat{X}} \cap X^+(F) \subset  \bigcup_{i=0}^m \HC^{-1}\left( v_i + (\mathcal{C}_X \cap \mathcal{V})\right) K$ for some $m$ and $v_0, \ldots, v_m \in \mathcal{V}$.
	
	Since $p$ is proper, $p^{-1}(C)$ is covered by finitely many open subsets $C_{\hat{X}}$ described above. Therefore $C \cap X^+(F) = p^{-1}(C) \cap X^+(F)$ is of the required form.
\MyQED\end{proof}

For the next results, we fix an open compact subgroup $J \subset G(F)$. For any $\Theta \subset \Delta_{X^+}$, we let $A_\Theta$ act on the left of $X_\Theta$.
\begin{lemma}\label{prop:eta-triv}
	For every $b \in X_\Theta(F)$, there exists $\eta \in C^\infty(A_\Theta(F) bJ)$ which is $A_\Theta(F) \times J$-invariant and non-vanishing.
\end{lemma}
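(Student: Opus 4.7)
The plan is to construct $\eta$ as the equivariant extension of a single nonzero element in the fibre $\mathscr{L}^{1/2}_{\Theta,b}$. Since $J$ is compact open, $bJ$ is compact open in $X_\Theta(F)$, and hence the orbit $A_\Theta(F) bJ$ is open. The half-density bundle $\mathscr{L}^{1/2}_\Theta$ carries commuting actions of $A_\Theta(F)$ (on the left, via $A_\Theta \simeq \mathcal{Z}(X_\Theta)$) and $J \subset G(F)$ (on the right), making it an $A_\Theta(F) \times J$-equivariant line bundle over the orbit. Since $A_\Theta(F) \times J$ acts transitively on $A_\Theta(F) bJ$, the existence of an invariant non-vanishing section reduces to showing that the stabilizer
\[ \Gamma_b := \{(a,j) \in A_\Theta(F) \times J : abj = b\} \]
acts trivially on the fibre $\mathscr{L}^{1/2}_{\Theta,b}$.

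The key step will be to show that $\Gamma_b$ is compact. Recall that $A_\Theta$ acts \emph{freely} on $X_\Theta$: writing $X_\Theta = H_\Theta \backslash G$ with $A_\Theta \subset H_\Theta \backslash N_G(H_\Theta)$, a coset $[n] \in A_\Theta$ fixes $H_\Theta g$ iff $n \in H_\Theta$, i.e. $[n]=1$. Consequently the orbit map $A_\Theta \to X_\Theta$, $a \mapsto ab$, is an $A_\Theta$-equivariant isomorphism onto a closed subvariety of $X_\Theta$ (the fibre through $b$ of the projection $X_\Theta \to Z$ described in \S\ref{sec:boundary}), inducing a homeomorphism of $A_\Theta(F)$ onto a closed subset of $X_\Theta(F)$. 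Thus $\{a \in A_\Theta(F) : ab \in bJ\}$ is the preimage of the compact set $bJ \cap A_\Theta(F) \cdot b$, hence is compact. Since $\Gamma_b$ is closed in the product of this compact set with $J$, it is itself compact.

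Now $A_\Theta(F)$ and $G(F)$ act on the real-positive one-dimensional fibre $\mathscr{L}^{1/2}_{\Theta,b}$ through continuous homomorphisms into $\R_{>0}$ (the square roots of the pertinent modular characters), and any such homomorphism vanishes on a compact subgroup. Applied to the two projections of the compact group $\Gamma_b$, this shows that $\Gamma_b$ acts trivially on $\mathscr{L}^{1/2}_{\Theta,b}$. Fixing any nonzero $\eta_0 \in \mathscr{L}^{1/2}_{\Theta,b}$, the formula
\[ \eta(abj) := a \cdot \eta_0 \cdot j, \qquad a \in A_\Theta(F),\; j \in J \]
is then well-defined, manifestly $A_\Theta(F) \times J$-invariant, and non-vanishing; smoothness is automatic from the smoothness of the actions on the equivariant bundle $\mathscr{L}^{1/2}_\Theta$.

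The main obstacle is the geometric claim that the $A_\Theta$-orbit $A_\Theta(F) \cdot b$ is closed in $X_\Theta(F)$ (equivalently, that the orbit map $a \mapsto ab$ is proper on $F$-points). This rests on the structure of $X_\Theta$ as the open $G$-orbit in the normal bundle $N_Z \bar{X}$ together with the freeness of the $A_\Theta$-action, and reduces via the Local Structure Theorem (Remark \ref{rem:local-structure-thm}) to the elementary toric case, where the closedness of a free torus orbit on an affine toric model is transparent.
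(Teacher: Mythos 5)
Your proof follows essentially the same route as the paper's: reduce to showing that the stabilizer of $b$ in $A_\Theta(F)\times J$ is compact, so that the continuous character by which it acts on the positive real line $\mathscr{L}^{1/2}_{\Theta,b}$ is trivial; the paper phrases the compactness as boundedness of $\{a : ab \in bJ\}$, justified by ``$A_\Theta$ dilates the normal cone,'' while you derive it from properness of the orbit map. One inaccuracy in your justification: the $A_\Theta$-orbit of $b$ is \emph{not} the fibre of $X_\Theta \to Z$ in general --- only the subtorus $\Gm^{\Delta_X \smallsetminus \Theta} \subset A_\Theta$ has orbits equal to those fibres, and $A_\Theta$ can be strictly larger --- so that parenthetical identification should be dropped; likewise, closedness of the orbit inside the open cell furnished by the Local Structure Theorem does not by itself give closedness in all of $X_\Theta$. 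The closedness you need is nonetheless true (for instance, the full $\Aut_G(X_\Theta)$-orbit is a fibre of $H_\Theta \backslash G \to N_G(H_\Theta)\backslash G$, hence closed, and $A_\Theta = \Aut_G(X_\Theta)^\circ$ has finite index in it), so the argument stands once that step is repaired.
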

\begin{proof}
	Let $S$ be the stabilizer of $b$ under $A_\Theta(F) \times J$-action. It acts on the fiber of $\mathscr{L}_\Theta^{1/2}$ by some continuous character $S \to \R^\times_{>0}$, and we have $A_\Theta(F)bJ \simeq S \backslash (A_\Theta(F) \times J)$. Thus it suffices to show $S$ is compact (cf.\ the criterion for the existence of invariant measures). Indeed, the projection of $S$ to $A_\Theta(F)$ equals $\{a \in A_\Theta(F) : ab \in bJ\}$; it must be bounded since $J$ is compact and $A_\Theta$ dilates the normal cone.
\MyQED\end{proof}
Note that such $\eta$ is unique up to $\CC^\times$.

\begin{definition}
	Given $\xi \in \Schw^J$ and a face $\mathcal{F}$ of $\mathcal{C}_X \cap \mathcal{V}$, an embedded polytopal complex $B \subset \mathcal{Q}$ is called \emph{$\mathcal{F}$-deep}, if $\mathcal{F} \prec \Theta$ and
	\begin{itemize}
		\item the germ $\SVexp_{\Theta, J}$ of exponential maps has a representative whose image contains $\HC^{-1}(B + \mathcal{F})K$, noting that the latter contains the $\Theta$-infinity since $\mathcal{F} \prec \Theta$;
		\item the property \eqref{eqn:asym-characterization} characterizing $e^*_\Theta$ holds for all $N_\Theta \subset \HC^{-1}(B + \mathcal{F})K$.
	\end{itemize}
\end{definition}

\begin{lemma}\label{prop:deep-property}
	The notion of $\mathcal{F}$-deepness conforms to the requirements in \S\ref{sec:cellular-decomp}. Consequently, Proposition \ref{prop:cellular-decomp} holds in this context.
\end{lemma}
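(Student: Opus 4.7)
The three requirements listed in \S\ref{sec:cellular-decomp} will be verified in turn; once this is done, Proposition \ref{prop:cellular-decomp} applies directly to produce the desired cellular decomposition of any polyhedron with recession cone $\mathcal{C}_X\cap\mathcal{V}$.

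For (iii), the relation $\{0\}\prec\Theta$ singles out $\Theta$ such that $0\in\relint(\mathcal{F}_\Theta)$, where $\mathcal{F}_\Theta=\mathcal{V}\cap\Theta^\perp$. In the wavefront (and strictly convex $\mathcal{V}$) setting this forces $\mathcal{F}_\Theta=\{0\}$, and the $G$-orbit closure $Z\leftrightarrow\Theta$ is the open orbit $X^+\subset\bar X$. The boundary degeneration $X_\Theta$ is then canonically identified with $X^+$ and $\SVexp_{\Theta,J}$ is represented by the identity map. Consequently both clauses in the definition of $\mathcal{F}$-deepness hold tautologically for every embedded polytopal complex $B$, since $B+\{0\}=B$ and \eqref{eqn:asym-characterization} reduces to $e^*_\Theta(a)=a$.

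Requirement (i) is immediate from the monotonicity $B+w+\mathcal{F}\subset B+\mathcal{F}$, valid because $w\in\mathcal{F}$ and $\mathcal{F}$ is a cone. Any representative of $\SVexp_{\Theta,J}$ whose image contains $\HC^{-1}(B+\mathcal{F})K$ \emph{a fortiori} contains $\HC^{-1}(B+w+\mathcal{F})K$, and the validity of \eqref{eqn:asym-characterization} on sub-neighborhoods is inherited.

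Requirement (ii) is the substantive step. Fix $B$ and $\mathcal{F}$, let $\Theta$ be determined by $\mathcal{F}\prec\Theta$, so $\mathcal{F}\subset\mathcal{F}_\Theta$ and $A_\Theta\subset A_{X^+}$ by definition. Choose any representative of $\SVexp_{\Theta,J}$ together with a $J$-stable open neighborhood $N_\Theta^\circ$ of $\infty_\Theta$ in $X^+(F)/J$ on which \eqref{eqn:asym-characterization} holds. One first arranges $N_\Theta^\circ$ to be \emph{$A_\Theta(F)^{\geq 0}$-stable}: the saturation $N_\Theta:=\bigcup_{a\in A_\Theta(F)^{\geq 0}} a\cdot N_\Theta^\circ$ is still an open neighborhood of $\infty_\Theta$ on which $\SVexp_{\Theta,J}$ admits a representative compatible with \eqref{eqn:asym-characterization}; this uses the toric-side contraction property recalled in \S\ref{sec:boundary} (namely that $A_\Theta(F)^{\geq 0}$-translates steer $X_\Theta(F)$ toward $\infty_\Theta$, cf.\ the description surrounding $A_\Theta(F)^{>0}$ and \cite[Lemma 2.4.9]{SV12}). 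Next, since $B$ is compact and $\Ker(\HC)=A_{X^+}(\mathfrak{o}_F)$ is compact, $\HC^{-1}(B)K$ has compact image in $X^+(F)$; by the same contraction property one finds $v_0\in\relint(\mathcal{F})\subset\relint(\mathcal{F}_\Theta)$ so deep that $a(v_0)\cdot\HC^{-1}(B)K\subset N_\Theta$. Finally, for any $f\in\mathcal{F}\cap X_*(A_{X^+})$ one has $a(f)\in A_\Theta(F)^{\geq 0}$, so $A_\Theta(F)^{\geq 0}$-stability of $N_\Theta$ yields
\[
\HC^{-1}(B+v_0+\mathcal{F})K \;=\; a(v_0)\,\HC^{-1}(B)K\cdot a(\mathcal{F}\cap X_*(A_{X^+})) \;\subset\; N_\Theta,
\]
which proves that $B+v_0$ is $\mathcal{F}$-deep.

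The main obstacle is the production of an $A_\Theta(F)^{\geq 0}$-stable neighborhood $N_\Theta$ on which \eqref{eqn:asym-characterization} persists after saturation; the cleanest way is to pass through the Local Structure Theorem (Remark \ref{rem:local-structure-thm}) and verify the claim on the toric slice $\bar Y$ of \eqref{eqn:local-structure-thm}, where $A_\Theta(F)^{\geq 0}$-stable neighborhoods of $\infty_\Theta$ manifestly form a fundamental system and the germ $\SVexp_{\Theta,J}$ reduces to the toric exponential — this reduction is exactly the one used in Proposition \ref{prop:comparison-Cartan} and Corollary \ref{prop:compact-exhaustion}.
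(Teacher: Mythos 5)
Your argument is correct and is essentially an expanded version of the paper's own (very terse) proof: the paper likewise disposes of (iii) by noting the conditions are vacuous for $\mathcal{F}=\{0\}\prec\Theta=\Delta_{X^+}$, and handles (ii) by observing that a deep $v_0\in\relint(\mathcal{F})$ pushes $\HC^{-1}(B+v_0+\mathcal{F})K$ towards $\infty_\Theta$, with $K$ and $B$ harmless by compactness and the verification reduced to the toric slice of the Local Structure Theorem. The only caveat is notational: the $A_\Theta$-translates in your displayed inclusion should act on the left (as in Remark \ref{rem:iden-B-orbits}), so that they commute with the right $K$-action.
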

\begin{proof}
	Given $B \subset \mathcal{Q}$, taking deep $v_0 \in \relint(\mathcal{F})$ has the effect of moving $\HC^{-1}(B + v_0 + \mathcal{F})K$ towards $\infty_\Theta$ --- here $K \subset G(F)$ is harmless by compactness; see \S\ref{sec:Cartan}. The conditions become vacuous when $\mathcal{F}=\{0\} \prec \Theta = \Delta_{X^+}$.
\MyQED\end{proof}

\begin{proposition}\label{prop:coeff-L2}
	Assume $X^+$ is wavefront. Let $\pi$ be an irreducible nice representation of $G(F)$ and $\varphi \in \mathcal{N}_\pi = \Hom_{G(F)}(\pi, C^\infty(X^+))$. Let $C^+ := \HC^{-1}(v_0 + \mathcal{C}_X \cap \mathcal{V}) K$, where $v_0 \in \mathcal{V}$. We have
	\[ \varphi_\lambda(v)|_{C^+} \in L^2(C^+), \quad v \in V_\pi^J \]
	whenever $\Re(\lambda) \relgg{X} 0$.
\end{proposition}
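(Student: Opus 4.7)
The strategy is to combine the cellular decomposition of Proposition \ref{prop:cellular-decomp} with the smooth asymptotics of $\varphi(v)$ from Theorem \ref{prop:asym-finite}, and then leverage the positivity statement in Lemma \ref{prop:Lambda-positivity}. First I would apply Lemma \ref{prop:deep-property}, which justifies applying Proposition \ref{prop:cellular-decomp} to the polyhedron $\mathcal{E}^{\text{max}} = v_0 + \mathcal{C}_X \cap \mathcal{V}$ with recession cone $\mathcal{F}^{\text{max}} = \mathcal{C}_X \cap \mathcal{V}$, using the chosen $J$-dependent notion of ``$\mathcal{F}$-deep''. This produces finitely many polytopal complexes $B_\mathcal{F}$ such that $C^+ = \bigcup_\mathcal{F} \HC^{-1}(B_\mathcal{F} + \mathcal{F}) K$, where each $\mathcal{F}$ is a face of $\mathcal{C}_X \cap \mathcal{V}$ with a uniquely determined $\Theta \subset \Delta_{X^+}$ such that $\mathcal{F} \prec \Theta$, and where $\HC^{-1}(B_\mathcal{F} + \mathcal{F})K$ lies in the image of a representative of $\SVexp_{\Theta,J}$ on which $\varphi(v)$ coincides with $(\SVexp_{\Theta,J})^* e^*_\Theta(\varphi(v))$. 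It thus suffices to prove $L^2$ on each cell.

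Fix such a cell with parameters $(\mathcal{F},\Theta)$. Pull everything back via $\SVexp_{\Theta,J}$ to the corresponding $J$-stable open subset $\bar{N}_\Theta \subset X_\Theta(F)$. By Theorem \ref{prop:asym-finite}, $e^*_\Theta(\varphi(v))$ lies in a finite-dimensional $A_\Theta(F)$-stable subspace of $C^\infty(X_\Theta(F), \mathscr{L}_\Theta^{1/2})$, so it decomposes as a finite sum of generalized $A_\Theta(F)$-eigenvectors $s_i$ with characters $\chi_i$. Choose a finite cover of the image of $B_\mathcal{F}K$ in $\bar N_\Theta$ by orbits of type $A_\Theta(F) b J$, and on each use Lemma \ref{prop:eta-triv} to trivialize $\mathscr{L}_\Theta$ by an $A_\Theta(F)\times J$-invariant non-vanishing density $\eta$. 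Writing $s_i = \psi_i \cdot \eta^{1/2}$ with $\psi_i$ polynomial in $\HC(a)$ times $\chi_i(a)$ up to a locally constant factor, we get
\[
 |e^*_\Theta(\varphi(v))(ak)|^2 \;\leq\; \Bigl(\sum_i |\chi_i(a)|\cdot P_i(\HC(a))\Bigr)^2 \cdot |\eta|(k)
\]
on $\HC^{-1}(\mathcal{F})K$, for polynomials $P_i$.

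Next, evaluate $|f|^{2\Re(\lambda)}$ on the same cell. Using $|f_i|(x_0 ak) = |f_i|(x_0)\,|\omega_i|(ak)$ and the identity $|\omega_i|(a) = q^{-\langle \chi_{\omega_i},\HC(a)\rangle}$, we obtain
\[
 |f|^{2\Re(\lambda)}(x_0 ak) = C(\lambda,k) \cdot q^{-2\langle \chi_{\Re(\lambda)},\HC(a)\rangle},
\]
where $\chi_{\Re(\lambda)} = \sum_i \Re(\lambda_i)\chi_{\omega_i}$. By the third assertion of Lemma \ref{prop:Lambda-positivity}, when $\Re(\lambda) \relgg{X} 0$ the linear form $\chi_{\Re(\lambda)}$ is strictly positive on $(\mathcal{C}_X \cap \mathcal{V})\setminus\{0\}$, hence on $\mathcal{F}\setminus\{0\}$. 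By continuity and rescaling, for $\Re(\lambda)$ large enough we can ensure $2\langle \chi_{\Re(\lambda)},\cdot\rangle$ dominates $2\Re\langle \chi_i,\cdot\rangle + \varepsilon$ on $\mathcal{F}$, for every character $\chi_i$ appearing in the asymptotics of $\varphi(v)$, and for some $\varepsilon>0$.

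Putting the two estimates together, the $L^2$-integral on the cell is bounded by a constant times
\[
 \sum_{\HC(a) \in B_\mathcal{F} + (\mathcal{F}\cap X_*(A_{X^+}))}\!\!\! Q(\HC(a))\cdot q^{-2\langle \chi_{\Re(\lambda)} - \max_i \Re\chi_i,\,\HC(a)\rangle}
\]
for some polynomial $Q$, which is a convergent lattice sum since the effective exponent is strictly negative on $\mathcal{F}\setminus\{0\}$ and the lattice points lie in a translate of the cone $\mathcal{F}$. Summing over the finitely many cells yields the claim. The main technical obstacle is the careful bookkeeping in the second step: relating the density $|\varphi(v)|^2$ on $X^+(F)$ to its pullback via $\SVexp_{\Theta,J}$, correctly identifying the $A_\Theta$-action on $\mathscr{L}^{1/2}_\Theta$ so that the generalized eigenvalue decomposition really produces an estimate of the stated exponential-polynomial form, uniformly in the base point $k \in K$.
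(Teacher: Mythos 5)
Your proposal is correct and follows essentially the same route as the paper's proof: the cellular decomposition of Proposition \ref{prop:cellular-decomp} (justified via Lemma \ref{prop:deep-property}), passage to $X_\Theta$ via $\SVexp_{\Theta,J}$ and the $A_\Theta(F)$-finiteness from Theorem \ref{prop:asym-finite}, trivialization of $\mathscr{L}^{\demi}_\Theta$ by Lemma \ref{prop:eta-triv}, and the positivity from Lemma \ref{prop:Lambda-positivity} to make the resulting exponential-polynomial sum converge on each cone. The only cosmetic difference is that the paper applies $e^*_\Theta$ directly to $\varphi_\lambda(v)$ and absorbs $|f|^\lambda$ into the character via $\chi_\lambda = |\omega|^\lambda\chi_0$, whereas you treat $\varphi(v)$ and $|f|^{2\Re(\lambda)}$ separately before recombining; the estimates are the same.
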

\begin{proof}
	In view of Lemma \ref{prop:deep-property}, we can apply Proposition \ref{prop:cellular-decomp} with $\mathcal{F}^\text{max} := \mathcal{C}_X \cap \mathcal{V}$ to decompose
	\[ v_0 + \mathcal{C}_X \cap \mathcal{V} = \bigcup_{\substack{\mathcal{F} \subset \mathcal{C}_X \cap \mathcal{V} \\ \text{face}}} (B_{\mathcal{F}} + \mathcal{F}), \]
	such that $B_{\mathcal{F}}$ is $\mathcal{F}$-deep for all faces $\mathcal{F}$. Thus we are reduced to showing that $\varphi_\lambda(v)$ is $L^2$ over $\HC^{-1}(B_{\mathcal{F}} + \mathcal{F}) K$ for all $\mathcal{F}$. Suppose that $\mathcal{F} \prec \Theta$, we may apply $e^*_\Theta$ (which is the same as $\SVexp_{\Theta,J}^*$ over $\HC^{-1}(B_{\mathcal{F}} + \mathcal{F}) K)$ to pass to the situation inside $X_\Theta(F)$. The integration over $ \HC^{-1}(B_{\mathcal{F}} + \mathcal{F})$ now comes from the $A_\Theta$-action on $X_\Theta$.

	By Theorem \ref{prop:asym-finite}, $e^*_\Theta(\varphi_\lambda(v))$ is $A_\Theta(F)$-finite. It suffices to show its square-integrability over open subsets of the form $A_{\mathcal{F}}(F)^{>0} bJ$. Observe that $\mathcal{F} \prec \Theta \implies A_{\mathcal{F}} \subset A_\Theta$. Upon trivializing $\mathscr{L}^{1/2}_\Theta$ over $A_\Theta(F) bJ$ by Lemma \ref{prop:eta-triv}, the function $a \mapsto e^*_\Theta(\varphi_\lambda(v))(ab)$ on $A_\Theta(F)$ becomes a linear combination of expressions $\chi_\lambda(a) Q(\HC(a))$, where $\chi_\lambda$ is a continuous character of $A_\Theta(F)$ and $Q: X_*(A_\Theta) \to \CC$ is a polynomial function.
	
	Since $\varphi_\lambda(v) = |f|^\lambda \varphi(v)$, we have
	\[ \chi_\lambda =  |\omega|^\lambda \chi_0, \quad \lambda \in \Lambda_{\CC}. \]
	It suffices to show the square-integrability of $\chi_\lambda(a) Q(\HC(a)) =|\omega|^\lambda(a) \chi_0(a) Q(\HC(a))$ over $A_{\mathcal{F}}(F)^{>0}$ for $\Re(\lambda) \relgg{X} 0$. Thanks to Lemma \ref{prop:Lambda-positivity}, we know that $\angles{\lambda, v} > 0$ for every $\lambda \in \relint(\Lambda_{X,\Q})$ and every extremal ray $\Q_{\geq 0} v$ of $\mathcal{F}$, and the square-integrability follows.
\MyQED\end{proof}

\begin{corollary}\label{prop:conv-gg0}
	Assume $X^+$ is wavefront. Given $v \in V_\pi$ and $\xi \in \Schw$, the zeta integral $Z_{\lambda,\varphi}(v \otimes \xi)$ converges whenever $\Re(\lambda) \relgg{X} 0$
\end{corollary}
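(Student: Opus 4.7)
The plan is to combine the compact-exhaustion result of Lemma \ref{prop:compact-translate} with the $L^2$-estimate of Proposition \ref{prop:coeff-L2}, and then to conclude by Cauchy--Schwarz. Since the substantive analytic work has already been done in Proposition \ref{prop:coeff-L2}, this corollary is essentially a packaging argument.

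First, I would invoke axiom (S2) to see that $C := \overline{\Supp(\xi)}$ is a compact subset of $X(F)$, and, using that $X(F)$ is locally compact and totally disconnected in the non-archimedean setting, enlarge $C$ to a compact open neighborhood $C'$ of $C$ in $X(F)$. Lemma \ref{prop:compact-translate} applied to $C'$ then yields vectors $v_0, \ldots, v_m \in \mathcal{V}$ such that
$$ C' \cap X^+(F) \;\subset\; \bigcup_{i=0}^m \HC^{-1}\bigl(v_i + \mathcal{C}_X \cap \mathcal{V}\bigr) K \;=:\; \bigcup_{i=0}^m C_i^+. $$
I would also fix a compact open subgroup $J \subset G(F)$ with $v \in V_\pi^J$, which exists because $\pi$ is smooth; this $J$ will serve as the fixed subgroup underlying Proposition \ref{prop:coeff-L2}.

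Next, Proposition \ref{prop:coeff-L2} applied to each $C_i^+$ for this fixed $J$ produces, for every $i$, an open region $U_i \subset \Lambda_{\CC}$ of the form $\Re(\lambda) \relgg{X} 0$ on which $\varphi_\lambda(v)|_{C_i^+} \in L^2(C_i^+)$. Since only finitely many indices are involved, the intersection $\bigcap_i U_i$ is still of the form $\Re(\lambda) \relgg{X} 0$, whence
$$ \varphi_\lambda(v) \in L^2\bigl(C' \cap X^+(F)\bigr) \subset L^2(\Supp(\xi)). $$

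Finally, since $\xi$ vanishes outside $\Supp(\xi)$, the zeta integral reduces to an integral over $\Supp(\xi)$, and Cauchy--Schwarz for the $\mathscr{L}^{1/2}$-valued sections $\varphi_\lambda(v)$ and $\xi$ (whose product is $\mathscr{L}$-valued) bounds it by
$$ \|\varphi_\lambda(v)\|_{L^2(\Supp(\xi))} \cdot \|\xi\|_{L^2(X^+)}. $$
The first factor is finite by the previous step, and the second is finite by axiom (F3) evaluated at $\lambda = 0$. If one prefers a more structured statement, the same argument can be packaged as a direct application of Proposition \ref{prop:zeta-convergence}, taking $\mathcal{U}$ to be an open subset of $X(F)$ containing $\partial X(F) \cup C'$: the Cauchy--Schwarz estimate handles the integral over $\mathcal{U}$, and $\xi$ vanishes on $\mathcal{U}'$. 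I do not anticipate any genuine obstacle; the result follows essentially formally from the $L^2$-asymptotic analysis built up through Lemma \ref{prop:compact-translate} and Proposition \ref{prop:coeff-L2}, the latter being where the wavefront hypothesis and the smooth asymptotics $e^*_\Theta$ do the real work.
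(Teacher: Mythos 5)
Your proposal is correct and follows essentially the same route as the paper, which likewise combines Lemma \ref{prop:compact-translate}, Proposition \ref{prop:coeff-L2} (with $J$ small enough that $v \in V_\pi^J$), and Proposition \ref{prop:zeta-convergence}; your explicit Cauchy--Schwarz step is just the content of the latter proposition in the non-archimedean case. The enlargement of $\overline{\Supp(\xi)}$ to a compact open $C'$ is harmless but unnecessary, since Lemma \ref{prop:compact-translate} applies to any compact subset of $X(F)$.
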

\begin{proof}
	The Axiom \ref{axiom:zeta} asserts that $\Supp(\xi)$ has compact closure in $X(F)$. It remains to apply Lemma \ref{prop:compact-translate}, Proposition \ref{prop:coeff-L2} together with Proposition \ref{prop:zeta-convergence}, by working with a small enough $J$.
\MyQED\end{proof}

\chapter{Prehomogeneous vector spaces}\label{sec:pvs}
Unless otherwise specified, $F$ will denote a local field of characteristic zero. We also fix a nontrivial continuous unitary character $\psi: F \to \CC^\times$ and use the self-dual Haar measure on $F$; note that $\psi^{-1}$ leads to the same measure. The integration of densities on $F$-analytic manifolds is thus normalized.

The discussions in \S\ref{sec:lfe} on the non-Archimedean local functional equations will rely on the Hypothesis \ref{hyp:lfe}.

\section{Fourier transform of half-densities}\label{sec:Fourier}
The constructions below are largely extracted from \cite[\S 9.5]{SV17}.

Let $V$ be a finite-dimensional $F$-vector space. We consider an affine space $\mathcal{A}$ under $V$. The group of affine automorphisms $\Aut(\mathcal{A})$ sits in the short exact sequence
\[ 0 \to V \to \Aut(\mathcal{A}) \to \GL(V) \to 1; \]
here $V$ embeds as the normal subgroup of pure translations. It is always possible to choose a basepoint $0 \in \mathcal{A}$ to identify $\mathcal{A}$ with $V$, and this gives rise to a section $\GL(V) \to \Aut(\mathcal{A})$. Nevertheless, in view of future applications, we will stick to the affine set-up as far as possible.

Let $\mathscr{L}^s$ denote the sheaf of $s$-densities on $\mathcal{A}$, where $s \in \R$. It is trivializable as a $V$-equivariant sheaf: choose a basis of $e_1, \ldots, e_n$ of $V$ and let $x_1, \ldots, x_n \in V^\vee$ be the dual linear functionals, the density $|\dd x_1 \wedge \cdots \wedge \dd x_n|^s$ affords an everywhere nonvanishing, translation-invariant section for $\mathscr{L}^s$. We will mainly work with $s = \demi$.

\begin{definition}\index{Schwartz--Bruhat half-density}
	Choose a basepoint $0 \in \mathcal{A}$ and a basis of $V$ as above. Define the space of \emph{Schwartz--Bruhat half-densities} as
	\[ \Schw(\mathcal{A}) := \mathrm{SB}(V) \cdot |\dd x_1 \wedge \cdots \wedge \dd x_n|^{\demi} \]
	where $\mathrm{SB}(V)$ is the space of usual Schwartz--Bruhat functions on $V \simeq \mathcal{A}$; see \cite[\S 11]{Weil64}. Equip $\Schw(\mathcal{A})$ with the topology coming from $\mathrm{SB}(V)$.
\end{definition}
It is routine to show that the topology on $\Schw(\mathcal{A})$ is independent of the basepoint $0$ and the basis.

\begin{lemma}\label{prop:pvs-nuclear}
	The topological vector space $\Schw(\mathcal{A})$ is a separable, nuclear and barreled. For $F$ Archimedean it is Fréchet. For $F$ non-Archimedean it is algebraic in the sense of Definition \ref{def:algebraic-tvs}. The group $\Aut(\mathcal{A})$ acts continuously on the left of $\Schw(\mathcal{A})$.
\end{lemma}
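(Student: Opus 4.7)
The plan is to reduce every assertion to standard facts about $\mathrm{SB}(V)$ through the defining trivialization. Upon fixing a basepoint $0 \in \mathcal{A}$ (which identifies $\mathcal{A}$ with $V$) and a basis $e_1, \ldots, e_n$ of $V$ with dual coordinates $x_1, \ldots, x_n$, the map
$$ \mathrm{SB}(V) \stackrel{\sim}{\longrightarrow} \Schw(\mathcal{A}), \qquad f \longmapsto f \cdot |\dd x_1 \wedge \cdots \wedge \dd x_n|^{\demi} $$
is a topological isomorphism by the very definition of the topology on $\Schw(\mathcal{A})$. Hence it suffices to establish the required properties for $\mathrm{SB}(V)$ itself.

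For the structural properties: when $F$ is archimedean, $\mathrm{SB}(V)$ is the usual space of Schwartz functions on the finite-dimensional real vector space underlying $V$, which is a well-known nuclear Fréchet space and is separable; being Fréchet implies barreled by Proposition \ref{prop:bb-properties}. When $F$ is non-archimedean, $\mathrm{SB}(V) = C^\infty_c(V)$ consists of locally constant, compactly supported functions; writing $V = \bigcup_K V_K$ where $K$ ranges over compact open subsets and further decomposing each $C^\infty_c(V)_K$ according to stabilizing compact open subgroups (which are cofinal) exhibits $\mathrm{SB}(V)$ as a countable inductive limit of finite-dimensional subspaces, i.e.\ an algebraic topological vector space in the sense of Definition \ref{def:algebraic-tvs}. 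Lemma \ref{prop:algebraic-nuclear} then supplies separability, nuclearity and barreledness in one stroke.

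Next I would verify that the topology on $\Schw(\mathcal{A})$ is intrinsic (independent of basepoint and basis). Changing the basepoint amounts to a translation on $V$, which is a topological automorphism of $\mathrm{SB}(V)$; changing the basis via $g \in \GL(V)$ multiplies the half-density by the locally constant (hence harmless) factor $|\det g|^\demi$ and pulls back the function, both of which are topological automorphisms of $\mathrm{SB}(V)$ by the standard theory.

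The final and heaviest step is the continuity of the $\Aut(\mathcal{A})$-action. After the choice of basepoint we obtain a semidirect product decomposition $\Aut(\mathcal{A}) \simeq V \rtimes \GL(V)$; any $\varphi = (v, g)$ acts on $\xi = f \cdot |\dd x_1 \wedge \cdots \wedge \dd x_n|^\demi$ by
$$ (\varphi \cdot \xi)(x) = |\det g|^{-\demi} f\bigl( g^{-1}(x-v) \bigr) \cdot |\dd x_1 \wedge \cdots \wedge \dd x_n|^\demi. $$
Joint continuity of $\Aut(\mathcal{A}) \times \Schw(\mathcal{A}) \to \Schw(\mathcal{A})$ is the delicate point. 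In the non-archimedean case this is immediate since any $f \in \mathrm{SB}(V)$ is already invariant under a compact open subgroup and supported on a compact set, so the action factors through a finite-dimensional quotient on which continuity is trivial, and the prefactor $|\det g|^{-\demi}$ is continuous in $g$. In the archimedean case the continuity is the expected statement that $(v, g, f) \mapsto f(g^{-1}(\cdot - v))$ is jointly continuous on $V \times \GL(V) \times \mathrm{SB}(V)$; I would prove this by verifying, for each Schwartz seminorm $\|h\|_{N,\alpha} = \sup_x (1+|x|)^N |\partial^\alpha h(x)|$, a local estimate $\|\varphi \cdot \xi - \xi\|_{N,\alpha} \to 0$ as $\varphi \to 1$ with $\xi$ held in a bounded set, using the chain rule together with the elementary bound $(1 + |g^{-1}(x-v)|)^{-N} \lesssim (1+|x|)^{-N}$ uniformly for $(v, g)$ in a compact neighborhood of $(0, \identity)$. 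The prefactor $|\det g|^{-\demi}$ contributes only a continuous scalar. This is essentially the classical verification that $\GL_n(\R)$ and translations act continuously on $\Schw(\R^n)$, and constitutes the main (though routine) obstacle.
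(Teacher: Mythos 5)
Your proposal is correct and follows essentially the same route as the paper: reduce everything to $\mathrm{SB}(V)$ via the trivializing half-density, quote the standard nuclear-Fréchet facts in the archimedean case, and exhibit $\mathrm{SB}(V)$ as a countable inductive limit of finite-dimensional spaces (hence algebraic, then invoke Lemma \ref{prop:algebraic-nuclear}) in the non-archimedean case. The only difference is that you spell out the joint continuity of the $\Aut(\mathcal{A})$-action, which the paper dismisses as evident; your sketch of that verification is sound.
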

\begin{proof}
	It suffices to establish these properties for $\mathrm{SB}(V)$. In the Archimedean case, it is well-known to be a separable Fréchet space, thus barreled; the nuclear property is proved in \cite[p.530]{Tr67}. In the non-Archimedean case, recall that $\mathrm{SB}(V)$ is defined to be the topological vector space
	\[ \varinjlim_{W \supset W'} \text{Maps}(W/W', \CC) \]
	where $W, W'$ range over compact open additive subgroups of $V$, i.e.\ lattices --- in fact, it suffices to take a countable family --- and $\text{Maps}(W/W', \CC)$ is finite-dimensional, thus endowed with the standard topology. Hence $\mathrm{SB}(V)$ is algebraic, the required topological properties are then assured by Lemma \ref{prop:algebraic-nuclear}. The assertion concerning $\Aut(\mathcal{A})$-action is evident.
\MyQED\end{proof}

Let $\check{\mathscr{L}}^s$ denote the sheaf of $s$-densities on $V^\vee$. We set out to define a canonical $\Ga$-torsor $\mathbf{R}$ over $V^\vee$, where $V^\vee$ is now viewed as an $F$-variety by slightly abusing notation. At each $\lambda \in V^\vee$, the fiber of $\mathbf{R}$ is the contracted product
\[ \mathbf{R}_{\lambda} := \mathcal{A} \utimes{V} \Ga, \]
i.e.\ its elements are equivalence classes $[a, t]$ such that $[a+v, t] = [a, \angles{\lambda, v} + t]$, where $a \in \mathcal{A}$, $v \in V$ and $t \in \Ga$. One readily verifies that various $\mathbf{R}_{\lambda}$ glue into a $\Ga$-torsor $\mathbf{R}$ over $V^\vee$. Every choice of base point $0 \in \mathcal{A}$ yields a section (thus a trivialization) of $\mathbf{R}$: simply take
\begin{gather}\label{eqn:R-trivialization}
	\lambda \longmapsto [0,0] \in \mathbf{R}_\lambda.
\end{gather}

\begin{notation}
	Being canonical, the torsor $\mathbf{R} \to V^\vee$ becomes equivariant under the action of $\Aut(\mathcal{A})$. Our convention is to let $\Aut(\mathcal{A})$ act \emph{on the right} of $V$, $V^\vee$ (see below) as well as on $\mathbf{R}$. In terms of equivalence classes, $g \in \Aut(\mathcal{A})$ acts on $\mathbf{R}$ as
	\begin{align*}
		\mathbf{R}_\lambda & \longrightarrow \mathbf{R}_{\lambda g} \\
		[a, t] & \longmapsto [ag, t].
	\end{align*}
	Note that $\angles{\lambda g, v} = \angles{\lambda, v\vec{g}^{-1}}$ for all $v \in V$, i.e.\ the contragredient, and $\vec{g} \in \GL(V)$ stands for the vectorial part of $g$. When $g$ is a pure translation $a \mapsto a+v$, we have $\lambda g = \lambda$, whereas
	\[ [a,t]g = [ag, t] = [a+v, t] = [a, t + \angles{\lambda, v}] \]
	on the fiber above $\lambda$.
\end{notation}

Using $\psi: \Ga(F) = F \to \CC^\times$, we obtain the $\CC^\times$-torsor $R_\psi$ over $V^\vee$ from $\mathbf{R}$, now regarded in the category of $F$-analytic manifolds. Again, any choice of base point trivializes $R_\psi$. Also, $R_\psi$ is equivariant under $\Aut(\mathcal{A})$. If $g$ is a pure translation $a \mapsto a+v$, then $g$ dilates each fiber $R_{\psi, \lambda}$ by the factor $\psi(\angles{\lambda, v})$. It is also clear from this description that
\begin{compactitem}
	\item $R_\psi \otimes R_{\psi^{-1}}$ is canonically trivialized, and
	\item $\overline{R_\psi} = R_{\psi^{-1}}$, thus $R_\psi$ carries a canonical hermitian pairing.
\end{compactitem}

The canonical pairing $\angles{\cdot, \cdot}: V \otimes V^\vee \to F$ induces a pairing between $\topwedge V$ and $\topwedge V^\vee$, which is still denoted by $\angles{\cdot,\cdot}$. Also, the bi-character $\psi(\angles{\cdot, \cdot})$ allows us to talk about duality between Haar measures on $V$ and $V^\vee$.

\begin{lemma}\label{prop:dual-measure}
	Let $\eta \in \topwedge V^\vee$ and $\eta' \in \topwedge V$. Suppose that $\eta, \eta' \neq 0$ so that they induce Haar measures on $V$ and $V^\vee$, respectively. Then the measures are dual with respect to $\psi$ if and only if $|\angles{\eta, \eta'}|=1$.
\end{lemma}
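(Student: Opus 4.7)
The plan is to reduce to the case of the standard pairing on $F^n$ by choosing dual bases, and then to use the fact that the self-dual Haar measure on $F^n$ (with respect to $\psi$ composed with the standard pairing) is the product of the self-dual measures on $F$.

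First, I would fix a basis $e_1, \ldots, e_n$ of $V$ with dual basis $e_1^*, \ldots, e_n^*$ of $V^\vee$, and write
\[
\eta' = c \cdot e_1 \wedge \cdots \wedge e_n, \qquad \eta = d \cdot e_1^* \wedge \cdots \wedge e_n^*
\]
for some $c, d \in F^\times$. Under the natural pairing $\topwedge V^\vee \otimes \topwedge V \to F$ normalized so that $\langle e_1^* \wedge \cdots \wedge e_n^*, e_1 \wedge \cdots \wedge e_n \rangle = 1$, we have $\langle \eta, \eta' \rangle = cd$. The Haar measures induced on $V$ and $V^\vee$ are then $|c| \cdot \mu_V^{\mathrm{std}}$ and $|d| \cdot \mu_{V^\vee}^{\mathrm{std}}$ respectively, where $\mu_V^{\mathrm{std}}$ and $\mu_{V^\vee}^{\mathrm{std}}$ denote the product measures on $V \simeq F^n$ and $V^\vee \simeq F^n$ built from the self-dual Haar measure on $F$.

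Next, I would invoke the standard fact that, by Fubini and the $\psi$-self-duality of the Haar measure on $F$, the measures $\mu_V^{\mathrm{std}}$ and $\mu_{V^\vee}^{\mathrm{std}}$ are themselves $\psi$-dual with respect to the standard pairing on $F^n \times F^n$, which coincides with $\langle\cdot,\cdot\rangle$ in the chosen dual bases. This reduces the lemma to an elementary scaling computation: if $\mu$ on $V$ is $\psi$-dual to $\mu^\vee$ on $V^\vee$, then for any $a, b \in \R_{>0}$ the measure $a\mu$ is $\psi$-dual to $b\mu^\vee$ if and only if $ab = 1$. This is immediate from Fourier inversion: writing $\hat{f}(\lambda) = \int_V f(v)\psi(\langle\lambda,v\rangle)\, a\, d\mu(v)$ and its inverse with $b\, d\mu^\vee(\lambda)$, one recovers $f$ multiplied by $ab$.

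Applying this with $a = |c|$ and $b = |d|$, the measures from $\eta'$ and $\eta$ are $\psi$-dual if and only if $|c|\cdot|d| = |cd| = |\langle \eta, \eta'\rangle| = 1$, which is the claim. There is no serious obstacle; the only point that requires mild care is checking that the reduction to $F^n$ is independent of the choice of basis, but this is automatic since both $|\eta|$ and $|\eta'|$ and the pairing $\langle\eta,\eta'\rangle$ are intrinsic.
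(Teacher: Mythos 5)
Your proof is correct and follows essentially the same route as the paper's: the paper reduces to the case $\dim_F V = 1$ (which amounts to the same choice of dual bases and Fubini factorization you carry out explicitly) and then invokes the $\psi$-self-duality of the chosen Haar measure on $F$, exactly as in your scaling computation. You have simply written out the details that the paper leaves implicit.
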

\begin{proof}
	It suffices to verify the case $\dim_F V = 1$. Then the assertion stems from the fact that our Haar measure on $F$ is self-dual with respect to $\psi$.
\MyQED\end{proof}

In general, any given vector bundle $\mathscr{E}$ over $V^\vee$ can be twisted by the $\CC^\times$-torsor $R_\psi$ by forming $\mathscr{E} \otimes R_\psi$. We retain the bracket notation for denoting elements in the fibers of $\mathscr{E} \otimes R_\psi$, namely:
\begin{gather}\label{eqn:bracket-notation}
	[a, s] = s \otimes [a, 1], \quad  a \in \mathcal{A}, s \in \mathscr{E}_\lambda \\
	[a+v, s] = \left[ a, \;\psi(\angles{\lambda, v})s \right], \quad v \in V.
\end{gather}
We also write $[\cdot, \cdot]_\lambda$ in order to indicate the $\lambda \in V^\vee$. If $\mathscr{E}$ is $\Aut(\mathcal{A})$-equivariant as well, then the action is $[a,s]_\lambda g = [ag, sg]_{\lambda g}$.

Now consider the sheaf $\check{\mathscr{L}}^s$ of $s$-densities over $V^\vee$, for $s \in \R$: it is also $\Aut(V^\vee)$-equivariant. Since $R_\psi$ is trivializable, we may define the Schwartz--Bruhat space $\Schw(V^\vee, R_\psi)$ as a subspace of $C^\infty(V^\vee, \check{\mathscr{L}}^{1/2} \otimes R_\psi)$, on which $\Aut(\mathcal{A})$ acts continuously on the left.

By the generalities from \S\ref{sec:integration-density}, one has the canonically defined unitary representations
\[ L^2(\mathcal{A}, \mathscr{L}^{1/2}), \quad L^2(V^\vee, \check{\mathscr{L}}^{1/2} \otimes R_\psi) \]
of $\Aut(\mathcal{A})$. They contain $\Schw(\mathcal{A})$ and $\Schw(V^\vee, R_\psi)$ as dense subspaces, respectively. Under this set-up, the elementary properties of Fourier transform can be summarized into a single statement as follows.

\begin{theorem}\label{prop:Fourier-def}\index{Fourier transform}
	There is a canonical $\Aut(\mathcal{A})$-equivariant continuous isomorphism, called the Fourier transform:
	\begin{align*}
		\mathcal{F}: \Schw(\mathcal{A}) & \stackrel{\sim}{\longrightarrow} \Schw(V^\vee, R_\psi) \\
		\xi & \longmapsto \left( \lambda \mapsto \int_{a \in \mathcal{A}} [a, \xi(a)]_{\lambda} \right)
	\end{align*}
	in the notation of \eqref{eqn:bracket-notation}; it extends to an $\Aut(\mathcal{A})$-equivariant isometry
	\[ \mathcal{F}: L^2(\mathcal{A}, \mathscr{L}^{\demi}) \stackrel{\sim}{\longrightarrow} L^2(V^\vee, \check{\mathscr{L}}^{\demi} \otimes R_\psi). \]
\end{theorem}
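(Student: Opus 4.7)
The plan is to reduce the theorem to the classical Fourier transform on Schwartz–Bruhat functions by trivialising every line bundle in sight via a choice of basepoint and basis, then to check that the resulting object is independent of these choices (equivalently, $\Aut(\mathcal{A})$-equivariant).

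First, I would make rigorous sense of the integrand $[a, \xi(a)]_\lambda$. For $\xi \in \Schw(\mathcal{A})$ we have $\xi(a) \in \mathscr{L}^\demi_a$, and because $\mathcal{A}$ is an affine space under $V$ the fibres of $\mathscr{L}^\demi$ at different points of $\mathcal{A}$ are canonically identified by translation. Lemma \ref{prop:dual-measure} provides a canonical isomorphism $\mathscr{L}^\demi_V \otimes \check{\mathscr{L}}^\demi_{V^\vee} \rightiso \CC$ coming from the $\psi$-duality of Haar measures; under this identification, $[a, \xi(a)]_\lambda$ defines an $\mathscr{L}$-valued density in $a$ taking values in the complex line $\check{\mathscr{L}}^\demi_\lambda \otimes R_{\psi,\lambda}$, where the dependence on $a$ picks up the phase $\psi(\angles{\lambda, a - a_0})$ for any auxiliary basepoint $a_0$. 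The integral $\int_{\mathcal{A}} [a, \xi(a)]_\lambda$ is thus an element of $\check{\mathscr{L}}^\demi_\lambda \otimes R_{\psi,\lambda}$.

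Second, I would choose a basepoint $0 \in \mathcal{A}$ and a basis $e_1, \ldots, e_n$ of $V$ with dual basis $x_1, \ldots, x_n \in V^\vee$. This trivialises $\mathscr{L}^\demi$ by $|dx_1 \wedge \cdots \wedge dx_n|^\demi$, trivialises $\check{\mathscr{L}}^\demi$ by $|de_1 \wedge \cdots \wedge de_n|^\demi$ (a dual Haar measure with respect to $\psi$ by Lemma \ref{prop:dual-measure}), and trivialises $R_\psi$ by \eqref{eqn:R-trivialization}. Writing $\xi = \phi \cdot |dx|^\demi$ with $\phi \in \mathrm{SB}(V)$, the formula becomes
\[
	\mathcal{F}(\xi)(\lambda) = \left( \int_V \phi(a) \psi(\angles{\lambda, a}) \dd a \right) |de|^\demi \otimes [0,1]_\lambda,
\]
whose inner integral is the classical Fourier transform. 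By Bruhat's theorem (in the non-archimedean case) and standard Schwartz-space theory (in the archimedean case) this is a continuous isomorphism $\mathrm{SB}(V) \rightiso \mathrm{SB}(V^\vee)$; by the Plancherel theorem for dual Haar measures it extends to an isometry $L^2(V) \rightiso L^2(V^\vee)$. Continuity of $\mathcal{F}$ and the $L^2$-isometry assertion follow at once.

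Third, I would verify $\Aut(\mathcal{A})$-equivariance, which simultaneously shows canonicity. For a pure translation $g: a \mapsto a+v$ one has $\lambda g = \lambda$; change of variables in the integral produces a phase $\psi(-\angles{\lambda, v})$ which is exactly cancelled by the dilation of $R_{\psi,\lambda}$ by $\psi(\angles{\lambda, v})$ recorded earlier. For $g \in \GL(V)$ (using the basepoint to split $\Aut(\mathcal{A}) \twoheadrightarrow \GL(V)$), the $|\det g|^\demi$ factors arising from the equivariant structure on $\mathscr{L}^\demi$ and $\check{\mathscr{L}}^\demi$ match up via the duality of measures, and the classical transformation law $\widehat{\phi \circ g} = \hat{\phi} \circ \check{g}^{-1}$ of the Fourier transform under the contragredient yields the required identity. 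The main obstacle is bookkeeping: one must carefully organise the canonical identifications of the various density bundles and of the twist by $R_\psi$ so that the integral formula is manifestly basepoint- and basis-independent before invoking the classical theory. Once the conventions of Lemma \ref{prop:dual-measure} and the equivariant structure on $R_\psi$ are laid out, the theorem reduces to standard facts about the Schwartz–Bruhat Fourier transform.
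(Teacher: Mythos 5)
Your proposal is correct and follows essentially the same route as the paper: make sense of the integrand via the canonical pairing of $\topwedge V$ with $\topwedge V^\vee$, trivialise by a basepoint and basis to reduce to the classical Schwartz--Bruhat Fourier transform (with the $L^2$-isometry coming from Lemma \ref{prop:dual-measure} on dual Haar measures), and deduce $\Aut(\mathcal{A})$-equivariance from the variance of the usual Fourier transform under translations and linear maps together with the twist by $R_\psi$. No substantive differences from the paper's argument.
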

\begin{proof}
	Let us explain the integral over $\int_{\mathcal{A}}$ first. The integrand is an element in the fiber of $R_\psi \otimes \mathscr{L}^{\demi}$ at $\lambda$. To integrate it, we may multiply by some $|\eta|^{1/2}$ with $\eta \in \topwedge V^\vee$ nonzero, and divide it out after integration; such an operation is justified by the canonical pairing between $\topwedge V$ and $\topwedge V^\vee$.
	
	In down-to-earth terms, one chooses a base point $0 \in \mathcal{A}$ to identify $V$ with $\mathcal{A}$ and trivialize $R_\psi$. The notation \eqref{eqn:bracket-notation} leads to
	\begin{gather}\label{eqn:classical-Fourier}
		\lambda \mapsto \int_{v \in V} \xi(v) \psi\left( \angles{\lambda, v}\right).
	\end{gather}	
	This is almost the familiar Fourier transform $\mathrm{SB}(V) \rightiso \mathrm{SB}(V^\vee)$, except that we have to multiply some $|\eta|^{1/2}$ then divide it out, as has been performed previously. It follows that $\mathcal{F}$ is a continuous isomorphism. The fact that $\mathcal{F}$ is an $L^2$-isometry is basically a consequence of Lemma \ref{prop:dual-measure}.
	
	In the same manner, the $\Aut(\mathcal{A})$-equivariance can be deduced from the variance of usual Fourier transform under translations and linear transforms. Alternatively, it also follows from the formula $\lambda \mapsto \int_{a \in \mathcal{A}} [a, \xi(a)]_\lambda$ by formal manipulations.
\MyQED\end{proof}

\begin{corollary}\label{prop:Fourier-vector}
	By choosing a base point $0 \in \mathcal{A}$ so that $\mathcal{A} \simeq V$, the Fourier transform becomes an $\GL(V)$-equivariant continuous isomorphism
	\[ \mathcal{F}: \Schw(V) \stackrel{\sim}{\longrightarrow} \Schw(V^\vee) \]
	defined by \eqref{eqn:classical-Fourier}. It extends to an equivariant isometry $L^2(V, \mathscr{L}^{\demi}) \rightiso L^2(V^\vee, \check{\mathscr{L}}^{\demi})$.
\end{corollary}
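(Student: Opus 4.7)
The plan is to deduce Corollary \ref{prop:Fourier-vector} as a direct specialization of Theorem \ref{prop:Fourier-def}, once a base point is chosen. First I would use the base point $0 \in \mathcal{A}$ to identify $\mathcal{A}$ with $V$ via $v \mapsto 0+v$, and simultaneously trivialize the torsor $R_\psi$ over $V^\vee$ using the canonical section \eqref{eqn:R-trivialization}, namely $\lambda \mapsto [0,0]_\lambda$. Under this trivialization the sheaf $\check{\mathscr{L}}^{\demi} \otimes R_\psi$ becomes $\check{\mathscr{L}}^{\demi}$, so the Schwartz-Bruhat space $\Schw(V^\vee, R_\psi)$ and its $L^2$ completion are identified with $\Schw(V^\vee)$ and $L^2(V^\vee, \check{\mathscr{L}}^{\demi})$, respectively.

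Next I would unravel the formula of Theorem \ref{prop:Fourier-def} in this trivialization. Writing $a = 0+v$ with $v \in V$, the bracket identity $[0+v, s]_\lambda = [0, \psi(\angles{\lambda,v}) s]_\lambda$ shows that $[a, \xi(a)]_\lambda$ corresponds to $\psi(\angles{\lambda,v})\xi(v)$ under the trivialization. Consequently $\mathcal{F}$ becomes the map $\xi \mapsto \left(\lambda \mapsto \int_V \xi(v)\psi(\angles{\lambda,v})\right)$ of \eqref{eqn:classical-Fourier}, interpreted as densities via the multiplication-then-division device explained in the proof of Theorem \ref{prop:Fourier-def}. The continuity of $\mathcal{F}$ and its extension to an $L^2$-isometry are therefore inherited immediately from \textit{loc.\,cit.}

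It remains to discuss equivariance. After the base point is fixed, $\Aut(\mathcal{A})$ splits as $V \rtimes \GL(V)$ with $\GL(V)$ being the stabilizer of $0$. The section \eqref{eqn:R-trivialization} used to trivialize $R_\psi$ is $\GL(V)$-equivariant (since $\GL(V)$ fixes $0$ and thus preserves the class $[0,0]_\lambda$), though it fails to be $V$-equivariant — the translations act on $R_\psi$ by the non-trivial characters $\psi(\angles{\lambda,\cdot})$, which accounts precisely for the classical variance of Fourier transform under translations. Hence the $\Aut(\mathcal{A})$-equivariance of Theorem \ref{prop:Fourier-def} descends to $\GL(V)$-equivariance of the specialized $\mathcal{F}: \Schw(V) \rightiso \Schw(V^\vee)$, and similarly on the $L^2$-level. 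No genuine obstacle is expected, the only point requiring care being the bookkeeping of the torsor trivialization, which is what turns the canonical ``coordinate-free'' formula of Theorem \ref{prop:Fourier-def} into the familiar expression \eqref{eqn:classical-Fourier}.
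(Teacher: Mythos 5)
Your proposal is correct and follows essentially the same route as the paper: everything except equivariance is inherited directly from Theorem \ref{prop:Fourier-def}, and the $\GL(V)$-equivariance comes from noting that the trivializing section \eqref{eqn:R-trivialization} is $\GL(V)$-stable once the base point is fixed. Your additional remarks on the bracket identity and the failure of $V$-equivariance are accurate elaborations of the same argument.
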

\begin{proof}
	Only the equivariance requires explanation. The choice of $0 \in \mathcal{A}$ embeds $\GL(V)$ as a subgroup of $\Aut(\mathcal{A})$. It suffices to notice that the trivializing section \eqref{eqn:R-trivialization} of $\mathbf{R}$ is $\GL(V)$-stable.
\MyQED\end{proof}

A similar normalization of Fourier transforms has appeared in \cite[\S 7.10]{BR05} and \cite{St67}. For a version fibered in families, see \cite[9.5.8]{SV17}.

\section{Review of prehomogeneous vector spaces}
The first part of this section is algebro-geometric: we only require $F$ to be a field of characteristic zero with algebraic closure $\bar{F}$, and $G$ can be any connected affine $F$-group.
\begin{definition}\index{prehomogeneous vector space}
	A \emph{prehomogeneous vector space} over $F$ is a triplet $(G, \rho, X)$ where
	\begin{compactitem}
		\item $G$ is a connected affine algebraic $F$-group,
		\item $X$ is a finite-dimensional $F$-vector space on which $\GL(X)$ acts \emph{on the right},
		\item $\rho: G \to \GL(X)$ is an algebraic representation, making $V$ into a $G$-variety,
	\end{compactitem}
	such that there exists an open dense $G$-orbit $X^+$ in $X$. Equivalently, there exists $x_0 \in X(F)$ such that $\overline{x_0 \rho(G)} = V$. We shall often abbreviate the triplet by $X$ or by $\rho$.
\end{definition}
We adopt the shorthand $xg$ for $x\rho(g)$. The contragredient representation $\check{\rho}: G \to \GL(\check{X})$ furnishes the \emph{dual triplet} $(G, \check{\rho}, \check{X})$, but the latter is not prehomogeneous in general. The prehomogeneity of $\check{X}$ will be crucial in the study of local functional equations. For this purpose, we record the convenient notion of \emph{regularity} to ensure the prehomogeneity of $\check{\rho}$. As a byproduct, the density bundles are trivializable in the regular case.

\begin{definition}[Cf.\ {\cite[Chapter 2]{Ki03} or \cite[p.468]{Sa89}}]\label{def:pvs-regular}\index{relative invariant}\index{prehomogeneous vector space!regular}
	A \emph{relative invariant} over $F$ for a prehomogeneous vector space $(G, \rho, X)$ is an eigenvector $f \in F(X)^\times$ for the left $G$-action, say with the eigencharacter $\omega \in X^*(G)$. Call a relative invariant $f$ \emph{non-degenerate} if $f^{-1}\dd f: X \dashrightarrow \check{X}$ is a birational equivalence. Prehomogeneous vector spaces admitting some non-degenerate relative invariant over $F$ are called \emph{$F$-regular}, or simply \emph{regular}.
\end{definition}
Note that the rational map $f^{-1}\dd f$ is always $G$-invariant, see \cite[Proposition 2.13]{Ki03}.

\begin{remark}\label{rem:basic-relative-invariant}\index{relative invariant!basic}\index{X_rho(G)@$X^*_\rho(G)$}
	In this generality, by \cite[\S 1.1]{Sa89} there are relative invariants $f_1, \ldots, f_r \in F[X]$ such that
	\begin{compactitem}
		\item every relative invariant $f$ has a unique factorization into $c\prod_{i=1}^r f_i^{a_i}$ with $a_i \in \Z$ and $c \in F^\times$;
		\item their characters $\omega_1, \ldots, \omega_r$ are linearly independent;
		\item the zero loci of $f_1, \ldots, f_r$ are precisely the irreducible components of codimension one of $\partial X = X \smallsetminus X^+$.
	\end{compactitem}
	We may use the same indexing for $f_i$ and the codimension-one components of $\partial X$, so that each $f_i$ is unique up to $F^\times$. Call them the \emph{basic relative invariants}. The eigencharacters of relative invariants form a subgroup $X^*_\rho(G) \subset X^*(G)$.
\end{remark}

\begin{theorem}\label{prop:pvs-regular}
	If $(G, \rho, X)$ is regular, its dual $(G, \check{\rho}, \check{X})$ is prehomogeneous and regular as well. In the case, $(\det\rho)^2$ is the eigencharacter of some relative invariant $f$ of $(G, \rho, X)$. Moreover, in this case
	\begin{itemize}
		\item $X^*_\rho(G) = X^*_{\check{\rho}}(G)$;
		\item $f^{-1}\dd f$ is a $G$-equivariant isomorphism $X^+ \rightiso \check{X}^+$;
		\item $\partial X$ is a hypersurface if and only if $\partial \check{X}$ is.
	\end{itemize}
\end{theorem}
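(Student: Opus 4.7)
My guiding object would be the rational map $\varphi := f^{-1}\dd f\colon X \dashrightarrow \check X$, where $f$ is a chosen non-degenerate relative invariant with eigencharacter $\omega$, so $f(xg) = \omega(g)f(x)$. First I would verify $G$-equivariance: the chain rule applied to $\log f(xg) = \log f(x) + \log\omega(g)$ gives $\dd\log f(xg)\circ\rho(g) = \dd\log f(x)$, which reads $\varphi(xg) = \varphi(x)\check\rho(g)$ once we identify $T_xX = X$ and $T_{\varphi(x)}\check X = \check X$. The divisor of $f$ is $G$-stable and hence disjoint from the open orbit $X^+$, so $\varphi$ is regular on $X^+$. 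Birationality of $\varphi$ (the non-degeneracy hypothesis) ensures $\varphi|_{X^+}$ is dominant; combined with equivariance, $\varphi(X^+)$ is a single $G$-orbit of dimension $\dim X = \dim\check X$, hence open in $\check X$. This is the open orbit $\check X^+$, proving that $(G,\check\rho,\check X)$ is prehomogeneous. A $G$-equivariant birational morphism between homogeneous $G$-spaces of equal dimension has zero-dimensional fibres, so in characteristic zero it is an open immersion; together with surjectivity onto $\check X^+$, this yields the $G$-equivariant isomorphism $\varphi\colon X^+\rightiso \check X^+$ demanded by the second bullet.

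Next I would compute the Jacobian of $\varphi$, which in any linear coordinates equals the Hessian $H := (\partial_i\partial_j \log f)_{ij}$. Differentiating $\log f(xg) = \log f(x) + \log\omega(g)$ twice yields $H(x) = \rho(g)^\mathrm{T} H(xg)\rho(g)$, whence $\det H(xg) = (\det\rho(g))^{-2}\det H(x)$. Non-degeneracy means $\det H \not\equiv 0$, so $\det H \in F(X)^\times$ is a relative invariant with eigencharacter $(\det\rho)^{-2}$; since the eigencharacters form a group, $(\det\rho)^2 \in X^*_\rho(G)$, realised for instance by $(\det H)^{-1}$.

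To obtain regularity of $\check X$ and the equality $X^*_\rho(G) = X^*_{\check\rho}(G)$, I would exploit the isomorphism $\varphi\colon X^+\rightiso \check X^+$. Pullback gives a $G$-equivariant isomorphism $\varphi^*\colon F(\check X) \rightiso F(X)$ carrying $G$-eigenvectors to $G$-eigenvectors of the same character, giving $X^*_{\check\rho}(G) = X^*_\rho(G)$. It remains to produce a non-degenerate relative invariant $\check f$ on $\check X$. I would characterise $\check f$ up to scalar by the requirement $\check f^{-1}\dd\check f = \varphi^{-1}$ on $\check X^+$; a concrete candidate is built from the algebraic Legendre transform, i.e.\ by noting that the Jacobian of the rational map $\varphi^{-1}\colon \check X\dashrightarrow X$ is (the pull-back of) $H^{-1}$, a symmetric rational matrix on $\check X$, from which an antiderivative $\log\check f \in F(\check X)$ exists up to additive scalars. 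The non-degeneracy of $\check f$ is then automatic: its Hessian is the inverse of the Hessian of $\log f$ pulled through $\varphi^{-1}$, hence invertible. Alternatively, one can invoke the classical theorem of M.\ Sato; see \cite[\S 1]{Sa89} or \cite[Proposition 2.15]{Ki03}.

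Finally, for the hypersurface equivalence, I would observe that $X \cong \A^n$ is factorial, so by unique factorisation together with algebraic Hartogs, the open complement $X^+$ is affine if and only if $\partial X$ is pure of codimension one, i.e.\ a hypersurface. Via the iso $\varphi\colon X^+ \rightiso \check X^+$, affineness is preserved, giving the same property for $\partial \check X$. The principal obstacle I foresee is the construction of $\check f$ in the third paragraph: the naïve analytic Legendre transform produces a transcendental function, and proving rationality of the candidate in the algebraic category requires the symmetry arguments of Sato–Kimura to be made precise, which is the technical heart of the argument.
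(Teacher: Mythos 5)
Your proposal is correct in substance, and it is worth noting that the paper itself offers no argument here at all: its ``proof'' is a bare citation to \cite[\S 1]{Sa89} and \cite[Theorem 2.16 and Corollary 2.17]{Ki03}. You have reconstructed the standard Sato--Kimura argument that those references contain. The routine parts of your sketch are sound: equivariance of $\varphi=f^{-1}\dd f$ by differentiating $\log f(xg)=\log f(x)+\log\omega(g)$ (this is exactly \cite[Proposition 2.13]{Ki03}, which the paper quotes just before the theorem); prehomogeneity of $\check X$ from dominance plus equivariance; the upgrade of the birational equivariant map to an isomorphism $X^+\rightiso\check X^+$ via finiteness of fibres and Zariski's main theorem; the Hessian identity $H(x)=\rho(g)H(xg)\rho(g)^{\mathrm T}$ giving $(\det H)^{-1}$ as a relative invariant (in $F(X)^\times$, as Definition \ref{def:pvs-regular} permits) with eigencharacter $(\det\rho)^2$; the equality $X^*_\rho(G)=X^*_{\check\rho}(G)$ by transport along $\varphi$; and the hypersurface equivalence routed through affineness of $X^+\cong\check X^+$ (the equivalence ``$\partial X$ hypersurface iff $X^+$ affine'' being \cite[Theorem 2.28]{Ki03}, which the paper also invokes in Theorem \ref{prop:pvs-geometric-axiom}).

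The one place where your argument is not yet a proof is the one you flag yourself: producing a \emph{non-degenerate} relative invariant $\check f$ on $\check X$. The ``antiderivative of $H^{-1}$'' device does not work as stated, since a closed rational $1$-form need not be $\dd\log$ of a rational function; the rationality of the primitive is precisely the content of Sato's lemma (the cleaner route is to set $\check f:=c\cdot(f^{-1}\circ\varphi^{-1})$ and verify $\check f^{-1}\dd\check f=\varphi^{-1}$, which is where the Sato--Kimura symmetry argument enters). Since you explicitly defer this step to \cite[\S 1]{Sa89} and \cite[Proposition 2.15]{Ki03} --- the very sources the paper cites for the whole theorem --- your proposal is at least as complete as the paper's own treatment.
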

\begin{proof}
	See \cite[\S 1]{Sa89}; the case over $\bar{F}$ is discussed in detail in \cite[Theorem 2.16 and Corollary 2.17]{Ki03}.
\MyQED\end{proof}

\begin{notation}
	We denote the Schwartz--Bruhat spaces of half-densities attached to $X$ (resp.\  $\check{X}$) as $\Schw(X)$ (resp.\  $\Schw(\check{X})$) as in \S\ref{sec:Fourier}. The $L^2$-spaces associated to half-densities are abbreviated as $L^2(X)$ (resp.\  $L^2(\check{X})$), so that the Fourier transform in Corollary \ref{prop:Fourier-vector} gives the commutative diagram:
	\[ \begin{tikzcd}
		L^2(X) \arrow{r}[above]{\mathcal{F}}[below]{\sim} & L^2(\check{X}) \\
		\Schw(X) \arrow{r}[above]{\sim}[below]{\mathcal{F}} \arrow[hookrightarrow]{u} & \Schw(\check{X}) \arrow[hookrightarrow]{u}
	\end{tikzcd}\]
\end{notation}

Some easy observations:
\begin{compactitem}
	\item Since $(\det \rho)^2 \in X^*_\rho(G)$, the $G$-equivariant sheaf of $s$-densities $\mathscr{L}^s$ is trivializable for all $s \in \R$.
	\item $L^2(X^+) = L^2(X)$ (resp.\  $L^2(\check{X}^+) = L^2(\check{X})$ if $\check{X}$ is prehomogeneous) since the complement has lower dimension.
	\item The topological vector spaces displayed above are continuous $G(F)$-representations, and the Fourier transform $\mathcal{F}$ is $G(F)$-equivariant.
\end{compactitem}

Hereafter, we revert to the usual convention that $F$ is a local field of characteristic zero, and $G$ is split connected reductive. Consider a prehomogeneous vector space $(G, \rho, X)$; assume that the open $G$-orbit $X^+$ is an affine spherical homogeneous $G$-space. The first goal is to verify the geometric Axiom \ref{axiom:geometric} for $X^+ \hookrightarrow X$.

\begin{theorem}\label{prop:pvs-geometric-axiom}
	Under the conventions above, the Axiom \ref{axiom:geometric} is satisfied.
\end{theorem}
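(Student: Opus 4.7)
The entire verification is driven by the structure theorem for basic relative invariants recalled in Remark~\ref{rem:basic-relative-invariant}: it furnishes $f_1,\ldots,f_r \in F[X]$ with \Z-linearly independent eigencharacters $\omega_1,\ldots,\omega_r \in X^*(G)$ such that every relative invariant factors uniquely as $c\prod_{i=1}^r f_i^{a_i}$ with $c \in F^\times$ and $a_i \in \Z$, and such that $V(f_1),\ldots,V(f_r)$ exhaust the codimension-one irreducible components of $\partial X$. To establish (G1), note that $F(X^+)^{(G)} = F(X)^{(G)}$ since $X^+$ is open dense in $X$, so $\Lambda$ is exactly the group of eigencharacters of relative invariants, namely $\Lambda = \bigoplus_{i=1}^r \Z\omega_i$; in particular $r = \rank(\Lambda)$. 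A relative invariant $c\prod_i f_i^{a_i}$ lies in $F[X]$ precisely when all $a_i \geq 0$, whence $\Lambda_X = \sum_i \Z_{\geq 0}\omega_i$. Therefore $\Lambda_{X,\Q} = \sum_i \Q_{\geq 0}\omega_i$ is the simplicial cone spanned by the \Q-basis $(\omega_i)_{i=1}^r$ of $\Lambda_{\Q}$, with $\omega_1,\ldots,\omega_r$ as the minimal integral generators of its extremal rays; this yields (G1).

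For (G2), since each $f_i \in F[X]$ has no poles, $\Supp(\divisor(f_i))$ coincides with the vanishing locus $V(f_i)$. By Remark~\ref{rem:basic-relative-invariant}, the union $|D| := \bigcup_{i=1}^r V(f_i)$ is contained in $\partial X$, with equality if and only if $\partial X$ has no irreducible component of codimension $\geq 2$. To prove the reverse inclusion, consider the $G$-stable principal open subset $Y := X \smallsetminus |D|$ defined by $f_1 \cdots f_r \neq 0$, which is affine and contains $X^+$. Suppose for contradiction that $Y \smallsetminus X^+ = \partial X \smallsetminus |D|$ contains an irreducible component $Z$ of codimension one in $Y$; since $G$ is connected it permutes components trivially, so $Z$ is $G$-stable, and its closure $\bar{Z}$ in $X$ is a $G$-invariant codimension-one irreducible closed subvariety of $X$. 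By factoriality of the polynomial ring $F[X]$, the defining ideal of $\bar{Z}$ is principal and its generator, being sent by each $g \in G$ to a scalar multiple of itself, is a relative invariant; by the uniqueness in Remark~\ref{rem:basic-relative-invariant}, this generator is proportional to some $f_i$, so $\bar{Z} = V(f_i) \subset |D|$, contradicting $Z \subset Y$. Hence $Y \smallsetminus X^+$ has pure codimension at least two in the smooth (thus normal) variety $Y$, and algebraic Hartogs yields $\mathcal{O}(X^+) = \mathcal{O}(Y)$. Since both $X^+$ and $Y$ are affine, the open inclusion $X^+ \hookrightarrow Y$ is therefore an isomorphism of schemes; equivalently $\partial X = |D|$, which is exactly (G2).

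The principal obstacle is this last step, where the affineness hypothesis on $X^+$ must be bootstrapped via Hartogs' extension to rule out higher-codimension components of $\partial X$ that would be invisible to the basic relative invariants $f_1,\ldots,f_r$. Everything else is a direct translation of the Sato-Kimura classification into the combinatorial data $(\Lambda, \Lambda_X)$.
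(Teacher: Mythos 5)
Your proposal is correct, and its overall architecture coincides with the paper's: (G1) is read off from the Sato--Kimura structure theory of basic relative invariants (Remark~\ref{rem:basic-relative-invariant}), and (G2) is reduced to the assertion that $\partial X$ has no irreducible component of codimension $\geq 2$. The one point of genuine divergence is how that last assertion is obtained. The paper simply invokes Kimura's theorem that $\partial X$ is a hypersurface if and only if $X^+$ is affine (\cite[Theorem 2.28]{Ki03}), whereas you reprove the implication you need: any codimension-one component of $\partial X$ not already among the $V(f_i)$ would be a $G$-stable prime divisor of the factorial variety $X$, hence cut out by a relative invariant and therefore already contained in $\bigcup_i V(f_i)$; the remaining boundary then has codimension $\geq 2$ inside the affine open $Y = X \smallsetminus \bigcup_i V(f_i)$, and Hartogs plus affineness of $X^+$ forces $X^+ = Y$. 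This buys a self-contained argument in place of a black-box citation (and, as a bonus, does not even require irreducibility of the $f_i$, only that $V(h) \subset \bigcup_i V(f_i)$ for any polynomial relative invariant $h$), at the cost of a slightly longer proof. Both routes are sound; yours makes visible exactly where the affineness of $X^+$ enters.
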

\begin{proof}
	The relative invariants for prehomogeneous vector spaces defined above is just a specialization for the general theory in \S\ref{sec:geometric-data}. The condition (G1) on $\Lambda_{X,\Q}$ follows from Remark \ref{rem:basic-relative-invariant}. Furthermore, since the zero loci of basic relative invariants cover the codimension-one components of $\partial X$, the condition (G2) is equivalent to that $\partial X$ is a hypersurface. This is in turn equivalent to $X^+$  being affine, by \cite[Theorem 2.28]{Ki03}.
\MyQED\end{proof}
When $X^+$ is a symmetric space, the conditions above hold true; moreover, $X^+$ is wavefront in that case.

Keeping the assumptions in Theorem \ref{prop:pvs-geometric-axiom}, the notation in \S\ref{sec:geometric-data} applies to $X^+ \hookrightarrow X$. We define accordingly $f_i \leftrightarrow \omega_i$, $\Lambda_X$, $\Lambda$, etc. In this section we work with half-densities, i.e.\ the case $\mathscr{E} = \mathscr{L}^{\demi}$.

\begin{theorem}\label{prop:pvs-Schw}
	Let $X^+ \hookrightarrow X$ be as above, and set $\Schw := \Schw(X)$.
	\begin{itemize}
		\item When $F$ is non-Archimedean and $X^+$ is wavefront, the Axioms \ref{axiom:finiteness} and \ref{axiom:zeta} are satisfied.
		\item When $F$ is Archimedean, the requirements on $\Schw$ in Axiom \ref{axiom:zeta} are satisfied. Those concerning the zeta integrals are temporarily unknown except when $\pi$ is the trivial representation, or when $X^+$ is an essentially symmetric $G$-space.
	\end{itemize}
	Here we say $X^+$ is essentially symmetric if it admits a finite equivariant étale covering by a symmetric $G$-space.
\end{theorem}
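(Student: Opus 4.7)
The plan is to verify the various clauses of Axiom~\ref{axiom:zeta} one at a time, and separately in the non-archimedean wavefront setting and in the archimedean setting. Axiom~\ref{axiom:finiteness} is handled first: when $F$ is non-archimedean and $X^+$ is wavefront, it follows directly from Theorem~\ref{prop:finiteness-density} applied to the bundle $\mathscr{E}=\mathscr{L}^{\demi}$; in the archimedean case it is the content of Remark~\ref{rem:finiteness}, building on \cite{KO13}. The functional-analytic axioms (F1), (F2) are immediate from Lemma~\ref{prop:pvs-nuclear}, which states that $\Schw(X)$ is separable, nuclear, and barreled (even algebraic in the non-archimedean case, Fréchet in the archimedean case). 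The $G(F)$-equivariance of the isometry $\Schw(X)\subset L^2(X)\simeq L^2(X^+)$ has been recorded in \S\ref{sec:Fourier}.

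For (F3), (F4), I would fix $\xi\in\Schw(X)$ and observe that on the support of $\xi$ the section $|f|^\lambda\xi$ is controlled by $\prod_i|f_i|^{\Re(\lambda_i)}|\xi|$. When $F$ is non-archimedean this support is a literal compact subset of $X(F)$ on which $|f_i|$ is bounded; when $F$ is archimedean the polynomial growth of $|f_i|$ is tamed by the rapid decay of $\xi$. Both yield $\alpha_\lambda(\xi)\in L^2(X^+)$ for $\Re(\lambda)\relgeq{X}0$, with continuity estimates uniform on compact subsets of that region. The weak holomorphy of Definition~\ref{def:L2-holomorphy} is a routine application of dominated convergence, since $\lambda\mapsto|f|^\lambda(x)$ is entire for each fixed $x\in X^+(F)$. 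The smooth axioms (S1), (S2) are next: (S2) is automatic since non-archimedean Schwartz-Bruhat half-densities have compact support in $X(F)$; (S1) follows because $|f|^{2\lambda}\|\xi\|^2$ is, after trivialization, a polynomial times a classical Schwartz function, hence of rapid decay in the sense of Definition~\ref{def:rapid-decay}, with the requisite continuity of the semi-norms $\nu_p$ being a standard property of the Schwartz topology.

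For (S3), (S4) in the non-archimedean wavefront case, convergence of $Z_{\lambda,\varphi}(v\otimes\xi)$ for $\Re(\lambda)\relgg{X}0$ is delivered by Corollary~\ref{prop:conv-gg0}. Separate continuity is automatic in $v\in V_\pi$ by Lemma~\ref{prop:autocont} since $V_\pi$ is algebraic, and continuity in $\xi$ is clear because the integrand $\varphi_\lambda(v)\xi$ is supported in a fixed compact set on which $\varphi_\lambda(v)$ is bounded. For (S4) I would fix an open compact $J\subset G(F)$ small enough so that $v,\xi$ are $J$-invariant, decompose $\Supp(\xi)$ into finitely many $J$-orbits, and expand $Z_\lambda$ as a finite sum of local Igusa-type integrals over these orbits. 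Each summand is rational in the variables $q^{-\lambda_i}$ by the non-archimedean complex-power theorem \cite{Ig00}; uniformity of the denominator $t\in\mathcal{O}$ across $v\in V_\pi^J$, $\xi\in\Schw(X)^J$ (and hence the construction of a global $\tilde Z\in\mathcal{L}_\pi^\circ$) follows from admissibility of $\pi$ and Hironaka resolution of the singularities of $\prod_i f_i$ on a fixed $J$-stable compactum. Lemma~\ref{prop:circ-removal} then promotes $\tilde Z$ to $\mathcal{L}_\pi$ once the separate continuity at some point of the convergence range is known.

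The archimedean zeta-integral clause is then treated only in the two listed sub-cases. For $\pi$ trivial, $\mathcal{N}_\pi$ is one-dimensional, spanned by a choice of $|f|^0\varphi$ for an eigen-half-density $\varphi$, and $Z_\lambda$ becomes the classical complex power $\int_{X^+(F)}|f|^\lambda\xi$ whose rational/meromorphic continuation and hypocontinuity is Atiyah-Bernstein. For $X^+$ essentially symmetric I would pass to a finite $G$-equivariant étale cover to reduce to the symmetric case and then invoke the harmonic analysis of real symmetric spaces (asymptotics of spherical functions, existence and behaviour of the Harish-Chandra Schwartz space), following the approach of \cite{Li16}. The main obstacle throughout is the archimedean (S3)-(S4) for general nice $\pi$, which is genuinely beyond the scope of the present statement; even within the symmetric sub-case, promoting separate continuity of the continued $Z_\lambda$ to hypocontinuity in the sense of Definition~\ref{def:mero-rat} requires a parametrized analogue of Theorem~\ref{prop:GS-principle} together with uniform-in-$\lambda$ control of the coefficient asymptotics, and this is where the bulk of the archimedean work would lie.
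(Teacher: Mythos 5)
Your overall architecture matches the paper's: Axiom \ref{axiom:finiteness} via Theorem \ref{prop:finiteness-density}/Remark \ref{rem:finiteness}, the functional-analytic clauses via Lemma \ref{prop:pvs-nuclear} plus boundedness of $|f|^\lambda$ on compacta (resp.\ rapid decay), (S1)--(S2) as you say, (S3) via Corollary \ref{prop:conv-gg0} and algebraicity, and the archimedean clause deferred to Sato--Shintani and \cite{Li16}. The one place where your argument has a genuine gap is (S4), the rational continuation in the non-archimedean case.

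Two problems with your (S4) step. First, $\Supp(\xi)$ is a compact open subset of $X(F)$ meeting $\partial X(F)$, and it is \emph{not} a finite union of $J$-orbits: $J$-orbits in $X^+(F)$ accumulate at the boundary (so $\Supp(\xi)\cap X^+(F)$ is an infinite union), and $J$-orbits of boundary points need not be open. So the proposed finite decomposition into ``local Igusa-type integrals'' does not exist; an infinite decomposition forces you to control and resum the tail near $\partial X$, which is exactly the hard part. Second, and more fundamentally, the integrand is $\varphi(v)\,|f|^\lambda\,\xi$ where the coefficient $\varphi(v)$ is smooth only on $X^+(F)$ and does not extend to a Schwartz--Bruhat function on $X(F)$; Igusa's complex-power theorem (and Hironaka resolution of $\prod_i f_i$, which says nothing about $\varphi(v)$) therefore does not apply as stated. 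The missing input is the behaviour of the coefficient near the boundary: one needs the $(\bar X\smallsetminus X^+)$-finiteness of $\varphi_\lambda(v)$ supplied by the smooth asymptotics (Theorem \ref{prop:asym-finite}), together with the generalization of Igusa's argument to integrals $\int \theta\,\Xi\,|f|^\lambda|\Omega|$ with $\theta$ a $D$-finite function --- this is the statement extracted from \cite[Proposition 15.3.6]{SV12} --- applied after pulling everything back along the proper birational toroidal morphism $p:\hat X\to X$ of \eqref{eqn:hat-vs-bar}, so that $\xi\circ p$ is compactly supported and the asymptotic expansion of $\varphi_\lambda(v)$ is visible on the toric strata. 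Without this, neither the convergence of the resummed boundary contributions nor their rationality in $q^{\lambda_i}$ is justified. The rest of your proposal (including the uniformity-of-denominator remark via admissibility, and the promotion to $\mathcal{L}_\pi$ via Lemma \ref{prop:circ-removal}) is sound.
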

\begin{proof}
	The assertions on Axiom \ref{axiom:finiteness} follow directly from Remark \ref{rem:finiteness}. As to Axiom \ref{axiom:zeta}, the topological properties of $\Schw$ are ensured by Lemma \ref{prop:pvs-nuclear}.

	Obviously $C^\infty_c(X^+)$ injects continuously into $\Schw$. Let us show that
	\[ \Re(\lambda) \relgeq{X} 0 \implies \left[ \Schw_\lambda := |f|^\lambda \Schw \hookrightarrow L^2(X^+)\; \text{ continuously} \right]. \]
	Note that $|f|^\lambda$ extends to a continuous function on $X(F)$, thus bounded over compacta. The continuous injection follows easily for non-Archimedean $F$. On the other hand, the Archimedean case follows from the rapid decay of Schwartz functions, say in the sense of Definition \ref{def:rapid-decay} since $|f|$ is a semi-algebraic function over $X(F)$. Let $\alpha_\lambda: \Schw \to L^2(X^+)$ be the continuous injection obtained by composition with $m_\lambda: \Schw \rightiso \Schw_\lambda$. By similar arguments, it is routine to establish the holomorphy of $\alpha_\lambda$ in the sense of Definition \ref{def:L2-holomorphy}.

	Now turn to the ``smooth axioms'' for non-Archimedean $F$. Suppose that the residual field of $F$ is $\F_q$ and $X^+$ is wavefront.
	\begin{itemize}
		\item The support condition on $\xi \in \Schw$ is immediate from the definition of $\Schw$.
		\item The convergence of $Z_\lambda: \pi_\lambda \otimes \Schw \to \CC$ for $\Re(\lambda) \relgg{X} 0$ results from Corollary \ref{prop:conv-gg0}, and the separate continuity is automatic by Remark \ref{rem:Schw-fa}.
		\item It remains to check the rational continuation of $Z_\lambda(v \otimes \xi)$, where $v \in V_\pi$ and $\xi \in \Schw$ are fixed. For this purpose we extract the following fact from the proof of \cite[Proposition 15.3.6]{SV17} --- see also \cite[\S 4.5]{Sak08}.
		
		Let $Y$ be a normal $F$-variety with a divisor $D$ defined over $F$. Consider the data
		\begin{compactitem}
			\item $\Xi \in C^\infty_c(Y(F))$,
			\item $\theta$: a $D$-finite function -- the precise meaning is explained before \cite[Corollary 5.1.8]{SV17},
			\item $|f|^\lambda = \prod_{i=1}^r |f_i|^{\lambda_i}$ such that each $f_i \in F(Y)$ has polar divisor supported in $D$, and $\lambda_i \in \CC$,
			\item $\Omega$: a rational $F$-volume form on $Y$ with polar divisor supported in $D$,
			\item $I(\lambda_1, \ldots, \lambda_r) := \int_{Y(F)}\limits \theta \Xi |f|^\lambda |\Omega|$, viewed as an integral (if convergent) parametrized by points of $\mathcal{T} := (\CC^\times)^r$ via $(\lambda_i)_{i=1}^r \mapsto (q^{\lambda_i})_{i=1}^r$.
		\end{compactitem}
		Conclusion: if $I(\lambda_1, \ldots, \lambda_r)$ converges for $(\lambda_1, \ldots, \lambda_r)$ in some nonempty open subset, then it extends to a rational function over $\mathcal{T}$. Such a continuation is clearly unique. As indicated in \textit{loc.\ cit.}, this follows essentially from Igusa's theory on complex powers\index{Igusa's theory}. Specifically, it may be viewed as a generalization of the proof of \cite[Theorem 8.2.1]{Ig00}.

		Apply this to $Y = \hat{X}$ with $D = \hat{X} \smallsetminus X^+$, where $p: \hat{X} \twoheadrightarrow X$ is a proper birational $G$-equivariant morphism such that $\hat{X}$ is open in a smooth complete toroidal compactification $\bar{X}$ of $X^+$; it is constructed in \S\ref{sec:Cartan}, cf.\ \eqref{eqn:hat-vs-bar}. To ease notations, we assume $\mathscr{L}^{\demi}$ trivialized in what follows. Take $\Xi = \xi \circ p$, $\theta = \varphi_\lambda(v)$ and $|f|^\lambda$ be the familiar complex power of relative invariants. Let us check the premises:
		\begin{compactitem}
			\item $\Xi \in C^\infty_c(\hat{X}(F))$ since $\xi \in C^\infty_c(X(F))$ and $p$ is proper,
			\item the $D$-finiteness of $\theta$ follows from the $(\bar{X} \smallsetminus X^+)$-finiteness asserted in Theorem \ref{prop:asym-finite}, by pulling back via $\hat{X} \hookrightarrow \bar{X}$ (see the discussions preceding \cite[Corollary 5.1.8]{SV17}),
			\item take $\Omega$ to be any nonzero rational $G$-eigenform on $X$ of top degree.
		\end{compactitem}
		Since the convergence of $I(\lambda) = I(\lambda_1, \ldots, \lambda_r)$ for $\Re(\lambda) \relgg{X} 0$ is known, the rational continuation follows.
	\end{itemize}

	Finally, when $F$ is Archimedean and $\pi$ is trivial, everything reduces to the classical setting studied by M.\ Sato, T.\ Shintani \textit{et al.} See \cite[\S 1.4]{Sa89}. Here $v$ ranges over a $1$-dimensional vector space, therefore can be neglected; the continuity of $Z_\lambda$ in $\xi$ is contained in \textit{loc.\ cit.} For the Archimedean case with essentially symmetric $X^+$, see \cite{Li18}.
\MyQED\end{proof}

\section{Local functional equation}\label{sec:lfe}
Assume $F$ to be local non-Archimedean. In the first place, we consider a connected reductive $F$-group $G$ and a parabolic subgroup $P$, with Levi quotient $P/U_P \twoheadrightarrow M$.

For every smooth representation $\pi$ of $G(F)$, define the (unnormalized) Jacquet module $\pi_P$ as the vector space
\[ V_\pi\big/ \sum_{u \in U_P(F)} \Image(\pi(u)-\identity) \]
equipped with the natural, unnormalized $M(F)$-action. Observe that for every continuous character $\chi: G(F) \to \CC^\times$, we have the equality
\[ (\pi \otimes \chi)_P = \pi_P \otimes (\chi|_{M(F)}) \]
as smooth representations of $M(F)$: since $\chi(u)=1$ for any unipotent $u \in G(F)$, it factors through a continuous character $\chi|_{M(F)}: M(F) \to \CC$, which can be viewed as a true restriction once a Levi decomposition of $P$ is chosen.

Consider now a subgroup $\Lambda \subset X^*(G)$, giving rise to a complex torus $\mathcal{T}$ of unramified characters of $G(F)$. Suppose given a subgroup $H \subset G$ such that $U_P \subset H \subset P$. Denote $H_M := H/U_P \subset M$. The diagram
\[ \begin{tikzcd}
	G & H \arrow[hookrightarrow]{l} \arrow[twoheadrightarrow]{r} \arrow[hookrightarrow]{d} & H_M \arrow[hookrightarrow]{d} \\
	& P \arrow[hookrightarrow]{lu} \arrow[twoheadrightarrow]{r}[below, inner sep=1em]{\mod U_P} & M
\end{tikzcd}\]
is commutative with a cartesian square. By the previous observation, a continuous character $\chi: G(F) \to \CC^\times$ can thus be ``restricted'' to $H_M(F)$ in either way, with the same result. We may further restrict $\chi$ to the $(Z_M \cap H_M)(F)$; note that $Z_M \cap H_M$ is a diagonalizable $F$-group.

\begin{lemma}\label{prop:intertwining-finiteness}
	Let $\mathcal{T}$ be the complex torus of unramified characters associated to $\Lambda$ as above. Let $\pi$ be a smooth representation of $G(F)$ of finite length and $\sigma$ be a continuous character of $H(F)$. Suppose that for every $\chi \in \mathcal{T}$ in general position, there exists a non-trivial intertwining operator of smooth $G(F)$-representations
	\[ \pi \otimes \chi \hookrightarrow C^\infty(H(F) \backslash G(F), \sigma). \]
	Then the restriction of $\mathcal{T}$ to $(Z_M \cap H_M)(F)$ is finite.
\end{lemma}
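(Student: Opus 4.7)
The plan is to apply Frobenius reciprocity to the induced representation and then analyse the resulting invariant functional via the (twisted) Jacquet module along $U_P$. Since $C^\infty(H(F)\backslash G(F),\sigma) = \Ind_{H(F)}^{G(F)}\sigma$ for smooth induction, Frobenius reciprocity rewrites the hypothesis as: for every $\chi \in \mathcal{T}$ in general position there exists a nonzero $\ell_\chi \in \Hom_{H(F)}(\pi, \sigma\chi^{-1})$, i.e.\ a linear map $\ell_\chi \colon V_\pi \to \CC$ with $\ell_\chi(\pi(h)v) = \sigma(h)\chi(h)^{-1}\ell_\chi(v)$ for every $h \in H(F)$ and $v \in V_\pi$.

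The next step is to exploit that $\chi \in \mathcal{T}$ is unramified, hence trivial on $U_P(F) \subset H(F)$. Setting $\psi := \sigma|_{U_P(F)}$, the identity reads $\ell_\chi(\pi(u)v) = \psi(u)\ell_\chi(v)$ for $u \in U_P(F)$, so $\ell_\chi$ descends to a functional $\bar\ell_\chi$ on the twisted Jacquet module
\[ \pi_{U_P,\psi} := V_\pi\big/\langle \pi(u)v - \psi(u)v : u \in U_P(F),\ v \in V_\pi\rangle. \]
Since $U_P$ is normal in $H$ (as $H \subset P$ and $U_P$ is normal in $P$) and $\psi$ is $H(F)$-invariant (conjugation by $h \in H(F)$ sends $u$ to $huh^{-1}$, and $\sigma(huh^{-1}) = \sigma(u)$ because $\sigma$ extends to $H$), the quotient $\pi_{U_P,\psi}$ inherits a smooth action of $H_M(F) = H(F)/U_P(F)$, and $\bar\ell_\chi$ intertwines with the character $\sigma\chi^{-1}$ of $H_M(F)$. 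In particular the central subgroup $(Z_M \cap H_M)(F)$ stabilises $\psi$ and acts on the line spanned by $\bar\ell_\chi$ through $\sigma\chi^{-1}|_{(Z_M \cap H_M)(F)}$.

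The crucial input is then the following finiteness: there is a finite set $S \subset \Hom((Z_M \cap H_M)(F),\CC^\times)$ such that every character by which $(Z_M \cap H_M)(F)$ can act on a generalised eigenline of $\pi_{U_P,\psi}$ lies in $S$. When $\psi = 1$ this is a standard consequence of Jacquet's theorem: $\pi_P$ is an admissible $M(F)$-representation of finite length, so $Z_M(F)$ acts via finitely many quasi-characters on its semisimplification, whose restrictions to $(Z_M \cap H_M)(F)$ give the desired $S$. In the general twisted case one invokes the analogous finiteness for $\pi_{U_P,\psi}$ as a module under the stabiliser $M_\psi$ of $\psi$ inside $M$, which contains $Z_M$ and hence $Z_M \cap H_M$; this can be deduced from Bernstein-centre considerations applied to the finite-length representation $\pi$. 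This is the main technical obstacle of the argument, although in the applications envisioned later in the paper the character $\psi$ is trivial.

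To conclude, every $\chi$ in general position satisfies
\[ \chi|_{(Z_M \cap H_M)(F)} \in \sigma|_{(Z_M \cap H_M)(F)}\cdot S^{-1}, \]
a finite subset of $\Hom((Z_M \cap H_M)(F),\CC^\times)$. The restriction map $\mathrm{res}\colon \mathcal{T} \to \Hom((Z_M \cap H_M)(F),\CC^\times)$ is an algebraic homomorphism out of the complex torus $\mathcal{T}$, so each of its fibres is a closed algebraic subvariety. The preimage of the finite set $\sigma\cdot S^{-1}$ is thus a finite union of closed subvarieties of $\mathcal{T}$ containing a Zariski-dense open subset, hence equal to $\mathcal{T}$ after taking closures. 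Consequently $\mathrm{res}(\mathcal{T}) \subseteq \sigma\cdot S^{-1}$ is finite, which is the required conclusion.
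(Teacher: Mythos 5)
Your argument is essentially the paper's: Frobenius reciprocity to turn the embedding into a nonzero $H(F)$-eigenfunctional on $\pi$, passage to a Jacquet module along $U_P$, finiteness of the central characters of $(Z_M\cap H_M)(F)$ occurring there, and a Zariski-density argument on the torus $\mathcal{T}$ to conclude. The paper reaches the same endpoint by one extra step of transitivity, writing $\Ind^G_H(\sigma)=\Ind^G_P\Ind^M_{H_M}(\sigma)$ and then applying the adjunction for $\Ind^G_P$ to land on a nonzero map $\pi_P\otimes(\chi_0|_{M(F)})\to\Ind^M_{H_M}(\sigma)$; comparing the action of $(Z_M\cap H_M)(F)$ on the two sides gives exactly your containment $\chi|_{(Z_M\cap H_M)(F)}\in\sigma\cdot S^{-1}$ with $S$ the (finite, by Jacquet's theorem) set of central characters of the Jordan--H\"older factors of $\pi_P$.

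The one place where you diverge is your attempt to allow $\psi:=\sigma|_{U_P(F)}$ to be nontrivial, which forces you into the twisted Jacquet module $\pi_{U_P,\psi}$ and the unproved finiteness you flag as ``the main technical obstacle.'' Note that the paper's own proof quietly excludes this case: the identification $\Ind^P_H(\sigma)=\Ind^M_{H_M}(\sigma)$ already presupposes that $\sigma$ descends to $H_M(F)$, i.e.\ that $\psi$ is trivial, and this holds in every application (there $\sigma$ is a modulus-type character, trivial on unipotents). So you should either add the hypothesis $\sigma|_{U_P(F)}=1$ --- in which case your $\pi_{U_P,\psi}=\pi_P$ and Jacquet's finite-length theorem closes the gap, making your proof complete and identical in substance to the paper's --- or be aware that the extra generality is not needed and not established here. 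Your final density step is fine (and in fact shows the restriction map is constant on $\mathcal{T}$, which is stronger than the stated finiteness).
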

\begin{proof}
	Denote the trivial representation by $\mathbf{1}$. For $\chi_0 \in \mathcal{T}$ in general position (i.e.\ in a Zariski-open subset), the assumption gives
	\begin{align*}
		\pi \otimes \chi_0 & \hookrightarrow C^\infty(H(F) \backslash G(F), \sigma) \simeq \Ind^G_H(\sigma) \\
		& = \Ind^G_P \Ind^P_H(\sigma) = \Ind^G_P \Ind^M_{H_M} (\sigma);
	\end{align*}
	we recall that $\Ind(\cdots)$ stands for the unnormalized smooth induction, and the representation $\Ind^M_{H_M} (\mathbf{1})$ is inflated to $P(F)$. By Frobenius reciprocity, this is equivalent to
	\[ \pi_P \otimes (\chi_0|_{M(F)}) = (\pi \otimes \chi_0)_P \xrightarrow{\neq 0} \Ind^M_{H_M}(\sigma). \]

	Consider the actions of $(Z_M \cap H_M)(F)$ on both sides in the displayed formula. On the right it acts by the character $\sigma$. On the left, $\pi_P \otimes \chi_0|_{M(F)}$ has a finite Jordan--Hölder series by \cite[VI.6.4]{Re10} whose subquotients admit central characters under $(Z_M \cap H_M)(F)$. These central characters form a set $\Xi(\chi_0)$ containing $\sigma$. For $\chi$ in general position, $\Xi(\chi\chi_0) = \chi|_{Z_M \cap H_M(F)} \Xi(\chi_0)$, whereas $\Xi(\chi\chi_0) \cap \Xi(\chi_0) \ni \sigma$.
	Hence $\chi$ lies in the inverse image of finitely many points of $\Hom(Z_M \cap H_M(F), \CC^\times)$, namely those $\omega/\omega'$ with $\omega, \omega' \in \Xi(\chi_0)$. This forces the restriction of $\mathcal{T}$ to $(Z_M \cap H_M)(F)$ to be finite.
\MyQED\end{proof}

Revert to the case that $G$ is a split connected reductive $F$-group and $X^+ \hookrightarrow X$ carries a prehomogeneous structure $(G, \rho, X)$. Assume that
\begin{compactitem}
	\item the premises of Theorem \ref{prop:pvs-Schw} hold, so that the conventions in \S\ref{sec:geometric-data} are in force;
	\item the prehomogeneous vector space $(G, \rho, X)$ is regular so that the dual triplet $(G, \check{\rho}, \check{X})$ is also prehomogeneous.
\end{compactitem}
Theorem \ref{prop:pvs-regular} implies that the premises of Theorem \ref{prop:pvs-Schw} also hold for $\check{X}$. All in all, the Fourier transform $\mathcal{F}: \Schw(X) \rightiso \Schw(\check{X})$ is a model transition according to Definition \ref{def:model-transition}. Here we take
\[  X_1 := \check{X}, \qquad X_2 := X. \]
The notations $C^\infty(X^+)$, $C^\infty_c(X^+)$, etc.\ have the same meaning as in \S\ref{sec:Schwartz}. Also note the situation is symmetric in $X$ and $\check{X}$: both $\Lambda_1$ and $\Lambda_2$ are equal to $\Lambda := X^*_\rho(G)$. Thus we have the same complex torus of unramified characters $\mathcal{T}_1 = \mathcal{T} = \mathcal{T}_2$, attached to the same $\mathcal{O}$, and the superscript $\flat$ in \S\ref{sec:model-transition} can be dropped.

\begin{hypothesis}\label{hyp:lfe}
	Suppose that for every $y \in (\partial X)(F)$ with stabilizer $H := \Stab_G(y)$, there exists a parabolic subgroup $P \subset G$ with Levi quotient $M := P/U_P$, such that
	\begin{itemize}
		\item $U_P \subset H \subset P$, so we can set $H_M := H/U_P$;
		\item the restriction of $\mathcal{T}$ to $(Z_M \cap H_M)(F)$ contains a complex torus of positive dimension.
	\end{itemize}
\end{hypothesis}

\begin{remark}
	When the spherical homogeneous $M$-space $H_M \backslash M$ is \emph{factorizable}, i.e.\ $\mathfrak{h}_M = (\mathfrak{h}_M \cap \mathfrak{z}_M) \oplus (\mathfrak{h}_M \cap \mathfrak{m}_\text{der})$ (cf.\ \cite[9.4.1]{SV17}), the second condition above is equivalent to: the restriction of $\mathcal{T}$ to $H_M(F)$ contains a complex torus of positive dimension.
\end{remark}

\begin{example}\label{eg:symm-forms}
	Let $X$ be the variety of symmetric bilinear forms on a finite-dimensional $F$-vector space $V$, whose elements may be identified with linear maps $b: V \to V^\vee$ such that the composite $V \rightiso (V^\vee)^\vee \xrightarrow{b^\vee} V^\vee$ equals $b$. Then $G := \GL(V)$ operates transitively on the right of $X$ with the dense open orbit $X^+$ consisting of non-degenerate forms, i.e.\ of invertible $b$. Therefore $X^+$ is a symmetric $G$-space and $\partial X$ is a prime divisor.

	The dual $\check{X}$ may be identified with the variety of symmetric bilinear forms on $V^\vee$. Indeed, the non-degenerate pairing is $(b, \check{b}) \mapsto \Tr(\check{b}b: V \to V)$, and the $G$-action on $\check{X}$ makes it invariant. In fact $X$ is regular as a prehomogeneous vector space; the proof is the same as that of Proposition \ref{prop:GJ-regular}, so we omit it.

	Let $x \in X(F)$ whose radical (that is, $V^\perp$ relative to $x$) we denote by $R_x$, and $H := \Stab_G(x)$. Set $P := \Stab_G(R_x)$. We have
	\begin{gather*}
		U_P \subset H \subset P, \\
		M = \GL(R_x) \times \GL(V/R_x), \\
		H_M = \GL(R_x) \times \mathrm{O}(V/R_x, x) \subset M.
	\end{gather*}
	Levi decompositions $P = M U_P$ arise from choices of an orthogonal decomposition $V = V_0 \oplus R_x$ relative to $x$. The Hypothesis \ref{hyp:lfe} for $X$ and $\check{X}$ is readily verified.
\end{example}

\begin{theorem}\label{prop:pvs-T_pi-isom}
	Under the Hypothesis \ref{hyp:lfe}, the $\mathcal{K}$-linear map $T_\pi: \mathcal{N}_\pi \otimes \mathcal{K} \to \mathcal{L}_\pi$ defined in \eqref{eqn:T_pi} for $X^+ \hookrightarrow X$ is an isomorphism.
	
	Moreover, \eqref{eqn:generic-inj} is satisfied in this case.
\end{theorem}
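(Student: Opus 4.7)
My plan is to first establish the generic injectivity \eqref{eqn:generic-inj} via a localization argument on $\partial X$, and then deduce the surjectivity of $T_\pi$ (whose injectivity is already Lemma \ref{prop:uniqueness-embedding}).

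Since $F$ is non-archimedean, $\Schw(X) = C^\infty_c(X(F), \mathscr{L}^{1/2})$, so $\Schw^\vee$ is the space of distributions on $X(F)$, and the kernel of the restriction $\Schw^\vee \to C^\infty_c(X^+)^\vee$ is the subspace $D_\partial$ of distributions supported on $\partial X(F)$. Split Galois cohomology (Kneser) ensures that $\partial X(F)$ decomposes into finitely many $G(F)$-orbits $Y_1, \ldots, Y_N$. Filtering $D_\partial$ along orbit closures in the spirit of Bernstein's localization yields a finite filtration whose graded pieces are spaces of distributions supported on single orbits $Y_k \simeq H_k(F) \backslash G(F)$, with $H_k := \Stab_G(y_k)$ for a base point $y_k \in Y_k(F)$. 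In the non-archimedean setting there are no transversal derivatives to worry about, so Frobenius reciprocity gives
\[
\Hom_{G(F)}(\pi_\lambda, \mathrm{gr}_k D_\partial) \;\cong\; \Hom_{H_k(F)}(\pi_\lambda, \sigma_k)
\]
for some continuous character $\sigma_k$ of $H_k(F)$ determined by $Y_k \subset X$ and the half-density twist.

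Now invoke the contrapositive of Lemma \ref{prop:intertwining-finiteness}: its proof actually shows that the set of $\chi \in \mathcal{T}$ for which a nonzero intertwining $\pi \otimes \chi \hookrightarrow C^\infty(H_k(F) \backslash G(F), \sigma_k)$ exists lies in the preimage of a finite subset under the restriction of $\mathcal{T}$ to $(Z_{M_k} \cap H_{k, M_k})(F)$, where $P_k$ is the parabolic provided by Hypothesis \ref{hyp:lfe} applied to $y_k$, with Levi $M_k$ and $H_{k, M_k} := H_k / U_{P_k}$. By that hypothesis this restriction contains a positive-dimensional torus, so the preimage is a proper Zariski-closed subset of $\mathcal{T}$. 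Taking the union over $k = 1, \ldots, N$, the complement $\mathcal{U}$ is a nonempty Zariski-open subset; its intersection with the cone $\{\Re(\lambda) \relgg{X} 0\}$ remains nonempty, and on this locus $\Hom_{G(F)}(\pi_\lambda, D_\partial) = 0$, which gives \eqref{eqn:generic-inj}.

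For the surjectivity of $T_\pi$: given $B \in \mathcal{L}_\pi$, represent it by its meromorphic family $\theta_\lambda \in \Hom_{G(F)}(\pi_\lambda, \Schw^\vee)$. Composing with the restriction $\Schw^\vee \to C^\infty_c(X^+)^\vee$ and using that $G(F)$-equivariant morphisms from a smooth admissible representation land in the smooth vectors of the target, which in the non-archimedean case identify with $C^\infty(X^+)$, we obtain a meromorphic family $\psi_\lambda \in \mathcal{N}_{\pi_\lambda}$. The isomorphism $\mathcal{N}_\pi \rightiso \mathcal{N}_{\pi_\lambda}$ given by $\varphi \mapsto |f|^\lambda \varphi$ is algebraic in $\lambda$, so un-twisting produces an element $\varphi \in \mathcal{N}_\pi \otimes \mathcal{K}$ with $\varphi_\lambda = \psi_\lambda$. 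By construction $T_\pi(\varphi)$ and $B$ have the same restrictions to $C^\infty_c(X^+)$ on the generic locus, and \eqref{eqn:generic-inj} forces $T_\pi(\varphi) = B$ in $\mathcal{L}_\pi$. The main obstacle will be making the stratification of $D_\partial$ and the identification of each $\sigma_k$ rigorous, in particular keeping track of the half-density factor and the modular characters arising in Frobenius reciprocity, and verifying that the argument behind Lemma \ref{prop:intertwining-finiteness} indeed produces a Zariski-open locus (a finite union of proper closed subsets) rather than merely a dense subset; both reduce to bookkeeping within the non-archimedean framework of $\ell$-spaces.
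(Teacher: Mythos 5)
Your proposal is correct and follows essentially the same route as the paper's proof: the paper likewise restricts an arbitrary $B \in \mathcal{L}_\pi$ to $C^\infty_c(X^+)$ to produce a candidate in $\mathcal{N}_\pi \otimes \mathcal{K}$ (using $\dim \mathcal{N}_\pi < \infty$), and kills the difference — a family of distributions supported on $\partial X(F)$ — by restricting to a maximal-dimensional boundary orbit and applying Lemma \ref{prop:intertwining-finiteness} together with Hypothesis \ref{hyp:lfe}, noting at the end that the same argument yields \eqref{eqn:generic-inj}. The only differences are the order of the two assertions and your slightly more explicit orbit filtration; your worry about Zariski-openness is already settled by the proof of Lemma \ref{prop:intertwining-finiteness}, which shows the locus of $\chi$ admitting a nonzero intertwining operator is contained in the preimage of a finite set under restriction to $(Z_M \cap H_M)(F)$, hence in a proper closed subvariety of $\mathcal{T}$.
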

\begin{proof}
	First of all, there is no worry about topologies since $\Schw(X)$, $\Schw(\check{X})$ and the underlying spaces of irreducible nice representations are all algebraic. It suffices to show the surjectivity of $T_\pi$ by Lemma \ref{prop:uniqueness-embedding}.

	Let $\widehat{Z}: \widehat{\pi} \otimes \Schw(X) \to \mathcal{K}$ be a an element of $\mathcal{L}_{\pi,t}$ where $\pi$ is an irreducible nice representation of $G(F)$, and recall that $t \in \mathcal{O} \smallsetminus \{0\}$ is a denominator of the rational family $\widehat{Z}$. To show that $\widehat{Z}$ comes from $\mathcal{N}_\pi \otimes \mathcal{K}$, we may clear the denominator $t$ and assume $\widehat{Z}$ regular, i.e.\ $\widehat{Z} \in \mathcal{L}_{\pi,1}$.

	When restricted to $C^\infty_c(X^+)$, from $\widehat{Z}$ we deduce a family $\varphi_\lambda: \pi_\lambda \to C^\infty(X^+)$ by Theorem \ref{prop:C-infty}, where $\lambda \in \Lambda_{\CC}$. Twisting back gives a rational family $|f|^{-\lambda}\varphi_\lambda: \pi \to C^\infty(X^+)$; it is actually regular over $\mathcal{T}$ as $\varphi_\lambda$ is. As $\dim\mathcal{N}_\pi < \infty$, it is actually an element of $\mathcal{N}_\pi \otimes \mathcal{O}$. Therefore, the machine of zeta integrals, i.e.\ the map $T_\pi$, applies to yield $\widehat{Z}' \in \mathcal{L}_{\pi, t'}$ for some $t' \in \mathcal{O} \smallsetminus \{0\}$. By construction and Remark \ref{rem:zeta-distribution}, $\widehat{Z}'$ restricted to $C^\infty_c(X^+)$ coincides with $|f|^\lambda |f|^{-\lambda} \varphi_\lambda = \varphi_\lambda$. Hence $\widehat{Z}' - \widehat{Z}$ vanishes over $\pi \otimes C^\infty_c(X^+)$. We contend that $\widehat{Z}' = \widehat{Z}$.
	
	To show this, we may clear denominators by considering the regular family $t'(\widehat{Z}' - \widehat{Z})$ parametrized by $\mathcal{T}$. Denote by $T_\lambda: \pi_\lambda \to \Schw(X)^\vee$ the family of tempered distributions on $X$ associated to $t'(\widehat{Z}' - \widehat{Z})$. It boils down to showing $T_\lambda = 0$.

	Our spaces $C^\infty_c(X^+)$, $\Schw(X)$ are $\mathscr{L}^\demi$-valued, but it is easy to switch to the usual framework since $\mathscr{L}^\demi$ is equivariantly trivializable, although this is not strictly necessary. The theory of tempered distributions over non-Archimedean $X(F)$ implies that $T_\lambda$ is represented by $C^\infty$-functions (valued in half-densities) over the $G(F)$-orbits of lower dimension, for every $\lambda$. Suppose on the contrary that $T_\lambda \neq 0$ and let $y G(F)$ be a $G(F)$-orbit with maximal dimension such that $T_\lambda$ contains an intertwining operator $\pi_\lambda \xrightarrow{\neq 0} C^\infty(yG(F), \mathscr{L}^{\demi}|_{yG(F)})$. Here $y \in X(F)$ and we put $H := \Stab_G(y)$, everything varies algebraically over $\mathcal{T}$. Lemma \ref{prop:intertwining-finiteness} and Hypothesis \ref{hyp:lfe} applied to this $\pi$, $y$, $H$ and suitable $\sigma$ lead to contradiction.
	
	The arguments in the previous paragraph actually establishes \eqref{eqn:generic-inj} under Hypothesis \ref{hyp:lfe}. This completes the proof.
\MyQED\end{proof}

\begin{theorem}\label{prop:pvs-lfe}
	Under the Hypothesis \ref{hyp:lfe} for $X$, the local functional equation (Definition \ref{def:lfe}) holds for $\mathcal{F}: \Schw(X) \rightiso \Schw(\check{X})$.
\end{theorem}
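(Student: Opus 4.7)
The plan is to define $\gamma(\pi)$ by brute force as a composition, leveraging the fact that the deep analytic work has already been packaged into Theorem \ref{prop:pvs-T_pi-isom}. The guiding observation is that the setup is in the symmetric regime of \S\ref{sec:model-transition}: both $X$ and $\check{X}$ give the same lattice $\Lambda_1 = \Lambda_2 = X^*_\rho(G) = \Lambda$, so $\mathcal{T}_1 = \mathcal{T}_2$, $\mathcal{O}_1 = \mathcal{O}_2$, and $\mathcal{K}_1 = \mathcal{K}_2 = \mathcal{K}$. In particular $\mathcal{O}_1^\flat = \mathcal{O}_1 \smallsetminus \{0\}$, so $\mathcal{L}^{(1),\flat}_\pi = \mathcal{L}^{(1)}_\pi$, the avoidance of singularities condition \eqref{eqn:avoidance-singularities} is vacuous for every $\pi$, and the restriction map used in the construction of $\mathcal{F}^\vee$ is the identity on $\mathcal{K}$.

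Next, I would apply Theorem \ref{prop:pvs-T_pi-isom} to the data $X_2 = X$, for which Hypothesis \ref{hyp:lfe} is assumed, to conclude that
\[ T^{(2)}_\pi: \mathcal{N}^{(2)}_\pi \dotimes{\CC} \mathcal{K} \rightiso \mathcal{L}^{(2)}_\pi \]
is a $\mathcal{K}$-linear isomorphism. On the other side, Lemma \ref{prop:uniqueness-embedding} (which requires nothing beyond the general Axiom \ref{axiom:zeta}) gives the injection $T^{(1)}_\pi: \mathcal{N}^{(1)}_\pi \otimes \mathcal{K} \hookrightarrow \mathcal{L}^{(1)}_\pi$. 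The model transition $\mathcal{F}$ yields the $\mathcal{K}$-linear arrow $\mathcal{F}^\vee: \mathcal{L}^{(1)}_\pi \to \mathcal{L}^{(2)}_\pi$ as in \S\ref{sec:model-transition}. I therefore \emph{define}
\[ \gamma(\pi) := \bigl(T^{(2)}_\pi\bigr)^{-1} \circ \mathcal{F}^\vee \circ T^{(1)}_\pi : \mathcal{N}^{(1)}_\pi \otimes \mathcal{K} \longrightarrow \mathcal{N}^{(2)}_\pi \otimes \mathcal{K}. \]
This is manifestly $\mathcal{K}$-linear, and by construction $T^{(2)}_\pi \circ \gamma(\pi) = \mathcal{F}^\vee \circ T^{(1)}_\pi$, so the square \eqref{eqn:gamma-def} commutes. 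Uniqueness of $\gamma(\pi)$ with this property is immediate from the injectivity of $T^{(2)}_\pi$ (a fortiori from its isomorphism property).

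In short, the proof is formal once Theorem \ref{prop:pvs-T_pi-isom} is in hand; the substantial content — meromorphic continuation, the reduction via $(\partial X)(F)$-orbits, and the application of Lemma \ref{prop:intertwining-finiteness} to Jacquet modules — has already been absorbed into the surjectivity half of that isomorphism. No further estimate, no analytic continuation, and no extra hypothesis on $\check{X}$ is required at this stage: the asymmetric formulation (hypothesis on $X$ only) is exactly what is needed to invert $T^{(2)}_\pi$, and the injectivity of $T^{(1)}_\pi$ — which is automatic — suffices on the $\check{X}$-side. The only conceivable obstacle would be a failure of $\mathcal{K}$-linearity somewhere in the chain, but each constituent map is $\mathcal{K}$-linear by its very definition, so this is a non-issue.
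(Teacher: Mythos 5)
Your proof is correct and follows essentially the same route as the paper: complete the square \eqref{eqn:gamma-def} formally by inverting $T^{(2)}_\pi$, which Theorem \ref{prop:pvs-T_pi-isom} supplies under Hypothesis \ref{hyp:lfe} for $X$, the $\flat$-superscripts and condition \eqref{eqn:avoidance-singularities} being vacuous since $\Lambda_1=\Lambda_2$. If anything you are slightly more careful than the paper's own proof, which asserts that \emph{both} vertical arrows are isomorphisms (the left one would require Hypothesis \ref{hyp:lfe} for $\check{X}$ as well); your observation that only the surjectivity of $T^{(2)}_\pi$, together with the automatic injectivity from Lemma \ref{prop:uniqueness-embedding} on the $\check{X}$-side, is needed matches the asymmetric hypothesis in the statement exactly.
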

\begin{proof}
	Recall that we put $X_1 = \check{X}$, $X_2 = X$. Fix $\pi$ and form $\mathcal{N}^{(i)}_\pi$ and $\mathcal{L}^{(i)}_{\pi}$ accordingly for $i=1,2$. We claim that every arrow in the diagram
	\[ \begin{tikzcd}
		\mathcal{L}^{(1)}_\pi \arrow{r}{\mathcal{F}^\vee} & \mathcal{L}^{(2)}_\pi \\
		\mathcal{N}^{(1)}_\pi \otimes \mathcal{K} \arrow{u} & \mathcal{N}^{(2)}_\pi \otimes \mathcal{K} \arrow{u}
	\end{tikzcd} \]
	is a $\mathcal{K}$-isomorphism. Indeed, the case for $\mathcal{F}^\vee$ stems from the facts that $\mathcal{T}_1 = \mathcal{T} = \mathcal{T}_2$ and $\mathcal{F}$ is an isomorphism, whereas the vertical arrows can be dealt by Theorem \ref{prop:pvs-T_pi-isom}. Hence it can be completed into the commutative diagram \eqref{eqn:gamma-def} for some $\gamma(\pi): \mathcal{N}^{(1)}_\pi \otimes \mathcal{K} \to \mathcal{N}^{(2)}_\pi \otimes \mathcal{K}$, as required.
\MyQED\end{proof}

The Hypothesis \ref{hyp:lfe} can be viewed as a variant of the condition \cite[p.474 (A.2)]{Sa89} in the study of \emph{prehomogeneous local zeta integrals}, which corresponds to the case of trivial representation $\pi = \mathbf{1}$.

The local functional equation for prehomogeneous zeta integrals is established for Archimedean $F$: see \cite[\S 1.4, Theorem $\mathbf{R}$]{Sa89}. Again, the $\gamma$-matrix appears in this framework.

\section{Local Godement--Jacquet integrals}\label{sec:GJ}\index{Godement--Jacquet integrals}
In what follows, $G := \GL(n) \times \GL(n)$ operates on the right of $X := \text{Mat}_{n \times n}$ via
\[ A(g_1, g_2) = g_2^{-1} A g_1, \quad A \in \text{Mat}_{n \times n}, \; (g_1, g_2) \in G. \]
This gives rise to a representation $\rho$ of $G$ on the vector space $X$. It is prehomogeneous: the open orbit is
\[ X^+ := \GL(n) = \{\det \neq 0\} \subset X \]
which falls under the group case of \S\ref{sec:group-case}: indeed, the $G$-action on $X^+$ coincides with \eqref{eqn:group-case-action1}.

Up to a multiplicative constant, the relative invariants are generated by the determinant $\det \in F[X]$. The eigencharacter of $\det$ is
\[ (g_1, g_2) \longmapsto \det g_2^{-1} \det g_1, \]
whose restriction to $Z_G \simeq \Gm^2$ is $(z_2, z_2) \mapsto (z_2^{-1} z_1)^n$. It generates the group $X^*_\rho(G)$. These descriptions work over any field $F$. Note that $\det$ plays two slightly different roles: (i) as an element of $F[X]$, and (ii) as a homomorphism $\GL(n) \to \Gm$.

By direct computation or Theorem \ref{prop:pvs-geometric-axiom}, these data conform to Axiom \ref{axiom:geometric}, and we may identify $\Lambda$ with $\Z$, $\Lambda_X$ with $\Z_{\geq 0}$.

There is another equally reasonable $G$-action on $X$ described as follows. As vector spaces, one can identify $X = \text{Mat}_{n \times n}$ with its dual $\check{X}$ by the perfect symmetric pairing
\[ \angles{\cdot,\cdot}: A \otimes B \longmapsto \Tr(AB). \]

\begin{lemma}
	Under the identification above, the contragredient triplet $(G, \check{\rho}, \check{X})$ is simply the flipped $G$-action on $X$
	\[ A(g_1, g_2) = g_1^{-1}A g_2, \]
	cf.\ the action \eqref{eqn:group-case-action2}.
\end{lemma}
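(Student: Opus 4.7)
The plan is to verify directly that the bilinear pairing $\langle A, B \rangle = \Tr(AB)$ is $G$-invariant when $X$ carries the original action $A(g_1, g_2) = g_2^{-1} A g_1$ and $\check{X}$ (identified with $X$ as a vector space) carries the flipped action $B(g_1, g_2) = g_1^{-1} B g_2$. Since the contragredient representation $\check{\rho}$ is characterized by the property that $\langle xg, \check{x}g \rangle = \langle x, \check{x} \rangle$ for all $g \in G$, verifying invariance suffices to identify $\check{\rho}$ with the flipped action.

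The key step is a one-line computation using the cyclic property of the trace:
\[
  \langle A(g_1,g_2),\, B(g_1,g_2) \rangle
  \;=\; \Tr\bigl( g_2^{-1} A g_1 \cdot g_1^{-1} B g_2 \bigr)
  \;=\; \Tr\bigl( g_2^{-1} (AB) g_2 \bigr)
  \;=\; \Tr(AB)
  \;=\; \langle A, B \rangle.
\]
I would also note that the pairing $\angles{\cdot,\cdot}$ is non-degenerate (it is the standard trace pairing on $\mathrm{Mat}_{n \times n}$), so it induces a genuine $G$-equivariant isomorphism $\check{X} \simeq X$ of algebraic representations, not merely of vector spaces.

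There is no substantive obstacle here; the argument is a direct application of the definition of contragredient representation together with the cyclicity of the trace. The only minor point worth flagging is that the symmetry of the pairing $\Tr(AB) = \Tr(BA)$ explains why the roles of $g_1$ and $g_2$ are swapped in passing from $\rho$ to $\check{\rho}$: the factor of $g_1$ acting on the right of $A$ pairs against $g_1^{-1}$ acting on the left of $B$, and similarly for $g_2$. This makes transparent the statement in the subsequent discussion that the Godement-Jacquet functional equation relates two different spherical embeddings of $G$, namely $X$ with the action \eqref{eqn:group-case-action1} and $\check{X}$ with the action \eqref{eqn:group-case-action2}.
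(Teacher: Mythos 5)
Your proof is correct and is essentially the paper's argument: both reduce to the cyclicity of the trace, the only cosmetic difference being that you verify invariance of the pairing under the simultaneous actions while the paper computes the adjoint $\angles{A\check{\rho}(g_1,g_2), B} = \angles{A, B\rho(g_1^{-1},g_2^{-1})}$ directly; these are equivalent by non-degeneracy of the trace pairing, which you correctly note.
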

\begin{proof}
	For all $A, B \in \text{Mat}_{n \times n}$, we have
	\begin{align*}
		\angles{A, B\rho(g_1^{-1}, g_2^{-1})} & = \angles{A, g_2 B g_1^{-1}} = \Tr(A g_2 B g_1^{-1}) \\
		& = \Tr(g_1^{-1} A g_2 B) = \angles{g_1^{-1}A g_2, B}.
	\end{align*}
	As $\angles{A, B\rho(g_1^{-1}, g_2^{-1})} = \angles{A\check{\rho}(g_1, g_2)}$, we deduce $A\check{\rho}(g_1, g_2) = g_1^{-1}A g_2$ for all $(g_1, g_2) \in G$.
\MyQED\end{proof}
This already implies the prehomogeneity of $(G, \check{\rho}, X)$: indeed, $\check{X}^+$ is identifiable with $X^+$. We can actually say more.

\begin{proposition}\label{prop:GJ-regular}
	The prehomogeneous vector space $(G, \rho, X)$ is regular.
\end{proposition}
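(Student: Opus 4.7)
The plan is to show that $f := \det \in F[X]$, already identified as the basic relative invariant with eigencharacter $(g_1, g_2) \mapsto \det(g_1)\det(g_2)^{-1}$, is non-degenerate in the sense of Definition \ref{def:pvs-regular}. By that definition this amounts to verifying that the rational map
\[
\det{}^{-1} \dd(\det): X \dashrightarrow \check{X}
\]
is a birational equivalence, where $\check{X}$ is identified with $X$ via the pairing $\angles{A, B} = \Tr(AB)$ as in the preceding lemma.

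First I would compute $\dd(\det)$ explicitly. The classical formula (valid in characteristic zero) gives
\[
\dd(\det)_A(B) = \det(A) \cdot \Tr(A^{-1} B), \qquad A \in X^+, \; B \in X,
\]
so that $\det{}^{-1} \dd(\det)$ sends $A \in X^+$ to the linear functional $B \mapsto \Tr(A^{-1} B)$ on $X$. Under the identification $\check{X} \simeq X$ via $\angles{\cdot, \cdot}$, this linear functional is represented precisely by the matrix $A^{-1}$. Therefore the rational map $\det{}^{-1} \dd(\det)$ is nothing but the matrix inversion
\[
\iota: X \dashrightarrow \check{X} \simeq X, \qquad A \longmapsto A^{-1},
\]
which is defined on the open subset $X^+ = \GL(n)$ and is an involutive isomorphism $X^+ \rightiso X^+$. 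In particular $\iota$ is a birational equivalence, which establishes the non-degeneracy of $\det$ and hence the $F$-regularity of $(G, \rho, X)$.

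There is no real obstacle here: the only point worth double-checking is the compatibility between the two conventions in play, namely (a) the identification of the cotangent direction $\det{}^{-1} \dd(\det)_A$ with an element of $\check{X}$ via the symmetric pairing $\Tr(AB)$, and (b) the normalization of the $G$-action on $\check{X}$, which by the preceding lemma is the flipped action $A(g_1, g_2) = g_1^{-1} A g_2$. As a consistency check one may verify directly that $\iota(A(g_1, g_2)) = (g_2^{-1} A g_1)^{-1} = g_1^{-1} A^{-1} g_2 = \iota(A)\check{\rho}(g_1, g_2)$, confirming that $\iota$ is $G$-equivariant, as it must be for any $f^{-1}\dd f$. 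This equivariance also recovers the general fact from Theorem \ref{prop:pvs-regular} that $f^{-1}\dd f$ restricts to a $G$-isomorphism $X^+ \rightiso \check{X}^+$ in the regular case.
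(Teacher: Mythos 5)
Your proof is correct and follows essentially the same route as the paper: the paper's one-line proof invokes Cramer's rule to identify $\det^{-1}\dd(\det)$ with $A \mapsto A^{-1}$ under the trace pairing, which is exactly the computation you carry out via the Jacobi formula $\dd(\det)_A(B) = \det(A)\Tr(A^{-1}B)$. The added equivariance check is a harmless (and correct) sanity verification.
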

\begin{proof}
	Apply Cramer's rule to show that the rational map
	\[ \frac{\dd(\det)}{\det}: X \dashrightarrow \check{X} \simeq X \]
	is actually $A \mapsto A^{-1}$, hence birational.
\MyQED\end{proof}

For $(G, \check{\rho}, X)$, the basic relative invariant is always $\det \in F[X]$, but its eigencharacter changes into
\[ (g_1, g_2) \longmapsto \det g_2 \det g_1^{-1}, \]
whose restriction to $Z_G = \Gm^2$ is $(z_1, z_2) \mapsto (z_2 z_1^{-1})^n$. Despite the identifications $X = \check{X}$ and $X^+ = \check{X}^+$, we will distinguish their $G$-actions by writing $\check{X}$ or $\check{\rho}$ whenever necessary. Over a local field $F$, they have the same Schwartz space $\Schw(X) = \Schw(\check{X})$ as defined in \S\ref{sec:Fourier}, but equipped with different $G(F)$-actions connected by flipping.

We proceed to describe the $G$-orbits in $X$. By linear algebra, they are parametrized by integers $0 \leq m \leq n$, with representatives
\[ e_m := \left( \begin{array}{c|c} 1_m & \\ \hline & 0_{n-m} \end{array}\right) \in X(F) \]
where $1_m$ (resp.\  $0_{n-m}$) is the identity $m \times m$-matrix (resp.\  zero $(n-m) \times (n-m)$-matrix). Put
\begin{align*}
	P_m & := \begin{pmatrix} \ast_{m \times m} & \\ \ast & \ast \end{pmatrix}, \quad P_m^- := \begin{pmatrix} \ast & \ast \\ & \ast_{m \times m} \end{pmatrix}, \\
	U_{P_m \times P_m^-} & := \text{the unipotent radical of }\; P_m \times P_m^- \subset G, \\
	L_m & := \begin{pmatrix} 1_m & \\ & \ast \end{pmatrix} \subset \GL(n).
\end{align*}
Then the stabilizer of $e_m$ is
\begin{align*}
	\Stab_G(e_m) & = \left\{ \left( \begin{pmatrix} a & \\ \ast & \ast \end{pmatrix}, \begin{pmatrix} a & \\ \ast & \ast \end{pmatrix} \right) : a \in \GL(m) \right\} \\
	& = \text{diag} \begin{pmatrix} \GL(m) & \\ & 1 \end{pmatrix} \cdot (L_m)^2 \cdot U_{P_m \times P_m^-},
\end{align*}

Denote by $M$ the Levi quotient of $P_m \times P_m^- \subset G$. Thus the orbit $e_m G$ is parabolically induced from some homogeneous $M$-space (recall \eqref{eqn:parabolic-induction}). Indeed, we have $\Stab_G(e_m) \subset P_m \times P_m^-$ and
\[ H_M := \Stab_G(e_m) \big/ U_{P_m \times P_m^-} \simeq \text{diag}(\GL(m)) \cdot \GL(n-m)^2. \]
Writing $M = (\GL(m) \times \GL(n-m))^2$, the inducing $M$-variety is readily seen to be
\[ \underbracket{\GL(m)}_{\GL(m) \times \GL(m)\text{-variety}} \times \underbracket{\{\text{pt}\}}_{\GL(n-m)^2\text{-variety}} \]
where the first slot is operated upon as in \eqref{eqn:group-case-action1}.

\begin{proposition}\label{prop:GJ-lfe-condition}
	For $0 \leq m < n$, the restriction of $(g_1, g_2) \mapsto \det g_2^{-1} \det g_1$ to the torus $Z_M \cap H_M$ has infinite order. Consequently, the family $|\det g_2|^{-s} |\det g_1|^s$ in $s \in \CC$ restricts to a complex torus of unramified characters on $(Z_M \cap H_M)(F)$ when $F$ is local non-Archimedean.
\end{proposition}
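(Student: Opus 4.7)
The plan is to reduce everything to an explicit computation of characters on a three-dimensional subtorus of $M$. First I would identify $Z_M \cap H_M$ concretely. Writing $M = \GL(m) \times \GL(n-m) \times \GL(m) \times \GL(n-m)$ with coordinates $(a_1, b_1, a_2, b_2)$ coming from the Levi decompositions of $P_m$ and $P_m^-$, the centre $Z_M$ is the torus of quadruples $(z_1 \cdot 1_m, w_1 \cdot 1_{n-m}, z_2 \cdot 1_m, w_2 \cdot 1_{n-m})$ with $z_i, w_i \in \Gm$. On the other hand, $H_M = \mathrm{diag}(\GL(m)) \cdot \GL(n-m)^2$ forces $a_1 = a_2$, so the intersection condition is precisely $z_1 = z_2$. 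Hence $Z_M \cap H_M \simeq \Gm^3$, parametrised by $(z, w_1, w_2)$ with $z := z_1 = z_2$.

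Next I would restrict the character $\chi:(g_1, g_2) \mapsto \det g_2^{-1} \det g_1$ to this torus. For $g_i$ lying in the Levi of $P_m$ or $P_m^-$ respectively, $\det g_i$ factorises as the product of the block determinants, so on $Z_M$ the character reads $\chi(z_1, w_1, z_2, w_2) = z_1^m w_1^{n-m} z_2^{-m} w_2^{-(n-m)}$. Imposing $z_1 = z_2 = z$, the $z^m$ factors cancel and we obtain
\[
	\chi\big|_{Z_M \cap H_M}(z, w_1, w_2) = (w_1/w_2)^{n-m}.
\]
Because the hypothesis $m < n$ gives $n-m \geq 1$, this is a nontrivial algebraic character of $\Gm^3$, so in particular it has infinite order in the character lattice.

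For the second assertion, I would simply substitute this formula into the family $|\chi|^s = |\det g_2|^{-s}|\det g_1|^s$. Over local non-archimedean $F$, the restriction of the family to $(Z_M \cap H_M)(F) = (F^\times)^3$ becomes $(z, w_1, w_2) \mapsto |w_1/w_2|^{(n-m)s}$. Setting $t := q^{-(n-m)s}$, this is the family of unramified characters $(z, w_1, w_2) \mapsto t^{v(w_1) - v(w_2)}$ as $t$ ranges over $\CC^\times$, which is a complex torus of dimension one in the character group, hence of positive dimension.

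There is no genuine obstacle here; the statement is a direct Lie-theoretic calculation once the right coordinates are chosen. The only point that requires care is the bookkeeping of the Levi factors of $P_m$ and $P_m^-$ in the two copies of $\GL(n)$, and the identification of the diagonal $\GL(m)$ inside $H_M$; once these are matched correctly, the cancellation of the $z^m$ contributions and the survival of $(w_1/w_2)^{n-m}$ come out immediately.
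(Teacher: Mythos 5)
Your proof is correct and follows the same route as the paper: the paper's own argument likewise notes that $\det g_2^{-1}\det g_1$ is trivial on $\mathrm{diag}(\GL(m))$ and has infinite order on the centre of $\GL(n-m)^2$, which is exactly your cancellation of the $z^m$ factors and the survival of $(w_1/w_2)^{n-m}$. Your version merely makes the coordinates on $Z_M \cap H_M$ explicit (with the harmless degeneration of $\Gm^3$ to $\Gm^2$ when $m=0$).
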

The ``restrictions'' here are performed as in Lemma \ref{prop:intertwining-finiteness} for both the algebraic and unramified characters: the unipotent radicals always factor out.
\begin{proof}
	We may embed $H_M$ in $G$ in the standard manner. The restriction of $\det g_2^{-1} \det g_1$ is trivial on $\text{diag}(\GL(m))$. It remains to observe that $\det g_2^{-1} \det g_1$ on the center of $\GL(n-m)^2$ has infinite order.
\MyQED\end{proof}

Now comes the zeta integrals. Our reference for the Godement--Jacquet local zeta integrals will be \cite{GH11-2}.

\begin{theorem}\label{prop:GJ-zeta}
	Let $F$ be a local field of characteristic zero. The Axioms \ref{axiom:geometric} and \ref{axiom:zeta} are verified for the regular prehomogeneous vector space $(G, \rho, X)$ and its dual together with their spaces of Schwartz--Bruhat half-densities.
\end{theorem}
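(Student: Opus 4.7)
The plan is to verify Theorem \ref{prop:GJ-zeta} by reducing it almost entirely to the general results Theorems \ref{prop:pvs-geometric-axiom} and \ref{prop:pvs-Schw}, since the Godement--Jacquet datum is, geometrically, the group case of \S\ref{sec:group-case} viewed as a regular prehomogeneous vector space. The main point is to check the hypotheses of those results for both $X$ and $\check{X}$.

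First I would collect the geometric ingredients. The open orbit $X^+ = \GL(n) \subset \text{Mat}_{n \times n}$ under the $G = \GL(n) \times \GL(n)$-action $A(g_1,g_2) = g_2^{-1}Ag_1$ is precisely the group case of \S\ref{sec:group-case}; it is a symmetric $G$-space via the involution $(g_1,g_2)\mapsto(g_2,g_1)$ with fixed subgroup $\operatorname{diag}(\GL(n))$. In particular $X^+$ is an affine spherical homogeneous $G$-space that is wavefront (symmetric spaces are automatically wavefront). The dual triplet $(G,\check{\rho},\check{X})$ is the flipped action $A(g_1,g_2) = g_1^{-1}Ag_2$, to which the same analysis applies with $\check{X}^+ \simeq \GL(n)$ symmetric and wavefront. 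Regularity of $(G,\rho,X)$ is Proposition \ref{prop:GJ-regular}, so by Theorem \ref{prop:pvs-regular} the dual $(G,\check{\rho},\check{X})$ is also a regular prehomogeneous vector space; in both cases $\partial X$ is the irreducible hypersurface $\{\det=0\}$ and $\Lambda = \Z \cdot \det$, $\Lambda_X = \Z_{\geq 0} \cdot \det$.

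Second, Axiom \ref{axiom:geometric} is then immediate from Theorem \ref{prop:pvs-geometric-axiom} applied to $X$ and to $\check{X}$: $X^+$ (resp.\ $\check{X}^+$) is affine spherical, and the prehomogeneity supplies the basic relative invariant $\det$ whose zero locus covers the boundary, so (G1) and (G2) hold with $r=1$.

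Third, for Axiom \ref{axiom:zeta} with $\Schw := \Schw(X)$ (resp.\ $\Schw(\check{X})$), I would invoke Theorem \ref{prop:pvs-Schw}. In the non-archimedean case $X^+$ is wavefront, so the theorem directly supplies Axioms \ref{axiom:finiteness} and \ref{axiom:zeta}; the same applies to $\check{X}$. In the archimedean case the topological, $L^2$-inclusion, and holomorphy requirements on $\Schw$ come from the general part of Theorem \ref{prop:pvs-Schw}; the nontrivial content---convergence, separate continuity, and meromorphic continuation of the zeta integrals $Z_{\lambda,\varphi}$ for arbitrary nice $\pi$---is covered by the essentially symmetric clause of that theorem, since $X^+ = \GL(n)$ is itself a symmetric $G$-space, and analogously for $\check{X}^+$.

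The only conceivable obstacle is the archimedean continuity/continuation of $Z_\lambda$ for general nice $\pi$, but this is already absorbed into Theorem \ref{prop:pvs-Schw} via the essentially symmetric case (drawn from \cite{Li16}); so no new analytic work is required here. Consequently, the proof amounts to assembling the three steps above and invoking the cited results once for $X$ and once for $\check{X}$.
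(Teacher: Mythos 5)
Your proposal is correct and follows essentially the same route as the paper: Axiom \ref{axiom:geometric} via Theorem \ref{prop:pvs-geometric-axiom}, the non-archimedean part of Axiom \ref{axiom:zeta} via the wavefront clause of Theorem \ref{prop:pvs-Schw}, and the archimedean zeta integrals via the (essentially) symmetric clause, which is exactly where the paper also lands (it simply cites \cite{Li16} directly and adds remarks comparing with the original Godement--Jacquet arguments). No gap.
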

\begin{proof}
	We have verified Axiom \ref{axiom:geometric} in Theorem \ref{prop:pvs-geometric-axiom}. By Theorem \ref{prop:pvs-Schw}, the Axiom \ref{axiom:zeta} is satisfied in the non-Archimedean case, and it remains to check the properties of Archimedean zeta integrals. It suffices to consider the triple $(G, \rho, X)$ with $F = \R$. The required properties in this case are all established in \cite{Li18}. Let us give some quick remarks and compare with the original results of Godement--Jacquet.
	\begin{itemize}
		\item The convergence of zeta integrals for $\Re(\lambda) \gg 0$ has been established for the $K \times K$-finite vectors of the form $v \otimes \check{v}$ in $\pi \boxtimes \check{\pi}$, where $K \subset \GL(n,F)$ is any maximal compact subgroup in good position relative to $A$ --- see \cite[\S 15.9]{GH11-2}. To deal with the general case and show continuity, one invokes the asymptotics of matrix coefficients obtained in \cite[Theorem 5.8 or Corollary 5.9]{KSS14}.
		\item Meromorphic continuation: Godement and Jacquet only considered a $K \times K$-invariant subspace $\Schw_0 \subset \Schw(X)$ spanned by ``Gaussian $\times$ polynomials''; the precise definition depends on $\psi$ as well and can be found in \cite[p.115]{GJ72}. This restriction on $\xi$ has been removed in \cite{Li18}. When restricted to the $K \times K$-finite vectors in $V_\Pi$, the zeta integrals admit meromorphic continuation by \cite[Theorem 15.9.1]{GH11-2} or \cite[Theorem 8.7]{GJ72}; the latter reference allows general $\pi$ and $F=\CC$. The point is to extend this to all vectors of $V_\Pi$ and obtain meromorphy together with continuity. This has also been achieved in \cite{Li18}: the basic tool is the \emph{method of analytic continuation} of Theorem \ref{prop:GS-principle}.
		\item  Finally, the $G(F)$-invariance follows from the case $\Re(\lambda) \gg 0$ by meromorphy.
	\end{itemize}
	This completes our proof modulo \cite{Li18}.
\MyQED\end{proof}

Recall that the Fourier transform of Schwartz--Bruhat half-densities $\mathcal{F}: \Schw(X) \rightiso \Schw(\check{X})$ affords a model transition.

\begin{corollary}\label{prop:GJ-lfe}
	Let $F$ be a local field of characteristic zero. The local functional equation holds for $\mathcal{F}: \Schw(X) \rightiso \Schw(\check{X})$ and every irreducible nice representation $\Pi$ of $G(F)$.
\end{corollary}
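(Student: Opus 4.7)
The plan is to split Corollary \ref{prop:GJ-lfe} according to whether $F$ is non-archimedean or archimedean, since only the former is directly subsumed by Theorem \ref{prop:pvs-lfe}.

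In the non-archimedean case, the strategy is simply to apply Theorem \ref{prop:pvs-lfe}, which reduces the problem to verifying Hypothesis \ref{hyp:lfe} for both $(G, \rho, X)$ and its dual $(G, \check{\rho}, \check{X})$. By the symmetry swapping the two factors of $G$ it will suffice to treat $X$. Every $y \in \partial X(F)$ is $G(F)$-conjugate to some $e_m$ with $0 \leq m < n$, and parabolic subgroups, unipotent radicals and stabilizers all transport under this conjugation, so I may take $y = e_m$. I will then choose $P := P_m \times P_m^-$, with Levi quotient $M \simeq (\GL(m) \times \GL(n-m))^2$: the paragraph preceding Proposition \ref{prop:GJ-lfe-condition} records the chain $U_P \subset H := \Stab_G(e_m) \subset P$, and Proposition \ref{prop:GJ-lfe-condition} itself furnishes the complex torus of positive dimension inside the restriction of $\mathcal{T}$ to $(Z_M \cap H_M)(F)$. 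This verifies Hypothesis \ref{hyp:lfe}, and Theorem \ref{prop:pvs-lfe} delivers the local functional equation for every irreducible nice $\Pi$.

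In the archimedean case, since Theorem \ref{prop:pvs-lfe} is unavailable I will instead bootstrap off the classical Godement-Jacquet local functional equation. The discussion of the group case in \S\ref{sec:group-case} shows $\dim \mathcal{N}_\Pi, \dim \check{\mathcal{N}}_\Pi \leq 1$, with both one-dimensional precisely when $\Pi \simeq \pi \boxtimes \check{\pi}$ for some irreducible nice $\pi$ of $\GL(n,F)$; when $\mathcal{N}_\Pi$ vanishes the zero map serves as $\gamma(\Pi)$, so I focus on the non-trivial case. Fixing a Haar measure $|\Omega|$ on $\GL(n,F)$ and writing $\xi = \xi_0 |\eta|^{\demi}$ with $\eta$ the additively invariant top form on $\mathrm{Mat}_{n \times n}(F)$, the identification recorded at the end of \S 1.3 translates my zeta integrals $Z_{\lambda, \varphi_{|\Omega|}}$ (resp.\ their $\check{X}$-counterparts attached to $\check{\varphi}_{|\Omega|}$) into classical Godement-Jacquet integrals $Z^{\mathrm{GJ}}(\lambda + \demi, v \otimes \check{v}, \xi_0)$ (resp.\ $Z^{\mathrm{GJ}}(\lambda' + \demi, \check{v} \otimes v, \xi'_0)$); for the matched self-dual Haar measures on $\mathrm{Mat}_{n \times n}(F)$, the Fourier transform of Corollary \ref{prop:Fourier-vector} acts on the scalar component $\xi_0$ as the classical Fourier transform. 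Substituting into the classical local functional equation $Z^{\mathrm{GJ}}(1-\mu, \check{v} \otimes v, \mathcal{F} \xi_0) = \gamma^{\mathrm{GJ}}(\mu, \pi) Z^{\mathrm{GJ}}(\mu, v \otimes \check{v}, \xi_0)$ at $\mu := \lambda + \demi$ will then produce the commutative diagram \eqref{eqn:gamma-def}, with a scalar $\gamma(\Pi) \in \mathcal{K}$ equal, up to a bookkeeping constant from the $|\Omega|$-normalisation, to $\gamma^{\mathrm{GJ}}(\lambda + \demi, \pi)$.

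The main obstacle is concentrated in the archimedean case: the proof in \cite{GJ72} establishes the classical functional equation only on $K \times K$-finite vectors $v \otimes \check{v}$ and on the restricted Gaussian-times-polynomial subspace of Schwartz-Bruhat functions, whereas the framework of Axiom \ref{axiom:zeta} demands the identity on all Casselman-Wallach vectors in $V_\Pi$ and on all of $\Schw(X)$. To propagate it I will combine the density of $K \times K$-finite vectors in $V_\Pi$, the meromorphic continuation and separate continuity of both sides supplied by Theorem \ref{prop:GJ-zeta} (via \cite{Li16}), and the Gelfand-Shilov method of analytic continuation (Theorem \ref{prop:GS-principle}) to extend the identity from the dense subspace where it is known off to the whole of $V_\Pi \otimes \Schw(X)$ and across the full parameter space $\Lambda_{\CC}$ outside the poles.
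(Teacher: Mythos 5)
Your proposal is correct and follows essentially the same route as the paper: the non-archimedean case is handled by verifying Hypothesis \ref{hyp:lfe} via Proposition \ref{prop:GJ-lfe-condition} and invoking Theorem \ref{prop:pvs-lfe}, and the archimedean case is reduced to the classical Godement--Jacquet functional equation through the half-density dictionary and then extended from $K \times K$-finite vectors and the Gaussian-times-polynomial subspace by the density and continuity statements of Theorem \ref{prop:GJ-zeta} (which rest on the Gelfand--Shilov principle, exactly as you propose).
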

The proof is divided into two cases plus one necessary intermezzo.
\begin{proof}[Proof of Corollary \ref{prop:GJ-lfe}: non-Archimedean case]
	By Proposition \ref{prop:GJ-lfe-condition} we know Hypothesis \ref{hyp:lfe} is satisfied for both $X$ and $\check{X}$ by symmetry. It remains to apply Theorem \ref{prop:pvs-lfe}.
\MyQED\end{proof}

\begin{remark}
	Let us reconcile Corollary \ref{prop:GJ-lfe} with the usual Godement--Jacquet local functional equation treated in \cite{GH11-1}.
	
	Let the representation $\Pi = \pi \boxtimes \check{\pi}$ be irreducible and nice. Choose $\eta \in \topwedge \check{X} \smallsetminus \{0\}$, we obtain a Haar measure $|\Omega| := |\det|^{-n}|\eta|$ on $\GL(n,F)$, hence an invariant global section $|\Omega|^\demi$ of $\mathscr{L}^\demi$. For this choice and \S\ref{sec:group-case} we obtain generators $\varphi$ and $\check{\varphi}$ of $\mathcal{N}_\Pi$ and $\check{\mathcal{N}}_\Pi$, respectively. We may write
	\begin{align*}
		Z(\lambda, \xi, v \otimes \check{v}) & := Z_{\lambda, \varphi}\left( (v \otimes \check{v}) \otimes \xi \right) \\
		\check{Z}(\lambda, \xi, v \otimes \check{v}) & := Z_{\lambda, \check{\varphi}}\left( (v \otimes \check{v}) \otimes \xi \right)
	\end{align*}
	for $v \otimes \check{v} \in V_\pi \otimes V_{\check{\pi}}$, $\xi \in \Schw(X) = \Schw(\check{X})$ and $\lambda \in \Lambda_{\CC} = \CC$. By the previous conventions, $\Pi_\lambda = \pi|\det|^\lambda \boxtimes \check{\pi}|\det|^{-\lambda}$, hence
	\begin{gather*}
		\varphi_\lambda = |\det|^\lambda \varphi, \quad \check{\varphi}_\lambda = |\det|^{-\lambda} \check{\varphi}.
	\end{gather*}
	This behavior is compatible with the fact that the basic relative invariants of $X$ and $\check{X}$ are opposite: $\Lambda_X = \Z_{\geq 0}$ whereas $\Lambda_{\check{X}} = \Z_{\leq 0}$, if we let $1 \in \Z$ correspond to the eigencharacter $\det g_2^{-1} \det g_1$.

	By the choice of $\eta$, every $\xi \in \Schw(X)$ may be expressed as
	\[ \xi = \xi_0 |\eta|^{\demi} = \xi_0 |\det(\cdot)|^{\frac{n}{2}} \cdot |\Omega|^\demi, \quad \xi_0 \in \text{SB}(X). \]
	Simply put, $\Schw(X)$ may be identified with $|\det|^{\frac{n}{2}} \text{SB}(X)$ by choosing $\eta$; similarly, $\Schw(\check{X})$ is identifiable with the same function space, but the $G(F)$-action is flipped.
	
	To facilitate the transition to the Godement--Jacquet set-up, we shall assume that $|\eta|$ gives the self-dual Haar measure on $X(F)$ relative to $\psi$. Unwinding the constructions in Theorem \ref{prop:Fourier-def} (or its proof), we get the commutative diagram of continuous linear maps:
	\begin{equation}\label{eqn:two-Fourier} \begin{tikzcd}
		\xi \arrow[mapsto]{d} & \Schw(X) \arrow{r}[above]{\mathcal{F}} \arrow{d}[left]{\simeq} & \Schw(\check{X}) \arrow{d}[right]{\simeq} \\
		\xi_0 & \text{SB}(X) \arrow{r}[below, inner sep=1em]{\mathcal{F}_0 := \text{usual Fourier}} & \text{SB}(\check{X})
	\end{tikzcd}\end{equation}
	
	Denote by $Z^\text{GJ}(\cdots)$ the usual Godement--Jacquet integrals as defined in \cite[(15.4.3)]{GH11-2}.	Using $|\Omega| = |\det|^{-n}|\eta|$ to perform integration over $X^+(F) = \GL(n,F)$, we see that
	\begin{align*}
		Z(\lambda, \xi, v \otimes \check{v}) & = \int_{X^+(F)} |\det|^{\lambda + \frac{n}{2}} \angles{v, \pi(\cdot) v} \xi_0 |\Omega| \\
		& = Z^\text{GJ}\left( \lambda + \demi, \xi_0, \angles{v, \pi(\cdot)v} \right), \quad \Re(\lambda) \gg 0, \\
		\check{Z}(\lambda, \xi, v \otimes \check{v}) & = \int_{X^+(F)} |\det|^{-\lambda + \frac{n}{2}} \angles{\check{\pi}(\cdot)v, v} \xi_0 |\Omega| \\
		& = Z^\text{GJ}\left( -\lambda + \demi, \xi_0, \angles{\check{\pi}(\cdot)v, v} \right), \quad \Re(\lambda) \ll 0.
	\end{align*}
	By meromorphy, these relations between $Z$, $\check{Z}$ and $Z^\text{GJ}$ extends to all $\lambda$.
	
	Recall that $\mathcal{N}_\Pi$, $\check{\mathcal{N}}_\Pi$ have been identified with $\CC$. The final step is to reinterpret Definition \ref{def:lfe}. It asserts the existence of a factor $\gamma(\pi, \lambda)$, rational in $q^\lambda$, such that
	\[ \check{Z}(\lambda, \mathcal{F}(\xi), v \otimes \check{v}) = \gamma(\pi, \lambda)Z(\lambda, \xi, v \otimes \check{v}) \]
	as a meromorphic function in $\lambda \in \CC$, for all $v \otimes \check{v}$ and $\xi \in \Schw(X)$. Now we switch to $Z^\text{GJ}$. Put $\mu := \lambda + \demi$, $\gamma^\text{GJ}(\mu, \pi) := \gamma\left( \pi, \mu - \demi \right)$, $\beta := \angles{\check{v}, \pi(\cdot)v}$ and $\beta^\vee(x) = \beta(x^{-1})$. Diagram \eqref{eqn:two-Fourier} reads as $\mathcal{F}(\xi)_0 = \mathcal{F}_0(\xi_0)$. Therefore
	\begin{gather}\label{eqn:GJ-lfe-orig}
		Z^\text{GJ}( 1-\mu, \mathcal{F}_0(\xi_0), \beta^\vee ) = \gamma^\text{GJ}(\mu, \pi) Z^\text{GJ}( \mu, \xi_0, \beta ), \quad \xi_0 \in \text{SB}(X).
	\end{gather}
	This is exactly the Godement--Jacquet local functional equation stated in \cite[Theorem 15.4.4 (3)]{GH11-2}. The Archimedean case admits a similar interpretation.

	Remarkably, our formalism is free from mysterious shifts such as $(n-1)/2$: it is just a shadow of the half-densities.
\end{remark}

\begin{proof}[Proof of Corollary \ref{prop:GJ-lfe}: Archimedean case]
	Thanks to the foregoing remark, we can exploit the Archimedean case of \eqref{eqn:GJ-lfe-orig} established by Godement--Jacquet as in \cite[Theorem 15.9.1]{GH11-1}. However, the original arguments by Godement and Jacquet include two premises:
	\begin{itemize}
		\item the zeta integrals apply only to $K \times K$-finite pure tensors in $\pi \boxtimes \check{\pi}$;
		\item the Schwartz--Bruhat half-density $\xi$ is taken from the subspace $\Schw_0$ in the proof of Theorem \ref{prop:GJ-zeta}, which is known to be dense and preserved under Fourier transform.
	\end{itemize}

	These two premises are easily removed by the continuity of $Z_\lambda$ established in Theorem \ref{prop:GJ-zeta}, upon clearing denominators. 
\MyQED\end{proof}

\chapter{The doubling method}\label{sec:doubling}
In order to reconcile our framework with that of Braverman--Kazhdan \cite{BK02}, two Conjectures \ref{conj:integral-coinvariants} and \ref{conj:Schw-BK-mult1} will be postulated.

\section{Geometric set-up}\label{sec:geometric-setup}
Consider a field $F$; assume $\text{char}(F)=0$ unless otherwise specified. Let $G^\Box$ be a connected reductive $F$-group and consider a parabolic subgroup $P \subset G^\Box$ with Levi quotient $P/U_P = M$\index{G-Box@$G^\Box$}. To simplify matters, we assume that
\begin{gather}\label{eqn:H1-assumption}
	M(F) \twoheadrightarrow M_\text{ab}(F)
\end{gather}	

Consider the homogeneous $M_\text{ab} \times G^\Box$-space\index{X_P@$X_P$}
\begin{gather*}
	X_P := P_\text{der} \backslash G^\Box
\end{gather*}
endowed with the action
\begin{gather*}
	(P_\text{der} y) \cdot (\bar{m}, g) = P_\text{der}m^{-1}yg, \quad (\bar{m}, g) \in M_\text{ab} \times G^\Box
\end{gather*}
where $m \in P$ is any lifting of $\bar{m}$. From the Bruhat decomposition, one easily sees that $X_P$ is spherical.

By considering the stabilizer of the coset $P_\text{der} \cdot 1 \in X_P(F)$, we may also realize $X_P$ as $P \backslash (M_\text{ab} \times G^\Box)$, the embedding of $P$ being determined by $P \to P/P_\text{der} \rightiso M_\text{ab}$ (abelianization) and $P \hookrightarrow G^\Box$ (inclusion). Since $H^1(F, P_\text{der}) \to H^1(F, P)$ is injective by \eqref{eqn:H1-assumption} whereas the injectivity of $H^1(F, P) \to H^1(F, G^\Box)$ is well-known, we have
\[ X_P(F) = (P_\text{der} \backslash G^\Box)(F) = P_\text{der}(F) \backslash G^\Box(F). \]

\begin{proposition}\label{prop:X_P-LV}
	Over the algebraic closure, the Luna--Vust datum for $X_P$ is described as follows:
	\begin{itemize}
		\item $\Lambda_{X_P} = X_*(M_\mathrm{ab})$, $\mathcal{V} = \mathcal{Q} = X_*(M_\mathrm{ab})_{\Q}$;
		\item $\mathcal{D}^B$ is in natural bijection with $\check{\Delta}_P = \check{\Delta}_0 \smallsetminus \check{\Delta}^M_0$ where $\check{\Delta}_0$ (resp.\  $\check{\Delta}^M_0$) denotes the set of simple coroots of $G^\Box$ (resp.\  of $M$), with respect to compatibly chosen Borel subgroups;
		\item furthermore, $X_P$ is quasi-affine and $\overline{X_P}^\mathrm{aff}$ corresponds to the colored cone $(\mathcal{C}, \mathcal{F})$ with $\mathcal{F} = \mathcal{D}^B$ and $\mathcal{C}$ generated by $\{ \rho(D) :D \in \mathcal{D}^B\}$, i.e.\ by the images in $X_*(M_\mathrm{ab})$ of the coroots in $\check{\Delta}_P$.
	\end{itemize}
\end{proposition}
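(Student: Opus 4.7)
The plan is to establish the assertions sequentially, working over the algebraic closure. Fix the base point $x_0 := P_\text{der} \cdot 1 \in X_P$; its stabilizer in $G := M_\text{ab} \times G^\Box$ is the graph of the abelianization $P \twoheadrightarrow M_\text{ab}$, namely $H = \{(\bar{m}, g) : g \in P, \; g \bmod P_\text{der} = \bar{m}\}$. Choose a Borel $B^\Box \subset P$ of $G^\Box$ and form $B := M_\text{ab} \times B^\Box$ with maximal torus $A := M_\text{ab} \times A^\Box$, where $A^\Box \subset B^\Box \cap M$. The Bruhat decomposition of $G^\Box$ already shows that $X_P$ admits a dense open $B$-orbit, so $X_P$ is spherical; since $P_\text{der}$ contains the unipotent radical $U^\Box$ of $B^\Box$, the stabilizer $H$ contains the unipotent radical $\{1\} \times U^\Box$ of $B$, and $X_P$ is in fact horospherical under $G$.

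I would next identify the quotient torus $A_{X_P}$. A direct computation gives $A \cap H = \{(\phi(g), g) : g \in A^\Box\}$, where $\phi: A^\Box \to M_\text{ab}$ is the composite $A^\Box \hookrightarrow P \twoheadrightarrow M_\text{ab}$; hence $A_{X_P} := A/(A \cap H) \cong M_\text{ab}$, and dually $\Lambda(X_P) \cong X^*(M_\text{ab})$ as a sublattice of $X^*(A)$, giving $\mathcal{Q} = X_*(M_\text{ab})_\Q$. The equality $\mathcal{V} = \mathcal{Q}$ is the classical fact that the valuation cone of a horospherical $G$-variety fills the whole of $\mathcal{Q}$; I would either quote it from the standard literature (Knop, Timashev), or prove it directly by exhibiting, for every $\chi \in X_*(M_\text{ab})$, the $G$-invariant valuation attached to the $\Gm$-action on $X_P$ through $\chi : \Gm \to M_\text{ab} = \mathcal{Z}(X_P)$, and reading off its image in $\mathcal{Q}$ as $\chi$.

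For the set $\mathcal{D}^B$, I would exploit the fact that $X_P / M_\text{ab} = P \backslash G^\Box$ with $M_\text{ab}$ acting freely: a $B$-stable prime divisor in $X_P = X_P^+$ is automatically $M_\text{ab}$-invariant and therefore descends to a $B^\Box$-stable prime divisor on $P \backslash G^\Box$, and conversely. The $B^\Box$-orbit stratification provided by the Bruhat decomposition exhibits exactly one codimension-one Schubert divisor for each simple reflection $s_\alpha$ with $\alpha \in \Delta_0 \smallsetminus \Delta_0^M$, whence a natural bijection $\mathcal{D}^B \leftrightarrow \check{\Delta}_P$, $D_\alpha \mapsto \check{\alpha}$. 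To compute $\rho(D_\alpha) \in \mathcal{Q}$, I would realize a $B$-eigenfunction $f_\lambda$ on $X_P$ of weight $\lambda \in X^*(M_\text{ab})$ (dominant for $G^\Box$) as the pullback via $X_P \twoheadrightarrow G^\Box/B^\Box$ of the lowest-weight section of the line bundle $\mathcal{L}_\lambda$, for which the classical Borel-Weil computation yields $v_{D_\alpha}(f_\lambda) = \angles{\lambda, \check{\alpha}}$; hence $\rho(D_\alpha)$ is the image of $\check{\alpha}$ under the projection $X_*(A^\Box) \twoheadrightarrow X_*(M_\text{ab})$.

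For the affine closure I would apply Theorem \ref{prop:LV}. Since $\mathcal{V} = \mathcal{Q}$, the affineness criterion $\chi|_\mathcal{V} \leq 0$ forces $\chi = 0$, which in turn forces $\mathcal{F}_X = \mathcal{D}^B$ and $\mathcal{C}_X$ to be generated by $\rho(\mathcal{D}^B)$; this candidate cone is strictly convex because $\rho(\check{\Delta}_P)$ is a basis of $X_*(M_\text{ab})_\Q$, a consequence of the standard fact that $\check{\Delta}_0^M \cup \check{\Delta}_P$ is a basis of $X_*(A^\Box)_\Q$ with $\check{\Delta}_0^M$ spanning the kernel of $X_*(A^\Box) \twoheadrightarrow X_*(M_\text{ab})$. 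The resulting simple colored cone defines an affine embedding which, by the universal property of the affine closure, must coincide with $\overline{X_P}^\text{aff}$. Quasi-affineness of $X_P$ itself is then the last criterion in Theorem \ref{prop:LV}, asserting that $\rho(\mathcal{D}^B)$ avoids $0$ and generates a strictly convex cone, both already established. The main technical point I anticipate is the Borel-Weil computation of $\rho(D_\alpha)$: one must pin down conventions (left versus right actions, dominant versus antidominant, which Borel is contained in $P$) and verify that the pulled-back section on $X_P$ really does have the claimed multiplicity along each Schubert divisor---standard in the horospherical literature, but warranting care.
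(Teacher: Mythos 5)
Your proof is correct and follows essentially the same route as the paper's own (which is a terse citation-based argument: horosphericity of $X_P$ plus Knop's Corollary 6.2 for $\mathcal{V}=\mathcal{Q}$, the Bruhat decomposition for $\Lambda_{X_P}$ and the colors, and Theorem \ref{prop:LV} for the affine closure); you have merely filled in the details, notably the computation of $\rho(D_\alpha)$ and the argument that $\mathcal{V}=\mathcal{Q}$ forces $\mathcal{F}=\mathcal{D}^B$ in the affineness criterion. One tiny caveat: when $G^\Box$ has a positive-dimensional centre, $\rho(\check{\Delta}_P)$ is linearly independent in $X_*(M_\mathrm{ab})_{\Q}$ but not a basis --- linear independence is all that strict convexity requires, so nothing breaks.
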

\begin{proof}
	The $M_\text{ab} \times G^\Box$-space $X_P$ is \emph{horospherical}\index{spherical variety!horospherical}, i.e.\ the stabilizer of any $x_0 \in X_P$ contains a maximal unipotent subgroup, hence $\mathcal{V} = \mathcal{Q}$ by \cite[Corollary 6.2]{Kno91}. The descriptions of $\Lambda_{X_P}$ and colors follow readily from Bruhat decomposition. The description of $(\mathcal{C}, \mathcal{F})$ is an easy consequence of Theorem \ref{prop:LV}; see also \cite[Remarks 4.3.3]{Sak12}.
\MyQED\end{proof}

Now we move to the case of a local field $F$. By the description in \S\ref{sec:integration-density} of $\mathscr{L}^s$, the line bundle on $X_P(F)$ of $s$-densities, $P(F)$ acts on the fiber of $\mathscr{L}^s$ at $P_\text{der} \cdot 1$ via the character
\[ t \mapsto \delta_P(t)^s = \left| \det(\Ad(t) | \mathfrak{u}_P ) \right|^s, \quad t \in P(F). \]

As usual, we use $\mathscr{L}^\demi$-valued sections to define
\[ L^2(X_P), \quad C^\infty_c(X_P) := C^\infty_c(X_P(F), \mathscr{L}^\demi), \quad C^\infty(X_P) := C^\infty(X_P(F), \mathscr{L}^\demi). \]
Therefore $(M_\text{ab} \times G^\Box)(F)$ acts unitarily on $L^2(X_P)$. Note that in \cite{BK02,Sak12} one adopts the usual $L^2$-space with respect to a $G^\Box(F)$-invariant measure $|\Xi|$; it transforms under $M_\text{ab}(F)$ as $|\bar{m}\Xi| = \delta_P(m) |\Xi|$ by the formula above, thus the unitary $M_\text{ab}(F)$-action on $L^2(X_P, |\Xi|^\demi)$ is $\xi_\text{BK}(y) \mapsto \xi_\text{BK}(\bar{m}^{-1}y) \delta_P(m)^{\demi}$. Summing up, a function $\xi_\text{BK}$ in the framework of \cite[(1.1)]{BK02} corresponds to a section $\xi = \xi_\text{BK} |\Xi|^\demi$. The conclusions remain unchanged.

\begin{remark}\label{rem:N_Pi-X_P}
	Let $\chi$ (resp.\  $\pi$) be a character of $M_\text{ab}(F)$ (resp.\  representation of $G^\Box(F)$) that is smooth and continuous, and put $\Pi := \chi \boxtimes \pi$. Frobenius reciprocity for continuous representations (see for example \cite[\S 2.5]{Be88}) implies
	\begin{align*}
		\Hom_{M_\text{ab} \times G^\Box(F)}(\chi \boxtimes \pi, C^\infty(X_P)) &= \Hom_{P(F)}(\chi \boxtimes \pi, \delta_P^\demi) = \Hom_{P(F)}(\pi, \check{\chi} \otimes \delta_P^\demi) \\
		& = \Hom_{G^\Box(F)}(\pi, I^{G^\Box}_P(\check{\chi})).
	\end{align*}
	Here the $\Hom$-spaces are continuous by default. Specializing to the case $\pi$ nice and irreducible, this yields a description for $\mathcal{N}_\Pi$. Unsurprisingly, we see that $X_P$ captures the part of the spectrum of $G^\Box(F)$ parabolically induced from various $\check{\chi}: M(F) \to M_\text{ab}(F) \to \CC^\times$.
\end{remark}

Now assume
\begin{compactitem}
	\item $F$: non-Archimedean local field of characteristic zero,
	\item $\psi: F \to \CC^\times$: nontrivial unitary character,
	\item $G^\Box$: split connected reductive group such that $G^\Box_\text{der}$ is simply connected.
\end{compactitem}

We normalize the Haar measure on $F$ by $\psi$. For any two parabolic subgroups $P, Q \subset G^\Box$ sharing a common Levi component $M$, Braverman and Kazhdan \cite[Theorem 1.4]{BK02} defined a normalized intertwining operator of $(M_\text{ab} \times G^\Box)(F)$-representations\index{Fourier-BK@$\mathcal{F}_{Q"|P}$}
\[ \mathcal{F}_{Q|P} = \mathcal{F}_{Q|P, \psi}: L^2(X_P) \to L^2(X_Q) \]
satisfying
\begin{compactenum}[(i)]
	\item $\mathcal{F}_{Q|P}$ is an isometry;
	\item $\mathcal{F}_{R|Q} \mathcal{F}_{Q|P} = \mathcal{F}_{R|P}$ for all $P, Q, R$ sharing the same $M$;
	\item $\mathcal{F}_{P|P} = \identity$, consequently $\mathcal{F}_{P|Q} \mathcal{F}_{Q|P} = \identity$ for all $P, Q$.
\end{compactenum}

Let us review the construction in \textit{loc.\ cit.} of $\mathcal{F}_{P|Q}$ briefly: it is the composite of two operators as follows.
\begin{enumerate}
	\item The intertwining operator $\mathcal{R}_{Q,P}: C^\infty_c(X_Q) \to C^\infty(X_P)$ defined by the same formula as the standard intertwining operator $J_{P|Q}(\cdot)$, essentially an integration over $(U_P \cap U_Q \backslash U_P)(F)$ (see \cite[(1.4)]{BK02}). It is convergent since unipotent group acts with Zariski-closed orbits on any affine variety. Since we are working with half-densities, by pairing $\mathfrak{u}_P/\mathfrak{u}_P \cap \mathfrak{u}_Q$ with $\mathfrak{u}_Q/\mathfrak{u}_P \cap \mathfrak{u}_Q$ via Killing form one can verify that $\mathcal{R}_{Q,P}$ is well-defined: it does not involve any choice of Haar measures, cf.\ the proof of Theorem \ref{prop:Fourier-def}.

	\item Define an action $\phi \mapsto \eta \ast \phi$ on the compactly supported functions or sections $\phi$ on $X_P(F)$, where $\eta$ is a density-valued function on $M_\text{ab}(F)$:
	\[ \eta \ast \phi : x \mapsto \int_{M_\text{ab}} \eta(\bar{m}) \phi(x \cdot \bar{m}). \]
		It is an authentic convolution if we let $\bar{m} \in M_\text{ab}$ act on the left of $X_P$ via $P_\text{der}y \mapsto P_\text{der}\bar{m}y$. The second operator is then given by left convolution with a suitable distribution $\eta_{Q, P, \psi}$ on $M_\text{ab}(F)$, but some regularization is needed: see \cite[\S 2]{BK02} for the precise meaning.
	\item Finally, \cite[Theorem 1.6]{BK02} asserts that $\mathcal{F}_{P|Q}(\xi) := \eta_{Q, P, \psi} \ast \mathcal{R}_{Q, P}(\xi)$ is well-defined on a dense subspace $C^0_c(X_Q) \subset C^\infty_c(X_Q)$ dense in $L^2(X_Q)$, with image in $C^\infty_c(X_P)$, and extends to an isometry $L^2(X_Q) \rightiso L^2(X_P)$.
\end{enumerate}

The \emph{Schwartz space} $\Schw(X_P)$\index{Schwartz-BK@$\Schw(X_P)$} is defined as the smooth $(M_\text{ab} \times G)(F)$-representation
\begin{gather}\label{eqn:Schw-BK}
	\Schw(X_P) := \sum_Q \mathcal{F}_{P|Q}(C^\infty_c(X_Q)),
\end{gather}
the sum ranging over all $Q$ with the same Levi component $M$. Therefore $\Schw(X_P)$ is a vector subspace of $L^2(X_P) \cap C^\infty(X_P)$; indeed, the smoothness follows from equivariance. In \textit{loc.\ cit.} it is denoted as $\Schw(G, M)$ since the spaces $\Schw(X_P)$ for different $P$ are canonically identified via the normalized intertwining operators; similarly, they write $L^2(X_P)$ as $L^2(G, M)$.

\begin{lemma}\label{prop:X_P-bdd}
	For every $\xi \in \Schw(X_P)$, the closure of $\Supp(\xi)$ in $\overline{X_P}^{\mathrm{aff}}(F)$ is compact.
\end{lemma}
\begin{proof}
	In view of Corollary \ref{prop:compact-exhaustion} and the description of Luna--Vust data for $\overline{X_P}^\text{aff}$ in Proposition \ref{prop:X_P-LV}, this is equivalent to \cite[Conjecture 5.6]{BK02}. The general case is established in \cite[Lemma 5.1]{GL17}; cf.\\ the discussion in the beginning of \cite[\S 4]{GL17}. 
\MyQED\end{proof}

\begin{remark}\label{rem:normalized-intop}
	Below is a summary of several facts from the constructions in \cite{BK02} (see the Introduction, \S 1.5 and \S\S 3--4 therein). Let $\chi$ be an irreducible nice representation of $M_\text{ab}(F)$.
	\begin{itemize}
		\item Choose a Haar measure of $M_\text{ab}(F)$ and form the $(M_\text{ab} \times G^\Box)(F)$-equivariant map
			\begin{align*}
				\check{\chi} \boxtimes C^\infty_c(X_P) & \longrightarrow C^\infty(X_P) \\
				1 \otimes \xi & \longmapsto \int_{M_\text{ab}(F)} \chi(m) (m\xi)(\cdot) \dd m
			\end{align*}
			According to Remark \ref{rem:N_Pi-X_P} with $\pi := C^\infty_c(X_P)$, the recipe gives a $G^\Box(F)$-equivariant map (extraction of the $\chi$-component) $C^\infty_c(X_P) \to I^{G^\Box}_P(\chi)$, which is easily seen to be surjective.
		\item This operation extends to a rational family of $(M_\text{ab} \times G^\Box)(F)$-equivariant maps $\check{\chi} \boxtimes \Schw(X_P) \to C^\infty(X_P)$, or equivalently the $G^\Box(X_P)$-equivariant maps
			\[ \Schw(X_P) \longrightarrow I^{G^\Box}_P(\chi). \]
			when $\chi$ varies in a $X^*(M_\text{ab})_{\CC}$-orbit. Rationality here is understood as in \S\ref{sec:auxiliary}, by working with a chosen $X^*(M_\text{ab})_{\CC}$-orbit $\mathcal{T}$ of unramified characters.
		\item For every pair of parabolic subgroups $P, Q \subset G^\Box$ sharing the same Levi $M$, there exists a rational family of normalized intertwining operators $R_{P|Q}(\chi): I^{G^\Box}_Q(\chi) \to I^{G^\Box}_P(\chi)$ with $\chi$ varying in $\mathcal{T}$, making the diagram
			\[ \begin{tikzcd}
				\Schw(X_Q) \arrow{r}[above, inner sep=1em]{\mathcal{F}_{P|Q}} \arrow{d} & \Schw(X_P) \arrow{d} \\
				I^{G^\Box}_Q(\chi) \arrow{r}[below, inner sep=1em]{R_{P|Q}(\chi)} & I^{G^\Box}_P(\chi)
			\end{tikzcd} \]
			commutative for $\chi$ in general position. Thus $R_{P|Q}(\chi)$ can be thought as the $\chi$-component of $\mathcal{F}_{P|Q}$. More precisely, $R_{P|Q}(\chi)$ equals the product of some rational function on $\mathcal{T}$ with the standard intertwining operator $J_{P|Q}(\chi): I^{G^\Box}_Q(\chi) \to I^{G^\Box}_P(\chi)$, which is also rational.
	\end{itemize}

	The statements concerning $\xi \in C^\infty_c(X_P)$ are easy. Now consider the rational continuation to $\Schw(X_P)$. We begin with the case $\xi = \mathcal{F}_{P|Q}(\xi_Q)$ where $\xi_Q \in C^\infty_c(X_Q)$. In the proof of \cite[Proposition 4.2]{BK02}, $C^0_c(X_Q)$ is defined as $\tau_{P,Q} \ast C^\infty_c(X_Q)$ where $\tau_{P,Q}$ belongs to Bernstein's center of $M_\text{ab}$. Moreover, it is shown that $\mathcal{R}_{Q, P}$ restricts to $C^0_c(X_Q) \to C^\infty_c(X_P)$, and ``covers'' $J_{P|Q}(\chi): I^{G^\Box}_Q(\chi) \to I^{G^\Box}_P(\chi)$ as in the diagram above. 

	On the other hand, there exists a rational function $\chi \mapsto r_{P, Q, \psi}(\chi)$ such that convolution by $\eta_{Q,P,\psi}$ on $C^\infty_c(X_P)$ covers the map $I^{G^\Box}_P(\chi) \xrightarrow{r_{P, Q, \psi}(\chi) \cdot \identity} I^{G^\Box}_P(\chi)$, for general $\chi$; this follows by \cite[2.3]{BK02}. Thus the case $\xi \mapsfrom \xi_Q \in C^0_c(X_Q)$ is established. Given the construction of $C^0_c(X_Q)$ alluded to above, the rational continuation extends to all $\xi \mapsfrom \xi_Q \in C^\infty_c(X_Q)$, thus to all $\xi \in \Schw(X_P)$. Thus $\mathcal{F}_{P|Q}$ covers a rational family $R_{Q|P}(\chi)$.
\end{remark}

It is unclear whether the following property is established in \cite{BK02}.
\begin{conjecture}\label{conj:integral-coinvariants}
	Choose a Haar measure on $M_\text{ab}(F)$. Upon twisting $\chi$ by a unramified character $M_\text{ab}(F)$ that is ``sufficiently negative'' relative to the roots in $U_P$, the map $\Schw(X_P) \to I^{G^\Box}_P(\chi)$ above is well-defined by the convergent integral $\int_{M_\text{ab}(F)} \chi(m) (m\xi) \dd m$.
\end{conjecture}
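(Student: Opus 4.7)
The plan is to reduce to a set of generators of $\Schw(X_P)$ and then to interpret the conjectural integral as a genuine local zeta integral attached to the affine spherical embedding $X_P \hookrightarrow \overline{X_P}^{\mathrm{aff}}$, so that the machinery developed in \S\ref{sec:geometric-data}--\S\ref{sec:convergence} applies. By linearity and by definition \eqref{eqn:Schw-BK}, it suffices to treat $\xi = \mathcal{F}_{P|Q}(\xi_Q)$ with $\xi_Q \in C^\infty_c(X_Q)$, for each parabolic $Q$ sharing the Levi component $M$ with $P$. The starting observation is that $X_P$, viewed as a homogeneous $M_\mathrm{ab}$-space for the action $x \mapsto \bar{m}^{-1} x$, is spherical with the geometric data made explicit in Proposition~\ref{prop:X_P-LV}: the valuation cone fills $\mathcal{Q} = X_*(M_\mathrm{ab})_{\Q}$, and the cone $\mathcal{C}$ attached to $\overline{X_P}^{\mathrm{aff}}$ is generated by the images of the coroots in $\check{\Delta}_P$. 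Thus "sufficiently negative relative to the roots in $U_P$" translates precisely into the positivity $\Re(\lambda) \relgg{X_P} 0$ of the twisting character $\chi = |\omega|^\lambda$ in the sense of \S\ref{sec:geometric-data}.

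Next I would establish absolute convergence. Assuming Conjecture~\ref{conj:X_P-bdd}, the closure of $\Supp(\xi)$ inside $\overline{X_P}^{\mathrm{aff}}(F)$ is compact. Using Corollary~\ref{prop:compact-exhaustion} (applied to the spherical embedding $X_P \hookrightarrow \overline{X_P}^{\mathrm{aff}}$, whose Luna--Vust data come from Proposition~\ref{prop:X_P-LV}), one decomposes the support, for each fixed $x \in X_P(F)$ and a small enough $J$-fixing subgroup, as a finite union of pieces of the form $\HC^{-1}(v_i + \mathcal{C})\cdot K$. Applying $\bar{m}^{-1}$ to such pieces translates $\HC$ by $-\HC(\bar{m})$, so the condition $\bar{m}^{-1}x \in \Supp(\xi)$ confines $\HC(\bar{m})$ to a finite union of shifted cones $v'_i - \mathcal{C}$ inside $X_*(M_\mathrm{ab})$. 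The sum defining $\int_{M_\mathrm{ab}(F)} \chi(m)(m\xi)(x)\dd m$ then reduces to a finite sum of expressions $\sum_{\check\mu\in v'_i - (\mathcal{C} \cap X_*(M_\mathrm{ab}))} \chi(\check\mu(\varpi)) \cdot (\text{bounded locally constant term})$, and the analogue of Lemma~\ref{prop:Lambda-positivity} (combined with the explicit shape of $\mathcal{C}$) guarantees absolute convergence as soon as $\langle \lambda, \check\alpha\rangle$ is sufficiently negative for each $\check\alpha \in \check{\Delta}_P$.

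The remaining point is to identify the value of this convergent integral with the rational family $\Schw(X_P) \to I^{G^\Box}_P(\chi)$ of Remark~\ref{rem:normalized-intop}. For $\xi \in C^\infty_c(X_P)$ the two prescriptions agree tautologically. For a general $\xi = \mathcal{F}_{P|Q}(\xi_Q)$, Remark~\ref{rem:normalized-intop} shows that $\mathcal{F}_{P|Q}$ covers a rational family $R_{P|Q}(\chi)$ between the induced representations; the above convergent integral, once convergence is granted on an open set of $\chi$'s, automatically defines a holomorphic family in that region by dominated convergence in the non-archimedean sense. Two rational families on $\mathcal{T}$ agreeing on a nonempty open subset coincide, so the integral must equal the rational family of Remark~\ref{rem:normalized-intop}. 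This yields both the convergence and the claimed identification.

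The main obstacle will be the second step, namely controlling the behavior of $\xi = \mathcal{F}_{P|Q}(\xi_Q)$ near the boundary $\partial \overline{X_P}^{\mathrm{aff}}$ well enough to apply the Cartan-style decomposition. This forces us either to rely on the full strength of Conjecture~\ref{conj:X_P-bdd} (already proven for $P = B$ in \cite{BK99}) or to analyse directly the regularized operator $\xi_Q \mapsto \eta_{Q,P,\psi} \ast \mathcal{R}_{Q,P}(\xi_Q)$: specifically, one needs to show that convolution by $\eta_{Q,P,\psi}$ and the integral $\mathcal{R}_{Q,P}$ over $(U_P \cap U_Q \backslash U_P)(F)$ produce sections that are compactly supported modulo $\overline{X_P}^{\mathrm{aff}}$, a statement whose non-archimedean geometric content essentially recovers Conjecture~\ref{conj:X_P-bdd} itself.
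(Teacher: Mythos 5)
The statement you are trying to prove is stated in the paper as a \emph{conjecture}, immediately preceded by the sentence ``It is unclear whether the following property is established in \cite{BK02}''; the paper offers no proof and only ever uses it as a hypothesis (e.g.\ in the theorem on the local functional equation for the doubling method). So there is no argument of the paper to compare yours against, and the question is whether your proposal actually closes the gap. It does not, for two reasons. First, your convergence step is explicitly conditional on Conjecture \ref{conj:X_P-bdd} (compactness of $\overline{\Supp(\xi)}$ in $\overline{X_P}^{\mathrm{aff}}(F)$), which is itself open in the paper except for $P=B$; you acknowledge at the end that the alternative route --- directly controlling the support of $\eta_{Q,P,\psi}\ast\mathcal{R}_{Q,P}(\xi_Q)$ --- ``essentially recovers Conjecture \ref{conj:X_P-bdd} itself.'' A proof that reduces one open conjecture to another open conjecture is a reduction, not a proof, and should be labelled as such.

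Second, even granting Conjecture \ref{conj:X_P-bdd}, your identification step is incomplete. You argue that the convergent integral and the rational family of Remark \ref{rem:normalized-intop} ``agree on a nonempty open subset'' and hence coincide, but the only place where agreement is tautological is $\xi\in C^\infty_c(X_P)$, where the $M_{\mathrm{ab}}(F)$-integral is compactly supported and converges for \emph{all} $\chi$. For $\xi=\mathcal{F}_{P|Q}(\xi_Q)$ with $\xi_Q\in C^\infty_c(X_Q)$, the rational family is \emph{defined} through the covering property $\mathcal{F}_{P|Q}\rightsquigarrow R_{P|Q}(\chi)$, i.e.\ through the regularized convolution by $\eta_{Q,P,\psi}$ and the intertwining integral $\mathcal{R}_{Q,P}$; to see that your convergent integral computes the same thing on some open set of $\chi$, you must interchange $\int_{M_{\mathrm{ab}}(F)}$ with these two operations, and the convolution by $\eta_{Q,P,\psi}$ is only defined after a regularization, so a Fubini argument is not automatic. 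Without establishing this agreement on an open set, the appeal to uniqueness of rational (or holomorphic) continuation has nothing to continue from. The geometric translation in your first two steps (reading ``sufficiently negative relative to the roots in $U_P$'' through Proposition \ref{prop:X_P-LV} and the cone generated by $\check{\Delta}_P$) is sound and is a useful clarification of what the conjecture asserts, but as written the proposal should be presented as a conditional statement: Conjecture \ref{conj:X_P-bdd} plus an unproven interchange-of-integrals lemma imply Conjecture \ref{conj:integral-coinvariants}.
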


\section{The symplectic case}\label{sec:symplectic-case}
The general \emph{doubling method} was introduced in \cite{PSR86} and \cite[Part A]{GPSR87}. The original concern is to study certain $L$-functions attached to automorphic representations on classical groups or similitude groups without assuming genericity. Here we follow the reinterpretation of \cite[\S 7]{BK02}, which is closer in spirit to Godement--Jacquet theory and relies on the aforementioned Schwartz spaces. The underlying geometric mechanisms are the same.

For the sake of simplicity, in this work we treat mainly the case of symplectic groups for which \eqref{eqn:H1-assumption} is easy to verify. Let $(V, \angles{\cdot|\cdot})$ be a finite-dimensional symplectic $F$-vector space, $V \neq \{0\}$\index{V-space@$V$}. Let $G := \Sp(V)$ be the corresponding symplectic group. ``Doubling'' refers to the following construction \index{V-Box@$V^\Box$}\index{G-Box@$G^\Box$}
\begin{gather*}
	(V^\Box, \angles{\cdot|\cdot}^\Box) := (V, \angles{\cdot|\cdot}) \overset{\perp}{\oplus} (V, -\angles{\cdot|\cdot}), \\
	V^\Delta := \text{diag}(V) \subset V^\Box , \\
	G^\Box := \Sp(V^\Box).
\end{gather*}
Then $V^\Delta$ is a canonically defined Lagrangian subspace of $V^\Box$, i.e.\ a totally isotropic subspace of dimension $(\dim V^\Box)/2 = \dim V$. Hence we obtain the maximal parabolic subgroup
\[ P := \Stab_{G^\Box}(V^\Delta). \]
The Levi quotient $M := P/U_P$ is canonically isomorphic to $\GL(V^\Delta)$. The geometric quotient $X_P^\flat := P \backslash G^\Box$ classifies the Lagrangians in $V^\Box$: to each coset $Pg$ we attach the Lagrangian $V^\Delta g$ in $V^\Box$, where $G^\Box$ acts on the right of $V^\Box$ as usual. Form $X_P := P_\text{der} \backslash G^\Box$ as before. This yields an $M_\text{ab}$-torsor $X_P \twoheadrightarrow X_P^\flat$. The morphism is clearly $M_\text{ab} \times G^\Box$-equivariant. It can be made more precise as follows: fix a volume form $\Lambda_0 \in \topwedge V^\Delta$, $\Lambda_0 \neq 0$, then the diagram
\begin{equation}\label{eqn:iden-Grass} \begin{tikzcd}
	X_P \arrow[twoheadrightarrow]{d} \arrow{r}[above]{\sim} & \left\{ (\ell, \Lambda): \begin{array}{l} \ell \subset V^\Box: \text{Lagrangian} \\ \Lambda \in \topwedge \ell, \; \Lambda \neq 0 \end{array} \right\} \arrow[twoheadrightarrow]{d}[right]{\text{pr}_1} \\
	X_P^\flat \arrow{r}[below]{\sim} & \left\{ \ell \subset V^\Box : \text{Lagrangian} \right\}
	\end{tikzcd} \qquad \begin{tikzcd}
		P_\text{der}g \arrow[mapsto]{r} \arrow[mapsto]{d} & (\ell g, \Lambda_0 g) \arrow[mapsto]{d} \\
		P g \arrow[mapsto]{r} & \ell g
\end{tikzcd}\end{equation}
is commutative. The arrows above can be made $M_\text{ab} \times G^\Box$-equivariant if we let $M_\text{ab} \times G^\Box$ act on the right of the pairs $(\ell, \Lambda)$ as
\begin{gather*}
	(\ell, \Lambda)(\bar{m}, g) = (\ell g, t \cdot \Lambda g), \quad m \in \GL(V^\Delta), \; g \in \Sp(V^\Box)
\end{gather*}
with the conventions
\begin{compactitem}
	\item $\bar{m}$ denotes the image of $m$ in $M_\text{ab}$,
	\item $t := \det(m)^{-1}$,
	\item and $\Sp(V^\Box)$ acts on the right of the exterior algebra $\bigwedge V^\Box$.
\end{compactitem}

\begin{notation}\label{nota:det-Gm}
	Hereafter, we shall fix $\Lambda_0$ and identify $X_P \twoheadrightarrow X_P^\flat$ with the right column of \eqref{eqn:iden-Grass}. It is sometimes convenient to identify $M_\text{ab} \times G$ with $\Gm \times G$ using $\bar{m} \mapsto t = \det(m)^{-1}$ so that the action becomes
	\[ (\ell, \Lambda)(t, g) = (\ell g, t \cdot \Lambda g). \]
\end{notation}

On the other hand, $G \times G$ embeds naturally into $G^\Box$. Set $V^+ := V \oplus \{0\}$ and $V^- := \{0\} \oplus V$; they are Lagrangians of $V^\Box$.

\begin{theorem}[{\cite[I. Lemma 2.1]{GPSR87}}]\label{prop:GPSR-orbits}
	For every field extension $E$ of $F$, the $(G \times G)(E)$-orbits in $X_P^\flat(E)$, or equivalently the $(G \times G)(E)$-orbits of Lagrangians $\ell \subset V^\Box \dotimes{F} E$, are classified by the invariant
	\[ \kappa(\ell) := \kappa^\pm(\ell) = \dim_E  \left( \ell \cap (V^\pm \dotimes{F} E) \right). \]

	In particular, there exists an Zariski-open dense orbit $X^{+,\flat}$; it is characterized as $\{\ell: \kappa(\ell)=0\}$, and we have $\kappa(V^\Delta)=0$. Moreover, $P \cap (G \times G) = \mathrm{diag}(G)$.
	
	For any $\gamma \in (X^\flat \smallsetminus X^{+,\flat})(F)$, there exists a proper parabolic subgroup $R \subset G \times G$ such that $U_R \subset \Stab_{G \times G}(\gamma)$.
\end{theorem}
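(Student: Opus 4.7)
My plan is to use Witt's theorem for the symplectic form on $V^\Box$ as the main engine, after showing that $\kappa^+(\ell)=\kappa^-(\ell)$, so that a single integer $\kappa(\ell)$ parametrizes the orbits. First I would set $W^{\pm}(\ell) := \ell \cap (V^{\pm}\otimes E)$ and observe that each is automatically totally isotropic inside $V$ (identified with $V^{\pm}$), since $\ell$ is lagrangian and the form restricted to $V^{\pm}$ is $\pm\angles{\cdot|\cdot}_V$. The equality $\dim W^{+}=\dim W^{-}$ will come from a short dimension count applied to the projections $p^{\pm}:\ell\to V^{\pm}$: their kernels are $W^{\mp}$, so $\dim p^{\pm}(\ell)=\dim V-\dim W^{\mp}$, and pairing $\ell$ against $V^{+}$ and $V^{-}$ under the lagrangian condition forces $p^{+}(\ell)^{\perp}=W^{+}$ inside $V$, giving the equality.

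Next, given two lagrangians $\ell_1,\ell_2$ with $\kappa(\ell_1)=\kappa(\ell_2)=k$, I would build an element of $(G\times G)(E)$ carrying one to the other. The recipe is Witt-style: choose symplectic bases of $V$ adapted to $W^{+}(\ell_i)$ on the left factor and to $W^{-}(\ell_i)$ on the right factor (possible because $\Sp(V)$ is transitive on isotropic subspaces of fixed dimension), then extend coherently to bases of $\ell_i$ using that the residual map $p^{+}(\ell_i)/W^{+}(\ell_i)\to p^{-}(\ell_i)/W^{-}(\ell_i)$ is a symplectic isomorphism of symplectic spaces. This yields a transitive action of $G\times G$ on each $\kappa$-locus. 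The open orbit corresponds to $\kappa=0$, where $\ell$ is the graph of a bijection $V\to V$ that is easily seen, via the twisted form on $V^\Box$, to lie in $\Sp(V)$; hence $\{\kappa=0\}$ is a single $G\times G$-orbit containing $V^\Delta$ (since $V^\Delta$ corresponds to the identity of $G$).

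The identification $P\cap(G\times G)=\mathrm{diag}(G)$ is then a direct check: $(g_1,g_2)$ stabilizes $V^\Delta$ iff $\{(vg_1,vg_2):v\in V\}=V^\Delta$, iff $vg_1=vg_2$ for all $v$, iff $g_1=g_2$. This also shows that the open orbit $X^{+,\flat}$ is isomorphic to $G$ as a $G\times G$-variety.

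The main obstacle, and the step I would spend the most care on, is producing the parabolic $R\subset G\times G$ with $U_R\subset\mathrm{Stab}_{G\times G}(\gamma)$ for non-open $\gamma$. Given $\ell$ with $\kappa(\ell)=k\geq 1$, I would take $R:=R^{+}\times R^{-}$ where $R^{\pm}:=\mathrm{Stab}_G(W^{\pm}(\ell))$ are the (proper) parabolic subgroups of $G$ fixing the nonzero isotropic subspaces $W^{\pm}(\ell)\subset V$, with unipotent radicals $U^{\pm}$ acting trivially on the graded pieces of $0\subset W^{\pm}\subset (W^{\pm})^{\perp}\subset V$. The task is then to show $U^{+}\times U^{-}$ preserves $\ell$. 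For this I would use the description, obtained in the previous paragraph, that $\ell$ fits into a filtration $W^{+}\oplus W^{-}\subset\ell\subset p^{+}(\ell)\oplus p^{-}(\ell)=(W^{+})^{\perp}\oplus(W^{-})^{\perp}$ and that the quotient $\ell/(W^{+}\oplus W^{-})$ is the graph of a symplectic isomorphism between $(W^{+})^{\perp}/W^{+}$ and $(W^{-})^{\perp}/W^{-}$. Since $U^{\pm}$ acts trivially on these graded pieces and sends $(W^{\pm})^{\perp}$ into $(W^{\pm})^{\perp}$ and $V$ into itself modulo $W^{\pm}$, an element $(u^{+},u^{-})\in U^{+}\times U^{-}$ acts on any $(v^{+},v^{-})\in\ell$ as $(v^{+}+w^{+},v^{-}+w^{-})$ with $w^{\pm}\in W^{\pm}$, which still lies in $\ell$ because $W^{+}\oplus W^{-}\subset\ell$. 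This yields $U_R\subset\mathrm{Stab}_{G\times G}(\gamma)$ and finishes the proof.
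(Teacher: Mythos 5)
Your proof is correct, and it is essentially the argument of the cited reference \cite[I.\ Lemma 2.1]{GPSR87} (the paper itself only cites this result rather than reproving it): the key points — $\kappa^+=\kappa^-$ via $p^{\pm}(\ell)^{\perp}=W^{\pm}$, the reconstruction of $\ell$ from the triple $(W^+,W^-,\phi)$ plus Witt transitivity, and the observation that $U^{+}\times U^{-}$ fixes $\ell$ because it acts trivially on the graded pieces of $0\subset W^{\pm}\subset (W^{\pm})^{\perp}\subset V$ — all check out. No gaps.
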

Implicit in the statements is the property that $\kappa^+(\ell) = \kappa^-(\ell)$. Let $G \times G$ act on $G$ as in \eqref{eqn:group-case-action1}. The theorem furnishes the $G \times G$-equivariant embedding
\begin{equation}\label{eqn:G-doubling-embedding} \begin{aligned}
	G & \hookrightarrow X_P^\flat \\
	g & \longmapsto V^\Delta (g,1): \text{ Lagrangian } \subset V^\Box.
\end{aligned}\end{equation}
It is an open immersion onto the open $G \times G$-orbit $X^{+,\flat}$. From the $M_\text{ab}$-torsor structure, the inverse image $X^+ \subset X_P$ of $X^{+,\flat}$ is readily seen to be an open dense $M_\text{ab} \times G \times G$-orbit in $X_P$.

\begin{lemma}\label{prop:GmG-embedding}
	Choose $\Lambda_0 \in \topwedge V^\Delta \smallsetminus \{0\}$ in \eqref{eqn:iden-Grass} as before. The composite below is an $M_\mathrm{ab} \times G \times G$-equivariant isomorphism (cf.\ (Notation \ref{nota:det-Gm}))
	\[\begin{tikzcd}[row sep=tiny]
		M_\mathrm{ab} \times G \arrow[r, "\sim"] & \Gm \times G \arrow{r} & X^+ \\
		(\bar{m}, g) \arrow[mapsto, r] & (\det(m)^{-1}, g) & \\
		& (t, g) \arrow[mapsto, r] & (V^\Delta (g, 1), t \Lambda_0 (g,1))
	\end{tikzcd}\]
	that covers \eqref{eqn:G-doubling-embedding}, the action on $M_\mathrm{ab} \times G$ being $(\bar{m}, g) \stackrel{(\bar{a}, g_1, g_2)}{\longmapsto} (\bar{m}\bar{a}, g_2^{-1} g g_1)$.
\end{lemma}
\begin{proof}
	Let $(\ell, \Lambda) \in X^+$ with $\ell \leftrightarrow g \in G$ under \eqref{eqn:G-doubling-embedding}, the inverse morphism is given by sending $(\ell, \Lambda)$ to $\left(\frac{\Lambda}{\Lambda_0 g}, g \right)$. The equivariance is routine to check.
\MyQED\end{proof}

For any $m \geq 0$, let $\pr^\vee(\bigwedge^m V^\Box)$ denote the space classifying lines in $\bigwedge^m V^\Box$. The identification in \eqref{eqn:iden-Grass} leads to the cartesian diagram
\begin{equation}\label{eqn:Plucker-embedding} \begin{tikzcd}
	X_P \arrow[hookrightarrow]{r} \arrow[twoheadrightarrow]{d} & \bigwedge^{\dim V} V^\Box \smallsetminus \{0\} \arrow[twoheadrightarrow]{d} \\
	X_P^\flat \arrow[hookrightarrow]{r} & \pr^\vee(\bigwedge^{\dim V} V^\Box)
\end{tikzcd}\end{equation}
where the lower horizontal arrow is the Plücker embedding: namely a Lagrangian $\ell$ is mapped to the line $\bigwedge^{\dim V} \ell$ in $\bigwedge^{\dim V} V^\Box$. All arrows are $M_\text{ab} \times G^\Box$-equivariant. As $\dim \bigwedge^{\dim V} V^\Box 
\geq 2$, one infers immediately that $X_P$ is quasi-affine: it is even possible to write down the defining equations of $X := \overline{X_P}^\text{aff}$ inside the affine space $\bigwedge^{\dim V} V^\Box$. We obtain $M_\text{ab} \times G \times G$-equivariant embeddings
\[ X^+ \xrightarrow{\text{open}} X_P \xrightarrow{\text{open}} X := \overline{X_P}^{\text{aff}} \xrightarrow{\text{closed}} \bigwedge^{\dim V} V^\Box. \]
Note that $X = X_P \sqcup \{\vec{0}\}$. \index{0@$\vec{0}$}

\begin{lemma}
	The homogeneous $M_\mathrm{ab} \times G \times G$-space $X^+$ is spherical and wavefront. Moreover, $X^+ \hookrightarrow X$ is an affine spherical embedding.
\end{lemma}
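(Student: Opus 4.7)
The approach is to reduce everything to the explicit product picture $X^+ \simeq \Gm \times G$ supplied by Lemma \ref{prop:GmG-embedding}, then combine the group case of \S\ref{sec:group-case} with the elementary torus case. Unwinding the equivariance and using $\det g_2 = 1$ for $g_2 \in \Sp(V)$, the action of $M_\mathrm{ab} \times G \times G \simeq \Gm \times G \times G$ on $\Gm \times G$ works out to
\[ (t, g) \cdot (s, g_1, g_2) = (s^{-1} t, \; g_2^{-1} g g_1), \]
so the first factor carries only the tautological $M_\mathrm{ab}$-torus action and the second carries the group-case $G \times G$-action \eqref{eqn:group-case-action1}. It is then a formal matter that the Luna--Vust data of a product of spherical homogeneous spaces under a product group decompose as direct sums of those of the factors.

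Sphericity and wavefrontness then follow factor-wise. Fixing a Borel subgroup $B \subset G$, the subgroup $\Gm \times B^- \times B$ is a Borel of $M_\mathrm{ab} \times G \times G$, and its orbit of the base point $(1, 1_G)$ equals $\Gm \times B^- B$, which is Zariski-open dense in $\Gm \times G$ by the Bruhat decomposition. For wavefrontness, both factors are separately wavefront: $\Gm$ trivially, and $G$ under $G \times G$ because it is the symmetric space attached to the flip involution (Remark \ref{rem:symmetric-spaces}); thus the image of the antidominant chamber of $M_\mathrm{ab} \times G \times G$ in $\mathcal{Q}(X^+) = \mathcal{Q}(\Gm) \oplus \mathcal{Q}(G)$ fills up $\mathcal{V}(X^+) = \mathcal{Q}(\Gm) \oplus \mathcal{V}(G)$.

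For the affine spherical embedding assertion, $X := \overline{X_P}^\mathrm{aff}$ is affine by construction, $X^+ \hookrightarrow X$ is open, and $X^+$ is dense in $X$ because $X_P$ is (by definition of the affine closure) and $X^+$ is dense in $X_P$; hence $X$ inherits the open dense Borel orbit from $X^+$. The only remaining---and main---point is the normality of $X$, which I would handle as follows. The variety $X_P = P_\mathrm{der} \backslash G^\Box$ is homogeneous under $G^\Box$, hence smooth, hence normal. By Proposition \ref{prop:X_P-LV} it is spherical under $M_\mathrm{ab} \times G^\Box$ and quasi-affine, thus strongly quasi-affine by \cite[Proposition 2.2.3]{Sak12}, so $X = \mathrm{Spec}\,\Gamma(X_P, \mathcal{O}_{X_P})$ is well-defined and has the same function field as $X_P$. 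An element $f \in F(X_P)$ integral over $\Gamma(X_P, \mathcal{O}_{X_P})$ is integral over each local ring $\mathcal{O}_{X_P, x}$, hence lies in $\mathcal{O}_{X_P, x}$ by normality of $X_P$, and thus defines a regular function on $X_P$; this shows $\Gamma(X_P, \mathcal{O}_{X_P})$ is integrally closed in its fraction field, so $X$ is normal. The chief obstacle in this strategy is precisely this normality step; a possible alternative, slightly more cumbersome, would exploit the Plücker picture \eqref{eqn:Plucker-embedding} together with projective normality of the minimal equivariant embedding of the Lagrangian Grassmannian, but the argument above is shorter and entirely internal to the paper.
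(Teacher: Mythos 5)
Your proof is correct and follows essentially the same route as the paper: the first part is obtained by combining the identification $X^+ \simeq \Gm \times G$ of Lemma \ref{prop:GmG-embedding} with the group case of \S\ref{sec:group-case} (plus the trivial torus factor), and the second part rests on the normality of $X = \overline{X_P}^{\mathrm{aff}}$ being inherited from the normality of the smooth quasi-affine homogeneous space $X_P$. Your write-up merely spells out the details (Bruhat decomposition for the open Borel orbit, integral closedness of $\Gamma(X_P,\mathcal{O}_{X_P})$) that the paper dismisses as routine.
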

\begin{proof}
	Combining Lemma \ref{prop:GmG-embedding} with the discussions in \S\ref{sec:group-case} gives the first part. For the second part, it is routine to see that $X = \overline{X_P}^\text{aff}$ is geometrically integral; the normality follows from that of $X_P$. 
\MyQED\end{proof}

\begin{lemma}\label{prop:f-pm}\index{f-plus@$f^+$}
	Under the affine embedding $X^+ \hookrightarrow X$ above, the reduced closed subscheme $\partial X := X \smallsetminus X^+$ is the zero locus of the relative invariant under $M_\mathrm{ab} \times G \times G$
	\begin{align*}
		f^+: \bigwedge^{\dim V} V^\Box & \longrightarrow \topwedge V^\Box \simeq \Ga \\
		\Lambda & \longmapsto \Lambda \wedge v^+
	\end{align*}
	where $v^+ \in \topwedge V^+$, $v^+ \neq 0$ under the Plücker embedding. Furthermore, the restriction of $f^+$ to $\Gm \times G \rightiso X^+ \subset X$ (using Lemma \ref{prop:GmG-embedding}) is proportional to $\mathrm{pr}_1: \Gm \times G \twoheadrightarrow \Gm$. Consequently its eigencharacter is given by
	\begin{align*}
		\omega_1: M_\mathrm{ab} \times G \times G & \longrightarrow \Gm \\
		(\bar{m}, g_1, g_2) & \longmapsto t := (\det m)^{-1}.
	\end{align*}

	The same conclusion holds if we use $V^-, v^-$ and the corresponding regular function $f^-$.
\end{lemma}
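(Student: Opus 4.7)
The plan is to work inside the ambient affine space $\bigwedge^{\dim V} V^\Box$ using the Plücker embedding \eqref{eqn:Plucker-embedding}, so that $X \subset \bigwedge^{\dim V} V^\Box$ and $X = X_P \sqcup \{\vec{0}\}$. Under this embedding a point of $X_P$ corresponds to a decomposable element $\Lambda \in \topwedge \ell$ for some lagrangian $\ell \subset V^\Box$. The function $f^+$ is then literally defined on the ambient affine space, so its restriction to $X$ is an honest regular function, and one only needs to (i) identify its zero locus with $\partial X$, (ii) check it is a $G \times G$-invariant $M_\text{ab}$-eigenvector, and (iii) compute the eigencharacter and the restriction to $X^+$.

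For (i): if $\Lambda = v_1 \wedge \cdots \wedge v_{\dim V}$ with $v_i \in \ell$, then $\Lambda \wedge v^+ = 0$ if and only if $\ell \cap V^+ \neq 0$, i.e.\ $\kappa^+(\ell) > 0$; by Theorem \ref{prop:GPSR-orbits} this is equivalent to $\ell \notin X^{+,\flat}$, which means $(\ell,\Lambda) \in X_P \setminus X^+$. Since $f^+(\vec{0}) = 0$ as well, the vanishing locus of $f^+|_X$ is exactly $\partial X = X \setminus X^+$.

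For (ii) and (iii), use Notation \ref{nota:det-Gm} so that the action reads $(\ell,\Lambda)(t,g_1,g_2) = (\ell(g_1,g_2),\, t\Lambda(g_1,g_2))$. Then
\[
 f^+(t\Lambda(g_1,g_2)) \;=\; t\,\bigl(\Lambda(g_1,g_2) \wedge v^+\bigr).
\]
Because $G = \Sp(V)$ acts on $V^+$ via $g_1$ (determinant one) and trivially via $g_2$, the element $v^+ \in \topwedge V^+$ is fixed by $G \times G$; and because $G^\Box = \Sp(V^\Box)$ preserves $\topwedge V^\Box$, we have $\bigl(\Lambda \wedge v^+\bigr)(g_1,g_2) = \Lambda(g_1,g_2) \wedge v^+$. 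Hence $f^+(t\Lambda(g_1,g_2)) = t f^+(\Lambda)$, giving the eigencharacter $\omega_1(\bar m,g_1,g_2) = t = (\det m)^{-1}$.

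For the restriction to $X^+$, Lemma \ref{prop:GmG-embedding} sends $(t,g)$ to $\bigl(V^\Delta(g,1),\, t\Lambda_0(g,1)\bigr)$, so it suffices to show $\Lambda_0(g,1) \wedge v^+ \in \topwedge V^\Box$ is a nonzero constant independent of $g$. Pick a symplectic basis $e_1,\ldots,e_d$ of $V$ and take $\Lambda_0 = (e_1,e_1)\wedge\cdots\wedge(e_d,e_d)$, $v^+ = (e_1,0)\wedge\cdots\wedge(e_d,0)$. Since the factors $(e_ig,e_i) = (e_ig,0)+(0,e_i)$ and the $(e_jg,0) \in V^+$ may be cleared by adding multiples of the later factors $(e_j,0)$ in the wedge, one obtains
\[
 \Lambda_0(g,1) \wedge v^+ \;=\; (0,e_1)\wedge\cdots\wedge(0,e_d)\wedge(e_1,0)\wedge\cdots\wedge(e_d,0),
\]
a nonzero top form in $\topwedge V^\Box$ independent of $g$; crucially no $\det g$ appears, because $\det g = 1$ for $g \in \Sp(V)$. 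Therefore $f^+|_{X^+}$ is a nonzero scalar multiple of $\text{pr}_1 : \Gm \times G \twoheadrightarrow \Gm$. The case of $f^-$ follows by the same argument with $V^+$ replaced by $V^-$, using $V^\Delta \cap V^- = 0$ and the symmetry $(e_i,e_i) \equiv (e_i,0) \pmod{V^-}$. The only genuine content beyond bookkeeping is the vanishing-of-$\det g$ step in the last display, which is the single point where the symplectic (as opposed to merely linear) nature of $G$ enters.
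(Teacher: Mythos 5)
Your proof is correct and follows essentially the same route as the paper's: identify the zero locus via the $\kappa$-invariant of Theorem \ref{prop:GPSR-orbits}, and compute $f^+$ on $X^+$ through the Plücker embedding using that $v^+$ is fixed by $G\times G$ and that $G\times G\subset\Sp(V^\Box)$ acts trivially on $\topwedge V^\Box$ (the paper packages these two facts into the single chain $t\Lambda_0(g,1)\wedge v^+=(\det g^{-1})\det(g,1)\cdot t\,\Lambda_0\wedge v^+$). One small quibble with your closing remark: in your displayed basis computation no $\det g$ appears for \emph{any} $g\in\GL(V)$ (the $V^+$-components are killed by wedging against the top form $v^+$), so that step is not where the symplectic hypothesis enters; it is used earlier, precisely in the two equivariance facts just mentioned.
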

\begin{proof}
	It suffices to deal with the case of $V^+$. By Theorem \ref{prop:GPSR-orbits}, the open orbit $X^+$ realized as the inverse image of $X^{+,\flat} \subset  X_P^\flat$ is characterized by $\ell \cap V^+ \neq \{0\}$. This is equivalent to $\Lambda \wedge v^+ \neq 0$ in $\topwedge V^\Box$, where $\Lambda \in \bigwedge^{\dim V} V^\Box$ lies over the image of $\ell$ under the Plücker embedding. It suffices to show that $f^+: \Lambda \mapsto \Lambda \wedge v^+$ pulled back to $\Gm \times G$ via Lemma \ref{prop:GmG-embedding} is proportional to $\text{pr}_1: \Gm \times G \to \Gm$.

	Fix $\Lambda_0 \in \topwedge V^\Delta$ as in \eqref{eqn:iden-Grass}. For any $(t,g) \in \Gm \times G$,
	\begin{align*}
		f^+(\Lambda_0 (t,g)) & = t \Lambda_0(g,1) \wedge v^+ = t \left( \Lambda_0 \wedge v^+(g^{-1}, 1)\right)(g, 1) \\
		& = (\det g^{-1}) t(\Lambda_0 \wedge  v^+)(g,1) \\
		& = (\det g^{-1}) \det(g,1) \cdot t \Lambda_0 \wedge v^+ = t \Lambda_0 \wedge v^+
	\end{align*}
	where we used the fact $V^+ \subset V^\Box$ is stable under $G \times \{1\}$.
\MyQED\end{proof}

\begin{lemma}\label{prop:doubling-irred}
	The closed subscheme $\partial X$ of $X$ is geometrically integral.

	Moreover, the valuation $v_{\partial X}: F(X) \to \Z$ satisfies $v_{\partial X}(f^\pm) = 1$ if it is normalized to have image $\Z$. Consequently, $\partial X$ is defined by the ideal generated by $f^\pm$.
\end{lemma}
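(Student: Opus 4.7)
The plan is to establish geometric integrality of $\partial X$ by exploiting the orbit stratification of the Lagrangian Grassmannian, and then to compute $v_{\partial X}(f^\pm) = 1$ via an explicit determinantal formula for $f^\pm$ read off the Plücker embedding.

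First, I will show that $\partial X$ is geometrically irreducible. By Theorem \ref{prop:GPSR-orbits}, the $(G \times G)$-orbits on $X_P^\flat$ are classified by $\kappa(\ell) \in \{0, 1, \ldots, \dim V\}$, with $X^{+,\flat} = \{\kappa = 0\}$ as the open orbit. The rank conditions $\{\kappa \geq k\}$ are closed and nested, and $\overline{\{\kappa = k\}} = \{\kappa \geq k\}$, so $\partial X_P^\flat = \{\kappa \geq 1\}$ is the closure of a single $(G \times G)$-orbit and hence geometrically irreducible. Pulling back along the $\Gm$-torsor $X_P \twoheadrightarrow X_P^\flat$ preserves irreducibility, and adjoining the unique $\Gm$-fixed point $\vec{0}$ (which lies in the closure since it is approached as $t \to 0$ along any $\Gm$-orbit in $X_P \smallsetminus X^+$) yields the geometrically irreducible set $\partial X$. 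Reducedness is built into our chosen scheme structure, and in characteristic zero it descends to geometric reducedness; hence $\partial X$ is geometrically integral.

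Next, I will compute $v_{\partial X}(f^\pm)$ via the Plücker description. A direct exterior-algebra manipulation, using a basis $e_1, \ldots, e_{\dim V}$ of a lagrangian $\ell$ decomposed as $e_i = \pi^+(e_i) + \pi^-(e_i)$ along $V^\Box = V^+ \oplus V^-$, yields
$$ e_1 \wedge \cdots \wedge e_{\dim V} \wedge v^+ = \det\left( \pi^-|_\ell \right) \cdot v^- \wedge v^+, $$
computed relative to bases of $V^\pm$. Thus the vanishing locus of $f^+$ on $X_P^\flat$ is the degeneracy locus of the family of maps $\pi^-|_\ell$, and at a generic $\ell_0 \in \partial X_P^\flat$ we have $\kappa(\ell_0) = 1$ and $\rank(\pi^-|_{\ell_0}) = \dim V - 1$. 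In the standard affine chart of the smooth variety $X_P^\flat$ given by lagrangians transverse to a chosen complement $\ell_0'$ (parametrizing deformations as symmetric linear maps $\ell_0 \to \ell_0'$), $f^+$ becomes the determinant of a square matrix-valued function whose value at $\ell_0$ has corank exactly one. By the classical local normal form for such determinantal degenerations, $\det$ vanishes to order exactly one along $\partial X_P^\flat$. Since the generic point of $\partial X$ lies in $X_P$ (away from $\vec{0}$) and $X_P \twoheadrightarrow X_P^\flat$ is a smooth $\Gm$-torsor, we conclude $v_{\partial X}(f^+) = 1$; the same argument with $V^+$ and $V^-$ swapped, or direct symmetry, yields $v_{\partial X}(f^-) = 1$.

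For the final ``consequently'' clause, I will use normality of $X$: the Weil divisor equality $\divisor(f^+) = 1 \cdot \partial X$ (which holds since $f^+$ is nonvanishing on the open $X^+$) translates, for a height-one prime in a normal domain, into the ideal equality $(f^+) = I(\partial X)$ in $F[X]$, and likewise for $f^-$. The principal technical obstacle is the rigorous order-one vanishing at the generic point of $\partial X_P^\flat$: while the determinantal formula above makes the statement geometrically transparent, carrying out the local chart setup on the Lagrangian Grassmannian and the Taylor expansion of $\det$ around a corank-one matrix constitutes the bulk of the calculation. The remaining pieces, by contrast, are formal consequences of the orbit picture and of normality.
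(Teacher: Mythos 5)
Your treatment of geometric integrality follows essentially the paper's route: both arguments rest on the closure relations among the orbits $\Omega_d=\{\kappa=d\}$ from Theorem \ref{prop:GPSR-orbits}, which force $\partial X$ to be the closure of the unique codimension-one orbit, with geometric irreducibility persisting because the orbit analysis is insensitive to field extension. For the valuation you diverge from the paper, and this is where there is a genuine gap. The paper exhibits an explicit curve $c(t)=t\,x_0(1,\mu(t))$ degenerating into the codimension-one orbit and invokes the identity $\mathrm{ord}_{t=0}(c^*(gf^+))=v_{\partial X}(f^+)\cdot i(c(0),\partial X\cdot c;X)$ from \cite[\S 24.1]{Ti11}; since $f^+$ is an eigenfunction of eigencharacter $\omega_1$, the pullback $c^*(f^+)$ is visibly linear in $t$, which forces both factors on the right to equal $1$. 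Your determinantal formula $f^+(\Lambda)=\det(\pi^-|_\ell)\cdot(v^-\wedge v^+)$ is correct, but the assertion that ``by the classical local normal form for such determinantal degenerations, $\det$ vanishes to order exactly one'' is not a valid deduction. The determinant of a matrix family $A(s)$ with $A(s_0)$ of corank one can vanish to any order at $s_0$ (e.g.\ $A(s)=\mathrm{diag}(s^2,1)$); order-one vanishing requires the family to meet the determinantal hypersurface with nonzero first-order term, and your family is constrained to the chart of symmetric maps $S\colon\ell_0\to\ell_0'$, so this transversality is exactly the point at issue, not a citation to a normal form.

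The gap is repairable within your framework. Writing $\pi^-|_{\ell_S}=A_0+\pi^-\circ S$ with $A_0:=\pi^-|_{\ell_0}$ of corank one, the first-order term of $\det$ in the direction $S$ is $\mathrm{tr}\bigl(\mathrm{adj}(A_0)\,\pi^-S\bigr)=\lambda\bigl(\pi^-(Sw)\bigr)$, where $w$ spans $\ker A_0=\ell_0\cap V^+$ and $\lambda$ is a functional cutting out $\mathrm{im}(A_0)=\pi^-(\ell_0)$, which has codimension one in $V^-$. Since $S\mapsto Sw$ surjects onto $\ell_0'$ as $S$ ranges over symmetric maps, and $\pi^-(\ell_0)+\pi^-(\ell_0')=\pi^-(V^\Box)=V^-$ forces $\pi^-(\ell_0')\not\subset\pi^-(\ell_0)$, some direction $S$ gives a nonzero first-order term; hence the order is one. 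As written, though, your proof asserts the crux rather than proving it --- and your closing sentence concedes that the ``bulk of the calculation'' is omitted. Either carry out this linearization, or adopt the paper's curve argument, which extracts the order-one statement almost for free from the eigencharacter of $f^+$. The remaining pieces (the handling of $\vec{0}$, reduced-implies-geometrically-reduced in characteristic zero, and the passage from $\divisor(f^+)=\partial X$ to $(f^+)=I(\partial X)$ via normality and height-one localizations) are fine.
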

\begin{proof}
	According to Theorem \ref{prop:GPSR-orbits}, $X_P^\flat$ is stratified into $G \times G$-orbits $\Omega_d = \{\ell: \kappa(\ell)=d\}$ with
	\[ \Omega_a \subset \overline{\Omega_b} \iff a \geq b. \]
	The closure relation entails that there is at most one $G \times G$-orbit of codimension one. By pulling back, the same property lifts to the $M_\text{ab} \times G \times G$-orbits in $X_P$, and then extends to $X$ since $X = X_P \sqcup \{\vec{0}\}$ by construction. In fact, our arguments below will give a recipe to steer an element of $\Omega_a$ towards $\Omega_{a+1}$ by one-parameter subgroups, $a=0,1,\ldots$.

	Being the zero locus of $f^\pm$, by Krull's theorem each irreducible component of $\partial X$ has codimension $\leq 1$. We infer that there exists a unique orbit $O$ in $X$ of codimension one, thus $\partial X = \overline{O}$ is irreducible since $O$ is. As our arguments are based on Theorem \ref{prop:GPSR-orbits} which works over any field extension of $F$, the irreducibility persists under field extensions. An immediate consequence is: any element of $X$ lying over a Lagrangian $\ell \in X^{+,\flat}$ with $\kappa(\ell)=1$ (i.e.\ $\ell \in \Omega_1$) belongs to $O$.
	
	It remains to show $v_{\partial X}(f^+) = 1$. We contend that for any $x_0 \in X^+(F)$, there exists a homomorphism of $F$-groups $\mu: \Gm \to G$ such that the morphism
	\begin{align*}
		c: \Gm & \longrightarrow X^+ \\
		t & \longmapsto t x_0 (1,\mu(t))
	\end{align*}
	extends to $c: \Ga \to X$ such that $c(0)$ lies in the $M_\text{ab} \times G \times G$-orbit of codimension one. As $(M_\text{ab} \times G \times G)(F)$ operates transitively on $X^+(F)$, we may assume that $x_0$ is any element lying over $V^\Delta \in X^{+,\flat}(F)$.
	
	Decompose $V$ into an orthogonal direct sum $V_0 \oplus V_1$ of symplectic vector spaces such that $\dim_F V_0 = 2$. Choose a basis $v, w$ of $V_0$ verifying $\angles{v|v'}=1$, and any basis $\{v_i : 1 \leq i \leq \dim V_1\}$ of $V_1$. Set
	\[ b := \bigwedge_{i=1}^{\dim V_1} (v_i, v_i) \in \Image [\; \topwedge V_1^\Delta \to \bigwedge^{\dim V_1} V^\Box \;] \]
	multiplied in any order, with the convention $b=1$ when $V_1 = \{0\}$. We may suppose that $x_0 = (v,v) \wedge (w,w) \wedge b$ under the embedding $X^+ \hookrightarrow \bigwedge^{\dim V} V^\Box \smallsetminus \{0\}$ in \eqref{eqn:Plucker-embedding}. Take
	\[ \mu(t) := \begin{cases}
		v \mapsto tv, \; w \mapsto t^{-1}w & \text{ on } V_0 \\
		\identity_{V_1} & \text{ on } V_1.
	\end{cases} \]
	In what follows, $o(t)$ stands for any expression valued in $\bigwedge V^\Box$ that involves only positive powers of $t$. We compute
	\begin{align*}
		c(t) & = t \left( (v,tv) \wedge (w, t^{-1}w) \wedge b \right) \\
		& = t\left( ((v,0) + (0,tv)) \wedge ((w,0) + (0, t^{-1}w)) \wedge b \right) \\
		& = t \left( \left( (v,0) \wedge (w,0) + t^{-1}(v,0) \wedge (0,w) + o(t) + (0,v) \wedge (0,w) \right) \wedge b \right) \\
		& = (v,0) \wedge (0,w) \wedge b + o(t).
	\end{align*}
	Hence $c$ extends to $\Ga$ by mapping $0$ to $(v,0) \wedge (0,w) \wedge b$. Observe that $(v,0) \wedge (0,w) \wedge b$ lies over the image of the Lagrangian $\ell = (Fv,0) \oplus (0, Fw) \oplus V_1^\Delta$ of $V_0^\Box \oplus V_1^\Box = V^\Box$ under \eqref{eqn:Plucker-embedding}. Since
	\[ \kappa(\ell) = \kappa^{V_0}((Fv,0) \oplus (0,Fw)) + \kappa^{V_1}(V_1^\Delta) = 1 + 0 = 1, \]
	it must lie in the codimension-one stratum by the arguments above. This establishes our claim.
	
	Now we are in position to invoke the techniques of \cite[\S 24.1]{Ti11}. Pick any $x_0$ and construct $c$ as above. By (24.1) in \textit{loc.\ cit.}, for $g \in M_\text{ab} \times G \times G$ in general position we have
	\[ \text{ord}_{t=0}(\underbracket{c^*(g f^+)}_{\in F[\Ga] \simeq F[t]}) = v_{\partial X}(f^+) \cdot i(c(0), \partial X \cdot c; X) \]
	where $i(c(0), \partial X \cdot c; X)$ is defined to be $\text{ord}_{t=0}(c^*(gh))$ for any local parameter $h$ of $\partial X$; it makes sense for generic $g$ since the orbit closure containing $c(0)$ equals $\partial X$.
	
	Evaluate the left hand side first. Since $f^+$ is a relative invariant, $g$ can be neglected. As $f^+$ has eigencharacter $\omega_1$, the regular function $c^*(f^+)(t) = f^+(c(t))$ is linear in $t$ by construction of $c$. So $\text{ord}_{t=0}(c^*(g f^+)) = 1$, which implies that $v_{\partial X}(f^+) = i(c(0), \partial X \cdot c; X) = 1$. This completes the proof.
\MyQED\end{proof}

\begin{theorem}\label{prop:geom-axiom-doubling}
	The relative invariants of $X$ are of the form $c (f^\pm)^n$ with $c \in F^\times$, $n \in \Z$, and the monoid $\Lambda_X$ is generated by $\omega_1$. Consequently, the geometric Axiom \ref{axiom:geometric} is satisfied by the spherical $M_\mathrm{ab} \times G \times G$-embedding $X^+ \hookrightarrow X$.
\end{theorem}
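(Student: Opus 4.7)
My plan is to leverage the explicit description $X^+ \simeq \Gm \times G$ of Lemma~\ref{prop:GmG-embedding} together with the fact that $G = \Sp(V)$ is semisimple and simply connected, so that $X^*(G) = 0$ and $\det|_G = 1$.

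First I would compute the lattice $\Lambda$ via Lemma~\ref{prop:char-Lambda}. Transporting the right action of $M_{\mathrm{ab}} \times G \times G$ on $X^+$ through the isomorphism of Lemma~\ref{prop:GmG-embedding} and using Notation~\ref{nota:det-Gm}, a direct calculation yields $(t,g)(\bar m, g_1, g_2) = (t \cdot \omega_1(\bar m) \cdot \det g_2,\; g_2^{-1} g g_1)$, which reduces to $(t \cdot \omega_1(\bar m),\; g_2^{-1} g g_1)$ because $\det g_2 = 1$. The stabilizer of $(1,1) \in \Gm \times G$ is therefore the copy of $G$ embedded diagonally as $g \mapsto (1,g,g)$. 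Since $X^*(G) = 0$ and $M_{\mathrm{ab}} \simeq \Gm$ via $\omega_1$, every character of $M_{\mathrm{ab}} \times G \times G$ has the form $n \omega_1$ with $n \in \Z$, and each such character is trivial on $\{1\} \times \text{diag}(G)$. Lemma~\ref{prop:char-Lambda} then gives $\Lambda = \Z\,\omega_1$. To pin down the relative invariants explicitly, note that an eigenfunction $f$ on $X^+ \simeq \Gm \times G$ of character $n \omega_1$ must satisfy $f(tt_1, g_2^{-1} g g_1) = t_1^n f(t,g)$; taking $g_1 = g_2 = 1$ forces $f(t,g) = t^n h(g)$ with $h(g) := f(1,g)$, after which the general equivariance collapses to $h(g_2^{-1} g g_1) = h(g)$, so $h$ is constant by transitivity of the $G \times G$-action of \eqref{eqn:group-case-action1}. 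By Lemma~\ref{prop:f-pm}, $f^+|_{X^+}$ is a nonzero scalar multiple of $t$, so every relative invariant of $X^+$ takes the form $c(f^+)^n = c'(f^-)^n$ with $c \in F^\times$ and $n \in \Z$.

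Next I would determine the monoid $\Lambda_X$. By Lemma~\ref{prop:doubling-irred} we have $v_{\partial X}(f^+) = 1$, and $\partial X$ is the unique $(M_{\mathrm{ab}} \times G \times G)$-stable prime divisor of $X$. Hence the divisor of $(f^+)^n$ on $X$ equals $n \cdot \partial X$, so $(f^+)^n$ lies in $F[X]$ if and only if $n \geq 0$. Therefore $\Lambda_X = \Z_{\geq 0}\,\omega_1$, generated as a monoid by $\omega_1$.

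Finally, Axiom~\ref{axiom:geometric} follows at once: $\Lambda_{X,\Q} = \Q_{\geq 0}\,\omega_1$ is a one-dimensional, hence simplicial, cone in $\Lambda_\Q = \Q\,\omega_1$, giving (G1); and the unique minimal integral generator $\omega_1$ corresponds to the relative invariant $f^+$ (or $f^-$), whose zero locus is exactly $\partial X$ by Lemma~\ref{prop:doubling-irred}, giving (G2). There is no real obstacle here: the ingredients---triviality of $X^*(\Sp(V))$, transitivity of the $G \times G$-action on $G$, the explicit shape of $f^\pm|_{X^+}$, and the irreducibility of $\partial X$ together with the value $v_{\partial X}(f^\pm) = 1$---are all in hand from the preceding lemmas, and the argument amounts to unwinding the group-case structure of $X^+$.
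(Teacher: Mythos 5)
Your proof is correct and follows essentially the same route as the paper: compute $\Lambda = \Z\,\omega_1$ from Lemma \ref{prop:char-Lambda} and the identification $X^+ \simeq \Gm \times G$ of Lemma \ref{prop:GmG-embedding}, note $\omega_1 \in \Lambda_X$ via $f^\pm$ (Lemma \ref{prop:f-pm}), and rule out $-\omega_1 \in \Lambda_X$ using the divisor of $f^\pm$ along the prime divisor $\partial X$ (Lemma \ref{prop:doubling-irred}). The only cosmetic difference is that you invoke the precise value $v_{\partial X}(f^+) = 1$ where the paper only needs that $(f^\pm)^{-1}$ has a nontrivial polar divisor; both suffice.
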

\begin{proof}
	Observe that $X^*(G)=\{0\}$. Apply Lemma \ref{prop:char-Lambda} together with Lemma \ref{prop:GmG-embedding} to deduce that $\Lambda$ is generated by $\omega_1$. By Lemma \ref{prop:f-pm} we have $\omega_1 \in \Lambda_X \subset \Lambda$ since it is the eigencharacter of $f^\pm$. 
	
	Since $(f^\pm)^{-1}$ has nontrivial polar divisor, namely $\partial X$, we see $-\omega_1 \notin \Lambda_X$. The only possibility is $\Lambda_X = \Z_{\geq 0} \cdot \omega_1$ and $f^\pm$ generates all relative invariants, as required.
\MyQED\end{proof}

\begin{remark}\label{rem:doubling-negligible}
	The following observation will be useful in the global setting: for every $\gamma \in \partial X(F)$, there exists a proper parabolic subgroup $R \subset M_\text{ab} \times G \times G$ such that $U_R$ stabilizes $\gamma$. Indeed, by construction $\partial X$ maps onto $\partial X^\flat = X^\flat \smallsetminus X^{\flat,+}$. Suppose $\gamma \neq \vec{0}$, otherwise our claim will be trivial. Theorem \ref{prop:GPSR-orbits} affords such a $U_R$ stabilizing the image $\bar{\gamma} \in \partial X^\flat(F)$ of $\gamma$. To ascend to $X$, it suffices to observe that the fiber over $\bar{\gamma}$ is a $\Gm$-torsor, on which the unipotent group $U_R$ can only act trivially. As in Theorem \ref{prop:GPSR-orbits}, this property holds true over any field $F$ of characteristic $\neq 2$.
	
	As one of the referees pointed out, this property holds for wavefront spherical varieties except for orbits ``along the center'': see \cite[Lemma 2.7.1]{SV17} for boundary degenerations, to which the general case can be related via blowups. We refer to \textit{loc.\ cit.} for details.
\end{remark}

\section{Doubling zeta integrals}\label{sec:doubling-zeta-integrals}
Retain the notations in \S\ref{sec:symplectic-case}; in particular $F$ is non-Archimedean, and $M_\text{ab} = \GL(V^\Delta)/\SL(V^\Delta)$. Since $X^+ \simeq \Gm \times G$ as $M_\text{ab} \times G \times G$-varieties, the density bundles $\mathscr{L}^s$ over $X^+$ are equivariantly trivializable --- this amounts to the existence of Haar measures on $(\Gm \times G)(F)$. Nevertheless, we retain the convention that $C^\infty(X^+)$, $C^\infty_c(X^+)$ and $L^2(X^+)$ all consist of $\mathscr{L}^\demi$-valued sections. Define $\mathcal{N}_\Pi := \Hom_{M_\text{ab} \times G \times G(F)}(\Pi, C^\infty(X^+))$.

\begin{lemma}\label{prop:N_Pi-doubling}
	Consider an irreducible nice representation $\Pi$ of $(M_\mathrm{ab} \times G \times G)(F)$. Then $\mathcal{N}_\Pi$ is nonzero if and only if $\Pi \simeq \chi \boxtimes \pi \boxtimes \check{\pi}$ for some continuous character $\chi$ of $\GL(V^\Delta, F)/\SL(V^\Delta, F)$ and irreducible nice representation $\pi$ of $G(F)$.
	
	In this case, $\mathcal{N}_\Pi$ is canonically isomorphic to the one-dimensional space of invariant sections of $\mathscr{L}$ over $(\Gm \times G)(F)$: to any Haar measure $|\Omega|$ on $(\Gm \times G)(F)$, we associate
	\begin{align*}
		\varphi: \chi \boxtimes \pi \boxtimes \check{\pi} & \longrightarrow C^\infty(X^+) \\
		1 \otimes v \otimes \check{v} & \longmapsto \left( \check{\chi} \circ \det \right) \; \angles{\check{v}, \pi(\cdot) v} |\Omega|^\demi
	\end{align*}
	where we identify $X^+$, $\Gm \times G$ and $M_\mathrm{ab} \times G$ by the recipe of Notation \ref{nota:det-Gm}.
\end{lemma}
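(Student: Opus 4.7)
The plan is to reduce the statement to two simpler ingredients: the Tate-type analysis of characters of $\Gm(F)$ and the group-case analysis of \S\ref{sec:group-case}. The reduction is effected by Lemma \ref{prop:GmG-embedding}, which identifies $X^+$ with $\Gm \times G$ as $M_\mathrm{ab} \times G \times G$-varieties, where (via Notation \ref{nota:det-Gm}) $M_\mathrm{ab}$ acts solely on the $\Gm$-factor by translation and $G \times G$ acts solely on the $G$-factor as in \eqref{eqn:group-case-action1}. Under this isomorphism the density bundle $\mathscr{L}$ on $X^+$ is equivariantly trivializable: fixing a product Haar measure $|\Omega| = |\Omega_{\Gm}| \cdot |\Omega_G|$ on $(\Gm \times G)(F)$ produces an invariant global section $|\Omega|^{1/2}$, and any invariant section is a scalar multiple of it.

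Next, since $F$ is non-archimedean, an irreducible nice (i.e. admissible smooth of finite length) representation $\Pi$ of $(M_\mathrm{ab} \times G \times G)(F)$ factors as $\chi \boxtimes \pi_1 \boxtimes \pi_2$ for a continuous character $\chi$ of $M_\mathrm{ab}(F) = \GL(V^\Delta,F)/\SL(V^\Delta,F)$ and irreducible nice representations $\pi_1, \pi_2$ of $G(F)$. Because the $M_\mathrm{ab} \times G \times G$-action on $\Gm \times G$ is the external product of its two factor actions, the invariance condition for elements of $\mathcal{N}_\Pi$ separates. Concretely, I would argue that an equivariant map
$$ \chi \boxtimes \pi_1 \boxtimes \pi_2 \longrightarrow C^\infty(\Gm \times G, \mathscr{L}^{1/2}) $$
corresponds under Fubini / the algebraic nature of smooth representations (Lemma \ref{prop:autocont}) to an element of
$$ \Hom_{\Gm(F)}\!\left(\chi, C^\infty(\Gm)\right) \otimes \Hom_{G(F) \times G(F)}\!\left(\pi_1 \boxtimes \pi_2, C^\infty(G, \mathscr{L}^{1/2}_G)\right), $$
after absorbing $|\Omega_{\Gm}|^{1/2}$ on the first factor.

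The first of these Hom-spaces is one-dimensional for any $\chi$, spanned by $1 \mapsto \check{\chi}(\cdot)$, by a direct computation (any $\Gm(F)$-equivariant map into $C^\infty(\Gm)$ is determined by its value at $1$). The second is governed by the group-case analysis recalled in \S\ref{sec:group-case}: it vanishes unless $\pi_2 \simeq \check{\pi}_1$, in which case it is one-dimensional and spanned by the matrix coefficient map $v \otimes \check{v} \mapsto \angles{\check{v}, \pi_1(\cdot)v}\cdot |\Omega_G|^{1/2}$. Combining these descriptions, multiplying the two generators, and recording the dependence on the choice of Haar measure (which is parametrized precisely by the line of invariant sections of $\mathscr{L}$ over $(\Gm \times G)(F)$), one obtains both the nonvanishing criterion and the explicit formula for $\varphi$ in the statement.

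I do not expect a serious obstacle here; everything is algebraic and the underlying vector spaces are algebraic in the sense of Definition \ref{def:algebraic-tvs}, so the external tensor product behaves as expected and no completion subtleties intervene. The only point requiring a moment of care is the bookkeeping of how $|\Omega|^{1/2}$ decomposes and how the twist by $\check\chi$ interacts with the $M_\mathrm{ab}$-action coming from Notation \ref{nota:det-Gm}; this amounts to verifying that, under the identifications chosen, the element $\varphi$ displayed in the lemma is indeed $(M_\mathrm{ab} \times G \times G)(F)$-equivariant, which is a direct unwinding of definitions.
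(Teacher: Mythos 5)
Your proposal is correct and matches the paper's route: the paper's proof is precisely the one-line observation that, after the identification $X^+ \simeq \Gm \times G$ of Lemma \ref{prop:GmG-embedding}, this is a special case of the group-case analysis of \S\ref{sec:group-case} (with the torus remark there handling the $M_\mathrm{ab}$-factor and the contragredient $\check{\chi}$ coming from the $m^{-1}$ in the $M_\mathrm{ab}$-action on $X_P$). Your explicit factorization of the $\Hom$-space into the $\Gm$-part and the $G\times G$-part is just a spelled-out version of that same reduction.
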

\begin{proof}
	This is a special case of \S\ref{sec:group-case}; the contragredient comes from the way $M_\text{ab}$ acts on $X_P$.
\MyQED\end{proof}

By restriction to $X^+(F)$, we view $\Schw := \Schw(X_P)$ as a subspace of $C^\infty(X^+)$. In view of Theorem \ref{prop:geom-axiom-doubling}, hereafter we may take $f^+$ to be the generator of $\Lambda_X$ and work in the formalism of \S\ref{sec:Schwartz}. The datum $v^+$ can even be chosen so that $f^+(t,g)=t$ if we identify $\Gm \times G$ with $X^+$. Also, we identify $\Lambda$ with $\Z$, $\Lambda_{\CC}$ with $\CC$ in such a manner that $\Re(\lambda) \relgeq{X} 0 \iff \Re(\lambda) \geq 0$. The eigencharacter of $|f^+|^\lambda$ under $M_\text{ab}(F)$ now becomes $|\omega_1|^\lambda: \bar{m} \mapsto |\det m|^{-\lambda}$, where we denote by $m \in P(F)$ any inverse image of $\bar{m}$. In particular,
\[ \varphi_\lambda(1 \otimes v \otimes \check{v}) = \underbracket{(\check{\chi}|\cdot|^{-\lambda}) \circ \det}_{\text{on } M_\text{ab}} \cdot \underbracket{\angles{\check{v}, \pi(\cdot) v}}_{\text{on } G} |\Omega|^\demi. \]

To discuss zeta integrals, fix
\begin{compactitem}
	\item $\Pi = \chi \boxtimes \pi \boxtimes \check{\pi}$,
	\item $\varphi \in \mathcal{N}_\Pi$ associated to a $(M_\text{ab} \times G \times G)(F)$-invariant density $|\Omega|$ as in Lemma \ref{prop:N_Pi-doubling}.
\end{compactitem}
For $\lambda \in \CC$, $\Re(\lambda) \gg 0$, The zeta integral for $\xi \in \Schw$ and the vector $1 \otimes v \otimes \check{v}$ is
\begin{equation}\label{eqn:doubling-zeta} \begin{aligned}
	Z_{\lambda, \varphi}\left( (1 \otimes v \otimes \check{v}) \otimes \xi\right) & = \int_{X^+} \xi \varphi_\lambda(1 \otimes v \otimes \check{v}) \\
	& = \int_{\substack{g \in G(F) \\ \bar{m} \in M_\text{ab}(F)}} \xi_0 \left( \delta_P^{\demi} \cdot (\check{\chi}|\cdot|^{-\lambda}) \circ \det \right)(\bar{m}) \angles{\check{v}, \pi(g) v} |\Omega|,
\end{aligned}\end{equation}
where $\xi_0 := \xi \delta_P^{-\demi} |\Omega|^{-\demi}: X^+(F) \to \CC$.

Apart from the $\delta_P^\demi$-shift, we claim that it equals the integral in \cite[Theorem 7.5]{BK02} in their ``case 2''. The comparison goes as follows.
\begin{itemize}
	\item As remarked in \S\ref{sec:geometric-setup}, in \cite{BK02} one fixes a $G^\Box(F)$-invariant measure $|\Xi|$ on $X_P(F)$, the $M_\mathrm{ab}(F)$-action on $L^2(X_P(F))$ is twisted by $\delta_P^\demi$. Their Schwartz function $\xi_\text{BK}$ and our $\xi \in \Schw(X_P)$ are related via $\xi = \xi_\text{BK} |\Xi|^\demi$.
	\item We have seen in \S\ref{sec:geometric-setup} that $|\Xi|$ has eigencharacter $\delta_P$ under $M_\text{ab}(F)$-action. Hence $|\Omega| := \delta_P^{-1} \cdot |\Xi|$ is a $(M_\text{ab} \times G \times G)(F)$-invariant density on $X^+(F)$.
	\item Summing up, $\xi_0 = \xi \delta_P^{-\demi} |\Omega|^{-\demi} = \xi |\Xi|^{-\demi} = \xi_\text{BK}$.
\end{itemize}

\begin{proposition}\label{prop:doubling-zeta-axiom}
	The space $\Schw$ admits a natural structure of algebraic topological vector space. It satisfies Axiom \ref{axiom:zeta}.
\end{proposition}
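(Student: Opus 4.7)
The plan is to verify each clause of Axiom \ref{axiom:zeta} in turn, exploiting the explicit construction \eqref{eqn:Schw-BK} of the Schwartz space and the fact that we are in the non-archimedean setting.

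First I would prove that $\Schw(X_P)$ admits the structure of an algebraic topological vector space. For each parabolic $Q \subset G^\Box$ sharing a Levi component with $P$, the space $C^\infty_c(X_Q)$ is algebraic in the sense of Definition \ref{def:algebraic-tvs}: it equals $\varinjlim C^\infty_\Omega(X_Q)^J$ where $(\Omega,J)$ runs over a countable directed family of pairs consisting of a compact subset of $X_Q(F)$ and a compact open subgroup of $(M_\text{ab}\times G^\Box)(F)$, each such subspace being finite-dimensional. By \eqref{eqn:Schw-BK}, $\Schw(X_P)$ is the image under a linear map of the finite direct sum $\bigoplus_Q C^\infty_c(X_Q)$, so I endow it with the quotient topology, which makes it algebraic. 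Lemma \ref{prop:algebraic-nuclear} then yields axioms (F1) and (F2).

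Next, Conjecture \ref{conj:X_P-bdd} directly supplies axiom (S2), and since $f^+ \in F[X]$ extends regularly to $X$, the function $|f^+|^\lambda$ is bounded on the compact closure of $\Supp(\xi) \subset X(F)$ for every $\xi \in \Schw$ whenever $\Re(\lambda) \geq 0$. Hence $\alpha_\lambda(\xi) = |f^+|^\lambda \xi \in L^2(X^+)$, and continuity of $\alpha_\lambda \colon \Schw \to L^2(X^+)$ is automatic from the algebraic structure on $\Schw$, yielding (F3); the weak holomorphy (F4) reduces to the pointwise holomorphy of $\lambda \mapsto |f^+|^\lambda$, which is immediate by dominated convergence on the compact support. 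For (S3), the lemma preceding Theorem \ref{prop:geom-axiom-doubling} asserts that $X^+$ is wavefront, so Corollary \ref{prop:conv-gg0} applied with $\mathscr{E} = \mathscr{L}^\demi$ gives convergence of $Z_{\lambda,\varphi}$ for $\Re(\lambda) \relgg{X} 0$. Separate continuity of $Z_\lambda$ is then automatic by Remark \ref{rem:Schw-fa} together with Lemma \ref{prop:autocont}.

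The main obstacle will be the rational continuation (S4). The plan here is to recognize the integral \eqref{eqn:doubling-zeta} as a Piatetski-Shapiro--Rallis doubling integral in disguise. Integrating first over $\bar m \in M_\text{ab}(F)$ amounts to extracting the $\check\chi \otimes |\det|^{-\lambda}$-isotypic component of $\xi$; by Remark \ref{rem:normalized-intop}, this extraction extends rationally in $\lambda$ to a $G^\Box(F)$-equivariant map $\Schw(X_P) \to I^{G^\Box}_P(\check\chi|\det|^{-\lambda})$ (with an appropriate $\delta_P^\demi$-normalization). After this step, the surviving integral over $g \in G(F)$ of the resulting section against the matrix coefficient $\angles{\check v, \pi(g)v}$ is the classical doubling zeta integral, whose convergence for $\Re(\lambda) \gg 0$ and rational continuation to all $\lambda \in \CC$ is established in \cite{PSR86, GPSR87}, and reviewed in the present formalism in \cite[Theorem 7.5]{BK02}. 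The principal technical difficulty is justifying the iterated integration and the interchange of the $M_\text{ab}(F)$- and $G(F)$-integrals: the $\bar m$-integral is only formally convergent, and making it rigorous requires either invoking Conjecture \ref{conj:integral-coinvariants} or a direct regularization argument along the lines of \cite[\S\S 2, 4]{BK02}. Once this is in place, the compatibility $\tilde Z \in \mathcal{L}_\pi^\circ$ of the resulting rational family with the pointwise-defined integrals for $\Re(\lambda) \gg 0$ is formal, and Lemma \ref{prop:circ-removal} places $\tilde Z$ in $\mathcal{L}_\pi$, completing (S4).
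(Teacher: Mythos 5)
Your treatment of the topological structure and of axioms (F1)--(F4), (S2), (S3) matches the paper's proof: algebraicity of $\Schw$ as a finite inductive limit (equivalently, quotient of a finite direct sum) of the spaces $C^\infty_c(X_Q)$, Lemma \ref{prop:algebraic-nuclear} for separability/nuclearity/barreledness, Conjecture \ref{conj:X_P-bdd} plus continuity of $|f|^\lambda$ on $X(F)$ for the $L^2$-inclusions and the weak holomorphy by dominated convergence, and Corollary \ref{prop:conv-gg0} for convergence, with separate continuity free of charge by algebraicity.

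Where you diverge is (S4), and there your route has a problem. You propose to integrate first over $M_\text{ab}(F)$ so as to land in the classical Piatetski-Shapiro--Rallis good-section formulation, and you correctly observe that making this iterated integration rigorous requires Conjecture \ref{conj:integral-coinvariants} (or an unspecified regularization). But Conjecture \ref{conj:integral-coinvariants} is \emph{not} among the hypotheses of the proposition --- only Conjecture \ref{conj:X_P-bdd} is assumed --- so as written your argument for (S4) either proves a weaker (more conditional) statement or leaves its hardest step open. The paper avoids this entirely: the discussion following \eqref{eqn:doubling-zeta} identifies $Z_{\lambda,\varphi}$, after the $\delta_P^{\demi}$-bookkeeping relating $\xi$ to $\xi_\text{BK}$, \emph{verbatim} with the integral of \cite[Theorem 7.5]{BK02} in their ``case 2'', which is a single integral over all of $X^+(F) \simeq (M_\text{ab} \times G)(F)$; the rationality in $q^\lambda$ is then simply quoted from \textit{loc.\ cit.}, with no interchange of an $M_\text{ab}$-integral and a $G$-integral and hence no appeal to Conjecture \ref{conj:integral-coinvariants}. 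The reduction to good sections that you sketch is exactly the content of Remark \ref{rem:doubling-classical}, which the paper deliberately keeps separate from this proposition precisely because of the extra conjecture it needs. To repair your proof, replace the iterated-integration step by the direct identification with the Braverman--Kazhdan integral.
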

\begin{proof}
	For every $Q$, the space $C^\infty_c(X_Q)$ is algebraic with a continuous $G^\Box(F)$-action. Since $\Schw$ is expressed as a finite $\varinjlim$ of those spaces in \eqref{eqn:Schw-BK}, $\Schw$ is algebraic as well. The required topological properties of $\Schw$ follows from Lemma \ref{prop:algebraic-nuclear}, and it is a continuous $G^\Box(F)$-representation.
	
	We also know that $\alpha: \Schw \hookrightarrow L^2(X_P) = L^2(X^+)$. Since $|f|^\lambda$ is a continuous on $X(F)$ when $\Re(\lambda) \geq 0$, we also have $\Schw_\lambda \subset L^2(X^+)$. The continuity of $\alpha_\lambda: \Schw \hookrightarrow L^2(X^+)$ is automatic by algebraicity. The holomorphy of $\alpha_\lambda$ over $\{\lambda \in \CC : \Re(\lambda) \geq 0\}$ boils down to that of $\int_{X_P} |f|^\lambda \xi \bar{b}$ for given $b \in L^2(X_P)$ and $\xi \in \Schw$, which is in turn guaranteed by Lemma \ref{prop:X_P-bdd} and dominated convergence.
	
	It remains to verify the conditions concerning zeta integrals. Convergence for $\Re(\lambda) \gg 0$ is implied by Corollary \ref{prop:conv-gg0}. Since $Z_\lambda$ equals the integral in \cite[\S 7]{BK02}, the rationality, etc.\ of $Z_\lambda$ are recorded in \textit{loc.\ cit.}; continuity does not matter since the topological vector spaces in view are all algebraic.
\MyQED\end{proof}

For discussions on the Archimedean case, see \cite[\S 6.1]{BK02}. One expects a suitable Fréchet space $\Schw = \Schw(X_P) \subset L^2(X_P)$. See also \cite[\S 5.2]{GL17}.

\begin{remark}\label{rem:doubling-classical}
	In order to get the usual doubling zeta integral, one has to choose a Haar measure on $M_\text{ab}(F)$, assume Conjecture \ref{conj:integral-coinvariants} and integrate $\xi\varphi_\lambda(1 \otimes v \otimes \check{v})$ along $M_\text{ab}(F)$-orbits for $\Re(\lambda) \gg 0$. This procedure has been summarized in Remark \ref{rem:normalized-intop}. The conclusion is that it yields a section $f_\lambda \in I^{G^\Box}_P(\chi \otimes |\det|^\lambda)$, rational in $q^\lambda$, and that \eqref{eqn:doubling-zeta} becomes
	\[ \int_G f_\lambda(g, 1) \angles{\check{v}, \pi(g) v} |\Omega''| \]
	for some invariant density $|\Omega''|$ on $G(F)$, where we used the natural inclusion $G \times G \hookrightarrow G^\Box$. This is the original integral in \cite[p. 3]{GPSR87} except that our parabolic induction is normalized. The sections $f_\lambda(g,1)$ so obtained are \emph{good sections} in the sense of \cite[p.110]{PSR87}, which play a pivotal role in the doubling method. Consequently, one should view $\Schw$ as a space of \emph{universal good sections}. See \cite{Sha17} for further details.
\end{remark}

Now move to the functional equations. Two types of model transitions are involved.
\begin{enumerate}
	\item Let $Q$ be another parabolic subgroup of $G^\Box$ sharing the same Levi component $M$ as $P$. In fact, the only choice in our framework is the opposite parabolic $Q = P^-$. The model transition is then $\mathcal{F}_{Q|P}: \Schw(X_P) \rightiso \Schw(X_Q)$.
	\item Let $w \in N_{G^\Box}(M)(F)/M(F)$ and $wQ := wQw^{-1}$. We have the isomorphism
		\begin{align*}
			X_{wQ} & \stackrel{\sim}{\longrightarrow} X_Q \\
			(wQ) g' & \longmapsto Q w^{-1} g', \quad g' \in G^\Box 
		\end{align*}
		between $G^\Box$-varieties over $F$. In contrast, acting by $w\bar{m}w^{-1} \in M_\text{ab}$ on $X_{wQ}$ corresponds to acting by $\bar{m}$ on $X_Q$; this is remedied by twisting the $M_\text{ab} \times G^\Box$-structure on $X_{wQ}$ accordingly. Therefore, transport of structure gives rise to the model transition
		\begin{align*}
			\mathcal{F}_w: L^2(X_Q) & \stackrel{\sim}{\longrightarrow} L^2(X_{wQ}) \\
			\Schw(X_Q) & \stackrel{\sim}{\longrightarrow} \Schw(X_{wQ}).
		\end{align*} 
		Usually we assume $wQ = P$, and such a $w$ exists in our setting. In this case, conjugation by $w$ induces the involution $t \mapsto t^{-1}$ of $M_\text{ab}$.
\end{enumerate}
We have $\Lambda_1 = \Z = \Lambda_2$ in the notation of \S\ref{sec:model-transition}. Thus $\mathcal{F} := \mathcal{F}_w \mathcal{F}_{Q|P}$ furnishes a model transition $\Schw(X_P) \rightiso \Schw(X_P)$ if we take $Q = P^-$ and $wQ = P$. More precisely, $X_1 = X = \overline{X_P}^\text{aff} =X_2$ and $\Schw_1 = \Schw(X_P) = \Schw_2$, but the objects with subscript $1$ are endowed with the twisted $M_\text{ab}$-action as above; $\mathcal{F}: \Schw_2 \rightiso \Schw_1$ is $M_\text{ab} \times G \times G(F)$-equivariant under the conventions above.

\begin{conjecture}\label{conj:Schw-BK-mult1}
	Set $\Schw := \Schw(X_P)$. Let $\chi \boxtimes \pi \boxtimes \check{\pi}$ be an irreducible nice representation of $(M_\text{ab} \times G \times G)(F)$. When we vary $\chi$ in a family $\chi \otimes |\det|^\lambda$, $\lambda \in \CC$, the space
	\[ \Hom_{M_\text{ab} \times G \times G(F)}\left( \chi \boxtimes \pi \boxtimes \check{\pi}, \Schw^\vee \right) \]
	is at most one-dimensional for $\chi$ in general position.
\end{conjecture}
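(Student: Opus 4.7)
The plan is to reduce Conjecture \ref{conj:Schw-BK-mult1} to the classical uniqueness theorem underlying the Piatetski-Shapiro-Rallis doubling method, namely
$$\dim_{\CC} \Hom_{(G \times G)(F)}\bigl(I^{G^\Box}_P(\chi) \otimes (\pi \boxtimes \check{\pi}),\,\CC\bigr) \leq 1$$
for $\chi$ in general position, which is the foundation of the $\gamma$-factors constructed in \cite{PSR86, GPSR87}. Given $T \in \Hom_{(M_{\text{ab}} \times G \times G)(F)}(\chi \boxtimes \pi \boxtimes \check{\pi}, \Schw^\vee)$, the barreledness of $\Schw$ (coming from its algebraic structure, cf.\ Proposition \ref{prop:doubling-zeta-axiom}) together with Proposition \ref{prop:hypocontinuity} converts $T$ into an invariant hypocontinuous bilinear form $B \colon (\chi \boxtimes \pi \boxtimes \check{\pi}) \times \Schw \to \CC$. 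The $M_{\text{ab}}(F)$-equivariance forces $B(1 \otimes v,\cdot)$, for each $v \in V_{\pi \boxtimes \check{\pi}}$, to be an eigen-functional on $\Schw$ of eigencharacter $\chi$ under $M_{\text{ab}}(F)$.

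The second and central step is to perform a Mellin inversion in the $M_{\text{ab}}$-direction. By Remark \ref{rem:normalized-intop}, there is a rational family of $G^\Box(F)$-equivariant surjections $\Phi_{\chi'} \colon \Schw \twoheadrightarrow I^{G^\Box}_P(\chi')$ parametrized by unramified twists $\chi' = \chi \otimes |\det|^\lambda$, and dualizing yields embeddings $\Phi_{\chi'}^* \colon I^{G^\Box}_P(\chi')^\vee \hookrightarrow \Schw^\vee$. I would then show that for $\chi$ in general position, every $M_{\text{ab}}(F)$-$\chi$-eigen-functional in $\Schw^\vee$ lies in the image of $\Phi_\chi^*$. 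Granting this identification, each $T(1 \otimes v)$ descends uniquely to an element of $I^{G^\Box}_P(\chi)^\vee$; collectively they produce a $(G \times G)(F)$-invariant bilinear pairing $\pi \boxtimes \check{\pi} \times I^{G^\Box}_P(\chi) \to \CC$, and the PSR uniqueness above bounds its dimension by one. Injectivity of the passage $T \mapsto B'$ is then automatic, since $T$ is recovered component-wise from its Mellin image.

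The hard part will be the Mellin identification. Modulo the (essentially known) surjectivity of $\Phi_\chi$ for generic $\chi$, this reduces to showing that every $\chi$-eigen-functional $\phi \in \Schw^\vee$ annihilates $\Ker \Phi_\chi$. The naïve calculation $\int_{M_{\text{ab}}(F)} \chi(m)\, \phi(m\xi)\, dm = \phi(\xi) \cdot \mathrm{vol}(M_{\text{ab}}(F))$ breaks down because $M_{\text{ab}}(F)$ has infinite volume and elements of $\Schw$ need not be compactly supported; the natural remedy is to regularize via Conjecture \ref{conj:integral-coinvariants}, to work rationally in $\lambda$, clearing denominators and specializing at generic $\chi$ as in the proof of Theorem \ref{prop:pvs-T_pi-isom}, and to invoke Bernstein-center techniques \`a la \cite[\S\S 2--4]{BK02}. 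Should this direct Mellin approach falter, one can try to imitate Theorem \ref{prop:pvs-T_pi-isom} directly using Remark \ref{rem:doubling-negligible} (every boundary point of $X$ has unipotent stabilizer inside a proper parabolic) together with Lemma \ref{prop:intertwining-finiteness}, at the cost of first controlling the contribution from $\Schw \smallsetminus C^\infty_c(X_P)$, that is, from the images $\mathcal{F}_{P|Q}(C^\infty_c(X_Q))$ for $Q \neq P$.
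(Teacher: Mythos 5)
First, a point of order: the statement you are proving is presented in the paper as Conjecture \ref{conj:Schw-BK-mult1} and is \emph{not} proved there; it is only offered as one of two alternative hypotheses (the other being Conjecture \ref{conj:integral-coinvariants}) under which the local functional equation for the doubling \Transition is deduced. So there is no proof in the paper to compare against, and your text must stand on its own.

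Your overall reduction --- Mellin analysis in the $M_\text{ab}$-direction so as to land in $\Hom_{(G\times G)(F)}\bigl(I^{G^\Box}_P(\chi)\otimes(\pi\boxtimes\check{\pi}),\CC\bigr)$, followed by the Piatetski-Shapiro--Rallis multiplicity-one theorem for generic $\chi$ --- is the natural strategy and is consistent with Remark \ref{rem:normalized-intop}. But it leaves unproved the one step that carries essentially all of the difficulty, and you acknowledge as much: the claim that for $\chi$ in general position every $M_\text{ab}(F)$-eigen-functional on $\Schw=\Schw(X_P)$ of eigencharacter $\chi$ factors through your surjection $\Phi_\chi:\Schw\twoheadrightarrow I^{G^\Box}_P(\chi)$. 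For $C^\infty_c(X_P)$ this is the standard computation of twisted coinvariants, but $C^\infty_c(X_P)$ is only one of the summands $\mathcal{F}_{P|Q}(C^\infty_c(X_Q))$ in \eqref{eqn:Schw-BK} and is not dense in $\Schw$ (the paper stresses precisely this non-density in \S\ref{sec:connection-L2}), so a functional on $\Schw$ is not determined by its restriction to $C^\infty_c(X_P)$; eigen-functionals could a priori concentrate ``at infinity'' in $\overline{X_P}^\text{aff}$ (in the symplectic doubling case, at $\vec{0}$) or on the non-open $G\times G$-orbits. Ruling this out requires controlling the asymptotics of the $\mathcal{F}_{P|Q}(C^\infty_c(X_Q))$ near the boundary --- essentially Conjecture \ref{conj:X_P-bdd} together with an analogue of the boundary analysis in the proof of Theorem \ref{prop:pvs-T_pi-isom} via Lemma \ref{prop:intertwining-finiteness} and Remark \ref{rem:doubling-negligible} --- and none of this is carried out. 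Moreover, your proposed regularization of the Mellin integral invokes Conjecture \ref{conj:integral-coinvariants}, which is itself open; a proof of Conjecture \ref{conj:Schw-BK-mult1} conditional on Conjecture \ref{conj:integral-coinvariants} would add little, since the theorem that uses them already accepts either one as a hypothesis, and it would defeat the stated purpose of having two independent routes. As written, this is a plausible reduction plan rather than a proof.
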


\begin{theorem}
	Under the assumptions of Proposition \ref{prop:doubling-zeta-axiom} and admit either
	\begin{compactitem}
		\item Conjecture \ref{conj:integral-coinvariants}, or
		\item Conjecture \ref{conj:Schw-BK-mult1},
	\end{compactitem}
	then the local functional equation (Definition \ref{def:lfe}) holds for $\mathcal{F}$.
\end{theorem}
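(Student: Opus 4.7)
The plan is to construct $\gamma(\pi)$ by proving that, under either conjecture, the $\mathcal{K}$-linear injection $T^{(2)}_\pi: \mathcal{N}^{(2)}_\Pi \otimes \mathcal{K} \hookrightarrow \mathcal{L}^{(2)}_\pi$ of \eqref{eqn:T_pi} is actually an isomorphism; then $\gamma(\pi) := (T^{(2)}_\pi)^{-1} \circ \mathcal{F}^\vee \circ T^{(1)}_\pi$ manifestly fills in the commutative diagram \eqref{eqn:gamma-def}, and is unique by Lemma \ref{prop:uniqueness-embedding}. Since Lemma \ref{prop:N_Pi-doubling} forces $\mathcal{N}^{(i)}_\Pi = 0$ unless $\Pi \simeq \chi \boxtimes \pi \boxtimes \check{\pi}$, in which case $\dim \mathcal{N}^{(i)}_\Pi = 1$, I may assume $\Pi$ is of this form; the other case is trivial.

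First I would handle Conjecture \ref{conj:Schw-BK-mult1}. Any $B \in \mathcal{L}^{(2)}_\pi$ gives, through \eqref{eqn:family-theta} and Remark \ref{rem:zeta-distribution}, a rational family $\lambda \mapsto \theta_\lambda \in \Hom_{(M_\mathrm{ab} \times G \times G)(F)}(\Pi_\lambda, \Schw^\vee)$. Writing $\Pi_\lambda = (\chi|\det|^{-\lambda}) \boxtimes \pi \boxtimes \check{\pi}$, the conjecture asserts this space is at most one-dimensional for $\lambda$ in general position. Suppose $B_1,B_2 \in \mathcal{L}^{(2)}_\pi$ are $\mathcal{K}$-linearly independent with associated families $\theta^{(1)}_\lambda, \theta^{(2)}_\lambda$. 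Then for generic $\lambda$ they are $\CC$-linearly dependent, so fixing $v \otimes \xi$ with $\theta^{(2)}_\lambda(v)(\xi) \not\equiv 0$, the ratio $c(\lambda) := \theta^{(1)}_\lambda(v)(\xi)/\theta^{(2)}_\lambda(v)(\xi)$ is a quotient of rational functions in $\lambda$, hence lies in $\mathcal{K}$; and the element $B_1 - c B_2 \in \mathcal{L}^{(2)}_\pi$ vanishes at every $v' \otimes \xi'$ via pointwise $\CC$-dependence, contradicting the independence of $B_1, B_2$. Thus $\dim_\mathcal{K} \mathcal{L}^{(2)}_\pi \leq 1$, so $T^{(2)}_\pi$ is surjective.

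Under Conjecture \ref{conj:integral-coinvariants} I would instead reduce directly to the classical doubling functional equation of Piatetski-Shapiro and Rallis. As in Remark \ref{rem:doubling-classical}, for $\Re(\lambda) \gg 0$ integration along $M_\mathrm{ab}(F)$-orbits turns \eqref{eqn:doubling-zeta} into
\[ Z_{\lambda,\varphi}\bigl((1 \otimes v \otimes \check{v}) \otimes \xi\bigr) = \int_{G(F)} f_\lambda(g,1)\, \angles{\check{v}, \pi(g) v}\, |\Omega''|, \]
where $f_\lambda \in I^{G^\Box}_P(\chi \otimes |\det|^\lambda)$ is a good section depending rationally on $\lambda$ and on $\xi \in \Schw(X_P)$. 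By Remark \ref{rem:normalized-intop}, the \Transition $\mathcal{F}_{Q|P}$ covers the normalized intertwining operator $R_{Q|P}(\chi \otimes |\det|^\lambda): I^{G^\Box}_P(\chi\otimes |\det|^\lambda) \to I^{G^\Box}_Q(\chi\otimes |\det|^\lambda)$; taking $Q := P^-$ and $w$ with $wQw^{-1} = P$, the map $\mathcal{F}_w$ implements the tautological identification $I^{G^\Box}_Q(\chi\otimes|\det|^\lambda) \rightiso I^{G^\Box}_P(\check{\chi}\otimes|\det|^{-\lambda})$ forced by the outer action $\bar{m} \mapsto \bar{m}^{-1}$ on $M_\mathrm{ab}$. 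Hence $\mathcal{F} = \mathcal{F}_w \mathcal{F}_{Q|P}$ covers a rational intertwining $I^{G^\Box}_P(\chi\otimes|\det|^\lambda) \rightiso I^{G^\Box}_P(\check{\chi}\otimes|\det|^{-\lambda})$. Substituting back, both $Z^{(2)}_{\lambda,\varphi}$ and $Z^{(1)}_{\lambda,\varphi} \circ \mathcal{F}$ factor through the $G(F)$-pairing of the induced representation with the matrix coefficient of $\pi \boxtimes \check{\pi}$. The classical uniqueness of such a pairing for generic parameter (the multiplicity-one statement underlying the Piatetski-Shapiro–Rallis doubling integrals) then produces a unique rational proportionality factor, which supplies $\gamma(\pi)$.

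The main obstacle is the Conjecture \ref{conj:integral-coinvariants} route: one must establish rigorously that the family $f_\lambda$ traces out the full space of good sections as $\xi$ runs through $\Schw(X_P)$, and that the normalization factors introduced by Braverman–Kazhdan lie in $\mathcal{K}$ rather than some larger extension, so that the resulting $\gamma(\pi)$ is genuinely rational on $\mathcal{T}$. A subtler point under Conjecture \ref{conj:Schw-BK-mult1} is verifying that the $\CC$-scalar $c(\lambda)$ obtained by pointwise comparison belongs to $\mathcal{K}$; this is harmless once one uses that both $B_1, B_2$ share a common denominator $t \in \mathcal{O}$, but must be carried out carefully to avoid pathological $\lambda$ in the zero locus of $t$.
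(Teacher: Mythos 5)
Your proposal is correct and follows essentially the same two routes as the paper: under Conjecture \ref{conj:integral-coinvariants} it reduces to the Piatetski-Shapiro--Rallis doubling functional equation via Remarks \ref{rem:normalized-intop} and \ref{rem:doubling-classical}, and under Conjecture \ref{conj:Schw-BK-mult1} it runs the multiplicity-one/Bernstein-type argument that the paper only gestures at (``the arguments are completely the same as [PSR86, Theorem 2.1]'') and that its subsequent remark records as yielding $\mathcal{N}_\Pi \otimes \mathcal{K} \rightiso \mathcal{L}_\Pi$. Your explicit verification that $\dim_{\mathcal{K}} \mathcal{L}^{(2)}_\Pi \leq 1$ forces $T^{(2)}_\Pi$ to be an isomorphism, so that $\gamma(\Pi) = (T^{(2)}_\Pi)^{-1} \circ \mathcal{F}^\vee \circ T^{(1)}_\Pi$, is exactly the mechanism of Theorem \ref{prop:pvs-lfe} transposed to this setting, and the cautionary points you raise at the end are routine rather than genuine gaps.
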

\begin{proof}
	Let $\Pi := \chi \boxtimes \pi \boxtimes \check{\pi}$ be an irreducible nice representation of $(M_\text{ab} \times G \times G)(F)$. The elements $\varphi^{(2)}$ of $\mathcal{N}^{(2)}_\Pi$ have been described as in Lemma \ref{prop:N_Pi-doubling}; as for $X_1 = X$, there is a twist
	\[ \varphi^{(1)}: 1 \otimes v \otimes \check{v} \longmapsto \chi(\cdot) \angles{\check{v}, \pi(\cdot) v} |\Omega|^\demi \]
	and $\varphi^{(1)}_\lambda = |\det|^\lambda \varphi^{(1)}$.

	Assume Conjecture \ref{conj:integral-coinvariants}.	In view of the twists and Remark \ref{rem:doubling-classical}, our assertion reduces to the doubling functional equation \cite[Theorem 2.1]{PSR86} provided that we can relate $\mathcal{F} = \mathcal{F}_w \mathcal{F}_{Q|P}$ to a rational family of intertwining operator $R(w, \lambda): I^{G^\Box}_P(\chi \otimes |\det|^\lambda) \to I^{G^\Box}_P(\check{\chi} \otimes |\det|^{-\lambda})$ such that the following diagram commutes for general $\lambda \in \CC$:
	\[ \begin{tikzcd}
		\Schw(X_P) \arrow{r}[above]{\mathcal{F}} \arrow{d} & \Schw(X_P) \arrow{d} \\
		I^{G^\Box}_P(\chi \otimes |\det|^\lambda) \arrow{r}[below, inner sep=1em]{R(w, \chi \otimes |\det|^\lambda)} & I^{G^\Box}_P(\check{\chi} \otimes |\det|^{-\lambda})
	\end{tikzcd} \]
	Apart from an easy $w$-twist, this is contained in Remark \ref{rem:normalized-intop}.
	
	Under Conjecture \ref{conj:Schw-BK-mult1}, the arguments are completely the same as \cite[Theorem 2.1]{PSR86}.
\MyQED\end{proof}

\begin{remark}
	Methodologically, the route via Conjecture \ref{conj:Schw-BK-mult1} seems more natural. It also implies the rationality of zeta integrals by Bernstein's principle \cite[Part B, \S 12]{GPSR87}, the isomorphism $\mathcal{N}_\Pi \otimes \mathcal{K} \rightiso \mathcal{L}_\Pi$, as well as the condition \eqref{eqn:generic-inj}; see the proof of Theorem \ref{prop:pvs-T_pi-isom}.
\end{remark}

\section{Relation to reductive monoids}\label{sec:relation-to-monoids}
Most of the following basic results are extracted from \cite[\S 27.1]{Ti11} and \cite{Re05}.
\begin{definition}\index{monoid}
	By an $F$-\emph{monoid} $Y$, we mean a monoid-like object in the category of algebraic $F$-varieties, or equivalently a representable functor $h_Y: \mathsf{Var}_F^\text{op} \to \mathsf{Monoid}$. The subfunctor of invertible elements is represented by an open subvariety $G := G(Y)$, which is actually an $F$-group called the \emph{unit group} of $Y$. Call $Y$ normal, affine, etc., if $Y$ has those properties as a variety. An $F$-monoid $Y$ with unit group $G$ is naturally a $G \times G$-variety under multiplication, cf.\ \eqref{eqn:group-case-action1}:
	\[ y(g_1, g_2) = g_2^{-1} y g_1. \]

	Fix the ground field $F$. Monoids with connected reductive unit groups are called \emph{reductive monoids}. Suppose that $Y$ is an affine, normal reductive monoid with unit group $G$. Then $G \hookrightarrow Y$ can be viewed as a spherical embedding under the $G \times G$-action.. The \emph{abelianization}\index{monoid!abelianization} of $Y$ is defined as the GIT quotient $\text{ab}: Y \to Y\sslash G _\text{der} \times G_\text{der}$.
	
	Following \cite{Ng14}, we say that $Y$ is \emph{flat}\index{monoid!flat} if $\text{ab}$ is a flat morphism with geometrically reduced fibers. Flat monoids are systematically studied by Vinberg and Rittatore. 
\end{definition}

\begin{example}\label{eg:GJ-monoid}
	The variety $\text{Mat}_{n \times n}$ is a normal affine monoid under matrix multiplication. Its unit group is $\GL(n)$. It is a flat monoid whose abelianization can be identified with $\det: \text{Mat}_{n \times n} \to \Ga$ (eg.\ by Igusa's criterion \cite[II. \S 4.5]{AG4}). This space has been considered in \S\ref{sec:GJ}.
\end{example}

\begin{theorem}[A.\ Rittatore]\label{prop:Rittatore}
	Let $G$ be a split connected reductive $F$-group. The category of normal affine reductive monoids with unit group $G$ is equivalent to the category of affine spherical embeddings of $G$.
\end{theorem}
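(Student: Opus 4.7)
The plan is to exhibit two quasi-inverse functors. In one direction, the functor $F$ sends a normal affine reductive monoid $Y$ with unit group $G$ to the open immersion $G \hookrightarrow Y$, where $Y$ is viewed as a $G \times G$-variety via $y(g_1,g_2) := g_2^{-1}y g_1$. Affineness and normality of $Y$ are assumed; sphericality of $Y$ as a $G \times G$-variety follows from the group case (\S\ref{sec:group-case}), since $G \subset Y$ is already the unique open $G \times G$-orbit and is spherical by Bruhat decomposition. A monoid homomorphism $\phi:Y_1\to Y_2$ that restricts to $\identity_G$ is automatically $G \times G$-equivariant, so $F$ is functorial on morphisms.

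For the quasi-inverse $H$, given an affine spherical embedding $G \hookrightarrow Y$ under $G \times G$, I would extend the group multiplication $m: G \times G \to G \hookrightarrow Y$ to $Y \times Y \to Y$ by a Hartogs-type argument. Restricting the $G \times G$-action on $Y$ to the subgroups $G \times \{1\}$ and $\{1\} \times G$ yields canonical morphisms $m_L: G \times Y \to Y$ and $m_R: Y \times G \to Y$ that agree with $m$ on $G \times G$, hence glue to
\[
    \tilde{m}: U := (G \times Y) \cup (Y \times G) \longrightarrow Y.
\]
The complement $(Y \times Y) \smallsetminus U = (Y \smallsetminus G) \times (Y \smallsetminus G)$ has codimension $\geq 2$ in $Y \times Y$, because $G$ is dense open in $Y$ so $\dim(Y \smallsetminus G) \leq \dim Y - 1$. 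Since $Y$ is normal, so is $Y \times Y$ (products of normal varieties over a perfect field are normal), hence satisfies Serre's $S_2$ condition. Because $Y$ is affine, a morphism to $Y$ is determined by finitely many regular functions on the source; each such function extends uniquely across the codimension-$2$ closed subset by normality. Thus $\tilde{m}$ extends uniquely to $m: Y \times Y \to Y$. Associativity and the identity axiom at $1 \in G \subset Y$ hold on the dense opens $G^3 \subset Y^3$ and $\{1\} \times G \subset Y \times Y$, hence on all of $Y \times Y$ and $Y^3$ by separatedness. The unit group of the resulting monoid contains $G$ and coincides with $G$, since any $y \in Y \smallsetminus G = \partial Y$ lies in a $G \times G$-stable closed subvariety and so cannot be invertible. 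Both composites $FH$ and $HF$ are the identity on objects tautologically.

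Full faithfulness is then reduced to the same codimension-$2$ principle: given a $G \times G$-equivariant morphism $\phi: Y_1 \to Y_2$ restricting to $\identity_G$, the two morphisms $\phi \circ m_1$ and $m_2 \circ (\phi \times \phi)$ from $Y_1 \times Y_1$ to $Y_2$ agree on $(G \times Y_1) \cup (Y_1 \times G)$ by equivariance of $\phi$, and hence everywhere by the same normality extension argument, so $\phi$ is automatically a monoid homomorphism. The main technical obstacle is the Hartogs-type extension, which requires $Y$ to be affine (so that morphisms $Y \times Y \to Y$ are controlled by finitely many regular functions) and $Y \times Y$ to be $S_2$ (so that regular functions on $U$ extend); both hinge on the combined hypotheses of normality and affineness of $Y$. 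The reductivity of $G$ is not needed directly for this extension step, but enters implicitly through Theorem~\ref{prop:LV}, which guarantees that $G$ is an open $G \times G$-orbit in the affine spherical variety $Y$ with $\partial Y$ of codimension $\geq 1$.
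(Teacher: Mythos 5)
Your argument is essentially correct, and it is worth noting that the paper itself offers no proof here: it cites the result to Rittatore (via \cite[\S 27.1]{Ti11} and \cite{Re05}) and only remarks on the uniqueness of the monoid structure. Your route — glue the two action maps $m_L, m_R$ on $(G\times Y)\cup(Y\times G)$, observe that the complement $(Y\smallsetminus G)\times(Y\smallsetminus G)$ has codimension $\geq 2$, and extend by algebraic Hartogs using normality of $Y\times Y$ and affineness of the target — is a sound geometric proof, and all the key steps check out: the extension of regular functions across codimension two, the propagation of associativity and the unit axiom by density and separatedness, and the identification of the unit group (your phrase ``cannot be invertible'' is best justified by noting that $\partial Y$ is a two-sided ideal, being the closure of the $G\times G$-stable set $G\cdot\partial Y\cdot G$, so it contains no unit; alternatively, a dense open subgroup of the unit group is the whole unit group). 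Two small points deserve tightening: normality of $Y\times Y$ over a non-algebraically-closed base requires geometric normality of $Y$, which does hold automatically in characteristic zero since $F\to\bar F$ is ind-étale; and in the full-faithfulness step the Hartogs extension is unnecessary — two morphisms to a separated scheme agreeing on the dense open $G\times G$ already agree everywhere. For comparison, Rittatore's original argument is Hopf-algebraic: $F[Y]\subset F[G]$ is a $G\times G$-stable subalgebra, and the comultiplication $\Delta f(x,y)=f(xy)$ lands in $(F[Y]\otimes F[G])\cap(F[G]\otimes F[Y])=F[Y]\otimes F[Y]$ by bi-stability. That argument buys the equivalence for \emph{all} affine embeddings, normal or not, whereas your Hartogs argument genuinely uses normality — harmless here since the statement assumes it, but worth knowing that the theorem is not confined to the normal case.
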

More precisely, the monoid structure of a given spherical embedding $Y$ of $G$ is uniquely determined, since the multiplication must restrict to the group law over the open dense orbit $G$.

Consequently, such monoids are classified by Luna--Vust theory (Theorem \ref{prop:LV}) applied to the homogeneous $G \times G$-space $G$, at least over the algebraic closure. It turns out that their colored cones have full colors, and each color is defined over $F$.

\begin{corollary}[See {\cite[Theorem 27.12]{Ti11}}]\label{prop:full-color}
	The normal affine reductive monoids $Y$ are classified by colored cones of the form $(\mathcal{C}_Y, \mathcal{D}^B)$.
\end{corollary}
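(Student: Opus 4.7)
The plan is to combine Rittatore's equivalence (Theorem \ref{prop:Rittatore}) with the Luna-Vust classification (Theorem \ref{prop:LV}) and then pin down the special shape of the colored cone in the group case. The outline has four pieces. First, Rittatore identifies normal affine reductive monoids with unit group $G$ with affine spherical embeddings of $G$ under the $G \times G$-action \eqref{eqn:group-case-action1}. Second, Theorem \ref{prop:LV} asserts that every affine spherical embedding is simple, hence classified by a strictly convex colored cone $(\mathcal{C}_Y, \mathcal{F}_Y)$ satisfying the affineness criterion: there exists $\chi \in \mathcal{Q}^\vee$ with $\chi|_\mathcal{V} \leq 0$, $\chi|_{\mathcal{C}_Y} = 0$ and $\chi|_{\rho(\mathcal{D}^B \smallsetminus \mathcal{F}_Y)} > 0$.

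Third and crucially, I would verify that $\mathcal{F}_Y = \mathcal{D}^B$ in the group case, i.e.\ every color $D \in \mathcal{D}^B$ has closure in $Y$ meeting (and hence containing) the unique closed $G \times G$-orbit $Z$. The idea is to exploit the monoid structure: after restricting multiplication to a central one-parameter subgroup $\lambda: \Gm \to Z_G \subset G$, acting by $\lambda(t)$ contracts all of $Y$ to $Z$ as $t \to 0$ (this is the standard contraction property of a normal affine reductive monoid, illustrated in Example \ref{eg:GJ-monoid}). Fix a color $D_\alpha$ arising as the closure of the codimension-one Bruhat cell $B^- s_\alpha B \subset G$. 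Since $D_\alpha$ is $B^- \times B$-stable and $\lambda$ is central, the closure $\overline{D_\alpha} \subset Y$ is stable under $\lambda$, and contains the limit $\lim_{t \to 0} x \lambda(t) \in Z$ for any $x \in \overline{D_\alpha}$; minimality of $Z$ then yields $Z \subset \overline{D_\alpha}$, so $D_\alpha \in \mathcal{F}_Y$. Conversely, any strictly convex cone $\mathcal{C}$ paired with the full set $\mathcal{D}^B$ satisfies the affineness criterion trivially (the third bullet is vacuous) and hence defines a monoid via Rittatore.

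Fourth, the $F$-rationality of the colors is automatic: $G$ being $F$-split makes $B^- \times B$ and the Bruhat decomposition $F$-rational, and each color is the closure of $B^- s_\alpha B$ for an $F$-rational simple reflection $s_\alpha$. The main obstacle in this plan is the contraction step: one must verify that for every affine normal reductive monoid $Y$ with nontrivial boundary, the center $Z_G$ supplies a cocharacter contracting $Y$ to $Z$. When $G$ is purely semisimple, this is vacuous since the only possibility is $Y = G$ with empty boundary (and $\mathcal{C}_Y = \{0\}$); for $Y$ with nontrivial boundary, the existence of a contracting central cocharacter is part of the structure theory of normal reductive monoids and amounts, through the dual picture, to the inclusion $\Lambda_Y \subset X^*(A_{X^+})$ landing in a suitable dominant region.
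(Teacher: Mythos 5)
The paper offers no argument for this corollary---it is quoted verbatim from Timashev---so there is no internal proof to compare against; you are supplying the content of \cite[Theorem 27.12]{Ti11} yourself. Your skeleton (Rittatore's equivalence, then Luna--Vust, then fullness of colors, then the converse via the affineness criterion) is the right one, and the converse direction and the $F$-rationality remarks are fine. The problem is that the step you flag as ``the main obstacle'' is not a technical verification to be deferred: it is the entire content of the statement, and as written it has two independent gaps. First, a central cocharacter $\lambda$ of $G$ has image in the Weyl-fixed subspace $X_*(Z_G^\circ)_{\Q} \subset \mathcal{Q}$, and nothing guarantees that this subspace meets $\relint(\mathcal{C}_Y)$; by Proposition \ref{prop:G-orbits} the limit $\lim_{t\to 0} x\lambda(t)$ lands in the orbit attached to the face of $(\mathcal{C}_Y,\mathcal{F}_Y)$ whose relative interior contains the image of $\lambda$, and this need not be the closed orbit $Z$. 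The ``standard contraction property'' you invoke is really a feature of monoids with zero, not of arbitrary normal reductive monoids. Second, and more seriously, the inference ``$\overline{D_\alpha}$ contains a point of $Z$, hence $Z\subset\overline{D_\alpha}$ by minimality'' is invalid: $\overline{D_\alpha}$ is only $B^-\times B$-stable, and a nonempty closed $B^-\times B$-stable subset of the homogeneous $G\times G$-space $Z$ can be proper (a closed Bruhat-type cell). Minimality of $Z$ constrains $G\times G$-stable closed subsets, which $\overline{D_\alpha}$ is not.

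Both failures are visible in a concrete normal affine monoid. Take $G=\GL(2)$ and let $Y$ be the closure of the image of $g\mapsto(g,\bar g)$ in $\text{Mat}_{2\times 2}\times\PGL(2)$, where $\bar g$ denotes the class of $g$ in $\PGL(2)$. Its coordinate ring is $\bigoplus_\lambda\End(V_\lambda)$, the sum over dominant weights $\lambda=(\lambda_1,\lambda_2)$ with $\lambda_1+\lambda_2\geq 0$; this is a normal sub-bialgebra of $\Bbbk[\GL(2)]$, so $Y$ is a normal affine monoid with unit group $\GL(2)$ whose closed orbit is $\{0\}\times\PGL(2)$, of codimension one. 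The degree-zero regular function $a_{11}a_{22}/\det$ belongs to $\Bbbk[Y]$, vanishes on the color $\{a_{11}=0\}$, and is not identically zero on the closed orbit; hence the closure of that color meets the closed orbit only in a proper subvariety, and your contraction mechanism cannot reach the desired conclusion there. You should therefore check the precise hypotheses and conventions in \cite[Theorem 27.12]{Ti11} (in particular whether the classification is phrased through $\overline{T}$ and $W$-stable cones rather than through the Luna--Vust color set). Note that in every application made in this paper ($\text{Mat}_{n\times n}$ in \S\ref{sec:GJ} and $\overline{X_P}^{\text{aff}}$ in \S\ref{sec:doubling}) the monoid has a zero element; there the closed orbit is a single point, any cocharacter $\gamma$ of the maximal torus with image in $\relint(\mathcal{C}_Y)\cap\mathcal{V}$ (no centrality required) contracts $Y$ to that point by right multiplication, $\overline{D_\alpha}$ is stable under right multiplication by $B\supset\gamma(\Gm)$, and both of your gaps close.
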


Now revert to the setting in \S\ref{sec:symplectic-case}. Recall that the affine variety $X$ carries a right $M_\text{ab}$-action commuting with $G^\Box$. Identify $M_\text{ab}$ and $\Gm$ as in Notations \ref{nota:det-Gm}, written as $\bar{m} \leftrightarrow t$. Then we can let $\Gm$ act on the left of $X$ via $tx = x \bar{m}^{-1}$. Also recall that there is a distinguished point $\vec{0} \in X(F)$. This endows $X$ with a conical structure. For every $x \in X(F)$ we have
\[ \lim_{t \to 0} tx = \vec{0} \in X(F), \]
the limit being taken in the algebraic sense: the orbit map $\Gm \to X$ extends to $\Ga \to X$, and $\vec{0} \in X(F)$ is the image of $0 \in \Ga$. This implies that $\lim_{t \to 0} f(tx) = f(\vec{0}) \in \pr^1(F)$, for every $f \in F(X)^\times$ and $x \in X$.

\begin{lemma}\label{prop:conical-function}
	Suppose that $f \in F(X)^\times$ satisfies $f(tx) = t^d f(x)$ under the $\Gm$-action, for some $d \in \Z$. If $d = 0$ then $f$ is constant.
\end{lemma}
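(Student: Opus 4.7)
The plan is to leverage the positively graded structure on $F[X]$ induced by the conical $\Gm$-action. The first step is to establish $F[X] = \bigoplus_{n \geq 0} F[X]_n$ together with $F[X]_0 = F$. The grading exists because $\Gm$ acts algebraically on $X$; the two vanishing constraints follow from the contraction $\lim_{t \to 0} tx = \vec{0}$: a regular function of weight $n < 0$ would satisfy $\phi(tx) = t^n \phi(x) \to \infty$ along any orbit on which it is nonzero, contradicting regularity at $\vec{0}$; and a weight-$0$ regular function is $\Gm$-invariant, hence determined by its value at $\vec{0}$ (since every orbit closure meets $\{\vec{0}\}$), therefore constant.

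Next, for $f \in F(X)^\times$ with $f(tx) = t^d f(x)$, I would write $f = g/h$ with $g, h \in F[X]$ coprime, which is possible because $X$ is normal and integral. The $\Gm$-equivariance of $f$, combined with the uniqueness (up to units) of the coprime factorisation, forces $g$ and $h$ individually to be $\Gm$-semi-invariant, say $g \in F[X]_{n_1}$ and $h \in F[X]_{n_2}$ with $n_1 - n_2 = d$ and $n_1, n_2 \geq 0$ by the grading.

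For part (1), $d \geq 1$: it suffices to show $h \in F^\times$, equivalently $n_2 = 0$. Assume $n_2 \geq 1$ for contradiction. Then $V(h) \subset X$ is a nonempty $\Gm$-stable closed subscheme; any irreducible component $D$ is a $\Gm$-stable prime divisor containing $\vec{0}$. Localising at the generic point $\eta_D$, the DVR $\mathcal{O}_{X,\eta_D}$ records $v_D(f) = v_D(g) - v_D(h) \leq -1$ by coprimality, so $f$ has a genuine pole along $D$. On the other hand, for any $x_0 \in X^+$ outside $D$ with $f(x_0) \neq 0$, the orbit $\{tx_0\}$ remains disjoint from $D$ (as $D$ is $\Gm$-stable) but limits to $\vec{0} \in D$, while $f(tx_0) = t^d f(x_0) \to 0$. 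I would derive the contradiction by comparing the two asymptotic behaviours at $\vec{0}$: the pole of $f$ along $D$ forces $f$ to be unbounded in any punctured formal/analytic neighbourhood of $\vec{0}$ in $X$ meeting $D$, while the orbit-wise estimate $f(tx_0) \to 0$, applied to orbits sweeping out an open set near $\vec{0}$, should preclude this. For part (2), $d = 0$: the identical strategy shows both $n_2 = 0$ and, by symmetry applied to $1/f$, also $n_1 = 0$, whence $f \in F[X]_0 = F$.

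The main obstacle will be making the contradiction in part (1) rigorous, particularly the passage from the orbit-wise decay along $\{tx_0\}$ to a genuine bound on $f$ in neighbourhoods of points of $D$ near $\vec{0}$. I expect this step to exploit specific geometric features of this $X$ — being the affine cone over the lagrangian Grassmannian $X_P^\flat$ embedded by Plücker, whose Picard group is $\Z$ — so that the $\Gm$-stable prime divisors are highly constrained (in particular linearly equivalent to multiples of the Plücker hyperplane class). Alternatively, one can try to push through the argument using only the abstract facts that $X$ is normal, $\{\vec{0}\}$ has codimension $\geq 2$ (so Hartogs applies and $F[X] = F[X_P]$), and every $\Gm$-orbit closure contains $\vec{0}$, which together should suffice to rule out $\Gm$-semi-invariant rational functions of non-negative weight with a pole divisor.
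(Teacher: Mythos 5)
Your strategy is essentially the paper's: the published proof is precisely the observation that the polar locus of $f$ is a $\Gm$-stable closed subset of $X$, hence contains $\vec{0}$ whenever it is nonempty, played off against the decay $f(tx)=t^d f(x)\to 0$ along scaling orbits through points off that locus — compressed into three lines ending with ``hence $f(\vec{0})=0$, a contradiction''. Your extra scaffolding (the grading $F[X]=\bigoplus_{n\geq 0}F[X]_n$ with $F[X]_0=F$, the semi-invariance of numerator and denominator) is correct, with one caveat: writing $f=g/h$ with $g,h$ \emph{coprime} requires $F[X]$ to be factorial, not merely normal, so that step needs its own justification for the $X$ at hand (plausible here, since $X$ is the cone over the lagrangian Grassmannian under the ample generator of its Picard group, whence $\mathrm{Cl}(X)=0$, but it is not free).

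The step you single out as the main obstacle is a genuine gap — and it is exactly the gap in the paper's own proof, which passes from ``$f(tx)\to 0$ for $x$ off the polar locus'' to ``$f(\vec{0})=0$, a contradiction'' without justification. The inference fails in general because $\{\vec{0}\}$ has codimension $\geq 2$ and may lie in the indeterminacy locus of $f\colon X\dashrightarrow \pr^1$: decay (or constancy) of $f$ along the scaling orbits through general points is perfectly compatible with $f$ having a pole divisor through the vertex. Concretely, on the affine plane $\Spec F[x,y]$ with the standard scaling action, $f=x^2/y$ is homogeneous of degree $1$ with a pole along $\{y=0\}$, and $f=x/y$ is homogeneous of degree $0$ and non-constant; so neither assertion can be obtained from the orbit-wise limit argument alone, and some further input is indispensable — either special geometric features of this $X$, or (more realistically) a restriction to the functions to which the lemma is actually applied in Theorem \ref{prop:doubling-cone}, whose polar loci are contained in the single prime divisor $\partial X=\Supp(\divisor(f^+))$ with $f^+$ homogeneous of degree one, where one can argue divisor-by-divisor. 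Your closing paragraph anticipates precisely this, so your diagnosis of where the difficulty sits is accurate; but as written neither your argument nor the paper's closes it.
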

\begin{proof}
	Assume $d=0$. For any $x \in X$ we have $f(x) = \lim_{t \to 0} f(tx) = f(\vec{0})$. Hence $f$ is constant.
\MyQED\end{proof}

Also recall that the open orbit $X^+ \simeq \Gm \times G$ is a $M_\text{ab} \times G \times G$-variety. By recalling the definition of the group actions together with the fact $M_\text{ab} \simeq \Gm$, this resembles the setting of monoids with unit group $M_\text{ab} \times G$. One exception: here the acting group is $M_\text{ab} \times G \times G$ instead of $(M_\text{ab} \times G)^2$; as remarked at the end of \S\ref{sec:group-case}, this is actually irrelevant. One retrieves the required $M_\text{ab}^2$-action by composing with $M_\text{ab}^2 \to M_\text{ab}$, $(a,b) \mapsto ab^{-1}$. We shall omit this formal step.

\begin{proposition}\label{prop:monoid}
	The $M_\mathrm{ab} \times G \times G$-variety $X$ admits a canonical structure of affine normal reductive monoid with unit group $M_\mathrm{ab} \times G$. Furthermore, it is a flat monoid with singular locus equal to $\{\vec{0}\}$.
\end{proposition}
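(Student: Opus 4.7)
The plan is to handle the three assertions in turn. \emph{Monoid structure.} I apply Rittatore's theorem (Theorem \ref{prop:Rittatore}): by Lemma \ref{prop:GmG-embedding} the open orbit $X^+$ identifies with $H := M_\text{ab} \times G$, and the given $(M_\text{ab} \times G \times G)$-action lifts to the required $H \times H$-action via the composite $H^2 \to M_\text{ab} \times G \times G$, $((a,g_1),(b,g_2)) \mapsto (ab^{-1},g_1,g_2)$, as already indicated in the paper. This realizes $X^+ \hookrightarrow X$ as the group case of \S\ref{sec:group-case} for $H$, and Rittatore's theorem uniquely endows $X$ with a structure of normal affine reductive monoid having unit group $H$.

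\emph{Flatness.} Since $M_\text{ab}$ is a torus and $G = \Sp(V)$ is semisimple, $H_\text{der} = \{1\} \times G$, so the abelianization map is $\text{ab}: X \to X \sslash (G \times G)$. The $G \times G$-invariants on $X^+ \simeq \Gm \times G$ are $F[\Gm] = F[t, t^{-1}]$; extending to $X$, Lemma \ref{prop:conical-function} (applied to the $\Gm = M_\text{ab}$, which contracts $X$ to $\vec{0}$ with positive weights) eliminates negative powers of $t$, leaving $F[t] = F[f^+]$ by Lemma \ref{prop:f-pm}. Hence $\text{ab}$ coincides with $f^+: X \to \Ga$, and is flat since $X$ is integral and $f^+$ is dominant onto the Dedekind scheme $\Ga$. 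The fiber over $t \in \Gm$ is $\{t\} \times G$, smooth and geometrically integral; the fiber over $0$ is $V(f^+)$, which equals $\partial X$ scheme-theoretically because $v_{\partial X}(f^+) = 1$ with $\partial X$ an integral prime divisor by Lemma \ref{prop:doubling-irred}, and on the normal variety $X$ this forces $(f^+) = I_{\partial X}$. The characteristic-zero assumption then upgrades reducedness to geometric reducedness.

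\emph{Singular locus.} Smoothness on $X \smallsetminus \{\vec{0}\} = X_P$ is immediate, since $X_P = P_\text{der} \backslash G^\Box$ is homogeneous under the smooth group $G^\Box$. For singularity at $\vec{0}$, the $\Gm$-action on $X$ is linear via the embedding \eqref{eqn:Plucker-embedding} into $\bigwedge^{\dim V} V^\Box$, with a single positive weight, and contracts $X$ to $\vec{0}$. Were $X$ smooth at $\vec{0}$, graded Nakayama applied to the grading on $F[X]$ would produce a $\Gm$-equivariant isomorphism $X \cong T_{\vec{0}} X$ onto an affine space with scalar action, under which the quotient $(X \smallsetminus \{\vec{0}\})/\Gm$ would be a genuine projective space. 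But by \eqref{eqn:iden-Grass} that quotient is $X_P^\flat$, the Lagrangian Grassmannian $L(V^\Box)$, which is a nontrivial generalized flag variety of type $C$ and hence never a projective space for $V \neq 0$; for instance, when $\dim V = 2$ it is a smooth quadric threefold in $\pr^4$ of Fano index $3$, whereas $\pr^3$ has Fano index $4$.

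The main obstacle I anticipate is this final dichotomy at $\vec{0}$. A fallback that avoids invoking flag-variety theory is a direct dimension count: the linear defining equations of $X$ inside $\bigwedge^{\dim V} V^\Box$ come exclusively from the Lagrangian condition $\Lambda \wedge \omega = 0$ (with $\omega \in \bigwedge^2 V^\Box$ dual to the symplectic form), so $T_{\vec{0}} X = \ker(\wedge\omega)$; the $\mathfrak{sl}_2$-Lefschetz decomposition of $\bigwedge^\bullet V^\Box$ under the primitive bivector $\omega$ then yields $\dim T_{\vec{0}} X > 1 + \dim \Sp(V) = \dim X$.
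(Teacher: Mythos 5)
Your proof of the monoid structure and of flatness follows the paper's route: Rittatore's theorem for the monoid, and the identification of $f^+$ with the abelianization followed by flatness over the Dedekind scheme $\Ga$ and the fiber analysis via Lemma \ref{prop:doubling-irred}. (The paper justifies $\mathrm{ab}=f^+$ by Igusa's criterion, using that $f^+$ restricts to $\mathrm{pr}_1$ over $\Gm$; your computation of $F[X]^{G\times G}=F[f^+]$ via Lemma \ref{prop:conical-function} is an acceptable substitute, and your scheme-theoretic identification $V(f^+)=\partial X$ from $v_{\partial X}(f^+)=1$ plus normality is exactly the last assertion of Lemma \ref{prop:doubling-irred}.) Where you genuinely diverge is the singular locus: the paper disposes of it in one line by citing \cite[Theorem 27.25]{Ti11} or \cite[Theorem 5.5]{Re05} (a reductive monoid with a zero element of this type cannot be smooth), whereas you give a self-contained geometric argument — $X$ is a cone in $\bigwedge^{\dim V}V^\Box$ with vertex $\vec{0}$, a cone smooth at its vertex is a linear subspace, hence $(X\smallsetminus\{\vec{0}\})/\Gm=X_P^\flat$ would be a projective space, contradicting the fact that the Lagrangian Grassmannian $\mathrm{LG}(2n,4n)$ is not one. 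This buys independence from the structure theory of reductive monoids at the cost of a small amount of flag-variety input. Two loose ends you should tighten: the claim that $\mathrm{LG}(m,2m)$ is never a projective space needs an argument valid for all $m=\dim V\geq 2$, not just the quadric-threefold case $\dim V=2$ (the Fano index of $\mathrm{LG}(m,2m)$ is $m+1$ while its dimension is $m(m+1)/2$, and these force $m=1$ for a projective space — note that $\Sp_{2m}/P_1=\pr^{2m-1}$ \emph{is} a projective space, so "nontrivial flag variety of type $C$" alone is not a proof); and the fallback tangent-space count presupposes that the linear span of the cone over $\mathrm{LG}$ in the Plücker embedding is exactly the primitive part $\ker\iota_\omega$ (the fundamental representation $V_{\varpi_{2n}}$ of $\Sp_{4n}$), which is true but should be cited rather than asserted.
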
 
\begin{proof}
	Since $X^+ \hookrightarrow X$ is an affine spherical embedding, it suffices to apply Theorem \ref{prop:Rittatore} to deduce the monoid structure. On the other hand, the relative invariant $f^+$ defines a dominant morphism $X \to \Ga$, hence flat; we claim that $f^+$ is the abelianization of $X$. Since $f^+$ pulls back to $\text{pr}_1: \Gm \times G \to \Gm$ over $\Gm \subset \Ga$, it is indeed the GIT quotient by $G \times G$ by Igusa's criterion \cite[II. \S 4.5]{AG4}.

	The scheme-theoretic fibers of $f^+$ over $\Gm$ are all isomorphic to $G$, whereas the fiber over $0 \in \Ga$ is the geometrically integral scheme $\partial X$ by Lemma \ref{prop:doubling-irred}. Thus $X$ is a flat monoid.

	By \eqref{eqn:Plucker-embedding}, there are exactly two $G^\Box$-orbits in $X = \overline{X_P}^\text{aff}$, namely $X_P$ and $\{0\}$, thus it suffices to show $X$ is singular. The singularity of $X$ follows from \cite[Theorem 27.25]{Ti11}. Alternatively, one may apply \cite[Theorem 5.5]{Re05} using the fact that the monoid $X$ has a ``zero element'', namely $\vec{0}$.
\MyQED\end{proof}
Note that Lemma \ref{prop:doubling-irred} is necessary for the proof.

We are ready to describe the Luna--Vust datum for $X$. It is customary to work with a Borel subgroup of $M_\text{ab} \times G \times G$ of the form $M_\text{ab} \times B^- \times B$; let $A$ be the Levi quotient of $B$. Some generalities about the homogeneous space $X^+ \simeq \Gm \times G$ are needed; details can be found in \cite[p.161]{Ti11}.
\begin{itemize}
	\item Observe that
		\[ \Lambda_{X^+} = X^*(\Gm) \oplus X^*((A \times A)/ \text{diag}(A)) = \Z \oplus X^*(A) \]
		where the last equality comes from $\Z = X^*(\Gm) \simeq X^*(M_\text{ab})$ and the second projection $A \times A/\text{diag}(A) \rightiso A$.
	\item The colors for $X^+$ are exactly the products of $\Gm$ with the closures of the Bruhat cells $B^- w B$ in $X^+$, where $w$ represents the simple root reflections, indexed by either the simple coroots $\check{\alpha}$ or dually the fundamental weights $\varpi$. The color $D_\varpi$ corresponding to $\varpi$ equals the zero loci of the following matrix coefficient of the irreducible algebraic $G$-representation $(\rho, V_\rho)$ with highest weight $\varpi$
		\[ f_\varpi: (t,g) \mapsto \angles{\check{v}_{-\varpi}, v_\varpi \rho(g)}, \quad f_\varpi \in F(X^+), \]
		where $v_\varpi$ (resp.\  $\check{v}_{-\varpi}$) is a highest weight vector in in $V_\rho$ (resp.\  the lowest weight vector in $V_\rho^\vee$). The eigencharacter of $f_\varpi$ under $\Gm \times B^- \times B$ equals $(0, \varpi) \in \Lambda_{X^+} = \Z \oplus X^*(A)$. All are $F$-rational since $G$ is split.
	\item The valuation cone $\mathcal{V} \subset \mathcal{Q}$ equals the anti-dominant Weyl chamber
		\[ \bigcap_\alpha \{\angles{\alpha, \cdot} \leq 0\}, \quad \alpha \in X^*(A) \text{ ranges over the positive roots.} \]
	\item Choose a symplectic basis $e_1, \ldots, e_n, f_n, \ldots, f_1$ of the symplectic $F$-space $V$, i.e.\ $\angles{e_i|e_j} = \angles{f_i|f_j}=0$ and $\angles{e_i|f_j} = \delta_{i,j}$ (Kronecker's delta), so that $G = \Sp(V) \hookrightarrow \GL(2n)$. We can take $B$ to be the upper triangular subgroup intersected with $G$, thereby obtain the basis $\epsilon^*_1, \ldots, \epsilon^*_n$ of $X^*(A)$ given by
		\begin{gather*}
			\epsilon^*_i: \begin{pmatrix} t_1 & & & & &\\ & \ddots & & & & \\ & & t_n & & & \\ & & & t_n^{-1} & & \\ & & & & \ddots & \\ & & & & & t_1^{-1} \end{pmatrix} \mapsto t_i, \quad i=1, \ldots, n.
		\end{gather*}
		The dual basis of $X_*(A)$ is denoted by $\epsilon_1, \ldots, \epsilon_n$. The fundamental weights are given by
		\[ \varpi_i := \epsilon^*_1 + \ldots + \epsilon^*_i, \quad i=1, \ldots, n. \]
		On the other hand, the simple roots are $\epsilon^*_i - \epsilon^*_{i+1}$ with $1 \leq i < n$ and $2\epsilon^*_n$, relative to the Borel subgroup $B$ chosen above.
	\item For $i=1, \ldots, n$, the normalized discrete valuation corresponding to $D_{\varpi_i}$ is $\rho(D_{\varpi_i}) \in \mathcal{Q}$. Since $\partial X = \{f^+ = 0\}$ does not contain any color, we have $\rho(D_{\varpi_i})(f^+) = 0$ for all $i$. Also notice that $f^+$ has eigencharacter $(1, 0) \in \Z \oplus X^*(A)$, hence $(\rho(D_{\varpi_i}))_{i=1}^n$ lie in $\{0\} \oplus X_*(A)$ and form the dual basis of the fundamental weights, i.e.\ the simple coroots. Specifically, the simple coroot dual to $\varpi_i = \epsilon^*_1 + \cdots + \epsilon^*_i$ is
		\[ \check{\alpha}_i = \begin{cases} \epsilon_i - \epsilon_{i+1}, & 1 \leq i < n  \\ \epsilon_n, & i=n. \end{cases} \]
\end{itemize}

\begin{theorem}\label{prop:doubling-cone}
	The colored cone associated to $X^+ \hookrightarrow X$ is $(\mathcal{C}, \mathcal{D}^{M_\mathrm{ab} \times B^- \times B})$ where $\mathcal{C}$ is the cone generated by the colors
	\[ \rho(D_{\varpi_i}) = \check{\alpha}_i, \quad i=1, \ldots, n, \]
	together with
	\[ (1, -\epsilon_1) \in \Lambda_{X^+} \cap \mathcal{V}. \]
\end{theorem}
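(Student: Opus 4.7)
The plan is to invoke the Luna--Vust classification (Theorem~\ref{prop:LV}) for the affine spherical embedding $X^+ \hookrightarrow X$ under $M_\mathrm{ab} \times G \times G$; what must be produced is the pair $(\mathcal{C}_X, \mathcal{F}_X)$. From the discussion preceding the theorem, the full set of colors $\mathcal{D}^{M_\mathrm{ab} \times B^- \times B}$ of $X^+ \simeq \Gm \times G$ is $\{D_{\varpi_1}, \dots, D_{\varpi_n}\}$, and one first observes that each $\rho(D_{\varpi_i}) \in \mathcal{Q} = \Z \oplus X_*(A)_{\Q}$ equals the simple coroot $\check{\alpha}_i$ sitting in the second summand, because $f^+$ (representing $(1,0) \in \Lambda_{X^+}$) does not vanish on any color. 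Since $X$ is a normal affine reductive monoid with unit group $M_\mathrm{ab} \times G$, Corollary~\ref{prop:full-color} gives $\mathcal{F}_X = \mathcal{D}^{M_\mathrm{ab} \times B^- \times B}$, so each $\check{\alpha}_i$ generates an extremal ray of $\mathcal{C}_X$. Meanwhile Lemma~\ref{prop:doubling-irred} shows that $\partial X$ is the unique proper $G \times G$-invariant prime divisor of $X$ (it is prime, and $G$-invariant divisors must lie inside $\partial X$), so the set $\mathcal{V}_X$ appearing in Theorem~\ref{prop:LV} reduces to the singleton $\{v_{\partial X}\}$.

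The crux is to locate $v_{\partial X}$ inside $\mathcal{Q}$. One already has $\angles{v_{\partial X}, (1,0)} = v_{\partial X}(f^+) = 1$ by Lemma~\ref{prop:doubling-irred}, so it remains to compute $v_{\partial X}(f_{\varpi_i})$ for each $i$. I would reuse the one-parameter curve $c \colon \Ga \to X$, $t \mapsto t x_0 \cdot (1, \mu(t))$ with $\mu(t) = t^{\epsilon_1}$, which was built in the proof of Lemma~\ref{prop:doubling-irred} and is known there to satisfy $c(0) \in \partial X$ with intersection number $i(c(0), \partial X \cdot c; X) = 1$. The concrete task is to take $x_0 = \Lambda_0$ and transport the formulas of Lemma~\ref{prop:GmG-embedding} through the right $M_\mathrm{ab} \times G \times G$-action on the Pl\"ucker model $X \hookrightarrow \bigwedge^{\dim V} V^\Box$, so as to identify $c(t) \in X^+$ with the pair $(t, \mu(t)^{-1}) \in \Gm \times G$. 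This is where the twisting conventions between the $M_\mathrm{ab}$-torsor, the Pl\"ucker embedding, and the induced $G^\Box$-action on $\bigwedge^{\dim V} V^\Box$ have to be reconciled (using $\det g_2 = 1$ for $g_2 \in \Sp(V)$), and I expect this bookkeeping to be the main obstacle. Once it is in place, the matrix-coefficient formula $f_{\varpi_i}(s,g) = \angles{\check{v}_{-\varpi_i}, v_{\varpi_i} \rho_i(g)}$ yields $f_{\varpi_i}(c(t)) = t^{-\angles{\varpi_i, \epsilon_1}}\angles{\check{v}_{-\varpi_i}, v_{\varpi_i}}$, a nonzero multiple of $t^{-1}$; thus $v_{\partial X}(f_{\varpi_i}) = -1$. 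Since $\angles{-\epsilon_1, \varpi_i} = -1$ for every $i$, this pins down $v_{\partial X} = (1, -\epsilon_1)$.

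Finally, one verifies that $(1, -\epsilon_1)$ indeed lies in $\mathcal{V}$ --- the sequence $(-1, 0, \dots, 0) \in X_*(A)$ is antidominant for the positive roots of $\Sp$ recalled before the theorem --- and feeds $\mathcal{F}_X = \mathcal{D}^{M_\mathrm{ab} \times B^- \times B}$ together with $\mathcal{V}_X = \{(1, -\epsilon_1)\}$ into Theorem~\ref{prop:LV}. This presents $\mathcal{C}$ as the convex hull of $\{\check{\alpha}_1, \dots, \check{\alpha}_n\} \cup \{(1, -\epsilon_1)\}$, yielding exactly the colored cone asserted in the theorem.
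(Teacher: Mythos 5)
Your skeleton agrees with the paper's: full colors via Corollary \ref{prop:full-color}, the identification $\rho(D_{\varpi_i})=\check{\alpha}_i$, the reduction of $\mathcal{V}_X$ to the single valuation $v_{\partial X}$ with $v_{\partial X}(f^+)=1$ from Lemma \ref{prop:doubling-irred}, and the linear system $a_0=1$, $a_1+\cdots+a_i=\angles{\varpi_i, v_{\partial X}}=-1$ forcing $v_{\partial X}=(1,-\epsilon_1)$. The bookkeeping you flag as the main obstacle is in fact fine: since $\det=1$ on $\Sp(V)$, the curve $c(t)=t\Lambda_0(1,\mu(t))$ does correspond to $(t,\mu(t)^{-1})$ under Lemma \ref{prop:GmG-embedding}, and $f_{\varpi_i}(c(t))=t^{-1}\angles{\check{v}_{-\varpi_i},v_{\varpi_i}}$ exactly, with nonzero pairing.

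The genuine gap is the inference ``$\mathrm{ord}_{t=0}\,c^*(f_{\varpi_i})=-1$, hence $v_{\partial X}(f_{\varpi_i})=-1$''. The order of $c^*(f_{\varpi_i})$ at $t=0$ is $\sum_D v_D(f_{\varpi_i})\,i(c(0),D\cdot c;X)$ over \emph{all} prime divisors $D$ in $\Supp(\divisor(f_{\varpi_i}))$, and that support contains $\overline{D_{\varpi_i}}$ as well as $\partial X$. Unless you know $c(0)\notin\overline{D_{\varpi_i}}$ (which is not automatic: $c(0)$ lies in the codimension-one orbit, and $\overline{D_{\varpi_i}}$ may meet that orbit in a proper closed subset containing $c(0)$), you only get $-1=v_{\partial X}(f_{\varpi_i})+(\text{nonnegative})$, i.e.\ $v_{\partial X}(f_{\varpi_i})\leq -1$. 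The paper's use of the intersection formula of \cite[\S 24.1]{Ti11} in Lemma \ref{prop:doubling-irred} handles exactly this by passing to a \emph{generic translate} $gf^+$, which is harmless there because $f^+$ is a relative invariant for the full group; $f_{\varpi_i}$ is only a $B^-\times B$-eigenfunction, so translating genuinely changes the function and you would then have to show that the generic coefficient $\angles{\check{u},u\,\rho_i(\mu(t)^{-1})}$ still has a pole of order exactly one (true, because every weight $\chi$ of $V_{\varpi_i}$ satisfies $\angles{\chi,\epsilon_1}\leq 1$, but this is an extra argument you do not supply). The paper instead avoids curves for the colors entirely: by the conical $\Gm$-structure and Lemma \ref{prop:conical-function}, $f_{\varpi_i}$ must have an actual pole along $\partial X$ while $f^+f_{\varpi_i}$ must be regular, so $0\leq v_{\partial X}(f^+f_{\varpi_i})=1+v_{\partial X}(f_{\varpi_i})\leq 0$, giving $v_{\partial X}(f_{\varpi_i})=-1$ with no intersection theory. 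You should either adopt that argument or close the gap above (generic translation plus the weight bound, or a direct check that $c(0)\notin\overline{D_{\varpi_i}}$).
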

\begin{proof}
	By Corollary \ref{prop:full-color}, the colored cone $(\mathcal{C}, \mathcal{F})$ associated to $X^+ \hookrightarrow X$ satisfies $\mathcal{F} = \{\text{all colors}\}$, and we have seen that the corresponding valuations are given by simple coroots.
	
	Recall that $\partial X$ is a prime divisor by Lemma \ref{prop:doubling-irred}. Write $\varpi = \varpi_i$. The poles of the defining equations $f_\varpi \in F(X)$ for the colors $D_\varpi \subset X^+$ can only lie in $\partial X$. The defining equation of the prime divisor $\overline{D_\varpi}$ in $X$ is thus given by $t^d f_\varpi$, where $d = - v_{\partial X}(f_\varpi)$ (recall that $X$ is normal and $t \in F[X^+] \subset F(X)$ is a uniformizer for $\partial X$). We contend that $d=1$ for every fundamental weight $\varpi$; in particular $t f_\varpi \in F[X]$.
	
	Write $\mathbb{G}_m = \operatorname{Spec} F[s, s^{-1}]$. By the proof of Lemma \ref{prop:doubling-irred}, for $g \in M_{\mathrm{ab}} \times G \times G$ in general position we have
	\begin{align*}
		v_{\partial X}(f_\varpi) & = \mathrm{ord}_{s=0} \left( f_\varpi(c(s)g) \right) \cdot i(c(0), \partial X \cdot c; X) \\
		& = \mathrm{ord}_{s=0} \left( f_\varpi(x_0 (1, \mu(s))g ) \right) \cdot \underbracket{i(c(0), \partial X \cdot c; X)}_{= 1},
	\end{align*}
	where the base point $x_0 \in X^+(F)$, the morphism $c: \mathbb{G}_m \to X^+$ and $\mu: \mathbb{G}_m \to G$ are defined in the cited proof, namely $c(s) = s x_0 (1, \mu(s))$. Here we used the fact that $f_\varpi$ is invariant under $M_{\mathrm{ab}}$-action. For the same reason, it suffices to take $g = (1, g_1, g_2)$ with generic $g_1, g_2 \in G$.
	
	By Lemma \ref{prop:GmG-embedding}, we identify $X^+$ with $\mathbb{G}_m \times G$ so that $x_0 = (1, 1)$; the $\mathbb{G}_m \times G \times G$-action is also specified there. Let $\rho$ be the (right) $G$-representation of highest weight $\varpi$; take vectors $v_\varpi$ and $\check{v}_{-\varpi}$ in $\rho$ and $\check{\rho}$, with weights $\varpi$ and $-\varpi$ respectively such that $\langle \check{v}_{-\varpi}, v_\varpi \rangle = 1$. Then
	\[ f_\varpi(g) = \left\langle \check{v}_{-\varpi}, \; \rho(g) v_\varpi \right\rangle, \quad g \in G. \]
	Thus we have to calculate, for generic $g_1, g_2 \in G$:
	\[ \mathrm{ord}_{s=0} \left\langle \check{\rho}(g_2) \check{v}_{-\varpi}, \; \rho( \mu(s)^{-1}) \rho(g_1) v_{\varpi} \right\rangle. \]
	
	Consider the standard representation of $G$ on $V = \bigoplus_{i=1}^n (F e_k \oplus F e_{-k})$. Recall that \footnote{See N.\ Bourbaki, \emph{Lie groups and Lie algebras, Chapters 7---9}, pp.206--207.} the fundamental weights of $G$ are $\varpi_k = \epsilon^*_1 + \cdots + \epsilon^*_k$ with $k = 1, \ldots, n$; the corresponding highest weight representations $\rho_k$ are realized on
	\begin{align*}
		E_k & := \left\langle e_{a_1} \wedge \cdots e_{a_l} \wedge e_{-b_1} \wedge \cdots \wedge e_{-b_m} : 1 \leq a_1 < \cdots < b_m \leq n, \; l+m=k \right\rangle \\
		& \subset \wedge^k V, \quad k=1, \ldots, n.
	\end{align*}
	We then take
	\[ v_{\varpi_k} = e_1 \wedge \cdots \wedge e_k, \quad \check{v}_{-\varpi_k} = e^*_1 \wedge \cdots \wedge e^*_k \mod E_k^\perp . \]
	In the proof of Lemma \ref{prop:doubling-irred} we took
	\[ \mu(s): e_{\pm i} \mapsto \begin{cases}
		s^{\pm 1} e_{\pm i}, & i = 1, \\
		e_{\pm i}, & i > 1.
	\end{cases} \quad (1 \leq i \leq n) \]
	Using this realization, it is clear that
	\[ \delta(s, g_1, g_2) := \left\langle \check{\rho}_k(g_2) \check{v}_{-\varpi_k}, \; \rho_k( \mu(s)^{-1}) \rho_k(g_1) v_{\varpi_k} \right\rangle \; \in s^{-1} F[s], \]
	and $\mathrm{ord}_{s=0} \left( \delta(s, g_1, g_2) \right)$ for generic $(g_1, g_2)$ equals actually
	\[ \inf_{(g_1, g_2) \in G^2} \mathrm{ord}_{s=0} \left( \delta(s, g_1, g_2) \right), \quad \text{which is $\geq -1$}. \]
	If the right hand side is $\geq 0$ then $f_\varpi \in F[X]$ and is invariant under the $\mathbb{G}_m$-dilation on $X$. This would imply the constancy of $f_\varpi$ by Lemma \ref{prop:conical-function}. This leads to a contradiction. Hence $v_{\partial X}(f_\varpi) = -1$.
	
	Since $(t,g) \mapsto t$ has eigencharacter $(1,0)$, we see $\overline{D_\varpi}$ is defined by an eigenfunction of eigencharacter $(1, \varpi)$. If we express the normalized discrete valuation of the divisor $\partial X$ as
	\[ v_{\partial X} = \left(a_0, \sum_{i=1}^n a_i \epsilon_i \right) \in \Lambda_{X^+} \cap \mathcal{V}, \]
	then:
	\begin{enumerate}[(i)]
		\item since $f^\pm$ has eigencharacter $(1, 0, \ldots, 0)$, we infer $1 = v_{\partial X}(f^\pm) = a_0$ from Lemma \ref{prop:doubling-irred};
		\item since $\overline{D_{\varpi_i}} \not\supset \partial X$, we have $0 = v_{\partial X}(t f_{\varpi_i}) = a_0 + a_1 + \cdots + a_i$, for $i=1, \ldots, n$.
	\end{enumerate}
	This system of $(n+1)$ equations is easily solved to yield $v_{\partial X} = (1, -\epsilon_1)$. 
\MyQED\end{proof}

Consider Langlands' dual group $(M_\text{ab} \times G)^\wedge$ over $\CC$: it comes equipped with a dual Borel subgroup $\GmC \times \hat{B}$ and a dual maximal torus $\GmC \times \hat{A} \subset \GmC \times \hat{B}$, together with identifications $X^*(\hat{A}) = X_*(A)$ and $X_*(\hat{A}) = X^*(A)$, under which the based root datum of $(\hat{G}, \hat{B}, \hat{A})$ is dual to that of $G$. In particular, the roots/coroots, fundamental weights/coweights get exchanged.

We have $(M_\text{ab} \times G)^\wedge = \GmC \times \hat{G}$ and $\hat{G} \simeq \SO(2n+1, \CC)$. The standard representation $\text{std}: \hat{G} \to \GL(2n+1, \CC)$ is thus defined. Tensored with the identity representation of $\GmC$ on $\CC$, we get
\[ \identity \boxtimes \text{std}: (M_\text{ab} \times G)^\wedge \to \GL(2n+1, \CC). \]

This algebraic representation is irreducible since $\text{std}$ is; this standard fact will be reproved in what follows.
\begin{proposition}\label{prop:rho-weight}
	The representation $\identity \boxtimes \mathrm{std}$ has $\hat{B}$-lowest weight equal to $(1, -\epsilon_1) \in \Z \oplus X_*(A) = X^*(\GmC \times \hat{A})$, and $\hat{B}$-highest weight equal to $(1, \epsilon_1)$.
\end{proposition}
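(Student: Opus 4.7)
The plan is to recognize this as a direct computation of extremal weights in the standard representation of $\SO(2n+1, \CC)$, once the identifications are pinned down.

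First I would make the dual group concrete. The maximal torus $A \subset G$ was chosen with the basis $\epsilon^*_1, \ldots, \epsilon^*_n$ of $X^*(A)$ given explicitly in the excerpt, dual to $\epsilon_1, \ldots, \epsilon_n \in X_*(A)$. Under the canonical identification $X^*(\hat{A}) = X_*(A)$ and $X_*(\hat{A}) = X^*(A)$, the vectors $\epsilon_i$ now live in $X^*(\hat{A})$. The Borel $\hat{B}$ is dual to $B$, so the simple \emph{roots} of $\hat{G}$ relative to $\hat{B}$ are the simple coroots of $G$ relative to $B$, which were computed to be $\epsilon_i - \epsilon_{i+1}$ for $1 \leq i < n$ together with $\epsilon_n$. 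This is the standard simple system of type $B_n$, confirming $\hat{G} \simeq \SO(2n+1, \CC)$ with $\hat{B}$ in the standard position.

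Next I would identify the weights of $\mathrm{std}$. The $(2n+1)$-dimensional standard representation of $\SO(2n+1, \CC)$ decomposes under $\hat{A}$ into the weights $\{\pm \epsilon_1, \ldots, \pm \epsilon_n, 0\}$, each appearing with multiplicity one. With respect to the simple roots above, the dominant chamber in $X^*(\hat{A})_\R$ is $\{a_1 \geq a_2 \geq \cdots \geq a_n \geq 0\}$ in coordinates $\sum a_i \epsilon_i$. Among the weights of $\mathrm{std}$, the unique dominant one is $\epsilon_1$, so it is the $\hat{B}$-highest weight; by symmetry (or by applying the longest Weyl element of $B_n$, which acts by $\epsilon_i \mapsto -\epsilon_i$) the $\hat{B}$-lowest weight is $-\epsilon_1$.

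Finally, the representation $\identity \boxtimes \mathrm{std}$ is obtained by tensoring $\mathrm{std}$ with the identity character of $\GmC$, which shifts every weight of $\mathrm{std}$ by $1 \in \Z = X^*(\GmC)$ in the first coordinate. Hence the highest weight is $(1, \epsilon_1)$ and the lowest weight is $(1, -\epsilon_1)$, as claimed. The main verification to pay attention to is simply the bookkeeping between $B$ and $\hat{B}$ and ensuring the correct sign convention; there is no real obstacle here because all the root-theoretic data for $G = \Sp(V)$ have already been made explicit in the excerpt preceding Theorem \ref{prop:doubling-cone}.
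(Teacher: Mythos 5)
Your proof is correct and follows essentially the same route as the paper: both arguments list the weights $\{\pm\epsilon_1,\ldots,\pm\epsilon_n,0\}$ of the standard representation of $\SO(2n+1,\CC)$, identify $-\epsilon_1$ (resp. $\epsilon_1$) as the unique extremal weight for the order determined by $\hat{B}$, and observe that tensoring with the identity character of $\GmC$ merely prepends a $1$ in the first coordinate.
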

\begin{proof}
	The standard representation of $\SO(2n+1, \CC)$ has weights
	\[ -\epsilon_1, \ldots, -\epsilon_n, 0, \epsilon_n, \ldots, \epsilon_1 \]
	as elements of $X^*(\hat{A}) = X_*(A)$, which are strictly increasing in the order relative to $\hat{B}$. Hence $-\epsilon_1$ is the unique $B$-lowest weight; in particular $\text{std}$ is irreducible. The passage to $\identity \boxtimes \text{std}$ is straightforward.
\MyQED\end{proof}

We conclude this section by recalling the recipe of Braverman and Kazhdan \cite{BK00} that connects reductive monoids to $L$-functions. Here the group under inspection is $M_\text{ab} \times G \simeq \Gm \times G$. The inclusion $\GmC \hookrightarrow (\Gm \times \hat{G})$ is clearly dual to $\text{pr}_1: \Gm \times G \to \Gm$. Let $\rho := \identity \boxtimes \text{std}$ be the irreducible representation of $\GmC \times \hat{G}$; it restricts to $z \mapsto z\cdot\identity$ on $\GmC$.

Following \cite[\S 5.5]{BK00}, one attaches an irreducible affine normal monoid $X_\rho$ to $\rho$ with unit group $\Gm \times G$. L.\ Lafforgue explained their construction in \cite[Définition V.9]{Laf14}. In our setting where $\rho$ is irreducible and $(\Gm \times G)_\text{der} = G$ is simply connected, this construction has been rephrased by B.\ C.\ Ngô in \cite{Ng14} and \cite[\S 4]{BNS16} in terms of Vinberg's theory, which is essentially the dual version of Luna--Vust classification in the case of reductive monoids. In this situation $X_\rho$ will be a flat reductive monoid, called the \emph{$L$-monoid}\index{L-monoid@$L$-monoid} associated to $\rho$.

\begin{theorem}\label{prop:PSR-Ngo-compatible}
	The reductive monoids $X_\rho$ and $X$ are isomorphic.
\end{theorem}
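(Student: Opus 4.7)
The plan is to reduce the statement to Rittatore's classification of normal affine reductive monoids (Theorem \ref{prop:Rittatore} together with Corollary \ref{prop:full-color}) and then match colored cones. Both $X$ and $X_\rho$ are affine normal reductive monoids with unit group $M_\mathrm{ab}\times G=\Gm\times\Sp(V)$: for $X$ this is the content of \S\ref{sec:symplectic-case}, where the open orbit is $X^+\simeq\Gm\times G$ and Rittatore's theorem endows $X$ with its monoid structure; for $X_\rho$ this is built into the Braverman-Kazhdan-Lafforgue-Ngô construction \cite[Définition V.9]{Laf14}, \cite[\S 4]{BNS16}. By Corollary \ref{prop:full-color}, each such monoid is determined up to canonical isomorphism by its colored cone $(\mathcal{C},\mathcal{D}^B)$ in $\Lambda_{X^+,\Q}=\Q\oplus X_*(A)_{\Q}$. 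It thus suffices to show that the two cones $\mathcal{C}$ coincide.

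Theorem \ref{prop:doubling-cone} already identifies the cone for $X$: it is generated by the simple coroots $\check{\alpha}_i$ of $\Sp(2n)$ (the images $\rho(D_{\varpi_i})$ of the colors, $i=1,\dots,n$) together with the single extra extremal ray through the valuation $(1,-\epsilon_1)\in\Lambda_{X^+}\cap\mathcal{V}$ coming from the $G$-invariant prime divisor $\partial X$. The task is therefore to check that the colored cone of $X_\rho$ admits the very same description.

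For this I would unwind the Vinberg/Ngô recipe for $X_\rho$, following \cite{Laf14,Ng14,BNS16}. Under our normalization $\rho|_{\GmC}=z\cdot\identity$, and using that $G_\mathrm{der}=\Sp(2n)$ is simply connected so that Vinberg's theory applies in its cleanest form, one obtains that $X_\rho$ is the unique flat normal affine reductive monoid with unit group $\Gm\times G$ whose Luna-Vust colored cone has, as its set of colors, all simple coroots of $\Gm\times G$ (equivalently, the simple coroots of $\Sp(2n)$, the $\Gm$-factor contributing nothing), and as its single extra extremal ray the $\hat{B}$-lowest weight of $\rho$, viewed as an element of $X^*(\GmC\times\hat{A})=\Z\oplus X_*(A)$. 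Proposition \ref{prop:rho-weight} computes this lowest weight to be $(1,-\epsilon_1)$, which is exactly the extra generator appearing in Theorem \ref{prop:doubling-cone}. Hence the two colored cones agree, and Rittatore's theorem delivers a canonical $M_\mathrm{ab}\times G\times G$-equivariant isomorphism $X\rightiso X_\rho$ of monoids.

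The main obstacle is the dictionary invoked in the previous paragraph. The references \cite{Laf14,Ng14,BNS16} describe $X_\rho$ via its abelianization $X_\rho\to\Ga$ and Vinberg's enveloping monoid, rather than through colored cones directly, so to be rigorous one must either cite (or spell out) a general translation between the Vinberg datum of an irreducible $\rho$ and the corresponding Luna-Vust colored cone, or else compute $X_\rho$ by hand: construct the affine closure of an explicit orbit map of $\Gm\times G$ built from $\rho$, identify its basic relative invariant with an analogue of $f^+$, and verify that its unique boundary divisor has valuation $(1,-\epsilon_1)$ as in Lemma \ref{prop:doubling-irred}. Either route ultimately reduces the identification to the weight computation of Proposition \ref{prop:rho-weight}.
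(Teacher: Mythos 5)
Your overall strategy is the same as the paper's: both arguments reduce the identification to matching combinatorial data, and both rest on the same two computations, namely Theorem \ref{prop:doubling-cone} (the colored cone of $X^+\hookrightarrow X$) and Proposition \ref{prop:rho-weight} (the lowest weight $(1,-\epsilon_1)$ of $\identity\boxtimes\mathrm{std}$). The difference lies in the bridging step that you yourself flag as ``the main obstacle'': you propose to compare full Luna--Vust colored cones via Rittatore's classification (Theorem \ref{prop:Rittatore} and Corollary \ref{prop:full-color}), which forces you to first translate the Vinberg/Ng\^o datum of $X_\rho$ into a colored cone --- and that translation is asserted but not carried out, so as written your argument has a gap exactly there. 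The paper sidesteps this by using a different uniqueness device: a normal affine reductive monoid is determined by the closure $\bar{T}$ of a maximal torus together with its Weyl group action (\cite[Theorem 5.4]{Re05}), and by \cite[Proposition 27.18]{Ti11} that toric variety corresponds to the cone generated by the Weyl translates of $\mathcal{C}_X\cap\mathcal{V}$. Since Lafforgue's \cite[D\'efinition V.9]{Laf14} characterizes $X_\rho$ precisely in these terms --- the torus closure is the toric variety of the cone spanned by the Weyl orbit of the lowest weight of $\rho$ --- no dictionary between Vinberg data and colored cones is needed; one only has to check, using Theorem \ref{prop:doubling-cone} and the standard fact that the weights of an irreducible representation lie in the convex hull of the Weyl orbit of its extreme weight, that the Weyl translates of $\mathcal{C}_X\cap\mathcal{V}$ generate the same cone as the Weyl orbit of $(1,-\epsilon_1)$. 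If you want to complete your version, the cleanest fix is to replace the colored-cone comparison by this torus-closure comparison; otherwise you must actually supply the translation from the flat-monoid/abelianization description of $X_\rho$ to its colored cone, which is a nontrivial piece of Vinberg theory and not available off the shelf in the references you cite.
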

\begin{proof}
	We shall adopt Lafforgue's description in \cite{Laf14}: the idea is to characterize the monoid in terms of the closure of a maximal torus with Weyl group action --- see \cite[Theorem 5.4]{Re05}. Fix a maximal torus $T \subset \Gm \times B$ of $\Gm \times G$. The closure $\bar{T}$ in $X$ is an affine normal toric variety under $T$, see \cite[6.2.14]{BK05}.
	Furthermore, \cite[Proposition 27.18]{Ti11} implies that as a toric variety $\bar{T}$ is determined by the cone $\mathcal{C}_{\bar{T}} \subset \mathcal{Q}$ generated by the Weyl translates of $\mathcal{C}_X \cap \mathcal{V}$. Combine Theorem \ref{prop:doubling-cone}, Proposition \ref{prop:rho-weight} and the standard properties of the weights of $\rho$, we see $\mathcal{C}_{\bar{T}}$ is the convex hull of the Weyl translates of the $\mathbb{G}_{m,\CC} \times \hat{B}$-lowest weight $(1, -\epsilon_1) \in \Z \oplus X_*(A)$ of $\rho$. This is exactly the characterization of $X_\rho$ in \cite[Définition V.9]{Laf14}.
\MyQED\end{proof}

As illustrated by \cite{PSR86,GPSR87}, the doubling zeta integrals or the integrals \eqref{eqn:doubling-zeta} point to the $L$-factor
\[ L\left( \cdot + \demi, \chi \times \pi \right) = L \left( \cdot + \demi, \chi \boxtimes \pi, \rho \right) \]
by taking the greatest common divisor for various $\xi$, where $\chi \boxtimes \pi$ is an irreducible nice representation of $M_\text{ab} \times G(F)$. The Braverman--Kazhdan program addresses the same issue by considering integrals of the same kind as \eqref{eqn:doubling-zeta}, namely: matrix coefficients integrated against ``Schwartz functions''. Morally, their approach is based on the following inputs
\begin{enumerate}[(i)]
	\item a certain Schwartz space that should be intimately related to the flat monoid $X_\rho$;
	\item a Fourier transform that should give rise to local functional equations;
	\item a Poisson summation formula in the global setting.
\end{enumerate}
The standard case is Godement--Jacquet theory whose corresponding monoid is described as in Example \ref{eg:GJ-monoid}. The preceding results affirm that
\begin{itemize}
	\item the geometry is correct: $X_\rho = X$ underlies both the $L$-monoid and the doubling constructions, and it conforms to our formalism in \S\ref{sec:geometric-data} and \S\ref{sec:Schwartz};
	\item the doubling integrals go beyond Godement--Jacquet --- in particular, the space $X$ is singular;
	\item we do have the Schwartz space and Fourier transform of at our disposal, and they also fit into our general setting \S\ref{sec:model-transition} of model transition.
\end{itemize}
A weak version of the Poisson formula for doubling integrals will be discussed in \S\ref{sec:Poisson}. Although our constructions still hinge on several conjectures, there seem to be good signs for further works.

\section{Remarks on the general case}\label{sec:doubling-general}
The reason for limiting to the symplectic case is threefold: (1) it is simpler; (2) in that case the formalisms of \cite{BK02} and \cite{PSR86, GPSR87} coincide; (3) the doubling method for unitary or orthogonal groups will involve non-split or disconnected groups which deviates from our general framework, albeit harmlessly.

We sketch the general case of doubling following \cite{PSR86, GPSR87}; see also \cite{Sha17}. Consider a field extension $E|F$ with $[E:F] \leq 2$. Fix $\epsilon \in \{\pm 1\}$ and a finite-dimensional $\epsilon$-Hermitian space $(V, (\cdot|\cdot))$ relative to $E|F$. Define $G = G(V)$ as the corresponding unitary group. Consider the $\epsilon$-Hermitian space $V^\Box := (V, (\cdot|\cdot)) \overset{\perp}{\oplus} (V, -(\cdot|\cdot))$ and the unitary group $G^\Box := G(V^\Box)$. The diagonal $V^\Delta$ is still a maximal totally isotropic subspace in $V^\Box$. Define the Siegel parabolic $P := \Stab_{G^\Box}(V^\Delta)$ of $G^\Box$, with Levi quotient $M$. Set $X_P := G^\Box/P_{\mathrm{der}}$ and $X^\flat_P := G^\Box / P$.

Note that $M \simeq \GL_E(V^\Delta)$, $M_{\mathrm{ab}} \simeq \mathbb{G}_{\mathrm{m},E}$ so our assumption \eqref{eqn:H1-assumption} is verified. When $E=F$ and $\epsilon = -1$, we revert to the symplectic case. When $E=F$ and $\epsilon = 1$, the groups $G^\Box$ and $G$ are disconnected.

\begin{itemize}
	\item By defining the Lagrangians of $V^\Box$ as maximal totally isotropic $E$-vector subspaces, the morphism $X_P \twoheadrightarrow X^\flat_P$ admits a linear-algebraic description parallel to \eqref{eqn:iden-Grass}, and we still have the Plücker embeddings.
	
	\item Lemma \ref{prop:GmG-embedding} still holds: we obtain an $M_{\mathrm{ab}} \times G \times G$-equivariant embedding $M_{\mathrm{ab}} \times G \hookrightarrow X_P$ with open image $X^+ \subset X_P$ by choosing any nonzero $\Lambda_0 \in \bigwedge^{\mathrm{max}} V^\Delta$. This embedding stems from Theorem \ref{prop:GPSR-orbits}, which holds in the general setting by \cite[I. Lemma 2.1]{GPSR87}.

	Note that $M_{\mathrm{ab}} \times G$ differs from the group $H$ considered in \cite[\S 7]{BK02} except in the symplectic case.

	\item As in \S\ref{sec:symplectic-case}, we take $X := \overline{X_P}^{\text{aff}} = X_P \sqcup \{0\}$, and deduce an $M_{\mathrm{ab}} \times G \times G$-equivariant open embedding $X^+ \hookrightarrow X$. The boundary $\partial X := X \smallsetminus X^+$ is again the zero locus of a relative invariant $f^+$ as in Lemma \ref{prop:f-pm}, with the same proof.

	\item Lemma \ref{prop:doubling-irred} (hence Theorem \ref{prop:geom-axiom-doubling}) also holds in this setting. It is a statement about geometry and one should check it on the algebraic closure $\overline{F}$. When $[E:F] = 2$, as $E \otimes_F \overline{F} \simeq \overline{F} \times \overline{F}$, we are reduced to the doubling method for general linear groups; the required geometric properties are furnished in \cite[I.4.2]{GPSR87}.

	\item Remark \ref{rem:doubling-negligible} also holds in the general setting, as proven in \textit{loc.\ cit.}

	\item The zeta integrals are as in \S\ref{sec:doubling-zeta-integrals} except that $\GL(V^\Delta) \xrightarrow{\det} \Gm$ becomes $\GL_E(V^\Delta) \xrightarrow{\det_E} \mathbb{G}_{\mathrm{m}, E}$, and $|\cdot|$ becomes $|\cdot|_E: E \to \R_{\geq 0}$. The result still takes the form \eqref{eqn:doubling-zeta}, namely
	\begin{equation*}\begin{aligned}
		Z_{\lambda, \varphi}\left( (1 \otimes v \otimes \check{v}) \otimes \xi\right) & = \int_{X^+} \xi \varphi_\lambda(1 \otimes v \otimes \check{v}) \\
		& = \int_{\substack{g \in G(F) \\ \bar{m} \in M_\text{ab}(F)}} \xi_0 \left( \delta_P^{\demi} \cdot (\check{\chi}|\cdot|_E^{-\lambda}) \circ \det_E \right)(\bar{m}) \angles{\check{v}, \pi(g) v} |\Omega|, \\
		& \Re(\lambda) \gg 0,
	\end{aligned}\end{equation*}
	with the same assumptions on $|\Omega|$, $1 \otimes v \otimes \check{v} \in \chi \otimes \pi \otimes \check{\pi}$, $\xi \in \Schw$ and $\xi_0 := \xi \delta_P^{-\demi} |\Omega|^{-\demi}: X^+(F) \to \CC$.

	\item The zeta integrals also have an interpretation as integration of matrix coefficients of $G$ against the \emph{good sections} in doubling method. There are subtleties related to normalization of intertwining operators; we refer to \cite{Sha17} for details.

	\item As for the relation to reductive monoids, cf.\ \S\ref{sec:relation-to-monoids}, we assume $E=F$ and $G$ is split in order to comply with \cite{BNS16}. Proposition \ref{prop:monoid} and its proof carry over, showing that $X^+ \hookrightarrow X$ is a flat reductive monoid. When $\epsilon = +1$, we choose a basis $e_1, \ldots, e_n, f_n, \ldots, f_1$ for $V$ such that $(e_i|f_j) = \delta_{i,j}$ and $(e_i|e_j) = (f_i|f_j) = 0$  for all $1 \leq i, j \leq j$. It yields a basis $\epsilon_1, \ldots, \epsilon_n$ for $X_*(A)$. The Luna--Vust datum $(\mathcal{C}, \mathcal{F})$ computed in this basis is still given by Theorem \ref{prop:doubling-cone}:
	\[ \mathcal{F} = \{\text{all colors}\}, \quad \mathcal{C} \text{ is generated by the simple coroots and } (1, -\epsilon_1). \] 

	This is to be compared with the irreducible representation $\rho := \identity \boxtimes \mathrm{std}$ of $\GmC \times \hat{G}$. Here the dual group $\hat{G}$ of the identity connected component of $G$ equals $\SO(2n+1, \CC)$ (resp.\ $\Sp(2n, \CC)$, $\SO(2n, \CC)$) if $G$ is symplectic (resp.\ odd orthogonal, even orthogonal) of rank $n$. The highest (resp.\ lowest) weight of $\rho$ is still given by Proposition \ref{prop:rho-weight}, namely $(1, \epsilon_1)$ (resp.\ $(1, -\epsilon_1)$).
	
	Now we can show that $X$ coincides with the monoid $X_\rho$ in \cite[Définition V.9]{Laf14} (or \cite{BNS16, Ng14}) as done in Theorem \ref{prop:PSR-Ngo-compatible}. The proof carries over verbatim.
\end{itemize}

\chapter{Speculation on the global integrals}\label{sec:global}
The constructions of basic vectors (also known as ``basic functions'') and $\vartheta$-distributions stem from \cite[\S 3]{Sak12}, to which we refer for further examples. In what follows, fix
\begin{compactitem}
	\item $F$: a number field with the ring of adèles $\A = \A_F$ and $F_\infty := \prod_{v|\infty} F_v$;
	\item $G$: split connected reductive $F$-group with a chosen Borel subgroup $B$;
	\item $\psi: F \backslash \A \to \CC^\times$: non-trivial unitary character, by which one normalizes the Haar measures on each $F_v$.
\end{compactitem}
We will consider affine spherical embeddings $X^+ \hookrightarrow X$ under $G$ subject to the same geometric Axiom \ref{axiom:geometric}, except that everything is now defined over the number field $F$. The objects $\Lambda$, $\Lambda_X$, etc.\ are thus defined. In particular, the chosen relative invariants $(f_i)_{i=1}^r$ and their eigencharacters $(\omega_i)_{i=1}^r$ are defined over $F$; the character $|\omega|^\lambda: G(\A) \to \CC^\times$ is automorphic for all $\lambda \in \Lambda_{\CC}$.

Assume furthermore that the monoids $\Lambda_{X_v}$ for each place $v$ equals $\Lambda_X$. Therefore $f_1, \ldots, f_r$ can be used in the local settings. We will choose an arbitrary $\mathfrak{o}_F$-model of $G$ and of the $G$-equivariant embedding $X^+ \hookrightarrow X$.

We will impose two Hypotheses \ref{hyp:evaluation-global}, \ref{hyp:theta} later on.

\section{Basic vectors}\label{sec:basic-vector}
Let $X^+ \hookrightarrow X$ be an affine spherical embedding under $G$-action.
\begin{itemize}
	\item We have the $G(\A)$-equivariant line bundle $\mathscr{L}$ of densities over the adélic space $X^+(\A)$. It is \textsc{not} defined as a restricted $\otimes$-product.
	\item Suppose that for each place $v$ of $F$, we are given:
		\begin{itemize}
			\item a $G(F_v)$-equivariant vector bundle $\mathscr{E}_v$ over the $F_v$-analytic manifold $X^+(F_v)$;
			\item a non-degenerate equivariant $\mathscr{E}_v \otimes \overline{\mathscr{E}_v} \to \mathscr{L}_v$ as in \eqref{eqn:hermitian-vb}, where $\mathscr{L}_v$ stands for the density bundle over $X^+(F_v)$;
			\item a smooth continuous representation $\Schw_v$ of $G(F_v)$ satisfying Axiom \ref{axiom:zeta} with respect to $\mathscr{E}_v$.
		\end{itemize}
		Moreover, we adopt the convention $L^2(X^+_v) := L^2(X^+(F_v), \mathscr{E}_v)$, $C^\infty_c(X^+_v) := C^\infty_c(X^+(F_v), \mathscr{E}_v)$ as in \S\ref{sec:coefficients}, so that $\Schw_v \subset L^2(X^+_v)$. The typical example is the bundle of half-densities: $\mathscr{E}_v = \mathscr{L}_v^\demi$.
	\item Observe that for almost all places $v \nmid \infty$, the $\mathfrak{o}_F$-model of the $G$-equivariant embedding $X^+ \hookrightarrow X$ and the relative invariants are \emph{unramified} at $v$, this means:
	\begin{itemize}
		\item $G$ is a smooth connected reductive group scheme over $\mathfrak{o}_v$, hence $G(\mathfrak{o}_v) \subset G(F_v)$ is a hyperspecial subgroup;
		\item $X^+$ is smooth over $\mathfrak{o}_v$;
		\item $f_i \in \mathfrak{o}_v[X] \cap \mathfrak{o}_v[X^+]^\times$ for $i=1, \ldots, r$.
	\end{itemize}
\end{itemize}

\begin{definition}\label{def:basic-vector}\index{basic vector}\index{xi-basic@$\xi_v^\circ$}
	By a system of \emph{basic vectors}, we mean a family of distinguished elements $\xi_v^\circ \in \Schw_v$ chosen for almost all $v \nmid \infty$ at which our data are unramified, such that
	\begin{enumerate}[(i)]
		\item $\xi_v^\circ$ is $G(\mathfrak{o}_v)$-invariant,
		\item $\Supp(\xi_v^\circ) \subset X(\mathfrak{o}_v) \cap X^+(F_v)$,
		\item $\xi_v^\circ$ is non-vanishing over $X^+(\mathfrak{o}_v)$,
		\item the global pairing $\mathscr{E} \otimes \overline{\mathscr{E}} \to \mathscr{L}$ is well-defined (see below).
	\end{enumerate}
	
	The \emph{adélic Schwartz space} with respect to the system of basic vectors is
	\[ \Schw := \Resotimes_v \Schw_v = \varinjlim_S \bigotimes_{v \in S} \Schw_v \]
	where $S$ ranges over finite sets of places of $F$ that contain $\{v: v \mid \infty\}$, and the transition map $\bigotimes_{v \in S} \Schw_v \to \bigotimes_{v \in V} \Schw_v$ is given by multiplication by $\prod_{v \in V \smallsetminus S} \xi^\circ_v$ whenever $V \supset S$.
	
	Given a system of basic vectors, define similarly the bundle $\mathscr{E}$ over $X^+(\A)$ whose fiber at $x = (x_v)_v \in X^+(\A)$ equals
	\[ \mathscr{E}_x := \Resotimes_v \mathscr{E}_{v, x_v}, \]
	the restricted product being taken with respect to the values of basic vectors $\xi_v^\circ$ at $x_v$. In a similar vein one defines $\overline{\mathscr{E}}_x$. \textit{A priori}, the local pairings $\mathscr{E}_v \otimes \overline{\mathscr{E}}_v \to \mathscr{L}_v$ do not necessarily give rise to $\mathscr{E} \otimes \overline{\mathscr{E}} \to \mathscr{L}$, due to the possible divergence of infinite products of local densities. Our last assertion ensures the convergence.
\end{definition}

\begin{remark}\label{rem:topology-global-S}
	The space $\Schw$ can be topologized in many ways. Here we adopt the \emph{inductive topology} for $\otimes$-products (see \cite[pp.92-93]{Gro52}) on each $\bigotimes_{v \in S} \Schw_v$, and then pass to $\varinjlim_S$.
\end{remark}

\begin{remark}\label{rem:global-L2}
	In view of the definition of $\Schw$ and the topology on $X^+(\A)$, the condition (iv) for basic vectors is equivalent to that every $\xi \in \Schw$ is locally $L^2$ over $X^+(\A)$.
\end{remark}

\begin{hypothesis}\label{hyp:evaluation-global}\index{ev-gamma@$\text{ev}_\gamma$}\index{chi-gamma@$\chi_\gamma$}
	We suppose that for every $\gamma \in X^+(F)$, a linear surjection
	\[ \text{ev}_\gamma: \mathscr{E}_\gamma \twoheadrightarrow \CC \]
	is given with the following properties.
	\begin{enumerate}[(i)]
		\item There exists an automorphic character $\chi_\gamma: H_\gamma(\A) \to \CC^\times$ where $H_\gamma := \Stab_G(\gamma)$, such that $\text{ev}_\gamma(\cdot\; h) = \chi_\gamma(h)\text{ev}_\gamma(\cdot)$ for all $h \in H_\gamma(\A)$, and $\chi_\gamma = 1$ on $(H_\gamma \cap Z_G)(\A)$.
		\item For every $g \in G(F)$, the composite of $\text{ev}_{\gamma g}$ with $g: \mathscr{E}_\gamma \rightiso \mathscr{E}_{\gamma g}$ equals $\text{ev}_\gamma$.
	\end{enumerate}
\end{hypothesis}
The maps $\text{ev}_\gamma$ will be used to define $\vartheta$-distributions in \S\ref{sec:theta-dist}. Furthermore, we will often extend it to the $F$-points of some subvariety larger than $X^+$, see \S\ref{sec:Poisson}.

\begin{example}\label{eg:ev-rational}
	Take $\mathscr{E}_v = \mathscr{L}^\demi_v$. Suppose that we are given a family of \emph{convergence factors}
	\[ \left\{ \lambda_v > 0 : v \text{ is a place of } F \right\} \]
	(terminology borrowed from \cite[\S 2.3]{Weil82}), such that for every $\gamma \in X^+(F)$ and every algebraic volume form $\omega$ of $X^+$ satisfying $\omega(\gamma) \neq 0$, the infinite product $\prod_{v \notin S} \lambda_v \xi_v^\circ(\gamma) |\omega(\gamma)|_v^{-\demi}$ converges, where $S$ is a large finite set of places and $\gamma \in X^+(\mathfrak{o}_v)$ for $v \notin S$. Note that this condition is independent of the choice of $\omega$. The conclusion is that one can define
	\[ \text{ev}_\gamma: \bigotimes_v t_v \longmapsto \prod_v \lambda_v \cdot \frac{t_v}{|\omega(\gamma)|_v^\demi}. \]
	It varies under $H_\gamma(\A)$ according to an automorphic character $\chi_\gamma$ by the discussions preceding \eqref{eqn:induced-density}, and $\chi_\gamma$ is easily seen to be trivial on $(H_\gamma \cap Z_G)(\A)$. The second property relating $\text{ev}_\gamma$ and $\text{ev}_{\gamma g}$ stems from Artin's product formula.
\end{example}

We refine Example \ref{eg:ev-rational} in two special cases below.

\begin{example}\label{eg:pvs-global}
	Suppose that $X$ is a prehomogeneous vector space under $G$-action (see \S\ref{sec:pvs}) and $\mathscr{E}_v = \mathscr{L}^\demi_v$ as above. We retain the assumptions in Theorem \ref{prop:pvs-geometric-axiom} so that $X^+ \hookrightarrow X$ satisfies all our premises. In this case we take $\Schw_v = \Schw(X_v)$ to be the space of Schwartz--Bruhat half-densities on $X(F_v)$. Take an $\mathfrak{o}_F$-structure of $G$ and $X$: concretely, this means that we fix an $\mathfrak{o}_F$-lattice $L \subset X(F)$ which is $G(\mathfrak{o}_F)$-invariant. Choose an algebraic volume form $\omega$ on $X$ that comes from $\bigwedge^{\dim X} L \smallsetminus \{0\}$. Then $X$ and $\omega$ have good reduction at almost all $v \nmid \infty$.

	At those places $v \nmid \infty$, the basic vectors are taken to be
	\[ \xi^\circ_v = \mathbf{1}_{X(\mathfrak{o}_v)} \lambda_v^{-1} |\omega|_v^\demi, \]
	where $\lambda_v > 0$ is taken such that $\|\xi^\circ_v\|_{L^2} = 1$; the square of $\lambda_v$ is essentially the convergence factor in the theory of Tamagawa measures.
	
	The global pairing $\mathscr{L}^\demi \otimes \mathscr{L}^\demi \to \mathscr{L}$ is well-defined by the choice of $\lambda_v$ and Remark \ref{rem:global-L2}.
	
	We claim that $\Schw$ with the topology from Remark \ref{rem:topology-global-S} is isomorphic to the usual space in \cite[\S 11]{Weil64} of adélic Schwartz--Bruhat half-densities on $X(\A)$ if there is only one Archimedean place; otherwise we must use $\widehat{\bigotimes}_{v \mid \infty} \Schw_v \otimes \bigotimes_{v \nmid \infty} \Schw_v$ instead. Notice that the inductive topology on $\bigotimes_{v \mid \infty} \Schw_v$ coincides with the projective topology since the Archimedean Schwartz spaces are nuclear Fréchet. The claim follows because for $v \nmid \infty$, the space $\Schw_v$ is a strict inductive limit of finite-dimensional spaces as in Lemma \ref{prop:pvs-nuclear}, and it remains to remark that inductive tensor product preserves such limits.

	The definition of $\text{ev}_\gamma$ is straightforward: indeed, the convergence factors $\lambda_v$ have just been defined. It corresponds to the usual evaluation map of Schwartz--Bruhat half-densities at rational points.

	Since $X$ is non-singular, we see that $\text{ev}_\gamma$ is defined for all $\gamma \in X(F)$ by exactly the same recipe of Example \ref{eg:ev-rational}.
\end{example}

\begin{example}\label{eg:X_P-global}
	Consider the space $X_P$ in \S\ref{sec:geometric-setup}; it is a spherical homogeneous space under $M_\text{ab} \times G^\Box$ with the notations thereof. The spaces $\Schw_v$ have been defined at non-Archimedean $v$: they take values in the bundle $\mathscr{E}_v := \mathscr{L}_v^\demi$. The basic vectors are defined in \cite[p.548]{BK02} as functions, denoted by $c_P = c_{P,0}$ therein. To obtain a section of $\mathscr{L}_v^\demi$, one has to choose a $G^\Box(F_v)$-invariant measure $|\Xi|_v$ on $X_P(F_v)$. By \cite[(5.4)]{BK02} we have $c_P = |\Xi|_v^\demi$ on the $G^\Box(\mathfrak{o}_F)$-orbit containing $P_\text{der} \cdot 1$. We prefer not to pin down the choice of $|\Xi|_v$ here: it is enough to satisfy (iv) of Definition \ref{def:basic-vector}.

	Granting the definition of $\Schw_v$ for $v \mid \infty$ (see \cite[\S 6.1]{BK02}), one can fix $|\Xi|_v$ at the remaining places $v$ and define $\Schw = \Resotimes_v \Schw_v$, whose elements may be viewed as functions. Let $\gamma \in X_P(F)$. In \textit{loc.\ cit.} the map $\text{ev}_\gamma$ is simply the evaluation of functions in $\Schw$ at $\gamma$. In terms of our formalism in Example \ref{eg:ev-rational}, it amounts to taking $\lambda_v = |\omega(\gamma)|_v^\demi \cdot |\Xi|_v^{-\demi}$.
	
	In the doubling method, eg.\ in \S\ref{sec:symplectic-case}, one usually works with a smaller group $G$ such that $G \times G$ is canonically embedded in $G^\Box$, and work with the spherical variety $X := \overline{X_P}^\text{aff}$ with open $M_\text{ab} \times G \times G$-orbit $X^+$. The global formalism developed in this section applies to $X^+ \hookrightarrow X$ instead of $X_P$. Nonetheless, as in the previous case of prehomogeneous vector spaces, the maps $\text{ev}_\gamma$ are actually defined on the larger variety $X_P$.
\end{example}

\section{Theta distributions}\label{sec:theta-dist}
Retain the conventions on $X^+ \hookrightarrow X$, $\Schw$, etc.\ from the previous section.

\begin{itemize}
	\item Choose $x_0 \in X^+(F)$ (Proposition \ref{prop:existence-point}) and put $Z[X] := \Stab_G(x_0) \cap Z_G$. This is a diagonalizable $F$-group which is independent of the choice of $x_0$. It is well-known that there is a closed subgroup $\mathfrak{a}[X] \subset Z[X]^\circ(F_\infty)$ isomorphic to $\R^{\dim Z[X]}$, such that $Z[X](\A)/\mathfrak{a}[X]Z[X](F)$ is compact; we recapitulate the construction as follows. The $F$-torus $Z[X]^\circ$ being split, it is isomorphic to $C \times_{\Q} F$ for a split $\Q$-torus $C$. From $C \hookrightarrow \Res_{F/\Q} (C \times_{\Q} F)$, with $\Res_{F/\Q}$ standing for Weil restriction, we deduce $C(\R) \hookrightarrow Z[X]^\circ(F_\infty) = \Res_{F/\Q}(C \times_{\Q} F)(\R)$. Now we can take $\mathfrak{a}[X]$ to be the identity connected component of $C(\R)$ under the usual topology, viewed as a subgroup of $Z[X]^\circ(F_\infty)$. \index{Z[X]@$Z[X]$}\index{a[X]@$\mathfrak{a}[X]$} 
	
	\item The \emph{automorphic quotient}\index{amquotient@$\amspace_{G,X}$} in our context is
		\[ \amspace_{G,X} := \mathfrak{a}[X] G(F) \backslash G(\A). \]
		Hereafter, we fix a Haar measure on $\mathfrak{a}[X]$ and use the Tamagawa measure on $G(\A)$, so that $\amspace_{G,X}$ is equipped with a Radon measure. The use of Tamagawa measure is immaterial for the constructions to follow: one can actually use any invariant measure.

		Denote by $\mathcal{A}_\text{cusp}(G, X)$\index{A-cusp-form@$\mathcal{A}_\text{cusp}(G,X)$} the space of cuspidal smooth automorphic forms living on the space $\amspace_{G,X}$. More precisely, our conventions are:
		\begin{compactitem}
			\item elements of $\mathcal{A}_\text{cusp}(G,X)$ are not required to have unitary central character,
			\item $K$-finiteness is not imposed on automorphic forms, and 
			\item the forms in $\mathcal{A}_\text{cusp}(G,X)$ have rapid decay and uniform moderate growth by \cite{MW12}.
		\end{compactitem}
		For $\phi \in \mathcal{A}_\text{cusp}(G,X)$ we put $\phi_\lambda := \phi |\omega|^\lambda$; it still lies in $\mathcal{A}_\text{cusp}(G,X)$ by Lemma \ref{prop:char-Lambda}. \index{phi-lambda@$\phi_\lambda$}
	\item The \emph{cuspidal automorphic representations}\index{representation!cuspidal automorphic} in our context are continuous $G(\A)$-subrepresentations $\pi$ together with a given automorphic realization $\pi \hookrightarrow \mathcal{A}_\text{cusp}(G,X)$. The representation $\pi$ decomposes as $\pi \simeq \widehat{\bigotimes}_{v | \infty} \pi_v \otimes  \Resotimes_{v \nmid \infty} \pi_v$ with each $\pi_v$ being an irreducible nice representation of $G(F_v)$. In particular, $\pi_v$ is an irreducible SAF representation when $v|\infty$, and $\pi$ is essentially unitary.
\end{itemize}

\begin{definition}\index{theta-distribution@$\vartheta$-distribution}
	By a \emph{$\vartheta$-distribution}, we mean a $G(F) \mathfrak{a}[X]$-invariant continuous linear functional
	\[ \vartheta: \Schw \longrightarrow \CC. \]
	By Frobenius reciprocity for smooth continuous representations, this is equivalent to giving a $G(\A)$-equivariant continuous linear map
	\begin{align*}
		\vartheta: \Schw & \longrightarrow C^\infty(\amspace_{G,X}) \\
		\xi & \longmapsto \left[ \vartheta_\xi(g) := \vartheta(g\xi) \right].
	\end{align*}
	For a precise description of the topology on $C^\infty(\amspace_{G,X})$, see \cite[p.677]{Be88}.
\end{definition}

Henceforth, we shall assume Hypothesis \ref{hyp:evaluation-global} and consider the $\vartheta$-distribution
\begin{gather}\label{eqn:theta-dist}
	\vartheta(\xi) = \sum_{\gamma \in X^+(F)} \text{ev}_\gamma(\xi), \quad \xi \in \Schw.
\end{gather}

\begin{hypothesis}\label{hyp:theta}
	The linear functional $\vartheta$ in \eqref{eqn:theta-dist} is indeed a $\vartheta$-distribution.
\end{hypothesis}
Granting its convergence, the required invariance of $\vartheta$ is automatic by the conditions in Hypothesis \ref{hyp:evaluation-global}.

Our aim is to integrate $\vartheta_\xi \in C^\infty(\amspace_{G,X})$, for a given $\xi \in \Schw$, against suitable functions of over $\amspace_{G,X}$. For this purpose, it seems necessary to assume that $\vartheta_\xi$ is of \emph{moderate growth}, cf.\ \cite[Proposition 3.1.3]{Sak12}. We skip this intermediate step and formulate the following axiom.

\begin{axiom}[Cf.\ {\cite[Conjecture 3.2.4]{Sak12}}]\label{axiom:theta}\index{Z_lambda@$Z_\lambda$}
	Assume Hypothesis \ref{hyp:theta}. For every cuspidal automorphic representation $\pi \hookrightarrow \mathcal{A}_\text{cusp}(G, X)$, we assume that there exists $\lambda(\pi) \in \Lambda_{\R}$ such that integral
	\[ Z_\lambda(\phi \otimes \xi) := \int_{\amspace_{G,X}} \vartheta_\xi \phi_\lambda \]
	is convergent (as a double integral $\int_{\amspace_{G,X}} \sum_\gamma \cdots$) for every $\xi \in \Schw$, $\phi \in V_\pi$ and $\Re(\lambda) \relgeq{X} \lambda(\pi)$, therefore $Z_\lambda$ defines a $G(\A)$-invariant bilinear form.

	Furthermore, we assume that $Z_\lambda$ admits a meromorphic continuation to all $\lambda \in \Lambda_{\CC}$ for any fixed $\phi$ and $\xi$. Call $Z_\lambda$ the \emph{global zeta integral} associated to $\vartheta$.
\end{axiom}

\begin{remark}
	The analogy with Axiom \ref{axiom:zeta} is clear. However, here we make no assumption on continuity. Let us indicate briefly what may be expected in general. Equip $V_\pi$ with the topology via its automorphic realization and suppose that the pairing $Z_\lambda: \pi_\lambda \otimes \Schw \to \CC$ is separately continuous for $\Re(\lambda) \relgeq{X} \lambda(\pi)$. If we topologize $\Schw$ as in Remark \ref{rem:topology-global-S}, the continuity in $\Schw$ can be checked on each $\Schw_v$.
	\begin{itemize}
		\item The automorphic realization makes $V_\pi$ into a $\varinjlim$ of Fréchet spaces (say by varying the level), thus barreled. It follows that $Z_\lambda$ is hypocontinuous for $\Re(\lambda) \relgeq{X} \lambda(\pi)$.
		\item If $\Schw$ is also barreled, then the hypocontinuity extends to all $\lambda \in \Lambda_{\CC}$ upon clearing denominators by the arguments of Lemma \ref{prop:circ-removal}, i.e.\ by the Theorem \ref{prop:GS-principle} of Gelfand--Shilov. This will yield a meromorphic family in $\Hom_{G(\A)}(\pi_\lambda, \Schw^\vee)$ as in the local case.
	\end{itemize}
	It is unclear if one can deduce the barreledness of $\Schw$ from those of $\Schw_v$.
\end{remark}

\begin{definition}\label{def:zeta-factorizable}
	We say that $Z_\lambda$ is \emph{factorizable} if it is proportional to an Euler product
	\[ Z^\text{loc}_{\lambda, (\varphi_v)_v} := \prod_v \left( Z_{\lambda, \varphi_v}: \pi_{v,\lambda} \otimes \Schw_v \to \CC \right) \]
	for some family $(\varphi_v \in \mathcal{N}_{\pi_v})_v$, whenever $\Re(\lambda) \relgg{X} 0$. Here $Z_{\lambda, \varphi_v}$ is the local zeta integral of Axiom \ref{axiom:zeta}. If $Z_\lambda$ is a linear combination of such Euler products, we say $Z_\lambda$ is \emph{essentially factorizable}.
\end{definition}

As is well-known, factorizability holds true if the space $\Hom_{G(F_v)}(\pi_{v,\lambda} \otimes \Schw_v, \CC)$ (say the algebraic $\Hom$) has dimension $\leq 1$ for generic $\lambda$ and every $v$. We prefer not to include this into the axiom.

\section{Relation to periods}\label{sec:periods}
Keep the setting of \S\ref{sec:basic-vector} and \S\ref{sec:theta-dist}. Specifically, we consider a paring $\mathscr{E}_v \otimes \overline{\mathscr{E}}_v \to \mathscr{L}_v$ at each place $v$ of $F$. In addition, a system of basic vectors $\xi_v^\circ$ is chosen, which gives rise to $\Schw = \Resotimes_v \Schw_v$ and $\mathscr{E} = \Resotimes_v \mathscr{E}_v$. Define
\[\begin{tikzcd}[row sep=tiny]
	\mathcal{J}^+ := \left\{ (x,g) \in X^+(\A) \times \amspace_{G,X} : xg^{-1} \in X^+(F) \right\} \arrow{r}[above]{\sim} & X^+(F) \times \amspace_{G,X} \\
	(x,g) \arrow[mapsto]{r} & (xg^{-1}, g).
\end{tikzcd}\]
Therefore $\mathcal{J}^+$ is a disjoint union of $\amspace_{G,X}$, from which it acquires the natural topology and Radon measure.

Consider the pull-push diagram
\begin{equation}\label{eqn:pull-push} \begin{tikzcd}[column sep=tiny, row sep=small]
	& \mathcal{J}^+ \arrow{ld}[left, inner sep=1em]{p_1} \arrow{rd}[right, inner sep=1em]{p_2} & \\
	X^+(\A) & & \amspace_{G,X}
\end{tikzcd}\end{equation}
made from the projection maps. We have seen that $p_2$ is surjective, whereas the image of $p_1$ is a union of $G(\A)$-orbits. Also notice that $p_1, p_2$ are both equivariant if we let $G(\A)$ act on the right of $\mathcal{J}^+$ by $(x,g)g' = (xg', gg')$.

\begin{notation}
	For every $x \in \mathcal{J}^+$ we set $H_x := \Stab_G(x)$, regarded as a scheme over $\A$. Observe that $H_x(\A) \supset \mathfrak{a}[X]$. For $\gamma \in X^+(F)$ we endow $H_\gamma(\A)$ with the right-invariant Tamagawa measure. Again, any coherent way of choosing Haar measures for various $H_\gamma$ will do the job.
\end{notation}

\begin{lemma}\label{prop:E-trivialization}
	Over $\mathcal{J}^+$, there is a canonical isomorphism $\tau = (\tau_{(x,g)})_{(x,g) \in \mathcal{J}} : p_1^{-1} \mathscr{E} \rightiso p_2^{-1} \CC$, where $\CC$ stands for the constant sheaf over $\amspace_{G,X}$. It satisfies
	\begin{align*}
		\tau_{(xg',gg')}(\xi g') & = \tau_{(x,g)}(\xi) \in \CC, \quad g' \in G(\A), \; \xi \in \mathscr{E}_x, \\
		\tau_{(x,g)}(\xi h) = \tau_{(x,gh^{-1})}(\xi) & = \chi_\gamma(ghg^{-1}) \tau_{(x,g)}(\xi), \quad h \in H_x(\A)
	\end{align*}
	where $\chi_\gamma: H_\gamma(\A) \to \CC^\times$ is the automorphic character postulated in Hypothesis \ref{hyp:evaluation-global} satisfying $\mathrm{ev}_\gamma(\cdot\;t) = \chi_\gamma(t)\mathrm{ev}_\gamma(\cdot)$.
\end{lemma}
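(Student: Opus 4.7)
The plan is to define, at each $(x,g) \in \mathcal{J}^+$ with canonically associated $\gamma := xg^{-1} \in X^+(F)$, the linear map
\[
 \tau_{(x,g)}(\xi) \;:=\; \mathrm{ev}_{\gamma}\bigl(\xi \cdot g^{-1}\bigr), \qquad \xi \in \mathscr{E}_x,
\]
where $\xi \cdot g^{-1} \in \mathscr{E}_{xg^{-1}} = \mathscr{E}_\gamma$ via the right $G(\A)$-action on the bundle $\mathscr{E}$, and $\mathrm{ev}_\gamma$ is supplied by Hypothesis~\ref{hyp:evaluation-global}. The element $\gamma$ is uniquely determined by $(x,g)$, so the recipe is unambiguous; the canonicity statement ultimately reduces to the compatibility $\mathrm{ev}_{\gamma\sigma} \circ \sigma = \mathrm{ev}_\gamma$ for $\sigma \in G(F)$, which is the second axiom of Hypothesis~\ref{hyp:evaluation-global}, but no such $\sigma$ is actually needed in the construction itself.

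Next I would verify the $G(\A)$-equivariance. Given $g' \in G(\A)$, the pair $(xg', gg') \in \mathcal{J}^+$ yields the same $\gamma = (xg')(gg')^{-1} = xg^{-1}$, so a direct substitution gives
\[
 \tau_{(xg', gg')}(\xi g')
 = \mathrm{ev}_{\gamma}\bigl((\xi g')(gg')^{-1}\bigr)
 = \mathrm{ev}_{\gamma}(\xi g^{-1})
 = \tau_{(x,g)}(\xi).
\]

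Then I would check the two $H_x(\A)$-relations simultaneously. If $h \in H_x(\A)$, then $xh = x$ is equivalent to $\gamma \cdot (ghg^{-1}) = \gamma$, so $h' := ghg^{-1} \in H_\gamma(\A)$. Associativity of the right action yields the key identity
\[
 (\xi h)\,g^{-1} \;=\; \xi\,(hg^{-1}) \;=\; \xi\,(g^{-1}h') \;=\; (\xi g^{-1})\,h' \;\in\; \mathscr{E}_\gamma.
\]
Applying $\mathrm{ev}_\gamma$ and invoking its eigencharacter property (first axiom of Hypothesis~\ref{hyp:evaluation-global}) gives $\tau_{(x,g)}(\xi h) = \chi_\gamma(h')\,\tau_{(x,g)}(\xi) = \chi_\gamma(ghg^{-1})\,\tau_{(x,g)}(\xi)$. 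For the remaining equality, observe $(x,gh^{-1}) \in \mathcal{J}^+$ with the same $\gamma$, and unwind: $\tau_{(x, gh^{-1})}(\xi) = \mathrm{ev}_\gamma(\xi \cdot hg^{-1}) = \mathrm{ev}_\gamma((\xi h)g^{-1}) = \tau_{(x,g)}(\xi h)$.

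The hard part is really just bookkeeping, but one point deserves a flag: the statement of the lemma writes $\tau$ as an \emph{isomorphism} $p_1^{-1}\mathscr{E} \rightiso p_2^{-1}\CC$, whereas for general $\mathscr{E}$ the map $\mathrm{ev}_\gamma$ is only surjective and so $\tau_{(x,g)}$ is a surjection onto the line $\CC$. In the principal case of interest where $\mathscr{E} = \mathscr{L}^{\demi}$ is a line bundle, the surjection is genuinely an isomorphism of lines; in the general higher-rank situation the $\rightiso$ should be read as the canonical surjective morphism $p_1^{-1}\mathscr{E} \twoheadrightarrow p_2^{-1}\CC$, which is what is actually needed to define the $\vartheta$-distribution in \eqref{eqn:theta-dist} and to state Frobenius reciprocity afterward.
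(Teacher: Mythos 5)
Your construction $\tau_{(x,g)} = \mathrm{ev}_{\gamma}\circ(\,\cdot\,g^{-1})$ with $\gamma = xg^{-1}$ is exactly the one in the paper, which simply declares the equivariance and $H_x(\A)$-variance ``routine to verify''; your explicit checks are correct. Your caveat that $\mathrm{ev}_\gamma$ is only postulated to be a surjection, so that $\tau$ is genuinely an isomorphism only when $\mathscr{E}$ has rank one (e.g.\ $\mathscr{E} = \mathscr{L}^{\demi}$), is a fair and accurate reading of Hypothesis~\ref{hyp:evaluation-global}.
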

\begin{proof}
	Let $(x,g) \in \mathcal{J}^+$, $\gamma := xg^{-1} \in X'(F)$. At the fiber over $(x,g)$, use the $G(\A)$-structure on $\mathscr{E}$ to set
	\begin{gather}\label{eqn:E-trivialization}
		\tau_{(x,g)}: p_1^{-1}(\mathscr{E})_{(x,g)} = \mathscr{E}_{\gamma g} \xrightarrow{g^{-1}} \mathscr{E}_\gamma \xrightarrow{\text{ev}_\gamma} \CC = p_2^{-1}(\CC)_{(x,g)}.
	\end{gather}
	It is routine to verify the required properties. In view of the topology on $\mathcal{J}^+$, the family $(\tau_{(x,g)})_{(x,g)}$ glues into an isomorphism $p_1^{-1} \mathscr{E} \simeq p_2^{-1} \CC$ between bundles.
\MyQED\end{proof}

Given $(x,g) \in \mathcal{J}^+$, we will also need the adjoint map $\check{\tau}_{(x,g)}: \mathscr{L}_x \rightiso \overline{\mathscr{E}}_x$. It is characterized by the commutativity of
\begin{equation}\label{eqn:tau-adjoint} \begin{tikzcd}[row sep=tiny]
	& \CC \otimes \mathscr{L}_x \arrow{rd}{\text{natural}} & \\
	\mathscr{E}_x \otimes \mathscr{L}_x \arrow{ru}{\tau_{(x,g) \otimes \identity}} & & \mathscr{L}_x \\
	& \mathscr{E}_x \otimes \overline{\mathscr{E}}_x \arrow{ru} \arrow[leftarrow]{lu}{\identity \otimes \check{\tau}_{(x,g)}} &
\end{tikzcd}\end{equation}
since both pairings towards $\mathscr{L}_x$ are non-degenerate. We still have $\check{\tau}_{(x,ghg^{-1})} = \chi(g^{-1}hg) \check{\tau}_{(x,g)}$ as in Lemma \ref{prop:E-trivialization}.

For any $x \in X^+(\A)$, set
\begin{gather*}
	\mathcal{Y}_x := \{ g \in \amspace_{G,X} : xg^{-1} \in X^+(F) \} = p_2(p_1^{-1}(x)).
\end{gather*}

\begin{lemma}\label{prop:Sha-finiteness}
	The set $\mathcal{Y}_x/H_x(\A)$ is finite.
\end{lemma}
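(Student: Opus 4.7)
The plan is to realize $\mathcal{Y}_x/H_x(\A)$ as a subset of the set of $G(F)$-orbits in $X^+(F)$ lying in the $G(\A)$-orbit of $x$, and then invoke the classical finiteness theorem of Borel and Serre for Galois cohomology.

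First I would define $\Phi\colon \mathcal{Y}_x \to G(F)\backslash X^+(F)$ by sending (a representative) $g \in G(\A)$ to $[xg^{-1}]$. Since $\mathfrak{a}[X] \subset Z[X]$ acts trivially on $X^+$, we have $\mathfrak{a}[X] \subset H_x(\A)$, and for every $z \in \mathfrak{a}[X]$, $h_F \in G(F)$, $t \in H_x(\A)$ one computes
\[
x(h_F \cdot z \cdot g \cdot t)^{-1} = x g^{-1} h_F^{-1} \in xg^{-1} G(F),
\]
using $xt^{-1} = x$ and the triviality of the $z$-action. Hence $\Phi$ descends to a map $\bar{\Phi}\colon \mathcal{Y}_x/H_x(\A) \to G(F)\backslash X^+(F)$. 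Injectivity is then routine: if $xg_2^{-1} = (xg_1^{-1})h_F$ with $h_F \in G(F)$, then $g_1^{-1} h_F g_2 \in H_x(\A)$, so $G(F)g_1 H_x(\A) = G(F)g_2 H_x(\A)$.

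By construction the image of $\bar{\Phi}$ is contained in the set of $G(F)$-orbits on $X^+(F)$ that meet $xG(\A)$, so it remains to bound the cardinality of that set. Fix a base point $x_0 \in X^+(F)$ (available by Proposition~\ref{prop:existence-point}) and set $H := \Stab_G(x_0)$, an affine $F$-subgroup of $G$. Galois descent for the homogeneous space $X^+ \simeq H \backslash G$ identifies $G(F) \backslash X^+(F)$ (resp.\ $G(\A) \backslash X^+(\A)$) with $\ker[H^1(F, H) \to H^1(F, G)]$ (resp.\ with $\ker[H^1(\A, H) \to H^1(\A, G)]$) compatibly with the localization map; consequently the target set is contained in a single fiber of $H^1(F, H) \to H^1(\A, H)$.

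The main input, and the only truly non-trivial step, is then the theorem of Borel and Serre: for any linear algebraic group $H$ over a number field $F$, the fibers of $H^1(F, H) \to H^1(\A, H)$ are finite. Combined with the previous steps this yields $|\mathcal{Y}_x/H_x(\A)| < \infty$. The point that will deserve the most care is the cohomological dictionary between orbits and $H^1$ in the local--global setting (in particular the fact that $H$ need not be reductive nor connected), but no serious difficulty is anticipated.
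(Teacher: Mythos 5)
Your proof is correct and is essentially the paper's argument: both identify $\mathcal{Y}_x/H_x(\A)$ with the set of $G(F)$-orbits in $X^+(F)$ meeting $xG(\A)$, and both reduce the finiteness to that of the fibers of the localization map $H^1(F,H) \to \prod_v H^1(F_v,H)$ for a (possibly disconnected, non-reductive) linear algebraic group over a number field. The only cosmetic difference is that the paper chooses the base point $\gamma_0$ inside $(xG(\A)) \cap X^+(F)$ so that the relevant fiber is the kernel $\Ker\left(H^1(F,H) \to \prod_v H^1(F_v,H)\right)$, cited from Platonov--Rapinchuk, whereas you take an arbitrary base point $x_0 \in X^+(F)$ and invoke finiteness of a general fiber, which reduces to the kernel case by twisting.
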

\begin{proof}
	By mapping $g \in \mathcal{Y}_x$ to $\gamma := xg^{-1}$, the set $\mathcal{Y}_x/H_x(\A)$ is in bijection with the set of $G(F)$-orbits in the $(x G(\A)) \cap X^+(F)$: taking quotient by $\mathfrak{a}[X]$ has no effect here. Suppose that $\mathcal{Y}_x/H_x(\A) \neq \emptyset$ and choose $\gamma_0 \in (x G(\A)) \cap X^+(F)$.
	
	Every $\gamma \in (x G(\A)) \cap X^+(F)$ surely lies in the $G(\bar{F})$-orbit of $\gamma_0$, and the obstacle for upgrading this to $G(F)$-orbit is an element $\text{inv}(\gamma_0, \gamma) \in \Ker \left( H^1(F, H_x) \to H^1(F, G) \right)$. Furthermore, $\text{inv}(\gamma_0, \gamma)$ has trivial image in $H^1(F_v, H_x)$ at every place $v$ of $F$. We conclude by applying the finiteness of $\Ker\left( H^1(F, H_x) \to \prod_v H^1(F_v, H_x) \right)$: see \cite[Theorem 6.15]{PR94}, noting that in \textit{loc.\ cit.} an ``algebraic group'' means a possibly disconnected linear $F$-group.
\MyQED\end{proof}

For $\gamma \in X^+(F)$ and $\phi \in \mathcal{A}_\text{cusp}(G,X)$, the \emph{period integrals} $\int_{\mathfrak{a}[X] H_\gamma(F) \backslash H_\gamma(\A)} \phi$ are convergent: it suffices to notice that $\phi$ is rapidly decreasing modulo center, $Z[X] = H_\gamma \cap Z_G$ and $Z[X](F) \mathfrak{a}[X] \backslash Z[X](\A)$ is compact. Since one can view $H_\gamma(\A) \backslash G(\A)$ as an open subset of $X^+(\A)$ containing $\gamma$ via the action map, it makes sense to define\index{measure-gamma@$\mes_\gamma$}
\begin{gather}\label{eqn:mes-gamma}
	\mes_\gamma := (\text{the quotient of right Tamagawa measures }) \in \mathscr{L}_\gamma.
\end{gather}
This element can be transported: write $\mes_\gamma g \in \mathscr{L}_{\gamma g}$ for $g \in G(\A)$. We obtain an automorphic character $\delta_\gamma: H_\gamma(\A) \to \R^\times_{>0}$ characterized by \index{delta-gamma@$\delta_\gamma$}
\[ \mes_\gamma h = \delta_\gamma(h) \mes_\gamma. \]

\begin{definition}\index{period@$\mathcal{P}_x(\phi)$}
	For every $x \in p_1(\mathcal{J}^+)$ we put
	\begin{gather}\label{eqn:period}
	\mathcal{P}_x(\phi) := \sum_{\substack{g \in \mathcal{Y}_x / H_x(\A) \\ \gamma := xg^{-1} }} \; \check{\tau}_{(x,g)} \int_{\mathfrak{a}[X] H_\gamma(F) \backslash H_\gamma(\A)} (\delta_\gamma \chi_\gamma^{-1})(\cdot) \phi(\cdot \; g) \cdot \underbracket{\mes_\gamma g}_{\in \mathscr{L}_x} ,
	\end{gather}
	which is $\overline{\mathscr{E}}_x$-valued; the sum is finite by Lemma \ref{prop:Sha-finiteness}. Recall that $\chi_\gamma$, $\check{\tau}_{(x,g)}: \mathscr{L}_x \rightiso \overline{\mathscr{E}}_x$ are defined after Lemma \ref{prop:E-trivialization}; their variance properties entail that $\check{\tau}_{(x,g)}(\int \cdots) \mes_\gamma g$ depends only on the coset $g \in \mathcal{Y}_x/H_x(\A)$. Extension by zero defines $\mathcal{P}_x$ for all $x \in X^+(\A)$.
\end{definition}

\begin{proposition}\label{prop:period-equivariance}
	For any cuspidal automorphic representation $\pi \hookrightarrow \mathcal{A}_\mathrm{cusp}(G,X)$,
	\begin{enumerate}[(i)]
		\item the functionals $\mathcal{P}_x$ give rise to a $G(\A)$-equivariant linear map
		\begin{align*}
		\mathcal{P}: \pi & \longrightarrow C^\infty(X^+(\A)) := C^\infty(X^+(\A), \overline{\mathscr{E}}) \\
		\phi & \longmapsto [\mathcal{P}(\phi): x \mapsto \mathcal{P}_x(\phi)];
		\end{align*}
		\item for all $\lambda \in \Lambda_{\CC}$ and $\phi \in \pi$, we have $\mathcal{P}(\phi_\lambda) = |f|^\lambda \mathcal{P}(\phi)$. 
	\end{enumerate}
\end{proposition}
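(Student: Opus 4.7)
The plan is to verify three claims in sequence: that each $\mathcal{P}_x(\phi)$ is a well-defined element of $\overline{\mathscr{E}}_x$ and varies smoothly with $x$; that the resulting $\mathcal{P}$ is $G(\A)$-equivariant; and that it intertwines the $|\omega|^\lambda$-twist on $\pi$ with multiplication by $|f|^\lambda$ on $C^\infty(X^+(\A),\overline{\mathscr{E}})$.

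First, fix $x \in X^+(\A)$ with $\mathcal{Y}_x \neq \emptyset$. The sum is finite by Lemma~\ref{prop:Sha-finiteness}. For each coset representative $g$ with $\gamma := xg^{-1} \in X^+(F)$, convergence of the period is the standard fact that $\phi$ cuspidal on $\amspace_{G,X}$ decays rapidly modulo $\mathfrak{a}[X]Z[X](F)$ and that $\chi_\gamma$ is trivial on $(H_\gamma \cap Z_G)(\A) \supset \mathfrak{a}[X]$ by Hypothesis~\ref{hyp:evaluation-global}. For independence of the representative: replacing $g$ by $gh$ with $h \in H_x(\A)$ leaves $\gamma$ unchanged (since $xh = x$) but conjugates $h$ to $ghg^{-1} \in H_\gamma(\A)$; the scalar arising from $\check{\tau}_{(x,gh)}$ via Lemma~\ref{prop:E-trivialization}, the Jacobian from transporting $\mes_\gamma$ under $g \mapsto gh$, and the change of variable $t \mapsto t(ghg^{-1})^{-1}$ inside the period combine to cancel against the transformation of $\chi_\gamma \delta_\gamma^{-1}$ under $H_\gamma(\A)$. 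Smoothness in $x$ then follows from the local constancy of the index set $\mathcal{Y}_x/H_x(\A)$, smoothness of $\phi$, and continuity of the trivialization $\tau$ of Lemma~\ref{prop:E-trivialization}.

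For the equivariance in (i), fix $g' \in G(\A)$ and compute $\mathcal{P}_{xg'}(\phi)$. The map $g \mapsto gg'^{-1}$ bijects $\mathcal{Y}_{xg'}/H_{xg'}(\A)$ with $\mathcal{Y}_x/H_x(\A)$ while preserving the associated $\gamma$ (since $(xg')(gg'^{-1})^{-1} \cdot g'^{-1} \cdot g'$ still equals $xg^{-1}$ after the change). The relation $\tau_{(xg',gg')}(\xi g') = \tau_{(x,g)}(\xi)$ from Lemma~\ref{prop:E-trivialization} transports $\check{\tau}_{(xg',\cdot)}$ to $\check{\tau}_{(x,\cdot)}$ up to the bundle action of $g'$, and $\pi(g')\phi$ evaluated at $tg$ equals $\phi(t g g'^{-1} \cdot g')$ matching the new integrand. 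The entire sum then reorganizes to $g' \cdot \mathcal{P}_x(\pi(g')^{-1}\phi)$, which is precisely the statement that $\mathcal{P}$ intertwines $\pi$ with the natural $G(\A)$-action on sections of $\overline{\mathscr{E}}$.

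For (ii), recall $\phi_\lambda(y) = \phi(y)|\omega|^\lambda(y)$. For $t \in H_\gamma(\A)$, Lemma~\ref{prop:char-Lambda} (applied to a conjugate of the base stabilizer) gives $\omega|_{H_\gamma} = 1$, so $|\omega|^\lambda(tg) = |\omega|^\lambda(g)$ and this factor pulls out of the period integral attached to $g$. On the other hand, $|f|^\lambda$ is an $|\omega|^\lambda$-eigenfunction on $X^+$ and takes value $1$ at $\gamma$ by the Artin product formula, since each $f_i(\gamma) \in F^\times$. Hence $|\omega|^\lambda(g) = |f|^\lambda(\gamma g) = |f|^\lambda(x)$; this value is independent of the coset representative $g \in \mathcal{Y}_x/H_x(\A)$ (either by the same vanishing on $H_x(\A)$ or, more simply, because it depends only on $x$). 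Factoring $|f|^\lambda(x)$ out of the whole sum yields $\mathcal{P}(\phi_\lambda) = |f|^\lambda \mathcal{P}(\phi)$.

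The only delicate step is the bookkeeping in paragraph one: tracking how $\check{\tau}_{(x,g)}$, the transported measure $\mes_\gamma g$, the character $\delta_\gamma\chi_\gamma^{-1}$, and the integrand $\phi(\cdot g)$ each transform under $g \mapsto gh$, and verifying that the combined cocycle is trivial. Everything else — finiteness, convergence, smoothness, equivariance, and the twist — reduces to an unfolding once this compatibility is in hand.
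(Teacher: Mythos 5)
Your proposal follows essentially the same route as the paper: part (i) is the same direct reindexing of the sum via $g\mapsto gg'$ (equivalently $g\mapsto gg'^{-1}$ in the other direction) combined with the equivariance of $\tau$, $\check{\tau}$ and $\mes_\gamma$ from Lemma \ref{prop:E-trivialization}, and part (ii) is exactly the paper's argument — $|\omega|^\lambda$ is trivial on $H_\gamma(\A)$ by Lemma \ref{prop:char-Lambda}, so the twist pulls out as $|\omega|^\lambda(g)$, which equals $|f|^\lambda(x)$ by the product formula since $|f|^\lambda(\gamma)=1$. Your opening paragraph additionally re-verifies well-definedness (independence of the coset representative, convergence, smoothness), which the paper disposes of in the definition of $\mathcal{P}_x$ and the surrounding discussion; that is harmless extra care.

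One small slip: with the paper's conventions ($\pi(g')\phi=\phi(\cdot\,g')$ on $\amspace_{G,X}$, right action on $\overline{\mathscr{E}}$), the computation you describe lands on $\mathcal{P}_{xg'}(\phi)=\mathcal{P}_x(\pi(g')\phi)\,g'$, not $g'\cdot\mathcal{P}_x(\pi(g')^{-1}\phi)$ — your intermediate rewriting $\phi(tg)=\phi(t\tilde{g}g')=(\pi(g')\phi)(t\tilde{g})$ already shows that the translated vector entering $\mathcal{P}_x$ is $\pi(g')\phi$, so the inverse in your final identity is a transcription error rather than a flaw in the argument.
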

\begin{proof}
	Let $x \in X^+(\A)$ and $g' \in G(\A)$. From $\mathcal{Y}_{xg'} = \mathcal{Y}_x \cdot g'$ and $H_{xg'} = g'^{-1} H_x g'$, we deduce
	\begin{align*}
		\mathcal{P}_{xg'}(\phi) & = \sum_{ \substack{g'' \in \mathcal{Y}_{xg'}/H_{xg'}(\A) \\ \gamma := xg'g''^{-1} } } \check{\tau}_{(xg', g'')} \int_{\mathfrak{a}[X] H_\gamma(F) \backslash H_\gamma(\A)} (\delta_\gamma \chi_\gamma^{-1})(\cdot) \phi(\cdot \; g'') \cdot \underbracket{\mes_\gamma g''}_{\in \mathscr{L}_{xg'}} , \\
		& = \sum_{ \substack{g \in \mathcal{Y}_x/H_x(\A) \\ \gamma = xg^{-1} } } \check{\tau}_{(xg', gg')} \int_{\mathfrak{a}[X] H_\gamma(F) \backslash H_\gamma(\A)} (\delta_\gamma \chi_\gamma^{-1})(\cdot) \phi(\cdot \; gg') \cdot \mes_\gamma gg' \\
		& \quad (\text{setting } g'' = gg') \\
		& = \left[ \; \sum_{ \substack{g \in \mathcal{Y}_x/H_x(\A) \\ \gamma = xg^{-1} } } \check{\tau}_{(x, g)} \int_{\mathfrak{a}[X] H_\gamma(F) \backslash H_\gamma(\A)} (\delta_\gamma \chi_\gamma^{-1})(\cdot) (g'\phi)(\cdot \; g) \cdot \underbracket{\mes_\gamma g}_{\in \mathscr{L}_x} \;\right] g' \\
		& = \mathcal{P}_x(g'\phi) g' \; \in \overline{\mathscr{E}}_{xg'}.
	\end{align*}
	We have exploited the equivariance $\check{\tau}_{(xg',gg')}(\xi g') = \check{\tau}_{(x,g)}(\xi)g'$: it results from the corresponding property of $\tau$ in Lemma \ref{prop:E-trivialization}. This proves the first assertion.
	
	To prove the second assertion, observe that replacing $\phi$ by $\phi_\lambda$ amounts to insert a factor $|\omega|^\lambda(\cdot\; g)$ in the $\int_{\mathfrak{a}[X] H_\gamma(F) \backslash H_\gamma(\A)} \cdots$ in \eqref{eqn:period}. Lemma \ref{prop:char-Lambda} implies that $|\omega|^\lambda(\cdot\; g) = |\omega|^\lambda(g)$. It remains to notice that when $(x,g) \in \mathcal{J}^+$, we have
	\[ |f(x)|^\lambda = |f(\gamma g)|^\lambda = |f(\gamma)|^\lambda |\omega(g)|^\lambda = |\omega(g)|^\lambda \]
	by the product formula.
\MyQED\end{proof}

As a preparation for the next result, we inspect how the measure on $\mathcal{J}^+$ decomposes under the fibrations $p_1$ and $p_2$.
\begin{itemize}
	\item The fibers of $p_2: \mathcal{J}^+ \to \amspace_{G,X}$ are identifiable with the discrete space $X^+(F)$, equipped with the counting measure. Under this convention $p_2$ preserves measures locally.
	\item The fiber of $p_1: \mathcal{J} \to X'(\A)$ over $x$ is
		\[ \{x\} \times \bigsqcup_{\gamma \in X^+(F)} \{ g \in \amspace_{G,X} : \gamma g = x \}. \]
		Those $\gamma$ with $\{ g \in G(\A): \gamma g = x \} \neq \emptyset$ is in natural bijection with $\mathcal{Y}_x/H_x(\A)$. For these $\gamma$, the transporter $\{ g \in G(\A): \gamma g = x \}$ is an $(H_\gamma(\A), H_x(\A))$-bitorsor under bilateral translation. We shall equip $\{ g \in \amspace_{G,X} : \gamma g = x \}$ with the Tamagawa measure of $H_\gamma(F) \backslash H_\gamma(\A)$ divided by the Haar measure on $\mathfrak{a}[X]$ using the bitorsor structure.
\end{itemize}

\begin{theorem}\label{prop:theta-period}
	Let $\pi \hookrightarrow \mathcal{A}_\mathrm{cusp}(G,X)$ be a cuspidal automorphic representation and $\phi \in \pi$. The zeta integral $Z_\lambda(\phi \otimes \xi)$ in Axiom \ref{axiom:theta} satisfies
	\begin{align*}
		Z_\lambda(\phi \otimes \xi) & = \int_{\amspace_{G,X} \ni g} \; \sum_{\gamma \in X^+(F)} \mathrm{ev}_\gamma(g\xi) \phi_\lambda(g) = \int_{X^+(\A)} \xi \mathcal{P}(\phi_\lambda) \\
		& = \int_{X^+(\A)} \xi |f|^\lambda \mathcal{P}(\phi)
	\end{align*}
	for $\lambda \in \Lambda_{\CC}$ in its range of convergence. The integrands in the last two expressions are viewed as sections of $\mathscr{L}$ via $\mathscr{E} \otimes \overline{\mathscr{E}} \to \mathscr{L}$.
\end{theorem}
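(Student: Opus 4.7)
The plan is to prove the three equalities in turn. The first,
\[
Z_\lambda(\phi\otimes\xi)=\int_{\amspace_{G,X}}\sum_{\gamma\in X^+(F)}\mathrm{ev}_\gamma(g\xi)\,\phi_\lambda(g)\,dg,
\]
is immediate from the definition of $Z_\lambda$ in Axiom \ref{axiom:theta} together with the formula \eqref{eqn:theta-dist} for $\vartheta$, the absolute convergence asserted by that axiom licensing the iterated expression. The third equality reduces directly to Proposition \ref{prop:period-equivariance}(ii), which gives $\mathcal{P}(\phi_\lambda)=|f|^\lambda\mathcal{P}(\phi)$.

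For the middle equality my plan is to pass through the correspondence $\mathcal{J}^+$ of \eqref{eqn:pull-push}, equipped with the product of counting and quotient-Tamagawa measures under $\mathcal{J}^+\cong X^+(F)\times\amspace_{G,X}$, and apply Fubini via the two projections $p_1,p_2$. The first move is a direct unwinding: for $(x,g)\in\mathcal{J}^+$ with $\gamma:=xg^{-1}\in X^+(F)$, the $G(\A)$-action on sections gives $(g\xi)(\gamma)=\xi(\gamma g)\cdot g^{-1}\in\mathscr{E}_\gamma$, so
\[
\mathrm{ev}_\gamma(g\xi)=\mathrm{ev}_\gamma\bigl(\xi(x)\cdot g^{-1}\bigr)=\tau_{(x,g)}\bigl(\xi(x)\bigr)
\]
by the very construction \eqref{eqn:E-trivialization} of $\tau$. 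Hence $Z_\lambda(\phi\otimes\xi)$ equals the integral over $\mathcal{J}^+$ of $\tau_{(x,g)}(\xi(x))\phi_\lambda(g)$, absolutely convergent by Axiom \ref{axiom:theta}.

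To evaluate this by $p_1$, I would unfold the sum over $X^+(F)$ into $G(F)$-orbits $[\gamma_0]$ and fold the residual sum over $H_{\gamma_0}(F)\backslash G(F)$ into the integral over $\amspace_{G,X}$, obtaining $\sum_{[\gamma_0]}\int_{\mathfrak{a}[X]H_{\gamma_0}(F)\backslash G(\A)}\tau_{(\gamma_0g,g)}(\xi(\gamma_0g))\phi_\lambda(g)\,dg$. The fibration $G(\A)\twoheadrightarrow H_{\gamma_0}(\A)\backslash G(\A)\rightiso\gamma_0G(\A)\subset X^+(\A)$, whose Tamagawa quotient measure is precisely $\mes_{\gamma_0}$, splits each summand into an outer integral over $\bar g$ (tracking $x=\gamma_0\bar g$) and an inner integral over $h\in\mathfrak{a}[X]H_{\gamma_0}(F)\backslash H_{\gamma_0}(\A)$ against the right-invariant Tamagawa measure. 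The standard decomposition of Haar measures produces a modulus $\delta_{\gamma_0}$ in the inner integrand (absorbing the non-unimodularity of $H_{\gamma_0}$), while the identity $\tau_{(x,h\bar g)}(\xi(x))=\chi_{\gamma_0}(h)^{-1}\tau_{(x,\bar g)}(\xi(x))$, a direct consequence of Hypothesis \ref{hyp:evaluation-global}(i) applied to \eqref{eqn:E-trivialization}, yields the factor $\chi_{\gamma_0}^{-1}$. Using the adjoint formula \eqref{eqn:tau-adjoint} to move $\xi(x)$ through the pairing $\mathscr{E}\otimes\overline{\mathscr{E}}\to\mathscr{L}$ (so that $\tau_{(x,\bar g)}(\xi(x))\cdot\mes_{\gamma_0}\bar g=\xi(x)\cdot\check\tau_{(x,\bar g)}(\mes_{\gamma_0}\bar g)$ inside $\mathscr{L}_x$) and recognising, via Lemma \ref{prop:Sha-finiteness}, that the sum over $[\gamma_0]\in X^+(F)/G(F)$ restricts at each $x$ to the finite sum over $\mathcal{Y}_x/H_x(\A)$, one arrives at exactly the formula \eqref{eqn:period} for $\mathcal{P}_x(\phi_\lambda)$, hence at $\int_{X^+(\A)}\xi\,\mathcal{P}(\phi_\lambda)$.

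The main obstacle is the careful bookkeeping of densities and modular characters through the Fubini exchange: specifically, verifying that the Tamagawa quotient measure on $\mathfrak{a}[X]H_{\gamma_0}(F)\backslash G(\A)$ decomposes along the bitorsor structure into $\mes_{\gamma_0}$ times the right-invariant Tamagawa measure on $\mathfrak{a}[X]H_{\gamma_0}(F)\backslash H_{\gamma_0}(\A)$ with the precise modulus $\delta_{\gamma_0}$ appearing in \eqref{eqn:period}, and that the recollection of terms over $[\gamma_0]$ matches the $\mathcal{Y}_x/H_x(\A)$-indexing of $\mathcal{P}_x$. The absolute convergence required for every interchange of summation and integration is supplied throughout by Axiom \ref{axiom:theta} in the range $\Re(\lambda)\relgeq{X}\lambda(\pi)$.
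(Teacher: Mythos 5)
Your proposal is correct and follows essentially the same route as the paper: both prove the first and last equalities by direct appeal to the definition of $\vartheta$ and to Proposition \ref{prop:period-equivariance}(ii), and both obtain the middle equality by a Fubini argument on $\mathcal{J}^+\cong X^+(F)\times\amspace_{G,X}$, using Lemma \ref{prop:E-trivialization} to identify $\mathrm{ev}_\gamma(g\xi)$ with $\tau_{(x,g)}(\xi(x))$, decomposing the fiber measure of $p_1$ into $\mes_\gamma$ against the Tamagawa measure on $\mathfrak{a}[X]H_\gamma(F)\backslash H_\gamma(\A)$ (producing $\delta_\gamma\chi_\gamma^{-1}$), passing through \eqref{eqn:tau-adjoint}, and invoking Lemma \ref{prop:Sha-finiteness} to match the orbit sum with the $\mathcal{Y}_x/H_x(\A)$-indexing of \eqref{eqn:period}. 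Your orbit-by-orbit unfolding over $X^+(F)/G(F)$ is only a cosmetic repackaging of the paper's direct integration along the fibers of $p_1$.
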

\begin{proof}
	The first equality follows from the definition of $\vartheta$-distribution.	For every $(x,g) \in \mathcal{J}^+$, Lemma \ref{prop:E-trivialization} yields
	\[ \begin{tikzcd}[row sep=tiny]
		\mathscr{E}_x \arrow{r}[above]{g^{-1}} \arrow[bend left=25]{rr}{\tau_{(x,g)}} & \mathscr{E}_{xg^{-1} =: \gamma} \arrow{r}[above]{\text{ev}_\gamma} & \CC \\
		p_1^*(\xi)(x,g) = \xi(x) \arrow[mapsto]{r} & \xi(x)g^{-1} \arrow[mapsto]{r} & \text{ev}_\gamma(\xi(x)g^{-1}) \\
		& (g\xi)(\gamma) \arrow[-,double equal sign distance]{u} & \text{ev}_\gamma(g\xi) \arrow[-,double equal sign distance]{u}.	
	\end{tikzcd} \]
	The pull-push diagram \eqref{eqn:pull-push} leads to
	\[ Z_\lambda(\phi \otimes \xi) = \int_{\amspace_{G,X}} p_{2 !} (p_1^* \xi) \phi_\lambda = \int_{\mathcal{J}^+} p_1^*(\xi) p_2^*(\phi_\lambda), \]
	where
	\begin{itemize}
		\item $p_1^*(\xi)$ is a section of $p_1^{-1} \mathscr{E}$ over $\mathcal{J}^+$, which is also viewed as a section of $\CC = p_2^{-1} \CC$ via $\tau$;
		\item $p_{2 !}$ denotes the summation over $\gamma = xg^{-1}$ of sections of $p_2^{-1} \CC$, along fibers of $p_2$.
	\end{itemize}
	
	The next step is to push the integral down to $X^+(\A)$ via $p_1$, namely
	\[ \int_{X^+(\A) \ni x} \; \int_{p_1^{-1}(x) \ni (x,g)} \tau_{(x,g)}(\xi) \phi_\lambda(g). \]
	According to the prior discussions, the properties in Lemma \ref{prop:E-trivialization} together with \eqref{eqn:mes-gamma}, it transforms into
	\[ \int_{X^+(\A) \ni x} \sum_{\substack{g \in \mathcal{Y}_x/H_x(\A) \\ \gamma := xg^{-1} }} \tau_{(x,g)}(\xi) \int_{\mathfrak{a}[X] H_\gamma(F) \backslash H_\gamma(\A) } (\delta_\gamma \chi_\gamma^{-1})(\cdot) \phi_\lambda(\cdot\; g) \cdot \underbracket{\mes_\gamma g}_{\in \mathscr{L}_x}. \]
	Note that $\int_{X^+(\A)}$ makes sense as the integrand is a section of $\mathscr{L}$. We may use \eqref{eqn:tau-adjoint} to rewrite it as
	\[ \int_{X^+(\A) \ni x} \sum_{\substack{g \in \mathcal{Y}_x/H_x(\A) \\ \gamma = xg^{-1} }} \xi(x) \; \check{\tau}_{(x,g)} \int_{\mathfrak{a}[X] H_\gamma(F) \backslash H_\gamma(\A) } (\delta_\gamma \chi_\gamma^{-1})(\cdot) \phi_\lambda(\cdot\; g) \cdot \mes_\gamma g; \]
	the integrand still takes value in $\mathscr{L} \leftarrow \mathscr{E} \otimes \overline{\mathscr{E}}$. All in all, we have arrived at
	\[ \int_{X^+(\A) \ni x} \xi(x) \mathcal{P}_x(\phi_\lambda) \]
	upon recalling \eqref{eqn:period}, whence the second equality. The last equality results from Proposition \ref{prop:period-equivariance} (ii).
\MyQED\end{proof}

\begin{corollary}\label{prop:period-dist}
	If $Z_\lambda(\phi \otimes \xi)$ is not identically zero for $\xi \in \Schw$, $\phi \in \pi$ and $\Re(\lambda) \relgg{X} 0$, then there exists $\gamma \in X^+(F)$ such that $\pi$ is globally $(H_\gamma, \delta_\gamma^{-1} \chi_\gamma)$-distinguished.
\end{corollary}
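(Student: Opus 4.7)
The plan is to reduce the statement to Theorem~\ref{prop:theta-period} and then unwind the definition \eqref{eqn:period} of the period map $\mathcal{P}$. First, from the identity
\[
Z_\lambda(\phi \otimes \xi) \;=\; \int_{X^+(\A)} \xi \,|f|^\lambda\, \mathcal{P}(\phi),
\]
valid throughout the region of convergence, the hypothesis that $Z_\lambda$ does not vanish identically on $\pi \otimes \Schw$ forces $\mathcal{P}(\phi) \in C^\infty(X^+(\A), \overline{\mathscr{E}})$ to be nonzero for some $\phi \in \pi$. Here I would note that $\Schw$ contains sections of arbitrarily small support coming from $C^\infty_c(X^+_v)$ at each place, and the pairing $\mathscr{E} \otimes \overline{\mathscr{E}} \to \mathscr{L}$ is non-degenerate fiberwise, so vanishing of every $\int \xi\,|f|^\lambda\,\mathcal{P}(\phi)$ against such localized $\xi$'s would force $\mathcal{P}(\phi) \equiv 0$ by continuity of $\mathcal{P}(\phi)$.

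Next I would pick $x \in X^+(\A)$ with $\mathcal{P}_x(\phi) \neq 0$. Since the sum in \eqref{eqn:period} is finite by Lemma~\ref{prop:Sha-finiteness}, at least one term is nonzero; fix such a $g \in \mathcal{Y}_x/H_x(\A)$ and set $\gamma := xg^{-1} \in X^+(F)$. Because $\check{\tau}_{(x,g)}: \mathscr{L}_x \rightiso \overline{\mathscr{E}}_x$ and $\mes_\gamma g \in \mathscr{L}_x$ are nonzero, nonvanishing of that term amounts to
\[
L(\phi) \;:=\; \int_{\mathfrak{a}[X] H_\gamma(F) \backslash H_\gamma(\A)} (\delta_\gamma \chi_\gamma^{-1})(h)\, \phi(hg)\, dh \;\neq\; 0.
\]
The translate $\phi' := g\phi \in \pi$ then satisfies $L(\phi) = \int (\delta_\gamma\chi_\gamma^{-1})(h)\, \phi'(h)\, dh \neq 0$, which produces a nonzero linear functional $\ell$ on $\pi$ given by $\ell(\psi) := \int_{\mathfrak{a}[X] H_\gamma(F) \backslash H_\gamma(\A)} (\delta_\gamma \chi_\gamma^{-1})(h)\, \psi(h)\, dh$.

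Finally, I would verify the transformation law: for $h_0 \in H_\gamma(\A)$, a change of variables gives
\[
\ell(h_0 \psi) \;=\; \int (\delta_\gamma \chi_\gamma^{-1})(h)\, \psi(h h_0)\, dh \;=\; (\delta_\gamma \chi_\gamma^{-1})(h_0)^{-1}\,\ell(\psi) \;=\; (\delta_\gamma^{-1} \chi_\gamma)(h_0)\,\ell(\psi),
\]
so $\ell \in \Hom_{H_\gamma(\A)}(\pi, \delta_\gamma^{-1}\chi_\gamma)$ is nonzero, which is exactly the assertion that $\pi$ is globally $(H_\gamma, \delta_\gamma^{-1}\chi_\gamma)$-distinguished.

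The argument is essentially a bookkeeping exercise; the only mildly nontrivial step is the first one, namely passing from nonvanishing of the global pairing $\int \xi\,|f|^\lambda\,\mathcal{P}(\phi)$ to pointwise nonvanishing of $\mathcal{P}(\phi)$. This uses smoothness of $\mathcal{P}(\phi)$, the availability of sufficiently localized test sections in $\Schw$, and the fact that $|f|^\lambda$ is everywhere nonzero on $X^+(\A)$; no delicate continuity or convergence issue arises because we work inside the region $\Re(\lambda) \relgg{X} 0$ where the double integral of Axiom~\ref{axiom:theta} already converges absolutely.
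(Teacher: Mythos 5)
Your proof is correct and follows exactly the route the paper intends: the corollary is stated as an immediate consequence of Theorem~\ref{prop:theta-period}, obtained by observing that nonvanishing of $Z_\lambda$ forces some $\mathcal{P}_x(\phi)\neq 0$, hence some term of the finite sum \eqref{eqn:period} is nonzero, which is precisely a nonvanishing $(\delta_\gamma^{-1}\chi_\gamma)$-equivariant period functional on $\pi$. The only remark is that your first step is more elaborate than needed — the implication you actually use ($Z_\lambda(\phi\otimes\xi)\neq 0$ implies the integrand, hence $\mathcal{P}(\phi)$, is not identically zero) is immediate and does not require localized test sections.
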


\begin{proposition}\label{prop:period-group-case}
	Suppose $G = H \times H$ where $H$ is a split connected reductive $F$-group, $X^+ = H$ under the action \eqref{eqn:group-case-action1}, and $\mathscr{E}_v = \mathscr{L}_v^\demi$ at each place $v$. Let $\Pi$ be a cuspidal automorphic representation realized on $\amspace_{G,X}$. If $Z_\lambda(\phi \otimes \xi)$ is not identically zero for $\phi \in \Pi$, then $\Pi \simeq \pi \boxtimes \check{\pi}$ for some cuspidal automorphic representation of $H(\A)$. In this case,
	\[ \mathcal{P}_x(\phi) = \angles{x\phi_1, \phi_2}_\mathrm{Pet} \cdot |\Omega|^\demi, \quad \phi = \phi_1 \otimes \phi_2 \in \Pi \]
	where $|\Omega|$ stands for some Haar measure on $H(\A)$, and $\angles{\cdot,\cdot}_\mathrm{Pet}$ is the Petersson pairing between $\pi$ and $\check{\pi}$. The global zeta integrals are factorizable in this case.
\end{proposition}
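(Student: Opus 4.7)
My proof plan proceeds as follows.

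The plan is to first upgrade the abstract distinguishedness produced by Corollary \ref{prop:period-dist} to a concrete statement about diagonal periods. Non-vanishing of $Z_\lambda(\phi\otimes\xi)$ for some $\phi\in\Pi$ yields some $\gamma\in H(F)$ for which $\Pi$ is globally $(H_\gamma,\delta_\gamma^{-1}\chi_\gamma)$-distinguished. A direct computation shows $H_\gamma = \{(\gamma^{-1}h\gamma,h):h\in H\}$, which is $G(F)$-conjugate to the honest diagonal $H_1=\mathrm{diag}(H)$, so distinguishedness may be rephrased for $H_1$. I would then verify that $\chi_\gamma=1$ and $\delta_\gamma=1$: since $\mathscr{E}_v=\mathscr{L}_v^\demi$ is equivariantly trivialized by any Haar half-density $|\Omega|^\demi$ (the $G$-action preserves Haar measure on $H$), the evaluation maps $\mathrm{ev}_\gamma$ factor through this trivialization and are $H_\gamma(\A)$-invariant; likewise $\mes_\gamma$ comes from the bi-invariant Tamagawa measures on $H$ and $G=H\times H$.

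Next I would invoke the classical argument to conclude $\Pi\simeq\pi\boxtimes\check\pi$. Any cuspidal $\Pi$ of $H(\A)\times H(\A)$ decomposes abstractly as a Hilbert sum of $\pi_i\boxtimes\pi'_i$; on a pure tensor $\phi_1\otimes\phi_2$ the diagonal period equals $\angles{\phi_1,\phi_2}_{\mathrm{Pet}}$, which vanishes unless $\pi'_i\simeq\check\pi_i$ by Schur orthogonality. Combined with cuspidal multiplicity one for the individual factors, only one summand $\pi\boxtimes\check\pi$ can survive.

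The third step is the explicit computation of $\mathcal{P}_x(\phi)$. Crucially, $\mathcal{Y}_x/H_x(\A)$ consists of a \emph{single} coset: from $\gamma=g_2 x g_1^{-1}$ one sees that the map $g\mapsto\gamma$ descends to a bijection between $\mathcal{Y}_x/H_x(\A)$ and $G(F)\backslash H(F)$, and $G(F)=H(F)\times H(F)$ acts transitively on $H(F)$ through $(a_1,a_2)\cdot\gamma = a_2\gamma a_1^{-1}$. Choosing the representative $g=(1,x^{-1})$ so that $\gamma=1$, formula \eqref{eqn:period} collapses to
\[
\mathcal{P}_x(\phi)=\check\tau_{(x,g)}\!\int_{\mathfrak{a}_H H(F)\backslash H(\A)} \phi(h,hx^{-1})\,dh\cdot \mes_1\,g,
\]
with $\chi_1,\delta_1$ absent. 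Writing $\phi=\phi_1\otimes\phi_2$ and changing variables $h\mapsto hx$ (legitimate by bi-invariance of the Tamagawa measure and the invariance of the quotient $\mathfrak{a}_H H(F)\backslash H(\A)$ under right translation), the integrand becomes $\phi_1(hx)\phi_2(h)=(x\phi_1)(h)\phi_2(h)$. The trivializations $\check\tau_{(x,g)}$ and $\mes_1 g$ combine into a single factor $|\Omega|^\demi_x$ via the identifications $\mathscr{L}_x^\demi \otimes \mathscr{L}_x^\demi \rightiso \mathscr{L}_x$ and the $G$-invariance of $|\Omega|^\demi$, yielding the desired formula.

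Finally, factorizability is nearly automatic. On pure tensors, $\angles{x\phi_1,\phi_2}_{\mathrm{Pet}}$ factorizes into local matrix coefficients $\prod_v\angles{x_v\phi_{1,v},\phi_{2,v}}_v$, which via \S\ref{sec:group-case} are precisely the generators $\varphi_v$ of $\mathcal{N}_{\pi_v\boxtimes\check\pi_v}$ associated to the local Haar measures. Combining with Theorem \ref{prop:theta-period}, which rewrites $Z_\lambda(\phi\otimes\xi)=\int_{X^+(\A)}\xi|f|^\lambda\mathcal{P}(\phi)$, and factorizing $\xi=\prod_v\xi_v$ against the local integrals, one obtains $Z_\lambda=\prod_v Z_{\lambda,\varphi_v}$ in the range of convergence. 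The main obstacle I anticipate is the careful bookkeeping of densities and the verification that $\chi_1=\delta_1=1$ in the formalism of Hypothesis \ref{hyp:evaluation-global} (equivalently, that the choice of local basic vectors in Example \ref{eg:ev-rational} is consistent with the trivialization by $|\Omega|^\demi$); the substantive content of the proposition is otherwise classical.
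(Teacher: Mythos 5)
Your proposal is correct and follows essentially the same route as the paper's proof: reduce to the single coset in $\mathcal{Y}_x/H_x(\A)$ with $\gamma=1$ and $H_\gamma=\mathrm{diag}(H)$, identify the period with the Petersson pairing (your representative $(1,x^{-1})$ differs from the paper's $(x,1)$ only by an element of $H_x(\A)$, whence your extra change of variables), recognize $\check{\tau}_{(x,g)}(\mes_\gamma g)$ as an invariant half-density, and deduce factorizability from that of the Petersson pairing. The details you fill in (the Schur-orthogonality step for $\Pi\simeq\pi\boxtimes\check{\pi}$ and the verification that $\chi_\gamma=\delta_\gamma=\mathbf{1}$ by unimodularity) are exactly what the paper leaves implicit.
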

\begin{proof}
	For every $x \in X^+(\A) = H(\A)$:
	\begin{compactitem}
		\item $H \simeq H_x = \{ (x^{-1}hx, h): h \in H\} \subset H \times H$;
		\item $Z[X] = \text{diag}(Z_H)$, thus $\mathfrak{a}[X] H_x(F) \backslash H_x(\A)$ is just the usual automorphic quotient $\amspace_{H_x}$ associated to $H_x$;
		\item the set $\mathcal{Y}_x/H_x(\A)$ is a singleton with representative $g = (x,1) \in H \times H(\A)$;
		\item accordingly we may take $\gamma=1$ in \eqref{eqn:period}, so $H_\gamma = \text{diag}(H)$; note that $\delta_\gamma = \chi_\gamma = \mathbf{1}$.
	\end{compactitem}
	The first assertion follows immediately from Corollary \ref{prop:period-dist}. Assume hereafter $\Pi = \pi \boxtimes \check{\pi}$. Recall that we have put a Haar measure on $H_\gamma(\A) = H(\A)$, namely the Tamagawa measure. For $\phi = \phi_1 \otimes \phi_2$, the formula \eqref{eqn:period} reduces to
	\[ \mathcal{P}_x(\phi) = \check{\tau}_{(x,(x,1))} \int_{\amspace_{H_\gamma} \simeq \amspace_H \ni h} \phi_1(hx) \phi_2(h) \cdot \mes_1 (x,1). \]
	Note that the integral is just $\angles{x\phi_1, \phi_2}_\text{Pet}$ by the very definition of the Petersson pairing.
	
	It remains to clarify the term $T(x) := \check{\tau}_{(x,(x,1))}(\mes_1 (x,1)) \in \overline{\mathscr{E}}_x$. The equivariance of $\check{\tau}$ implies $T(x) = T(1)x$, hence $T: x \mapsto T(x)$ is a positive, right $H(\A)$-invariant section of $\mathscr{L}^\demi$. Therefore $T = |\Omega|^\demi$ for some Haar measure $|\Omega|$ on $H(\A)$. We shall not attempt to evaluate $|\Omega|$, as it depends on our initial choice of measures on $H(\A)$ and $\mathfrak{a}[X]$.
	
	Finally, the factorizability of $Z_\lambda$ follows from the well-known factorizability of Petersson pairings; more precisely, $\angles{\cdot, \cdot}_\text{Pet}$ factors into integral pairings of local matrix coefficients.
\MyQED\end{proof}

\section{Global functional equation and Poisson formula}\label{sec:Poisson}
In this section we work with two affine spherical embeddings $X_i^+ \hookrightarrow X_i$ for $i=1,2$, both subject to the previous conditions. Thus our data include the basic vectors $\xi_v^{\circ,(i)}$ for almost all $v \nmid \infty$ as well as the $\vartheta$-distributions $\vartheta^{(i)}: \Schw_i \to \CC$. Axiom \ref{axiom:theta} will be imposed later on.

In order to talk about model transition, hereafter we assume $\Lambda_2 \subset \Lambda_1$ as in the local setting.
%

Assume that we are given a family of model transitions $\mathcal{F}_v: \Schw_{2,v} \rightiso \Schw_{1,v}$ at each place $v$, as in \S\ref{sec:model-transition}. Moreover we assume
\[ \mathcal{F}_v \left( \xi_v^{\circ,(2)} \right) = \xi_v^{\circ,(1)} \quad \text{ for almost all } v \nmid \infty. \]
Consequently, the adélic model transition\index{model transition!global}
\[ \mathcal{F} := \bigotimes_v \mathcal{F}_v: \Schw_2 \rightiso \Schw_1 \]
is well-defined as a continuous $G(\A)$-equivariant linear map.

\begin{definition}\label{def:gfe}\index{functional equation!global}
	Given $\mathcal{F} = \bigotimes_v \mathcal{F}_v$ as above, assume the Axiom \ref{axiom:theta} so that $Z^{(i)}_\lambda$ is defined for $i=1,2$, we say that the \emph{global functional equation for zeta integrals} holds for a cuspidal automorphic representation $\pi \hookrightarrow \mathcal{A}_\text{cusp}(G, X_2) \cap \mathcal{A}_\text{cusp}(G, X_2)$, if
	\[ Z^{(1)}_\lambda(\phi \otimes \mathcal{F}(\xi)) = Z^{(2)}_\lambda(\phi \otimes \xi), \quad \xi \in \Schw_2, \; \phi \in V_\pi \]
	where both sides are interpreted as meromorphic families in $\lambda \in \Lambda_{2,\CC}$.	

	Here is an implicit assumption that $\Lambda_{2,\CC}$ is not contained in the singular locus of $\lambda \mapsto Z^{(1)}_\lambda(\phi \otimes \mathcal{F}(\xi))$.
\end{definition}

The  meromorphy and the global functional equation for $Z^{(i)}_\lambda$ ($i=1,2$) are often established at once, by invoking a \emph{Poisson formula}\index{Poisson formula} relating $\vartheta^{(1)}$ and $\vartheta^{(2)}$. The upshot is to work with the new distribution
\begin{gather*}
	\tilde{\vartheta}^{(i)}(\xi) := \sum_{\gamma \in X'_i(F)} \text{ev}_\gamma(\xi), \quad \xi \in \Schw_i
\end{gather*}
where $X'_i$ is subvariety lying between $X_i$ and $X^+_i$, such that one still have the maps $\text{ev}_\gamma$ for $\gamma \in X'_i(F)$ that satisfy the properties in Hypothesis \ref{hyp:evaluation-global}, for $i=1,2$. Granting the convergence of $\tilde{\vartheta}^{(1)}$, $\tilde{\vartheta}^{(2)}$, the Poisson formula would take the form
\begin{gather}\label{eqn:Poisson}
	\tilde{\vartheta}^{(1)}(\mathcal{F}(\xi)) = \tilde{\vartheta}^{(2)}(\xi), \quad \xi \in \Schw_2.
\end{gather}

To plug this into $Z_\lambda$, it is often necessary to assume that the orbits in $(X' \smallsetminus X)(F)$ are all \emph{negligible}\index{negligible orbit}. We say an element $\gamma \in X(F)$ is negligible if $\Stab_G(\gamma) \supset U_P$ for some parabolic subgroup of $P \subsetneq G$; its effect is that the integration of $\text{ev}_\gamma(g\xi)$ against a cusp form, whenever convergent, will be zero. Certainly, this assumptions excludes the case where $G$ is a torus and $X'(F) \neq X^+(F)$. Known instances include
\begin{itemize}
	\item Wavefront spherical varieties with a structure of preflag bundle \cite[\S 4.4]{Sak12}.
	\item Spaces arising in the doubling method, cf.\ Remark \ref{rem:doubling-negligible}.
	\item The Godement--Jacquet case with $n > 1$: $X = \text{Mat}_{n \times n}$ under $G = \GL(n) \times \GL(n)$-action. This is seen in \S\ref{sec:GJ}.
	\item The spaces of symmetric bilinear forms on an $F$-vector space of dimension $> 1$ (Example \ref{eg:symm-forms}).
\end{itemize}
We refer to Remark \ref{rem:doubling-negligible} for further discussions on this property.

We hesitate to formalize a general theory for deducing meromorphy and functional equation from \eqref{eqn:Poisson}, which would require finer properties of the Schwartz spaces and the geometry of $X_1, X_2$. Instead, let us inspect the mechanism in concrete examples in which $\Schw_i$, $\mathcal{F}$ and $\tilde{\vartheta}^{(i)}$ are already available.

In what follows, we fix a nontrivial unitary character $\psi = \prod_v \psi_v: \A/F \to \CC^\times$ and normalize the Haar measures on $\A$ and on each $F_v$ accordingly. Such a $\psi$ is unique up to dilation by $F^\times$. For almost all $v \nmid \infty$, we have $\psi_v|_{\mathfrak{o}_V}=1$ but $\psi_v|_{\mathfrak{m}_v^{-1}} \not\equiv 1$; such an additive character is called unramified. The self-dual Haar measure with respect to unramified $\psi_v$ is characterized by $\mes(\mathfrak{o}_v)=1$.

\begin{example}
	In the prehomogeneous case (Example \ref{eg:pvs-global})\index{prehomogeneous vector space}, we take $\tilde{\vartheta}$ to be the linear functional
	\[ \tilde{\vartheta} = \tilde{\vartheta}^X := \sum_{\gamma \in X(F)} \text{ev}_\gamma: \Schw(X) \to \CC. \]
	Its convergence and continuity are well-known. Hereafter, we assume that $X$ is $F$-regular so that the dual space $\check{X}$ is prehomogeneous as well.

	The local Fourier transforms $\mathcal{F}_v$ patch together to an adélic model transition $\mathcal{F}: \Schw(X) \rightiso \Schw(\check{X})$. Indeed, the basic vectors $\xi^\circ_v$ are preserved since $\psi_v$ is unramified almost everywhere and $\mathcal{F}_v$ is an isometry, so there is no worry about the convergence factors $\lambda_v$.
	
	The Poisson formula $\vartheta^{\check{X}}(\mathcal{F}\xi) = \vartheta^X(\xi)$ is almost classical: one can get rid of half-densities as in the proof of Theorem \ref{prop:Fourier-def}.

	The meromorphy and global functional equation for $Z^X_\lambda$ are currently unknown. However, if $\pi$ is allowed to be the trivial representation of $G(\A)$ (which is non-cuspidal), then this is known under mild conditions --- see \cite[Chapter 5]{Ki03} and the references therein.
\end{example}

\begin{example}\label{eg:GJ-global}\index{Godement--Jacquet integrals!global}
	As illustrated in \S\ref{sec:GJ}, Godement--Jacquet theory may be regarded as a special case of prehomogeneous vector spaces. This is also true in the global setting. Specifically, consider $X^+ = \GL(n)$, $X = \text{Mat}_{n \times n}$ with $G = \GL(n) \times \GL(n)$-action as in \eqref{eqn:group-case-action1}, a cuspidal representation $\Pi := \pi \boxtimes \check{\pi}$ of $G(\A)$ and $\phi = \phi_1 \otimes \phi_2 \in \Pi$. Identify $\Lambda_X$ with $\Z_{\geq 0}$ as in \S\ref{sec:GJ}, and define $\GL(n,\A)^1 := \{g \in \GL(n,\A) :|\det g| = 1 \}$ as usual.
	
	The meromorphy, etc.\ for zeta integrals can also be posed for the dual prehomogeneous vector space $\check{X}$ of $X$, and $\check{X}$ will play a crucial role in the arguments below.

	\begin{enumerate}
		\item Let us address the convergence of $Z_\lambda(\phi \otimes \xi)$ first. The stabilizer of $x_0 := 1 \in X^+(F)$ is $H = \text{diag}(\GL(n))$, so $Z[X] = \text{diag}(\Gm) \subset \Gm^2 = Z_G$. The construction in the beginning of \S\ref{sec:theta-dist} produces a subgroup $\mathfrak{a} \subset \Gm(F_\infty) = F_\infty^\times$, $\mathfrak{a} \simeq \R^\times_{>0}$, viewed also as a subgroup of $Z[X](F_\infty) \subset H(F_\infty)$. Hence
		\[ Z_\lambda(\phi \otimes \xi) = \int_{\mathfrak{a} G(F) \backslash G(\A) \ni (g_1, g_2)} \vartheta_\xi(g_1, g_2) \phi_1(g_1) \phi_2(g_2) |\det g_2^{-1} g_1|^\lambda \dd g_1 \dd g_2. \]
		The convergence for $\Re(\lambda) \gg 0$ follows because
		\begin{inparaenum}[(i)]
			\item $\vartheta_\xi$ has moderate growth,
			\item $\phi_1$, $\phi_2$ are both rapidly decreasing modulo center,
			\item $Z[X] \backslash Z_G$ acts on $X$ by dilation: $\det(g_2^{-1} g_1) \to 0$ implies $g_2^{-1}x g_1 \to 0$ when $(g_1, g_2) \in Z_G$. 
		\end{inparaenum}
		\item As a special case of Proposition \ref{prop:period-group-case},
			\[ Z_\lambda(\phi \otimes \xi) = \int_{\GL(n,\A) \ni x} |\det x|^\lambda \angles{x\phi_1, \phi_2}_\text{Pet} \underbracket{\xi(x) |\Omega|^\demi}_{\in \mathscr{L}_x} \]
			in its range of convergence, where
			\begin{compactitem}
				\item $\angles{\cdot, \cdot}_\text{Pet}$ stands for the Petersson bilinear pairing $\phi_1 \otimes \phi_2 \mapsto \int_{\amspace_{\GL(n)}} \phi_1 \phi_2$, where the space $\amspace_{\GL(n)} := \R^\times_{>0} \GL(n,F) \backslash \GL(n,\A)$ is naturally isomorphic to $\GL(n,F) \backslash \GL(n,\A)^1$ and carries a chosen Haar measure;
				\item $|\Omega|$ is some Haar measure on $\GL(n,\A)$.
			\end{compactitem}
			Up to a constant factor depending on measures, $Z_\lambda(\phi \otimes \xi)$ equals the adélic Godement--Jacquet integral $Z^\text{GJ}\left( \frac{1}{2} + \lambda, \xi_0, \beta \right)$ in \cite[Definition 15.2.3]{GH11-2}, where $\beta(x) := \angles{x\phi_1, \phi_2}_\text{Pet}$ and $\xi_0 := \xi |\det|^{-\frac{n}{2}} |\Omega|^{-\demi}$ are both $\CC$-valued smooth functions on $X^+(\A)$. The precise recipe can be found in \S\ref{sec:GJ}.
		\item Now consider the meromorphy and global functional equation for $Z_\lambda$. To simplify matters, we assume $n > 1$ . The arguments are the same as in \cite[Theorem 15.2.4]{GH11-2}; we translate them into our formalism as follows. Firstly, make the standard identification
		\[ \mathfrak{a} G(F) \backslash G(\A) = \left( \GL(n,F) \backslash \R^\times_{>0} \GL(n,\A)^1 \right) \times \left( \GL(n,F) \backslash \GL(n,\A)^1 \right) \]
			where $\R^\times_{>0}$ embeds into $\GL(n,\A)$ via
			$t \mapsto \left( \begin{smallmatrix} t & & \\ & \ddots & \\ & & t \end{smallmatrix} \right)$.
			Denote by $\omega_\pi$ the central character of $\pi$, and rewrite the defining integral for $Z_\lambda(\phi \otimes \xi)$ as
			\[ \int_{t \in \R^\times_{>0}} t^{n\lambda} \omega_\pi(t) \int_{\substack{g_1 \in \GL(n, F) \backslash \GL(n,\A)^1 \\ g_2 \in \GL(n,F) \backslash \GL(n,\A)^1}} \vartheta_\xi(tg_1, g_2) \phi_1(g_1) \phi_2(g_2) = \int^1_0 + \int^\infty_1 \cdots. \]
			Here $\R^\times_{>0}$ carries the measure $\dd^\times t = t^{-1} \dd t$. Thanks to the rapid decay of $\phi_1, \phi_2$ on $\GL(n,F) \backslash \GL(n,\A)^1$ and $\xi$ on $X(\A)$, the integral $\int^\infty_1$ is absolute convergent and defines an entire function in $\lambda$.
			
			As for $\int^1_0$, observe that when $\Re(\lambda) \gg 0$, we may replace $\vartheta_\xi(tg_1, g_2)$ by $\tilde{\vartheta}_\xi(tg_1, g_2) = \tilde{\vartheta}((tg_1,g_2)\xi)$ in the integrand, since the orbits in $(X \smallsetminus X^+)(F)$ are negligible, and the $\int_0^1 \int_{(g_1,g_2)} \sum_{\gamma \in X(F)} \cdots$ is still convergent. Now the Poisson formula can be applied to $\tilde{\vartheta}_\xi$ inside $\int^1_0 \cdots$, leading to the expression
			\[ \int_{t \in ]0,1[} t^{n\lambda} \omega_\pi(t) \int_{\substack{g_1 \in \GL(n, F) \backslash \GL(n,\A)^1 \\ g_2 \in \GL(n,F) \backslash \GL(n,\A)^1}} \tilde{\vartheta}_{\mathcal{F}(\xi)}(tg_1, g_2) \phi_1(g_1) \phi_2(g_2). \]
			Notice that $\tilde{\vartheta}_{\mathcal{F}(\xi)}$ is defined on the prehomogeneous vector space $\check{X}$ dual to $X$; thus the $G$-action here is $x(tg_1, g_2) = g_1^{-1} t^{-1} x g_2$, cf.\ \eqref{eqn:group-case-action2}. Upon unwinding it into $\int_0^1 \int_{(g_1, g_2)} \sum_{\gamma \in \check{X}(F)} \cdots$, the multiple integral is still convergent whenever $\Re(\lambda) \gg 0$. Indeed, the terms associated with $\gamma \neq 0$ have rapid decay as $t \to 0$ since the action is flipped, whereas the term $\text{ev}_0(\mathcal{F}(\xi))$ associated to $\gamma=0$ gives rise to a convergent integral since $\Re(\lambda) \gg 0$.

			By another argument involving negligible orbits, the $\tilde{\theta}_{\mathcal{F}(\xi)}$ in the resulting integral $\int^1_0$ can be replaced by $\theta_{\mathcal{F}(\xi)}$, giving rise to an expression that converges and is actually holomorphic for all $\lambda$.

			Evidently, the final expression $Z_\lambda = \int^\infty_1 + \int^1_0 \cdots$ is symmetric under exchanging $X \leftrightarrow \check{X}$, $\xi \leftrightarrow \mathcal{F}(\xi)$, and everything lies in its own domain of holomorphy. Hence we have established the entireness and the global functional equation for $Z_\lambda$ at once.
	\end{enumerate}
	When $n=1$, the boundary term $\gamma=0$ is no longer negligible, and some fractions involving $\text{ev}_0(\xi)$, $\text{ev}_0(\mathcal{F}(\xi))$ will appear in the arguments above. This is certainly familiar from Tate's thesis.
\end{example}

\begin{example}
	In the case $X = X_P$ of Example \ref{eg:X_P-global}, choose any other parabolic subgroup $Q \subset G^\Box$ sharing the same Levi component with $P$. Then we have the local model transition $\mathcal{F}_{Q|P,v}: \Schw(X_P)_v \rightiso \Schw(X_Q)_v$. If the basic vectors $c_{P,v}, c_{Q,v}$ (with the notations in \cite{BK02}) are chosen to have the same $L^2$-norm, then $\mathcal{F}_{Q|P,v}(c_{P,v}) = c_{Q,v}$ by \cite[p.548]{BK02}. Thus $\mathcal{F}_{Q|P} = \bigotimes_v \mathcal{F}_{Q|P,v}: \Schw(X_P) \rightiso \Schw(X_Q)$ is well-defined.
	
	Let $\xi = \bigotimes_v \xi_v \in \Schw(X_P)$ such that there exists places $w \neq w'$ of $F$ with $\xi_w \in C^\infty_c(X_{P,w})$ and $\mathcal{F}_{Q|P, w'}(\xi_{w'}) \in C^\infty_c(X_{Q,w'})$. For such $\xi$, define
	\[ \tilde{\vartheta}(\xi) := \sum_{\gamma \in X_P(F)} \text{ev}_\gamma(\xi). \]
	The Poisson formula in this context is recorded in \cite[Theorem 6.4]{BK02} (see also the proof). For the connections of $Z_\lambda$ to Eisenstein series, see \cite[\S 7.22]{BK99}.
\end{example}

\vspace{1em}
\begin{flushleft}
	Wen-Wei Li \\
	Academy of Mathematics and Systems Science \\
	Chinese Academy of Sciences \\
	55, Zhongguancun donglu, \\
	100190 Beijing, People's Republic of China. \\
	Current E-mail address: \href{mailto:wwli@pku.edu.cn}{\texttt{wwli@pku.edu.cn}}
\end{flushleft}


\bibliographystyle{abbrv}	
\bibliography{zeta}

\printindex

\end{document}